\numberwithin{equation}{theorem}
\newcommand{\mytau}{{\uptau}}
\numberwithin{equation}{theorem}
\def\todo#1{\textcolor{Mahogany}%
{\footnotesize\newline{\color{Mahogany}\fbox{\parbox{\textwidth-15pt}{\textbf{todo: } #1}}}\newline}}
\def\commentbox#1{\textcolor{Mahogany}%
{\footnotesize\newline{\color{Mahogany}\fbox{\parbox{\textwidth-15pt}{\textbf{comment: } #1}}}\newline}}
\renewcommand{\O}{\mathcal O}
\renewcommand{\m}{\mathfrak{m}}
\renewcommand{\fram}{\mathfrak{m}}
\newcommand{\perfd}{\textnormal{perfd}}
\newcommand{\utau}{\tau}
\renewcommand{\bZ}{{\bf Z}}
\renewcommand{\bN}{{\bf N}}
\renewcommand{\bC}{{\bf C}}
\begin{document}

\title[Test ideals in mixed characteristic]{Test ideals in mixed characteristic: \\a unified theory up to perturbation}

\author[Bhatt, Ma, Patakfalvi, Schwede, Tucker, Waldron, Witaszek; appendix by Datta]{Bhargav Bhatt, Linquan Ma, Zsolt Patakfalvi, Karl Schwede, Kevin Tucker, Joe Waldron, Jakub Witaszek, and an appendix by Rankeya Datta}

\dedicatory{\color{black}Dedicated to Karen E. Smith on the occasion of her 60th birthday.}

\address{Department of Mathematics, IAS/Princeton and University of Michigan, Princeton and Ann Arbor, NJ and MI, USA}
\email{bhargav.bhatt@gmail.com}
\address{Department of Mathematics, Purdue University, West Lafayette, IN, USA}
\email{ma326@purdue.edu}
\address{\'Ecole Polytechnique F\'ed\'erale de Lausanne (EPFL), Lausanne, Switzerland}
\email{zsolt.patakfalvi@epfl.ch}
\address{Department of Mathematics, University of Utah, Salt Lake City, UT, USA}
\email{schwede@math.utah.edu}
\address{Department of Mathematics, University of Illinois at Chicago, Chicago, IL, USA}
\email{kftucker@uic.edu}
\address{Department of Mathematics, Michigan State University, East Lansing, MI, USA}
\email{waldro51@msu.edu}
\address{Department of Mathematics, Fine Hall, Washington Road, Princeton, NJ, USA}
\email{jwitaszek@princeton.edu}
\address{Department of Mathematics, University of Missouri, Columbia, MO, USA}
\email{rankeya.datta@missouri.edu}
\begin{abstract}
Let $X$ be an integral scheme of finite type over a complete DVR of mixed characteristic.  We provide a definition of a test ideal which agrees with the multiplier ideal after inverting $p$, is computed from a sufficiently large alteration, agrees with previous mixed characteristic BCM test ideals after completing at any point of residue characteristic $p$ (up to small perturbation), and which satisfies the full suite of expected properties of a multiplier or test ideal. This object is obtained via the $p$-adic Riemann-Hilbert functor. 
\end{abstract}

\maketitle

\setcounter{tocdepth}{1}
\tableofcontents

\section{Introduction}


\subsection{Background}
In characteristic zero algebraic geometry, it is enormously useful to study the singularities of algebraic varieties by considering resolutions of singularities. In characteristic $p > 0$ however, even when a resolution is known to exist, looking solely at birational maps is insufficient to deduce properties of singularities, due to a lack of vanishing theorems. Instead, it turns out to be more fruitful to study properties of the Frobenius morphism, or more generally, of all finite covers or even all alterations.
To wit, we call a commutative Noetherian ring $R$ a \emph{splinter} if every finite surjective map $\Spec S \to \Spec R$, the induced map $R \to S$ splits as a map of $R$-modules; this is an important class of singularities in characteristic $p$ algebraic geometry\footnote{Indeed, in characteristic $p > 0$, $R$ being a splinter is conjecturally the same as $R$ being \emph{strongly $F$-regular}, and this equivalence is known when $R$ is $\bQ$-Gorenstein by \cite{SinghQGorensteinSplinters} \cf \cite{BlickleSchwedeTuckerTestAlterations,ChiecchioEnescuMillerSchwede}.  Strongly $F$-regular singularities are arguably the most important class of singularities coming out of tight closure theory \cite{HochsterHunekeTC1}. 
 They are also known to correspond to KLT singularities in characteristic zero (\cite{HaraWatanabeFRegFPure,HaraRatImpliesFRat,MehtaSrinivasRatImpliesFRat,SmithMultiplierTestIdeals,HaraYoshidaGeneralizationOfTightClosure,TakagiInterpretationOfMultiplierIdeals}) and are thus important in birational geometry.}. 

While the definition of a splinter makes perfect sense in mixed characteristic, relatively little was known about this notion until quite recently. The breakthrough was Andr\'e's proof of Hochster's direct summand conjecture (\cite{AndreDirectsummandconjecture}), showing that regular rings are splinters. Based on this and the related works \cite{HochsterHunekeInfiniteIntegralExtensionsAndBigCM,AndreWeaklyFunctorialBigCM,BhattDirectsummandandDerivedvariant,GabberMSRINotes,HeitmannMaBigCohenMacaulayAlgebraVanishingofTor}, the second and the fourth author helped develop the theory of BCM singularities (see \cite{HochsterSolidClosure,SchoutensCanonicalBCM,MaSchwedeSingularitiesMixedCharBCM,SatoTakagiArithmeticAndGeometricDeformationsOfFPure,PerezRGTestIdeals21,JiangClosureOpsMixedChar,DattaTuckerOpenness,CaiLeeMaSchwedeTuckerPerfectoidSignature1,MurayamaUniformBoundsOnSymbolicPowers,YamaguchiBCMTestIdealsInEqualChar,NakazatoShimomotoVariantPerfectoidAbhyankar}, \cf \cite{BrennerRescueSolidClosure}) which was inspired by the theory of tight closure and multiplier ideals in characteristic $p > 0$ and 0 respectively, see \cite{HochsterHunekeTC1,HaraYoshidaGeneralizationOfTightClosure,TakagiInterpretationOfMultiplierIdeals,EsnaultViehwegSurUneMinoration,NadelMultiplierIdealSheaves,LipmanAdjointsOfIdealsInRegularLocal,LazarsfeldPositivity2}.  This, in conjunction with the result of the first author on the Cohen-Macaulayness of the absolute integral closure $R^+$ (\cite{BhattAbsoluteIntegralClosure}), allowed for a thorough study of splinters.  

In this paper, we are interested in measuring the failure of a ring $R$ to be a splinter. To quantify this, note that a finite extension  $R \subseteq S$ splits if and only if the evaluation-at-1 map $\Hom_R(S,R) \to R$ surjects. Tautologically then, the  ideal 
\begin{equation}
\label{eq:intro-splinter}
    \bigcap_{\textrm{ finite } R \subseteq S} \Image \Big( \Hom_R(S, R) \xrightarrow{\text{evaluation at}\,  1} R \Big) = \bigcap_{\textrm{ finite } R \subseteq S} \Tr_{S/R}(\omega_{S/R}) \subset R
\end{equation}
equals $R$ exactly when $R$ is a splinter; in general, we regard this ideal as a measure of the failure of $R$ to be a splinter.
A priori, the infinite intersection appearing above is quite difficult to control. Nonetheless, one of the goals of this paper is to prove some finiteness properties of this infinite intersection, such as those discussed next, by accessing it topologically via the $p$-adic Riemann--Hilbert functor.

\subsection{The motivating question}

To formulate statements and obtain objects that work uniformly in all characteristics, it is convenient to replace finite maps with alterations in the index set of  the intersection in \eqref{eq:intro-splinter} above\footnote{It is known that replacing finite covers with alterations in \eqref{eq:intro-splinter} does not change the intersection in geometric situations in positive or mixed characteristic, see \cite[Corollary 4.13]{BMPSTWW1}, but it is essential to include birational maps in characteristic zero.}. With this replacement, we shall study the following fundamental question about mixed characteristic singularities:

\begin{question*}[Localization of alteration-test ideal] \label{question:key-intro}
Let $X = \Spec R$ with $R$ Noetherian normal and Gorenstein.  Define
\[
    \tau_{\alt}(R) = \bigcap_{f : Y \to X}  \Image \Big( \Tr_{Y/X} : H^0(X, f_*\omega_{Y/X}) \to R \Big),
\]
where the intersection runs over all alterations of $X$\footnote{More precisely, in this and all such intersections that appear in this paper, we always mean the following: choose a geometric generic point of $X$ (i.e., an algebraic closure of $K(R)$), and intersect over all alterations of $X$ equipped with a lift of this point.}. Is it true that 
\[ \tau_{\alt}(R) = \Image \big( \Tr_{Y/X} : H^0(X, f_*\omega_{Y/X}) \to R \big)\] 
for a single sufficiently large alteration $f \colon Y \to X$? 

Note that a positive answer would, in particular, show that $\tau_{\rm alt}(R[1/g]) = \tau_{\rm alt}(R)[1/g]$ for all $g \in R$., i.e., the formation of $\tau_{\rm alt}(R)$ commutes with localization. 
\end{question*}

Let us briefly provide some context for this question. In mixed characteristic, the ideal $\tau_{\alt}(R)$ appeared in \cite{BMPSTWW1,TakamatsuYoshikawaMMP, HaconLamarcheSchwede} among other places; the fact that its formation is not known to commute with localization (and then completion at a maximal ideal) has been a major obstacle.\footnote{A variant of $\tau_{\alt}(R)$ has been defined in \cite{HaconLamarcheSchwede} for quasi-projective varieties by passage to the affine cone; this commutes with localization essentially by fiat but its behavior after completion at a stalk was unclear.} Note that localization does not commute with infinite intersections in general. 
In equal characteristic zero, this intersection stabilizes since $\tau_{\alt}(R)$ is the multiplier ideal, and is in fact computed by any single resolution of singularities.  In characteristic $p > 0$, a resolution is not enough even if it exists, but there is still a single sufficiently large alteration which computes it by \cite{SmithTightClosureParameter,BlickleSchwedeTuckerTestAlterations}, \cf \cite{HochsterHunekeInfiniteIntegralExtensionsAndBigCM,HunekeLyubeznikAbsoluteIntegralClosure,ChiecchioEnescuMillerSchwede,DattaTuckerOpenness}. 

A primary goal of our article is to provide a positive answer to Question~\ref{question:key-intro} ``up to small $p$-perturbation'', as made precise next.

\subsection{The main results}
In order to state the main theorem, we first fix a system of compatible $p$-power roots $\{p^{1/p^e}\}_{e \in \bZ_{\geq 0}}$ of $p$ within a fixed algebraic closure of the fraction field of $R$ -- in the end, the resulting object will be independent of such choices.
We need to define a $p$-perturbed alteration variant of $\tau_{\alt}(R)$ for $0 < \epsilon = 1/p^e \ll 1$ (the choice of $0 < \epsilon  \ll 1$ is irrelevant by Noetherianity):
\[
\tau^a_{\alt}(R) := \bigcap_{f : Y \to X}  \Image \Big(  H^0(X, f_*\omega_{Y/X}) \xrightarrow{p^\epsilon} H^0(X, f_*\omega_{Y/X}) \xrightarrow{\Tr_{Y/X}} R \Big),
\]
where $X := \Spec R$ and the intersection is taken over alterations such that
$p^{\epsilon} \in \cO_Y$. We emphasize that although the above $p$-perturbation may look technical, in most applications it is completely harmless. One should note that building in small perturbation is the key part of tight closure theory -- \emph{test elements} are small perturbations -- and of course statements up to small $p$-perturbation are common applications of almost ring theory \cite{FaltingsAlmostEtale,GabberRameroAlmostringtheory,GabberRameroFoundationsAlmostRingTheory}.   This perturbation lets us prove that the intersection stabilizes for $f$ sufficiently large. 
\begin{theoremA*}[{\autoref{thm.tauAltNonComplete}, \autoref{cor.TauRHInvertPEqualsJNoetherianNoPair}, \autoref{cor.TauAgreesWithCompletionComputation}}]
Let $R$ be a normal Gorenstein\footnote{Versions of this result also hold for (log-)$\bQ$-Gorenstein $X$ if $\omega_{Y/X}$ is replaced by $\cO_Y(\lceil K_Y - f^* K_X\rceil) $.  See the discussion below, \autoref{rem.AddingRoundings}, as well as \autoref{rem:a-test-ideal-definition} and \autoref{cor.TauRHVsTau+ForDivisorPairs} for more details.} domain of finite type over a DVR $V$ of mixed characteristic.
Then for $1 \gg \epsilon > 0$, there exists an alteration $f : Y \to X = \Spec R$ with $p^\epsilon \in \cO_Y$ such that
\[
\tau^a_{\alt}(R) = \Image \Big(  H^0(X, f_*\omega_{Y/X}) \xrightarrow{p^\epsilon} H^0(X, f_*\omega_{Y/X}) \xrightarrow{\Tr_{Y/X}} R \Big).
\]
In particular, $\tau^a_{\alt}(R[1/g]) = \tau^a_{\alt}(R)[1/g]$ for every $g \in R$, and furthermore $\tau^a_{\rm alt}(R)[1/p]$ agrees with the multiplier ideal $\mathcal{J}(R[1/p])$.  Its formation also commutes with completion at a maximal ideal: $\tau^a_{\alt}(R) \cdot \widehat{R} = \tau^a_{\alt}(\widehat{R})$.
\end{theoremA*}

To prove this theorem, our main idea is to interpret $\tau^a_\alt(R)$ intrinsically in terms of the intersection cohomology complex (with $\mathbf{Z}_p$-coefficients) of $\mathrm{Spec}(R[1/p])$ via the $p$-adic Riemann-Hilbert functor \cite{BhattLuriepadicRHmodp}; this is a mixed characteristic counterpart of the fact (reviewed in  \autoref{ss:intro-char0}) that the multiplier ideal sheaf on a complex variety is naturally encoded in the Hodge filtration on the intersection cohomology $\mathcal{D}$-module provided by Saito's theory of Hodge modules; see \autoref{ss:IdeaProof} for a more detailed summary of the proof.


By incorporating small perturbations along arbitrary divisors and not just ${\rm div}(p)$, we provide a comprehensive theory of a test ideal sheaf $\tau(\cO_X)$. 
This notion can be generalized to pairs and even triples $(X, \Delta, \fra^t)$, see \autoref{sec.TestIdealsDivisorPairs} and \autoref{sec.TestIdealsNonPrincipal}, where $\Delta$ is a $\bQ$-divisor such that $K_X+\Delta$ is $\bQ$-Cartier, $\fra$ is an ideal sheaf, and $t \in \bQ_{\geq 0}$.  
In that setting, for any sufficiently large Cartier divisor $G > 0$ on $X$ (a test element analog), and $1 \gg \epsilon > 0$, we can define 
\[
    \tau(\cO_X, \Delta, \fra^t) = \Tr_{Y/X}\big( f_* \cO_Y(K_Y - f^*(K_X + \Delta + \epsilon G) - tM)\big)
\]
where $f : Y \to X$ is a sufficiently large alteration such that $\fra \cO_Y = \cO_Y(-M)$, where $M$, $tM$ and $f^*(K_X + \Delta + \epsilon G)$ are Cartier.  In particular, the intersection over such $Y$ stabilizes (\autoref{thm:onealterationtorulethemall},
\autoref{thm.FinalAlterationStabilizationForTestIdeal}, \cf \autoref{prop:tau-nonprincipal-is-global-intersection}).

In this context, we are able to show that $\tau(\cO_X, \Delta, \fra^t)$ satisfies the full suite of properties one expects from multiplier ideals in characteristic zero or test ideals in characteristic $p > 0$.


\begin{theoremB*}[Properties of test ideals, {\autoref{cor.EffectiveGlobalGenerationTauDivisor}, \autoref{cor.FinalCompletionGlobalizationLocalizationForTestIdeal}, \autoref{thm.FinalPropertiesOfTestIdealsTriples}}]
Suppose $X$ is normal, integral, flat, and of finite type over a DVR $V$ of mixed characteristic.  Suppose additionally that $\Delta$ is a $\bQ$-divisor such that $K_X + \Delta$ is $\bQ$-Cartier, $\fra, \frb$ are ideal sheaves and $s,t \geq 0$ are rational numbers.  

Then the ideal sheaf $\tau(\cO_X, \Delta, \fra^t)$ is coherent and satisfies the following properties.
\begin{enumerate}
\setlength\itemsep{0.2em}
\item \textnormal{(Multiplier ideals):}  Inverting $p$, it becomes the multiplier ideal: $\tau(\cO_X,\Delta, \fra^t)[1/p]=\mathcal{J}(\cO_{X[1/p]}, \Delta|_{X[1/p]}, \fra^t[1/p])$.  
\item \textnormal{(Smooth pullback):}  If $f : Y \to X$ is smooth, then $\utau(\cO_Y, f^*\Delta, (\fra \cO_Y)^t) = f^* \utau(\cO_X, \Delta, \fra^t)$. 
\item \textnormal{(Finite maps):}  If $f : Y \to X$ is a finite surjective map, then  for $K_Y+\Delta_Y = f^*(K_X+\Delta)$ we have that $\Tr_{Y/X} \big( \utau(\cO_Y, \Delta_Y, (\fra \cO_Y)^t) \big) =\utau(\cO_X, \Delta,  \fra^t)$.  
\item \textnormal{(Restriction)}  If $H$ is a normal Cartier divisor on $X$, then $\tau(\cO_X, \Delta, \fra^t) \cdot \cO_H \supseteq \tau(\cO_H, \Delta|_H, (\fra \cO_H)^t)$.  
\item \textnormal{(Skoda):}  If $t \geq \dim(X)$, then $\tau(\cO_X, \Delta, \fra^t) = \fra \cdot \tau(\cO_X, \Delta, \fra^{t-1})$.  
\item \textnormal{(Summation):}  $\utau(\cO_X,\Delta, (\fra + \frb)^t) = \sum_{t_1 + t_2 = t} \utau(\cO_X,\Delta, \fra^{t_1} \frb^{t_2})$, where $t_1, t_2 \geq 0$.
\item \textnormal{(Subadditivity):}  If $X$ is nonsingular, then $\tau(\cO_X, \fra^t \frb^s) \subseteq \tau(\cO_X, \fra^t) \cdot \tau(\cO_X, \frb^s)$.
\item \textnormal{(Perturbation):} $\tau(\cO_X,\Delta + \epsilon D, \fra^t) = \tau(\cO_X, \Delta, \fra^t)$ for any Cartier divisor $D \geq 0$ and $0 < \epsilon \ll 1$. 
\item \textnormal{(Effective global generation):}  Suppose $X$ is projective over $V$, $A$ is a globally generated ample Cartier divisor and $L$ is such that $L - K_X - \Delta$ is big and nef, then there exists Cartier $H > 0$ so that for $1 \gg \epsilon > 0$ and for all $n \geq \dim X_{p=0}$,
\[
    \tau(\cO_X, \Delta) \otimes \cO_X(L + nA)
\]
is globally generated by $\myB^0(X, \Delta + \epsilon H, \cO_X(L + nA)) \subseteq H^0(X, \cO_X(L + nA))$. 
\item \textnormal{(Unification):}  $\tau(\cO_X, \Delta, \fra^t)$ agrees with and unifies previous mixed characteristic test ideals up to small perturbation \cite{MaSchwedeSingularitiesMixedCharBCM,BMPSTWW1,DattaTuckerOpenness,SatoTakagiArithmeticAndGeometricDeformationsOfFPure,RobinsonBCMTestIdealsMixedCharToric,MurayamaUniformBoundsOnSymbolicPowers} (cf.\ \cite{BhattAbsoluteIntegralClosure}).  Hence, for $x \in X_{p=0}$ and if $R = \widehat{\cO_{X,x}}$, then using notation of the references:
\[
    \tau(\cO_X, \Delta, \frc^t) \cdot R = \tau_B(R, \Delta|_R, (\frc R)^t)
\]
for all sufficiently large perfectoid big Cohen-Macaulay $R^+$-algebras $B$.
Furthermore,
\[ 
H^0(X, \utau(\cO_X, \Delta, \fra) \otimes \sL^n) = \myB^0(X, \Delta + \epsilon G; \sL^n)
\] 
for $X$ projective over a complete DVR $V$, $\sL$ ample, $n \gg 0$, some (test element-like-)divisor $G \geq 0$, and every $0 < \epsilon \ll 1$, where $\myB^0$ is defined as in \cite{BMPSTWW1,TakamatsuYoshikawaMMP}.  
\end{enumerate}
\end{theoremB*}
For variants and generalizations of these results in other contexts, see \autoref{thm.SmoothPullbackForRHOnNoetherian}, \autoref{thm.ComparisonTauRHvsHLSNoPair}, \autoref{prop.PerturbedTestIdealIsReal}, \autoref{prop.PropertiesOfUltTauOmegaForPairs}, \autoref{thm:BlowUpSubadditivity}, \autoref{thm.SkodaForLocalCase}, \autoref{cor.SummationInLocalCase}, \autoref{thm.MainResultOnNonprincipalV2}.  
In  positive characteristic for $F$-finite schemes, $\tau(\cO_X, \Delta, \fra^t)$ agrees with the usual test ideal detecting strongly $F$-regular singularities \cite{BlickleSchwedeTuckerTestAlterations}.  In mixed characteristic, after localization and completion, it detects perfectoid BCM-regular singularities in the sense of \cite{MaSchwedeSingularitiesMixedCharBCM,SatoTakagiArithmeticAndGeometricDeformationsOfFPure,RobinsonBCMTestIdealsMixedCharToric,MurayamaUniformBoundsOnSymbolicPowers} (also see \cite{BrennerRescueSolidClosure,PerezRGTestIdeals21}) as implied by (Unification) above.

The unification property above is quite useful.  Indeed, for some variants of test ideals we had subadditivity, and for others we had Skoda and effective global generation, and so by showing that our ideal agreed with previously defined ones, those properties formally work on our ideal as well.  We believe that some of the above properties (for instance the summation formula) were not known for any mixed characteristic test ideal variant.

{\color{black}
\begin{remark}[More precise test elements]
    In the above definition of $\tau(\cO_X, \Delta, \fra^t)$ and the unification statement, we needed to fix a divisor $G$.  
    We can make the choice of $G$ precise.  One simply needs any $G$ such that 
    \[
        \Supp G \supseteq \Div(p) \cup \Supp(\Delta) \cup V(\fra) \cup X_{\text{sing}}.
    \]
    The $\Div(p)$ support condition can also be weakened frequently, and $\Supp(\Delta)$ can replaced with the non-Cartier locus of $K_X + \Delta$.  See instance \autoref{cor.CompleteLocalOptimalTestElementChoice} for other more precise statements in the local case.
    
    Furthermore, set $\frg$ to be an ideal sheaf such that $V(\frg) \supseteq V(p) \cup \Supp \Delta \cup V(\frc)$ and such that $X \setminus V(\frg)$ is smooth over $V$.  
    Then for $1 \gg \epsilon > 0$ there exists an alteration $f : Y \to X$ with $\frc \cO_Y = \cO_Y(-M)$, 
    $\frg \cO_Y = \cO_Y(-N)$,
    $p^\epsilon \in \cO_Y$,  and $\epsilon N$ and $t M$ Cartier, such that 
    \[
        \tau(X, \Delta, \frc^t) = \Image\Big( H^0(X, f_* \cO_Y(K_Y - f^*(K_X + \Delta) -\epsilon N - t M)  \Big).
    \]
    The same equality also holds for any larger alteration.  See \autoref{def.UltimateTestIDEALDefinition} and \autoref{thm.FinalAlterationStabilizationForTestIdeal} for the full statement.
\end{remark}}

\begin{remark}[Adding roundings]
\label{rem.AddingRoundings}
    In our work, based on the theory of multiplier ideals \cite{LazarsfeldPositivity2,NadelMultiplierIdeals,NadelMultiplierIdealSheaves} \cf \cite{EsnaultViehwegLecturesOnVanishing,LipmanAdjointsOfIdealsInRegularLocal}, it is natural to consider 
    \[
        \bigcap_{\pi : Y \to X}  \Tr_{W/X}\Big( f_* \cO_W(\lceil K_W - f^*(K_X + \Delta + \epsilon G) - tM) \rceil \Big)
    \]
    where the intersection runs over normal alterations $\pi : W \to X$ where $\frc \cO_W = \cO_Y(-M)$ is a line bundle.  However, any such alteration $W \to X$ is dominated by one where $K_X + \Delta + \epsilon G$ and $tM$ pull back to Cartier divisors.  Since we are rounding up ($\lceil - \rceil$), it follows that one may restrict the intersection to such $Y$.  We thus avoid roundings in this paper as the intersections are the same.
\end{remark}

Throughout most of the article we work with $\tau(\omega_X)$ and variants for pairs and triples, as this formulation is more convenient for our purpose, see the discussion of strategy below, noting that if $\Delta = -K_X$ then $\tau(\cO_X, \Delta) = \tau(\omega_X)$.  Note $\tau(\omega_X)$ is called the \emph{(parameter) test (sub)module} in positive characteristic and the analogous notion in characteristic zero is called the  \emph{Grauert-Riemenschneider sheaf} or \emph{multiplier (sub)module}. 

\subsection{Other applications}

We discuss some additional applications. 

\subsubsection*{Openness of splinter loci} 
Let $R$ be a normal Noetherian domain. Suppose that $R_{\mathfrak{q}}$ is a splinter for $\mathfrak{q} \in X_{p=0}$, where $X = \Spec R$. Is $R[1/f]$ a splinter for some $f \not \in \mathfrak{q}$? This is known to be true in characteristic $p>0$ by \cite{DattaTuckerOpenness}. In what follows we answer this question for $\bQ$-Gorenstein $p$-almost splinters in mixed characteristic (see \autoref{ss:open-splinter} for details). 
\begin{cor*}[{\autoref{thm:p-almost-splinter-locus-open}}]
The $p$-almost splinter locus is open for normal $\bQ$-Gorenstein domains of finite type over a DVR of mixed characteristic.
\end{cor*}

We expect a similar statement for the splinter locus itself, but our methods, which need the $p$-perturbation, cannot handle this case yet.

\subsubsection*{Multiplier ideals via finite covers}

Suppose $(R, \fram)$ is a Gorenstein local domain of mixed characteristic $(0,p>0)$ essentially of finite type over a DVR $V$ of mixed characteristic.  
Define $\tau_{\mathrm{fin}}(R) = \bigcap_{\textrm{ finite } R \subseteq S} \Tr_{S/R}(\omega_{S/R})$ as in \autoref{eq:intro-splinter},
where $S$ runs over finite extensions of $R$ contained in a fixed choice of $R^+$. As a consequence of our main result, we obtain the following.

\begin{cor*}[{\autoref{cor.MultiplierIdealViaFiniteCovers}}]
    With notation as above, $\tau_{\mathrm{fin}}(R)[1/p] = \mJ(R[1/p])$.  In particular, the multiplier ideal in characteristic zero can be computed from finite covers in mixed characteristic.
\end{cor*}
\noindent Note that this Corollary implies that the infinite intersection defining $\tau_{\mathrm{fin}}(R)$ {\it cannot} stabilize since the multiplier ideal is not equal to $R[1/p]$ for any normal but non-KLT $R[1/p]$.  
{
\color{black}
\subsubsection*{Uniform approximation of Abhyankar valuation ideals}
An appendix to this paper, written by Rankeya Datta, establishes analogs of \cite{EinLazSmithValuations,DattaUniformApproximation} in mixed characteristic.  The following result is obtained.  

\begin{theorem*}[Datta, {
\autoref{thm:uniform-approximation-Abhyankar-valuation-ideals}}]
Let $(V,\fram_V,\kappa_V)$ be a DVR of mixed characteristic $(0,p)$ such that $\kappa_V$ is perfect. Let $(B,\fram_B,\kappa_B)$ be a regular local ring that is an essentially of finite type extension domain of $V$ that dominates $V$. Let $F$ be the fraction field of $B$ and $\nu$ be an $\bR$-valued valuation of $F$ such that $(B,\fram_B,\kappa_B)$ is an Abhyankar center of $\nu$. Suppose $\Gamma_\nu/\nu(\Frac(V)^\times)$ is torsion-free. Then there exists $e \in \mathbb{R}_{\geq 0}$ such that for all $m \in \mathbb{R}$ and $\ell \in \mathbb{Z}_{> 0}$, 
\[\ba_{\nu,m}(B)^\ell \subseteq \ba_{\nu,\ell m}(B) \subseteq \ba_{\nu,m-e}(B)^\ell.\]
\end{theorem*}
See \autoref{Datta-valuation-ideals} for additional consequences, including an Izumi-type theorem.
}

\subsubsection*{Non-archimedean Monge-Amp\`ere (MA) equations} The paper \cite{fang2022nonarchimedean} obtained results on solutions of non-archimedean MA equations in mixed characteristic assuming resolutions of singularities and subadditivity of $+$-test ideals (see \cite[Theorem 1.2]{fang2022nonarchimedean}). As far as we understand, our theory of $+$-test ideals is now sufficient for their applications (see e.g.\ Theorem B (subadditivity) and \autoref{rem.TestIdealsOfLinearSeries}), and so their result is now contingent on resolutions of singularities only. 

\subsection{Idea of the proof}
\label{ss:IdeaProof}
In what follows we explain the idea of the proof of Theorem A. Let $X$ be a normal integral scheme of finite type over a complete DVR $V$ of mixed characteristic. For simplicity, we shall work with the sheaf \[
\tau^a_{\alt}(\omega_X) = \bigcap_{Y \to X}\Image \Big( f_*\omega_{Y} \xrightarrow{p^\epsilon}  f_*\omega_{Y} \xrightarrow{\Tr_{Y/X}}  \omega_X \Big)
\]
with intersection taken over alterations. This sheaf, called a \emph{test module} (or in characteristic zero the \emph{Grauert-Riemenschneider sheaf} or \emph{multiplier module}), agrees with $\tau^a_{\alt}(\cO_X) \otimes \omega_X$ when $X$ is Gorenstein (\cf \cite{GRVanishing,SmithTightClosureParameter,SmithTestIdeals,BlickleMultiplierIdealsAndModulesOnToric}). We will show that $\tau^a_{\alt}(\omega_X)$ is coherent (i.e.\ satisfies localization), as this is easier to explain than the existence of a single alteration which calculates it. Note, with some work however, a strong enough version of the former implies the latter for formal reasons (\cf \autoref{lem.StabilizingIntersection}).

The main difficulty in showing that $\tau^a_{\alt}(\omega_X)$ is coherent is that infinite intersections of modules do not commute with localization in general; or, in other words, an infinite intersection of coherent sheaves may not be quasi-coherent. To circumvent this problem, we find an intrinsic definition of $\tau^a_{\alt}(\omega_X)$ that is visibly coherent using methods inspired by topology. Specifically, we shall read-off this Grauert-Riemenschneider sheaf from the intersection complex\footnote{The intersection complex originates from the theory of intersection cohomology introduced by Goresky and MacPherson in the topological setting and Deligne algebraically. Specifically, they generalized the usual singular cohomology $H^i(X,\bC)$ constructing intersection cohomology $I^pH^i(X,\bC) := \bH^{i-\dim}(X, {\rm IC}_X)$. This cohomology theory agrees with $H^i(X,\bC)$, when $X$ is smooth, but is much better behaved when $X$ is singular (for example, it always satisfies Poincare duality if $X$ is compact).} ${\rm IC}_{X[1/p]} \in D^b_{\rm cons}(X[1/p], \mathbf{Z}_p)$ which is a constructible complex of sheaves on the characteristic $0$ variety $X[1/p]$. What allows for accessing the algebro-geometric object $\tau^a_{\alt}(\omega_X)$ in mixed characteristic using the topological object $\mathrm{IC}_{X[1/p]}$ living in characteristic $0$ is the $p$-adic Riemann--Hilbert functor from \cite{BhattLuriepadicRHmodp}, as we explain next.\footnote{It is perhaps initially surprising that one can 
detect mixed characteristic phenomena using characteristic $0$ objects. However, note that it has been understood for a long time in $p$-adic Hodge theory that cohomology theories with $\mathbf{F}_p$ or $\bZ_p$ coefficients on a characteristic zero variety carry information on the geometry of reductions mod $p$ or integral models.}

\begin{remark}
The $p$-adic Riemann--Hilbert functor from \cite{BhattLuriepadicRHmodp} can be regarded as an analog of a functor provided by Saito's theory of mixed Hodge modules over $\mathbf{C}$. To make this analogy clear, in \autoref{ss:intro-char0} below, we explain how to read off the multiplier ideal of a complex algebraic variety from the intersection complex with $\bC$-coefficients using the framework of Hodge modules (\autoref{thm:mainthm-in-char-zero}); this procedure, which is well-known to experts, is the complex geometric motivation leading to \eqref{eq:def-omegaRH-intro} below.
\end{remark}

For simplicity of exposition, we work over $V=\bZ_p$; let $X/V$ be a flat projective scheme. The $p$-adic Riemann-Hilbert functor is easiest to work with over a perfectoid base, so we shall first solve our problem after base change to a perfectoid extension $V_\infty$ of $V$, and then trace the solution down to $V$. For definiteness, let us take $V_\infty := (\bZ_p[p^{1/p^\infty}])^{\wedge p}$, and set $X_{\infty} := X \otimes_V V_{\infty}$. In this setup, \cite{BhattLuriepadicRHmodp} provides a functor: 
\[
\RH : D^b_{\cons}(X_\infty[1/p], \bZ_p) \to D^b_{\acoh}(X_\infty),
\]
assigning to a complex of $\bZ_p$-constructible sheaves on the characteristic zero fiber $X_\infty[1/p]$, a complex of ($p$-)almost coherent sheaves on $X_\infty$; the latter are quasi-coherent sheaves on $X_\infty$ that enjoy a strong finiteness property: they are coherent up to multiplication by $p^{\epsilon}$ for any $0< \epsilon \ll 1$. Note that almost coherence only makes sense because we adjoined all the roots of $p$ in $V_{\infty}$. The functor $\RH$ enjoys many nice properties; for our purposes, let us note that $\RH(\bZ_p)$ equals to the perfectoidification $\cO_{X_\infty, \rm perfd}$ as in \cite{BhattScholzepPrismaticCohomology}, and $\RH$ commutes with proper pushforward as well as $p$-completed direct limits.

With that being said, one can define the analog of $\tau^a_{\alt}(\omega_X)$ in this context as:
\begin{equation} \label{eq:def-omegaRH-intro}
\omega^{\RH}_{X_\infty/V_\infty} := {\rm Image}\big(\psi \colon H^0{\rm RH} ({\rm IC}_{X_\infty[1/p]}) \to \omega_{X_\infty/V_\infty}\big),
\end{equation}
where the intersection complex is defined with $\bZ_p$ coefficients
(see \autoref{thm:mainthm-in-char-zero} in characteristic zero). Note that there are no infinite intersections involved here, and in fact this object is visibly almost coherent (as almost coherent sheaves form an Abelian category). To explain the relation to alterations, it is easier to work with the dual variant; in the dual form, the key result is the following. 
\begin{theoremC*}[\autoref{GRviaAIC}] Assume $X_\infty$ is integral, and let $\pi:X_\infty^+ \to X_\infty$ be an absolute integral closure.
Pick a closed point $x \in X_{p=0}$ and let 
$\fram$ be the corresponding maximal ideal of $\cO_{X,x}$. Then
\[
\Image \big(H^d_{\fram}(\cO_{X_\infty}) \to H^d_{\fram}(\pi_*\cO_{X_\infty}^+)\big) = \Image \big(H^d_{\fram}(\cO_{X_\infty}) \to H^d_{\fram}(\RH(j_*\bZ_p))\big),  
\]
up to $p$-almost mathematics, where $d=\dim(X)$ 
and $j \colon U \to X_\infty[1/p]$ is an inclusion of any regular dense open affine subscheme.
\end{theoremC*}
Before sketching the proof, let us explain the relevance of this theorem to describing $\omega^{\RH}_{X_\infty/V_\infty}$ in terms of alterations. First, the map in the theorem is induced by the composition 
$$\cO_{X_{\infty}}\to \cO_{X_\infty, \rm perfd}= {\rm RH}(\bZ_p) \to {\rm RH}(j_*\bZ_p),$$ where the second map is the standard map.  One can then check that if we replace $j_*\bZ_p$ by ${\rm IC}_{X_\infty[1/p]}[-\dim(X_\infty[1/p])]$ in the statement of the theorem, the right hand side does not change\footnote{This follows from the perverse left $t$-exactness of Bhatt-Lurie's Riemann-Hilbert functor and the fact that ${\rm IC}_{X_\infty[1/p]}[-\dim(X_\infty[1/p])] = j_{!*}\bZ_p \to j_*\bZ_p$ is injective in the category of perverse sheaves (up to a shift) by definition.}. Using duality, one then learns from the theorem that $\omega^{\RH}_{X_\infty/V_\infty}$ almost equals 
\[ \bigcap_{f : Y \to X_\infty}  \Image \Big( f_* \omega_{Y/V_\infty} \to \omega_{X_\infty/V_\infty} \Big),\]
where the intersection runs over all finitely presented finite covers of $X_\infty$ dominated by $X^+$; this provides the desired relationship of $\omega^{\RH}_{X_\infty/V_\infty}$ to finite covers, and the passage to alterations can be accomplished using a cohomology killing result from \cite{BhattAbsoluteIntegralClosure}.

\begin{proof}[Sketch of the proof]
It is enough to construct a map 
\begin{equation}  \label{eq:intro-proof-Bhatt}
 \RH(j_*\bZ_p) \to \pi_*\cO^+_{X_\infty}   
\end{equation}
and show that it is almost injective on the $d$-th local cohomology.

Since  $\cO^+_{X_\infty}$ is perfectoid (up to $p$-completion), $\pi_*\cO^+_{X_\infty}= \RH(\pi_*\bZ_p)$. Given that the fraction field of $\cO^+_{X_\infty}$ is algebraically closed, one can check that $\bZ_p = j'_*\bZ_p$ for an open inclusion $j' : U' \to X_\infty^+[1/p]$ based changed from $j : U \to X_\infty[1/p]$ under $X_
\infty^+ \to X_\infty$ (\cite[Theorem 3.11]{BhattAbsoluteIntegralClosure}, cf.\ \autoref{thm:AIC}). Thus $\pi_*\cO^+_{X_\infty} = \RH(\pi_*j'_*\bZ_p)$, and so we can construct \autoref{eq:intro-proof-Bhatt} by applying $\RH$ to the map
\[
j_*\bZ_p \to \pi_*j'_*\bZ_p.
\]
This map is injective (up to a shift) in the category of perverse sheaves.\!\footnote{We prove this using \cite[Theorem 3.11]{BhattAbsoluteIntegralClosure}, see  \autoref{thm:AIC}. However, this result should not come as a surprise. Intuitively, if we replace  $\pi : X_\infty^+ \to X_\infty$ by a finite cover $\pi : Y_\infty \to X_\infty$ and make $U$ smaller, we have that $U' \to U$ is \'etale, in which case $j_*\bZ_p \to \pi_*j'_*\bZ_p$ just splits.} Now, a key property of the $p$-adic Riemann-Hilbert functor, called perverse left $t$-exactness, turns injections of perverse sheaves into almost injections of local cohomology modules under applying $\myR\Gamma_{\fram}\RH$:
\[
H^d_{\fram}(\RH(j_*\bZ_p[d])) \hookrightarrow H^d_{\fram}(\RH(\pi_*j'_*\bZ_p[d])) = H^d_{\fram}(\pi_*\cO^+_{X_\infty}),
\]
which concludes this sketch of the proof.
\end{proof}


The above discussion gives a satisfactory finitistic approach to the test module \autoref{eq:def-omegaRH-intro} for projective schemes over $V_\infty$.  It remains to descend this solution down to $V$.  The main difficulty here is that $\bZ_p \to \bZ_p[p^{1/p^\infty}]$ is only integral and not finite; as such, there is no naturally defined trace map. Nonetheless, our key idea is to construct a map $\kev : V_{\infty} \to V$  to play the role of a trace map in the descent from $V_\infty$ down to $V = \bZ_p$. Noting that $\bZ_p[p^{1/p^\infty}]$ is a free $\bZ_p$-module with basis $p^{\alpha}$ for $\alpha \in \bZ[1/p]$ and $0 \leq \alpha < 1$, one may simply take $\kev$ to be the (completion of) the unique map determined by setting $$\kev(p^\alpha) = \begin{cases}
    1 \quad \text{ if } \alpha = 1-1/p^e \mbox{ for some } e\in \bZ_{\geq 0} \\ 0 \quad \mbox{otherwise}
\end{cases}$$
and observe that the restriction $\kev|_{V_e}$ is a trace map for each of the finite extensions $\bZ_p \to V_e := \bZ_p[p^{1/p^e}]$, \textit{i.e.} a generator for $\Hom_{V}(V_e,V)$ as a $V_e$-module. Similar constructions are possible for arbitrary complete DVRs in mixed characteristic, see \autoref{sec:SingOverDVR}.

Thus, we can prove the following -- which also demonstrates independence of the choice of the auxiliary map $\kev$ above.
\begin{theoremD*}[see the Proof of \autoref{thm.SingleAlterationWithPToEpsilon}]
With notation as above, 
$$
\tau^a_{\rm alt}(\omega_X) = \kev((p^{1/p^\infty}) \cdot \omega^{\RH}_{X_\infty/V_
\infty}).$$
\end{theoremD*}
\noindent
Note  that, while the construction of $\omega^{\RH}_{X_\infty/V_
\infty}$ on $X_\infty$ relies heavily on the use of almost mathematics, multiplying by $(p^{1/p^\infty})$ yields an honest sheaf.
In particular, $\tau^a_{\rm alt}(\omega_X)$ is coherent as it is a quasi-coherent subsheaf of the coherent sheaf $\omega_{X}$. Thus, the shadow of almost mathematics in the non-Noetherian world of almost coherent sheaves on $X_{\infty}$ persists as a small $p$-perturbation in the definition of $\tau^a_{\rm alt}(\omega_X)$ on the Noetherian scheme $X$.

\subsection*{Acknowledgements}  Various subsets of the authors worked on this while visiting the American Institute of Mathematics at an AIM square in 2023, while visiting Luminy in 2023, and while visiting the various authors home institutions, the IAS, Michigan, Princeton, Utah, EPFL in 2022, 2023, and 2025.  The authors thank  Jacob Lurie and Shunsuke Takagi for valuable conversations.  They also thank Rankeya Datta and Walter Gubler for comments on previous drafts.  

Bhatt was supported by NSF Grant DMS \#1801689 and \#1840234, NSF FRG Grant \#1952399, a Packard Fellowship, and the Simons Foundation.  
Ma was supported by NSF Grant DMS \#2302430, NSF FRG Grant DMS \#1952366, a fellowship from the Sloan Foundation, and by a grant from the Institute for Advanced Study School of Mathematics. 
Patakalvi was partially supported by the following grants: grant \#200020B/192035 from the Swiss National Science Foundation,  ERC Starting grant \#804334. 
Schwede was supported by NSF Grant \#2101800, NSF FRG Grant \#1952522 and a Fellowship from the Simons Foundation. 
Tucker was supported by NSF Grant DMS \#2200716. 
Waldron was supported by NSF Grant DMS \#2401279, the Simons Foundation Gift ID \#850684 and also gratefully acknowledges support from the Institute for Advanced Study during his Spring 2024 Membership funded by the Infosys Membership Fund. 
Witaszek was supported by NSF Grant No.\ DMS-\#2101897. 
Datta was supported by a grant from the Simons Foundation MP-TMS-00002400.

\section{Preliminaries}
Let $R$ be a domain. We denote by $R^+$  \emph{the absolute integral closure} of $R$ (that is, the integral closure of $R$ in an algebraic closure of $K(R)$). This object is unique up to isomorphism. Specifically, 
 $R^+ = \bigcup_{R \subseteq S}S$, where the union is taken over all finite extensions $R \subseteq S$ contained in the fixed algebraic closure $\overline{K(R)}$ of $K(R)$. 




\subsection{Grothendieck and local duality}
In this subsection, we briefly review Grothendieck and local duality. Recall that any complete Noetherian local ring $(R, \fram)$ has a dualizing complex $\omega_R^{\mydot}$, since such an $R$ is a quotient of a regular ring (\cite[\href{https://stacks.math.columbia.edu/tag/032A}{Tag 032A}]{stacks-project},
\cite[\href{https://stacks.math.columbia.edu/tag/0A7I}{Tag 0A7I}]{stacks-project},
\cite[\href{https://stacks.math.columbia.edu/tag/0A7J}{Tag 0A7J}]{stacks-project}). 
We always choose $\omega_R^{\mydot}$ to be normalized in the sense of \cite{HartshorneResidues}, that is $\myH^{-i} \omega_R^{\mydot} = 0$ for $i > \dim R$ and $\myH^{-\dim R} \omega_R^{\mydot} \neq 0$.  If $\pi : X \to \Spec(R)$ is a finite type morphism, then we define 
\begin{enumerate}
    \item The normalized dualizing complex $\omega_X^\mydot$ to be $\pi^! \omega_R^\mydot$ and the dualizing sheaf $\omega_X$ to be $\myH^{-\dim X}(\omega_X^\mydot)$.
    \item The relative dualizing complex $\omega_{X/R}^\mydot$ to be $\pi^!R$ and the relative dualizing sheaf $\omega_{X/R}$ to be $\myH^{-\dim X+ \dim R}(\omega_{X/R}^\mydot)$.
\end{enumerate}
Note that, if $R$ is Gorenstein, then the relative dualizing complex is a dualizing complex (but it is not normalized unless $R$ is a field) and we have $\omega_X\cong \omega_{X/R}$. In \autoref{sec:SingOverDVR}, we will be frequently working in the scenario that $R=V$ is a complete DVR and $X$ is finite type over $R$, see \autoref{notation.ForSchemesOverADVR}. In \autoref{sec:preliminaries_perverse_sheaves} and \autoref{sec:RH_subsheaves}, we will also encounter the dualizing complexes for certain $p$-adic formal schemes, and we refer to \autoref{notation:CoherentDualizingSheavesFormalScheme} for more details in that setting. 


Back in the case of a Noetherian complete local ring $(R,\m)$, fix $E = E_R(R/\fram)$ to be an injective hull of the residue field.  This provides an exact Matlis duality functor $(-)^{\vee} := \Hom_R(-, E)$ which induces an anti-equivalence of categories of Noetherian $R$-modules with Artinian $R$-modules \cite[\href{https://stacks.math.columbia.edu/tag/08Z9}{Tag 08Z9}]{stacks-project}; by exactness, Matlis duality extends to the derived category as well, and we continue to denote it by $(-)^\vee$.  In particular, since $E$ is injective, we may harmlessly identify $\Hom_R(-, E)$ and $\myR \Hom_R(-, E)$. Since we work with normalized dualizing complexes, we have an isomorphism $\myR\Gamma_{\mathfrak{m}}(\omega^{\mydot}_R) \simeq E$  \cite[\href{https://stacks.math.columbia.edu/tag/0A81}{Tag 0A81}]{stacks-project}. This isomorphism together with the complete-torsion equivalence \cite[\href{https://stacks.math.columbia.edu/tag/0A6X}{Tag 0A6X}]{stacks-project} shows the following compatibility of Grothendieck and Matlis duality:  for any $K \in D_{\coherent}^b(R)$, the following natural maps give isomorphisms
\[ \myR\Hom_R(K,\omega_R^{\mydot}) \simeq \myR\Hom_R\big(\myR\Gamma_{\mathfrak{m}}(K), \myR\Gamma_{\mathfrak{m}}(\omega^{\mydot}_R)\big) \simeq \Hom_R\big(\myR\Gamma_{\mathfrak{m}}(K),E\big) = \myR\Gamma_{\mathfrak{m}}(K)^\vee. \]
As $R$ is complete and $\Hom_R(-,E)$ induces an anti-equivalence of Noetherian and Artinian $R$-modules, this also yields
\[
    \big(\myR \Hom_R(K, \omega_R^{\mydot})\big)^{\vee} \simeq \myR \Gamma_{\fram}(K)
\]
for $K \in D^b_{\coherent}(R)$. For more details see for instance \cite{HartshorneLocalCohomology, HartshorneResidues, BrunsHerzog} and \cite[\href{https://stacks.math.columbia.edu/tag/0A81}{Tag 0A81}]{stacks-project}. 

If $X$ is a scheme and $x\in X$ is a (not necessarily closed) point, for $K\in D^b_{\qcoh}(X)$ we will use $\myR\Gamma_x(K_x)$ to denote the local cohomology functor; if $R=\cO_{X,x}$, $\m=\m_x$, and $M=K_x$, then this is the same as $\myR\Gamma_\m(M)$.

\begin{remark} \label{remark:MD-non-noetherian}
We record the following form of duality over a non-Noetherian base in a relative setting. This follows from the Noetherian case and is not used directly in our paper (but it underlies some arguments). Let $V_0$ be a DVR with uniformizer $\varpi$ and let $R_0$ be a finitely generated $\varpi$-torsion free $V_0$-algebra. Let $\{V_j\}$ be    a directed system of DVRs flat over $V_0$ and let $V:=\widehat{\varinjlim}_j V_j$ where the completion is $\varpi$-adic (note $V$ is typically non-Noetherian as we will be taking roots of $\varpi$). Let $R_j=R_0\otimes_{V_0}V_j$, $R=R_0\otimes_{V_0}V$, and let $\m$ be a maximal ideal of $R_0$. We consider the relative dualizing complex $\omega^\mydot_{R/V} := \omega^{\mydot}_{R_0/V_0}\otimes_{V_0}V$, for which we have that 
\[
    E := E_{R/V} : = \myR\Gamma_\m(\omega^\mydot_{R/V}) = \myR\Gamma_\m(\omega^\mydot_{R_0/V_0}) \otimes_{V_0} V = H^1(\myR\Gamma_\m(\omega^\mydot_{R/V}))[-1] = (E_{R_0} \otimes_{V_0} V)[-1].
\]
Note that $E[1]$ is in general not an injective module over $R$. On the other hand, for $K \in {D^b_{\qcoh}(R)}$, we have 
\begin{align*}
\RHom_R(\myR\Gamma_\m (K), E) & = \RHom_R(K\otimes_{R}^{{{\bf L}}} \myR\Gamma_\m(R), E) \\
& = \RHom_{R}(K, \RHom_R(\myR\Gamma_\m(R), E)) \\
& = \RHom_{R}(K, (\myR\Gamma_\m\omega^\mydot_{R/V})^{\wedge\m}) \\
& = \RHom_R(K, (\omega^\mydot_{R/V})^{\wedge\m})\\
& = \RHom_R(K, \omega^\mydot_{R/V})^{\wedge\m}
\end{align*}
where $(-)^{\wedge\m}$ denotes derived $\m$-completion. Moreover, if $K=K_j\otimes_{R_j}R$ where $K_j\in D^b_{\coherent}(R_j)$, then we also have
\begin{align*}
\RHom_R(\RHom_R(K, \omega^\mydot_{R/V}), E) & = \RHom_R(\RHom_R(K, \omega^\mydot_{R/V}), \myR\Gamma_\m\omega^\mydot_{R/V}) \\
& = \myR\Gamma_\m\RHom_R(\RHom_R(K, \omega^\mydot_{R/V}), \omega^\mydot_{R/V}) \\
& = \myR\Gamma_\m(K)
\end{align*}
where the second equality follows since $\RHom_R(K, \omega^\mydot_{R/V})\in D^b_{\coherent}(R)$, and the third equality follows from local duality over $R_j$. 
\end{remark}

\subsection{Perfectoid rings and almost mathematics} 

We will freely and frequently use the language of perfectoid rings as in \cite[Section 3]{BhattMorrowScholzeIHES}. Fix a prime number $p$. A ring $S$ is {\it perfectoid} if it is $\varpi$-adically complete for some element $\varpi\in S$ such that $\varpi^p$ divides $p$, the Frobenius on $S/pS$ is surjective, and the kernel of Fontaine's map $\theta$: $W(S^\flat)\to S$ is principal. One can always choose $\varpi$ such that it admits a compatible system of $p$-power roots $\{\varpi^{1/p^e}\}_{e=1}^{\infty}$ in $S$ (\cite[Lemma 3.9]{BhattMorrowScholzeIHES}) and we will always assume this. In our context, it is often the case that either $p=0$ in $S$ or $p$ is a nonzerodivisor in $S$. If $S$ has characteristic $p$, then a perfectoid ring is the same as a perfect ring, see \cite[Example 3.15]{BhattMorrowScholzeIHES}. On the other hand, if $S$ is $p$-torsion free, then $S$ is perfectoid if and only if it is $p$-adically complete and the Frobenius map $S/\varpi \to S/\varpi^p$ is bijective, see \cite[Lemma 3.10]{BhattMorrowScholzeIHES}. If $\varpi=p^{1/p}$ (e.g., $S$ is an algebra over $R^+$ for a Noetherian complete local domain $R$), then this definition is compatible with the definition given in \cite[2.2]{AndreWeaklyFunctorialBigCM} or \cite[Definition 2.2]{MaSchwedeSingularitiesMixedCharBCM}. Perfectoid rings are always reduced.

We record two main examples of perfectoid rings that will be relevant in this paper, see \cite[Example 3.8]{BhattIyengarMaRegularRingsPerfectoid} for more details.
\begin{example}
\label{example:PerfectoidRings}
\begin{enumerate}[(1)]
    \item $(\mathbf{Z}_p[p^{1/p^\infty}])^{\wedge p}$ is a perfectoid valuation ring (see \autoref{prop:kev_is_generator} below for a more general construction).
    \item Let $(R,\m)$ be a Noetherian complete local domain of residue characteristic $p>0$. Both the $p$-adic completion $(R^+)^{\wedge p}$ and the $\m$-adic completion $(R^+)^{\wedge \m}$ of the absolute integral closure of $R$ are perfectoid.
\end{enumerate}
\end{example}

We will also freely use some basic language of almost mathematics following \cite{GabberRameroAlmostringtheory}. 
Recall that an almost mathematics setup is a pair $(V,\m_V)$, where $\m_V$ is an ideal such that $\m_V^2=\m_V$, and for our purpose we always have $\m_V$ is flat. For $M$  an almost $V$-module, we have the $!$-realization $M_!:=M\otimes_V\m_V$ and he $*$-realization $M_*:= \Hom(\m_V, M)$. We will frequently fix a perfectoid ring $S$ (e.g., a perfectoid valuation ring) and work with the almost mathematics setup $(S,(\varpi^{1/p^\infty}))$ in later sections.

\subsection{$+$-stable sections ($\myB^0$)}  \label{ss:+-stable-sections}
One of the key problems in positive characteristic birational geometry is the fact that Kodaira vanishing fails in general. To address this problem, one introduces the space of Frobenius stable sections $S^0(X,M) \subseteq H^0(X, \cO_X(M))$, where $M$ is a Weil divisor, which behaves as if Kodaira vanishing was valid for them. Motivated by the work in \cite{BlickleSchwedeTuckerTestAlterations}, we introduced in \cite{BMPSTWW1} (\cf \cite{TakamatsuYoshikawaMMP}) the space of $+$-stable sections in mixed characteristic.

\begin{definition}[{\cite[Definition 4.2]{BMPSTWW1}, \cite{TakamatsuYoshikawaMMP}}] 

Let
\begin{itemize}
\item $X \xrightarrow{\pi} \Spec R$ be a normal and integral scheme proper over a Noetherian complete local domain $(R, \fram)$ such that $R/\m$ has characteristic $p>0$,
\item $\Delta \geq 0$ be a $\mathbf{Q}$-divisor on $X$,
\item $M$ be a $\mathbf{Z}$-divisor on $X$ with $\sM = \sO_X(M)$. 
\end{itemize}
Fix an algebraic closure $\overline{K(X)}$ of $K(X)$. 
We define 
\begin{equation*}
\myB^0(X, \Delta; \sM) := \bigcap_{\substack{f \colon Y \to X\\ \textnormal{finite}}}\Image \left( H^0(Y, \sO_Y( K_Y + \lceil{f^* (M - K_X - \Delta)}\rceil)) \xrightarrow{\rm Tr} H^0(X, \sM) \right) 
\end{equation*}
where the intersection is taken over all finite maps $f \colon Y \to X$ with $Y$ integral and $K(Y)$ contained in the fixed algebraic closure $\overline{K(X)}$ of $K(X)$. If $\Delta=0$, then we use the notation: $\myB^0(X; \sM):=\myB^0(X, \Delta; \sM)$.
\end{definition}

If $(-)^{\vee}$ denotes Matlis duality on $R$, then we also have that  
\begin{equation}
    \label{eq.B0DefinitionViaLocalCohom}
    \myB^0(X, \Delta; \sM) = \Image\Big( H^d \myR\Gamma_{\pi^{-1}\fram}(X, \sM) \to H^d \myR\Gamma_{\pi^{-1}\fram}(X^+, \sM \otimes \cO_{X^+}(\rho^*\Delta)) \Big)^{\vee}
\end{equation}
where $\cO_{X^+}(\Delta)$ is the colimit of the $\cO_{Y}(f^*\Delta)$ running over $f : Y \to X$ with $f^* \Delta$ integral.

When $M-(K_X+\Delta)$ is $\bQ$-Cartier, then we proved in \cite[Corollary 4.13]{BMPSTWW1} that $\myB^0(X, \Delta; \sM)$ is equal to 
\begin{equation*}
\myB^0_{\rm alt}(X, \Delta; \sM) := \bigcap_{\substack{f \colon Y \to X\\\textnormal{alteration}}}\Image \left( H^0(Y, \sO_Y( K_Y + \lceil f^* (M - K_X - \Delta)\rceil)) \xrightarrow{\rm Tr} H^0(X, \sM) \right)
\end{equation*}
where the intersection runs over all alterations $f:Y \to X$ from a normal integral schemes such that $K(Y)$ is contained in the fixed algebraic closure $\overline{K(X)}$ of $K(X)$.

Finally, in the case when $X$ is a disjoint union of normal schemes of the same dimension, we define $\myB^0(X, \Delta; \sM)$ to be the direct sum of $\myB^0$ for each connected component of $X$.

\subsection{Different notions of test ideals}
\label{ss:different-notions-test-ideals}
A central goal of this paper is to unify various existing definitions of test ideals in mixed characteristic. In what follows we recall some of these definitions. Unless otherwise stated,  we work in the following setting.

\begin{setting} \label{setting:local-complete-setting}
Assume that $(R,\fram)$ is a complete Noetherian local domain such that $R/\fram$ is of characteristic $p>0$. We fix an algebraic closure $\overline{K(R)}$ of the fraction field $K(R)$. If $R$ is additionally normal, we let $\Delta$ and $D$ be $\bQ$-divisors on $\Spec(R)$ so that $K_R+\Delta$ and $D$ are $\bQ$-Cartier. 
\end{setting}

\noindent \textbf{BCM-test ideals.} We first recall that an $R$-algebra $B$ is said to be a \emph{(balanced) big Cohen-Macaulay $R^+$-algebra} if $B$ is an $R^+$-algebra, $B\neq \m B$, and every system of parameters $x_1,\ldots, x_d \in R$ is a regular sequence on $B$, \textit{i.e.} $x_i$ is a non-zero-divisor in $B/(x_1,\ldots, x_{i-1})$ for every $i$ (\textit{cf.} \cite[Section 2.2]{BMPSTWW1}). We now recall the definition of BCM-test ideals of pairs as introduced by the second and the fourth author in \cite{MaSchwedeSingularitiesMixedCharBCM}.  Also compare with \cite{HochsterSolidClosure,HaraWatanabeFRegFPure,HaraYoshidaGeneralizationOfTightClosure,BrennerRescueSolidClosure,TakagiInterpretationOfMultiplierIdeals,PerezRGTestIdeals21}.

\begin{definition}\label{def:tauR+} Working in \autoref{setting:local-complete-setting}, 
suppose first that $D$ is effective. Fix a big Cohen-Macaulay $R^+$-algebra $B$ and take $m \in \bN$ and $f \in R$ such that $D = \frac{1}{m}{\rm div}(f)$. Following \cite[Definition 6.2]{MaSchwedeSingularitiesMixedCharBCM}, define
\[
\tau_B(\omega_R, D) := \big(\Image(H^d_{\fram}(R) \xrightarrow{f^{1/m}} H^d_{\fram}(B))\big)^{\vee} \subseteq \omega_R,
\]
where the map is induced by $R \hookrightarrow B \xrightarrow{f^{1/m}} B$, and $(-)^{\vee}$ denotes Matlis duality. When $D$ is not-effective, we pick a Cartier divisor $E \geq 0$ such that $D + E \geq 0$, and define $\tau_B(\omega_R, D) := \tau_B(\omega_R, D+E) \otimes_R R(-E)$.
Taking $D = K_R + \Delta$, we can then 
 define:
\[
\tau_B(R, \Delta) := \tau_B(\omega_R, K_R+\Delta).
\]
When $\Delta \geq 0$, we have that $H^d_{\fram}(R) \xrightarrow{f^{1/m}} H^d_{\fram}(B)$ naturally factors through $H^d_{\fram}(\omega_R)$, and thus $\tau_B(R, \Delta) \subseteq R$ (see \cite[Lemma 6.8 an Definition 6.9]{MaSchwedeSingularitiesMixedCharBCM}).

Finally, with notation as above, we define:
\begin{align*}
\tau_\mathcal{B}(\omega_R,D) &:= \bigcap_{B} \tau_B(\omega_R,D), \text{ and } \\
\tau_\mathcal{B}(R,\Delta) &:= \bigcap_{B} \tau_B(R,\Delta), 
\end{align*}
where the intersection is taken over all perfectoid big Cohen-Macaulay $R^+$-algebras $B$, whose existence is guaranteed by Andr\'e's work \cite{AndreWeaklyFunctorialBigCM}. It is proved in \cite[Proposition 6.4 and Proposition 6.10]{MaSchwedeSingularitiesMixedCharBCM} that $\tau_\mathcal{B}(\omega_R,D)$ (resp. $\tau_\mathcal{B}(R,\Delta)$) is equal to $\tau_B(\omega_R,D)$ (resp. $\tau_{B}(R,\Delta)$) for all sufficiently large perfectoid big Cohen-Macaulay $R^+$-algebras $B$. 
\end{definition}

\noindent \textbf{$+$-test ideals.} Motivated by \cite{BlickleSchwedeTuckerTestAlterations}, one can define test ideals using finite maps.
    
\begin{definition} \label{def:+test} Again working in \autoref{setting:local-complete-setting}, suppose first that $D$ is effective and write $mD = {\rm div}(f)$ for some $m \in \bN$ and $f \in R$. Define
\[
\tau_+(\omega_R,D) := \bigcap_{\substack{R \, \subseteq\, S\\ \textnormal{finite}}}\Image \left( \omega_{S}(\lceil{-f^*D}\rceil) \xrightarrow{\rm Tr} \omega_R\right),
\]
where the intersection is taken over all finite extensions $R \subseteq S$ contained in the fixed algebraic closure $\overline{K(R)}$ of $K(R)$. This agrees with $\myB^0(\Spec R, \Delta; \omega_R)$, and the definition is once more extended to non-effective $D$ by setting $\tau_+(\omega_R,D) = \tau_+(\omega_R, D + E) \otimes_R R(-E)$ for a Cartier divisor $E \geq 0$ with $D + E \geq 0$. Taking $D = K_R+\Delta$, we can then define
\[
\tau_+(R, \Delta) := \tau_+(\omega_R, K_R+\Delta);
\]
\noindent when $\Delta \geq 0$, we have that $\tau_+(R,\Delta) \subseteq R$.
\end{definition}

\noindent If $D \geq 0$, then $\tau_+(\omega_R,D)$ can also be calculated using local cohomology
\[
\tau_+(\omega_R,D) = \big(\Image(H^d_{\fram}(R) \xrightarrow{f^{1/m}} H^d_{\fram}(R^+))\big)^{\vee};
\]
see \cite[Lemma 4.8(a)]{BMPSTWW1} for details. Moreover, the $p$-adic completion $B=(R^+)^{\wedge p}$ of $R^+$ is a perfectoid and (balanced) big Cohen-Macaulay $R^+$-algebra by \cite{BhattAbsoluteIntegralClosure}, and for arbitrary $\bQ$-divisors $D$ and $\Delta$ we have
\[
\tau_+(\omega_R,D) = \tau_{B}(\omega_R,D) \quad \text{ and } \quad \tau_+(R,\Delta) = \tau_{B}(R,\Delta)
\]
with $B=(R^+)^{\wedge p}$ by \cite[Theorem 2.5]{BMPSTWW1}.

\begin{remark}
Finally, $\tau_+(\omega_R,D)$ is also equal to:
\[
\tau_{\rm alt}(\omega_R, D) := \begin{cases} \bigcap_{\substack{f : Y \to X\\ \textnormal{alteration}}}\Image \left( H^0(Y, \cO_Y(K_Y + \lceil{f^*(-D)}\rceil)) \xrightarrow{\rm Tr} \omega_R\right) \text{ if } D \geq 0;\\[1em]
\tau_{\rm alt}(\omega_R, D+E)(-E) \text{ for a sufficiently effective } E \geq 0,
\end{cases}
\]
where the intersection runs over all alterations $f:Y \to X := \Spec{(R)}$ from  normal integral schemes such that $K(Y)$ is contained in the fixed algebraic closure $\overline{{K}(R)}$ of ${K}(R)$. Equivalently, $\tau_+(R,\Delta) = \tau_{\rm alt}(R, \Delta) := \tau_{\rm alt}(\omega_R, K_R+\Delta)$. The latter statement follows from \cite[Corollary 4.13]{BMPSTWW1}.  


Note that when working with non-local rings, one usually cannot detect singularities using only finite maps (since this is not possible for characteristic zero rings), and so our ultimate definition of a test ideal will be based on $\tau_{\rm alt}(R, \Delta)$.
\end{remark}

\noindent \textbf{Hacon-Lamarche-Schwede test ideals.} The above definitions 
 were only introduced in the setting of local rings. The first attempt to give a meaningful definition of a global version of mixed characteristic test ideals in the general setting has been undertaken in \cite{HaconLamarcheSchwede} by Hacon, Lamarche, and the fourth author. We recall their construction in this subsection, starting with the case when $X$ is projective, after which we consider the quasi-projective case.

\begin{setting} \label{setting:proper-HLS}
Let $(R,\fram)$ be a complete Noetherian local domain such that $R/\fram$ is of characteristic $p>0$. Let $X$ be a projective normal integral scheme over $\Spec(R)$. Let $\sL$ be a very ample line bundle on $X$,  let $\Delta$ be a $\mathbf{Q}$-divisor on $X$ such that $K_X+\Delta$ is $\mathbf{Q}$-Cartier, and let $D$ be a $\mathbf{Q}$-Cartier $\mathbf{Q}$-divisor on $X$. 
\end{setting}

\noindent Briefly speaking, the idea of \cite{HaconLamarcheSchwede} is that one expects any reasonably defined test ideal $\tau(X,\Delta)$ to satisfy: $H^0({X}, \tau({X},\Delta) \otimes \cO_{X}(nL)) = \myB^0(X,\Delta; \cO_X(nL))$ for an ample line bundle $\sL = \cO_X(L)$ and $n\gg 0$. We leverage this equality to define $\tau_{\myB^0}({X},\Delta)$.     

\begin{definition}[{\cite[Definition 4.3, 4.14, 4.15, Section 6.1]{HaconLamarcheSchwede}}]
\label{def.HaconLamarcheSchwede} \label{def:HLS_test_ideal} 
We work in the situation of \autoref{setting:proper-HLS}. First we assume that $\Gamma \geq 0$ is a $\bQ$-Cartier $\bQ$-divisor. In this case, let $\mathcal{N}_i\subseteq \omega_{X}\otimes\sL^i$ be the subsheaf of $\omega_{X}\otimes\sL^i$ generated by $\myB^0( X, \Gamma; \omega_{X}\otimes\sL^i)\subseteq H^0(X, \omega_{X}\otimes\sL^i)$. Then we have that $\mathcal{N}_i\otimes\sL^{-i}\subseteq \omega_{X}$ is a subsheaf and is stable when $i\gg0$. One then defines
$$\tau_{\myB^0}(\omega_{X}, \Gamma) := \mathcal{N}_i\otimes\sL^{-i} \text{ for $i\gg0$. } $$
It was shown in \cite{HaconLamarcheSchwede} that $\tau_{\myB^0}(\omega_{X}, \Gamma)$ does not depend on the choice of $\sL$. When $\Gamma$ is a $\mathbf{Q}$-Cartier divisor $\lambda\Div(f)$, we write $\tau_{\myB^0}(\omega_{X}, f^{\lambda})$ for $\tau_{\myB^0}(\omega_{X}, \lambda\Div(f))$. When $\Gamma$ is not necessarily effective, we pick a Cartier divisor $E \geq 0$ such that $\Gamma + E \geq 0$ and define $\tau_{\myB^0}(\omega_{X},\Gamma)$ as $\tau_{\myB^0}(\omega_{X},\Gamma+E)(-E)$. Finally, we define $\tau_{\myB^0}(\sO_X,\Delta) :=\tau_{\myB^0}(\omega_X, K_X+\Delta)$.

Now suppose additionally that $\frc \subseteq \cO_X$ is an ideal sheaf and $t \geq 0$ is a rational number.  Let $f : Z \to X$ be a projective birational map from a normal scheme dominating the blowup of $\frc$ (for example, the normalized blowup) and write $\frc \cO_Z = \cO_Z(-M)$.  Suppose $\sL$ on $X$ is ample.  For each $i \geq 0$, define $\cN_i$ to be the subsheaf of $\omega_X \otimes \sL^i$ globally generated by $\myB^0(Z, tM + f^* \Gamma; \omega_Z \otimes f^* \sL^i) \subseteq H^0(X, \omega_X \otimes \sL^i)$.  

As before, we define 
\[
    \tau_{\myB^0}(\omega_X, \Gamma, \frc^t) := \cN_i \otimes \sL^{-i}
\]
for $i \gg 0$.  As above, it is independent of the choice of $\sL$. 
\end{definition}

In the case when $X$ is only a quasi-projective normal integral scheme over $\Spec(R)$, one can define $\tau_{\myB^0}(\omega_X,\Gamma)$ and $\tau_{\myB^0}(X,\Delta)$ by considering a compactification of $X$ and restricting the above definition to $X$. It was shown in \cite[Section 5]{HaconLamarcheSchwede} that this is independent of the compactification and we refer to \cite[Definition 5.10]{HaconLamarcheSchwede} for more details.




{\color{black}
\subsection{A lemma on alterations}
In this subsection we record a result domination of alterations under base change that will be used in \autoref{sec:SingOverDVR} to remove the completeness assumptions of the base. We begin with a general lemma that is well-known to experts.
\begin{lemma}
\label{lemma: small Galois lemma}
Let $R$ be a domain with fraction field $F$ and let $R\to S$ be an essentially \'etale extension with $K$ the fraction field of $S$. Let $L$ be a finite Galois extension of $F$ that contains $K$ and $R'$ be the integral closure of $R$ in $L$. Then 
$$R'\otimes_RS \cong \prod_{G/H} S_i$$
where $i$ runs over all cosets of $H\leq G$ where $G$ and $H$ are the Galois groups of $L/F$ and $L/K$ respectively. Furthermore, we have $S_i\cong S'$ for all $i$, where $S'$ is the integral closure of $S$ in $L$.
\end{lemma}
\begin{proof}
We first notice that, since $R'$ is normal and $R\to S$ is essentially \'etale, $R'\otimes_RS$ is normal. Thus $R'\otimes_RS$ is the integral closure of $S$ in $L\otimes_FK$. Since $L$ is Galois over $F$, we have $L\otimes_FL\cong \prod_{G}L$. Since $K=L^H$, we have 
$$L\otimes_FK = (\prod_G L)^H\cong \prod_{G/H} L.$$
The image of $K$ in $L\otimes_FK$ can be identified with $\prod_{G/H} K_i$ where each $K_i$ corresponds to an embedding of $K\to L$ induced by $g_i\in G$ where each $g_i$ lives in different coset of $H$ in $G$. Thus the integral closure of $S$ in $\prod_{G/H} L$ is isomorphic to $\prod_{G/H}S_i$, where $S_i$ is the integral closure of $S$ in $L$ under the map
$$S \to K \to L \xrightarrow{g_i} L.$$
Thus $S_i\cong S'$ for each $i$.
\end{proof}

\begin{lemma}
\label{damn_lemma}
    Suppose $W \to V$ is an essentially \'etale map of Noetherian integral schemes and $X \to V$ is projective with $X$ normal.  Suppose that there exists some normal alteration $Y \to X_W := X \times_V W$.  Then there exists a normal alteration $Z \to X$ so that the base change 
    \[
        Z_W := Z \times_V W \to X_W
    \]
    can be factored through $Y \to X_W$.
\end{lemma}
It will be important to us though that we think about the map $Y \to W$ in various different ways (via some Galois action).  An alternate proof, via a study of Riemann-Zariski spaces over $X^+$, is sketched below in \autoref{rem.RZ-damn-lemma}.
\begin{proof}
    Consider the finite field extension $F = K(V) \subseteq K(W) = K$.  Set $L$ to be the Galois closure of this extension.  Let $V' \subseteq L$ be the normalization of $V$ in $L$ and set $W' \subseteq L$ to be the normalization of $W$ in $L$ (we see that $W'$ is a localization at some multiplicative set of $V'$).  Using \autoref{lemma: small Galois lemma}, we see that $V' \times_V W = \coprod W_i$ where each $W_i$ is isomorphic to $W'$ over $V$.  Each $W_i$ then can be viewed as a different open subset of $V'$.   Without loss of generality, enlarging the alteration if necessary, we can assume that $Y \to X_W$ factors through $X_{W'} \to X_W$.  This yields the following diagram where subscript denotes base change.
    \[
    \xymatrix{
    && Y \ar[dd] \ar[rd]& \\
    & X_{V'} \ar[ld] \ar[dd] && X_{W'} \ar[dd]\ar[ld]\ar[ll] \\
    X \ar[dd] && X_W \ar[dd] \ar[ll] & \\
    & V'\ar[ld] && W'\ar[ld] \ar[ll] \\
    V && W\ar[ll] & 
    }.
    \]
We observe that $X_{V'} = X \times_V V'$ need be neither normal or integral, even if $X$ was. Note that 
$$X\times_V V'\times_V W \cong \coprod X\times_V W_i,$$ 
It follows that each $X \times_V W_i$ embeds as a different open set of $X \times_V V'$ (although they are all isomorphic as above).  

    Via the isomorphism $X \times_V W' \cong X \times_V W_i$, we obtain alterations 
    \[
        Y_i := Y \to X \times_V W' \xrightarrow{\sim} X \times_V W_i,
    \]
    We form the following diagram:
    \[
        \xymatrix{
            Y_i \ar@{^{(}->}[r]^-{\text{open}} \ar[d] & \overline{Y_i} \ar[d] \\
            X \times_V W_i \ar@{^{(}->}[r]_-{\text{open}} & X\times_V {V'}. 
        }
    \]
    where $\overline{Y_i} \to X\times_V{V'}$ is a normal projective morphism compactifying $Y_i \to X \times_V W_i$.  Choose $Z \to X_{V'}$ a normal alteration with factorizations $Z \to \overline{Y_i} \to X\times_V{V'}$ for each $i$.  Notice that $\overline{Y_i} \times_{V'} W_i \cong Y_i$. 

    Consider the map $Z \to X_{V'} \to X$.  We form the base change with $W$, 
    \[
        Z \times_V W \to X \times_V V' \times_V W \to X \times_V W.
    \]
    By \autoref{lemma: small Galois lemma} we have that 
    $$X \times_V V' \times_V W \cong \coprod_j X \times_V W_j.$$  We can thus write $Z_W := Z \times_V W = \coprod_j Z_j$ where $Z_j$ is the components of $Z_W$ mapping into $X \times_V W_j$ via the map above.  Indeed, we can also obtain this decomposition via 
    $$Z_W= Z \times_{V'} V' \times_V W = \coprod_j Z \times_{V'} W_j$$ 
    so we see that $Z_j = Z \times_{V'} W_j$.  Now, we have the following map
    \[
        Z_W = \coprod_j Z \times_{V'} W_j \to \coprod_j \overline{Y_j} \times_{V'} W_j = \coprod_j Y_j \to \coprod_j X \times_V V' = X \times_V V' \times_V W \to X_W.
    \]
    In particular, we have a factorization $Z_j \to Y_j \to X_W$ as desired.
\end{proof}

\begin{remark}
\label{rem.RZ-damn-lemma}
We also give an explanation of \autoref{damn_lemma} via Riemann-Zariski spaces. All schemes and morphisms in this remark are qcqs. With the same notations as in \autoref{damn_lemma}, fix a geometric generic point $\overline{\eta} \to X$ and let $X^+ \to X$ be the corresponding absolute integral closure of $X$, i.e., the inverse limit over all finite covers of $X$ dominated by $\overline{\eta}$.  Let $\mathrm{RZ}(X^+)\to X^+$ be the Riemann-Zariski space of the integral scheme $X^+$, i.e., $\mathrm{RZ}(X^+) = \lim_j Z_i$ the inverse limit (in locally ringed spaces) of all proper maps $Z_j \to X^+$ which are isomorphisms over the generic point $\overline{\eta} \in X^+$.

Consider the base change $X^+ \times_V W$; this is \'etale over $X^+$. As any \'etale $X^+$-scheme is a finite disjoint union of open subschemes of $X^+$, we can write $X^+ \times_V W = \sqcup F_i$, with each $F_i$ mapping to $X^+$ via an open immersion. In particular, each $F_i$ is an absolute integrally closed integral schemes. As the projection to $X_W$ is pro-finite, it follows that $F_i \to X_W$ is an absolute integral closure of $X_W$ for each $i$.

Next, consider the base change $\mathrm{RZ}(X^+) \times_V W = \lim_j Z_j \times_V W$ (in locally ringed spaces). The base change map $\mathrm{RZ}(X^+) \times_V W \to X^+ \times_V W$ becomes $R_i := \mathrm{RZ}(F_i) \to F_i$ over the component $F_i \subset X^+ \times_V W$: indeed, $F_i \to X^+ \times_V W \to X^+$ is an open immersion and the formation of Riemann-Zariski spaces commutes with restrictions to open subsets of the base.  In particular, for any $i$, the composition $R_i \to F_i \to X_W$ must lift through any given alteration $Y \to X_W$ (everything in $X_W$-schemes). Choosing such lifts for each $i$ separately, we learn that the map $\mathrm{RZ}(X^+) \times_V W = \lim_j Z_j \times_V W \to X_W$ has the same property. As maps from a cofiltered limit to a finitely presented scheme factor through some finite stage, it follows that for $j$ large enough, we have a factorization $Z_j \times_V W \to Y \to X_W$ of $X_W$-schemes. Writing $Z_j$ as the inverse limit of alterations of $X$, another approximation argument then provides the desired alteration $Z \to X$.
\end{remark}
}
\section{A review of perverse sheaves}
\label{sec:preliminaries_perverse_sheaves}
We start by giving a brief introduction for non-experts to perverse $t$-structures for varieties over a field, both in the setting of (quasi) coherent sheaves, as well as constructible sheaves with coefficients $K=\mathbf{Z}/n$. In the rest of this section, we will discuss perverse $t$-structures: 
\begin{itemize}
    \item  for quasi-coherent sheaves on formal schemes $\mathcal X$ over a perfectoid ring ${\cO_C}$.
    \item  for constructible sheaves on the generic fibre $\mathfrak X_{C}$ with $\mathbf{Z}_p$-coefficients.
\end{itemize}


\subsection{Review: perverse coherent sheaves and perverse constructible sheaves} We give a quick exposition on perverse sheaves; for simplicity, we assume $X$ is a finite type separated scheme over a field in this subsection, and we work in the bounded derived category $D^b$, see \cite{Gabber.tStruc} for more general results (for general Noetherian schemes and regarding unbounded derived category). We will work with the middle perverse $t$-structure.
\begin{definition}
\label{def.PervQCohScheme}
For $K \in D^b_{\qcoh}(X)$, we say that
\begin{enumerate}
\item[(1)] $K \in {}^pD^{\leq 0}(X)$ if and only if $K_x \in D^{\leq - {\dim} \overline{\{x\}}}$ for all $x \in X$,
\item[(2)] $K \in {}^pD^{\geq 0}(X)$ if and only if $\myR\Gamma_x(K_x) \in D^{\geq - {\dim} \overline{\{x\}}}$ for all $x \in X$.
\end{enumerate}
This defines a $t$-structure on $D^b_{\qcoh}(X)$ so that 
\[
{\rm Perv}_{\qcoh}(X) := D^b_{\qcoh}(X)^{\heartsuit} = {}^pD^{\leq 0}(X) \cap {}^pD^{\geq 0}(X)
\]
is an Abelian category. Furthermore, the $t$-structure on $D^b_{\qcoh}(X)$ above restricts to a $t$-structure on $D^b_{\coherent}(X)$, which we call the perverse $t$-structure on $D^b_{\coherent}(X)$ and we have
\[
{\rm Perv}_{\coherent}(X) := D^b_{\coherent}(X)^{\heartsuit} = D^b_{\qcoh}(X)^{\heartsuit} \cap D^b_{\coherent}(X).
\]
Finally, for $K \in D^b_{\qcoh}(X)$, we define its perverse cohomology 
$${}^pH^i(K) := ({}^p\tau^{\leq 0})({}^p\tau^{\geq 0})(K[i]) \in {\rm Perv}_{\qcoh}(X). $$
\end{definition}

First, we point out that condition (1) above can be checked by local cohomology (see also \autoref{rem:local-cohomology-is-perverse-t-exact} and \autoref{remark:equivalent-characterisation-of-perverse-structure-coherent-case}):

\begin{lemma}
\label{lem.LocalCohomPerv}
With notation as above, we have $K\in {}^pD^{\leq 0}(X)$ if and only if for all $x\in X$, we have $\myR\Gamma_x(K_x) \in D^{\leq -\dim(\overline{\{x\}})}$. 
\end{lemma}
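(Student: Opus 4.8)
The plan is to unwind the two conditions defining the perverse $t$-structure and observe that the only genuine difference between condition (1) of \autoref{def.PervQCohScheme} and the criterion stated in \autoref{lem.LocalCohomPerv} is that the former bounds the stalk $K_x$ from above (in the derived category of $\cO_{X,x}$), whereas the latter bounds its local cohomology $\myR\Gamma_x(K_x) = \myR\Gamma_{\fram_x}(K_x)$ from above. So the whole content is the assertion: for a complex $M \in D^b_{\coherent}(A)$ over a Noetherian local ring $(A,\fram)$ of dimension $\delta$ and an integer $n$, one has $M \in D^{\leq n}(A)$ if and only if $\myR\Gamma_{\fram}(M) \in D^{\leq n + \delta}(A)$. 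Applying this with $A = \cO_{X,x}$, $M = K_x$, $\delta = \dim \cO_{X,x}$, and $n = -\dim\overline{\{x\}}$, and recalling that for a finite type scheme over a field $\dim \cO_{X,x} + \dim\overline{\{x\}} = \dim X$ is \emph{not} what we need — rather we need $n + \delta = -\dim\overline{\{x\}} + \dim\cO_{X,x}$, which is exactly the shift appearing in condition (2)'s dual — wait, let me restate cleanly: the target bound in \autoref{lem.LocalCohomPerv} is $D^{\leq -\dim\overline{\{x\}}}$, so I want $M = K_x \in D^{\leq -\dim\overline{\{x\}}}$ iff $\myR\Gamma_{\fram_x}(K_x) \in D^{\leq -\dim\overline{\{x\}}}$. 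Thus the clean statement to prove is: \emph{for $M \in D^b_{\coherent}(A)$ over a Noetherian local ring, $M \in D^{\leq m}$ if and only if $\myR\Gamma_{\fram}(M) \in D^{\leq m}$} — i.e. passing to local cohomology does not change the top nonvanishing degree (for the $\leq$ direction), for a \emph{local} ring.

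First I would prove the easy direction: if $M \in D^{\leq m}$, then since $\myR\Gamma_{\fram}$ is the right-derived functor of a left-exact functor, it has cohomological amplitude $[0,\infty)$ — more precisely, $H^i_{\fram}(N) = 0$ for $i < 0$ and any module $N$, and $H^i_{\fram}(N) = 0$ for $i > \dim A$ (Grothendieck vanishing). A spectral sequence / truncation argument then gives $\myR\Gamma_{\fram}(M) \in D^{\leq m + \dim A}$; but this is too weak — I need $D^{\leq m}$, not $D^{\leq m + \dim A}$. So the easy direction as I stated it is false, and I must be more careful: the correct statement, and the one actually needed, must involve the dimension correction on \emph{both} sides. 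Re-examining: in \autoref{def.PervQCohScheme}(1) the bound is $D^{\leq -\dim\overline{\{x\}}}$ on $K_x$, while \autoref{lem.LocalCohomPerv} puts the \emph{same} bound $D^{\leq -\dim\overline{\{x\}}}$ on $\myR\Gamma_x(K_x)$. Since $\myR\Gamma_{\fram}$ only shifts cohomology \emph{up}, the implication "$K_x \in D^{\leq -\dim\overline{\{x\}}}$ $\Rightarrow$ $\myR\Gamma_x(K_x) \in D^{\leq -\dim\overline{\{x\}}}$" is genuinely false in general unless we also know $\myR\Gamma_x(K_x) \in D^{\leq -\dim\overline{\{x\}}}$ a priori; so the lemma must be read as: condition (1) holds iff the local-cohomology bound holds, where the subtlety is that the equivalence is \emph{equivalence of conjunctions over all $x$}, and one exploits that if some stalk bound fails the generic-type contribution propagates. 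The honest route: this is precisely \cite[Lemma]{Gabber.tStruc} or the standard fact that the perverse $t$-structure can be tested either "$*$-wise" or via local cohomology; I would cite that and then spell out the local-ring reduction.

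Concretely, the steps I would carry out: (i) reduce to a statement about a single point $x$ by working stalk-locally, setting $A = \cO_{X,x}$, $\fram = \fram_x$, $M = K_x \in D^b_{\coherent}(A)$; (ii) invoke local duality over $A$ — since $X$ is finite type over a field, $A$ admits a normalized dualizing complex $\omega_A^\mydot$, and Grothendieck--Matlis duality gives $\myR\Gamma_{\fram}(M)^\vee \simeq \myR\Hom_A(M, \omega_A^\mydot)$ as in the review subsection above; (iii) translate the cohomological bound on $\myR\Gamma_{\fram}(M)$ into a bound on $\myR\Hom_A(M,\omega_A^\mydot)$ from below, which by a standard spectral-sequence argument (using that $\omega_A^\mydot$ is a dualizing complex with $H^{-i}(\omega_A^\mydot)$ supported in codimension $\geq i$ and the Auslander--Buchsbaum-type estimates) is equivalent to the stalk bound on $M$ itself; (iv) unwind back to the scheme-theoretic statement, using $\dim\overline{\{x\}}$ as the normalization shift so that both sides carry the \emph{same} bound. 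The main obstacle I anticipate is step (iii): getting the dimension bookkeeping exactly right — i.e. verifying that $\myR\Hom_A(M,\omega_A^\mydot) \in D^{\geq \dim\overline{\{x\}}}$ is equivalent to $M \in D^{\leq -\dim\overline{\{x\}}}$ requires knowing that $\omega_A^\mydot$ has the "right" amplitude relative to $\dim A$ and that no cancellation occurs, which is where one really uses that $\omega_A^\mydot$ is a genuine (pointwise) dualizing complex rather than merely a bounded complex; alternatively, one sidesteps this by directly citing \cite{Gabber.tStruc} for the equivalence of the two descriptions of $^pD^{\leq 0}$, which is the expositionally cleanest option and is what I would ultimately do here, adding only the short remark that over a finite type scheme the two formulations match degree-for-degree because of the chosen middle perversity.
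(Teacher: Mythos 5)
There is a genuine gap, on two counts. First, the lemma is stated for $K\in D^b_{\qcoh}(X)$, not $D^b_{\coherent}(X)$ (the paper's own footnote stresses that $\Supp H^i(K)$ need not be closed), and it is later applied to genuinely non-coherent objects such as pushforwards from $X^+$ and filtered colimits. Your steps (ii)--(iii) run the argument through Grothendieck--Matlis duality against a normalized dualizing complex, which requires coherent cohomology; as the paper itself remarks right after \autoref{lem:coherent-perverse-dual-baby}, $\mathbf{D}(M)$ need not even be quasi-coherent for quasi-coherent $M$, so this route is unavailable in the stated generality. (For coherent $K$ your duality argument is essentially \autoref{lem:coherent-perverse-dual-baby}, which in the paper is \emph{deduced from} \autoref{lem.LocalCohomPerv}, so one would also have to be careful about circularity.) Second, your step (i), ``reduce to a statement about a single point,'' cannot be carried out, for exactly the reason you flag mid-proposal and then never resolve: for a fixed $x$ the implication $K_x\in D^{\leq -\dim\overline{\{x\}}}\Rightarrow \myR\Gamma_x(K_x)\in D^{\leq -\dim\overline{\{x\}}}$ is false (take $X$ a smooth curve, $K=\cO_X$, $x$ closed: $K_x$ sits in degree $0=-\dim\overline{\{x\}}$ but $H^1_{\fram_x}(\cO_{X,x})\neq 0$). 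The forward direction genuinely needs the stalk bounds at all generizations of $x$ simultaneously, and falling back on ``cite Gabber'' does not supply the converse direction at all.

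For comparison, the paper's argument is elementary and avoids duality entirely. One first reformulates $K\in{}^pD^{\leq 0}(X)$ as the support condition $\dim\Supp H^i(K)\leq -i$ for all $i$. For the forward direction, this global condition localizes to $\dim\Supp\big(H^i(K)_x\big)\leq -i-\dim\overline{\{x\}}$ in $\Spec\cO_{X,x}$, and Grothendieck vanishing ($H^{>j}_{\fram}(M)=0$ when $\dim\Supp M\leq j$) then gives $\myR\Gamma_x(K_x)\in D^{\leq-\dim\overline{\{x\}}}$ degree by degree. For the converse, one takes a minimal $i$ with $\dim\Supp H^i(K)>-i$ and a point $y$ of that support with $\dim\overline{\{y\}}$ maximal; minimality forces the lower cohomology sheaves to contribute nothing to $H^i\myR\Gamma_y(K_y)$, while $H^i(K)_y$ has zero-dimensional support, so $H^i\myR\Gamma_y(K_y)\cong H^i(K)_y\neq 0$ in a degree $i>-\dim\overline{\{y\}}$, a contradiction. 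You would need to supply both of these arguments; neither is present in your proposal.
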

\begin{proof}
First of all, we note that $K\in {}^pD^{\leq 0}(X)$ if and only if $\dim \Supp H^i(K) \leq -i$.\footnote{Since we are not assuming $K\in D^b_{\coherent}(X)$, $\Supp H^i(K)$ might not be closed, and we define the dimension of this support as Krull dimension, i.e., $\dim \Supp H^i(K)= \sup\{\dim(\overline{\{y\}})\}$ where $y\in \Supp H^i(K)$.} It is clear that this condition implies that $\myR\Gamma_x(K_x)\in D^{\leq -\dim(\overline{\{x\}})}$ for all $x\in X$: this comes down to the standard fact that over a Noetherian local ring $(R,\m)$, any $R$-module $M$ with $\dim\Supp(M)\leq j$ has $H^{>j}\myR\Gamma_\m(M)=0$. 

Conversely, suppose $\myR\Gamma_x(K_x)\in D^{\leq -\dim(\overline{\{x\}})}$ for all $x\in X$. If $\dim\Supp H^i(K) > -i$ for some $i$, then we choose a minimum such $i$ (which is at worst $-d$ since the condition is vacuous for $i< -d$) and choose a point $y\in \Supp H^i(K)$ such that $$\dim(\overline{\{y\}})=\dim \Supp H^i(K) > -i.$$

Since $\dim \Supp H^j(K)_y \leq -j$, and by our choice of $i$ and $y$ we have that 
$$\dim \Supp H^j(K)_y \leq -j - \dim\overline{\{y\}} < i-j \text{ for all $j<i$, and } \dim \Supp H^i(K)_y =0.$$
Using the standard fact again, it follows that $H^i\myR\Gamma_y(K_y) \cong H^i(K)_y\neq 0$ which contradicts the assumption $\myR\Gamma_y(K_y) \in D^{\leq -\dim(\overline{\{y\}})}$.
\end{proof}


\begin{remark}
By \autoref{lem.LocalCohomPerv}, a complex $K \in {\rm Perv}_{\coherent}(X)$ if and only if 
\[
\myR\Gamma_x(K_x)[-{\dim} \overline{\{x\}}] \in {\rm Coh(X)}
\]
for all $x\in X$. Therefore, up to shift, \emph{perverse complexes} are closely related to \emph{maximal Cohen-Macaulay complexes} at the stalks\footnote{Note that, there is a non-triviality condition on maximal Cohen-Macaulay complexes which may not be satisfied for all perverse complexes. For example, if $X$ is equidimensional, then $\omega_X^\mydot$ is always perverse under our definition, but $\omega_{X,x}^\mydot[-\dim\overline{\{x\}}]$ may not be maximal Cohen-Macaulay in the sense of \cite{IyengarMaSchwedeWalkerMCM} by the main example in \cite{MaSinghWaltherKoszulvsLocal}.}, see \cite{IyengarMaSchwedeWalkerMCM}. In fact, for a coherent sheaf $K$, $K[\dim(X)]$ is perverse if and only if $K_x$ is a maximal Cohen-Macaulay $\cO_{X,x}$-module for all $x\in X$. 
\end{remark}

By Matlis and local duality, the perverse $t$-structure on $D^b_{\coherent}(X)$ is the Grothendieck-dual of the usual $t$-structure on $D^b_{\coherent}(X)$. More precisely, define $\mathbf{D} \colon D^b_{\coherent}(X) \to D^b_{\coherent}(X)$ via
\[
\mathbf{D}(M) := \myR\sHom(M,\omega^{\mydot}_X).
\]
We have the following characterization.

\begin{lemma} \label{lem:coherent-perverse-dual-baby}
Suppose $X$ is equidimensional and $K \in D^b_{\coherent}(X)$. Then
\begin{enumerate}
\item[(a)] $K \in {}^pD^{\leq 0}(X)$ if and only if $\mathbf{D}(K) \in D^{\geq 0}$,
\item[(b)] $K \in {}^pD^{\geq 0}(X)$ if and only if $\mathbf{D}(K) \in D^{\leq 0}$.
\end{enumerate}
In particular, $K \in {\rm Perv}_{\coherent}(X)$ if and only if $\mathbf{D}(K)$ is concentrated in degree $0$. 
\end{lemma}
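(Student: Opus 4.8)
The statement is a local claim—both $K$ and $\mathbf{D}(K)$ may be analyzed stalk-by-stalk and the functor $\mathbf{D}$ commutes with restriction to opens—so I would begin by localizing. Fix a point $x \in X$, set $(R, \fram) = (\cO_{X,x}, \fram_x)$ with $\dim \overline{\{x\}} = e$, and write $M = K_x \in D^b_{\coherent}(R)$. Since $X$ is equidimensional of dimension $d$, the localized normalized dualizing complex $\omega^{\mydot}_{X,x}$ is, up to the canonical shift, a (non-normalized) dualizing complex for $R$; more precisely $\omega^{\mydot}_{X,x} \simeq \omega^{\mydot}_{R}[-e]$ where $\omega^{\mydot}_R$ is the normalized dualizing complex of $R$ (this uses equidimensionality, which guarantees that the codimension function is $x \mapsto d - \dim\overline{\{x\}}$; the shift $-e$ records the difference between the normalization on $X$ and the normalization on $R$). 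Hence $(\mathbf{D}K)_x = \myR\Hom_R(M, \omega^{\mydot}_R)[e]$.

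\textbf{Translating the two conditions.} With this identification in hand, the two perverse conditions at $x$ read: $K \in {}^pD^{\leq 0}(X)$ iff $M \in D^{\leq -e}(R)$ for every $x$ (Definition~\ref{def.PervQCohScheme}(1)), and $K \in {}^pD^{\geq 0}(X)$ iff $\myR\Gamma_{\fram}(M) \in D^{\geq -e}(R)$ for every $x$ (Definition~\ref{def.PervQCohScheme}(2)). On the other side, $\mathbf{D}(K) \in D^{\geq 0}$ at $x$ means $\myR\Hom_R(M, \omega^{\mydot}_R)[e] \in D^{\geq 0}$, i.e. $\myR\Hom_R(M, \omega^{\mydot}_R) \in D^{\geq -e}(R)$; and $\mathbf{D}(K) \in D^{\leq 0}$ at $x$ means $\myR\Hom_R(M, \omega^{\mydot}_R) \in D^{\leq -e}(R)$. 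So in each of (a) and (b) the claim is reduced to a statement purely about $R$ and $M$.

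\textbf{The local input.} For part (a): the equivalence ``$M \in D^{\leq -e}(R) \iff \myR\Hom_R(M,\omega^{\mydot}_R) \in D^{\geq -e}(R)$'' is exactly the statement that Grothendieck duality on $D^b_{\coherent}(R)$, with the normalized dualizing complex, swaps the standard $t$-structure for its opposite—equivalently, that $\myR\Hom_R(-,\omega^{\mydot}_R)$ is a $t$-exact anti-equivalence from $D^b_{\coherent}(R)$ with the standard $t$-structure to $D^b_{\coherent}(R)$ with the standard $t$-structure read backwards. This is a well-known property of normalized dualizing complexes over a complete (or excellent) local ring; concretely, for a finitely generated module $N$ one has $\ext^i_R(N,\omega^{\mydot}_R) = 0$ for $i < 0$ and $\ext^0_R(N,\omega^{\mydot}_R) \neq 0$ when $N \neq 0$, and one bootstraps from modules to complexes by truncation and induction on amplitude. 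Since $x$ was arbitrary and both sides of (a) are pointwise conditions, this proves (a). For part (b): one invokes the identity $\big(\myR\Hom_R(M,\omega^{\mydot}_R)\big)^{\vee} \simeq \myR\Gamma_{\fram}(M)$ recorded in the duality subsection of the excerpt (valid for $M \in D^b_{\coherent}(R)$ over the complete local ring $R$). Since Matlis duality $(-)^{\vee}$ is exact and reflects vanishing—$\myR\Gamma_{\fram}(M) \in D^{\geq -e}$ iff its Matlis dual $\myR\Hom_R(M,\omega^{\mydot}_R)$ lies in $D^{\leq e}$... wait, one must track the sign carefully: $(-)^{\vee}$ sends $D^{\geq n}$ to $D^{\leq -n}$. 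So $\myR\Gamma_{\fram}(M) \in D^{\geq -e}$ iff $\myR\Hom_R(M,\omega^{\mydot}_R) \in D^{\leq e}$, which is \emph{not} what we want. The correct route is instead: $\mathbf{D}(K) \in D^{\leq 0}$ at $x$ says $\myR\Hom_R(M,\omega^{\mydot}_R) \in D^{\leq -e}$, and applying $(-)^{\vee}$ this is equivalent to $\myR\Gamma_{\fram}(M) \in D^{\geq e}$—again the wrong sign, which signals that my normalization shift is off. The fix: over a \emph{complete local} ring of dimension $n$, $\myR\Gamma_{\fram}(\omega^{\mydot}_R) \simeq E$ sits in degree $0$, and the formula from the excerpt is $\myR\Gamma_{\fram}(M) \simeq (\myR\Hom_R(M,\omega^{\mydot}_R))^{\vee}$ with no shift; combined with $(-)^{\vee}$ exchanging $D^{\geq a} \leftrightarrow D^{\leq -a}$, one gets $\myR\Gamma_{\fram}(M) \in D^{\geq -e} \iff \myR\Hom_R(M,\omega^{\mydot}_R) \in D^{\leq e}$. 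For this to match $\mathbf{D}(K) \in D^{\leq 0}$ (which I computed as $\myR\Hom_R(M,\omega^{\mydot}_R) \in D^{\leq -e}$) I must have mislabeled the shift; the resolution is simply that $\omega^{\mydot}_{X,x} \simeq \omega^{\mydot}_R[e]$ (not $[-e]$) under the equidimensionality normalization, which then makes everything consistent. I would pin down this shift at the outset by checking it on $X$ smooth, where $\omega^{\mydot}_X = \omega_X[d]$ and $\omega^{\mydot}_{X,x} = \omega_{R}[\dim R] = \omega_R[d-e]$ while $\omega^{\mydot}_R = \omega_R[\dim R] = \omega_R[d-e]$—so in fact the shift is trivial, $\omega^{\mydot}_{X,x} \simeq \omega^{\mydot}_R$, because both are normalized for $R$. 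Re-running the computation with no shift gives $(\mathbf{D}K)_x = \myR\Hom_R(M,\omega^{\mydot}_R)$, condition (1) is $M \in D^{\leq -e}(R)$, and Grothendieck duality over $R$ relates $M \in D^{\leq -e} \iff \myR\Hom_R(M,\omega^{\mydot}_R) \in D^{\geq ?}$—and here the point is that the standard $t$-structure on $D^b_{\coherent}(R)$ pulled back along $\mathbf{D}_R := \myR\Hom_R(-,\omega^{\mydot}_R)$ is precisely the $t$-structure whose connective part is $\{N : \dim\Supp H^i N \le -i - \dim R\}$... This bookkeeping of shifts and codimension functions is the real content, so that is where I would spend care.

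\textbf{Main obstacle.} The substance of the proof is entirely in the compatibility of normalizations: reconciling the codimension function implicit in Definition~\ref{def.PervQCohScheme} (which measures $\dim\overline{\{x\}}$) with the normalized dualizing complex on $X$ and the induced (possibly non-normalized) dualizing complex on the local ring $\cO_{X,x}$, and checking that equidimensionality is exactly what makes these line up so that $\mathbf{D}$ is perverse-$t$-exact-to-standard-$t$-exact. Once the shifts are fixed, parts (a) and (b) are immediate from, respectively, $t$-exactness of Grothendieck duality over a local ring and the Matlis-local-duality identity already quoted in the excerpt; the final sentence (``$K \in {\rm Perv}$ iff $\mathbf{D}(K)$ concentrated in degree $0$'') then follows by combining (a) and (b), since ${\rm Perv}_{\coherent}(X) = {}^pD^{\leq 0} \cap {}^pD^{\geq 0}$ and $D^{\geq 0} \cap D^{\leq 0} = D^{\heartsuit}$.
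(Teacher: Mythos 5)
Your overall strategy (localize at each point, reconcile the normalization of the dualizing complex, then invoke duality over the local ring) is the paper's strategy, but the write-up has two genuine problems and never actually closes.

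First, the shift is not settled, and your final sanity check is miscomputed. For $X$ smooth of dimension $d$ the localization of $\omega^{\mydot}_X = \omega_X[d]$ at $x$ is $\omega_{X,x}[d]$, \emph{not} $\omega_{X,x}[d-\dim\overline{\{x\}}]$; you conflated the localization of the normalized dualizing complex of $X$ with the normalized dualizing complex of $\cO_{X,x}$. The shift is therefore not trivial: the correct statement, which is exactly the point at which equidimensionality enters in the paper's proof, is that $\omega^{\mydot}_{X,x}[-\dim\overline{\{x\}}]$ is the \emph{normalized} dualizing complex of $\cO_{X,x}$, i.e.\ $(\mathbf{D}K)_x \simeq \myR\Hom_{R}(K_x,\omega^{\mydot}_R)[\dim\overline{\{x\}}]$ with $R=\cO_{X,x}$ and $\omega^{\mydot}_R$ normalized. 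Since you end the argument deferring precisely this bookkeeping, the proof is incomplete even as a plan.

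Second, the "local input" you invoke for part (a) is false. Grothendieck duality against a normalized dualizing complex on a local ring does \emph{not} interchange $D^{\leq 0}$ and $D^{\geq 0}$ on $D^b_{\coherent}(R)$: for $R$ regular local of dimension $n>0$ and $M=R$ in degree $0$ one has $\mathbf{D}_R(M)=\omega^{\mydot}_R=R[n]$, concentrated in degree $-n$. Likewise your claim that $\ext^i_R(N,\omega^{\mydot}_R)=0$ for $i<0$ is backwards; by local duality these groups vanish for $i>0$ and may be nonzero for all $-\dim\Supp N\leq i\leq 0$. What is true is that $\mathbf{D}_R$ exchanges the standard $t$-structure with the dimension-weighted (perverse) one — which is the content of the lemma itself, so your argument for (a) is circular. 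The paper avoids this by first applying \autoref{lem.LocalCohomPerv} to replace the stalk condition $K_x\in D^{\leq-\dim\overline{\{x\}}}$ defining ${}^pD^{\leq 0}$ by the local cohomology condition $\myR\Gamma_x(K_x)\in D^{\leq-\dim\overline{\{x\}}}$; after that, both (a) and (b) follow uniformly from the single identity $\big(\myR\Hom_R(M,\omega^{\mydot}_R)\big)^{\vee}\simeq\myR\Gamma_{\fram}(M)$ together with exactness and faithfulness of Matlis duality on the (coherent) cohomology modules, which converts $\myR\Gamma_{\fram}(M)\in D^{\lessgtr -e}$ into $\myR\Hom_R(M,\omega^{\mydot}_R)\in D^{\gtrless e}$, i.e.\ $(\mathbf{D}K)_x\in D^{\gtrless 0}$. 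You should restructure part (a) along these lines rather than appealing to a nonexistent $t$-exactness of $\mathbf{D}_R$ for the standard $t$-structures.
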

\begin{proof}
This follows from \autoref{def.PervQCohScheme}, \autoref{lem.LocalCohomPerv}, and local duality (note that, equidimensionality of $X$ is used to guarantee that $\omega_{X,x}^\mydot[-\dim\overline{\{x\}}]$ is a normalized dualizing complex for $\cO_{X,x}$).
\end{proof}

\begin{remark} \hphantom{a}

\begin{enumerate}
\item Since connectiveness of coconnectiveness of $\mathbf{D}(K)$ can be checked after localization at closed points, to show $K \in {}^pD^{\leq 0}(X)$ (resp. $K \in {}^pD^{\geq 0}(X)$), we only need to show $\myR\Gamma_x(K_x)\in D^{\leq 0}$ (resp. $\myR\Gamma_x(K_x)\in D^{\geq 0}$)  for all closed points $x \in X$.
\item \autoref{lem:coherent-perverse-dual-baby} does not hold for quasi-coherent sheaves: in fact, ${\bf D}(M)$ may not be a quasi-coherent sheaf. This is a crucial source of difficulties in the next subsections.
\item It follows from \autoref{lem:coherent-perverse-dual-baby} that $H^{-i}(\mathbf{D}(M)) = \mathbf{D}({}^pH^{i}(M))$ when $X$ is equidimensional.
\end{enumerate}
\end{remark}

\begin{example} Let $i \colon Z \hookrightarrow X$ be a closed immersion and suppose $X, Z$ are equidimensional. Then we have
\begin{enumerate}
    \item $i_*\cO_Z[\dim(Z)]$ is perverse if $Z$ is Cohen-Macaulay;
    \item $i_*\omega_Z^\mydot$ is perverse, in particular, $\omega^\mydot_X$ is perverse.
\end{enumerate}
\end{example}

\vspace{0.5em}

We next recall the definition of the perverse $t$-structure on the derived category of constructible $\mathbf{Z}/n$-sheaves for the \'etale topology, where $n \in \bN$ is invertible in $X$. We note that everything also holds true for other coefficients such as $\bQ$ or $\bC$, if $X$ is defined over the complex numbers and we work in the analytic topology. We refer to \cite{BBDG82} for more details and proofs (therein the coefficients are set to be equal to $\overline{\bQ_l}$, but as stated on page 101, the result holds for coefficients $\mathbf{Z}/n$ as well).


\begin{definition}
For $K \in D^b_{\cons}(X,\mathbf{Z}/n)$, we say that
\begin{enumerate}
\item[(1)] $K \in {}^pD^{\leq 0}(X, \mathbf{Z}/n)$ if and only if $K_x \in D^{\leq - {\dim} \overline{\{x\}}}$ for all geometric points $i \colon \overline{x} \to X$,
\item[(2)] $K \in {}^pD^{\geq 0}(X, \mathbf{Z}/n)$ if and only if $\myR\Gamma_x(K_x) \in D^{\geq - {\dim} \overline{\{x\}}}$ for all geometric points $i \colon \overline{x} \to X$.
\end{enumerate}
Here $K_x=i^*K$ and $\myR\Gamma_x(K_x)=i^!K$. 
This defines a $t$-structure on $D^b_{\cons}(X,\mathbf{Z}/n)$ so that 
\[
{\rm Perv}_{\cons}(X,\mathbf{Z}/n) := D^b_{\cons}(X,\mathbf{Z}/n)^{\heartsuit} = {}^pD^{\leq 0}(X,\mathbf{Z}/n) \cap {}^pD^{\geq 0}(X,\mathbf{Z}/n)
\]
is an Abelian category. We define perverse cohomology in the constructible setting similarly, that is, for $K \in D^b_{\cons}(X)$,  
$${}^pH^i(K) := {}^p\tau_{\leq 0}{}^p\tau_{\geq 0}(K[i]) \in {\rm Perv}_{\cons}(X).$$
\end{definition}

\begin{remark}
Note that, $i^!K \cong \myR\Gamma_{x}(\mathrm{Spec}(\cO_{X,\bar{x}}^{sh}),K_x)$ and we will use the latter notation in the sequel (see \autoref{PervIntBig}). Alternatively, let $Z\subseteq X$ be the irreducible closed subscheme of $X$ whose generic point is $x$, then $i^!K$ can be computed as $i^*\underline{\myR\Gamma}_Z(K)$, where $\underline{\myR\Gamma}_Z(-)$ is the sheaf version of $\myR\Gamma_Z(-)$.\footnote{That is, \'etale locally, it is the the fiber of $\myR\Gamma(X, -)\to \myR\Gamma(X\backslash Z, -)$.} To see this, note that $i: \overline{x}\to X$ can be factored in two ways: $\overline{x}\xrightarrow{a} Z\xrightarrow{b} X$ and $\overline{x}\xrightarrow{c} \Spec(\cO_{X,\bar{x}}^{sh}) \xrightarrow{d} X$. In general, for a closed immersion $f$: $Y\to X$, $f^!(K)=f^*\underline{\myR\Gamma}_Y(K)$ and for an (ind-)\'etale morphism $g$, $g^!=g^*$. Using these two facts, $i^!=a^!b^!$ is computed by $i^*\underline{\myR\Gamma}_Z(K)$, while $i^!=c^!d^!$ is also computed by $\myR\Gamma_{x}(\mathrm{Spec}(\cO_{X,\bar{x}}^{sh}),K_x)$.
\end{remark}

\begin{example}\hphantom{a}

\begin{enumerate}
    \item If $i \colon Z \hookrightarrow X$ is a closed immersion, then $i_*\mathbf{Z}/n[\dim(Z)] \in {}^pD^{\leq 0}$ (in particular, $\mathbf{Z}/n[\dim(X)]\in {}^pD^{\leq 0}$). If $X, Z$ are both smooth, then $i_*\mathbf{Z}/n[\dim(Z)]$ is perverse (in particular, $\mathbf{Z}/n[\dim(X)]$ is perverse when $X$ is smooth). 
    \item In general, the \emph{intersection complex} ${\rm IC}_X$ is always perverse when $X$ is integral, where  
\[
{\rm IC}_X := \Image(j_!\mathbf{Z}/n[\dim(X)] \to j_* \mathbf{Z}/n[\dim(X)]) 
\]    
for an open embedding $j \colon U \hookrightarrow X$ of a smooth affine variety $U$. Here we are using Artin's vanishing which states that $j_!\mathbf{Z}/n[\dim(X)]$ and $j_* \mathbf{Z}/n[\dim(X)]$ are perverse as $U$ is smooth affine (see \autoref{remark:all-properties-of-perverse-Fp-coefficients}(h) below). 
\end{enumerate}  
\end{example}

\begin{remark}
We caution the readers that the analog of \autoref{lem.LocalCohomPerv} for perverse $t$-structure of constructible $\mathbf{Z}/n$-complexes is false. For example, if $X$ is a smooth curve over $\mathbf{C}$ and let $i: x \to X$ be a closed point. Then we know that $H^2(i^!\mathbf{Z}/n)\neq 0$ (see \cite[Proposition 14.3 and Remark 14.4]{milneLEC}). Now we have $\mathbf{Z}/n[1]\in {}^pD^{\leq 0}$ but $\myR\Gamma_x((\mathbf{Z}/n[1])_x) \notin D^{\leq 0}$. 
\end{remark}

We have Verdier duality $\mathbf{D} \colon D^b_{\cons}(X, \mathbf{Z}/n) \to D^b_{\cons}(X, \mathbf{Z}/n)$, which is defined via
\[
\mathbf{D}(M)= \myR\sHom(M,\omega_{X,\et}),
\] 
where $\omega_{X,\et} = f^! k$ for the projection $f \colon X \to \Spec(k)$. In contrast to the setting of coherent sheaves, we have $\mathbf{D}({}^pD^{\leq 0}(X, \mathbf{Z}/n)) = {}^pD^{\geq 0}(X, \mathbf{Z}/n)$ and $\mathbf{D}({}^pD^{\geq 0}(X, \mathbf{Z}/n)) = {}^pD^{\leq 0}(X, \mathbf{Z}/n)$. In particular, Verdier duality induces a self duality: 
\[
\mathbf{D} \colon {\rm Perv}_{\cons}(X,\mathbf{Z}/n) \to {\rm Perv}_{\cons}(X,\mathbf{Z}/n).
\]

We next record some fundamental properties of constructible sheaves and the six-functor formalism in the following remark.

\begin{remark} \label{remark:all-properties-of-perverse-Fp-coefficients} Let $f \colon X \to Y$ be a morphism of schemes of finite type over a field of characteristic zero. Let $M \in D^b_{\cons}(X, \mathbf{Z}/n)$ and $N \in D^b_{\cons}(Y, \mathbf{Z}/n)$. We consider derived functors $f_*, f_!, f^*, f^!$ between $D^b_{\cons}(X, \mathbf{Z}/n)$ and $D^b_{\cons}(Y, \mathbf{Z}/n)$. Then
\begin{enumerate}
    \item $\mathbf{D}(f_*M) = f_!\mathbf{D}(M)$ and $\mathbf{D}(f^*N) = f^!\mathbf{D}(N)$ (this follows from Verdier duality, see for example \cite[Corollary 17.16]{BhattPerverseNotes}). 
    \item If dimension of the fibers of $f$ are at most $d$, then (\cite[4.2.4]{BBDG82}):
    \begin{itemize}
        \item $f_!$ and $f^*$ are of cohomological amplitude at most $d$, 
        \item $f^!$ and $f_*$ are of cohomological amplitude at least $-d$.
    \end{itemize}
In particular, if $d=0$, then $f_!$, $f^*$ are $t$-right exact, while $f^!$,  $f_*$ are $t$-left exact.
    \item If $f$ is affine, then $f_!$ is left $t$-exact \cite[Theorem 4.1.2]{BBDG82}. 
    \item If $f$ is proper, then $f_!=f_*$ (\cite[p.\ 108]{BBDG82}). 
    \item If $f$ is finite, then $f_!=f_*$ is $t$-exact (by (b), (c), (d)). 
    \item If $f$ is smooth of relative dimension $d$, then $f^! = f^*[2d](d)$ (\cite[p.\ 108]{BBDG82}).
    \item If $f$ is an open immersion, then $f^!=f^*$ and these are $t$-exact (by (b), (f)). 
    \item If $f$ is an affine open immersion, then $f_*$ and $f_!$ are $t$-exact (by (a), (b), (c)).
\end{enumerate}
\end{remark}

\begin{remark} 
\label{remark:only-check-coconnective} 
In what follows, we will implicitly use the fact that given a perverse structure on a derived category, it is uniquely characterized by the coconnective (or equivalently, the connective) part. This follows from the exact triangle ${}^p\tau^{\leq 0}(K) \to K \to {}^p\tau^{\geq 1}(K) \xrightarrow{+1}$.
\end{remark}

\subsection{Review: integral perverse coherent sheaves}
\label{subsec:PervQC}

In this section, we give a quick exposition of the theory of perverse quasi-coherent sheaves on finitely presented flat $p$-adic formal schemes $\mathcal{X}$ over a perfectoid valuation ring. Our main goal is to gain access to the notion of perverse almost coherent sheaves, which one could simply define as Grothendieck duals of usual almost coherent sheaves. It will however be quite convenient to define the perverse $t$-structure on the entirety of $D_{\qcoh}(\mathcal{X})$ before passing to the subcategory of almost coherent objects. Consequently, we first construct (\autoref{Pervqcbig}) the perverse $t$-structure without Grothendieck duality by relying on results of Gabber \cite{Gabber.tStruc} and then give the promised description via duality on almost coherent objects (\autoref{PervAlmostCoh}). The presentation borrows from \cite{BhattLuriepadicRHmodp}, and omitted details can be found there.

\begin{notation}[$p$-adic formal schemes and their quasi-coherent derived categories]
\label{not:padicformal}
Let $\cO_C$ be the ring of integers of a perfectoid extension $C/\mathbf{Q}_p$. Let $\mathcal{X}/\cO_C$ be a finitely presented flat $p$-adic formal scheme of relative dimension $d$, i.e., $\mathcal{X}$ is qcqs and for each affine open $\mathrm{Spf}(R) \subset \mathcal{X}$, the ring $R$ is $p$-complete and $p$-torsionfree with $R/p$ finitely presented over $\cO_C/p$ and having Krull dimension $d$.

We shall write $D_{\qcoh}(\mathcal{X})$ for the quasi-coherent derived category of $\mathcal{X}$; recall that this is defined as the full subcategory of $D(\mathcal{X}, \cO_{\mathcal{X}})$ spanned by derived $p$-complete complexes $K$ with the property that $K/p \in D_{\qcoh}(\mathcal{X}_{p=0}) \subset D(\mathcal{X}, \cO_{\mathcal{X}}/p)$. When $\mathcal{X}=\mathrm{Spf}(R)$ is affine, the global sections functor yields an equivalence $D_{\qcoh}(\mathcal{X}) \simeq D_{\pcomp}(R)$; in this case, the global sections functor $\myR\Gamma(\mathcal{X},-)$ is simply the forgetful functor $D_{\pcomp}(R) \to D(\mathrm{Ab})$ (which does not commute with colimits).

Since $\cO_c$ is a perfectoid valuation ring, $(\cO_C, \m_{\cO_C})$ is an almost mathematics setup. We write $D_{\qcoh}(\mathcal{X})^a$ for almost variant of $D_{\qcoh}(\mathcal{X})$, i.e., the quotient of the latter by its full subcategory of almost zero complexes. Let $D^b_{\acoh}(\mathcal{X}) \subset D_{\qcoh}(\mathcal{X})^a$ be the full subcategory spanned by bounded almost coherent complexes. Note that objects in $D^b_{\acoh}(\mathcal{X})$ only live in the almost world; we shall occasionally use the $!$-realization to view them as honest complexes. We refer to \cite{ZavyalovAlmostCoh} for an exposition of the theory of almost coherent sheaves.
\end{notation}

Let us explain the shape of the perverse $t$-structure on $D_{\qcoh}(\mathcal{X})$. The analog for $D_{\qcoh}(\mathcal{X}_{p^n=0})$ is classical and can be immediately deduced from general results in \cite{Gabber.tStruc}. To pass to the limit, the main idea is to use the complete-torsion equivalence to transport the perverse $t$-structure on $p^\infty$-torsion complexes (resulting, essentially, from the previous sentence) to one on $p$-complete complexes.

\begin{theorem}[Perverse quasi-coherent complexes]
\label{Pervqcbig}
There is a perverse $t$-structure on $D_{\qcoh}(\mathcal{X})$ characterized by either of the following conditions: 
\begin{enumerate}
\item $K \in {}^p D^{\leq 0}_{\qcoh}(\mathcal{X})$ if and only if for any $x \in \mathcal{X}$, we have $\myR\Gamma_p(K)_x \in D^{\leq -\dim(\overline{\{x\}})+1}$.
\item $K \in {}^p D^{\geq 0}_{\qcoh}(\mathcal{X})$ if any of the following equivalent conditions are satisfied:
\begin{enumerate}
\item For any $x \in \mathcal{X}$, we have $\myR\Gamma_{x}(\myR\Gamma_p(K)_x) \in D^{\geq -\dim( \overline{\{x\}})+1}$.
\item For any $x \in \mathcal{X}$, we have $\myR\Gamma_{x}(K_x/p) \in D^{\geq -\dim( \overline{\{x\}})}$.
\end{enumerate}
\end{enumerate}
\end{theorem}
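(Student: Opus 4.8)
The plan is to obtain the $t$-structure on $D_{\qcoh}(\mathcal{X})$ by transporting one from the category of $p^\infty$-torsion quasi-coherent complexes — where it is essentially a special case of Gabber's results \cite{Gabber.tStruc} — along the complete--torsion equivalence, and then to unwind the transported conditions into the stated form. Recall \cite[\href{https://stacks.math.columbia.edu/tag/0A6X}{Tag 0A6X}]{stacks-project} that the functor $\myR\Gamma_p(-) = \mathrm{fib}\big(\mathrm{id} \rightarrow (-)[1/p]\big)$ identifies $D_{\qcoh}(\mathcal{X})$ — which in the conventions of \autoref{not:padicformal} means the derived $p$-complete quasi-coherent complexes — with the full subcategory $\mathcal{T} \subset D_{\qcoh}(\mathcal{X})$ of $p^\infty$-torsion complexes, a quasi-inverse being derived $p$-completion.

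First I would put a perverse $t$-structure on $\mathcal{T}$. This category is the quasi-coherent derived category of the ind-scheme $\{\mathcal{X}_{p^n=0}\}_n$, whose underlying space $|\mathcal{X}_{p=0}| = |\mathcal{X}|$ is Noetherian and finite-dimensional; moreover the perverse conditions for objects of $\mathcal{T}$ refer only to this common space, since restriction of scalars along $\cO_{\mathcal{X},x} \twoheadrightarrow \cO_{\mathcal{X},x}/p^n$ changes neither underlying complexes nor the local cohomology $\myR\Gamma_x$. Gabber's construction \cite{Gabber.tStruc} then applies with the dimension function $x \mapsto \dim\overline{\{x\}}-1$ and yields a $t$-structure on $\mathcal{T}$ with ${}^p D^{\leq 0}(\mathcal{T}) = \{N : N_x \in D^{\leq -\dim\overline{\{x\}}+1}\ \text{for all }x\}$ and ${}^p D^{\geq 0}(\mathcal{T}) = \{N : \myR\Gamma_x(N_x) \in D^{\geq -\dim\overline{\{x\}}+1}\ \text{for all }x\}$. (Concretely: at finite level this is the usual perverse $t$-structure on each $D_{\qcoh}(\mathcal{X}_{p^n=0})$ — the closed immersions $\mathcal{X}_{p^n=0} \hookrightarrow \mathcal{X}_{p^{n+1}=0}$ being $t$-exact — and one extends over all of $\mathcal{T}$ by making truncations commute with filtered colimits. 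The shift by $1$ in the dimension function, i.e.\ a cohomological shift of the standard perverse $t$-structure, is the normalization making the relative dualizing complex $\omega^{\mydot}_{\mathcal{X}/\cO_C}$ perverse, and is the origin of the ``$+1$'' in (1) and (2)(a).)

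Transporting this $t$-structure along the equivalence $\myR\Gamma_p$ then produces, automatically, a $t$-structure on $D_{\qcoh}(\mathcal{X})$, for which $K \in {}^p D^{\leq 0}_{\qcoh}(\mathcal{X})$ (resp.\ $K \in {}^p D^{\geq 0}_{\qcoh}(\mathcal{X})$) means precisely $\myR\Gamma_p(K) \in {}^p D^{\leq 0}(\mathcal{T})$ (resp.\ $\in {}^p D^{\geq 0}(\mathcal{T})$); using $\myR\Gamma_p(K)_x \simeq \myR\Gamma_p(K_x)$, these are conditions (1) and (2)(a), so existence and those two characterizations are in hand. (Since $p \in \mathfrak{m}_{\cO_{\mathcal{X},x}}$ one also has $\myR\Gamma_x \circ \myR\Gamma_p \simeq \myR\Gamma_x$ on stalks, so (2)(a) could equally be phrased without $\myR\Gamma_p$.) To finish I would check (2)(a)$\,\Leftrightarrow\,$(2)(b): fixing $x$ and writing $M := K_x$, $\ell := -\dim\overline{\{x\}}+1$, one has $\myR\Gamma_p(M)[1] \simeq \varinjlim_n M/p^n$ (colimit along $M/p^n \xrightarrow{\cdot p} M/p^{n+1}$), hence $\myR\Gamma_x(\myR\Gamma_p(M))[1] \simeq \varinjlim_n \myR\Gamma_x(M/p^n)$ — using that these local cohomology functors commute with filtered colimits (the relevant local rings have Noetherian spectra, so their maximal ideals are finitely generated up to radical; alternatively, reduce to the Noetherian case by descending $\mathcal{X}$ to a finite type model over a Noetherian subring of $\cO_C$). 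Each $M/p^n$ has a finite filtration with graded pieces $M/p$, which gives (2)(b)$\,\Rightarrow\,$(2)(a) after passing to the colimit and shifting; for the converse, if $\myR\Gamma_x(M/p) \notin D^{\geq \ell-1}$ one takes $j_0 < \ell-1$ minimal with $H^{j_0}\myR\Gamma_x(M/p) \neq 0$, observes that the connecting maps of the triangles $M/p^{n-1} \xrightarrow{\cdot p} M/p^n \rightarrow M/p$ into degree $j_0$ vanish (their source $H^{j_0-1}\myR\Gamma_x(M/p)$ is zero), so the transition maps on $H^{j_0}$ are injective and $\varinjlim_n H^{j_0}\myR\Gamma_x(M/p^n) \neq 0$, contradicting (2)(a).

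The hard part here is not conceptual but bookkeeping: correctly identifying the normalization on $\mathcal{T}$ — the shifted dimension function $x \mapsto \dim\overline{\{x\}}-1$ and its compatibility with $\omega^{\mydot}_{\mathcal{X}/\cO_C}$, which is what forces the ``$+1$'' in the statement — and checking that Gabber's machinery is genuinely available in the unbounded, non-Noetherian setting we are in. The equivalence of (2)(a) and (2)(b), though elementary, is the one place where a (small) argument rather than a citation is needed.
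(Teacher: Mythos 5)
Your proposal is correct and follows the same overall strategy as the paper — obtain a perverse $t$-structure on the $p^\infty$-torsion subcategory from Gabber's results and transport it along the complete–torsion equivalence, then check (2)(a)$\Leftrightarrow$(2)(b) separately — but you implement the Gabber step differently. The paper applies \cite{Gabber.tStruc} once to all of $D(R)$ for affine $\mathcal{X}=\mathrm{Spf}(R)$, using a perversity function defined on every point of $\mathrm{Spec}(R)$ (including the characteristic-zero points, where $-p(x)+1$ is the dimension of $\overline{\{x\}}$ inside the generic fibre); the nontrivial input there is verifying that this is a \emph{strong} perversity function, i.e.\ that it does not increase under specializations $x\rightsquigarrow y$ from $\mathrm{Spec}(R[1/p])$ to $\mathrm{Spec}(R/p)$, which rests on $\dim(\mathrm{Spec}(S[1/p]))=\dim(\mathrm{Spec}(S/p))$ for $\cO_C$-flat topologically finitely presented quotients $S$. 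The torsion category then inherits the $t$-structure as the kernel of the ($t$-exact) localization $D(R)\to D(R[1/p])$. You instead stay entirely on the special fibre, applying Gabber at each finite level $\mathcal{X}_{p^n=0}$ and assembling over $n$; this buys you a completely trivial specialization check (the function $x\mapsto -\dim\overline{\{x\}}+1$ on a single topologically Noetherian space is automatically a strong perversity function), at the cost of having to justify the passage from finite levels to all of $\mathcal{T}$. That last step — existence of truncation triangles and orthogonality for arbitrary $p^\infty$-torsion complexes, not just those killed by a fixed $p^n$ — is the one place your write-up is thin: you need that both aisles, in their pointwise characterizations, are stable under filtered colimits (true, since $(-)_x$ and $\myR\Gamma_x$ commute with filtered colimits on a topologically Noetherian space), that the finite-level truncations are compatible along the transition maps $K/p^n\to K/p^{n+1}$ (true, by $t$-exactness of restriction of scalars), and that $\Hom$-vanishing against a general coconnective torsion object can be checked after applying $\RHom(\cO/p^n,-)=\mathrm{fib}(\cdot p^n)$, which preserves coconnectivity. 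None of this fails, but it should be said. Your proof of (2)(a)$\Leftrightarrow$(2)(b) is a correct unwinding of the paper's one-line appeal to the fact that a $p^\infty$-torsion complex $M$ lies in $D^{\geq 0}$ iff $M/p\in D^{\geq -1}$, and your injectivity argument via the vanishing connecting maps in degree $j_0-1$ is exactly right.
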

Here $\myR\Gamma_p(K)$ is the local cohomology complex of $K$ with respect to $(p)$ computed in the big derived category $D(\mathcal{X}, \cO_{\mathcal{X}})$ of all $\cO_{\mathcal{X}}$-modules on $\mathcal{X}$, i.e., $\myR\Gamma_p(K) = \left(K[1/p]/K\right)[-1]$, where $K[1/p] = K \otimes_{\mathbf{Z}}^L \mathbf{Z}[1/p]$. Note that the inclusion $D_{\qcoh}(\mathcal{X}) \subset D(\mathcal{X}, \cO_{\mathcal{X}})$ does not preserve colimits, and in fact $\myR\Gamma_p(K)$ typically does not lie in $D_{\qcoh}(\mathcal{X})$.



\begin{remark}[Why use $\myR\Gamma_p(K)?$]
The main reason we prefer to write $\myR\Gamma_p(K)_x$ in the statement of the theorem is that the functor $K \mapsto \myR\Gamma_p(K)_x$ commutes with filtered colimits, unlike the functor $K \mapsto K_x$. In particular, the compatibility of the $t$-structure with filtered colimits (\autoref{PervCohColimit}) is obvious from the above definitions in terms of $\myR\Gamma_p(K)$, but less obvious otherwise.

Another advantage of using $\myR\Gamma_p(K)$ is the interpretation of $\myR\Gamma_{x}(\myR\Gamma_p(K)_x)$: to define $\myR\Gamma_{x}(M_x)$, one can work locally to assume $x$ is a closed point of $\mathcal{X}={\rm Spf}(R)$ and $M_x$ corresponds to a complex of $R$-modules, so $\myR\Gamma_{x}(M_x)$ is the local cohomology complex. Interpreted this way, in the statement of \autoref{Pervqcbig}, we could replace each occurrence of $\myR\Gamma_p(K)_x$ by $K_x$ since applying $\myR\Gamma_x$ annihilates the cone of $\myR\Gamma_p(K)_x \to K_x$ as $p$ acts invertibly on this cone (on a $p$-adic formal scheme $\mathcal{X}$, any $x \in \mathcal{X}$ is already a point of characteristic $p$, i.e., $p$ lies in the ideal sheaf of $x$). However, at least when $x\in \mathcal{X}$ is a closed point, one can also define $\myR\Gamma_x(-)$ as the fiber of $\myR\Gamma(\mathcal{X}, -) \to \myR\Gamma(\mathcal X \backslash\{x\}, -)$. Under this definition, $\myR\Gamma_{x}(\myR\Gamma_p(M))$ and $\myR\Gamma_x(M)$ are usually {\it not} the same for $M\in D^b_{\qcoh}(\mathcal X)$: the latter object, being derived $p$-complete as it is the fiber of two derived $p$-complete complexes, is exactly the derived $p$-completion of the former. In the sequel, what we need is $\myR\Gamma_{x}(\myR\Gamma_p(M))$ and this is another reason we use $\myR\Gamma_p(K)$ -- since then there are no ambiguities of these two interpretations of $\myR\Gamma_x(-)$. 
\end{remark}

\begin{proof}[Proof of \autoref{Pervqcbig}]
We first explain the equivalence between the two conditions appearing in (2). The complex $M = \myR\Gamma_x(\myR\Gamma_p(K)_x)$ is a $p^\infty$-torsion complex with $M/p = \myR\Gamma_x(K_x/p)$, 
so the claim follows from a general fact: for a $p^\infty$-torsion complex $M$, we have $M \in D^{\geq 0}$ if and only if $M/p \in D^{\geq -1}$.

To construct the $t$-structure, assume first that $\mathcal{X}=\mathrm{Spf}(R)$ is affine. We shall construct a perverse $t$-structure on $D(R)$ using \cite{Gabber.tStruc}, and apply  it to induce one on $D_{\pcomp}(R)$ via the complete-torsion equivalence. To implement this strategy, we first recall some basic properties of $\mathrm{Spec}(R)$. 
\begin{itemize}
\item The scheme $\mathrm{Spec}(R/p)$ is topologically Noetherian, i.e., the topological space $\mathrm{Spec}(R/p)$ is Noetherian: in fact, $(R/p)_{\reduced}$ is a finitely presented algebra over the residue field of $\cO_C$.
\item The scheme $\mathrm{Spec}(R[1/p])$ is Noetherian of Krull dimension equal to $\dim(\mathrm{Spec}(R/p))$: this is a standard fact in the basic theory of affinoid algebras \cite{BGR}.
\item The space $\mathrm{Spec}(R)$ is topologically Noetherian: this follows from the previous two properties. 
\item For any $p$-torsionfree quotient $S = R/J$, the ring $S$ is a topologically finitely presented $\cO_C$-algebra ({that is, it is finitely presented modulo $p$}, see \cite[Proposition 1.1 (c)]{BLR1}), and hence $\mathrm{Spec}(S)$ enjoys the same properties as the ones listed above. 
\end{itemize}


Using these properties, we construct a perversity function.  Specifically,  define a function $p:\mathrm{Spec}(R) \to \mathbf{Z}$ by setting $-p(x)+1$ to be the dimension of the closure of $\{x\}$ in the fibre of $\mathrm{Spec}(R) \to \mathrm{Spec}(\cO_C)$ containing $x$. We claim that this is a strong perversity function in the sense of \cite[\S 1]{Gabber.tStruc}, i.e., for each integer $n$, the sets 
\[ 
    \mathrm{Spec}(R)_{\geq n} = \{x \in \mathrm{Spec}(R) \mid p(x) \geq n\}
\]
are ind-constructible and closed under specialization. In fact, it suffices to show just the latter:  any set closed under specialization can be written as a union of closed sets (by taking closures of the points within the set), and any closed set is constructible by topological Noetherianity. The closedness under specialization is clear for specializations entirely contained in a single fibre of $\mathrm{Spec}(R) \to \mathrm{Spec}(\cO_C)$, so fix a specialization $x \rightsquigarrow y$ with $x \in \mathrm{Spec}(R[1/p]) \subset \mathrm{Spec}(R)$ and $y \in \mathrm{Spec}(R/p) \subset \mathrm{Spec}(R)$. The closure $\overline{\{x\}} \subset \mathrm{Spec}(R)$ has the form $\mathrm{Spec}(S)$ for a topologically finitely presented $\cO_C$-flat quotient $S$ of $R$ by the properties recalled above; in particular, we have $\dim(\mathrm{Spec}(S[1/p])) = \dim(\mathrm{Spec}(S/p))$. As the closure of $x$ in $\mathrm{Spec}(R[1/p])$ equals $\mathrm{Spec}(S[1/p])$ while the closure of $y$ in $\mathrm{Spec}(R/p)$ is contained in $\mathrm{Spec}(S/p)$, we conclude that $-p(x)+1 \geq -p(y)+1$, as wanted.

Thanks to the previous paragraph and the results of \cite[Remark 6.1 (6)]{Gabber.tStruc},
we obtain a $t$-structure on $D(R)$ characterized as follows:
\begin{itemize}
\item $K \in {}^p D^{\leq 0}(R)$ if and only if for any $x \in \mathrm{Spec}(R)$, we have $K_x \in D^{\leq p(x)}$.
\item $K \in {}^p D^{\geq 0}(R)$ if and only if for any $x \in \mathrm{Spec}(R)$, we have $\myR\Gamma_{x}(K_x) \in D^{\geq p(x)}$.
\end{itemize}
We remark that the definition in \cite[\S 1]{Gabber.tStruc} also requires that $K \in D^+$ for $K \in {}^p D^{\geq 0}$; in our case, as the function $p$ is bounded, this follows from the condition appearing above and \cite[Lemma 3.1]{Gabber.tStruc}. 

The exact same conditions as above (with $\mathrm{Spec}(R)$ replaced by $\mathrm{Spec}(R[1/p])$) also induce a perverse $t$-structure on $D(R[1/p])$ by similar reasoning, and the localization functor $D(R) \to D(R[1/p])$ is $t$-exact. It follows that the kernel $D_{p-nilp}(R) \subset D(R)$ of this localization also inherits a perverse $t$-structure. Note that for checking perverse (co)connectivity of $K \in D_{\ptors}(R)$, the conditions appearing above only need to be checked for $x \in \mathrm{Spec}(R)_{p=0} \subset \mathrm{Spec}(R)$: the complex $K_x$ vanishes if $x \in \mathrm{Spec}(R) - \mathrm{Spec}(R)_{p=0} = \mathrm{Spec}(R[1/p])$. Transporting this $t$-structure along the complete-torsion equivalence $D_{\ptors}(R) \simeq D_{\pcomp}(R)$, then yields a perverse $t$-structure on $D_{\pcomp}(R)$. Unwinding definitions, this $t$-structure is characterized as follows: 
\begin{itemize}
\item $K \in {}^p D^{\leq 0}_{\pcomp}(R)$ if and only if for any $x \in \mathrm{Spec}(R)_{p=0}$, we have $(\myR\Gamma_p(K))_x \in D^{\leq p(x)}$.
\item $K \in {}^p D^{\geq 0}_{\pcomp}(R)$ if and only if for any $x \in \mathrm{Spec}(R)_{p=0}$, we have $\myR\Gamma_x(\myR\Gamma_p(K)_x) \in D^{\geq p(x)}$.
\end{itemize}
As these are exactly the conditions appearing in \autoref{Pervqcbig}, we win in the affine case.

The case of a general $\mathcal{X}$ can be proven by glueing together the statement for affine $\mathcal{X}$ shown above; we omit the details. 
\end{proof}

\begin{remark}[Compatibility with filtered colimits]
\label{PervCohColimit}
The categories ${}^p D^{\leq 0}$ and ${}^p D^{\geq 0}$ appearing in \autoref{Pervqcbig} contain and are stable under filtered colimits in $D_{\qcoh}(\mathcal{X})$. Consequently, the functor of taking perverse cohomology sheaves commutes with filtered colimits.
\end{remark}

\begin{remark}[Passage to the almost category]
The $t$-structure on $D_{\qcoh}(\mathcal{X})$ constructed in \autoref{Pervqcbig} induces one on the full subcategory of almost zero objects with the inclusion of the latter being $t$-exact. Consequently, one also obtains a perverse $t$-structure on the almost category $D_{\qcoh}(\mathcal{X})^a$. 
\end{remark}

\begin{remark}[Local cohomology is perverse $t$-exact] 
\label{rem:local-cohomology-is-perverse-t-exact}
For any $x \in \mathcal{X}$, the functor $D_{\qcoh}(\mathcal{X}) \to D(\mathrm{Ab})$ given by $K \mapsto \myR\Gamma_{x}(\myR\Gamma_p(K)_x)$ is $t$-exact for the perverse $t$-structure on the source and the $(-\dim(\overline{\{x\}})+1)$-shift of the standard $t$-structure on the target. To check this, by the condition in (b)(i), it suffices to show $t$-right exactness, i.e., that $K \mapsto \myR\Gamma_{x}(\myR\Gamma_p(K)_x)$ carries ${}^p D^{\leq 0}$ into $D^{\leq -\dim(\overline{\{x\}})+1}$. 

Now the condition $K \in {}^ p D^{\leq 0}$ implies that 
\begin{equation} 
\label{eq:lc-perverse-t-exact-support}
\mathrm{Supp}(H^i(\myR\Gamma_p(K))) = \{x \in \mathcal X \mid H^i(\myR\Gamma_p(K)_x) \neq 0\} \subset \{x \in \mathcal X \mid \dim(\overline{\{x\}}) \leq 1-i\}.
\end{equation}
Set $M := H^i(\myR\Gamma_p(K))$ and consider $\Supp M \subseteq \mathcal{X}_{p=0}$, we claim that 
\[ 
\myR\Gamma_x (M_x) \in D^{\leq \dim (\Supp M) - \dim \overline{\{x\}}}.
\]
By filtering $M$ by the approximations to it, this can be reduced to checking that: if $Z \subset \mathcal{X}_{p=0}$ is a closed subscheme with $\dim(Z) \leq j$, then $\myR\Gamma_x(\cO_Z) \in D^{\leq j - \dim \overline{\{x\}}}$ for all points $x \in \mathcal{X}_{p=0}$. This is a well known fact, and so the claim follows.

In particular, by \autoref{eq:lc-perverse-t-exact-support} we see that 
\[
\myR\Gamma_x(H^i(\myR\Gamma_p(K))_x) \in D^{\leq 1-i - \dim(\overline{\{x\}})},
\]
and so $\myR\Gamma_x(H^i(\myR\Gamma_p(K)_x)[-i]) \in D^{\leq 1 - \dim(\overline{\{x\}})}$. Therefore, $\myR\Gamma_x(\myR\Gamma_p(K)_x) \in D^{\leq 1 - \dim(\overline{\{x\}})}$, as required.
\end{remark}


{\begin{remark} 
\label{remark:equivalent-characterisation-of-perverse-structure-coherent-case}
In fact, the converse of the \autoref{rem:local-cohomology-is-perverse-t-exact} is also true. More precisely, we have that
\begin{enumerate}
\item $K \in {}^p D^{\leq 0}_{\qcoh}(\mathcal{X})$ if and only if for every $x \in \mathcal{X}$, we have $\myR\Gamma_x(\myR\Gamma_p(K)_x) \in D^{\leq -\dim(\overline{\{x\}})+1}$.
\item $K \in {}^p D^{\geq 0}_{\qcoh}(\mathcal{X})$ if and only if for every $x \in \mathcal{X}$, we have $\myR\Gamma_x(\myR\Gamma_p(K)_x) \in D^{\geq -\dim(\overline{\{x\}})+1}$.
\end{enumerate}
In particular, the perverse quasi-coherent sheaves are simply those $K \in D_{\qcoh}(\mathcal{X})$ such that $\myR\Gamma_x(\myR\Gamma_p(K)_x)$ is supported in degree $-\dim \overline{\{x\}} + 1$ for each $x \in \mathcal X$. The proof of the ``if" direction of (a) (which is the only thing that remains to prove) is very similar to the proof of \autoref{lem.LocalCohomPerv} (with $\myR\Gamma_p(K)$ in place of $K$ and with a shift of $-\dim\overline{\{x\}}$ by $1$) and we leave the details to the interested readers.
\end{remark}
}

\begin{example}[The case of a point]
Specializing to the case $\mathcal{X} = \mathrm{Spf}(\cO_C)$, we obtain a perverse $t$-structure on $D_{\pcomp}(\cO_C)$. As $\mathrm{Spf}(\cO_C)$ has a single point, this $t$-structure is simply obtained by transporting a shift of the standard $t$-structure on $D_{\ptors}(\cO_C)$ along the complete-torsion equivalence. One checks that it has the following intrinsic characterization: given $K \in D_{\pcomp}(\cO_C)$, we have
\begin{itemize}
\item $K \in {}^p D^{\geq 0}$ exactly when $K \in D^{\geq 0}$ and $H^0(K)$ is $p$-torsionfree.
\item $K \in {}^p D^{\leq 0}$ exactly when $K \in D^{\leq 1}$ and $H^1(K)$ is $p^\infty$-torsion.
\end{itemize}
In fact, since $p^\infty$-torsion $p$-complete objects have bounded $p^\infty$-torsion by the Banach open mapping theorem, the condition on $H^1(K)$ in the second item above can be replaced with ``$p^c \cdot H^1(K) = 0$ for some $c \geq 1$''.
\end{example}


We next recall the definition and some basic stability properties of perverse coherent sheaves on $\mathcal{X}$.


\begin{notation}[Coherent dualizing sheaves]
\label{notation:CoherentDualizingSheavesFormalScheme}
Write $\omega^{\mydot}_{\mathcal{X}/\cO_C}$ for the normalized dualizing complex of $\mathcal{X}/\cO_C$. This means that $\omega^{\mydot}_{\mathcal{X}/\cO_C} \in D^{\geq -d}$ and the $-d$-th cohomology is nonzero, where $d$ is the dimension of $\mathcal{X}_{p=0}$. Suppose $\mathcal{X}_{p=0}$ is equidimensional, then since $\mathcal{X}_{p=0}$ is finitely presented over $\cO_C/p$, local rings of $\mathcal{X}_{p=0}$ at closed points are also of dimension $d$ by the dimension formula. In particular, for any closed point $x \in \mathcal{X}_{p=0}$, the object $\myR\Gamma_x(\omega^{\mydot}_{\mathcal{X}/\cO_C}/p)$ is concentrated in degree $0$. {More generally, for $x \in \mathcal{X}_{p=0}$ an arbitrary point, $\myR\Gamma_x((\omega^{\mydot}_{\mathcal{X}/\cO_C}/p)_x)$ is concentrated in degree $-\dim \overline{\{x\}}$ and $\myR\Gamma_x(\myR\Gamma_p(\omega^{\mydot}_{\mathcal{X}/\cO_C})_x)$ is concentrated in degree $-\dim \overline{\{x\}}+1$. In particular, $\omega^{\mydot}_{\mathcal{X}/\cO_C}$ is a $\mathfrak{p}^+$-perverse sheaf.}

The formal scheme $\mathcal{X}$ is coherent, so there is a standard $t$-structure on $D^b_{\coherent}(\mathcal{X})$. Write $\mathbf{D}_\mathcal{X}(-) = \underline{\RHom}_{\cO_\mathcal{X}}(-, \omega^{\mydot}_{\mathcal{X}/\cO_C})$ for the resulting Grothendieck duality on $D^b_{\coherent}(\mathcal{X})$ and similarly write $\mathbf{D}_{\mathcal{X}_{p^n=0}}(-) = \underline{\RHom}_{\cO_\mathcal{X}/p^n}(-, \omega^{\mydot}_{\mathcal{X}/\cO_C}/p^n)$ for the resulting Grothendieck duality on $D^b_{\coherent}(\mathcal{X}_{p^n=0})$ (cf.\ \cite[Tag 0A7T]{stacks-project}). Thus, we have
 \begin{equation}
 \label{eq:coherent-duality-mod-p}
  \mathbf{D}_\mathcal{X}(-)/p^n \simeq \mathbf{D}_{\mathcal{X}_{p^n=0}}(-/p^n),
  \end{equation}
 i.e.,  the formation of Grothendieck duals commutes with reduction modulo $p^n$ (see \cite[0A6A(4)]{stacks-project}). On the other hand, given $M \in D^b_{\coherent}(\mathcal{X}_{p^n=0})$, the objects $\mathbf{D}_\mathcal{X}(M)$ and $\mathbf{D}_{\mathcal{X}_{p^n=0}}(M)$ differ by a shift. The functor $\mathbf{D}_{\mathcal{X}}(-)$ induces an autoequivalence of $D^b_{\acoh}(\mathcal{X})$ that we shall also call Grothendieck duality. 
 \end{notation}

\begin{proposition}[Perverse almost coherent sheaves]
\label{PervAlmostCoh}
The perverse $t$-structure on $D_{\qcoh}(\mathcal{X})^a$ restricts to one on $D^b_{\acoh}(\mathcal{X})$. Moreover, the induced $t$-structure is the Grothendieck dual of the standard $t$-structure, i.e., we have 
\begin{enumerate}
\item $K \in {}^p D^{\geq 0}$ if and only if $\mathbf{D}_{\mathcal{X}}(K) \in D^{\leq 0}$ (or equivalently $\mathbf{D}_{\mathcal{X}}(K)/p \in D^{\leq 0}$).
\item $K \in {}^p D^{\leq 0}$ if and only if $\mathbf{D}_{\mathcal{X}}(K) \in D^{\geq 0}$.
\end{enumerate}
\end{proposition}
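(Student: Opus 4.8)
The plan is to reduce the statement to the closed fibre $\mathcal{X}_{p=0}$, where the corresponding fact for coherent sheaves on a scheme is the equidimensional analog of \autoref{lem:coherent-perverse-dual-baby}, and then to transport the conclusion back to $\mathcal{X}$ using the mod-$p$ description of the perverse $t$-structure from \autoref{Pervqcbig} together with the compatibility \eqref{eq:coherent-duality-mod-p} of Grothendieck duality with reduction modulo $p$. I will treat the coconnective half $(1)$ by reduction mod $p$, the connective half $(2)$ by a direct local-duality argument, and deduce the restriction to $D^b_{\acoh}(\mathcal{X})$ formally.

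\emph{The coconnective half.} By \autoref{Pervqcbig}(2)(b), a complex $K \in D_{\qcoh}(\mathcal{X})$ lies in ${}^pD^{\geq 0}$ if and only if $\myR\Gamma_x((K/p)_x) \in D^{\geq -\dim\overline{\{x\}}}$ for every $x$; since every point of $\mathcal{X}$ is a point of characteristic $p$, this says exactly that $K/p$ lies in the coconnective part of the middle perverse $t$-structure (\autoref{def.PervQCohScheme}) on the scheme $\mathcal{X}_{p=0}$. For $K \in D^b_{\acoh}(\mathcal{X})$ the reduction $K/p$ lies in $D^b_{\acoh}(\mathcal{X}_{p=0})$, so by the analog of \autoref{lem:coherent-perverse-dual-baby} on $\mathcal{X}_{p=0}$ (with respect to the relative dualizing complex $\omega^{\mydot}_{\mathcal{X}/\cO_C}/p$, using the equidimensionality hypothesis, cf.\ \autoref{notation:CoherentDualizingSheavesFormalScheme}) this is equivalent to $\mathbf{D}_{\mathcal{X}_{p=0}}(K/p) \in D^{\leq 0}$. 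Now \eqref{eq:coherent-duality-mod-p} identifies $\mathbf{D}_{\mathcal{X}_{p=0}}(K/p)$ with $\mathbf{D}_{\mathcal{X}}(K)/p$, and since $\mathbf{D}_{\mathcal{X}}(K)$ is bounded and derived $p$-complete, $\mathbf{D}_{\mathcal{X}}(K)/p \in D^{\leq 0}$ if and only if $\mathbf{D}_{\mathcal{X}}(K) \in D^{\leq 0}$ (the nontrivial implication follows by writing $\mathbf{D}_{\mathcal{X}}(K)$ as the limit of the $\mathbf{D}_{\mathcal{X}}(K)/p^{n}$, whose transition maps are surjective on top cohomology). This proves $(1)$ including its ``or equivalently'' clause.

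\emph{The connective half.} Here the relation between $K/p$ and $K$ is not exact on the connective side, so I argue directly. By \autoref{remark:equivalent-characterisation-of-perverse-structure-coherent-case}, $K \in {}^pD^{\leq 0}$ if and only if $\myR\Gamma_x(\myR\Gamma_p(K)_x) \in D^{\leq -\dim\overline{\{x\}}+1}$ for all $x$, and since $\mathbf{D}_{\mathcal{X}}(K)$ is coherent its connectivity may be checked at closed points; so it suffices to show, for each closed point $x \in \mathcal{X}_{p=0}$, that $\myR\Gamma_x(\myR\Gamma_p(K)_x) \in D^{\leq -\dim\overline{\{x\}}+1}$ if and only if $\mathbf{D}_{\mathcal{X}}(K)_x \in D^{\geq 0}$. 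This is precisely a non-Noetherian local/Grothendieck duality statement over $\cO_{\mathcal{X},x}$ of the kind recorded in \autoref{remark:MD-non-noetherian}: $\myR\Gamma_x(\myR\Gamma_p(K)_x)$ and $\RHom_{\cO_{\mathcal{X},x}}(K_x,\omega^{\mydot}_{\mathcal{X}/\cO_C,x})$ are interchanged by duality into $E = E_{\mathcal{X}/\cO_C,x}$, which is concentrated in a single cohomological degree. The shift of $\omega^{\mydot}_{\mathcal{X}/\cO_C,x}$ relative to a normalized dualizing complex of $\cO_{\mathcal{X},x}$ supplies the $-\dim\overline{\{x\}}$, and the extra degree carried by $E$ supplies the $+1$; tracking these two shifts gives $(2)$. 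For the restriction to $D^b_{\acoh}(\mathcal{X})$: since $\mathcal{X}$ is coherent the standard $t$-structure on $D_{\qcoh}(\mathcal{X})^a$ restricts to $D^b_{\acoh}(\mathcal{X})$, and $\mathbf{D}_{\mathcal{X}}$ is an autoequivalence of $D^b_{\acoh}(\mathcal{X})$; hence for $K \in D^b_{\acoh}(\mathcal{X})$, writing $L = \mathbf{D}_{\mathcal{X}}(K)$, the triangle $\mathbf{D}_{\mathcal{X}}(\tau^{\geq 0}L) \to K \to \mathbf{D}_{\mathcal{X}}(\tau^{\leq -1}L)$ has all three terms in $D^b_{\acoh}(\mathcal{X})$, and by $(1)$ and $(2)$ its first term lies in ${}^pD^{\leq 0}$ while its last term lies in ${}^pD^{\geq 1}$; thus it is the perverse truncation triangle ${}^p\tau^{\leq 0}K \to K \to {}^p\tau^{\geq 1}K$, so the perverse truncation functors preserve $D^b_{\acoh}(\mathcal{X})$.

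\emph{The main obstacle.} The conceptual skeleton above is short, but the delicate point is the bookkeeping of the various shifts by one -- between $\mathbf{D}_{\mathcal{X}}$ and $\mathbf{D}_{\mathcal{X}_{p=0}}$ (which, as noted in \autoref{notation:CoherentDualizingSheavesFormalScheme}, differ by a shift), between the $\myR\Gamma_p(K)$-conditions and the $K/p$-conditions, and in the placement of $E$ -- together with making the local and Grothendieck duality over the possibly non-Noetherian ring $\cO_{\mathcal{X},x}$ (where $E$ fails to be injective) genuinely precise; the duality input on $\mathcal{X}_{p=0}$ must likewise be handled with care since that scheme is only topologically Noetherian, so one works with the relative dualizing complex and almost coherent sheaves rather than invoking \autoref{lem:coherent-perverse-dual-baby} verbatim.
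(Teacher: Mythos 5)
Your treatment of the coconnective half $(1)$ is essentially the paper's argument: reduce mod $p$ via \autoref{Pervqcbig}(2)(b), use \eqref{eq:coherent-duality-mod-p} to identify $\mathbf{D}_{\mathcal{X}}(K)/p$ with $\mathbf{D}_{\mathcal{X}_{p=0}}(K/p)$, and invoke the (almost coherent analog of the) standard local-duality characterization of $D^{\leq 0}$ on the mod-$p$ fibre; the parenthetical equivalence via derived $p$-completeness also matches.

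The connective half is where you have a genuine gap. You try to prove $(2)$ directly by a local/Grothendieck duality argument over the stalk $\cO_{\mathcal{X},x}$, citing \autoref{remark:MD-non-noetherian}. But that remark's double-duality statement $\RHom_R(\RHom_R(K,\omega^{\mydot}_{R/V}),E)\simeq \myR\Gamma_{\m}(K)$ is proved there only for complexes $K$ base-changed from a finite Noetherian level, which a general almost coherent $K$ on $\mathcal{X}$ is not; indeed the paper's Appendix B exists precisely because naive local duality over these non-Noetherian stalks fails (one only gets almost isomorphisms into $\cO_{X_\infty}\otimes_{\cO_X}E$ and almost \emph{pure} maps into $\Hom_{\cO_X}(\cO_{X_\infty},E)$, not a two-sided duality). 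In addition, your reduction of the ${}^pD^{\leq 0}$ condition to closed points is circular at this stage: the fact that perverse connectivity can be tested at closed points is itself a consequence of the duality you are trying to prove. So "tracking these two shifts gives $(2)$" is not a proof.

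The repair is formal and you already have the needed tool in hand. Since $\mathbf{D}_{\mathcal{X}}$ is an autoequivalence of $D^b_{\acoh}(\mathcal{X})$, the conditions in $(1)$ and $(2)$ manifestly define \emph{some} $t$-structure on $D^b_{\acoh}(\mathcal{X})$; the perverse $t$-structure from \autoref{Pervqcbig} restricted to $D^b_{\acoh}(\mathcal{X})$ is another. Once their coconnective parts are identified (your half $(1)$), the two $t$-structures coincide — a $t$-structure is determined by its coconnective part via the truncation triangle ${}^p\tau^{\leq 0}K \to K \to {}^p\tau^{\geq 1}K$ (\autoref{remark:only-check-coconnective}) — and $(2)$ plus the restriction statement follow at once. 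This is exactly the paper's route, and it is the same triangle trick you deploy in your last paragraph; you just need to use it to \emph{derive} $(2)$ rather than assuming $(2)$ as an input.
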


Here we remind the readers that, since we are working in the almost category, $D^{\leq 0}$ (resp., $D^{\geq 0}$) should be interpreted in the almost category, i.e., those complexes that are almost concentrated in non-positive (resp., non-negative) cohomological degree.


\begin{proof}
The equivalence in the parenthetical in (a) is immediate from derived $p$-completeness. For the rest, note that as $\mathbf{D}_{\mathcal{X}}$ is an auto-equivalence of $D^b_{\acoh}(\mathcal{X})$, the conditions described in the proposition clearly describe a $t$-structure on $D^b_{\acoh}(\mathcal{X})$. To prove the proposition, it then suffices to identify the coconnective parts. Since once this is done, the connective parts are also identified by the exact triangle ${}^p\tau^{\leq 0}(K) \to K \to {}^p\tau^{\geq 1}(K) \xrightarrow{+1}$ applied to both $t$-structures (see \autoref{remark:only-check-coconnective}). Specifically, we need to show that 
\[
\mathbf{D}_{\mathcal{X}}(K)/p \overset{\autoref{eq:coherent-duality-mod-p}}{=} \mathbf{D}_{\mathcal{X}_{p=0}}(K/p) \in D^{\leq 0}
\]
if and only if $\myR\Gamma_x(K_x/p) \in D^{\geq -\dim(\overline{\{x\}})}$ for every $x \in \mathcal X$ (see \autoref{Pervqcbig}(b)(ii))). But this is an almost coherent analog of the following standard result in commutative algebra {applied to $K/p$}: if $R$ is a Noetherian ring admitting a dualizing complex $\omega_R^\mydot$ normalized to ensure $\myR\Gamma_{\mathfrak{m}}(\omega_R^\mydot)$ is in degree $0$ for all maximal ideals $\mathfrak{m}$, then $\RHom_R(-, \omega_R^\mydot)$ identifies $D^{\leq 0, b}_{\coherent}(R)$ with the full subcategory of $D^b_{\coherent}(R)$ spanned by complexes $K$ with $\myR\Gamma_x(K_x) \in D^{\geq -d(x)}$ for all $x \in \mathrm{Spec}(R)$, where $d(x)$ is the dimension function determined by $\omega_R^\mydot$ (i.e., $\myR\Gamma_x((\omega_R^\mydot)_x)$ is concentrated in degree $-d(x)$). We omit the details here, and refer to \cite{BhattLuriepadicRHmodp} for a proof.
\end{proof}

\begin{example}[Perversity of the structure sheaf]
\label{ex:PervStr}
We have $\cO_{\mathcal{X}}[d] \in {}^p D^{\leq 0}$: indeed, this assertion is dual to saying that $\omega_{\mathcal{X}/\cO_C}^{\mydot} \in D^{\geq -d}$, which is a standard property of the normalized dualizing complexes. Alternately, this follows from the definition in \autoref{Pervqcbig} (1), the description $\myR\Gamma_p(\cO_{\mathcal{X}}[d]) = \colim_n \cO_{\mathcal{X}}/p^n[d-1]$, and the fact that $d \geq \dim(\overline{\{x\}})$ for all $x \in \mathcal{X}$. 

The first argument also shows that $\cO_{\mathcal{X}}[d]$ is perverse coherent exactly when $\omega^{\mydot}_{\mathcal{X}/\cO_C}$ is concentrated in degree $-d$, i.e., $\mathcal{X}$ is Cohen-Macaulay over $\cO_C$. 

For future reference, we observe that since containment in ${}^p D^{\leq 0}$ can be checked locally on $\mathcal{X}$, the containment $\cO_{\mathcal{X}}[d] \in {}^p D^{\leq 0}$ formally yields that $D^{\leq -d}_{\qcoh}(\mathcal{X}) \in {}^p D^{\leq 0}_{\qcoh}(\mathcal{X})$. 
\end{example}

\begin{remark}[Finite pushforwards]
\label{PervAlmostCohFinPush}
If $f:\mathcal{X} \to \mathcal{Y}$ is a finite morphism of finitely presented flat $p$-adic formal $\cO_C$-schemes, then $f_*:D^b_{\acoh}(\mathcal{X}) \to D^b_{\acoh}(\mathcal{Y})$ commutes with Grothendieck duality on $D^b_{\acoh}(-)$ and is $t$-exact for the usual $t$-structure. Consequently, $f_*$ is also $t$-exact for the perverse $t$-structure on $D^b_{\acoh}(-)$. 

More generally, the same reasoning shows that for any proper map $f:\mathcal{X} \to \mathcal{Y}$ between finitely presented flat $p$-adic formal $\cO_C$-schemes, the pushforward $f_*:D^b_{\acoh}(\mathcal{X}) \to D^b_{\acoh}(\mathcal{Y})$ is $t$-right exact for the perverse $t$-structure: indeed, the dual statement is that $f_*$ is $t$-left exact for the standard $t$-structure, which is clear.
\end{remark}

\begin{remark}[Smooth pullback]
\label{PervAlmostCohSmoothPull}
If $f:\mathcal{X} \to \mathcal{Y}$ is a smooth morphism of relative dimension $r$ between finitely presented flat $p$-adic formal $\cO_C$-schemes, then $f^*(-)[r]$ is $t$-exact
for the perverse $t$-structure on $D^b_{\acoh}(-)$: indeed, conjugating by Grothendieck duality, this amounts checking that $f^!(-)[-r]$ is $t$-exact 
for the standard $t$-structure, which follows from the smoothness of $f$ thanks to the formula $f^!(-) = f^*(-) \otimes \omega_{\mathcal{X}/\mathcal{Y}}[r]$.
\end{remark}

\subsection{Review: perverse $\mathbf{Z}_p$-sheaves}
\label{subsec:perverseZpSheaves}

In this section, let $C$ be any field where $p$ is invertible.  Let $Y/C$ be a variety of dimension $d$.  We recall some basic definitions and properties of perverse $\mathbf{Z}_p$-sheaves on $Y$. Most references on perverse sheaves focus on the case of $\mathbf{Z}/p^n$ or $\mathbf{Q}_p$-coefficients (primarily as one obtains a self-dual theory in these cases). However, working with $\mathbf{Z}_p$-coefficients is important for our purposes: we cannot invert $p$ as that would loose valuable torsion information, and we cannot work modulo a fixed power of $p$ as we ultimately want to define invariants of singularities for $p$-torsionfree $\mathbf{Z}_p$-schemes. Moreover, it is quite convenient for our applications (especially those that rely on the absolute integral closure) to work with all complexes, instead of merely the constructible ones. Consequently, we explain (\autoref{PervIntBig}) how to construct the perverse $t$-structure on all $\mathbf{Z}_p$-complexes using results of Gabber \cite{Gabber.tStruc}. Using this $t$-structure, we also explain the shape for $\mathbf{Z}_p$-coefficients of certain standard results and constructions for $\mathbf{Z}/p^n$-coefficients, e.g., one has non-self dual $\mathrm{IC}$-sheaves in this setting (\autoref{Def:ICInt}).
{We emphasize that we need to work with $p$-complete objects as opposed to merely constructible ones, since the sheaves $\pi_*\mathbf{Z}_p/p^n$ for $\pi \colon X^+ \to X$ (which are key for our applications) are {\it not} constructible.}

\begin{notation}[\'Etale derived categories]
Write $D_{\pcomp}(Y, \mathbf{Z}_p) \subset D(Y_{\et}, \mathbf{Z})$ for the full subcategory spanned by (derived) $p$-complete objects, i.e., those $K \in D(Y_{\et}, \mathbf{Z})$ such that the natural map $K \to \myR\lim_n K/p^n$ is an isomorphism. The natural maps give an equivalence $D_{\pcomp}(Y, \mathbf{Z}_{p}) = \lim D(Y_{\et}, \mathbf{Z}/p^n)$
provided we work with stable $\infty$-categories (see \cite[Proposition 4.3.9]{GL_Weil}). The ``local cohomology at $p$ functor'' ({$K \mapsto \colim K/p^n[-1]$}) and the ``derived $p$-completion functor'' ({$K \mapsto \myR\lim K/p^n$}) induce mutually inverse equivalences $D_{\pcomp}(Y, \mathbf{Z}_{p}) \simeq D_{\ptors}(Y, \mathbf{Z})$, where the latter denotes the full subcategory of $K \in D(Y_{\et}, \mathbf{Z})$ with $K[1/p] = 0$. As explained above,  this perspective is often helpful as objects in $D_{\ptors}(Y, \mathbf{Z})$ can be written as filtered colimits of constructible objects in $D(Y_{{\et}}, \mathbf{Z}/p^n)$ for varying $n$. {For example, $\myR\Gamma_p(\mathbf Z_p) = \mathbf Q_p / \mathbf Z_p[-1]$.}

\begin{remark}
We remind the readers that the ``constant sheaf" $\mathbf{Z}_p\in D_{\pcomp}(Y, \mathbf{Z}_p)$ denotes $\myR\lim_n \mathbf{Z}/p^n$ calculated on the \'etale site.
In fact, to compute $\myR\lim_n \mathbf{Z}/p^n$ in $D(Y_{\et}, \mathbf{Z})$, one needs to take an injective resolution $I_n^{\mydot}$ of $\mathbf{Z}/p^n$ (in the category of \'etale sheaves) and then compute the homotopy fiber of $\prod_n I_n^{\mydot} \to \prod_n I_n^{\mydot}$ where the map is the identity minus the natural map induced by $I^{\mydot}_n\to I^{\mydot}_{n-1}$. 
Alternatively, one can view $\mathbf{Z}/p^n$ as local systems on the pro-\'etale site of $Y$, then $\myR\lim_n \mathbf{Z}/p^n = \lim_n\mathbf{Z}/p^n$ is a sheaf on $Y_{{\proet}}$ which we also denoted by ${\mathbf{Z}_p}$ by abusing notation, see \cite[Proposition 3.1.10]{BhattScholzeProetale}. More precisely, for any $U\in Y_{{\proet}}$, the value of ${\mathbf{Z}_p}$ on $U$ is the continuous functions from $U$ to $\mathbf{Z}_p$ where $\mathbf{Z}_p$ is given the $p$-adic topology. 
We then have $\myR\lim_n\mathbf{Z}/p^n \in D(Y_{{\et}}, \mathbf{Z})$ is equal to $\myR\nu_*{\mathbf{Z}_p}$ where $\nu: Y_{{\proet}}\to Y_{{\et}}$, 
see \cite[Example 5.2.2]{BhattScholzeProetale}.
\end{remark}

Write $D^b_{\cons}(Y, \mathbf{Z}_p) \subset D_{\pcomp}(Y, \mathbf{Z}_p)$ for the usual constructible derived category, i.e., $K \in D_{\pcomp}(Y, \mathbf{Z}_p)$ such that $K/p \in D^b_{\cons}(Y, \mathbf{Z}/p)$ is constructible in the usual sense.  Write $\omega^{\et}_{Y} \in D^b_{\cons}(Y, \mathbf{Z}_p)$ for the Verdier dualizing complex of $Y$, normalized to ensure that for any closed point $y \in Y$, the object $\myR\Gamma_y(\omega^{\et}_{Y}/p)$ is concentrated in degree $0$\footnote{Unlike the coherent story, as $\omega_{Y}^{\et}$ is $p$-complete, this also means that $\myR\Gamma_y(\omega^{\et}_{Y})$ is concentrated in degree $0$.}; 
write $\mathbf{D}_{Y}^{\et}(-) = \underline{\RHom}_{Y}(-, \omega^{\et}_{Y})$ for the resulting Verdier duality on $D^b_{\cons}(Y, \mathbf{Z}_p)$. 
 
 We shall use the standard  middle perverse $t$-structure $({}^p D^{\leq 0}(Y, \mathbf{Z}/p^n), {}^p D^{\geq 0}(Y, \mathbf{Z}/p^n))$ on  $D(Y, \mathbf{Z}/p^n)$ with heart  $\mathrm{Perv}(Y, \mathbf{Z}/p^n)$. Note that this $t$-structure restricts to a self-dual one on $D^b_{\cons}(Y, \mathbf{Z}/p^n)$ with heart given by the category $\mathrm{Perv}_{\cons}(Y, \mathbf{Z}/p^n)$ of perverse constructible sheaves.
\end{notation}

The reduction mod $p^n$ functor $D(Y_{\et}, \mathbf{Z}/p^{n+1}) \to D(Y_{\et}, \mathbf{Z}/p^n)$ is not $t$-exact for the middle perverse $t$-structure,
so it does not (at least in an obvious way) induce a $t$-structure on the limit $D_{\pcomp}(Y, \mathbf{Z}_p)$. However, its right adjoint (aka the restriction of scalars functor) $D(Y_{\et}, \mathbf{Z}/p^{n}) \to D(Y_{\et}, \mathbf{Z}/p^{n+1})$ is $t$-exact for the middle perverse $t$-structures. Consequently, by passage to the limit, one can define perverse truncation functors on $D_{\ptors}(Y,\mathbf{Z})$,  and then also on $D_{\pcomp}(Y, \mathbf{Z}_p)$ by transport of structure along the complete-torsion equivalence. This analysis is formalized next:

\begin{theorem}[Perverse $\mathbf{Z}_p$-complexes]
\label{PervIntBig}
There is a $\mathfrak{p}^+$-perverse $t$-structure on $D_{\pcomp}(Y, \mathbf{Z}_p)$ characterized by either of the following conditions:
\begin{enumerate}
\item $K \in {}^{\mathfrak{p}^+} D^{\leq 0}$ if and only if $\myR\Gamma_p(K)_{{y}} := \myR\Gamma(\mathrm{Spec}(\cO_{Y,\bar{y}}^{sh}),R\Gamma_p(K)) \in D^{\leq -\dim(\overline{\{y\}})+1}$ for every geometric point $\bar{y} \to Y$ supported at $y \in Y$.
\item $K \in {}^{\mathfrak{p}^+} D^{\geq 0}$ if and only if any of the following equivalent conditions are satisfied:
\begin{enumerate}
\item $\myR\Gamma_y(\myR\Gamma_p(K)_y) := \myR\Gamma_y(\mathrm{Spec}(\cO_{Y,\bar{y}}^{sh}),R\Gamma_p(K)_y) \in D^{\geq -\dim(\overline{\{y\}})+1}$ for all geometric points $\bar{y} \to Y$ supported at $y \in Y$.
\item $\myR\Gamma_y(K_y/p) := \myR\Gamma_y(\mathrm{Spec}(\cO_{Y,\bar{y}}^{sh}),K_y/p) \in D^{\geq -\dim(\overline{\{y\}})}$ for all geometric points $\bar{y} \to Y$ supported at $y \in Y$.
\item $K/p \in {}^p D^{\geq 0}(Y, \mathbf{F}_p)$.
\end{enumerate}
\end{enumerate}
\end{theorem}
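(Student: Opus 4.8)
The plan is to follow, essentially verbatim, the strategy used for the coherent side in the proof of \autoref{Pervqcbig}: one first builds a perverse $t$-structure on the big \'etale derived category $D(Y_{\et},\mathbf{Z})$ via Gabber's machinery for a suitable perversity function, observes that it descends to the subcategory $D_{\ptors}(Y,\mathbf{Z})$ cut out by a $t$-exact localization, and then transports it to $D_{\pcomp}(Y,\mathbf{Z}_p)$ along the complete--torsion equivalence. The content of the theorem is then the unwinding of what the resulting $t$-structure says, together with the equivalence of the three forms of condition (2).

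More precisely, I would define $\mathfrak{p}\colon Y \to \mathbf{Z}$ by $\mathfrak{p}(y) = -\dim\overline{\{y\}}+1$ and check that it is a strong perversity function in the sense of \cite[\S 1]{Gabber.tStruc}: since $\overline{\{y'\}}\subseteq\overline{\{y\}}$ whenever $y\rightsquigarrow y'$, each set $\{y : \mathfrak{p}(y)\geq n\}$ is closed under specialization, and such a set in the Noetherian (indeed excellent Jacobson) scheme $Y$ is a union of closed subsets, hence ind-constructible --- exactly the argument in the proof of \autoref{Pervqcbig}. The shift by $1$ is precisely why we write $\mathfrak{p}^+$: it is the classical middle perversity $y\mapsto -\dim\overline{\{y\}}$ raised by one, accounting for the extra cohomological degree that $\mathbf{Z}_p$ carries over $\mathbf{F}_p$ (equivalently, the $[-1]$-shift in $\myR\Gamma_p$), just as in the $p$-adic formal case of \autoref{Pervqcbig}. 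Gabber's theorem then furnishes a $t$-structure on $D(Y_{\et},\mathbf{Z})$ whose connective (resp.\ coconnective) objects are those $K$ with $i_{\bar y}^*K \in D^{\leq \mathfrak{p}(y)}$ (resp.\ $i_{\bar y}^!K = \myR\Gamma_y(i_{\bar y}^*K)\in D^{\geq \mathfrak{p}(y)}$) for all geometric points $\bar y \to Y$; boundedness of $\mathfrak{p}$ takes care of the $D^+$-condition in \cite[\S 1]{Gabber.tStruc} via \cite[Lemma 3.1]{Gabber.tStruc}, exactly as before.

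Next I would note that extension of scalars $(-)[1/p]\colon D(Y_{\et},\mathbf{Z})\to D(Y_{\et},\mathbf{Z}[1/p])$ is $t$-exact for the $\mathfrak{p}$-perverse $t$-structures on source and target: it is exact on abelian \'etale sheaves, commutes with $i_{\bar y}^*$, and commutes with $i_{\bar y}^! = \myR\Gamma_y$ by flatness and the compatibility of local cohomology with filtered colimits. Hence its kernel $D_{\ptors}(Y,\mathbf{Z}) = \{K : K[1/p]=0\}$ inherits a perverse $t$-structure, and transporting along the complete--torsion equivalence $D_{\ptors}(Y,\mathbf{Z}) \simeq D_{\pcomp}(Y,\mathbf{Z}_p)$ (with $\myR\Gamma_p(K) = \colim_n K/p^n[-1]$ and derived $p$-completion as mutually inverse functors) produces a $t$-structure on $D_{\pcomp}(Y,\mathbf{Z}_p)$. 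Unwinding the transported conditions through $i_{\bar y}^*\myR\Gamma_p(K) = \myR\Gamma_p(K)_y$ gives: $K$ is $\mathfrak{p}^+$-connective iff $\myR\Gamma_p(K)_y \in D^{\leq -\dim\overline{\{y\}}+1}$ for all $y$, which is (1), and $\mathfrak{p}^+$-coconnective iff $\myR\Gamma_y(\myR\Gamma_p(K)_y)\in D^{\geq -\dim\overline{\{y\}}+1}$, which is (2)(a).

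It remains to check that (2)(a), (2)(b), (2)(c) coincide. Since $\myR\Gamma_p(K)\otimes^{L}\mathbf{Z}/p \simeq K/p$ (because $K[1/p]\otimes^{L}\mathbf{Z}/p = 0$), we get $\myR\Gamma_y(\myR\Gamma_p(K)_y)/p \simeq \myR\Gamma_y(K_y/p)$; as $\myR\Gamma_y(\myR\Gamma_p(K)_y)$ is $p^\infty$-torsion, the elementary fact used already in the proof of \autoref{Pervqcbig} --- a $p^\infty$-torsion complex $N$ lies in $D^{\geq a}$ iff $N/p\in D^{\geq a-1}$ --- converts the bound $-\dim\overline{\{y\}}+1$ in (2)(a) into the bound $-\dim\overline{\{y\}}$ in (2)(b). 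Finally, (2)(b) is literally the defining coconnectivity condition for $K/p$ in the middle perverse $t$-structure on $D(Y,\mathbf{F}_p)$, since $(K/p)_{\bar y} = K_{\bar y}/p$; this is (2)(c). No gluing is needed for non-affine $Y$, Gabber's construction applying directly to $D(Y_{\et},\mathbf{Z})$ over any Noetherian $Y$. I expect the only real subtlety to be the bookkeeping: pinning down the correct perversity ($\mathfrak{p} = $ middle$\,+1$) and tracking the single shift introduced by $\myR\Gamma_p$ so that all three forms of (2) land in the right degree; the $t$-exactness of $(-)[1/p]$ and the applicability of Gabber's formalism to $\mathbf{Z}$-coefficients are then routine.
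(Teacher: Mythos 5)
Your proposal is correct and follows essentially the same route as the paper: define the strong perversity function $q(y)=-\dim\overline{\{y\}}+1$, invoke Gabber's results to get $t$-structures on $D(Y,\mathbf{Z})$ and $D(Y,\mathbf{Z}[1/p])$, observe that localization is $t$-exact so the kernel $D_{\ptors}(Y,\mathbf{Z})$ inherits a $t$-structure, transport along the complete--torsion equivalence, and reduce the equivalence of (2)(a)--(c) to the fact that a $p^\infty$-torsion complex $M$ lies in $D^{\geq 0}$ iff $M/p\in D^{\geq -1}$. The only difference is cosmetic (you verify the strong-perversity and $t$-exactness claims in slightly more detail, and order the steps differently).
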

\begin{proof}
First, we explain the equivalence between the conditions appearing in (b). The equivalence $(ii)\Leftrightarrow (iii)$ is definitional. For ${(i)} \Leftrightarrow {(ii)}$, observe that the complex $\myR\Gamma_y(\myR\Gamma_p(K)_y)$ is $p^\infty$-torsion with $\myR\Gamma_y(\myR\Gamma_p(K)_y)/p = \myR\Gamma_y(K_y/p)$; {the equivalence then follows by the general statement that for a $p^\infty$-torsion complex $M$, we have $M \in D^{\geq 0}$ if and only if $M/p \in D^{\geq -1}$. }

We now construct the $t$-structure. Consider the function $q:Y \to \mathbf{Z}$ given by $q(y) = -\dim(\overline{\{y\}})+1$. The function $q$ is a finite bounded strong perversity function in the sense of \cite[\S 1]{Gabber.tStruc}. Consequently, by \cite[\S 6]{Gabber.tStruc}, there is a $t$-structure $\mathfrak{q}$ on either $D(Y, \mathbf{Z})$ or $D(Y, \mathbf{Z}[1/p])$ characterized as follows:
\begin{itemize}
\item $M \in {}^{\mathfrak{q}} D^{\leq 0}$ if and only if $M_y \in D^{\leq -\dim(\overline{\{y\}})+1}$ for every geometric point $\bar{y} \to Y$ supported at $y \in Y$.
\item $M \in {}^{\mathfrak{q}} D^{\geq 0}$ if and only if $\myR\Gamma_y(M_y) \in D^{\geq -\dim(\overline{\{y\}})+1}$ for every geometric point $\bar{y} \to Y$ supported at $y \in Y$.
 \end{itemize}
As the formation of both $*$ and $!$ stalks for objects in $D(Y, \mathbf{Z})$ commutes with filtered colimits, the localization functor $D(Y, \mathbf{Z}) \to D(Y, \mathbf{Z}[1/p])$ is clearly $t$-exact, so its kernel $D_{\ptors}(Y, \mathbf{Z})$ also inherits a $t$-structure $\mathfrak{q}$ characterized as above. Transporting this $t$-structure along the complete-torsion equivalence $D_{\pcomp}(Y, \mathbf{Z}_p) \simeq D_{\ptors}(Y, \mathbf{Z})$ then yields the desired $t$-structure.
\end{proof}

\begin{example}[The case of a point]
Assume $Y=\mathrm{Spec}(C)$, so $D_{\pcomp}(Y,\mathbf{Z}_p)$ identifies with the $p$-complete objects $D_{\pcomp}(\mathbf{Z}_p)$ in the derived category of $\mathbf{Z}_p$-modules.
As $Y$ has a single point, the $\mathfrak{p}^+$-perverse $t$-structure on $D_{\pcomp}(\mathbf{Z}_p)$ is obtained by transporting the standard $t$-structure along the complete-torsion equivalence. In particular, it
 can be described as follows:
\begin{itemize}
\item $K \in {}^{\mathfrak{p}^+} D^{\leq 0}$ exactly when $K \in D^{\leq 1}$ and $H^1(K)$ is $p^\infty$-torsion.
\item $K \in {}^{\mathfrak{p}^+} D^{\geq 0}$ exactly when $K \in D^{\geq 0}$ with $H^0(K)$ being $p$-torsionfree.
\end{itemize}
\end{example}

\begin{remark}[Compatibility with filtered colimits]
The categories ${}^{\mathfrak{p}^+} D^{\leq 0}$ and ${}^{\mathfrak{p}^+} D^{\geq 0}$ appearing in \autoref{PervIntBig} are stable under filtered colimits in $D_{\pcomp}(Y, \mathbf{Z}_p)$. Consequently, the functor of taking $\mathfrak{p}^+$-cohomology sheaves commutes with filtered colimits. 
\end{remark}

\begin{proposition}[Perverse constructible $\mathbf{Z}_p$-sheaves]
\label{PropEtaleIntPerv}
The $\mathfrak{p}^+$-perverse $t$-structure on {\allowbreak} $D_{\pcomp}(Y, \mathbf{Z}_p)$ restricts to a $\mathfrak{p}^+$-perverse $t$-structure on $D^b_{\cons}(Y, \mathbf{Z}_p)$ characterized by either of the following conditions:
\begin{enumerate}
\item  $K \in {}^{\mathfrak{p}^+} D^{\geq 0}$ if and only if $K/p \in {}^p D^{\geq 0}(Y, \mathbf{F}_p)$ (or equivalently $\mathbf{D}_Y^{\et}(K)/p \in {}^p D^{\leq 0}(Y, \mathbf{F}_p)$).
\item $K \in {}^{\mathfrak{p}^+} D^{\leq 0}$ if and only if $K/p^n \in {}^p D^{\leq 1}(Y, \mathbf{Z}/p^n)$ for all (or equivalently any) $n \geq 1$ and such that ${}^p H^1(K/p^n)$ is annihilated by $p^c$ for some fixed $c$ independent of $n$. 
\end{enumerate}
In particular, given $K \in D^b_{\cons}(Y, \mathbf{Z}_p)$, if $K/p \in \mathrm{Perv}(Y, \mathbf{F}_p)$, then $K \in \mathrm{Perv}^+(Y, \mathbf{Z}_p)$.
\end{proposition}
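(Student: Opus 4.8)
The plan is to deduce both characterizations from \autoref{PervIntBig} by a universal‑coefficients analysis of the complex $\myR\Gamma_p(K)$ at geometric points, extracting the uniform exponent $p^c$ from constructibility; the restriction of the $t$‑structure and the concluding assertion then follow formally. For (a) there is essentially nothing to prove: by \autoref{PervIntBig}(b)(iii), ``$K \in {}^{\mathfrak{p}^+}D^{\geq 0}$'' \emph{is} the condition ``$K/p \in {}^pD^{\geq 0}(Y,\mathbf{F}_p)$''. The parenthetical reformulation follows from the fact that mod‑$p$ reduction commutes with Verdier duality on constructible complexes, $\mathbf{D}^{\et}_Y(K)/p \simeq \mathbf{D}_{Y,\mathbf{F}_p}(K/p)$ (because $\omega^{\et}_Y/p$ is the $\mathbf{F}_p$‑Verdier dualizing complex and $\underline{\RHom}_Y(K,-)$ commutes with $-\otimes^{L}\mathbf{F}_p$ for constructible $K$), together with the self‑duality $\mathbf{D}_{Y,\mathbf{F}_p}\big({}^pD^{\leq 0}(Y,\mathbf{F}_p)\big) = {}^pD^{\geq 0}(Y,\mathbf{F}_p)$ of the middle perverse $t$‑structure \cite{BBDG82}.

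For (b), I would begin from \autoref{PervIntBig}(a): $K \in {}^{\mathfrak{p}^+}D^{\leq 0}$ iff $\myR\Gamma_p(K)_{\bar y} \in D^{\leq -\dim\overline{\{y\}}+1}$ for every geometric point $\bar y$ over $y\in Y$. Writing $\myR\Gamma_p(K) = \colim_n (K/p^n)[-1]$ (the local‑cohomology‑at‑$p$ side of the complete–torsion equivalence, with $K/p^n[-1] \simeq \mathrm{fib}(K\xrightarrow{p^n}K)$ and transition maps induced by multiplication by $p$) and using that $*$‑stalks commute with filtered colimits, the universal‑coefficients sequences for $\mathrm{fib}(K_{\bar y}\xrightarrow{p^n}K_{\bar y})$ pass to the colimit to give, for each $j$,
\[
0 \longrightarrow (\mathbf{Q}_p/\mathbf{Z}_p)^{r_{j-1}(\bar y)} \longrightarrow H^j(\myR\Gamma_p(K)_{\bar y}) \longrightarrow \big(H^j(K_{\bar y})\big)_{\mathrm{tors}} \longrightarrow 0,
\]
with $r_i(\bar y) = \rk_{\mathbf{Z}_p} H^i(K_{\bar y})$ (the torsion summands of $H^{j-1}(K_{\bar y})$ drop out of the colimit, the transition maps there being eventually multiplication by $p$ on a finite group). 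Thus \autoref{PervIntBig}(a) becomes: for every $\bar y$, $H^j(K_{\bar y}) = 0$ for $j > -\dim\overline{\{y\}}+1$ and $H^{-\dim\overline{\{y\}}+1}(K_{\bar y})$ is $p$‑power torsion. The vanishing part translates (running universal coefficients backwards) into $K/p^n \in {}^pD^{\leq 1}(Y,\mathbf{Z}/p^n)$ for all $n$, and since the stalks $H^\bullet(K_{\bar y})$ are finitely generated $\mathbf{Z}_p$‑modules (a finitely generated $p$‑divisible $\mathbf{Z}_p$‑module is $0$) already into $K/p \in {}^pD^{\leq 1}(Y,\mathbf{F}_p)$ — this is the ``for all (or equivalently any) $n$''. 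Given $K/p^n \in {}^pD^{\leq 1}$, the perverse‑truncation triangle identifies the top $*$‑stalk $H^{-\dim\overline{\{y\}}+1}((K/p^n)_{\bar y})$, which in this range equals $H^{-\dim\overline{\{y\}}+1}(K_{\bar y})/p^n$ by universal coefficients, with the top $*$‑stalk $H^{-\dim\overline{\{y\}}}(i^*_{\bar y}\,{}^pH^1(K/p^n))$ of the perverse sheaf ${}^pH^1(K/p^n)$. Constructibility of $K$ (finitely many strata, lisse on each) makes the finite groups $H^{-\dim\overline{\{y\}}+1}(K_{\bar y})$ uniformly annihilated by a single $p^c$, hence so is ${}^pH^1(K/p^n)$ for all $n$; conversely a uniform annihilator $p^c$ of ${}^pH^1(K/p^n)$ forces $H^{-\dim\overline{\{y\}}+1}(K_{\bar y})/p^n$ to be $p^c$‑torsion for all $n$, so that group has no free part and is torsion. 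The step I expect to be the main obstacle is the upgrade from ``the top $*$‑stalks of ${}^pH^1(K/p^n)$ are killed by $p^c$'' to ``${}^pH^1(K/p^n)$ is killed by $p^c$'': this calls for a Noetherian induction on the support via recollement, controlling the remaining $*$‑ and $!$‑stalks by the same torsion data (e.g.\ through the spectral sequence ${}^pH^a(H^b(K/p^n)[-b]) \Rightarrow {}^pH^{a+b}(K/p^n)$ and functoriality of multiplication by $p^c$), and is where the constructibility formalism for $\mathbf{Z}/p^n$‑sheaves from \cite{BBDG82} enters essentially.

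Granting (a) and (b), the $\mathfrak{p}^+$‑perverse $t$‑structure restricts to $D^b_{\cons}(Y,\mathbf{Z}_p)$: for $K \in D^b_{\cons}(Y,\mathbf{Z}_p)$ one assembles the middle‑perverse truncations ${}^p\tau^{\leq 1}(K/p^n) \in D^b_{\cons}(Y,\mathbf{Z}/p^n)$ (constructible by \cite{BBDG82}) inside $D_{\pcomp}(Y,\mathbf{Z}_p) = \lim_n D(Y,\mathbf{Z}/p^n)$, and uses the uniform bound from (b) to conclude that the resulting limit object $A$ is constructible and lies in ${}^{\mathfrak{p}^+}D^{\leq 0}$, with $B := \mathrm{cofib}(A \to K)$ then constructible and in ${}^{\mathfrak{p}^+}D^{\geq 1}$; the delicate point is that reduction $\mathbf{Z}/p^{n+1}\to\mathbf{Z}/p^n$ is \emph{not} perverse $t$‑exact, so the ${}^p\tau^{\leq 1}(K/p^n)$ do not reduce to one another on the nose — their discrepancy is a tower of perverse sheaves of exponent $\leq p^c$ with vanishing colimit (exactly the content of the $p^c$‑bound), which is what makes the limit genuinely constructible rather than merely pro‑constructible.

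Finally, for the concluding statement: if $K/p \in \mathrm{Perv}(Y,\mathbf{F}_p)$, then $K/p \in {}^pD^{\geq 0}(Y,\mathbf{F}_p)$ gives $K \in {}^{\mathfrak{p}^+}D^{\geq 0}$ by (a); and $K/p \in {}^pD^{\leq 0}(Y,\mathbf{F}_p)$ gives, via universal coefficients and finite generation of the stalks, $H^j(K_{\bar y}) = 0$ for $j > -\dim\overline{\{y\}}$, so $K_{\bar y} \in D^{\leq -\dim\overline{\{y\}}}$ and hence $\myR\Gamma_p(K)_{\bar y} = \mathrm{fib}(K_{\bar y} \to K_{\bar y}[1/p]) \in D^{\leq -\dim\overline{\{y\}}+1}$, i.e.\ $K \in {}^{\mathfrak{p}^+}D^{\leq 0}$ by \autoref{PervIntBig}(a); therefore $K \in \mathrm{Perv}^+(Y,\mathbf{Z}_p)$.
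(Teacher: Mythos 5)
Your treatment of (a) and of the final implication is correct and matches the paper in substance: (a) is literally \autoref{PervIntBig}(b)(iii), the parenthetical follows from compatibility of Verdier duality with derived reduction mod $p$ and self-duality of the middle perverse $t$-structure, and the last sentence follows from the stalkwise characterization in \autoref{PervIntBig}(a) (the paper instead quotes \autoref{remark:perverse-with-Zpn-coeffs}, but your direct argument is fine). The problem is with (b) and with the restriction of the $t$-structure to $D^b_{\cons}(Y,\mathbf{Z}_p)$, where you leave two gaps that you yourself flag but do not close. First, your universal-coefficients analysis only controls the $*$-stalks of $K/p^n$ in the top perverse degree, and the promotion from ``the relevant $*$-stalks are killed by $p^c$'' to ``${}^pH^1(K/p^n)$ is killed by $p^c$'' (and back) is precisely the hard step; the sketched Noetherian induction via recollement is not carried out, and without it neither direction of the equivalence in (b) is established. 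Second, the claim that the tower of truncations ${}^p\tau^{\leq 1}(K/p^n)$ assembles into a \emph{constructible} object of $D_{\pcomp}(Y,\mathbf{Z}_p)$ — i.e., that the $t$-structure actually restricts to $D^b_{\cons}$ — is asserted with a gesture at the $p^c$-bound but not proven. As written, the proof of (b) and of the restriction statement is incomplete.

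It is worth noting that the paper's proof sidesteps both difficulties entirely, and you could too. The existence of a $t$-structure on $D^b_{\cons}(Y,\mathbf{Z}_p)$ characterized by (a) and (b) is taken from \cite{BBDG82} (see also \cite[\S 4.2]{BhattHansen}); one then only needs to identify it with the restriction of the $\mathfrak{p}^+$-structure of \autoref{PervIntBig}, and by \autoref{remark:only-check-coconnective} a $t$-structure is determined by its coconnective part, so it suffices to match the two coconnective parts — which is exactly your (trivial) observation (a). The connective parts, i.e.\ condition (b), then agree automatically, and no direct comparison between the $p^c$-annihilation condition and the stalk condition on $\myR\Gamma_p(K)$ is ever needed. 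If you insist on a self-contained argument, you must either supply the recollement induction you allude to or restructure the proof along these lines.
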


\begin{remark} \label{remark:perverse-with-Zpn-coeffs} 
Suppose that $K \in D^b_{\cons}(Y, \mathbf{Z}_p)$ is such that $K/p \in {}^p D^{\geq 0}(Y, \mathbf{F}_p)$. Then $K/p^n \in {}^p D^{\geq 0}(Y, \mathbf{Z}/p^n)$ for every $n\geq 1$. This is immediate from the exact triangles:
\[
K/p \to K/p^n \to K/{p^{n-1}} \xrightarrow{+1}
\]
and the long exact sequence of perverse cohomology as the restriction of scalars functor $D^b_{\cons}(Y, \mathbf{Z}/p^{n-1}) \to D^b_{\cons}(Y, \mathbf{Z}/p^n)$ is $t$-exact for the middle perverse t-structure. The same argument shows that if $K/p \in {}^p D^{\leq 0}(Y, \mathbf{F}_p)$, then $K/p^n \in {}^p D^{\leq 0}(Y, \mathbf{Z}/p^n)$ for every $n \geq 1$. In particular, if $K/p \in \mathrm{Perv}(Y, \mathbf{F}_p)$, then $K/p^n \in \mathrm{Perv}(Y, \mathbf{Z}/{p^n})$.
\end{remark}

{In the proofs in this section we shall use that applying natural functors such as $f^!, f^*, f_!, f_*$ to objects in $D_{\pcomp}(\mathbf{Z}_p)$ commute with derived restriction mod $p$, because this is simply tensoring by $\mathbf{Z}_p \xrightarrow{p} \mathbf{Z}_p$.}

\begin{proof}[Proof of \autoref{PropEtaleIntPerv}]
The existence of a $t$-structure characterized by {(a)} or {(b)} is already \cite{BBDG82}; see also \cite[\S 4.2]{BhattHansen} for a relatively recent exposition.  To identify this $t$-structure with the one induced from \autoref{PervIntBig}, it suffices to note that the coconnective parts are the same (see \autoref{remark:only-check-coconnective}), that is (a) holds, which is clear by (iii)(b) of \autoref{PervIntBig}. The last sentence is immediate from (a), (b), and \autoref{remark:perverse-with-Zpn-coeffs}.
\end{proof}


\begin{warning}[Non-self duality]
\label{NoDual}
Unlike its mod $p^n$ counterpart, the perverse $t$-structure from \autoref{PropEtaleIntPerv} is not self-dual. In fact, applying $\mathbf{D}_Y^{\et}$ to the $\mathfrak{p}^+$-perverse $t$-structure gives the so-called $\mathfrak{p}$-perverse $t$-structure on $D^b_{\cons}(Y, \mathbf{Z}_p)$, characterized by the following: $K \in {}^{\mathfrak{p}} D^{\leq 0}$ exactly when $K/p \in {}^p D^{\leq 0}(Y, \mathbf{F}_p)$.
\end{warning}


Despite the above warning, many of the basic results on perverse sheaves with $\mathbf{Z}/p^n$-coefficients extend to $\mathbf{Z}_p$-coefficients. 

\begin{remark}[Acyclicity of finite pushforwards]
\label{FinPushPervEt}
If $f:Y' \to Y$ is a finite morphism, then $f_*:D^b_{\cons}(Y', \mathbf{Z}_p) \to D^b_{\cons}(Y, \mathbf{Z}_p)$ is $t$-exact for the $\mathfrak{p}^+$-perverse $t$-structure. 

To prove this, we will show that if $K \in {}^{\mathfrak{p}^+}D^{\leq 0}(Y', \mathbf{Z}_p)$, then $f_*K \in {}^{\mathfrak{p}^+}D^{\leq 0}(Y, \mathbf{Z}_p)$. The case of ${}^{\mathfrak{p}^+}D^{\geq 0}(Y', \mathbf{Z}_p)$ is analogous but simpler. By the above assumption and \autoref{PropEtaleIntPerv}  we have that $K/p^n \in {}^pD^{\leq 1}(Y', \mathbf Z/p^n)$ and ${}^pH^1(K/p^n)$ is annihilated by $p^c$ for some fixed $c$ independent of $n$. Since $f_*$ is $t$-exact for the middle perverse $t$-structure (see \autoref{remark:all-properties-of-perverse-Fp-coefficients}), we also have that $f_*K/p^n = f_*(K/p^n) \in {}^pD^{\leq 1}(Y, \mathbf Z/p^n)$ and ${}^pH^1(f_*(K/p^n)) = f_*{}^pH^1(K/p^n)$ is annihilated by the same $p^c$. Thus,  $f_*K \in {}^{\mathfrak{p}^+}D^{\leq 0}(Y, \mathbf{Z}_p)$ by \autoref{PropEtaleIntPerv} again. 
\end{remark}

\begin{remark}[Artin vanishing with $\mathbf{Z}_p$-coefficients]
\label{ArtinIntCoeff}
If $j:U \to W$ is an affine \'etale morphism, then the functors $j_!,j_*:D^b_{\cons}(U, \mathbf{Z}_p) \to D^b_{\cons}(W, \mathbf{Z}_p)$ are $t$-exact for the $\mathfrak{p}^+$-perverse $t$-structure. 



{To prove this, we first note that by the Artin vanishing for $\mathbf{Z}/p^n$ coefficient, we know that $j_*$ and $j_!$: $D^b_{\cons}(U, \mathbf{Z}/p^n) \to D^b_{\cons}(W, \mathbf{Z}/p^n)$ are both $t$-exact for the middle perverse $t$-structure (\autoref{remark:all-properties-of-perverse-Fp-coefficients}). Suppose $K\in {}^{\mathfrak{p}^+}D^{\geq 0}$, then $K/p\in {}^{p}D^{\geq 0}(Y, \mathbf{F}_p)$.  Thus $(j_*K)/p=j_*(K/p) \in {}^{p}D^{\geq 0}(Y, \mathbf{F}_p)$, and so $j_*K\in {}^{\mathfrak{p}^+}D^{\geq 0}$ (by \autoref{PropEtaleIntPerv}). If $K\in {}^{\mathfrak{p}^+}D^{\leq 0}$, then $K/p^n\in {}^{p}D^{\leq 1}(Y, \bZ/p^n)$ and ${}^pH^1(K/p^n)$ is annihilated by $p^c$ for some $c$ independent of $n$. It follows that $(j_*K)/p^n=j_*(K/p^n) \in{}^{p}D^{\leq 1}(Y, \mathbf{Z}/p^n)$ and that ${}^pH^1((j_*K)/p^n)= {}^pH^1(j_*(K/p^n))=j_*{}^pH^1(K/p^n)$ is annihilated by $p^c$. Therefore $j_*K\in{}^{\mathfrak{p}^+}D^{\leq 0}$ (again, by \autoref{PropEtaleIntPerv}). This proves the $t$-exactness of $j_*$ for the $\mathfrak{p}^+$-perverse $t$-structure (the same argument shows that $j_*$ is also $t$-exact for the $\mathfrak{p}$-perverse $t$-structure). Now for $j_!$, simply notice that $j_!= \mathbf{D}_W^{\et} j_* \mathbf{D}_U^{\et}$ and so the $t$-exactness of $j_!$ for the $\mathfrak{p}^+$-perverse (resp.\ $\mathfrak{p}$-perverse $t$-structure) follows from the $t$-exactness of $j_*$ for the  $\mathfrak{p}$-perverse (resp. $\mathfrak{p}^+$-perverse $t$-structure).}
\end{remark}

\begin{example}[The constant sheaf is perverse]
\label{ex:SmoothArtin}
If $Y$ is smooth, then $\mathbf{Z}_p[d]$ is $\mathfrak{p}^+$-perverse by the last sentence of \autoref{PropEtaleIntPerv}. Note that $\mathbf{Z}_p[d]$ is also self-dual (ignoring Tate twists), so $\mathbf{Z}_p[d]$ is also $\mathfrak{p}$-perverse in the sense of \autoref{NoDual}. Without smoothness assumptions on $Y$, we always have $\mathbf{Z}_p[d] \in {}^{\mathfrak{p}^+} D^{\leq 0}$: this follows as $\mathbf{Z}/p^n[d] \in {}^p D^{\leq 0}(Y, \mathbf{Z}/p^n)$. 
\end{example}

To define $\mathrm{IC}$-sheaves, we need the following observation.

\begin{lemma}
\label{lem:NoSubQuotConstSmooth}
Suppose $Y$ is smooth of dimension $d$. For any closed subset $Z \subset Y$ of dimension $\leq d-1$, the $\mathfrak{p}^+$-perverse sheaf $\mathbf{Z}_p[d]$ (\autoref{ex:SmoothArtin}) has no non-trivial subobject or quotient object supported on $Z$. 
\end{lemma}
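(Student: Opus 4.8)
The plan is to reformulate the statement, via the recollement (gluing) formalism, as two vanishing assertions about the $!$- and $*$-restrictions of $\mathbf{Z}_p[d]$ to $Z$, and then to verify those by reducing modulo $p$ and using self-duality of the mod-$p$ theory. Write $i\colon Z\hookrightarrow Y$ for the closed immersion (with reduced structure) and $j\colon U:=Y\setminus Z\hookrightarrow Y$ for the open complement. Since $\mathbf{Z}_p[d]$ is $\mathfrak{p}^+$-perverse, $Y$ is equidimensional of dimension $d$ (\autoref{ex:SmoothArtin}), hence $\dim\overline{\{z\}}\le d-1$ for every $z\in Z$. Any $\mathfrak{p}^+$-perverse sheaf $\mathcal{G}$ on $Y$ set-theoretically supported on $Z$ has the form $\mathcal{G}=i_*\mathcal{G}'$ with $\mathcal{G}'\in\mathrm{Perv}^+(Z,\mathbf{Z}_p)$: indeed $j^*\mathcal{G}=0$, so the recollement triangle $j_!j^*\mathcal{G}\to\mathcal{G}\to i_*i^*\mathcal{G}\xrightarrow{+1}$ gives $\mathcal{G}\cong i_*i^*\mathcal{G}$, and $i_*$ is fully faithful and $t$-exact for the perverse $t$-structure (\autoref{FinPushPervEt}). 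Using the adjunctions $(i_*=i_!,\,i^!)$ and $(i^*,\,i_*)$ and full faithfulness of the perverse heart inside $D_{\pcomp}(-,\mathbf{Z}_p)$, we get
\[\Hom(\mathcal{G},\mathbf{Z}_p[d])=\Hom(\mathcal{G}',i^!\mathbf{Z}_p[d]),\qquad\Hom(\mathbf{Z}_p[d],\mathcal{G})=\Hom(i^*\mathbf{Z}_p[d],\mathcal{G}').\]
As $\mathcal{G}'$ is perverse, the left-hand group vanishes once $i^!\mathbf{Z}_p[d]\in{}^{\mathfrak{p}^+}D^{\ge 1}(Z,\mathbf{Z}_p)$, and the right-hand group vanishes once $i^*\mathbf{Z}_p[d]\in{}^{\mathfrak{p}^+}D^{\le-1}(Z,\mathbf{Z}_p)$ — in each case by the standard vanishing of $\Hom$ from a connective object into a strictly coconnective one (applied after a shift in the second case). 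In either case $\mathcal{G}=0$, giving the absence of a nonzero subobject, respectively quotient object, supported on $Z$. So it suffices to prove these two containments.

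For the $*$-restriction, $i^*\mathbf{Z}_p[d]=(\mathbf{Z}_p)_Z[d]$ since the pullback of a constant sheaf is constant. The standard bound for shifted constant sheaves gives $(\mathbf{Z}/p^n)_Z[\dim Z]\in{}^pD^{\le 0}(Z,\mathbf{Z}/p^n)$ for all $n$, hence $(\mathbf{Z}/p^n)_Z[d]\in{}^pD^{\le-(d-\dim Z)}\subseteq{}^pD^{\le-1}$ because $d-\dim Z\ge 1$; in particular ${}^pH^1((\mathbf{Z}/p^n)_Z[d])=0$. By the characterization of ${}^{\mathfrak{p}^+}D^{\le 0}$ in \autoref{PropEtaleIntPerv}(b), this forces $(\mathbf{Z}_p)_Z[d]\in{}^{\mathfrak{p}^+}D^{\le-1}(Z,\mathbf{Z}_p)$. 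For the $!$-restriction, \autoref{PropEtaleIntPerv}(a) reduces the claim $i^!\mathbf{Z}_p[d]\in{}^{\mathfrak{p}^+}D^{\ge 1}(Z,\mathbf{Z}_p)$ to $(i^!\mathbf{Z}_p[d])/p\in{}^pD^{\ge 1}(Z,\mathbf{F}_p)$, and since $i^!$ commutes with reduction mod $p$ this reads $i^!(\mathbf{F}_p[d])\in{}^pD^{\ge 1}(Z,\mathbf{F}_p)$. Applying the self-dual Verdier duality $\mathbf{D}_Z$ on $\mathbf{F}_p$-sheaves and the identity $\mathbf{D}_Z\circ i^!=i^*\circ\mathbf{D}_Y$, this becomes $i^*\mathbf{D}_Y(\mathbf{F}_p[d])\in{}^pD^{\le-1}(Z,\mathbf{F}_p)$; as $Y$ is smooth of pure dimension $d$ its $\mathbf{F}_p$-dualizing complex is $\mathbf{F}_p(d)[2d]$, so $\mathbf{D}_Y(\mathbf{F}_p[d])=\mathbf{F}_p(d)[d]$ and $i^*\mathbf{D}_Y(\mathbf{F}_p[d])=(\mathbf{F}_p)_Z(d)[d]\cong(\mathbf{F}_p)_Z[d]$ — Tate twists being invisible to the perverse $t$-structure — which is exactly the mod-$p$ content of the previous sentence.

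The only delicate point is that the $\mathfrak{p}^+$-perverse $t$-structure on $D^b_{\cons}(Z,\mathbf{Z}_p)$ is not self-dual (\autoref{NoDual}), so one cannot dualize directly at the integral level; the fix, used above, is that reducing mod $p$ turns the $\mathfrak{p}^+$-conditions into ordinary middle-perversity conditions with $\mathbf{F}_p$-coefficients, where $\mathbf{D}_Z$ swaps ${}^pD^{\le-1}$ with ${}^pD^{\ge 1}$ and the computation of $\mathbf{D}_Y(\mathbf{F}_p[d])$ on a smooth variety is elementary. Everything else is routine bookkeeping with the recollement triangles and the two adjunctions.
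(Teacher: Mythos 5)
Your proof is correct and follows essentially the same route as the paper's: both reduce the claim to showing $i^!\mathbf{Z}_p[d]\in{}^{\mathfrak{p}^+}D^{\geq 1}$ and $i^*\mathbf{Z}_p[d]\in{}^{\mathfrak{p}^+}D^{\leq -1}$, verify the latter directly from \autoref{PropEtaleIntPerv}(b), and verify the former by Verdier duality on the smooth $Y$ (you dualize after reducing mod $p$, the paper dualizes integrally and then reduces mod $p$ — the same computation). Your explicit recollement/adjunction bookkeeping merely spells out what the paper compresses into ``since $i^!$ is left $t$-exact'' and ``since $i^*$ is right $t$-exact.''
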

\begin{proof}
Write $i:Z \to Y$ for the defining closed immersion. To rule out subobjects supported on $Z$, it suffices to show that $i^! \mathbf{Z}_p[d] \in {}^{\mathfrak{p}^+} D^{\geq 1}$ {since $i^!$ is left $t$-exact (\autoref{remark:all-properties-of-perverse-Fp-coefficients})}. Using \autoref{PropEtaleIntPerv} (1), this amounts to checking that $\mathbf{D}_Z^{\et}(i^! \mathbf{Z}_p[d])/p \in {}^p D^{\leq -1}$.
But using the self-duality of $\mathbf{Z}_p[d]$ resulting from the smoothness of $Y$, we  have 
\[ \mathbf{D}_Z^{\et}(i^! \mathbf{Z}_p[d])/p = i^* \mathbf{D}_Y^{\et}(\mathbf{Z}_p[d])/p = i^* \mathbf{Z}_p[d]/p = \mathbf{F}_p[d],\]
so the claim follows as $\dim(Z) < d$. 

Similarly, to rule out quotient objects supported on $Z$, we must show that $i^* \mathbf{Z}_p[d] \in {}^{\mathfrak{p}^+} D^{\leq -1}$ {since $i^*$ is right $t$-exact}; this is immediate from \autoref{PropEtaleIntPerv} {(b)} and does not use the smoothness of $Y$.
\end{proof}

\begin{definition}[The $\mathrm{IC}$-sheaf with integral coefficients]
\label{Def:ICInt}
If $j:U \subset Y$ is a dense affine open immersion with $U_{\reduced}$ smooth, then we define the $\mathfrak{p}^+$-intersection cohomology complex as
\[ \mathrm{IC}^+_{Y,\mathbf{Z}_p} = \mathrm{Image}\left(j_! \mathbf{Z}_p[d] \to j_* \mathbf{Z}_p[d]\right) \in  D^b_{\cons}(Y, \mathbf{Z}_p)^{\heartsuit},\]
where we implicitly use \autoref{ArtinIntCoeff} to make sense of the above expression. One checks that this object is canonically independent of $U$ by \autoref{lem:NoSubQuotConstSmooth}.
\end{definition}

As the $\mathfrak{p}$ and the $\mathfrak{p}^+$-perverse $t$-structures are different, the $\mathrm{IC}$-sheaf defined above need not be self-dual in general. {Specifically, applying duality to $j_! \mathbf{Z}_p[d] \to j_* \mathbf{Z}_p[d]$ preserves the map, but the images are taken in a different perverse structure, and so they need not be equal.}

\begin{example}
If $Y$ is smooth, then $\mathrm{IC}^+_{Y,\mathbf{Z}_p} = \mathbf{Z}_p[d]$. {Indeed, by \autoref{lem:NoSubQuotConstSmooth}, we have that $\mathbf Z_p[d]$ has no non-trivial subobject supported on $Y - U$, and so the natural map $\mathbf Z_p[d] \to j_*\mathbf Z_p[d]$ is injective in the category of $\mathfrak{p}^+$-perverse sheaves. In particular, the factorization 
\[
j_! \mathbf{Z}_p[d] \to \mathbf{Z}_p[d] \to j_* \mathbf{Z}_p[d]
\]
yields an injective map $\mathrm{IC}^+_{Y,\mathbf{Z}_p} \to \mathbf Z_p[d]$ which is an isomorphism on $U$. Since $\mathbf Z_p[d]$ has no non-trivial quotient objects supported on $Y - U$, this map $\mathrm{IC}^+_{Y,\mathbf{Z}_p} \to \mathbf Z_p[d]$ is in fact an isomorphism.} 
\end{example}

We shall repeatedly use the canonical map relating the shifted constant sheaf with the $\mathrm{IC}$-sheaf.

\begin{proposition}[The fundamental class in intersection cohomology]
\label{CanMapIC}
There is a unique morphism 
\[ \mathbf{Z}_p[d] \to \mathrm{IC}^+_{Y,\mathbf{Z}_p}\]
on $Y$ characterized by the requirement that its formation commutes with passing to open subsets and that it equals the identity over $(Y_{\reduced})_{sm}$. Moreover, in the factorization
\[ \mathbf{Z}_p[d] \to {}^{\mathfrak{p}^+} \tau^{\geq 0} \mathbf{Z}_p[d] \simeq {}^{\mathfrak{p}^+} H^0(\mathbf{Z}_p[d]) \to \mathrm{IC}^+_{Y,\mathbf{Z}_p}\]
induced by the last sentence of \autoref{ex:SmoothArtin}, the second map is a surjection of $\mathfrak{p}^+$-perverse sheaves.
\end{proposition}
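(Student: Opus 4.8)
The plan is to construct $\phi$ by passing through the $\mathfrak p^+$-perversification ${}^{\mathfrak p^+}H^0(\mathbf{Z}_p[d])$ and identifying $\mathrm{IC}^+_{Y,\mathbf{Z}_p}$ with the image of a natural map out of it. First fix a dense affine open immersion $j\colon U\hookrightarrow Y$ with $U_{\reduced}$ smooth and $U\subseteq(Y_{\reduced})_{sm}$, and let $i\colon Z:=Y\setminus U\hookrightarrow Y$ be the reduced closed complement; since $U$ is dense, $\dim Z\le d-1$. By \autoref{ex:SmoothArtin} and Artin vanishing (\autoref{ArtinIntCoeff}), the objects $j_!\mathbf{Z}_p[d]$ and $j_*\mathbf{Z}_p[d]$ are $\mathfrak p^+$-perverse, and $\mathrm{IC}^+_{Y,\mathbf{Z}_p}=\mathrm{Image}\big(\alpha\colon j_!\mathbf{Z}_p[d]\to j_*\mathbf{Z}_p[d]\big)$ in $\mathrm{Perv}^+(Y,\mathbf{Z}_p)$, where $\alpha$ is the composite of the counit $u\colon j_!\mathbf{Z}_p[d]\to\mathbf{Z}_p[d]$ with the unit $v\colon\mathbf{Z}_p[d]\to j_*\mathbf{Z}_p[d]$.

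The key point, which I expect to be the main obstacle, is the connectivity statement $\mathrm{cone}(u)\in{}^{\mathfrak p^+}D^{\le -1}$. Applying the recollement triangle $j_!j^*\to\mathrm{id}\to i_*i^*\xrightarrow{+1}$ to $\mathbf{Z}_p[d]$ identifies $\mathrm{cone}(u)$ with $i_*\mathbf{Z}_p[d]$, where on the right $\mathbf{Z}_p[d]$ denotes the constant sheaf on $Z$ placed in degree $-d$. As $\dim Z\le d-1$, we have $\mathbf{Z}/p^n[d]|_Z\in{}^pD^{\le -1}(Z,\mathbf{Z}/p^n)$ for every $n$; since $i_*$ is $t$-exact for the middle perverse $t$-structure (\autoref{remark:all-properties-of-perverse-Fp-coefficients}(e)) and commutes with reduction mod $p^n$, this gives $\mathrm{cone}(u)/p^n\in{}^pD^{\le -1}(Y,\mathbf{Z}/p^n)$ for all $n$, and then \autoref{PropEtaleIntPerv}(b) applied to $\mathrm{cone}(u)[-1]$ yields $\mathrm{cone}(u)\in{}^{\mathfrak p^+}D^{\le -1}$, so that ${}^{\mathfrak p^+}H^{i}(\mathrm{cone}(u))=0$ for $i\ge 0$. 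The only genuine subtlety here is the passage between $\mathbf{Z}_p$- and $\mathbf{Z}/p^n$-coefficients, which is exactly what the mod-$p^n$ characterization in \autoref{PropEtaleIntPerv}(b) is for; everything else below is formal bookkeeping with the $\mathfrak p^+$-perverse $t$-structure.

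Granting this, I would finish the construction as follows. Since $\mathbf{Z}_p[d]\in{}^{\mathfrak p^+}D^{\le 0}$ (\autoref{ex:SmoothArtin}) and $j_!\mathbf{Z}_p[d]$ is perverse, the long exact $\mathfrak p^+$-perverse cohomology sequence of $j_!\mathbf{Z}_p[d]\xrightarrow{u}\mathbf{Z}_p[d]\to\mathrm{cone}(u)$ gives an exact sequence $j_!\mathbf{Z}_p[d]\xrightarrow{f}{}^{\mathfrak p^+}H^0(\mathbf{Z}_p[d])\to{}^{\mathfrak p^+}H^0(\mathrm{cone}(u))=0$, so $f$ is a surjection of perverse sheaves. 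Writing $g:={}^{\mathfrak p^+}H^0(v)\colon{}^{\mathfrak p^+}H^0(\mathbf{Z}_p[d])\to{}^{\mathfrak p^+}H^0(j_*\mathbf{Z}_p[d])=j_*\mathbf{Z}_p[d]$, functoriality of ${}^{\mathfrak p^+}H^0$ gives $g\circ f=\alpha$, hence (using surjectivity of $f$) $\mathrm{Image}(g)=\mathrm{Image}(\alpha)=\mathrm{IC}^+_{Y,\mathbf{Z}_p}$, so $g$ corestricts to a surjection $\bar g\colon{}^{\mathfrak p^+}H^0(\mathbf{Z}_p[d])\twoheadrightarrow\mathrm{IC}^+_{Y,\mathbf{Z}_p}$. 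I then define $\phi$ as the composite $\mathbf{Z}_p[d]\to{}^{\mathfrak p^+}H^0(\mathbf{Z}_p[d])\xrightarrow{\bar g}\mathrm{IC}^+_{Y,\mathbf{Z}_p}$, which realizes the claimed factorization with surjective second map.

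Finally, I would verify the two characterizing properties. Compatibility with open restriction $W\hookrightarrow Y$ is formal: such restriction is $\mathfrak p^+$-$t$-exact (\autoref{remark:all-properties-of-perverse-Fp-coefficients}(g), lifted to $\mathbf{Z}_p$-coefficients through \autoref{PropEtaleIntPerv}), hence commutes with $j_!$, $j_*$, $i_*$, ${}^{\mathfrak p^+}H^0$ and the formation of images, while $\mathrm{IC}^+_{Y,\mathbf{Z}_p}|_W=\mathrm{IC}^+_{W,\mathbf{Z}_p}$ since the intersection complex commutes with restriction to open subschemes. Over $V:=(Y_{\reduced})_{sm}$ we therefore get $\phi|_V=\phi_V$, and for the smooth $V$ one has $\mathrm{IC}^+_{V,\mathbf{Z}_p}=\mathbf{Z}_p[d]$ and ${}^{\mathfrak p^+}H^0(\mathbf{Z}_p[d]|_V)=\mathbf{Z}_p[d]|_V$ (by the Example preceding the proposition), so unwinding the construction identifies $\phi_V$ with the identity. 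For uniqueness, the truncation adjunction $\Hom(\mathbf{Z}_p[d],\mathrm{IC}^+_{Y,\mathbf{Z}_p})=\Hom_{\mathrm{Perv}^+}({}^{\mathfrak p^+}H^0(\mathbf{Z}_p[d]),\mathrm{IC}^+_{Y,\mathbf{Z}_p})$ (valid since $\mathbf{Z}_p[d]\in{}^{\mathfrak p^+}D^{\le 0}$ and $\mathrm{IC}^+_{Y,\mathbf{Z}_p}\in{}^{\mathfrak p^+}D^{\ge 0}$) reduces us to morphisms of perverse sheaves; the difference of two candidates agreeing with the identity over $V$ is then a perverse-sheaf map vanishing over $V$, whose image is a perverse subsheaf of $\mathrm{IC}^+_{Y,\mathbf{Z}_p}\subseteq j_*\mathbf{Z}_p[d]$ supported on $Y\setminus V\subseteq Z$, and there is no nonzero such subsheaf because $\Hom(i_*\mathcal G,j_*\mathbf{Z}_p[d])=\Hom(j^*i_*\mathcal G,\mathbf{Z}_p[d])=0$ for any $\mathcal G$. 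Hence the two candidates coincide.
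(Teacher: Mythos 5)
Your proof is correct and follows essentially the same route as the paper: both arguments hinge on the single fact that $i_*i^*\mathbf{Z}_p[d]\in{}^{\mathfrak{p}^+}D^{\leq -1}$ for the closed complement of a dense smooth affine open (the second half of \autoref{lem:NoSubQuotConstSmooth}), the paper using it to kill the composite $\mathbf{Z}_p[d]\to j_*\mathbf{Z}_p[d]\to Q$ while you use it (via the recollement triangle) to make $j_!\mathbf{Z}_p[d]\to{}^{\mathfrak{p}^+}H^0(\mathbf{Z}_p[d])$ surjective, which then yields the factorization and the surjectivity onto $\mathrm{IC}^+_{Y,\mathbf{Z}_p}$ in one stroke. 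Your uniqueness argument (no nonzero perverse subobject of $j_*\mathbf{Z}_p[d]$ supported on the closed complement, by adjunction) is a mild variant of the paper's implicit one and is equally valid.
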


\begin{proof}
To construct this map, we may assume $Y$ is reduced. Choose $j:U \hookrightarrow Y$ a sufficiently small dense affine open with $U$ smooth. Then we have a natural map $\mathbf{Z}_p[d] \to j_* \mathbf{Z}_p[d]$. We first show that this map naturally factors as $\mathbf{Z}_p[d]\to \mathrm{IC}^+_{Y,\mathbf{Z}_p} \to j_* \mathbf{Z}_p[d]$; this also immediately shows the uniqueness. To see this note that the quotient $Q = j_* \mathbf{Z}_p[d] / \mathrm{IC}^+_{Y,\mathbf{Z}_p}$ is a $\mathfrak{p}^+$-perverse sheaf on $Y$ supported on $Z=Y-U$, so the map $\mathbf{Z}_p[d] \to Q$ factors canonically as $$\mathbf{Z}_p[d] \xrightarrow{a} i_* i^* \mathbf{Z}_p[d] \xrightarrow{b} Q.$$ But as we have seen in the second half of \autoref{lem:NoSubQuotConstSmooth} that $i_* i^* \mathbf{Z}_p[d] \in  {}^{\mathfrak{p}^+} D^{\leq -1}$ (here we used the fact that $i_*$ is $t$-exact, see \autoref{remark:all-properties-of-perverse-Fp-coefficients}). Thus the map $b$ is $0$ and so we have the factorization as wanted. 

To see surjectivity, following the above construction, it suffices to show that the composition $j_! \mathbf{Z}_p[d] \to \mathbf{Z}_p[d] \to \mathrm{IC}^+_{Y,\mathbf{Z}_p}$ is the natural (surjective) map appearing in the definition of $\mathrm{IC}^+_{Y,\mathbf{Z}_p}$ as an image. By adjunction, it suffices to check the claim after applying $j^*$, where it is clear.
\end{proof}
\section{A summary of the Riemann-Hilbert functor and related results}
\label{sec:ReviewRH}

In this section, we recall some results that will appear in \cite{BhattLuriepadicRHmodp} and will be used below\footnote{While the natural geometric context for \cite{BhattLuriepadicRHmodp} is that of $p$-adic formal schemes and their rigid-analytic generic fibres, in the interest of keeping the exposition more accessible, we have tried to formulate statements entirely in the language of schemes; this leads to certain properness assumptions in Theorem~\ref{thm:RH}.}.

\begin{notation}
\label{not:RH1}
Fix a perfectoid extension $C/\mathbf{Q}_p$. We adopt the following convention: fraktur fonts refer to schemes over $\cO_C$, while the corresponding calligraphic and roman fonts refer to the $p$-completion over $\cO_C$. Thus, if $\mathfrak{X}$ is a $\cO_C$-scheme, then $\mathcal{X}$ is its $p$-adic formal completion (and hence a $p$-adic formal $\cO_C$-scheme). We will write $\mathfrak{X}_C$ to denote its generic fibre. 
\end{notation}

\begin{theorem}[The Riemann-Hilbert functor]
\label{thm:RH}
Let $\mathfrak{X}$ be a finitely presented flat $\cO_C$-scheme. There is a naturally defined exact coproduct preserving lax symmetric monoidal functor
\[ \RH:D_{\pcomp}(\genX, \mathbf{Z}_p) \to  D_{\qcoh}(\mathcal{X})^a\]
with the following features:
\begin{enumerate}
\item Almost coherence:  For constructible $F \in D^b_{\cons}(X, \mathbf{Z}_p)$, we have 
\[ \RH(F) \in D^b_{\acoh}(\mathcal{X}) \subset D_{\qcoh}(\mathcal{X})^a.\]
If $\mathfrak{X}/\cO_C$ is proper, then formal GAGA gives $D^b_{\acoh}(\mathfrak{X}) \simeq D^b_{\acoh}(\mathcal{X})$, so we can then regard the $p$-localization of $\RH(-)$ as an exact functor
\[ \RH:D^b_{\cons}(\genX,\mathbf{Q}_p) \to D^b_{\coherent}(\genX).\]

\item The value on the constant sheaf: There is a natural isomorphism.
\[ \RH(\mathbf{Z}_p) \simeq \cO_{\mathcal{X},\pfd} \in D_{\qcoh}(\mathcal{X})^a.\] 
Moreover, if $\mathfrak{X}$ is proper with $\genX$ smooth and $j:U \to \genX$ is an open immersion whose complement $D$ is an SNC divisor (possibly empty), then we have a (non-canonical) decomposition
\[ \RH(j_* \mathbf{Q}_p) \simeq \bigoplus_i \Omega^i_{(\genX,D)/C}(-i)[-i] \in D^b_{\coherent}(\genX),\]
where $\Omega^i_{(\genX,D)/C}$ is the sheaf of $i$-forms with logarithmic poles along $D$. Such decompositions can be chosen compatibly for any finite diagram of such pairs $(\mathfrak{X},j:U \to \mathfrak{X}_C)$.  Moreover, if $\mathfrak{X}$ and $j$ are defined over a discretely valued subfield $K \subset C$, there is a unique such decomposition compatible with Galois actions; it is functorial in the descent to $K$.


\item Pullback compatibility: Via the first isomorphism in (b), we obtain an induced coproduct preserving lax symmetric monoidal exact functor
 \[ \RH:D_{\pcomp}(\genX, \mathbf{Z}_p) \to  D_{\rm qc}(\mathcal{X}, \cO_{\mathcal{X},\pfd})^a.\]
This functor is symmetric monoidal and commutes with arbitrary pullbacks.

\item Proper pushforward compatibility: If $f:\mathfrak{Y} \to \mathfrak{X}$ is a proper map of finitely presented flat $\cO_C$-schemes with generic fibre $f_{C}$, then there is a natural isomorphism $\RH \circ \myR f_{C,*} \simeq \myR f_* \circ \RH$ of functors $D_{\pcomp}(Y, \mathbf{Z}_p) \to  D_{\rm qc}(\mathcal{X})^a$.

\item Duality: The functor $\RH:D^b_{\cons}(\genX, \mathbf{Z}_p) \to D^b_{\acoh}(\mathcal{X})$ intertwines Verdier duality on the source with Grothendieck duality on the target. 

\item Perverse $t$-structures: the functor $\RH:D_{\pcomp}(\genX,\mathbf{Z}_p) \to D_{\qcoh}(\mathcal{X})^a$ is:
\begin{itemize}
\item $t$-right exact for the $\mathfrak{p}^+$-perverse $t$-structure on the source and the standard $t$-structure on the target.
\item $t$-left exact for the $\mathfrak{p}^+$-perverse $t$-structures on the source and the perverse $t$-structure on the target.
\end{itemize}
\end{enumerate}
\end{theorem}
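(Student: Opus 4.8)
The plan is to build $\RH$ directly from the pro-\'etale integral structure sheaf on the rigid generic fibre — this is essentially the construction of \cite{BhattLuriepadicRHmodp} — then reduce each assertion to the affine-local case and, by d\'evissage along a constructible stratification, to the shifted constant sheaf (or a local system) on a smooth variety, importing the single genuinely analytic input, finiteness of $p$-adic \'etale cohomology of rigid spaces, only for the coherence statement. Concretely: let $\nu\colon(\genX)_{\proet}\to\mathcal{X}$ be the natural morphism of ringed sites from the pro-\'etale site of the generic fibre to the formal scheme, let $\widehat{\cO}^+$ be the completed integral structure sheaf, and define
\[
\RH(F) := \myR\nu_*\big(\nu^* F \otimes^{\myL}_{\underline{\mathbf{Z}_p}} \widehat{\cO}^+\big)
\]
in the almost category for the setup $(\cO_C,\mathfrak{m}_{\cO_C})$. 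Exactness, coproduct-preservation and the lax symmetric monoidal structure are formal from this formula, and landing in $D_{\qcoh}(\mathcal{X})^a$ uses almost flatness of $\widehat{\cO}^+$ over $\cO_{\mathcal{X}}$ after perfectoidification. Part (c) then follows once (b) is in hand, since pullback of $\widehat{\cO}^+$ along a finitely presented map of formal schemes is almost $\widehat{\cO}^+$ of the target, and $p$-adic smooth and proper base change control the interaction of $\myR\nu_*$ with pullback.

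For (b), the value $\RH(\mathbf{Z}_p)=\myR\nu_*\widehat{\cO}^+$ is, by the arc/v-descent description of perfectoidification, the perfectoidification $\cO_{\mathcal{X},\pfd}$ of \cite{BhattScholzepPrismaticCohomology} — the single local computation everything rests on. The logarithmic Hodge decomposition of $\RH(j_*\mathbf{Q}_p)$ after inverting $p$ is $p$-adic Hodge theory: in the smooth proper case $\myR\nu_*\widehat{\cO}^+[1/p]\simeq\bigoplus_i\Omega^i_{\genX/C}(-i)[-i]$ is a form of the degeneration of the Hodge--Tate spectral sequence, the SNC-logarithmic variant and the compatibility over a finite diagram of pairs come out of the same argument, and the Galois-equivariant refinement over a discretely valued $K\subset C$ is the usual decompletion. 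Part (d) is Scholze's primitive comparison theorem — $\myR f_{C,*}$ commutes up to almost isomorphism with $-\otimes\widehat{\cO}^+$ for proper $f_C$ — together with $\myR\nu_*$ commuting with proper pushforward on the formal side.

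The substantive step is (a), almost coherence of $\RH(F)$ for constructible $F$. I would run Noetherian induction on the support of a constructible stratification, using the localization triangle attached to an open--closed decomposition and the compatibility of $\RH$ with $j^*$ and with proper (in particular closed-immersion) pushforward, to reduce to $F$ a finite local system in degree $0$ on a smooth affine; there $\RH(F)=\myR\nu_*(\nu^*F\otimes\widehat{\cO}^+)$ is almost coherent by Scholze's finiteness theorem for $p$-adic \'etale cohomology of smooth rigid spaces in its relative, almost-integral form. Formal GAGA for almost coherent sheaves upgrades the proper case to $D^b_{\acoh}(\mathfrak{X})$ and, after inverting $p$, to a functor into $D^b_{\coherent}(\genX)$. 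Given (a), part (e) is the $\widehat{\cO}^+$-linear incarnation of Poincar\'e--Verdier duality: construct the trace $\RH(\omega^{\et}_{\genX})\to\omega^{\mydot}_{\mathcal{X}/\cO_C}$ dual to the unit $\cO_{\mathcal{X}}\to\RH(\mathbf{Z}_p)=\cO_{\mathcal{X},\pfd}$, check perfectness of the induced pairing on a smooth affine using (b), and propagate by d\'evissage and (a). I expect this finiteness/almost-coherence step — and carrying the almost-mathematics bookkeeping uniformly through the gluing — to be the main obstacle; everything else is either formal or an appeal to an established comparison theorem.

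Finally, for (f): using the characterizations of the $\mathfrak{p}^+$-perverse $t$-structure in \autoref{PervIntBig} and \autoref{PropEtaleIntPerv}, the characterizations of the perverse $t$-structure on $D_{\qcoh}(\mathcal{X})^a$ in \autoref{Pervqcbig} and \autoref{PervAlmostCoh}, and the fact that perverse (co)connectivity in $D_{\qcoh}(\mathcal{X})^a$ can be tested modulo $p$, the claim reduces to the analogous perverse $t$-exactness of the mod-$p^n$ Riemann--Hilbert functor on the scheme $\mathcal{X}_{p=0}$ — this is established in \cite{BhattLuriepadicRHmodp} via Artin vanishing and Verdier duality on $\genX$, the same constructible-side inputs recalled in \autoref{remark:all-properties-of-perverse-Fp-coefficients} — together with a bootstrap from $\mathbf{F}_p$- to $\mathbf{Z}_p$-coefficients through the complete--torsion equivalence; the $t$-left exactness half then also follows by applying Grothendieck/Verdier duality (e) to the $t$-right exactness statement for the $\mathfrak{p}$-perverse structure (\autoref{NoDual}).
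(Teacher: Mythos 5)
You should know at the outset that the paper does not prove this statement: \autoref{thm:RH} is quoted as a black box from the forthcoming work \cite{BhattLuriepadicRHmodp} (the section opens with ``we recall some results that will appear in...''), and no proof environment follows it. So there is no in-paper argument to compare yours against; what follows is an assessment of your reconstruction on its own terms.

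Your overall architecture --- define $\RH$ via the pro-\'etale completed integral structure sheaf, identify $\RH(\mathbf{Z}_p)$ with the perfectoidization, get (d) from the primitive comparison theorem, and prove almost coherence by d\'evissage --- is the expected shape of the Bhatt--Lurie construction, and (b), (c), (d) are plausibly reduced to known comparison theorems as you say. But there are concrete gaps. First, in the d\'evissage for (a): the localization triangle $j_!j^*F \to F \to i_*i^*F$ only reduces you to local systems on smooth strata if you know what $\RH$ does to $j_!$ of a lisse sheaf on an \emph{open} stratum, and you never address this. Compatibility with $j^*$ and with closed pushforward is not enough; the term $\RH(j_!G)$ is exactly the hard one (on the coherent side it is controlled by duality or by the perverse $t$-structure, not by any naive lower-shriek), and without it the induction does not close. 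Second, the analytic input you cite for the base case is off target: almost coherence of $\myR\nu_*\widehat{\cO}^+$ on a smooth \emph{affine} formal model is not Scholze's finiteness theorem (which requires properness) but the local Hodge--Tate computation --- precisely the explicit complex $\widehat{\bigoplus}_{i\in\mathbf{Z}[1/p]}(\cO_C x^i \xrightarrow{\underline{\epsilon}^i-1} p^{1/p^\infty}\cO_C x^i)$ that appears in the paper's proof of \autoref{prop:SmoothGR}; and for a general constructible $F$ one needs semistable or log-smooth integral models (alterations), which is where most of the work in the actual construction lives.

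Two smaller points. In (f), you propose to deduce $t$-left exactness from $t$-right exactness ``by applying duality (e)'', but (e) is only available on $D^b_{\cons}$, whereas (f) is asserted on all of $D_{\pcomp}$; you must first reduce to the constructible case using the compatibility of both perverse $t$-structures with filtered colimits (\autoref{PervCohColimit} and the corresponding remark for $\mathbf{Z}_p$-sheaves) before the duality argument applies. And your assertion that landing in $D_{\qcoh}(\mathcal{X})^a$ follows from ``almost flatness of $\widehat{\cO}^+$'' glosses over the fact that $\myR\nu_*$ does not obviously preserve colimits or quasi-coherence; in practice this is handled by descent from the perfectoid case rather than by a flatness argument.
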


In applications of \autoref{thm:RH}, it is convenient to have a large supply of perverse sheaves. In practice, these are provided by classical techniques in perverse sheaf theory (e.g., Artin vanishing or the decomposition theorem) as well as the last assertion of the following result. 

\begin{theorem}[{Absolute integral closures, \cite[Theorem 3.11]{BhattAbsoluteIntegralClosure}}]
\label{thm:AIC}
Let $X$ be an integral scheme. If  $\pi:X^+ \to X$ is an absolute integral closure, then the constant sheaf $\mathbf{F}_p$ and $\mathbf{Z}_p$ on $X^+$ are $*$-extended from the generic point (and hence from any non-empty open subset). If $X$ is a $d$-dimensional variety over a field of characteristic $\neq p$, then $\pi_* \mathbf{F}_{p}[d]$ is ind-perverse and $\pi_* \mathbf{Z}_{p}[d]$ is $\mathfrak{p}^+$-perverse. 
\end{theorem}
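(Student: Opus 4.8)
The plan is to reduce the statement to its $\mathbf{F}_p$-coefficient version, which is precisely \cite[Theorem 3.11]{BhattAbsoluteIntegralClosure}, and then to bootstrap to $\mathbf{Z}_p$-coefficients by dévissage together with the identity $\mathbf{Z}_p=\myR\lim_n\mathbf{Z}/p^n$. The structural point making this work is that both the property of being ``$*$-extended from the generic point'' and the $\mathfrak{p}^+$-perverse $t$-structure of \autoref{PervIntBig} are defined through such limit procedures, so they interact cleanly with $\myR\lim_n$ and with filtered colimits.

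For the $*$-extension claim, I would fix $j\colon U\hookrightarrow X^+$ a dense open. The $\mathbf{F}_p$-case gives $\mathbf{F}_p\simeq\myR j_*\mathbf{F}_p$; inducting on $n$ using the exact triangle $\mathbf{F}_p\to\mathbf{Z}/p^n\to\mathbf{Z}/p^{n-1}\xrightarrow{+1}$ and the (triangulated) exactness of $\myR j_*$ propagates this to $\mathbf{Z}/p^n\simeq\myR j_*\mathbf{Z}/p^n$ for every $n$. As $\myR j_*$ is a right adjoint it commutes with $\myR\lim_n$, so $\mathbf{Z}_p\simeq\myR j_*\mathbf{Z}_p$; the statement for an arbitrary non-empty open then follows by composing pushforwards.

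For the perversity claim, write $X^+=\lim_i Y_i$ as a cofiltered limit of finite covers $f_i\colon Y_i\to X$ with $Y_i$ integral of dimension $d$, so that $\myR\pi_*(\mathbf{Z}/p^n)=\colim_i f_{i,*}(\mathbf{Z}/p^n)$ (étale cohomology commutes with cofiltered limits along affine transition maps, and each $f_i$ is finite). Since $\mathbf{Z}/p^n[d]\in{}^pD^{\le0}(Y_i,\mathbf{Z}/p^n)$ and finite pushforward is perverse $t$-exact (\autoref{remark:all-properties-of-perverse-Fp-coefficients}), each $f_{i,*}(\mathbf{Z}/p^n[d])$, and hence the filtered colimit $\pi_*(\mathbf{Z}/p^n[d])$, lies in ${}^pD^{\le0}(X,\mathbf{Z}/p^n)$. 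To check $\pi_*\mathbf{Z}_p[d]\in{}^{\mathfrak{p}^+}D^{\le0}$ I would use \autoref{PervIntBig}(1): one has $\myR\Gamma_p(\pi_*\mathbf{Z}_p[d])=\colim_n\pi_*(\mathbf{Z}/p^n)[d-1]$ (using $\myR\Gamma_p(\mathbf{Z}_p)=\mathbf{Q}_p/\mathbf{Z}_p[-1]$ and $(\pi_*\mathbf{Z}_p)/p^n=\pi_*(\mathbf{Z}/p^n)$), and since geometric stalks (at strict henselizations) commute with filtered colimits, the $D^{\le0}$-connectivity of each $\pi_*(\mathbf{Z}/p^n)[d]$ for the middle perversity gives exactly the bound $\myR\Gamma_p(\pi_*\mathbf{Z}_p[d])_y\in D^{\le-\dim\overline{\{y\}}+1}$ required there. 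For coconnectivity, I would observe that $\pi_*\mathbf{Z}_p[d]/p=\pi_*\mathbf{F}_p[d]$ (apply $\pi_*$ to $\mathbf{Z}_p\xrightarrow{p}\mathbf{Z}_p\to\mathbf{F}_p$), which is ind-perverse by the cited theorem and therefore lies in ${}^pD^{\ge0}(X,\mathbf{F}_p)$ because that half of the (non-constructible) perverse $t$-structure is stable under filtered colimits; by the coconnective criterion of \autoref{PervIntBig}(2) this gives $\pi_*\mathbf{Z}_p[d]\in{}^{\mathfrak{p}^+}D^{\ge0}$. Combining, $\pi_*\mathbf{Z}_p[d]$ is $\mathfrak{p}^+$-perverse; note that since the $t$-structure of \autoref{PervIntBig} is defined on all of $D_{\pcomp}(X,\mathbf{Z}_p)$ and not merely on constructible objects, no ``ind-'' qualifier is needed here.

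I do not anticipate a genuine obstacle, as the essential content is imported from \cite[Theorem 3.11]{BhattAbsoluteIntegralClosure} and everything else is formal dévissage. The two points requiring care are bookkeeping: first, $\pi_*\mathbf{Z}_p[d]$ is \emph{not} constructible, so one must work with the big-category description in \autoref{PervIntBig} (and Gabber's $t$-structures) rather than the tidier $D^b_{\cons}$ characterization of \autoref{PropEtaleIntPerv}; second, one must keep track of the shift between the middle perversity $-\dim\overline{\{y\}}$ at each mod-$p^n$ level and the $\mathfrak{p}^+$-perversity $-\dim\overline{\{y\}}+1$ on the $p$-complete limit.
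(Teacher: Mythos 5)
Your proposal is correct and follows essentially the same route as the paper: both reduce everything to the $\mathbf{F}_p$-case of \cite[Theorem 3.11]{BhattAbsoluteIntegralClosure} and then bootstrap formally to $\mathbf{Z}_p$. The only cosmetic difference is that the paper invokes derived Nakayama for the $*$-extension claim (where you induct on $n$ and take $\myR\lim$, which is the same content) and cites the colimit-stability of the $\mathfrak{p}^+$-perverse $t$-structure together with \autoref{PropEtaleIntPerv} for the perversity claim (where you verify the connective and coconnective halves of \autoref{PervIntBig} separately).
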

\begin{proof}
The first conclusion for $\mathbf{F}_p$ follows from \cite[Proposition 3.10]{BhattAbsoluteIntegralClosure}, which implies the conclusion for $\mathbf{Z}_p$: to prove $\eta_*\mathbf{Z}_p\cong \mathbf{Z}_p$ in $D_{\pcomp}(X^+, \mathbf{Z}_p)$ where $\eta: U\to X^+$, it is enough to check this modulo $p$ by derived Nakayama. The second conclusion for $\pi_* \mathbf{F}_{p}[d]$ is contained in \cite[Theorem 3.11]{BhattAbsoluteIntegralClosure} (we recalled the argument in \autoref{rmk:noncompleteAIC} below), and for $\pi_* \mathbf{Z}_{p}[d]$, note that our definition of $\mathfrak{p}^+$-perverse $\mathbf{Z}_p$-complexes is stable under colimit, the result then follows from the $\mathbf{F}_p$-coefficient case via \autoref{PropEtaleIntPerv}. Alternatively, one can also deduce it from the first conclusion and Artin vanishing with $\mathbf{Z}_p$-coefficient (see \autoref{ArtinIntCoeff}).
\end{proof}

\begin{remark}
\label{rmk:noncompleteAIC}
For our applications in later sections, we often deal with the situation that $\cO_C$ is perfectoid and is the $p$-adic completion of a Henselian valuation ring $\cO_C^{nc}$, and we have an integral scheme $\mathfrak{X}$ over $\cO_C$ that is based changed from an integral scheme over $\cO_{C}^{nc}$, i.e., $\mathfrak{X}= \mathfrak{X}^{nc}\otimes_{\cO_C^{nc}}\cO_C$. We can apply \autoref{thm:AIC} to $X= \mathfrak{X}[1/p] (=\mathfrak{X}_C)$ and we have $\RH(\pi_*\mathbf{Z}_p)=\pi_*\cO_{\mathcal{X}^+}$ (in the almost category) by property (b) and (d) of \autoref{thm:RH}, where $\mathcal{X}^+$ denotes the $p$-adic completion of $\mathfrak{X}^+$ (which is perfectoid). 

However, for our applications, we also need a version for the $p$-adic completion of $(\mathfrak{X}^{nc})^+$. More precisely, we consider the map 
$$\pi: (\mathfrak{X}^{nc})^+\otimes_{\cO_C^{nc}}\cO_C \to \mathfrak{X}^{nc}\otimes_{\cO_C^{nc}}\cO_C = \mathfrak{X}$$
and by abusing notations we will also use $\pi$ to denote the corresponding maps after inverting $p$ and after $p$-adic completion. Then we have:
\begin{enumerate}
    \item $\pi_* \mathbf{F}_{p}[d]$ is ind-perverse and $\pi_* \mathbf{Z}_{p}[d]$ is $\mathfrak{p}^+$-perverse. 
    \item $\RH(\pi_*\mathbf{Z}_p)=\pi_*\cO_{(\mathcal{X}^{nc})^+}$, where $(\mathcal{X}^{nc})^+$ denotes the $p$-adic completion of ${(\mathfrak{X}^{nc})^+}$.
\end{enumerate}
By the same argument as in \autoref{thm:AIC}, part (a) above for $\pi_* \mathbf{Z}_{p}[d]$ follows from the statement for $\pi_* \mathbf{F}_{p}[d]$, and the latter follows from the same proof as in \cite[Theorem 3.11]{BhattAbsoluteIntegralClosure} and we give some details. First, $\pi_*\mathbf{F}_p[d]$ is the colimit of the ind-object $\{f'_*\mathbf{F}_p[d]\}$ where $f': \mathfrak{X}'\otimes_{\cO_C^{nc}}\cO_C \to\mathfrak{X}$ runs over all finite cover $\mathfrak{X}'\to \mathfrak{X}^{nc}$. We choose a dense affine open $j$: $U\to \mathfrak{X}^{nc}$ such that $U[1/p]$ is smooth over $\cO_{C}^{nc}[1/p]$ and $\mathfrak{X}'\times_{\mathfrak{X}^{nc}} U[1/p]\to \mathfrak{X}^{nc}[1/p]$ is finite \'etale (and thus remains finite \'etale after base change to $\cO_C[1/p]$). It follows that $j_*(f'_*\mathbf{F}_p[d])|_{U\otimes_{\cO_C^{nc}}\cO_C[1/p]}$\footnote{Again, we are abusing notations a bit and we still use $j$ (resp., $f'$) to denote the corresponding map after the base change $-\otimes_{\cO_C^{nc}}\cO_C[1/p]$ (resp., after inverting $p$).} is perverse by Artin vanishing \cite[Corollary 4.1.3]{BBDG82}. It is then enough to observe that there exists a map $\mathfrak{X}''\to \mathfrak{X}'$ such that the pullback: $f'_*\mathbf{F}_p[d]\to f''_*\mathbf{F}_p[d]$ factors through $j_*(f'_*\mathbf{F}_p[d])|_{U\otimes_{\cO_C^{nc}}\cO_C[1/p]}$. This can be checked after taking a colimit by compactness of constructible complexes, and thus follows from the fact that the constant sheaf $\mathbf{F}_p$ on $(\mathfrak{X}^{nc})^+[1/p]$ is $*$-extended from its generic point by \cite[Proposition 3.10]{BhattAbsoluteIntegralClosure}, and this remains true after a base change from $\cO_C^{nc}[1/p]\to \cO_C[1/p]$ (since this is a field extension of characteristic $0$). Part (b) above follows from the general lemma below and by noting that the $p$-adic completion of ${(\mathfrak{X}^{nc})^+}$ is the same as the $p$-adic completion of $(\mathfrak{X}^{nc})^+\otimes_{\cO_C^{nc}}\cO_C$ (and it is perfectoid).
\end{remark}

\begin{lemma}
Suppose $\pi$: $\mathfrak{Y}\to \mathfrak{X}$ is an integral map such that the $p$-adic completion of $\mathfrak{Y}$ is perfectoid.  Then we have 
$\RH(\pi_ * \mathbf{Z}_p)$ almost agrees with the structure sheaf of the $p$-adic completion of the structure sheaf of $\mathfrak{Y}$.
\end{lemma}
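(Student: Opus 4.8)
The plan is to realize $\pi$ as a cofiltered limit of finite morphisms, so that the finite (hence proper) case reduces the statement to the three relevant properties of $\RH$ in \autoref{thm:RH}: the value on the constant sheaf, compatibility with proper pushforward, and exactness/coproduct-preservation. First I would write $\mathfrak{Y} = \varprojlim_i \mathfrak{Y}_i$ with each $\mathfrak{Y}_i \to \mathfrak{X}$ finite; passing to a cofinal subsystem (e.g.\ replacing $\cO_{\mathfrak{Y}_i}$ by its image in $\cO_{\mathfrak{Y}}$ and discarding $p$-power torsion, which preserves topological finite presentation over $\cO_C$ by \cite[Proposition 1.1]{BLR1}) we may assume each $\mathfrak{Y}_i$ is flat and finitely presented over $\cO_C$, and still $\mathcal{Y} = \widehat{\varinjlim}_i \mathcal{Y}_i$; this is what makes $\RH_{\mathfrak{Y}_i}$ and the perfectoidization $\cO_{\mathcal{Y}_i,\pfd}$ available. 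On generic fibres $\mathfrak{Y}_C = \varprojlim_i \mathfrak{Y}_{i,C}$ with finite, in particular affine, transition maps, so the continuity of \'etale cohomology gives $\myR\pi_{C,*}(\mathbf{Z}/p^n) \simeq \varinjlim_i \pi_{i,C,*}(\mathbf{Z}/p^n)$ for each $n$; taking the derived limit over $n$ identifies $\pi_{C,*}\mathbf{Z}_p$ with the $p$-completed filtered colimit $\widehat{\varinjlim}_i \pi_{i,C,*}\mathbf{Z}_p$ in $D_{\pcomp}(\mathfrak{X}_C,\mathbf{Z}_p)$.

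Next I would apply $\RH$. Since $\RH$ is exact and coproduct-preserving, it preserves colimits in $D_{\pcomp}$, i.e.\ $p$-completed filtered colimits, so $\RH(\pi_{C,*}\mathbf{Z}_p) \simeq \widehat{\varinjlim}_i \RH(\pi_{i,C,*}\mathbf{Z}_p)$. For each finite $\pi_i$, proper pushforward compatibility (\autoref{thm:RH}(d)) yields $\RH(\pi_{i,C,*}\mathbf{Z}_p) \simeq \pi_{i,*}\RH_{\mathfrak{Y}_i}(\mathbf{Z}_p)$ — there is no higher pushforward since $\pi_i$ is finite — and the value on the constant sheaf (\autoref{thm:RH}(b)) gives $\RH_{\mathfrak{Y}_i}(\mathbf{Z}_p) \simeq \cO_{\mathcal{Y}_i,\pfd}$. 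Hence $\RH(\pi_*\mathbf{Z}_p) \simeq \widehat{\varinjlim}_i \pi_{i,*}\cO_{\mathcal{Y}_i,\pfd}$.

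Finally I would collapse the colimit. As each $\pi_i$ is affine, $\pi_{i,*}$ is restriction of scalars and commutes with the completed colimit, so $\widehat{\varinjlim}_i \pi_{i,*}\cO_{\mathcal{Y}_i,\pfd} \simeq \pi_*\bigl(\widehat{\varinjlim}_i \cO_{\mathcal{Y}_i,\pfd}\bigr)$, where now $\pi_*$ is pushforward along the affine map $\mathcal{Y} \to \mathcal{X}$. Because perfectoidization $R \mapsto \cO_{\mathrm{Spf}(R),\pfd}$ is a left adjoint (to the inclusion of perfectoid rings into $p$-complete rings; cf.\ \cite{BhattScholzepPrismaticCohomology}), it commutes with $p$-completed filtered colimits, so $\widehat{\varinjlim}_i \cO_{\mathcal{Y}_i,\pfd} \simeq \bigl(\widehat{\varinjlim}_i \cO_{\mathcal{Y}_i}\bigr)_{\pfd} = \cO_{\mathcal{Y},\pfd}$, using $\mathcal{Y} = \widehat{\varinjlim}_i \mathcal{Y}_i$. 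Since $\mathcal{Y}$ is perfectoid by hypothesis, $\cO_{\mathcal{Y},\pfd} = \cO_{\mathcal{Y}}$, and we conclude $\RH(\pi_*\mathbf{Z}_p) \simeq \pi_*\cO_{\mathcal{Y}}$ in $D_{\qcoh}(\mathcal{X})^a$.

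The main obstacle I anticipate is bookkeeping around the non-Noetherian and $p$-completion issues: checking that the \'etale continuity isomorphisms assemble correctly under the derived limit over $n$ so that the colimit identification takes place in $D_{\pcomp}$ (not merely modulo each $p^n$), the reduction to flat finitely presented $\mathfrak{Y}_i$ with $\mathcal{Y}$ still the completed colimit, and pinning down the precise sense in which perfectoidization commutes with completed filtered colimits of the $\cO_{\mathcal{Y}_i}$. Each of these is formal given the cited machinery, but they are where care is required.
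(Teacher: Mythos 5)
Your proposal is correct and follows essentially the same route as the paper's own proof: write $\mathfrak{Y}$ as a cofiltered limit of finite covers, use that $\RH$ commutes with $p$-completed direct limits together with its value on the constant sheaf and compatibility with (finite) proper pushforward, then use that perfectoidization commutes with $p$-completed colimits and invoke the perfectoid hypothesis on $\mathcal{Y}$. The extra bookkeeping you flag (flat finitely presented approximations, continuity of \'etale cohomology in $D_{\pcomp}$) is exactly the detail the paper elides, and your handling of it is sound.
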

\begin{proof}
As $\mathfrak{Y}$ is integral over $\mathfrak{X}$, we can write $\mathfrak{Y}$ as a cofiltered limit $\varprojlim_i \mathfrak{Y}_i$ of finite covers of $\mathfrak{X}$. Set $Y_i = \mathfrak{Y}_i[1/p]$ to be the generic fibre, so $Y = \varprojlim_i Y_i$. The RH functor commutes with $p$-completed direct limits, so it takes $\pi_*\mathbf{Z}_p$ so the direct limit of RH applied to the constant sheaf $\mathbf{Z}_p$ on each $Y_i$. The latter gives (almost) the perfectoidization of the structure sheaf of $\mathfrak{Y}_i$. As perfectoidization commutes with $p$-completed direct limits, we get that $\RH(\pi_*\mathbf{Z}_p)$ is the perfectoidization of the structure sheaf of $\mathfrak{Y}$. But the latter coincides with the $p$-adic completion of $\mathfrak{Y}$ by hypothesis.
\end{proof}

\section{Singularities over the perfectoid $\cO_C$ - the RH-subsheaves}
\label{sec:RH_subsheaves}

\begin{notation}
\label{not:RH}
Continue with the notation and conventions from \autoref{not:RH1}. 
Fix a finitely presented flat $\cO_C$-scheme $\mathfrak{X}$, with $\mathcal{X}$ denotes its $p$-adic formal completion\footnote{The existence and use of the $\cO_C$-scheme $\mathfrak{X}$ algebraizing the $p$-adic formal scheme $\mathcal{X}$ is a crutch: it allows us to use \'etale sheaf theory on the scheme-theoretic generic fibre $\mathfrak{X}_{C}$ instead of contemplating Zariski-constructible sheaves on the adic generic fibre $\mathcal{X}_\eta$ of $\mathcal{X}$ in the sense of \cite{BhattHansen}. This is sufficient for our purposes and allows us to avoid some technicalities. Nonetheless, we expect that all constructions/results discussed in this section go through without assuming $\mathcal{X}$ admits an algebraization $\mathfrak{X}$. In particular, the object $\cO_{\mathcal{X}}^{\RH}$ ought to only depend on $\mathcal{X}$ and not the algebraization $\mathfrak{X}$.} and $\mathfrak{X}_{C}$ denotes its scheme-theoretic generic fibre. Write $d=\dim(\genX) = \dim(\mathcal{X}_{p=0})$. As before, we write $\omega^{\mydot}_{\mathcal{X}/\cO_C}$ for the normalized dualizing complex; we also write $\omega_{\mathcal{X}/\cO_C} := H^{-d}(\omega_{\mathcal{X}/\cO_C}^{\mydot})$ for the dualizing sheaf.
\end{notation}

In this section, we introduce the main objects: the RH structure sheaf $\cO_{\mathcal{X}}^{\RH}$ and its dual $\omega_{\mathcal{X}}^{\RH}$. These objects are variants of the structure sheaf and the dualizing sheaf that measure the singularities and are defined in terms of the Riemann-Hilbert functor (and will consequently only be \emph{almost} coherent, rather than coherent). They are defined in \autoref{ss:GRDef}. We then study properties of this construction, including behavior under finite maps (\autoref{ss:GRfin}), alterations (\autoref{ss:GRAlt}) and smooth maps (\autoref{ss:GRSm}). Using these basic properties, we explain two alternative constructions of these sheaves without any reference to \'etale sheaf theory. First, \autoref{GRviaAIC} explains a direct construction of $\cO_{\mathcal{X}}^{\RH}$ in terms of the absolute integral closure (thus giving it an arithmetic flavor). On the other hand, \autoref{AlmostStabTraceAlt} explains how to approximately compute $\omega_{\mathcal{X}}^{\RH}$ and $\cO_{\mathcal{X}}^{\RH}$ in terms of sufficiently large alterations (giving it a geometric flavour). Using this last construction, we deduce that 
base changing $\omega_{\mathcal{X}}^{\RH}$ with $p$-inverted agrees with what is known as the Grauert-Riemenschneider sheaf or the multiplier submodule in classical algebraic geometry (\autoref{GRchar0Mult}).


\begin{remark}
We emphasize that throughout this whole section, we often work in the almost category, for instance, when we say $\omega_{\mathcal{X}}^{\RH}\to \omega_{\mathcal{X}/\cO_C}$ is an injection of almost coherent sheaves (see \autoref{def:GRSheaf} below), the injection should be interpreted as an almost injection. In later sections we will use $!$-realization and view $(\omega_{\mathcal{X}}^{\RH})_!$ as an honest subsheaf of $\omega_{\mathcal{X}/\cO_C}$.
\end{remark}

\subsection{The definition}
\label{ss:GRDef}

To explain the definition, recall that we have canonical maps
\[ \cO_{\mathcal{X}} \to \cO_{\mathcal{X},\pfd} = \RH(\mathbf{Z}_p) \to \RH(\mathrm{IC}^+_{\genX,\mathbf{Z}_p}[-d]),\]
where the second map is the fundamental class from \autoref{CanMapIC}. Now $\cO_{\mathcal{X}} \in {}^p D^{\leq d}$ by \autoref{ex:PervStr} while $\RH(\mathrm{IC}^+_{\genX,\mathbf{Z}_p}[-d]) \in {}^p D^{\geq d}$ by \autoref{thm:RH} (f), so the above map factors uniquely as
\[ 
  \begin{array}{rcl}
    \cO_{\mathcal{X}} & \to & {}^p \tau^{\geq d} \cO_{\mathcal{X}} \\
    & \simeq & {}^p H^d(\cO_{\mathcal{X}})[-d]\\
    & \xrightarrow{c_\mathcal{X}} & {}^p H^0(\RH(\mathrm{IC}^+_{\genX,\mathbf{Z}_p}))[-d] \\
    & \simeq & {}^p \tau^{\leq d} \RH(\mathrm{IC}^+_{\genX,\mathbf{Z}_p}[-d]) \\
    & \to & \RH(\mathrm{IC}^+_{\genX,\mathbf{Z}_p}[-d]),
  \end{array} 
\]
where the unlabelled maps are the canonical truncation maps. Using the map $c_{\mathcal{X}}$, we obtain:

\begin{definition}[The Riemann-Hilbert structure sheaf]
\label{def:GRstr}
The object
\[ \cO_\mathcal{X}^{\RH} := \mathrm{Image}\big( \underbrace{{}^p H^d(\cO_\mathcal{X})}_{{}^p H^0(\cO_\mathcal{X}[d])} \xrightarrow{c_{\mathcal{X}}} {}^p H^0(\RH(\mathrm{IC}_{\genX,\mathbf{Z}_p}^+)) \big)[-d] \in \mathrm{Perv}_{\acoh}(\mathcal{X})[-d]\]
is the RH structure sheaf on $\mathcal{X}$.
\end{definition}

\begin{remark}[Alternate description]
\label{GRstrAlt}
Choose a dense affine open subset $j:U \hookrightarrow \mathfrak{X}_{C}$ with $U_{\reduced}$ smooth over $C$ (necessarily of dimension $d$). Then the canonical map $\mathrm{IC}_{\genX,\mathbf{Z}_p}^+ \to j_* \mathbf{Z}_p[d]$ is an injection of $\mathfrak{p}^+$-perverse sheaves by \autoref{Def:ICInt}. Using \autoref{thm:RH} (f), one then finds the following alternate description of $\cO_{\mathcal{X}}^{\RH}$: 
\[ \cO_\mathcal{X}^{\RH} \simeq \mathrm{Image}\left( {}^p H^d(\cO_\mathcal{X}) \xrightarrow{\can} {}^p H^0(\RH(j_* \mathbf{Z}_p[d]))\right)[-d] \in \mathrm{Perv}_{\acoh}(\mathcal{X})[-d]. \]
The downside to this description is that it is not canonical: there are many possible choices of $U$. On the other hand, it is perhaps easier to use than \autoref{def:GRstr} as it avoids contemplating the $\mathrm{IC}$-construction.
\end{remark}

\begin{remark}[Insensitivity to nilpotents]
If $i:\mathcal{X}_{\reduced} \subset \mathcal{X}$ is the inclusion of the reduced locus, then we have a natural isomorphism $\cO_{\mathcal{X}}^{\RH} \simeq i_* \cO_{\mathcal{X}_{\reduced}}^{\RH}$. Indeed, by \autoref{PervAlmostCohFinPush} and \autoref{FinPushPervEt}, the functor $i_*$ is $t$-exact for the $\mathfrak{p}^+$-perverse $t$-structure on constructible sheaves and the perverse $t$-structure on almost coherent sheaves; the claim now follows from the topological invariance of the \'etale site as well as the surjectivity of ${}^p H^d(\cO_{\mathcal{X}}) \to {}^p H^d(\cO_{\mathcal{X}_{\reduced}})$ (which can be checked using the last sentence in \autoref{ex:PervStr}). 
\end{remark}

By construction, the map ${}^p H^d(\cO_\mathcal{X}) \to \cO_\mathcal{X}^{\RH}[d]$ is a surjection of perverse almost coherent sheaves. Dualizing it thus gives a sheaf:

\begin{definition}[The Riemann-Hilbert canonical sheaf]
\label{def:GRSheaf}
The RH dualizing sheaf (or just the RH sheaf) on $\mathcal{X}$ is defined as
\[ \omega_\mathcal{X}^{\RH} := \mathbf{D}_\mathcal{X}(\cO_\mathcal{X}^{\RH}[d]) \in D^b_{\acoh}(\mathcal{X}).\]
Note this is indeed a sheaf: in fact, dualizing the surjection ${}^p H^d(\cO_\mathcal{X}) \to \cO_\mathcal{X}^{\RH}[d]$ in $\mathrm{Perv}_{\acoh}(\mathcal{X})$ gives an injection $\omega_\mathcal{X}^{\RH} \to \omega_{\mathcal{X}/\cO_C}$ of almost coherent sheaves on $\mathcal{X}$ (\autoref{PervAlmostCoh}), so we can regard $\omega_\mathcal{X}^{\RH}$ as an almost coherent subsheaf of the dualizing sheaf  $\omega_{\mathcal{X}/\cO_C}$.
\end{definition}

\subsection{Smooth pullback}
\label{ss:GRSm}


\begin{proposition}[Smooth pullback]
\label{prop:SmoothGR}
If ${f}:\mathfrak{X}' \to \mathfrak{X}$ is a smooth map with $p$-completion $f:\mathcal{X}' \to \mathcal{X}$ of relative dimension $r$, then the natural map gives an isomorphism 
\[ f^* \cO_\mathcal{X}^{\RH} \simeq \cO_{\mathcal{X}'}^{\RH}\]
of quotients of ${}^p H^{d+r}(\cO_{\mathcal{X}'})[-d-r]$ in $\mathrm{Perv}_{\acoh}(\mathcal{X}')[-d-r] \subset D^b_{\acoh}(\mathcal{X}')$. 
\end{proposition}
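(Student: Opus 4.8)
The plan is to reduce to the non-canonical description of \autoref{GRstrAlt} and transport it along $f$, using that $\RH$ commutes with arbitrary pullbacks (\autoref{thm:RH}(c)) together with the perverse $t$-exactness of smooth pullback on both the constructible and the almost coherent sides. First I would pick a dense affine open immersion $j : U \hookrightarrow \mathfrak{X}_C$ with $U_{\reduced}$ smooth over $C$ and form the Cartesian square with $U' := f_C^{-1}(U)$ and $j' : U' \hookrightarrow \mathfrak{X}'_C$. Since $f_C$ is smooth: $j'$ is again an affine open immersion (base change of $j$); $U'_{\reduced} = f_C^{-1}(U_{\reduced})$ is smooth over $C$ (base change of $f_C$ along $U_{\reduced} \to \mathfrak{X}_C$, composed with $U_{\reduced} \to \Spec C$, noting this has the same underlying space as $U'$ and is reduced); and $U'$ is dense in $\mathfrak{X}'_C$ because $f_C$, being flat, sends generic points of $\mathfrak{X}'_C$ to generic points of $\mathfrak{X}_C$, all of which lie in $U$. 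Hence the description of \autoref{GRstrAlt} applies verbatim on $\mathfrak{X}'$ with $j'$ in place of $j$.

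Next I would identify $f_C^*(j_*\mathbf{Z}_p)$ with $j'_*\mathbf{Z}_p$. This is smooth base change for the smooth morphism $f_C$ applied to $j$, valid with $\mathbf{Z}/p^n$-coefficients and hence with $\mathbf{Z}_p = \myR\lim_n \mathbf{Z}/p^n$-coefficients by passing to the limit. Applying $\RH$ and invoking \autoref{thm:RH}(c) together with $\RH(\mathbf{Z}_p) \simeq \cO_{\mathcal{X},\pfd}$, this yields a canonical isomorphism
\[ f^*\RH(j_*\mathbf{Z}_p[d]) \simeq \RH(j'_*\mathbf{Z}_p[d]) = \RH(j'_*\mathbf{Z}_p[d+r])[-r] \]
under which $f^*$ of the canonical map $\cO_{\mathcal{X}}[d] \to \RH(j_*\mathbf{Z}_p[d])$ becomes, up to the shift, the canonical map $\cO_{\mathcal{X}'}[d+r] \to \RH(j'_*\mathbf{Z}_p[d+r])$ on $\mathfrak{X}'$. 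I would check this by chasing the factorization $\cO_{\mathcal{X}} \to \RH(\mathbf{Z}_p) \to \RH(j_*\mathbf{Z}_p)$ defining the fundamental class, using that $\RH$ is symmetric monoidal (so it sends the unit $\mathbf{Z}_p$ to the perfectoidified structure sheaf compatibly with pullback) and that the adjunction unit $\mathbf{Z}_p \to j_*\mathbf{Z}_p$ is compatible with the base change identification.

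Finally I would commute $f^*(-)[r]$ past the perverse truncations and the image defining the RH structure sheaf. By \autoref{PervAlmostCohSmoothPull} the functor $f^*(-)[r]$ is $t$-exact for the perverse $t$-structure on $D^b_{\acoh}(-)$, hence exact on hearts, and therefore compatible with ${}^pH^{\bullet}$ and with images of morphisms of perverse sheaves. Since $f^*\cO_{\mathcal{X}} = \cO_{\mathcal{X}'}$, this gives $f^*(-)[r]\,{}^pH^d(\cO_{\mathcal{X}}) = {}^pH^{d+r}(\cO_{\mathcal{X}'})$ and $f^*(-)[r]\,{}^pH^0(\RH(j_*\mathbf{Z}_p[d])) = {}^pH^0(\RH(j'_*\mathbf{Z}_p[d+r]))$, so applying $f^*(-)[r]$ to $\cO_{\mathcal{X}}^{\RH}[d] = \mathrm{Image}\big({}^pH^d(\cO_{\mathcal{X}}) \xrightarrow{\can} {}^pH^0(\RH(j_*\mathbf{Z}_p[d]))\big)$ produces $\mathrm{Image}\big({}^pH^{d+r}(\cO_{\mathcal{X}'}) \xrightarrow{\can} {}^pH^0(\RH(j'_*\mathbf{Z}_p[d+r]))\big) = \cO_{\mathcal{X}'}^{\RH}[d+r]$ by \autoref{GRstrAlt}, compatibly with the quotient maps out of ${}^pH^{d+r}(\cO_{\mathcal{X}'})$, which is the asserted statement. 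The argument is essentially formal once the machinery is set up; the main thing to be careful about is the bookkeeping — that $U'$ is an admissible auxiliary open, that smooth base change is genuinely available with $\mathbf{Z}_p$-coefficients, and above all that the isomorphism produced is literally the natural map (i.e. the several $t$-exact functors in play commute with $\mathrm{Image}$ on the nose and compatibly with the structure maps), rather than some abstract isomorphism.
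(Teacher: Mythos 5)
Your setup (the choice of $U'$, the use of \autoref{GRstrAlt}, and the perverse $t$-exactness of $f^*[r]$) matches the paper's proof, but there is a genuine gap at the central step: the claimed ``canonical isomorphism $f^*\RH(j_*\mathbf{Z}_p[d]) \simeq \RH(j'_*\mathbf{Z}_p[d])$'' is false in general. Theorem~\ref{thm:RH}(c) says that the \emph{refined} functor $\RH \colon D_{\pcomp}(\genX,\mathbf{Z}_p) \to D_{\qcoh}(\mathcal{X},\cO_{\mathcal{X},\pfd})^a$ commutes with pullback, i.e.\ $\RH(f_C^*F) \simeq f_\pfd^* \RH(F)$ where $f_\pfd^*(-) = f^*(-)\,\widehat{\otimes}_{f^*\cO_{\mathcal{X},\pfd}}\,\cO_{\mathcal{X}',\pfd}$. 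This is not the plain $\cO$-module pullback $f^*$: perfectoidization does not commute with smooth base change as an isomorphism (already for $\mathcal{X}=\mathrm{Spf}(\cO_C)$ and $\mathcal{X}'$ the formal torus, $\cO_C[x^{\pm 1}]^\wedge_\pfd$ involves all $p$-power roots of $x$ and is much larger than $\cO_C[x^{\pm 1}]^\wedge$). So what one has is only a natural map $f^*\RH(j_*\mathbf{Z}_p)[d+r] \to f_\pfd^*\RH(j_*\mathbf{Z}_p)[d+r] \simeq \RH(j'_*\mathbf{Z}_p[d+r])$, and your argument as written would silently replace $\cO_{\mathcal{X}'}^{\RH}$ by the image of ${}^pH^0(\cO_{\mathcal{X}'}[d+r])$ in ${}^pH^0$ of the wrong (smaller) target.

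The actual content of the proposition is showing that this natural map is injective on ${}^pH^0(-)$, which suffices because both candidate images are quotients of the same ${}^pH^{d+r}(\cO_{\mathcal{X}'})$. The paper proves this by establishing that $\eta_M \colon f^*M \to f_\pfd^*M$ is split for every $M \in D_{\qcoh}(\mathcal{X},\cO_{\mathcal{X},\pfd})$, reducing to $M=\cO_{\mathcal{X},\pfd}$, then to $\mathfrak{X}'=\mathfrak{X}\times_{\mathrm{Spec}(\cO_C)}\mathbf{G}_m$, and finally exhibiting an explicit $\cO_C[x^{\pm 1}]^\wedge$-linear splitting of $\cO_C[x^{\pm 1}]^\wedge \to \cO_C[x^{\pm 1}]^\wedge_\pfd$ using the almost-purity presentation of the latter. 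This splitting is the genuinely non-formal input that your proposal is missing; without it, or some substitute argument for injectivity of ${}^pH^0(f^*\RH(j_*\mathbf{Z}_p)[d+r]) \to {}^pH^0(\RH(j'_*\mathbf{Z}_p[d+r]))$, the proof does not close.
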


In the proof below, we shall write
\[ f^*:D_{\qcoh}(\mathcal{X}) \to D_{\qcoh}(\mathcal{X}') \quad \text{and} \quad f_\pfd^*:D_{\qcoh}(\mathcal{X},\cO_{\mathcal{X},\pfd}) \to D_{\qcoh}(\mathcal{X}', \cO_{\mathcal{X}',\pfd})\]
for the pullback functors (left adjoint to the evident pushforward functors); thus, we have $f_\pfd^*(-) = f^*(-) \widehat{\otimes}_{f^*\cO_{\mathcal{X},\pfd}} \cO_{\mathcal{X}',\pfd}$. We shall use crucially that the refined $\RH$ functor 
\[ \RH:D_{\pcomp}(\genX, \mathbf{Z}_p) \to D_{\qcoh}(\mathcal{X}, \cO_{\mathcal{X},\pfd})\]
commutes with arbitrary pullback (see \autoref{thm:RH} {(c)}).

\begin{proof}
We may assume $f$ is smooth of relative dimension $r$, so $\mathcal{X}'$ has relative dimension $d+r$ over $\cO_C$. Recall from Remark~\autoref{PervAlmostCohSmoothPull} that the shifted pullback functor $f^*[r]:D^b_{\acoh}(\mathcal{X}) \to D^b_{\acoh}(\mathcal{X}')$ is $t$-exact with respect to the perverse $t$-structure. Choose a dense affine open immersion $j:U \hookrightarrow \mathfrak{X}_{C}$ with $U_{\reduced}$ smooth. As $f$ is smooth, the pullback $j':U' \hookrightarrow \mathfrak{X}_{C}'$ is also a dense affine open immersion with $U'_{\reduced}$ smooth. Consequently, we can compute the RH structure sheaves using $U$ and $U'$. Moving shifts around, we learn that
\[ \cO_\mathcal{X}^{\RH} := \mathrm{Image}\left( {}^p H^0(\cO_\mathcal{X}[d]) \xrightarrow{\can} {}^p H^0(\RH(j_* \mathbf{Z}_p)[d])\right){[-d]} \]
and
\[ \cO_{\mathcal{X}'}^{\RH} := \mathrm{Image}\left( {}^p H^{0}(\cO_{\mathcal{X}'}[d+r]) \xrightarrow{\can} {}^p H^{0}(\RH(j'_* \mathbf{Z}_p)[d+r])\right){[-d-r]}. \]
By the perverse $t$-exactness of $f^*[r]$, the first formula above pulls back to show
\[ f^* \cO_{\mathcal{X}}^{\RH} = \mathrm{Image}\left( {}^p H^0(\cO_{\mathcal{X}'}[d+r]) \xrightarrow{\can} {}^p H^0(f^* \RH(j_* \mathbf{Z}_p)[d+r])\right){[-d-r]}.\]
Consequently, it suffices to show that the natural map
\[ f^* \RH(j_* \mathbf{Z}_p)[d+r] \to \RH(j'_* \mathbf{Z}_p[d+r]) \simeq f_\pfd^* \RH(j_* \mathbf{Z}_p[d+r])\]
is injective on ${}^p H^0(-)$, where we used the pullback compatibility of $$\RH:D^b_{\cons}(\genX, \mathbf{Z}_p) \to D_{\qcoh}(\mathcal{X}, \cO_{\mathcal{X},\pfd})$$ in the last isomorphism above. This can be checked locally on $\mathfrak{X}'$, so we may assume both $\mathfrak{X}$ and $\mathfrak{X}'$ are affine. In fact, as all our constructions are compatible with \'etale localization, we may then reduce by induction to the case $\mathfrak{X}' = \mathfrak{X} \times_{\mathrm{Spec}(\cO_C)} \mathbf{G}_m$.  In this case,  the claim follows from the following stronger assertion applied to $M=\RH(j_* \mathbf{Z}_p)$:

\begin{itemize}
\item[$(\ast)$] For any $M \in D_{\qcoh}(\mathcal{X}, \cO_{\mathcal{X},\pfd})$, the map
\[ \eta_M:f^* M \to f_{\pfd}^* M := f^*M {\otimes}_{f^*\cO_{\mathcal{X},\pfd}} \cO_{\mathcal{X'},\pfd}\]
admits a left-inverse (i.e., is split) in $D_{\qcoh}(\mathcal{X}', f^* \cO_{\mathcal{X},\pfd})$.
\end{itemize}

The assertion $(\ast)$ immediately reduces to the case $M=\cO_{\mathcal{X},\pfd}$ itself: we have $\eta_M  = \eta_{\cO_{\mathcal{X},\pfd}} \otimes \mathrm{id}_{f^* M}$. Thus, we must show that $f^* \cO_{\mathcal{X},\pfd} \to \cO_{\mathcal{X}',\pfd}$ admits a left-inverse in $D_{\qcoh}(\mathcal{X}', f^* \cO_{\mathcal{X},\pfd})$. Since $\mathfrak{X}' = \mathfrak{X} \times_{\mathrm{Spec}(\cO_C)} \mathbf{G}_m$, this reduces to the following assertion: for any $p$-complete $\cO_C$-algebra $R$, the map
\[ R_\pfd[x^{\pm 1}]^{\wedge}  := R_\pfd \widehat{\otimes}_{\cO_C} \cO_C[x^{\pm 1}]^\wedge \to R[x^{\pm 1}]^\wedge_\pfd \simeq R_\pfd \widehat{\otimes}_{\cO_C} \cO_C[x^{\pm 1}]^\wedge_\pfd  \]
admits an $R_\pfd[x^{\pm 1}]^\wedge$-linear splitting. It suffices to solve this problem when $R=\cO_C$: the general case then follows by tensoring with $R_\pfd$. Thus, we must check that 
\[  \cO_C[x^{\pm 1}]^\wedge \to  \cO_C[x^{\pm 1}]^\wedge_\pfd\]
admits a $ \cO_C[x^{\pm 1}]^\wedge$-linear splitting. This can be checked using the standard presentation of $ \cO_C[x^{\pm 1}]^\wedge_\pfd$ coming from the almost purity theorem: {
\[ 
\cO_C[x^{\pm 1}]^\wedge_\pfd\simeq \widehat{\bigoplus_{i \in \mathbf{Z}[1/p]}} \left(\cO_C x^i \xrightarrow{\underline{\epsilon}^i - 1} p^{1/p^\infty} \cO_C x^i\right),\]
where $\underline{\epsilon} \in \cO_C^\flat$ is a compatible system of $p$-power roots of unity (normalized so that $\underline{\epsilon}^{1/p^n}$ maps to a primitive $p^n$-th root of $1$ in $\cO_C$); in particular, the differential appearing above is $0$ for $i \in \mathbf{Z}$.} 
\end{proof}

\begin{remark}[Smooth pullback: dual form]
\label{SmoothGRdual}
In the situation of \autoref{prop:SmoothGR}, if $f$ is smooth, then we have a natural identification $f^* \omega_{\mathcal{X}}^{\RH} \otimes \omega_{\mathcal{X}'/\mathcal{X}} \simeq \omega_{\mathcal{X}'}^{\RH}$ of almost coherent subsheaves of the dualizing sheaf of $\mathcal{X}'$. To see this, write $d$ and $d+r$ for the relative dimensions of $\mathcal{X}/\cO_C$ and $\mathcal{X}'/\cO_C$ respectively. Keeping track of shifts, applying Grothendieck duality $\mathbf{D}_{\mathcal{X}'}(-)$ to the isomorphism in \autoref{prop:SmoothGR} gives a natural identification $f^! (\omega_{\mathcal{X}}^{\RH}[d]) \simeq \omega_{\mathcal{X}'}^{\RH}[d+r]$ compatible with the isomorphism $f^! \omega_{\mathcal{X}/\cO_C}^{\mydot} \simeq \omega_{\mathcal{X}'/\cO_C}^{\mydot}$. The rest follows as $f^!(-) \simeq f^*(-) \otimes \omega_{\mathcal{X}'/\mathcal{X}}[r]$, since $f$ is smooth of relative dimension $r$.
\end{remark}

\subsection{Finite pushforward}
\label{ss:GRfin}

\begin{proposition}[Finite pushforward functorialty]
\label{Prop:FinPushGR}
Let $f:\mathfrak{X}' \to \mathfrak{X}$ be a finite surjective morphism with $p$-completion also denoted $f:\mathcal{X}' \to \mathcal{X}$, so $f_* \cO_{\mathcal{X}'}^{\RH}[d]$ is a perverse almost coherent sheaf (\autoref{PervAlmostCohFinPush}). Then there is a unique  map $\alpha_f:\cO_{\mathcal{X}}^{\RH} \to f_* \cO_{{\mathcal{X}'}}^{\RH}$ in $\mathrm{Perv}_{\acoh}(\mathcal{X})[-d]$  rendering the following diagram commutative in $D^b_{\acoh}(\mathcal{X})$:
\[ \xymatrix{ \cO_{\mathcal{X}} \ar[r] \ar[d] & \cO_{\mathcal{X}}^{\RH} \ar[d]^{\alpha_f} \\
f_* \cO_{\mathcal{X}'} \ar[r] & f_* \cO_{\mathcal{X}'}^{\RH}. }\]
Moreover, the map $\alpha_f$ is an injection of perverse almost coherent sheaves. 
\begin{proof}
Write $d=\dim(\mathcal{X}/\cO_C)$.  Let us first explain the uniqueness. \autoref{ex:PervStr} gives a unique factorization of the top horizontal map as
\[ \cO_{\mathcal{X}} \xrightarrow{\can} {}^p \tau^{\geq d} \cO_{\mathcal{X}} \simeq {}^p H^d(\cO_{\mathcal{X}})[-d] \to \cO_{\mathcal{X}}^{\RH}, \]
with the last map being a surjection of perverse sheaves placed in cohomological degree $d$. Applying the same analysis to $\mathcal{X}'$ and using the $t$-exactness of $f_*$, the uniqueness assertion in the proposition is immediate. 

For existence, we can make choices, so choose $j:U \hookrightarrow \mathfrak{X}_{C}$ a dense affine open immersion such that the preimage $j':U' \hookrightarrow \mathfrak{X}_{C}'$ is also a dense affine open immersion and such that both $U_{\reduced}$ and $U'_{\reduced}$ are smooth and the map $U'_{\reduced} \to U_{\reduced}$ is a finite \'etale cover. As $f_*$ is perverse $t$-exact, functoriality then gives a commutative square
\[ \xymatrix{ {}^p H^d(\cO_{\mathcal{X}}) \ar[r] \ar[d] & {}^p H^d(\RH(j_* \mathbf{Z}_p)) \ar[d] \\
f_* {}^p H^d(\cO_{\mathcal{X}'}) \ar[r] & f_* {}^p H^d(\RH(j'_* \mathbf{Z}_p)) }\]
of perverse almost coherent sheaves on $\mathcal{X}$.

Taking the induced map on images (and using $t$-exactness of $f_*$ once more) then gives the desired map $\alpha_f$ (cf.\ \autoref{GRstrAlt}).

It remains to explain why $\alpha_f$ is injective. Following the construction above, it suffices to show that the right vertical map in the square above is injective. The perverse $t$-exactness of $f_*$ and the finite pushforward compatibility of $\RH$ allow us to rewrite the target as 
\[  f_* {}^p H^d(\RH(j'_* \mathbf{Z}_p)) \simeq {}^p H^d(f_* \RH(j'_* \mathbf{Z}_p)) \simeq {}^p H^d \RH(f_* j'_* \mathbf{Z}_p).\]
We are therefore reduced to showing that the map  $j_* \mathbf{Z}_p[d] \to f_* j'_* \mathbf{Z}_p[d]$ of $\mathfrak{p}^+$-perverse sheaves induces an injection on applying ${}^p H^0 \RH(-)$. Now $\RH$ is perverse $t$-left exact (\autoref{thm:RH} (f)), so it suffices to show that $j_* \mathbf{Z}_p[d] \to f_* j'_* \mathbf{Z}_p[d]$ is an injection of $\mathfrak{p}^+$-perverse sheaves on $\mathfrak{X}_{C}$. As $j_*$ is $t$-exact (\autoref{ArtinIntCoeff}), it suffices show that $\mathbf{Z}_p[d] \to g_* \mathbf{Z}_p[d]$ is injective on $U$, where $g:U' \to U$ is the map induced by $f$. This assertion can be checked \'etale locally on $U$; but $g$ is a finite \'etale cover, so we may then reduce to the case where $g$ is split (i.e., $U'_{\reduced} = \sqcup_{i=1}^n U_{\reduced}$ with $g$ being the obvious map), in which case the claim is clear as the map $\mathbf{Z}_p[d] \to g_* \mathbf{Z}_p[d]$ even has a left-inverse.
\end{proof}
\end{proposition}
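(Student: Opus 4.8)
The plan is to reduce everything to concrete assertions about perverse $\mathbf{Z}_p$-sheaves on the generic fibre $\mathfrak{X}_C$, using the non-canonical description of $\cO^{\RH}$ in \autoref{GRstrAlt} together with the formal inputs: compatibility of $\RH$ with proper (hence finite) pushforward (\autoref{thm:RH}(d)), perverse left $t$-exactness of $\RH$ (\autoref{thm:RH}(f)), and perverse $t$-exactness of $f_*$ on both the almost-coherent side (\autoref{PervAlmostCohFinPush}) and the constructible side (\autoref{FinPushPervEt}). For \textbf{uniqueness}, I would first record that $\cO_{\mathcal{X}}[d]\in{}^pD^{\leq 0}$ (\autoref{ex:PervStr}), so the canonical map $\cO_{\mathcal{X}}[d]\to\cO_{\mathcal{X}}^{\RH}[d]$ factors as $\cO_{\mathcal{X}}[d]\to{}^pH^d(\cO_{\mathcal{X}})[d]\twoheadrightarrow\cO_{\mathcal{X}}^{\RH}[d]$, where the first arrow has cone in ${}^pD^{\leq 0}$ with vanishing ${}^pH^0$ (hence is an epimorphism onto objects of the heart) and the second is a surjection in $\mathrm{Perv}_{\acoh}(\mathcal{X})$. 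Since $f_*$ is perverse $t$-exact, $f_*\cO_{\mathcal{X}'}^{\RH}[d]$ lies in the heart, and any $\alpha_f$ completing the square is determined by its precomposition with $\cO_{\mathcal{X}}[d]\to\cO_{\mathcal{X}}^{\RH}[d]$, which is pinned down by the lower-left path of the diagram; this forces uniqueness.

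For \textbf{existence}, I would choose, using generic flatness and generic smoothness/étaleness in characteristic zero, a dense affine open $j:U\hookrightarrow\mathfrak{X}_C$ with $U_{\reduced}$ smooth such that the preimage $j':U'\hookrightarrow\mathfrak{X}_C'$ is again a dense affine open with $U'_{\reduced}$ smooth and $g:U'_{\reduced}\to U_{\reduced}$ finite étale. Applying $\RH$ and ${}^pH^d(-)$ to the natural map $j_*\mathbf{Z}_p[d]\to f_*j'_*\mathbf{Z}_p[d]$, and using perverse $t$-exactness of $f_*$ to commute $f_*$ past ${}^pH^d$, produces a commutative square of perverse almost coherent sheaves on $\mathcal{X}$ whose two horizontal arrows are $\mathrm{Image}$'s computing $\cO_{\mathcal{X}}^{\RH}[d]$ and $f_*\cO_{\mathcal{X}'}^{\RH}[d]$ via \autoref{GRstrAlt}. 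Passing to the induced map on images yields $\alpha_f$; that it sits in the required diagram follows because the maps $\cO_{(-)}\to\cO_{(-)}^{\RH}$ are by construction the truncation-composites through ${}^pH^d$, so the outer square commutes in $D^b_{\acoh}(\mathcal{X})$.

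For \textbf{injectivity}, the map on images is injective as soon as the right-hand vertical arrow ${}^pH^0(\RH(j_*\mathbf{Z}_p[d]))\to f_*{}^pH^0(\RH(j'_*\mathbf{Z}_p[d]))$ is a monomorphism. I would rewrite its target, using $t$-exactness of $f_*$ and the finite-pushforward compatibility of $\RH$, as ${}^pH^0(\RH(f_*j'_*\mathbf{Z}_p[d]))$, and then invoke perverse left $t$-exactness of $\RH$ (so that ${}^pH^0\RH(-)$ is left exact on hearts and preserves monomorphisms) to reduce to checking that $j_*\mathbf{Z}_p[d]\to f_*j'_*\mathbf{Z}_p[d]$ is a monomorphism of $\mathfrak{p}^+$-perverse sheaves on $\mathfrak{X}_C$. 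Since $j_*$ is $t$-exact (\autoref{ArtinIntCoeff}) and $f\circ j'=j\circ g$, this reduces further to $\mathbf{Z}_p[d]\to g_*\mathbf{Z}_p[d]$ being a monomorphism on $U$, which may be tested étale-locally; as $g$ is finite étale, it becomes a fold map $\sqcup_{i=1}^n U\to U$ étale-locally, and $\mathbf{Z}_p[d]\to\bigoplus_{i=1}^n\mathbf{Z}_p[d]$ is the diagonal, which is split injective.

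The main obstacle I anticipate is not a deep new ingredient but the careful bookkeeping of shifts and $t$-structures so that every object lands where claimed — in particular genuinely justifying that $f_*$ is perverse $t$-exact on $D^b_{\acoh}$ (so $\alpha_f$ lies in $\mathrm{Perv}_{\acoh}(\mathcal{X})[-d]$), and that the identification $f_*{}^pH^0\RH(j'_*-)\simeq{}^pH^0\RH(f_*j'_*-)$ is compatible with the natural maps in the square. Once those compatibilities are nailed down, the remaining content is purely formal: left exactness of $\RH$ on hearts, $t$-exactness of affine-open pushforward with $\mathbf{Z}_p$-coefficients (Artin vanishing), and the triviality that a finite étale cover splits étale-locally.
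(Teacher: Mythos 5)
Your proposal is correct and follows essentially the same route as the paper's proof: uniqueness via the factorization through ${}^p H^d(\cO_{\mathcal{X}})$ and perverse $t$-exactness of $f_*$, existence via the choice of a dense affine open $U$ with finite \'etale preimage and passage to images, and injectivity by reducing through left $t$-exactness of $\RH$ and $t$-exactness of $j_*$ to the split injectivity of $\mathbf{Z}_p[d]\to g_*\mathbf{Z}_p[d]$ checked \'etale-locally.
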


\begin{remark}[Dual form of finite pushforward compatibility]
\label{FinPushGRdual}
In the situation of \autoref{Prop:FinPushGR}, one has a dual statement that is often useful: there is a unique map $\beta_f:f_* \omega_{\mathcal{X}'}^{\RH} \to \omega_{\mathcal{X}}^{\RH}$ of  almost coherent sheaves rendering the following diagram commutative in $D^b_{\acoh}(\mathcal{X})$:
\[ \xymatrix{ f_* \omega_{\mathcal{X}'}^{\RH} \ar[r] \ar[d]^-{\beta_f} & f_* \omega^{\mydot}_{\mathcal{X}'/\cO_C}[-d] \ar[d] \\
\omega_{\mathcal{X}}^{\RH} \ar[r]  & \omega^{\mydot}_{\mathcal{X}/\cO_C}[-d], }\]
where the map on the right is the Grothendieck trace map. Moreover, the map $\beta_f$ is a surjection of almost coherent sheaves.
\end{remark}

\begin{proposition}[Describing $\cO_{\mathcal{X}}^{\RH}$ via the absolute integral closure]
\label{GRviaAIC}
Let $\mathfrak{X}$ be a finitely presented integral $\cO_C$-scheme with absolute integral closure $\pi:\mathfrak{X}^+ \to \mathfrak{X}$. Then we have a natural identification
\[ \cO_{\mathcal{X}}^{\RH}[d] = \mathrm{Image}\left({}^p H^0(\cO_{\mathcal{X}}[d]) \to {}^p H^0(\cO_{\mathcal{X}^+}[d])\right)\]
of perverse quotients of ${}^p H^0(\cO_{\mathcal{X}}[d])$.
\end{proposition}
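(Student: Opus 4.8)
The plan is to reduce to the non-canonical description of $\cO_{\mathcal{X}}^{\RH}$ in \autoref{GRstrAlt} and to identify the intersection complex appearing there with the pushforward of the constant sheaf along $\pi$. So I would fix a dense affine open immersion $j \colon U \hookrightarrow \genX$ with $U_{\reduced}$ smooth, giving
\[
\cO_{\mathcal{X}}^{\RH}[d] = \Image\big( {}^{p}H^{0}(\cO_{\mathcal{X}}[d]) \xrightarrow{\ \can\ } {}^{p}H^{0}(\RH(j_{*}\mathbf{Z}_{p}[d])) \big)
\]
by \autoref{GRstrAlt}. Let $\pi' \colon U' := \pi^{-1}(U) \to U$ be the base change of $\pi$ and $j' \colon U' \hookrightarrow \mathfrak{X}^{+}_{C}$ the resulting dense affine open immersion. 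By \autoref{thm:AIC}, $\mathbf{Z}_{p}$ on $\mathfrak{X}^{+}_{C}$ is $*$-extended from any non-empty open, so $j'_{*}\mathbf{Z}_{p} \simeq \mathbf{Z}_{p}$; hence $\pi_{*}\mathbf{Z}_{p} \simeq \pi_{*}j'_{*}\mathbf{Z}_{p} = j_{*}\pi'_{*}\mathbf{Z}_{p}$, and the unit $\mathbf{Z}_{p} \to \pi'_{*}\mathbf{Z}_{p}$ yields a canonical map $j_{*}\mathbf{Z}_{p} \to \pi_{*}\mathbf{Z}_{p}$ sitting in a commutative triangle with the canonical maps out of $\mathbf{Z}_{p}$.

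Next I would feed this through the Riemann--Hilbert functor. Since $\RH(\mathbf{Z}_{p}) = \cO_{\mathcal{X},\pfd}$ (\autoref{thm:RH}(b)) and $\pi$ is integral with $(\mathfrak{X}^{+})^{\wedge p}$ perfectoid (\autoref{example:PerfectoidRings}, \autoref{rmk:noncompleteAIC}), the Lemma following \autoref{rmk:noncompleteAIC} gives $\RH(\pi_{*}\mathbf{Z}_{p}) = \pi_{*}\cO_{\mathcal{X}^{+}}$ in the almost category, with $\mathcal{X}^{+}$ the $p$-completion of $\mathfrak{X}^{+}$. Applying $\RH$ to $\mathbf{Z}_{p}[d] \to j_{*}\mathbf{Z}_{p}[d] \to \pi_{*}\mathbf{Z}_{p}[d]$ and precomposing with $\cO_{\mathcal{X}}[d] \to \cO_{\mathcal{X},\pfd}[d]$ produces a factorization $\cO_{\mathcal{X}}[d] \to \RH(j_{*}\mathbf{Z}_{p}[d]) \to \pi_{*}\cO_{\mathcal{X}^{+}}[d]$ in $D^{b}_{\acoh}(\mathcal{X})$ whose total composite, by lax monoidality of $\RH$, is an $\cO_{\mathcal{X}}$-algebra map and therefore equals the $d$-shift of the structure map $\cO_{\mathcal{X}} \to \pi_{*}\cO_{\mathcal{X}^{+}}$. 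Taking ${}^{p}H^{0}$ (using $\cO_{\mathcal{X}}[d] \in {}^{p}D^{\leq 0}$, \autoref{ex:PervStr}, so that ${}^{p}H^{0}(\cO_{\mathcal{X}}[d]) = {}^{p}H^{d}(\cO_{\mathcal{X}})$) turns this into a factorization
\[
{}^{p}H^{0}(\cO_{\mathcal{X}}[d]) \xrightarrow{\ \alpha\ } {}^{p}H^{0}(\RH(j_{*}\mathbf{Z}_{p}[d])) \xrightarrow{\ \beta\ } {}^{p}H^{0}(\pi_{*}\cO_{\mathcal{X}^{+}}[d])
\]
of the map in the statement, where $\Image(\alpha) = \cO_{\mathcal{X}}^{\RH}[d]$ by \autoref{GRstrAlt}.

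It then suffices to show that $\beta$ is injective: granting this, $\Image(\beta \circ \alpha) = \beta(\Image(\alpha))$ is identified with $\cO_{\mathcal{X}}^{\RH}[d]$ compatibly with the common surjection from ${}^{p}H^{0}(\cO_{\mathcal{X}}[d])$, which is exactly the asserted identification of perverse quotients. Now $\RH$ is $t$-left exact from the $\mathfrak{p}^{+}$-perverse $t$-structure to the perverse $t$-structure (\autoref{thm:RH}(f)), so it sends monomorphisms of $\mathfrak{p}^{+}$-perverse sheaves to injections on ${}^{p}H^{0}$, and ${}^{p}H^{0}(\pi_{*}\cO_{\mathcal{X}^{+}}[d]) = {}^{p}H^{0}(\RH(\pi_{*}\mathbf{Z}_{p}[d]))$ since $\RH$ is exact. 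Hence I am reduced to proving that $j_{*}\mathbf{Z}_{p}[d] \to \pi_{*}\mathbf{Z}_{p}[d]$ is a monomorphism in $\mathrm{Perv}^{+}(\genX, \mathbf{Z}_{p})$ — both sides are $\mathfrak{p}^{+}$-perverse, the source by \autoref{ArtinIntCoeff} (as $U_{\reduced}$ is smooth) and the target by \autoref{thm:AIC}. Since $j_{*}$ is $t$-exact for this $t$-structure (\autoref{ArtinIntCoeff}) and the map is $j_{*}$ of the unit $\mathbf{Z}_{p}[d] \to \pi'_{*}\mathbf{Z}_{p}[d]$ on $U$, this comes down to the latter being a monomorphism in $\mathrm{Perv}^{+}(U, \mathbf{Z}_{p})$.

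This monomorphism is the crux of the argument, and is where the full strength of \autoref{thm:AIC} is used — it is the affine, scheme-theoretic counterpart of the local-cohomology injectivity sketched in \autoref{ss:IdeaProof}. The unit $\mathbf{Z}_{p} \to \pi'_{*}\mathbf{Z}_{p}$ is injective as a map of \'etale sheaves (on geometric stalks it is a diagonal map into a product), so its kernel in $\mathrm{Perv}^{+}(U, \mathbf{Z}_{p})$ is ${}^{\mathfrak{p}^{+}}H^{-1}$ of the cone of $\mathbf{Z}_{p}[d] \to \pi'_{*}\mathbf{Z}_{p}[d]$, i.e.\ of the $d$-shift of the honest cokernel sheaf. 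Writing $\pi'_{*}\mathbf{Z}_{p}$ as the filtered colimit of $f_{*}\mathbf{Z}_{p}$ over the finite covers $f$ in the tower defining $\pi'$ — so that ${}^{\mathfrak{p}^{+}}H^{-1}$ commutes with this colimit — one checks that the contribution of each stage dies after restricting to a dense open over which $f$ is \'etale (there the cokernel is a torsion-free local system), and a $\mathfrak{p}^{+}$-perverse subsheaf of the perverse sheaf $\mathbf{Z}_{p}[d]$ that vanishes on a dense open is zero by \autoref{lem:NoSubQuotConstSmooth}. I expect the main obstacle to be precisely this colimit bookkeeping: ruling out that the kernel is concentrated along the ramification loci of the finite covers, for which the perversity of $\pi_{*}\mathbf{Z}_{p}[d]$ and the $*$-extension property in \autoref{thm:AIC} are exactly what is needed.
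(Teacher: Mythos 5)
Your proposal follows the paper's proof almost verbatim up to the decisive step: the same reduction via \autoref{GRstrAlt}, the same use of \autoref{thm:AIC} to rewrite $\pi_*\mathbf{Z}_p$ as $\pi_* j^+_*\mathbf{Z}_p = j_*\pi'_*\mathbf{Z}_p$, the same passage through $\RH$ and its perverse left $t$-exactness (the ``strategy of \autoref{Prop:FinPushGR}''), and the same reduction via $t$-exactness of $j_*$ (\autoref{ArtinIntCoeff}) to the injectivity of $\mathbf{Z}_p[d] \to (\pi|_U)_*\mathbf{Z}_p[d]$ in the $\mathfrak{p}^+$-perverse category on the smooth affine $U$. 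Where you diverge is precisely at that last injectivity, and this is where your argument has a gap — one you partly acknowledge. The paper disposes of it in two lines: by \autoref{PervIntBig}(b)(iii) it suffices to show the cone of $\mathbf{F}_p[d] \to (\pi|_U)_*\mathbf{F}_p[d]$ lies in ${}^pD^{\geq 0}(U,\mathbf{F}_p)$; since $U$ is smooth, $\mathbf{F}_p[d]$ is a \emph{simple} perverse sheaf and the map is visibly nonzero, hence injective on perverse cohomology, so one only needs $(\pi|_U)_*\mathbf{F}_p[d] \in {}^pD^{\geq 0}$, which is the second half of \autoref{thm:AIC}. No support analysis or colimit bookkeeping is needed.

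Your replacement argument has two concrete problems. First, for a ramified finite cover $f$ the complex $f_*\mathbf{Z}_p[d]$ need not be $\mathfrak{p}^+$-perverse (only ${}^{\mathfrak{p}^+}D^{\leq 0}$), so the long exact sequence gives that ${}^{\mathfrak{p}^+}H^{-1}$ of the cone at a finite stage is an extension of $\ker\bigl(\mathbf{Z}_p[d] \to {}^{\mathfrak{p}^+}H^0(f_*\mathbf{Z}_p[d])\bigr)$ by ${}^{\mathfrak{p}^+}H^{-1}(f_*\mathbf{Z}_p[d])$; it is \emph{not} a subobject of $\mathbf{Z}_p[d]$, so \autoref{lem:NoSubQuotConstSmooth} does not apply to it as you invoke it. Second, even granting that each stage vanishes on a dense open, those opens shrink with $f$, and you do not explain why the colimit (which is what the kernel actually is) remains supported on a proper closed subset. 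Both issues are fixable along your lines: since $\mathbf{Z}_p[d]$ is perverse and $\pi'_*\mathbf{Z}_p[d]$ is perverse by \autoref{thm:AIC}, the map factors through ${}^{\mathfrak{p}^+}H^0(f_*\mathbf{Z}_p[d])$ at each finite stage, each kernel $K_f := \ker\bigl(\mathbf{Z}_p[d] \to {}^{\mathfrak{p}^+}H^0(f_*\mathbf{Z}_p[d])\bigr)$ is a \emph{constructible} subobject of $\mathbf{Z}_p[d]$ vanishing on the \'etale locus of $f$, hence is zero by \autoref{lem:NoSubQuotConstSmooth}, and the total kernel is the filtered colimit of the $K_f$ by compatibility of the $t$-structure with filtered colimits. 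So your route closes, but it is strictly longer than the paper's, and as written the appeal to \autoref{lem:NoSubQuotConstSmooth} is misapplied.
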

\begin{proof}
Choose a smooth dense affine open $j:U \subset \mathfrak{X}_{C}$ with preimage $j^+:U^+ \to \mathfrak{X}_{C}^+$ in the absolute integral closure. Consider the maps
\[ j_* \mathbf{Z}_p[d] \to \pi_* j^+_* \mathbf{Z}_p[d] \xleftarrow{\simeq} \pi_* \mathbf{Z}_p[d]\]
in $D(\genX, \mathbf{Z}_p)$, where the last isomorphism comes from the first part of \autoref{thm:AIC}. Following the strategy of proof of \autoref{Prop:FinPushGR}, it suffices to show the first map has cone in ${}^{\mathfrak{p}^+} D^{\geq 0}$ (i.e., the first map is an injection of $\mathfrak{p}^+$-perverse sheaves). As $j_*$ is $t$-exact, it suffices to show that $\mathrm{cone}\left(\mathbf{Z}_p[d] \to (\pi|_U)_* \mathbf{Z}_p[d]\right) \in {}^{\mathfrak{p}^+} D^{\geq 0}(U, \mathbf{Z}_p)$. By definition of the $\mathfrak{p}^+$-perverse $t$-structure {(\autoref{PervIntBig}(b)(iii))}, this is equivalent to showing $\mathrm{cone}\left(\mathbf{F}_p[d] \to (\pi|_U)_* \mathbf{F}_p[d]\right) \in {}^p D^{\geq 0}(U, \mathbf{F}_p)$. But $\mathbf{F}_p[d]$ is a simple perverse sheaf as $U$ is smooth and the map is obviously nonzero; thus, the map is injective on all perverse cohomology sheaves, and we are reduced to checking that $(\pi|_U)_* \mathbf{F}_p[d] \in {}^p D^{\geq 0}$, which  follows from the second part of \autoref{thm:AIC}. 
\end{proof}

{
\begin{remark}
\label{GRviaAIC-noncompleted}
Suppose we are in the scenario of \autoref{rmk:noncompleteAIC}, i.e., $\cO_C$ is the $p$-adic completion of a Henselian valuation ring $\cO_C^{nc}$ and $\mathfrak{X}=\mathfrak{X}^{nc}\otimes_{\cO_C^{nc}}\cO_C$ is integral. Then we also have a natural identification
\[ \cO_{\mathcal{X}}^{\RH}[d] = \mathrm{Image}\left({}^p H^0(\cO_{\mathcal{X}}[d]) \to {}^p H^0(\cO_{(\mathcal{X}^{nc})^+}[d])\right)\]
of perverse quotients of ${}^p H^0(\cO_{\mathcal{X}}[d])$, where $(\mathcal{X}^{nc})^+$ denotes the $p$-adic completion of $(\mathfrak{X}^{nc})^+$. This follows from the same argument as in the proof of \autoref{GRviaAIC} by choosing $U\subseteq \mathfrak{X}_C$ such that $U$ is base changed from a smooth dense open affine of $\mathfrak{X}^{nc}[1/p]$ and replacing the use of \autoref{thm:AIC} by \autoref{rmk:noncompleteAIC}.
\end{remark}
}

\subsection{Behavior under alterations}
\label{ss:GRAlt}

\begin{proposition}
\label{prop:AltContain}
Let $f:\mathfrak{X}' \to \mathfrak{X}$ be an alteration between integral $\cO_C$-schemes with $p$-completion $f:\mathcal{X}' \to \mathcal{X}$. Then there is a unique surjection 
\[ \mathrm{Image}\left({}^p H^0(\cO_{\mathcal{X}}[d]) \xrightarrow{f^*} {}^p H^0(f_* \cO_{\mathcal{X}'}[d])\right) \to \cO_{\mathcal{X}}^{\RH}[d]\] 
in $\mathrm{Perv}_{\acoh}(\mathcal{X})$ such that the following diagram commutes 
\[ \xymatrix{  & {}^p H^0(\cO_{\mathcal{X}}[d]) \ar@{->>}[ld]^{\mathrm{\can}} \ar@{->>}[rd] \\ 
\mathrm{Image}\left({}^p H^0(\cO_{\mathcal{X}}[d]) \xrightarrow{\can} {}^p H^0(f_* \cO_{\mathcal{X}'}[d])\right)  \ar[rr] & & \cO_{\mathcal{X}}^{\RH}[d],}\]
\end{proposition}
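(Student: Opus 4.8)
The plan is to factor the comparison through a finite cover, reducing to \autoref{Prop:FinPushGR}, via the standard device of an intermediate ``trace splitting'' for alterations. First I would choose a dense smooth affine open $j\colon U \hookrightarrow \genX_C$ such that its preimage $j'\colon U' \hookrightarrow \mathfrak{X}'_C$ is also dense affine with $U'_{\reduced}$ smooth, and such that the induced map $g\colon U' \to U$ is finite; this is possible because $f$ is generically finite. By \autoref{GRstrAlt}, $\cO_{\mathcal{X}}^{\RH}[d]$ is the image of ${}^p H^0(\cO_{\mathcal{X}}[d]) \to {}^p H^0(\RH(j_*\mathbf{Z}_p[d]))$. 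Uniqueness of the claimed surjection is immediate: since ${}^p H^0(\cO_{\mathcal{X}}[d]) \to \mathrm{Image}\left({}^p H^0(\cO_{\mathcal{X}}[d]) \xrightarrow{\can} {}^p H^0(f_*\cO_{\mathcal{X}'}[d])\right)$ is an epimorphism of perverse almost coherent sheaves, any two maps to $\cO_{\mathcal{X}}^{\RH}[d]$ making the diagram commute must agree.

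For existence, the key point is to produce a morphism $\mathrm{Image}\left({}^p H^0(\cO_{\mathcal{X}}[d]) \xrightarrow{\can} {}^p H^0(f_*\cO_{\mathcal{X}'}[d])\right) \to {}^p H^0(\RH(j_*\mathbf{Z}_p[d]))$ compatible with the maps from ${}^p H^0(\cO_{\mathcal{X}}[d])$, whose image is then exactly $\cO_{\mathcal{X}}^{\RH}[d]$. I would build this by factoring through the RH structure sheaf of $\mathcal{X}'$: the composite $\cO_{\mathcal{X}'} \to \RH_{\mathcal{X}'}(\mathbf{Z}_p) \to \RH_{\mathcal{X}'}(j'_*\mathbf{Z}_p[-d]\cdot[d]) = \RH_{\mathcal{X}'}(j'_*\mathbf{Z}_p[d])[-d]$ pushes forward along $f_*$, and by proper pushforward compatibility of $\RH$ (\autoref{thm:RH}(d)) together with perverse $t$-left exactness of $\RH$ (\autoref{thm:RH}(f)), one gets a map on ${}^pH^0$ landing in $f_*{}^pH^0(\RH_{\mathcal{X}'}(j'_*\mathbf{Z}_p[d])) = {}^pH^0(\RH_{\mathcal{X}}(f_*j'_*\mathbf{Z}_p[d]))$. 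Now the adjunction map $j_*\mathbf{Z}_p[d] \to f_*j'_*\mathbf{Z}_p[d] = j_* g_*\mathbf{Z}_p[d]$ must be \emph{split} as a map of $\mathfrak{p}^+$-perverse sheaves: since $g$ is finite \'etale after possibly shrinking $U$ (generic smoothness of $f$ over characteristic $0$), the trace map $g_*\mathbf{Z}_p \to \mathbf{Z}_p$ divided by $\deg g$ splits $\mathbf{Z}_p \to g_*\mathbf{Z}_p$ --- wait, division by $\deg g$ is not integral; instead I would argue \'etale-locally on $U$, where $g$ splits into a disjoint union of copies, so $\mathbf{Z}_p[d] \to g_*\mathbf{Z}_p[d]$ has a left inverse (the sum-of-components map is a retraction even integrally). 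Applying $j_*$ (which is $t$-exact, \autoref{ArtinIntCoeff}) and then ${}^pH^0\RH(-)$, this yields a retraction ${}^pH^0(\RH_{\mathcal{X}}(f_*j'_*\mathbf{Z}_p[d])) \to {}^pH^0(\RH_{\mathcal{X}}(j_*\mathbf{Z}_p[d]))$. Composing with the map from $f_*{}^pH^0(\RH_{\mathcal{X}'}(j'_*\mathbf{Z}_p[d]))$ gives a morphism ${}^pH^0(f_*\cO_{\mathcal{X}'}[d]) \to {}^pH^0(\RH_{\mathcal{X}}(j_*\mathbf{Z}_p[d]))$ which, by construction and naturality, restricts on ${}^pH^0(\cO_{\mathcal{X}}[d])$ to the canonical map defining $\cO_{\mathcal{X}}^{\RH}[d]$. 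Restricting to the subobject $\mathrm{Image}({}^pH^0(\cO_{\mathcal{X}}[d]) \to {}^pH^0(f_*\cO_{\mathcal{X}'}[d]))$ and taking the image of this restricted map produces the desired surjection onto $\cO_{\mathcal{X}}^{\RH}[d]$; surjectivity holds because already ${}^pH^0(\cO_{\mathcal{X}}[d]) \to \cO_{\mathcal{X}}^{\RH}[d]$ is surjective and factors through it.

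The main obstacle I anticipate is the bookkeeping around the retraction of $\mathbf{Z}_p[d] \to g_*\mathbf{Z}_p[d]$: one must check that the \'etale-local splittings glue, or rather that one does not actually need them to glue --- it suffices that $j_*\mathbf{Z}_p[d] \to f_*j'_*\mathbf{Z}_p[d]$ is a \emph{monomorphism} of $\mathfrak{p}^+$-perverse sheaves with a left inverse after applying $\RH$, and the left inverse can be produced directly on the level of $\RH$ images using that $\cO_{\mathcal{X}'}^{\RH}$ maps to $\cO_{\mathcal{X}}^{\RH}$ (this is essentially \autoref{FinPushGRdual} in dual form, or the pushforward construction of \autoref{Prop:FinPushGR} applied after normalizing). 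A cleaner route, which I would adopt if the above gets tangled, is: since $\cO_{\mathcal{X}}^{\RH}$ only depends on the generic point and commutes with finite pushforward by \autoref{Prop:FinPushGR}, replace $\mathfrak{X}'$ by its normalization and use that the Stein factorization of $f$ through a finite cover $\mathfrak{X}'' \to \mathfrak{X}$ gives $\cO_{\mathcal{X}}^{\RH}[d] \hookrightarrow f''_*\cO_{\mathcal{X}''}^{\RH}[d] \hookrightarrow f''_*{}^pH^0(\cO_{\mathcal{X}''}[d])$, and then factor ${}^pH^0(\cO_{\mathcal{X}}[d]) \to {}^pH^0(f''_*\cO_{\mathcal{X}''}[d])$ through ${}^pH^0(f_*\cO_{\mathcal{X}'}[d])$; the image surjection then follows by a diagram chase in the abelian category $\mathrm{Perv}_{\acoh}(\mathcal{X})$.
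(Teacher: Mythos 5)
Your main line of attack is the paper's: choose $U$ so that $U'=f^{-1}(U)\to U$ is finite \'etale, form the commutative square comparing $\cO_{\mathcal{X}}[d]\to\RH(j_*\mathbf{Z}_p[d])$ with $f_*\cO_{\mathcal{X}'}[d]\to f_*\RH(j'_*\mathbf{Z}_p[d])$, and reduce via $\RH\circ f_*\simeq f_*\circ\RH$ and perverse left $t$-exactness of $\RH$ to a statement about $j_*\mathbf{Z}_p[d]\to f_*j'_*\mathbf{Z}_p[d]=j_*g_*\mathbf{Z}_p[d]$. The flaw is that you insist on producing a \emph{global} left inverse of this map after applying $\RH$ and then composing with it to build the morphism out of ${}^pH^0(f_*\cO_{\mathcal{X}'}[d])$. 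No such retraction exists in general: the \'etale-local splittings of $\mathbf{Z}_p[d]\to g_*\mathbf{Z}_p[d]$ do not glue, and $\tfrac{1}{\deg g}\Tr$ is not integral, as you yourself observe. Nor is one needed. The proposition only requires the containment of kernels $\ker\big({}^pH^0(\cO_{\mathcal{X}}[d])\to{}^pH^0(f_*\cO_{\mathcal{X}'}[d])\big)\subseteq\ker\big({}^pH^0(\cO_{\mathcal{X}}[d])\to\cO_{\mathcal{X}}^{\RH}[d]\big)$, and for this it suffices, by the commutative square, that the right vertical map be injective on ${}^pH^0$; that in turn follows once $j_*\mathbf{Z}_p[d]\to j_*g_*\mathbf{Z}_p[d]$ is a monomorphism of $\mathfrak{p}^+$-perverse sheaves. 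Injectivity, unlike split injectivity, is a local condition, so the \'etale-local splitting of $g$ (plus $t$-exactness of $j_*$ from \autoref{ArtinIntCoeff}) finishes the job --- which is exactly the paper's proof. Drop the retraction and run the kernel comparison instead.

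Your fallback via Stein factorization does not work as stated. In the chain $\cO_{\mathcal{X}}^{\RH}[d]\hookrightarrow f''_*\cO_{\mathcal{X}''}^{\RH}[d]\hookrightarrow f''_*{}^pH^0(\cO_{\mathcal{X}''}[d])$ the second arrow points the wrong way: $\cO_{\mathcal{X}''}^{\RH}[d]$ is a \emph{quotient}, not a subobject, of ${}^pH^0(\cO_{\mathcal{X}''}[d])$ (it is the dual object $\omega^{\RH}$ that sits inside the dualizing sheaf). Moreover ${}^pH^0(\cO_{\mathcal{X}}[d])\to{}^pH^0(f_*\cO_{\mathcal{X}'}[d])$ factors through ${}^pH^0(f''_*\cO_{\mathcal{X}''}[d])$, so its kernel \emph{contains} the kernel at the finite-cover level rather than being contained in it; handling the finite part by \autoref{Prop:FinPushGR} therefore leaves the birational part of the alteration entirely unaddressed, and that is precisely where the $\RH(j'_*\mathbf{Z}_p)$ and Artin-vanishing input is indispensable.
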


\begin{proof}
Choose $j:U \subset \mathfrak{X}_{C}$ a sufficiently small dense affine open such that $U$ is smooth and the induced map $U' = f^{-1}(U) \to U$ is finite \'etale; write $j':U' \to \mathfrak{X}_{C}'$ for the pullback of $j$ to $\mathfrak{X}_{C}'$. This choice gives rise to the diagram
\[ \xymatrix{ \cO_{\mathcal{X}}[d] \ar[r] \ar[d] & \RH(j_* \mathbf{Z}_p[d]) \ar[d] \\
f_* \cO_{\mathcal{X}'}[d] \ar[r] & f_* \RH(j'_* \mathbf{Z}_p[d]) }\]
By the recipe in \autoref{GRstrAlt}, the image of ${}^p H^0(-)$ of the top horizontal map is exactly $\cO_{\mathcal{X}}^{\RH}[d]$. By the commutative diagram above, it then suffices to show that ${}^p H^0(-)$ of the right vertical map is injective. As $\RH$ commutes with $f_*$ and is $t$-left exact for the perverse $t$-structures (\autoref{thm:RH} (6)), it suffices to show that $j_* \mathbf{Z}_p[d] \to f_* j'_* \mathbf{Z}_p[d]$ has cone in ${}^{\mathfrak{p}^+} D^{\geq 0}$; in fact, we shall prove this map is an injection of $\mathfrak{p}^+$-perverse sheaves. We may rewrite $f_* j'_* \mathbf{Z}_p$ as $j_* g_* \mathbf{Z}_p$, where $g:U' \to U$ is the map induced by $f$. As $j$ is an affine \'etale map, the functor $j_*$ is $t$-exact for the $\mathfrak{p}^+$-perverse $t$-structure (\autoref{ArtinIntCoeff}), so it suffices to show that the canonical map $\mathbf{Z}_p[d] \to g_* \mathbf{Z}_p[d]$ is an injection of $\mathfrak{p}^+$-perverse sheaves. But this follows from the same argument as in \autoref{Prop:FinPushGR}: since $g$ is a finite \'etale cover, locally on the base, $g$ splits into a finite disjoint union of copies of the identity map. 
\end{proof}

\begin{remark}[Dual form of alteration compatibility]
\label{AltContainDual}
Continue with notation as in \autoref{prop:AltContain}. The dual version of the proposition asserts that we have a containment
\[ \omega_{\mathcal{X}}^{\RH} \subset \mathrm{Image}\left(H^{-d}(f_* \omega^{\mydot}_{\mathcal{X}'/\cO_C})  \xrightarrow{\Tr} H^{-d}(\omega^{\mydot}_{\mathcal{X}/\cO_C}) = \omega_{\mathcal{X}/\cO_C}\right)\]
as subsheaves of the dualizing sheaf $\omega_{\mathcal{X}/\cO_C}$. Noting that $f_*$ is $t$-left exact for the standard $t$-structure as well as $\omega^{\mydot}_{\mathcal{X}'/\cO_C} \in D^{\geq -d}$, we learn that
\[ \omega_{\mathcal{X}}^{\RH} \subset \mathrm{Image}\left(R^0 f_* \omega_{\mathcal{X}'/\cO_C}  \xrightarrow{\Tr} \omega_{\mathcal{X}/\cO_C}\right)\]
as subsheaves of the dualizing sheaf $\omega_{\mathcal{X}/\cO_C}$. Intuitively, this says that a volume form lying in $\omega_{\mathcal{X}}^{\RH}$ lifts to every alteration of $\mathcal{X}$; we shall see later (\autoref{AlmostStabTraceAlt:explicit}) that this property characterizes $\omega_{\mathcal{X}}^{\RH}$.
\end{remark}

\subsection{Describing $\omega_{\mathcal{X}}^{\RH}$ via alterations}
\label{ss:GRAlt2}

For any alteration $f:\mathfrak{Y} \to \mathfrak{X}$, we saw in \autoref{AltContainDual} that there was a natural inclusion 
\[ \omega_{\mathcal{X}}^{\RH} \subset \mathrm{Image}(R^0 f_* \omega_{\mathcal{Y}/\cO_C} \to \omega_{\mathcal{X}/\cO_C})\]
of subsheaves of $\omega_{\mathcal{X}/\cO_C}$. We shall now show that this  inclusion approximates an almost isomorphism as we vary over all possible such $\mathfrak{Y}$'s. 

\begin{notation}[The category of pointed alterations]
\label{not:GRDesAlt}
With notation as in \autoref{not:RH}, assume additionally that $\mathfrak{X}$ is integral (whence the same holds for $\mathfrak{X}_{C}$). Fix an algebraic closure $\overline{K(\genX)}$ of the function field $K(\genX)$, giving a geometric generic point $\eta_{\mathfrak{X}}$ of $\mathfrak{X}$. Let $\mathcal{P}$ denote the category of geometrically pointed alterations of $\mathfrak{X}$, i.e., all pairs $(f_{\mathfrak{Y}}:\mathfrak{Y} \to \mathfrak{X}, \eta_{\mathfrak{Y}})$, where $f_\mathfrak{Y} $ is an alteration of integral $\cO_C$-schemes and $\eta_{\mathfrak{Y}}$ is a lift of the geometric generic point of $\mathfrak{X}$ along $f_{\mathfrak{Y}}$; note that $\mathcal{P}$ is a poset. As in \autoref{not:RH}, we shall write $\mathcal{Y} = \widehat{\mathfrak{Y}}$ and $\genY$ for the $p$-adic formal completion and scheme-theoretic generic fibre of $\mathfrak{Y} \in \mathcal{P}$ respectively. 

Furthermore, if we are in the scenario of \autoref{rmk:noncompleteAIC}, i.e., $\cO_C$ is the $p$-adic completion of a Henselian valuation ring $\cO_C^{nc}$ and $\mathfrak{X}=\mathfrak{X}^{nc}\otimes_{\cO_C^{nc}}\cO_C$ is integral, then we let $\mathcal{P}^{nc}\subseteq \mathcal{P}$ denote the subcategory of those $\mathfrak{Y}$ that come from $\mathfrak{X}^{nc}$, i.e., all pairs $(f_{\mathfrak{Y}}:\mathfrak{Y} \to \mathfrak{X}, \eta_{\mathfrak{Y}})$ as above such that $\mathfrak{Y}=\mathfrak{Y}^{nc}\otimes_{\cO_C^{nc}}\cO_C$ for some alteration $\mathfrak{Y}^{nc}$ of $\mathfrak{X}^{nc}$.

\end{notation}

We shall use the following notion to tame certain infinite intersections that will appear:

\begin{definition}[Almost pro- and ind- constancy]
\label{def:AlmostIndConst}
A pro-object $\{M_i\}$ in stable $\cO_C$-linear $\infty$-category $\mathcal{C}$ (such as almost derived $\infty$-category $D_{\qcoh}(\mathcal{X})^a$) is called {\em almost pro-zero} if for every $\epsilon \in \mathfrak{m}_{\cO_C}$, the pro-object $\{M_i\}$ is pro-isomorphic to an $\epsilon$-torsion\footnote{This refers to an $\cO_C/\epsilon$-linear object of $\mathcal{C}$.} pro-object. Similarly, an ind-object $\{M_i\}$ in $\mathcal{C}$ is called {\em almost ind-zero} if for every $\epsilon \in \mathfrak{m}_{\cO_C}$, the ind-object $\{M_i\}$ is ind-isomorphic to an $\epsilon$-torsion ind-object.

For objects in the heart of a $t$-structure, being almost pro-zero is equivalent to the following: for each $\epsilon \in \mathfrak{m}$ and each index $i$, there exists a map $M_j \to M_i$ in the pro-system such that $\mathrm{Image}(M_j \to M_i)$ is annihilated by $\epsilon$. Similarly, being almost ind-zero is equivalent to the condition: for each $\epsilon \in \mathfrak{m}$ and each index $i$, there exists a map $M_i \to M_j$ in the ind-system such that $\mathrm{Image}(M_i \to M_j)$ is annihilated by $\epsilon$. More generally, for objects that are bounded with respect to a $t$-structure, being almost pro-zero or almost ind-zero is equivalent to the same condition on its cohomology groups.

A pro-object $\{M_i\}$ in $\mathcal{C}$ is called {\em almost pro-constant} if one can find a map $\{M\} \to \{M_i\}$ of pro-objects from a constant pro-object with almost pro-zero cone. An ind-object $\{M_i\}$ in $\mathcal{C}$ is called {\em almost ind-constant} if one can find a map $\{M_i\}\to \{M\}$ to a constant ind-object with almost ind-zero cone.  
\end{definition}

\begin{remark}
We caution the readers that, in the definition of almost ind-constancy (resp.\ almost pro-constancy), it is important that the map goes to (resp.\ from) the constant system $\{M\}$. If we swap $\{M\}$ and $\{M_i\}$, then we get a strictly stronger condition. For instance, if we have $\{M\}\to \{M_i\}$ of ind-objects whose cone is almost ind-zero, then $M$ is almost isomorphic to the colimit of $\{M_i\}$, and we obtain that $\{M_i\}\to \{\colim M_i\}$ has an almost ind-zero cone and thus $\{M_i\}$ is almost ind-constant under our definition. On the other hand, suppose $\varpi\in \m$ admits a compatible system of $p$-power roots, then the system $\{M_i\}=\{(\varpi^{1/p^i})\}$ is almost ind-constant (take $M=\cO_C$), but one cannot find $M$ such that the cone of $\{M\}\to \{M_i\}$ is almost ind-zero. 
\end{remark}

Given a Noetherian scheme, an ind-object in coherent sheaves has coherent colimit if and only if it is an essentially constant ind-object. We need a variant of this observation for the almost coherence.

\begin{lemma}
\label{AlmostIndConstCrit}
Let $\{M_i\}$ be an ind-object in $D^b_{\acoh}(\mathcal{X})$. Assume the following:
\begin{enumerate}[(1)]
\item There exists $N \gg 0$ such that $M_i \in D^{[-N,N]}$ for all $i$.
\item There exists some integer $m > 0$ and an index $i_0$ such that for all $j \geq i_0$ the cone of any map $M_{i_0} \to M_j$ in the diagram is annihilated by $p^m$. 
\item The colimit $\colim_i M_i$ is almost coherent.
\end{enumerate}
Then $\{M_i\}$ is almost ind-constant.
\end{lemma}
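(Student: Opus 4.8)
The goal is to produce a constant ind-object $\{M\}$ together with a map $\{M_i\} \to \{M\}$ whose cone is almost ind-zero. The natural candidate for $M$ is the colimit $M_\infty := \colim_i M_i$, which is almost coherent by hypothesis (3). So the plan is to show that, after passing to a cofinal subsystem, the natural maps $M_i \to M_\infty$ have cones that are ``uniformly almost bounded'', in the sense that for each $\epsilon \in \mathfrak{m}_{\cO_C}$ there is an index $i_\epsilon$ past which the cone $\mathrm{cone}(M_i \to M_\infty)$ is killed by $\epsilon$. Concretely, I would first reduce to the heart: since everything is bounded (hypothesis (1)) with bounded amplitude $[-N,N]$, being almost ind-zero can be checked on each cohomology sheaf $H^k(-)$ separately (as noted in \autoref{def:AlmostIndConst}), and taking cohomology commutes with filtered colimits. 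So it suffices to prove the analogous statement for the ind-system of almost coherent \emph{sheaves} $\{H^k(M_i)\}$ with colimit $H^k(M_\infty)$, for each of the finitely many relevant $k$.

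\textbf{Key steps.} Fix $k$ and write $N_i := H^k(M_i)$, $N_\infty := \colim_i N_i = H^k(M_\infty)$, an almost coherent sheaf on $\mathcal{X}$. Hypothesis (2) gives, after replacing the index set by the cofinal subset $\{j \ge i_0\}$, a uniform bound: for all $j' \ge j \ge i_0$ the cone of $M_j \to M_{j'}$ is killed by $p^m$, hence (applying $H^k$ and $H^{k-1}$) the kernel and cokernel of $N_j \to N_{j'}$ are killed by $p^m$ — so the transition maps in $\{N_i\}$ are ``$p^m$-isomorphisms'' and in particular each $N_j \to N_\infty$ has kernel and cokernel killed by $p^m$. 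Now I would work locally: choose a finite affine cover of $\mathcal{X}$, reducing to $\mathcal{X} = \mathrm{Spf}(R)$ with $R$ a $p$-complete topologically finitely presented $\cO_C$-algebra, so $N_\infty$ corresponds to an almost finitely presented $R$-module $M_\infty$ and each $N_j$ to an almost finitely generated $R$-module. Almost coherence of $M_\infty$ means: for each $\epsilon \in \mathfrak{m}_{\cO_C}$ there is a genuinely finitely presented $R$-module $P_\epsilon$ and a map $P_\epsilon \to M_\infty$ whose kernel and cokernel are killed by $\epsilon$. Since $R$ is Noetherian-like enough that $P_\epsilon$ is finitely generated and $M_\infty = \colim_i N_i$ is filtered, the map $P_\epsilon \to M_\infty$ lifts (up to $\epsilon$-torsion) to some $P_\epsilon \to N_{i_\epsilon}$ for $i_\epsilon$ large, and comparing this with $P_\epsilon \to M_\infty$ through the $p^m$-isomorphism $N_{i_\epsilon} \to M_\infty$ shows that $\mathrm{coker}(N_{i_\epsilon} \to M_\infty)$ is killed by $\epsilon p^m$ — wait, one must be careful: I want the cokernel killed by a power that goes to zero with $\epsilon$. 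The point is that $\epsilon p^m$ ranges over a cofinal subset of $\mathfrak{m}_{\cO_C}$ as $\epsilon$ does (since $p^m$ is a fixed unit-up-to-$\mathfrak{m}$ element, or more precisely $p^{1/p^\infty}$-divisibility is preserved), so this is exactly what ``almost ind-zero cone'' requires. Similarly the kernel of $N_{i_\epsilon} \to M_\infty$ is killed by $p^m$, hence a fortiori almost zero. Threading these through the finitely many cohomological degrees and re-globalizing over the affine cover produces the map of pro-objects — sorry, ind-objects — $\{M_i\} \to \{M_\infty\}$ with almost ind-zero cone.

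\textbf{Main obstacle.} The delicate point is the lifting argument: extracting from almost finite presentation of $M_\infty = \colim_i N_i$ a \emph{single} index $i_\epsilon$ at which a genuine finitely presented approximation $P_\epsilon$ already factors, and controlling the error. This requires combining (i) the standard fact that a finitely generated module mapping to a filtered colimit factors through a finite stage, applied to the $\epsilon$-approximations $P_\epsilon$, with (ii) the uniform $p^m$-control from hypothesis (2), which is what prevents the ``amount of error'' from drifting as $\epsilon \to 0$ — without (2) the colimit could be almost coherent while the individual $N_i$ approach it only in an uncontrolled way, e.g. the $\{(\varpi^{1/p^i})\}$-type behavior flagged in the remark, where no single constant approximation from the left exists. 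A secondary technical nuisance is checking that the whole argument is compatible with the gluing over an affine cover and with the passage from sheaves back to the bounded derived category (recombining the $H^k$'s via the Postnikov filtration and the amplitude bound (1)); this is routine but must be stated carefully so that the uniform bounds survive the finitely many extension steps, at the cost of enlarging $m$ by a bounded amount depending only on $N$.
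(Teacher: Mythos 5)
Your overall strategy (compare each $M_i$ with the constant object $M_\infty=\colim_i M_i$ and control the cone) is reasonable, and your lifting step --- approximating by a finitely generated/presented module that factors through a finite stage of the filtered system --- is essentially the key mechanism in the paper's proof of the auxiliary statement $(\ast)$. But the way you close the argument is wrong at the decisive point. You assert that the kernel of $N_{i_\epsilon}\to M_\infty$ is ``killed by $p^m$, hence a fortiori almost zero,'' and that killing the cokernel by $\epsilon p^m$ suffices because ``$\epsilon p^m$ ranges over a cofinal subset of $\mathfrak{m}_{\cO_C}$.'' Both claims are false: in the almost setup $(\cO_C,(p^{1/p^\infty}))$, a module killed by $p^m$ for a fixed $m>0$ (e.g.\ $\cO_C/p^m$) is not almost zero, and the set $\{\epsilon p^m : \epsilon\in\mathfrak{m}_{\cO_C}\}$ generates only $p^m\mathfrak{m}_{\cO_C}\subsetneq \mathfrak{m}_{\cO_C}$, so annihilation by all of these elements does not force annihilation by $p^{1/p^n}$. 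The uniform bound coming from hypothesis (2) therefore cannot be used to make the cone of $N_i\to M_\infty$ almost zero; its role in the lemma is different.

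In the paper's proof, hypothesis (2) is used only to replace $\{M_i\}$ by $\{M_i/M_{i_0}\}$, a diagram of $p^m$-torsion objects, so that one works over the honest scheme $\mathcal{X}_{p^m=0}$ and the colimit is computed naively rather than after $p$-completion (this is exactly what fails in the counterexample $\cO_{\mathcal{X}}\xrightarrow{p}\cO_{\mathcal{X}}\xrightarrow{p}\cdots$ recorded right after the lemma). The almost-ind-zero property is then obtained by a different mechanism: after passing to fibres of the map to the colimit one may assume $\colim_i M_i=0$, and for each $\epsilon$ and each $i$ one uses almost coherence of $M_i$ itself (not of the colimit) to find a finitely generated submodule $M\subset M_i$ with $\epsilon\cdot M_i/M=0$; since $M$ is finitely generated and the naive colimit vanishes, $M$ dies at some finite stage $j$, so the image of $M_i\to M_j$ is $\epsilon$-torsion. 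Your argument could be repaired by applying exactly this reasoning to the ind-systems of kernels and cokernels of $N_i\to M_\infty$ (both almost coherent with vanishing colimit once one has reduced to the $p^m$-torsion situation), but the $\epsilon p^m$ bookkeeping as written does not prove the lemma.
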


Note that condition (2) is necessary: as we are working on a $p$-adic formal scheme, colimits involve an implicit $p$-completion. For example, the diagram 
\[ \cO_{\mathcal{X}} \xrightarrow{p} \cO_{\mathcal{X}} \xrightarrow{p} \cO_{\mathcal{X}} \xrightarrow{p} ... \]
has $0$ colimit in $D_{\qcoh}(\mathcal{X})$, and hence verifies (1) and (3) but is not almost ind-constant.

\begin{proof}
By replacing $\{M_j\}$ with $\{M_j/M_{i_0}\}$ and possibly enlarging $m$, we may assume that $\{M_i\}$ is a diagram in ${D}^b_{\acoh}(\mathcal{X}_{p^m=0})$; in particular, as $\mathcal{X}_{p^m=0}$ is a scheme (instead of merely a $p$-adic formal scheme), colimits are computed at the level of underlying sheaves. Passing to fibres of the map to the direct limit, we may assume $\colim_i M_i = 0$. Filtering by cohomology sheaves, we may then also assume that each $M_i$ is in degree $0$. In this case, it suffices to prove the following statement:

\begin{itemize}
\item[$(\ast)$] Suppose $R$ is a finitely presented (and hence coherent) $\cO_C$-algebra. Let $\{M_i\}$ be a filtered diagram of almost coherent $R$-modules with $0$ direct limit. Then $\{M_i\}$ is almost ind-zero.
\end{itemize}

Fix some $\epsilon \in \mathfrak{m}_{\cO_C}$. We want to show that the inclusion $\{M_i[\epsilon]\} \to \{M_i\}$ is an ind-isomorphism, i.e., for each $M_i$, there exists some map $M_i \to M_j$ in the system with image contained in $M_j[\epsilon]$. As $M_i$ is almost coherent, we can find a finitely generated $R$-submodule $M \subset M_i$ with $\epsilon \cdot M_i/M = 0$. As $\colim_i M_i = 0$ and $M$ is finitely generated, we can find a map $M_i \to M_j$ in the system carrying $M$ to $0$. But then $M_i \to M_j$ factors as $M_i \to M_i/M \to M_j$, so the image must be $\epsilon$-torsion as $M_i/M$ is so. 
\end{proof}

\begin{theorem}[Almost stabilization of the image of the Grothendieck trace]
\label{AlmostStabTraceAlt}
The pro-object
\[ \{\mathrm{Image}\left(R^0 f_{\mathcal{Y},*} \omega_{\mathcal{Y}/\cO_C} \xrightarrow{\Tr} \omega_{\mathcal{X}/\cO_C}\right)\}_{\mathcal{P}} \]
is almost pro-constant with limit $\omega_{\mathcal{X}}^{\RH}$. Similarly, the ind-object
\[ \{\mathrm{Image}\left({}^p H^0(\cO_{\mathcal{X}}[d]) \to {}^p H^0(f_{\mathcal{Y,*}} \cO_{\mathcal{Y}}[d])\right)\}_{\mathcal{P}} \]
is almost ind-constant with colimit $\cO_{\mathcal{X}}^{\RH}{[d]}$. 
\end{theorem}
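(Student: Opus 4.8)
\textbf{Proof plan for \autoref{AlmostStabTraceAlt}.}
The two assertions are Grothendieck dual to each other, so I will focus on the ind-object statement about $\{\mathrm{Image}({}^p H^0(\cO_{\mathcal{X}}[d]) \to {}^p H^0(f_{\mathcal{Y},*}\cO_{\mathcal{Y}}[d]))\}_{\mathcal{P}}$; the pro-object claim follows by applying $\mathbf{D}_{\mathcal{X}}(-)$ and using that Grothendieck duality is an exact anti-equivalence on $D^b_{\acoh}(\mathcal{X})$ that swaps almost ind-zero with almost pro-zero cones (note every map $R^0 f_*\omega_{\mathcal{Y}/\cO_C} \to \omega_{\mathcal{X}/\cO_C}$ is injective, being the dual of a surjection, so the pro-system does consist of honest subsheaves of $\omega_{\mathcal{X}/\cO_C}$). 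First I would fix the cofinal sub-poset of $\mathcal{P}$ given by those $\mathfrak{Y}$ that factor through $\mathfrak{X}^+$, and observe that $\cO_{\mathcal{X}^+} = \mathrm{colim}_{\mathcal{P}}\, \cO_{\mathcal{Y}}$ as a $p$-completed filtered colimit. By \autoref{PervCohColimit}, taking ${}^p H^0$ commutes with filtered colimits, so $\mathrm{colim}_{\mathcal{P}}\, {}^p H^0(f_{\mathcal{Y},*}\cO_{\mathcal{Y}}[d]) = {}^p H^0(\cO_{\mathcal{X}^+}[d])$, and hence the colimit of the ind-system of images is exactly $\mathrm{Image}({}^p H^0(\cO_{\mathcal{X}}[d]) \to {}^p H^0(\cO_{\mathcal{X}^+}[d]))$, which equals $\cO_{\mathcal{X}}^{\RH}[d]$ by \autoref{GRviaAIC}. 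This establishes that the colimit is as claimed and, in particular, that it is almost coherent.

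The remaining task is to promote ``the colimit is almost coherent'' to ``the ind-system is almost ind-constant'', and this is where \autoref{AlmostIndConstCrit} enters. Hypothesis (1) (uniform boundedness) is immediate since every term is a perverse almost coherent sheaf placed in a single cohomological degree ($-d$ after the shift, or $0$ in the convention where the image lies in $\mathrm{Perv}_{\acoh}(\mathcal{X})$). Hypothesis (3) is what we just proved. The real content is hypothesis (2): I need to exhibit one alteration $\mathfrak{Y}_0 \in \mathcal{P}$ and an integer $m$ such that for every further alteration $\mathfrak{Y} \geq \mathfrak{Y}_0$ in $\mathcal{P}$, the cone of $M_{\mathfrak{Y}_0} \to M_{\mathfrak{Y}}$ (where $M_{\mathfrak{Y}}$ denotes the image term) is annihilated by $p^m$. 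I would get this from a cohomology-killing statement: by \cite{BhattAbsoluteIntegralClosure} (the cohomology-annihilation results quoted in the introduction, cf.\ the discussion after Theorem C), there is a single finite cover (hence alteration) $\mathfrak{Y}_0 \to \mathfrak{X}$ and a power $p^m$ such that $p^m$ kills all the higher cohomology (in the relevant range) of the cone of $\cO_{\mathcal{X}} \to \cO_{\mathcal{Y}_0}$ in a way that persists after further alteration — concretely, the composite $\cO_{\mathcal{Y}_0} \to \cO_{\mathcal{Y}}$ for $\mathfrak{Y} \geq \mathfrak{Y}_0$ induces, on the relevant local cohomology (equivalently, after dualizing, on $R^0 f_*\omega$), a map whose cokernel is $p^m$-torsion uniformly in $\mathfrak{Y}$. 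Translating this through ${}^p H^0$ (which is exact on short exact sequences of perverse objects and carries $p^m$-torsion to $p^m$-torsion) gives exactly hypothesis (2) for the ind-system of images.

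The main obstacle is precisely this uniform killing step (2): one must check that the power of $p$ provided by the absolute-integral-closure cohomology-annihilation results can be chosen \emph{independently} of how far one alters past $\mathfrak{Y}_0$, and that the annihilation really transfers to the \emph{images} $M_{\mathfrak{Y}}$ rather than to the full cohomology groups. The cleanest route is to work with the dual (pro-)picture: the surjections $\beta_f : f_*\omega_{\mathcal{Y}}^{\RH} \to \omega_{\mathcal{X}}^{\RH}$ of \autoref{FinPushGRdual} together with the inclusions of \autoref{AltContainDual} show $\omega_{\mathcal{X}}^{\RH} \subseteq M_{\mathfrak{Y}}^\vee \subseteq \omega_{\mathcal{X}/\cO_C}$ for all $\mathfrak{Y}$, so the pro-system of the $M_{\mathfrak{Y}}^\vee$ is sandwiched, and it suffices to find one $\mathfrak{Y}_0$ and one $m$ with $p^m \cdot (M_{\mathfrak{Y}_0}^\vee / \omega_{\mathcal{X}}^{\RH}) = 0$; any $\mathfrak{Y} \geq \mathfrak{Y}_0$ then has $M_{\mathfrak{Y}}^\vee$ trapped between $\omega_{\mathcal{X}}^{\RH}$ and $M_{\mathfrak{Y}_0}^\vee$, giving (2) automatically for the pro-system. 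Finding such a single $\mathfrak{Y}_0$ amounts to: the inclusion $\omega_{\mathcal{X}}^{\RH} \hookrightarrow \omega_{\mathcal{X}/\cO_C}$ has almost-coherent (in fact coherent after $!$-realization) cokernel, and $\omega_{\mathcal{X}}^{\RH}$ is itself almost coherent, so $M_{\mathfrak{Y}_0}^\vee/\omega_{\mathcal{X}}^{\RH}$ is an almost coherent sheaf; as the colimit (= intersection) of these over $\mathcal{P}$ is almost zero (that is the content of the colimit computation, dualized), the argument in the proof of \autoref{AlmostIndConstCrit}, part $(\ast)$, applied on an affine chart, produces the required single index $\mathfrak{Y}_0$ and power $p^m$. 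I would then reassemble: (1), (2), (3) verified $\Rightarrow$ \autoref{AlmostIndConstCrit} gives almost ind-constancy of the $\{M_{\mathfrak{Y}}\}$, and dualizing gives almost pro-constancy of the trace-image pro-system, both with the stated limit/colimit.
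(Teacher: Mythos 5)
Your overall architecture matches the paper's: reduce to the ind-statement by duality, verify hypotheses (1)--(3) of \autoref{AlmostIndConstCrit}, compute the colimit via the absolute integral closure and \autoref{GRviaAIC}, and correctly flag hypothesis (2) as the crux. (Two small points on the colimit step: the identification $\colim_{\mathcal{P}} f_{\mathcal{Y},*}\cO_{\mathcal{Y}} \simeq \cO_{\mathcal{X}^+}$ over \emph{all alterations} is not automatic --- the paper needs \cite[Theorem 4.19 (1)]{BhattAbsoluteIntegralClosure} to pass from $\mathcal{P}$ to $\mathcal{P}^{\fin}$; and the trace $R^0 f_*\omega_{\mathcal{Y}/\cO_C} \to \omega_{\mathcal{X}/\cO_C}$ is not injective, though its image is of course a subsheaf.)

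The genuine gap is your resolution of hypothesis (2). Your ``cleanest route'' argues: the terms $M_{\mathfrak{Y}}^\vee$ are sandwiched between $\omega_{\mathcal{X}}^{\RH}$ and $\omega_{\mathcal{X}/\cO_C}$, their intersection is almost $\omega_{\mathcal{X}}^{\RH}$, hence the argument of $(\ast)$ in \autoref{AlmostIndConstCrit} ``produces the required single index $\mathfrak{Y}_0$ and power $p^m$.'' This is circular and, as stated, false. First, $(\ast)$ only yields, for each $\epsilon \in \mathfrak{m}_{\cO_C}$, an index \emph{depending on} $\epsilon$ whose transition map has $\epsilon$-torsion image; it never produces a single uniform power of $p$ --- that is exactly why (2) is an independent hypothesis, and the example $\cO_{\mathcal{X}} \xrightarrow{p} \cO_{\mathcal{X}} \xrightarrow{p} \cdots$ following \autoref{AlmostIndConstCrit} shows that (1) and (3) (equivalently, ``the colimit/intersection is almost the expected thing'') do \emph{not} imply (2). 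Second, even the input ``the intersection of the $M_{\mathfrak{Y}}^\vee$ is almost $\omega_{\mathcal{X}}^{\RH}$'' is the dual of the almost pro-constancy you are trying to prove (it is \autoref{AlmostStabTraceAlt:explicit}(1)), not a consequence of the colimit computation alone, since $\mathbf{D}_{\mathcal{X}}$ does not commute with the $p$-completed filtered (co)limits in play. The paper's actual verification of (2) requires a new geometric input you do not use: choose $\mathfrak{Y}_0$ so that $\mathfrak{Y}_{0,C}$ is a log resolution with a dense affine open $j\colon U \hookrightarrow \mathfrak{Y}_{0,C}$ \'etale over $\mathfrak{X}_C$ and SNC complement; then the common quotient $\Image({}^p H^0(\cO_{\mathcal{X}}[d]) \to {}^p H^0(\RH(f_{\mathcal{Y}_0,*} j_*\mathbf{Z}_p[d])))$ of all terms past $\mathfrak{Y}_0$ differs from the $\mathfrak{Y}_0$-term by a kernel that is killed by a fixed $p^m$, because the map $f_{\mathfrak{Y}_0,*}\cO_{\mathfrak{Y}_0} \to f_{\mathfrak{Y}_0,*}\RH(j_*\mathbf{Z}_p)$ becomes a split inclusion of a direct summand after inverting $p$ by the Hodge--Tate decomposition of \autoref{thm:RH}(b), and one then bounds the torsion using almost coherence. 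Without resolution of singularities of the generic fibre and this Hodge--Tate splitting (or some substitute), your argument does not close.
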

\begin{proof}
We put aside the question of describing the limiting values until the end of the proof; it will fall out of the argument. Next we note that the two almost constancy statements in the theorem are Grothendieck dual to each other, so it suffices to prove either one. In what follows, we will check the ind-constancy of the second system. 


Thus, we must verify the hypotheses (1), (2) and (3) from \autoref{AlmostIndConstCrit}. In fact, (1) is clear as the cohomological amplitude of the Grothendieck dual of $f_{\mathcal{Y,*}} \cO_{\mathcal{Y}}[d]$ is bounded independently of $\mathcal Y$. 

We now verify hypothesis (2) in \autoref{AlmostIndConstCrit}. Let $f_{\mathfrak{Y}}$: $\mathfrak{Y}\to \mathfrak{X}$ be an alteration such that $\mathfrak{Y}_C$ is regular and such that there exists a dense open affine $j: U\hookrightarrow \mathfrak{Y}_C$ with $U\to \mathfrak{X}_C$  \'etale and $E:=\mathfrak{Y}_C - U$ is an SNC divisor on $\mathfrak{Y}_C$ (such a $\mathfrak{Y}$ exists by the existence of log resolution of singularities of the generic fiber $\mathfrak{X}_C$). We need to show that for any further alteration $\mathfrak{Z}\to \mathfrak{Y}$, 
\begin{equation}
\label{equation 1 in proof of AlmostStabTrace}
\Image\left({}^p H^0(\cO_{\mathcal{X}}[d]) \to {}^p H^0(f_{\mathcal{Y,*}} \cO_{\mathcal{Y}}[d])\right) \twoheadrightarrow  
\Image\left({}^p H^0(\cO_{\mathcal{X}}[d]) \to {}^p H^0(f_{\mathcal{Z,*}} \cO_{\mathcal{Z}}[d])\right)
\end{equation}
has kernel annihilated by a fixed power of $p$ (that is independent of $\mathfrak{Z}$). 

Let $k: V\hookrightarrow \mathfrak{Z}_C$ be a dense open regular affine subset. By \autoref{Prop:FinPushGR}, we know that
\begin{equation}
\label{equation 2 in proof of AlmostStabTrace}
\Image\left({}^p H^0(\cO_{\mathcal{X}}[d]) \to {}^p H^0(f_{\mathcal{Y,*}}\RH( j_* \mathbf{Z}_p[d]))\right) \cong 
\Image\left({}^p H^0(\cO_{\mathcal{X}}[d]) \to {}^p H^0(f_{\mathcal{Z,*}}\RH( k_* \mathbf{Z}_p[d]))\right)
\end{equation}
and by the argument in \autoref{prop:AltContain}, the displayed object in \autoref{equation 2 in proof of AlmostStabTrace} is a common quotient of both sides of \autoref{equation 1 in proof of AlmostStabTrace}. Thus it is enough to show that the canonical map
\begin{equation}
\label{equation 3 in proof of AlmostStabTrace}
\Image\left({}^p H^0(\cO_{\mathcal{X}}[d]) \to {}^p H^0(f_{\mathcal{Y,*}} \cO_{\mathcal{Y}}[d])\right) \twoheadrightarrow
\Image\left({}^p H^0(\cO_{\mathcal{X}}[d]) \to {}^p H^0(\RH(f_{\mathcal{Y,*}} j_* \mathbf{Z}_p[d]))\right)
\end{equation}
has kernel annihilated by a fixed power of $p$. By the almost coherency of the displayed objects on both sides of \autoref{equation 3 in proof of AlmostStabTrace}, it is enough to show that \autoref{equation 3 in proof of AlmostStabTrace} becomes an isomorphism after inverting $p$. For this, it is enough to show the stronger statement that the natural map 
\begin{equation}
\label{equation 4 in proof of AlmostStabTrace}
 f_{\mathfrak{Y},*}\cO_{\mathfrak{Y}} \to 
 \RH (f_{\mathcal{Y,*}} j_* \mathbf{Z}_p) \simeq f_{\mathfrak{Y},*}\RH(j_*\mathbf{Z}_p)
\end{equation}
is split injective after inverting $p$, where we used \autoref{thm:RH} (d). But after inverting $p$, we have
\begin{equation}
\label{RHlog}
f_{\mathfrak{Y}_C,*}\RH(j_*\mathbf{Q}_p) \cong f_{\mathfrak{Y}_C,*}\left(\bigoplus_{i=0}^d \Omega^i_{(\mathfrak{Y}_C, E)}[-i]\right)
\end{equation}
by \autoref{thm:RH} (b) and (d). Since $(\mathfrak{Y}_C, E)$ is log smooth, we know that $\Omega^0_{(\mathfrak{Y}_C, E)}\cong \cO_{\mathfrak{Y}_C}$. Thus after inverting $p$, the map in \autoref{equation 4 in proof of AlmostStabTrace} is the inclusion of a direct summand, as wanted.

We now verify hypothesis (3) in \autoref{AlmostIndConstCrit}. Write $\mathcal{P}^{\fin} \subset \mathcal{P}$ for the full subcategory spanned by those alterations $\mathfrak{Y} \to \mathfrak{X}$ that are actually finite. We know from \cite[Theorem 4.19 (1)]{BhattAbsoluteIntegralClosure} that
 \[ \colim_{\mathcal{P}^{\fin}} f_{\mathcal{Y},*} \cO_{\genY} \simeq  \colim_{\mathcal{P}} f_{\mathcal{Y},*} \cO_{\genY} \in D_{\qcoh}(\mathcal{X}).\]

As the functor of applying  ${}^p H^0(-)$ or taking images both commute with filtered colimits for the perverse $t$-structure on $D_{\qcoh}(\mathcal{X})$ (\autoref{PervCohColimit}), we then learn that 
   \begin{align*}
  & \colim_{\mathcal{P}} \mathrm{Image}\left({}^p H^0(\cO_{\mathcal{X}}[d]) \to {}^p H^0(f_{\mathcal{Y,*}} \cO_{\mathcal{Y}}[d])\right) \\
   \simeq\; & 
  \colim_{\mathcal{P}^{\fin}} \mathrm{Image}\left({}^p H^0(\cO_{\mathcal{X}}[d]) \to {}^p H^0(f_{\mathcal{Y,*}} \cO_{\mathcal{Y}}[d])\right) \\
   \simeq\; & \mathrm{Image}\left( {}^p H^0 (\cO_{\mathcal{X}}[d]) \to {}^p H^0(\cO_{\mathcal{X}^+}[d])\right) \\
   = \;& \cO_{\mathcal{X}}^{\RH}[d],
  \end{align*}
where the last identification comes from \autoref{GRviaAIC}. As all hypotheses in \autoref{AlmostIndConstCrit} are verified, we obtain the promised almost ind-constancy. Finally, we note that this argument also gives the desired identification of the limiting values, proving the entire theorem.
\end{proof}

\begin{corollary}[Almost stabilization of the image of the Grothendieck trace: explicit form]\
\label{AlmostStabTraceAlt:explicit}
In what follows we work in the almost category.
\begin{enumerate}
\item We have
\[ \bigcap_{\mathfrak{Y} \in \mathcal{P}} \mathrm{Image}\left(R^0 f_{\mathcal{Y},*} \omega_{\mathcal{Y}/\cO_C} \xrightarrow{\Tr} \omega_{\mathcal{X}/\cO_C}\right) = \omega_{\mathcal{X}}^{\RH} \]
as subsheaves of $\omega_{\mathcal{X}/\cO_C}$.
\item The intersection in (1) is almost finite, i.e., for each $\epsilon \in \mathfrak{m}_{\cO_C}$, there exists some $\mathfrak{Y} \in \mathcal{P}$ such that the cokernel of the inclusion
\[ \omega_{\mathcal{X}}^{\RH} \subset \mathrm{Image}\left(R^0 f_{\mathcal{Y},*} \omega_{\mathcal{Y}/\cO_C} \xrightarrow{\Tr} \omega_{\mathcal{X}/\cO_C}\right) \]
from \autoref{prop:AltContain} is annihilated by $\epsilon$.
\end{enumerate}
\end{corollary}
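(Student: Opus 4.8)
The plan is to deduce the corollary essentially formally from \autoref{AlmostStabTraceAlt}, by unwinding the definition of almost pro-constancy in \autoref{def:AlmostIndConst}. Write $M_{\mathfrak{Y}} := \mathrm{Image}\big(R^0 f_{\mathcal{Y},*}\omega_{\mathcal{Y}/\cO_C} \xrightarrow{\Tr} \omega_{\mathcal{X}/\cO_C}\big)$ for $\mathfrak{Y} \in \mathcal{P}$. First I would record that for a morphism $\mathfrak{Z} \to \mathfrak{Y}$ in $\mathcal{P}$, functoriality of the Grothendieck trace gives $\Tr_{\mathcal{Z}/\mathcal{X}} = \Tr_{\mathcal{Y}/\mathcal{X}} \circ R^0 f_{\mathcal{Y},*}(\Tr_{\mathcal{Z}/\mathcal{Y}})$, so that $M_{\mathfrak{Z}} \subseteq M_{\mathfrak{Y}}$ as subsheaves of $\omega_{\mathcal{X}/\cO_C}$; hence $\{M_{\mathfrak{Y}}\}_{\mathcal{P}}$ is a cofiltered system of almost coherent subsheaves of $\omega_{\mathcal{X}/\cO_C}$ with the transition maps being inclusions, and its honest limit in quasi-coherent sheaves is precisely the intersection $\bigcap_{\mathfrak{Y}} M_{\mathfrak{Y}}$ appearing in (1).

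Next I would invoke \autoref{AltContainDual}, which supplies the inclusions $\omega_{\mathcal{X}}^{\RH} \hookrightarrow M_{\mathfrak{Y}}$ for all $\mathfrak{Y} \in \mathcal{P}$, compatible with the transition maps, i.e., a map of pro-objects $\{\omega_{\mathcal{X}}^{\RH}\} \to \{M_{\mathfrak{Y}}\}_{\mathcal{P}}$ from the constant pro-object. By \autoref{AlmostStabTraceAlt} this map has almost pro-zero cone. Since every map in sight is an injection of sheaves concentrated in degree $0$, the cone is the pro-system $\{M_{\mathfrak{Y}}/\omega_{\mathcal{X}}^{\RH}\}$ (again with inclusions as transition maps, so each transition has image equal to the source), and the "heart" reformulation of almost pro-zero in \autoref{def:AlmostIndConst} then reads: for every $\epsilon \in \mathfrak{m}_{\cO_C}$ and every index $\mathfrak{Y}_0$ there is a refinement $\mathfrak{Y} \to \mathfrak{Y}_0$ in $\mathcal{P}$ with $\epsilon \cdot (M_{\mathfrak{Y}}/\omega_{\mathcal{X}}^{\RH}) = 0$, equivalently $\epsilon \cdot M_{\mathfrak{Y}} \subseteq \omega_{\mathcal{X}}^{\RH}$.

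Finally I would harvest both assertions. Taking $\mathfrak{Y}_0$ to be the identity alteration of $\mathfrak{X}$ (with the fixed geometric generic point), the previous step yields, for each $\epsilon \in \mathfrak{m}_{\cO_C}$, an alteration $\mathfrak{Y} \in \mathcal{P}$ with $\epsilon\, M_{\mathfrak{Y}} \subseteq \omega_{\mathcal{X}}^{\RH} \subseteq M_{\mathfrak{Y}}$; this says exactly that the cokernel of the inclusion $\omega_{\mathcal{X}}^{\RH} \subseteq M_{\mathfrak{Y}}$ of \autoref{prop:AltContain} (dual form) is annihilated by $\epsilon$, which is (2). For (1): the containment $\omega_{\mathcal{X}}^{\RH} \subseteq \bigcap_{\mathfrak{Y}} M_{\mathfrak{Y}}$ is immediate from \autoref{AltContainDual}, while for the reverse inclusion a local section $x$ of $\bigcap_{\mathfrak{Y}} M_{\mathfrak{Y}}$ satisfies $\epsilon x \in \epsilon\, M_{\mathfrak{Y}} \subseteq \omega_{\mathcal{X}}^{\RH}$ for a suitable $\mathfrak{Y}$ depending on $\epsilon$; since this holds for every $\epsilon \in \mathfrak{m}_{\cO_C}$ and we work in the almost category, $x$ lies in $\omega_{\mathcal{X}}^{\RH}$. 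I do not anticipate a genuine obstacle here: the only points needing care are the trace-functoriality that makes the transition maps injective and the translation of "almost pro-zero cone" into the explicit $\epsilon$-divisibility statement, both routine from the definitions; all the substance is already in \autoref{AlmostStabTraceAlt}.
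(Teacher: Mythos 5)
Your proposal is correct and follows essentially the same route as the paper: the paper's proof of this corollary is literally "Theorem \autoref{AlmostStabTraceAlt} gives (a) on taking limits, while (b) simply spells out almost pro-constancy in the case of interest," and your argument is exactly that spelling-out (inclusions as transition maps, the cone being the quotient pro-system, and the heart reformulation of almost pro-zero from \autoref{def:AlmostIndConst}).
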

\begin{proof}
\autoref{AlmostStabTraceAlt} gives (a) on taking limits, while (b) simply spells out almost pro-constancy in the case of interest. 
\end{proof}

{
\begin{remark}
\label{AlmostStabTraceAlt-noncomplete}
Suppose we are in the scenario of \autoref{rmk:noncompleteAIC}, i.e., $\cO_C$ is the $p$-adic completion of a Henselian valuation ring $\cO_C^{nc}$ and $\mathfrak{X}=\mathfrak{X}^{nc}\otimes_{\cO_C^{nc}}\cO_C$ is integral. Then both \autoref{AlmostStabTraceAlt} and \autoref{AlmostStabTraceAlt:explicit} hold if we replace $\mathcal{P}$ by $\mathcal{P}^{nc}$ (see \autoref{not:GRDesAlt}). More precisely, we have 
\[ \bigcap_{\mathfrak{Y} \in \mathcal{P}^{nc}} \mathrm{Image}\left(R^0 f_{\mathcal{Y},*} \omega_{\mathcal{Y}/\cO_C} \xrightarrow{\Tr} \omega_{\mathcal{X}/\cO_C}\right) = \omega_{\mathcal{X}}^{\RH} \]
in the almost category, and for each $\epsilon \in \mathfrak{m}_{\cO_C}$, there exists some $\mathfrak{Y} \in \mathcal{P}^{nc}$ (i.e., $\mathfrak{Y}=\mathfrak{Y}^{nc}\otimes_{\cO_C^{nc}}\cO_C$) such that the cokernel of the inclusion
\[ \omega_{\mathcal{X}}^{\RH} \subset \mathrm{Image}\left(R^0 f_{\mathcal{Y},*} \omega_{\mathcal{Y}/\cO_C} \xrightarrow{\Tr} \omega_{\mathcal{X}/\cO_C}\right) \]
from \autoref{prop:AltContain} is annihilated by $\epsilon$. To prove these statements, it is enough to show the analog of \autoref{AlmostStabTraceAlt} for $\mathcal{P}^{nc}$ (the analog of \autoref{AlmostStabTraceAlt:explicit} follows immediately). Now all we need to observe is that in the proof of \autoref{AlmostStabTraceAlt}, we can replace the use of \autoref{GRviaAIC} by \autoref{GRviaAIC-noncompleted}, and that \cite[Theorem 4.19 (1)]{BhattAbsoluteIntegralClosure} holds for $\mathcal{P}^{nc, fin}\subseteq \mathcal{P}^{nc}$ as well. 
\end{remark}
}

\begin{corollary}[The characteristic $0$ localization]
\label{GRchar0Mult}
Assume $\mathfrak{X}/\cO_C$ is proper, so we can identify $D^b_{\acoh}(\mathfrak{X}) \simeq D^b_{\acoh}(\mathcal{X})$ via formal GAGA.  Then the generic fibre functor
\[ D^b_{\acoh}(\mathcal{X}) \simeq D^b_{\acoh}(\mathfrak{X}) \xrightarrow{\text{invert }p} D^b_{\coherent}(\genX) \]
carries $\omega_{\mathcal{X}}^{\RH}$ to the classical Grauert-Riemenschneider canonical sheaf $\omega_{\genX}^{\rm GR} = \mathcal{J}(\genX, \omega_{\genX})$.
\end{corollary}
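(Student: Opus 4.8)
The plan is to reduce the statement to the alteration description of $\omega^{\RH}_{\mathcal{X}}$ proven in \autoref{AlmostStabTraceAlt:explicit}, and then compare that description, after inverting $p$, with the classical construction of the Grauert--Riemenschneider sheaf via a single log resolution. First I would note that, by \autoref{AlmostStabTraceAlt:explicit}(a), we have
\[
\omega^{\RH}_{\mathcal{X}} = \bigcap_{\mathfrak{Y} \in \mathcal{P}} \mathrm{Image}\left(R^0 f_{\mathcal{Y},*}\, \omega_{\mathcal{Y}/\cO_C} \xrightarrow{\Tr} \omega_{\mathcal{X}/\cO_C}\right)
\]
in the almost category. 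Inverting $p$ commutes with finite intersections of coherent subsheaves of a fixed coherent sheaf, and it also converts $\mathcal{P}$ (alterations of $\mathfrak{X}$) into the category of alterations of the characteristic-zero variety $\genX$ (since any alteration of $\genX$ spreads out to one of $\mathfrak{X}$ after possibly shrinking, and conversely restricts). Moreover, by \autoref{GRchar0Mult}'s hypothesis $\mathfrak{X}/\cO_C$ is proper, so formal GAGA and \autoref{thm:RH}(a) let us identify the generic fibre of $\omega^{\RH}_{\mathcal{X}}$ with an honest coherent subsheaf of $\omega_{\genX}$. Hence, after inverting $p$,
\[
(\omega^{\RH}_{\mathcal{X}})[1/p] = \bigcap_{g : Z \to \genX}\mathrm{Image}\left(R^0 g_*\, \omega_{Z} \xrightarrow{\Tr} \omega_{\genX}\right),
\]
the intersection taken over all alterations $Z \to \genX$ of the integral $C$-variety $\genX$.

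Next I would invoke the classical fact that this infinite intersection is computed by a single log resolution. Concretely, if $\mu : \widetilde{X} \to \genX$ is a resolution of singularities with $\widetilde{X}$ smooth, then $R^0\mu_* \omega_{\widetilde{X}} = \mu_* \omega_{\widetilde{X}}$ is exactly the Grauert--Riemenschneider sheaf $\omega^{\mathrm{GR}}_{\genX}$, which by Grauert--Riemenschneider vanishing coincides with the multiplier module $\mathcal{J}(\genX, \omega_{\genX})$; see \cite{GRVanishing} and the discussion in \cite[\S 9]{LazarsfeldPositivity2} (\cf the references cited in \autoref{ss:IdeaProof}). The containment $(\omega^{\RH}_{\mathcal{X}})[1/p] \subseteq \mu_* \omega_{\widetilde{X}}$ is clear since $\mu$ is one of the alterations in the index set. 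For the reverse containment, one uses that $\mu_* \omega_{\widetilde{X}}$ is in fact contained in $R^0 g_* \omega_Z$ for \emph{every} alteration $g : Z \to \genX$: after passing to a common resolution dominating both $\widetilde{X}$ and $Z$ and using that $\mu_*\omega_{\widetilde{X}}$ is independent of the chosen resolution, together with the fact that pushforward of the canonical sheaf along a generically finite proper map of smooth varieties lands inside the trace image (this is the standard argument that the multiplier module is the smallest such image, \cf \cite[Theorem 4.13]{BMPSTWW1} and \cite{BlickleSchwedeTuckerTestAlterations} in the analogous positive-characteristic setting). This shows $\mu_*\omega_{\widetilde{X}} \subseteq \bigcap_g R^0 g_* \omega_Z = (\omega^{\RH}_{\mathcal{X}})[1/p]$, giving equality.

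The main obstacle I anticipate is bookkeeping the passage between alterations of $\mathfrak{X}$ over $\cO_C$ and alterations of $\genX$ over $C$, and making sure the almost-mathematics subtleties evaporate correctly after inverting $p$ — in particular that the $!$-realization $(\omega^{\RH}_{\mathcal{X}})_!$ and the intersection of trace images, which a priori are only defined up to $\epsilon$-torsion for $\epsilon \in \mathfrak{m}_{\cO_C}$, become a genuine coherent subsheaf of $\omega_{\genX}$ with no ambiguity once $p$ is inverted (here one uses that $\mathfrak{m}_{\cO_C}[1/p] = C$, so the almost category trivializes). Once the index sets are matched and the almost structure is cleared, the remaining content is precisely the classical identification of $\bigcap_g R^0 g_*\omega_Z$ with $\omega^{\mathrm{GR}}_{\genX} = \mathcal{J}(\genX, \omega_{\genX})$, which is well documented; alternatively one can cite \autoref{thm:mainthm-in-char-zero} (the characteristic-zero incarnation of the whole package, referenced in \autoref{ss:intro-char0}), since the Riemann--Hilbert construction over $C$ specializes to exactly the Hodge-module description of the multiplier module there.
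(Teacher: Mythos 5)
Your route is correct in substance but is organized differently from the paper's, and one step is justified by the wrong principle. The paper does not pass through the full infinite intersection at all: it applies the almost pro-constancy of \autoref{AlmostStabTraceAlt} directly to produce a \emph{single} alteration $\mathfrak{Y} \to \mathfrak{X}$ with $\genY$ smooth (obtained by spreading out a resolution of $\genX$ over $\cO_C$) such that the cokernel of $\omega_{\mathcal{X}}^{\RH} \subset \mathrm{Image}(R^0 f_{\mathcal{Y},*}\omega_{\mathcal{Y}/\cO_C} \to \omega_{\mathcal{X}/\cO_C})$ is annihilated by $p$; inverting $p$ then gives equality with the trace image from that one smooth alteration, which is classically $\omega^{\mathrm{GR}}_{\genX} = \mathcal{J}(\genX,\omega_{\genX})$. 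Your version instead starts from \autoref{AlmostStabTraceAlt:explicit}(1), inverts $p$ on the infinite intersection, and then re-runs the classical stabilization over $C$. The two are close, but the paper's choice buys a shorter argument: it never has to commute localization with an infinite intersection, and it only uses the "classical algebraic geometry" input for a single smooth alteration rather than for the full intersection over all alterations of $\genX$.

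The step you need to repair is the sentence "Inverting $p$ commutes with finite intersections of coherent subsheaves of a fixed coherent sheaf": the intersection over $\mathcal{P}$ is infinite, and localization does not commute with infinite intersections in general (this failure is the central difficulty the whole paper is built to circumvent). What saves you is not the finite-intersection fact but \autoref{AlmostStabTraceAlt:explicit}(2): the intersection is \emph{almost finite}, so choosing $\mathfrak{Y}_0$ with cokernel killed by $p$ (or by some $\epsilon$ dividing a power of $p$) gives
\[ \bigcap_{\mathfrak{Y}} \Big(\mathrm{Image}\big(R^0 f_{\mathcal{Y},*}\omega_{\mathcal{Y}/\cO_C} \to \omega_{\mathcal{X}/\cO_C}\big)[1/p]\Big) \subseteq \mathrm{Image}\big(R^0 f_{\mathcal{Y}_0,*}\omega_{\mathcal{Y}_0/\cO_C} \to \omega_{\mathcal{X}/\cO_C}\big)[1/p] = \omega_{\mathcal{X}}^{\RH}[1/p], \]
which together with the trivial containment yields the commutation you assert. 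Once you make that substitution (and note that extending a resolution of $\genX$ to an alteration of $\mathfrak{X}$ is done by taking a model over $\cO_C$, e.g.\ the closure or normalization, rather than by shrinking), the rest of your argument — identifying the resulting intersection over alterations of $\genX$ with $\mu_*\omega_{\widetilde{X}} = \mathcal{J}(\genX,\omega_{\genX})$ via a common resolution — is the standard characteristic-zero fact and matches what the paper invokes.
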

\begin{proof}
\autoref{AlmostStabTraceAlt} and resolution of singularities implies that we can find an alteration $f_{\mathfrak{Y}}:\mathfrak{Y} \to \mathfrak{X}$ of integral $\cO_C$-schemes such that $\genY$ is smooth and such that the inclusion
\[ \omega_{\mathcal{X}}^{\RH} \subset \mathrm{Image}\left(R^0 f_{\mathcal{Y},*} \omega_{\mathcal{Y}/\cO_C} \xrightarrow{\Tr} \omega_{\mathcal{X}/\cO_C}\right)\]
of subsheaves of $H^{-d}(\omega_{\mathcal{X}/\cO_C}^{\mydot})$ has cokernel annihilated by $p$. In particular, inverting $p$ and using GAGA, we learn that
\[ \omega_{\mathcal{X}}^{\RH}[1/p]  = \mathrm{Image}\left(R^0 f_{\genY,*} \omega_{\genY/C} \xrightarrow{\Tr} \omega_{\genX/C}\right).\] 
But $\genY$ is smooth and $\genY \to \genX$ is an alteration; it follows from the classical algebraic geometry that the right side above agrees with $\omega_{\genX}^{\RH}$, as wanted.
\end{proof}

\begin{remark}
\label{rmk:tauRHInfinityNonProper}
By using \autoref{GRchar0Mult}, we can algebraize the construction of the RH sheaves: for any integral finitely presented $\mathfrak{X}/\cO_C$, there is a unique object $\omega_{\mathfrak{X}}^{\RH} \in D^b_{\coherent}(\mathfrak{X})$ enjoying the analog of the properties discussed in \autoref{ss:GRfin}, \autoref{ss:GRAlt},  \autoref{ss:GRSm} and \autoref{ss:GRAlt2} at the level of schemes, i.e., without passage to $p$-adic formal completions. {Specifically, we can compactify $\mathfrak{X}$ to $\overline{\mathfrak{X}}$ and define  $\omega_{\mathfrak{X}}^{\RH} := \omega_{\overline{\mathfrak{X}}}^{\RH}|_{\mathfrak{X}}$. This definition is independent of the choice of the compactification by Beauville-Laszlo gluing \cite{BeauvilleLaszlo}, see \cite[Tag 0BNI]{stacks-project} and \cite[Section 5]{BhattTannakaDuality}, because on the generic fiber it agrees with the multiplier ideal sheaf (cf.\ the beginning of the proof of \autoref{thm.SmoothPullbackForRHOnNoetherian}).} 
\end{remark}

\section{Singularities over a DVR}
\label{sec:SingOverDVR}

The goal of this section is to descend the results of the previous section to the Noetherian case. Throughout this section we let $(V, \varpi, k)$ be a DVR of mixed characteristic $(0, p > 0)$.

\begin{proposition}
\label{prop:kev_is_generator} 
Suppose $(V,\varpi, k)$ is a {\color{black}Henselian} DVR of mixed characteristic. Then there exists inclusions of  valuation domains {\color{black}with $V_{\infty}$ $\varpi$-adically complete}
\[(V,\varpi, k) \to (\widehat{V},\varpi, k) \to (V_{\infty},\varpi^{1/p^\infty}, k^{1/p^\infty})\]
such that 
\begin{enumerate}

    \item There are {\color{black}{Henselian}} DVRs $\{V_{i}\}_{i\in I}$ which are finite free over $V$ {\color{black}(and complete if $V$ is)}, and forming a direct system, so that we can define 
    \[V_{\infty}^{\mathrm{nc}}:=\mathrm{colim}_{i\in I} V_{i}.\]
     \item $V_\infty:=\widehat{V_\infty^{\mathrm{nc}}}$ is perfectoid.
    \item 
    {\color{black}If $V$ is complete,} there exists a $V$-module homomorphism $V_{\infty}\xrightarrow{\kev} V$ such that $\kev|_{V_{i}}$ generates $\Hom_V(V_{i}, V)$ as a $V_i$-module for all $i\in I$.
    \item 
    {\color{black}If $V$ is complete,} for each $i$, there exists a {$V_i$-linear} splitting $\kev_i:V_{\infty}\to V_i$ of the natural inclusion such that $\kev=\kev|_{V_i}\circ \kev_i$.

\end{enumerate}
\end{proposition}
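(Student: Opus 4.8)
The plan is to upgrade the explicit construction for $V=\bZ_p$ recalled in the introduction (there $V_e=\bZ_p[p^{1/p^e}]$, $V_\infty^{\mathrm{nc}}=\bZ_p[p^{1/p^\infty}]$, and $\kev(p^\alpha)=1$ iff $\alpha=1-1/p^e$) to an arbitrary complete DVR by treating separately the ``ramification'' and the ``residue field'' directions. Fix a uniformizer $\varpi$ of $V$, with $\varpi^e=p\cdot(\text{unit})$, and a coefficient ring $C=C(k)\subseteq V$, so that $V$ is finite free over $C$. For $n\geq 0$ set $V^{(n)}:=V[T]/(T^{p^n}-\varpi)$: this is Eisenstein over $V$, hence a complete DVR, totally ramified of degree $p^n$, finite free over $V$ with basis $\{\varpi^{j/p^n}\}_{0\leq j<p^n}$. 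For the residue direction, write $k^{1/p^\infty}=\colim_{j\in J}k_j$ as a filtered colimit of finite purely inseparable extensions and choose compatibly coefficient rings $C_j=C(k_j)\subseteq W(k^{1/p^\infty})$ (obtained by adjoining Teichm\"uller lifts of finitely many $p$-power roots); then $V_{[j]}:=V\otimes_C C_j$ is a complete DVR finite free over $V$ with uniformizer $\varpi$ and residue field $k_j$. Index $I:=\bN\times J$ and set $V_{(n,j)}:=V^{(n)}\otimes_V V_{[j]}$; a check on fraction fields (the two extensions are linearly disjoint over $\Frac V$, one being totally ramified and the other residually-inseparable of ramification $1$) shows these are complete DVRs finite free over $V$, forming a directed system, and one takes $V_\infty^{\mathrm{nc}}:=\colim_{(n,j)}V_{(n,j)}$, $V_\infty:=\widehat{V_\infty^{\mathrm{nc}}}$ ($\varpi$-adically, which coincides with $p$-adically). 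This gives (1).

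For (2): $V_\infty$ is a domain, hence $p$-torsion free, it is $p$-adically complete, its residue field is $k^{1/p^\infty}$ (perfect), and it contains the compatible system $\{\varpi^{1/p^n}\}$ of $p$-power roots of $\varpi$. Taking $\varpi':=\varpi^{1/p}$ (so $(\varpi')^p=\varpi\mid p$), one checks that $V_\infty/\varpi'$ is an $\bF_p$-algebra and that the Frobenius $V_\infty/\varpi'\to V_\infty/(\varpi')^p=V_\infty/\varpi$ is bijective: injectivity is immediate since $V_\infty$ is a valuation ring, and surjectivity follows because $V_\infty/\varpi=\colim_{n,j} k_j[t]/(t^{p^n})$ is generated over $\bF_p$ by $p$-th powers (using perfectness of $k^{1/p^\infty}$ and the relations $\varpi^{1/p^n}=(\varpi^{1/p^{n+1}})^p$). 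Hence $V_\infty$ is perfectoid by \cite[Lemma 3.10]{BhattMorrowScholzeIHES}.

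For (3): each $\Hom_V(V_i,V)$ is a finite torsion-free module over the DVR $V_i$ of generic rank one, hence free of rank one; its generators form a $V_i^\times$-torsor. Two inputs combine to produce $\kev$. First, a direct computation generalizing the one for $\bZ_p$ shows that the $V$-linear functional $\kev^{(n)}$ on $V^{(n)}$ defined by $\kev^{(n)}(\varpi^{j/p^n})=1$ if $j=p^n-p^{n-f}$ for some $0\leq f\leq n$ and $0$ otherwise has the property that the pairing $(x,y)\mapsto\kev^{(n)}(xy)$ is perfect (its Gram determinant, which lies in $\bZ[\varpi]$, is a unit mod $\varpi$), i.e. $\kev^{(n)}$ generates $\Hom_V(V^{(n)},V)$ over $V^{(n)}$; moreover $\kev^{(n)}|_{V^{(n')}}=\kev^{(n')}$ for $n'\leq n$. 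Second, for the residue steps the normalized trace $\tau_j:=[k_j{:}k]^{-1}\Tr_{V_{[j]}/V}$ generates $\Hom_V(V_{[j]},V)$ over $V_{[j]}$ (a short computation with the Cohen-ring presentation $C_j=C[x]/(x^p-\tilde t)$ and its iterates), satisfies $\tau_j|_V=\id_V$, and these restrict compatibly: $\tau_{j'}|_{V_{[j]}}=\tau_j$ for $j\leq j'$. Since $V_{(n,j)}=V^{(n)}\otimes_V V_{[j]}$ and both factors are finite free over $V$, one has $\Hom_V(V_{(n,j)},V)\cong\Hom_V(V^{(n)},V)\otimes_V\Hom_V(V_{[j]},V)$ as $V_{(n,j)}$-modules, and $\kev_{(n,j)}:=\kev^{(n)}\boxtimes\tau_j$ (i.e. $a\otimes b\mapsto\kev^{(n)}(a)\,\tau_j(b)$) is a generator; these assemble to a compatible family, hence an element $\kev_0\in\varprojlim_i\Hom_V(V_i,V)=\Hom_V(V_\infty^{\mathrm{nc}},V)$ whose restriction to every $V_i$ is a generator. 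As $\kev_0$ lands in the $\varpi$-adically complete module $V$, it extends along $V_\infty^{\mathrm{nc}}\to V_\infty$ to the desired $\kev\colon V_\infty\to V$.

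Finally (4) is formal given (3). Fix $i$; since $\kev|_{V_i}$ generates $\Hom_V(V_i,V)$, the $V$-bilinear pairing $(x,y)\mapsto\kev(xy)$ on $V_i$ is perfect, so for each $y\in V_\infty$ there is a unique $\kev_i(y)\in V_i$ with $\kev(\kev_i(y)\cdot x)=\kev(yx)$ for all $x\in V_i$. One checks directly that $\kev_i$ is $V_i$-linear, that $\kev_i|_{V_i}=\id$ (again by perfectness of the pairing), and, evaluating the defining relation at $x=1$, that $\kev|_{V_i}\circ\kev_i=\kev$. The main obstacle is really the bookkeeping in the construction of $\kev$ with property (3) holding for \emph{all} indices at once --- restriction of a generator need not be a generator in general --- which is circumvented by splitting $V_\infty^{\mathrm{nc}}$ into the purely ramified tower (handled by the explicit $\bZ_p$-type formula, whose generator property is the one genuine computation) and the residually-inseparable net (handled by the normalized trace, which restricts canonically), and then tensoring; the Cohen-ring gymnastics needed when $k$ is imperfect is the only additional technical ingredient.
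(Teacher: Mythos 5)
Your proposal is correct, and it builds the same object — the tower obtained by adjoining compatible $p$-power roots of a uniformizer and of lifts of a $p$-basis of $k$, together with the functional that is $1$ on $\varpi^{1-1/p^e}$ and $0$ on all other basis monomials — but it is organized differently from the paper's proof. The paper adjoins all roots at once, forming $V_{(\Sigma,e)}=V[y_e,x_{j,e}]/(x_{j,e}^{p^e}-\widetilde\lambda_j,\,y_e^{p^e}-\varpi)$, proves the DVR property by an inductive residue-field argument, verifies the generator property of $\kev|_{V_{(\Sigma,e)}}$ by exhibiting it as a unit multiple of the explicit ``top coordinate'' projection $\Phi_{(\Sigma,e)}$, and then writes down the splittings $\kev_{(\Sigma',e')}$ explicitly on a basis. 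You instead decompose $V_\infty^{\mathrm{nc}}$ as a tensor product of the totally ramified tower $V^{(n)}$ and the residually inseparable tower $V_{[j]}$, handle the ramified factor by the same computation as the paper (your Gram-determinant argument is equivalent to the paper's unit $1+\varpi^{1/p^{e-1}-1/p^e}+\cdots$), identify the residue-direction functional as the normalized trace $[k_j{:}k]^{-1}\Tr$ (which is the same map as the paper's coordinate projection, but whose compatibility in $j$ you get for free from trace transitivity), and — most notably — deduce (d) \emph{formally} from (c) via the perfect pairing $(x,y)\mapsto\kev(xy)$ on $V_i$, avoiding the paper's explicit basis construction of the splittings entirely. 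That last step is a genuine simplification and works verbatim on all of $V_\infty$, so no completion argument is needed for $\kev_i$.

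Two small points of care. First, your justification that $V^{(n)}\otimes_V V_{[j]}$ is a DVR via ``linear disjointness of fraction fields'' is loose; the clean argument is that this ring is local (its closed fibre $k_j[T]/(T^{p^n})$ is local), $V$-flat, and has principal maximal ideal generated by $\varpi^{1/p^n}$, whence it is a complete DVR. Second, the phrase ``Teichm\"uller lifts'' is not quite right when $k$ is imperfect — there is no canonical multiplicative lift into a Cohen ring of $k$ — but the presentation $C_j=C[x]/(x^p-\tilde t)$ you actually compute with (matching the paper's $V[x_{j,e}]/(x_{j,e}^{p^e}-\widetilde\lambda_j)$) is the correct one, so this is only a matter of phrasing.
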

\begin{proof}
To begin the construction, fix a $p$-basis $\{\lambda_j\}_{j\in J}$ of $k$ by \cite[\href{https://stacks.math.columbia.edu/tag/07P2}{Lemma 07P2}]{stacks-project}, and fix lifts $\widetilde{\lambda}_{j\in J}$ of each $\lambda_j$ to $V$.  Note that if $k$ is already perfect, this set is empty.  Let $I$ be the set of pairs $(\Sigma,e)$ where $\Sigma\subset J$ is a finite subset and $e\in\bN$, which forms a partially ordered set with $(\Sigma_1,e_1)\leq (\Sigma_2, e_2)$ if and only if $\Sigma_1\subseteq \Sigma_2$ and $e_1\leq e_2$.

For each $(\Sigma,e)\in I$, define 
\[
V_{(\Sigma,e)}:=\frac{V[y_e,x_{j,e}\mid j\in \Sigma]}{(x_{j,e}^{p^e}-\widetilde{\lambda}_j,y_e^{p^e}-\varpi)}
\]
Set $\varpi^{1/p^e}$ and $\tilde\lambda_j^{1/p^e}$ to be the images of $y_e$ and $x_{j,e}$ in $V_{(\Sigma,e)}$, respectively. 
We now claim that $V_{(\Sigma,e)}$ is a DVR, which we check inductively for a fixed $e$ by adjoining the $p^e$-th roots one at a time.  Assume that $\Sigma=\{1,...,n\}$.
Suppose we have already shown that 
\[
U_{m}:= \frac{V[x_{j,e}\mid j\leq m]}{(x_{j,e}^{p^e}-\widetilde{\lambda_j})}
\]
is a DVR with residue field $k(\lambda^{1/p^e}_1,\dots,\lambda_{m}^{1/p^e})$.
Then $U_{m+1}:=U_{m}[x_{{m+1},e}]/(x^{p^e}_{m+1,e}-\widetilde{\lambda}_{m+1})$.  Then the minimal polynomial of $\lambda_{m+1}$ over $k(\lambda^{1/p^e}_1,\dots,\lambda_{m}^{1/p^e})$ is $x^{p^e}-\lambda_{m+1}$ since the $\lambda_i$ form a $p$-basis.  Therefore $x_{m+1,e}^{p^e}-\widetilde{\lambda}_{m+1}$ is irreducible over $U_m$ since its image is irreducible in $k(\lambda^{1/p^e}_1,\dots,\lambda_{m}^{1/p^e})[x]$ 
and so $U_{m+1}$ is a domain.  Since it is a finite domain extension of the {\color{black}\emph{Henselian}} DVR $V$, {we know that $U_{m+1}$ is local}. To show that $U_{m+1}$ it is a DVR, it is enough to show that the maximal ideal is principal, which is the case since $U_{m+1}/(\varpi)$ is the field $k(\lambda^{1/p^e}_1,\dots,\lambda_{m}^{1/p^e}, \lambda_{m+1}^{1/p^e})$, hence  $(\varpi)$ is the maximal ideal as required. Now by induction we see that
\[
U_{n}:= \frac{V[x_{j,e}\mid j\leq n]}{(x_{j,e}^{p^e}-\widetilde{\lambda_j})}
\]
is a DVR with residue field $k(\lambda_1^{1/p^e},\dots,\lambda_n^{1/p^e})$.  Finally $V_{(\Sigma,e)}\cong U_n[y]/(y^{p^e}-\varpi)$ is a domain since $y^{p^e}-\varpi$ is irreducible by Eisenstein's criterion.  It is a DVR since the maximal ideal is $(\varpi^{1/p^e})$ because 
\[V_{(\Sigma,e)}/(\varpi^{1/p^e})\cong U_n/\varpi\cong k(\lambda_1^{1/p^e},\dots,\lambda_n^{1/p^e}).\]
Note that $V_{(\Sigma,e)}$ is a free $V$-module on the basis 
\[
\mathcal{B}_{(\Sigma,e)}:=
\{
\varpi^\frac{b}{p^e}\Pi_i \widetilde{\lambda_i}^\frac{a_i}{p^e}\mid 0\leq a_i,b\leq p^e-1
\}.
\]
Given $(\Sigma_1,e_1)\leq (\Sigma_2, e_2)$, we have a mapping $V_{(\Sigma_1,e_1)} \to V_{(\Sigma_2,e_2)}$ determined by sending $\varpi^{1/p^{e_1}} \mapsto (\varpi^{1/p^{e_2}})^{p^{e_2-e_1}}$ and $\widetilde\lambda_j^{1/p^{e_1}} \mapsto (\widetilde\lambda^{1/p^{e_2}})^{p^{e_2-e_1}}$ for $j \in \Sigma_1$, and the $V_{(\Sigma,i)}$ form a directed system with respect to the partial order given above. We may view $V_\infty^{\mathrm{nc}}:=\colim_{(\Sigma, e)} V_{{\Sigma, e}}$ as a union as all of the maps in the system are injective, and moreover since $\mathcal{B}_{(\Sigma_1,e_1)}\subseteq \mathcal{B}_{(\Sigma_2,e_2)}$ if $(\Sigma_1,e_1)\leq (\Sigma_2,e_2)$ it also follows that $V_\infty^{\mathrm{nc}}$ is also a free $V$ module with basis given by $\bigcup_{(\Sigma,e)} \mathcal{B}_{(\Sigma,e)}$.
Since each $V_{{\Sigma, e}}$ is a DVR, it follows that $V_\infty^{\mathrm{nc}}$ is a (non-discrete) valuation domain with maximal ideal $(\varpi^{1/p^\infty})$ which is Henselian by \cite[\href{https://stacks.math.columbia.edu/tag/0A04}{Tag 0A04}]{stacks-project}.

{\color{black}From this point forward, we assume $V$ is complete.}
Now define  
$\kev^{\mathrm{nc}} \in \Hom_V( V_\infty^{\mathrm{nc}}, V)$ to be the unique mapping specified on the basis above by the following formula.
\[
 \kev^{\mathrm{nc}}(\varpi^{c}\Pi_i\widetilde{\lambda_i}^{d_i}) = \begin{cases}
            1 & \text{ if } c = \frac{p^e-1}{p^e} \text{\ for\ some\ } e\in \bZ_{\geq 0} \text{\ and\ } d_i=0\ \forall i \\
            0 & \text{otherwise}
        \end{cases}
\]

On the other hand, we claim that
\[
        \Hom_V(V_{(\Sigma,e)}, V) = \Phi_{(\Sigma,e)} \cdot V_{(\Sigma,e)}
    \]
    where 
    \[
        \Phi_{(\Sigma,e)}(\varpi^{\frac{a}{p^e}}\Pi_i\widetilde{\lambda_i}^\frac{b_i}{p^e}) =  
        \begin{cases}
            1 & \text{ if } a = p^e-1\text{\ and\ } b_i=0\ \forall i \\
            0 & \text{otherwise}
        \end{cases}
    \]
    To see this it is sufficient to show that the projection onto every basis vector can be written in this way, and the projection onto the 
   $\varpi^\frac{a}{p^e}\Pi_i \widetilde{\lambda_i}^\frac{b_i}{p^e} $ component
    can be obtained as 
    \[\Phi_{(\Sigma,e)}\left( \left(\varpi^\frac{p^e-a}{p^e}\Pi_i \widetilde{\lambda_i}^{-\frac{b_i}{p^e}}\right)\cdot - \right)\] for some $a$ and $b_i$.   Here we use that $\widetilde{\lambda_i}$ are units since $V_{(\Sigma,e)}$ is local and their images in the residue field are non-zero.
    
    Now we show that $\kev^{\mathrm{nc}}|_{V_{(\Sigma, e)}}$ is a unit-multiple of $\Phi_{(\Sigma,e)}$.  Observe that 
    \[
        \kev^{\mathrm{nc}}|_{V_{(\Sigma,e)}}( - ) = \Phi_{V_{(\Sigma,e)}}\Big( (1 + \varpi^{1/p^{e-1} - 1/p^e} + \varpi^{1/p^{e-2} - 1/p^e}+ \dots + \varpi^{1/p - 1/p^e})\cdot - \Big)
    \]
    and notice that $1 + \varpi^{1/p^{e-1} - 1/p^e} + \varpi^{1/p^{e-2} - 1/p^e}+ \dots + \varpi^{1/p - 1/p^e}$ is a unit.

Finally, apply the $p$-completion functor to $\kev^{\mathrm{nc}}$ to obtain $\kev: V_{\infty}\to V$, noting that $V_\infty^{\mathrm{nc}}$ and hence $V_{(\Sigma,e)}$ inject into $V_{\infty}$ since $V_{\infty}^\mathrm{nc}$ is $\varpi$-adically separated.


For fixed $(\Sigma',e')$ we define $\kev^{\rm nc}_{(\Sigma',e')}: V_{\infty}^{\rm nc}\to V_{(\Sigma', e')}$.
First note that $V_{(\Sigma',e')}$ is a free $V$ module with basis 
\[\Bigg\{
\varpi^{\frac{b}{p^{e'}}}\Pi_i \widetilde{\lambda}^{\frac{a_i}{p^{e'}}}
\;\Big|\; 0\leq b,a_i\leq p^{e'}-1
\Bigg\}.
\]

Also $V_{\infty}^{\mathrm{nc}}$ is a free $V_{(\Sigma', e')}$ module whose basis elements are given by 
\[
\Bigg\{
\left(\varpi^{\frac{1}{p^{e'}}}\right)^{\frac{b}{p^{e}}}\Pi_{i\in\Sigma'} \left(\widetilde{\lambda_i}^{\frac{1}{p^{e'}}}\right)^\frac{a_i}{p^{e}}\Pi_{j\notin\Sigma'} \widetilde{\lambda_j}^{\frac{c_j}{p^{e}}} \;\Big|\; e\in\bN,\ \ 0
\leq b,a_i,c_j\leq p^{e}-1 
\Bigg\}.
\]

Note that the given basis of $V_{\infty}^{\mathrm{nc}}$ over $V$ is the product of the elements of the two bases given above. 

Now define the map by 
\[\kev^{\mathrm{nc}}_{(\Sigma',e')}\left(
\left(\varpi^{\frac{1}{p^{e'}}}\right)^{\frac{b}{p^{e}}}\Pi_{i\in\Sigma'} \left(\widetilde{\lambda_i}^{\frac{1}{p^{e'}}}\right)^\frac{a_i}{p^{e}}\Pi_{j\notin\Sigma'} \widetilde{\lambda_j}^{\frac{c_j}{p^{e}}}\right)
= \begin{cases}
            1 & \text{\ when\ } b = {p^{e}-1} \text{\ for\ some\ } e\in \bZ_{\geq 0},\\
            &  \qquad \quad a_i=0\ \forall i \text{\ and\ }  c_j=0\ \forall j\\
            0 & \text{otherwise}
        \end{cases}
%
\]
Here $\kev^{\mathrm{nc}}_{(\Sigma',e')}$ is $V_{(\Sigma',e')}$-linear by construction since we extended it from a $V_{(\Sigma',e')}$-basis, and in particular it is a splitting since it sends the basis vector $1$ to $1$. 
Furthermore, we can see that it satisfies $\kev^{\mathrm{nc}}=\kev^{\mathrm{nc}}|_{V_{(\Sigma',e')}}\circ \kev^{\mathrm{nc}}_{(\Sigma',e')}$ by noting that the images of the $V$-basis of $V_{\infty}^{\mathrm{nc}}$ are the same  under both maps, which are both $V$-linear.
Now obtain the splitting of the statement by taking the $p$-completion, and noting that each $V_{(\Sigma,e)}$ is already complete.
\end{proof}

Below we summarize the notation used in this section. We emphasize that this notation differs from the one used in the previous section.
\begin{notation}
    \label{notation.ForSchemesOverADVR}
Throughout the rest of this section, unless otherwise indicated, we set
\begin{enumerate}
    \item $X$ to be an integral scheme flat over $\Spec(V)$ of relative dimension $d$ (thus $\dim X=d+1$);
    \item $X_e:= X\otimes_VV_e$, $X_\infty^{\mathrm{nc}}=X\otimes_VV_\infty^{\mathrm{nc}}$, and $X_\infty:= X \otimes_VV_\infty$;
    \item $\pi \colon X_\infty \to X$ to be the natural projection;
    \item $\widehat{X}_\infty$ to be the $p$-adic formal scheme associated to $X_\infty$;
    \item $\kev$: $V_\infty\to V$ and $\kev_e$: $V_\infty\to V_e$ to be the maps in \autoref{prop:kev_is_generator} {\color{black} in the case that $V$ is complete.}
    \end{enumerate}
In particular, $X_\infty$ plays the role of $\mathfrak{X}$ and $\widehat{X}_\infty$ plays the role of $\mathcal{X}$ from the previous sections. 

We further set $\omega^\mydot_{X/V}$ be the relative dualizing complex over $V$, i.e., such that the lowest nonvanishing cohomology degree of $\omega^{\mydot}_{X/V}$ is $-d$, and we set $\omega^\mydot_{X_e/V_e}:=\omega^\mydot_{X/V}\otimes_VV_e$ and $\omega^\mydot_{X_\infty/V_\infty}:= \omega^\mydot_{X/V}\otimes_VV_\infty$, which are  relative dualizing complexes of $X_e$ and $X_\infty$ over $V_e$ and $V_\infty$, respectively. We will abuse notations a bit and also use $\kev$ to denote the induced map $$\kev: \pi_*\omega_{X_\infty/V_\infty} = \pi_* \left(\omega_{X/V} \otimes_{V} V_\infty \right) \to \omega_{X/V}.$$ 
\end{notation}


We summarize the following results from \autoref{sec:RH_subsheaves}, applied in the setup of \autoref{notation.ForSchemesOverADVR}.

\begin{theorem}
\label{thm.InputsFromSectionRH}
With notation as in \autoref{notation.ForSchemesOverADVR}. Suppose $V$ is {\color{black}Henselian}, $X\to \Spec(V)$ is proper and $X_\infty$ is integral. Then there exists a quasi-coherent, almost coherent subsheaf $\omega^\RH_{X_\infty/V_\infty}$ of $\omega_{X_\infty/V_\infty}$ such that
\begin{enumerate}
    \item In the almost category, $\omega^\RH_{X_\infty/V_\infty}$ can be identified with 
    $$\bigcap_{f: Y_\infty\to X_\infty} \Tr(f_*\omega_{Y_\infty/V_\infty})$$
    where $Y_\infty$ runs over all alterations of $X_\infty$ in a fixed algebraic closure $\overline{K(X_\infty)}$ of the fraction field $K(X_\infty)$ of $X_\infty$ such that $Y_\infty\cong Y_e\otimes_{V_e}V_\infty$ for some $e$ and some alteration $Y_e$ of $X_e$.  
    \item For every $m>0$, there exists an alteration $f \colon Y_\infty \to X_\infty$ as above (i.e., $Y_\infty\cong Y_e\otimes_{V_e}V_\infty$ for some $e$ and some alteration $Y_e$ of $X_e$) such that the cokernel of
    \[
    \omega^{\RH}_{X_\infty/V_\infty} \to \Tr (f_*\omega_{Y_\infty/V_\infty})
    \]
    is annihilated by $p^{1/m}$.
    \item $\omega^{\RH}_{X_\infty/V_\infty}[1/p]$ agrees with the Grauert-Riemenschneider canonical sheaf / multiplier submodule  $\omega^{\emph{GR}}_{X_\infty[1/p]}=\mJ(X_\infty[1/p], \omega_{X_\infty[1/p]})$ (note that $X_\infty[1/p]$ is a proper scheme over the characteristic zero field $V_\infty[1/p]$).
\end{enumerate}
\end{theorem}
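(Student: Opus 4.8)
\textbf{Plan for proving \autoref{thm.InputsFromSectionRH}.}

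The statement is essentially a translation of the results of \autoref{sec:RH_subsheaves} into the notation of \autoref{notation.ForSchemesOverADVR}, so the proof should proceed by \emph{matching up} the hypotheses: we are in the scenario of \autoref{rmk:noncompleteAIC}, with $\cO_C = V_\infty$ the $p$-adic completion of the Henselian valuation ring $\cO_C^{nc} = V_\infty^{nc}$ (the Henselian property being part of \autoref{prop:kev_is_generator}), and $\mathfrak{X} = X_\infty = X \otimes_V V_\infty$ base changed from $\mathfrak{X}^{nc} = X_\infty^{nc} = X \otimes_V V_\infty^{nc}$. So the plan is: first, record that $X_\infty$ is finitely presented and flat over $V_\infty$ (this is clear as $X$ is finite type and flat over $V$, and $X_\infty = X \otimes_V V_\infty$ with $V_\infty$ perfectoid), so that all of \autoref{sec:RH_subsheaves} applies. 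Then define $\omega^\RH_{X_\infty/V_\infty}$ via formal GAGA from $\omega^\RH_{\mathcal{X}}$ as in \autoref{def:GRSheaf} and \autoref{rmk:tauRHInfinityNonProper} — here we use that $X \to \Spec(V)$ is proper, hence so is $X_\infty \to \Spec(V_\infty)$, so $D^b_{\acoh}(X_\infty) \simeq D^b_{\acoh}(\mathcal{X})$; then $\omega^\RH_{X_\infty/V_\infty}$ is the honest subsheaf $(\omega^\RH_{\mathcal{X}})_!$ of $\omega_{X_\infty/V_\infty}$ after applying $!$-realization (recall that the dualizing sheaf is coherent, hence so is any quasi-coherent subsheaf, and the $!$-realization of an almost coherent subsheaf of a coherent sheaf is again quasi-coherent).

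For part (1), I would invoke \autoref{AlmostStabTraceAlt-noncomplete}, which is exactly the statement that
\[
\bigcap_{\mathfrak{Y} \in \mathcal{P}^{nc}} \mathrm{Image}\left(R^0 f_{\mathcal{Y},*} \omega_{\mathcal{Y}/\cO_C} \xrightarrow{\Tr} \omega_{\mathcal{X}/\cO_C}\right) = \omega_{\mathcal{X}}^{\RH}
\]
in the almost category, where $\mathcal{P}^{nc}$ is the poset of pointed alterations of $X_\infty$ descending to alterations of $X_\infty^{nc}$. The only point to check is that this subcategory $\mathcal{P}^{nc}$ matches the index set in the statement of (1), namely alterations $Y_\infty \cong Y_e \otimes_{V_e} V_\infty$ for some $e$ and some alteration $Y_e$ of $X_e$: this is because any alteration of $X_\infty^{nc} = \colim_e X_e$ is already defined over some finite level $X_e$ by a standard limit/spreading-out argument, and conversely any such $Y_e \otimes_{V_e} V_\infty^{nc}$ is an alteration of $X_\infty^{nc}$. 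Finally one must cross the bridge between $\omega_{\mathcal{X}/\cO_C}$ on the formal scheme and $\omega_{X_\infty/V_\infty}$ on the scheme: since $X_\infty/V_\infty$ is proper, formal GAGA identifies the two, and the trace maps match. Part (2) is then just the almost-finiteness clause of \autoref{AlmostStabTraceAlt-noncomplete} spelled out for a given $m$, taking $\epsilon = p^{1/m} \in \mathfrak{m}_{V_\infty}$.

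For part (3), I would apply \autoref{GRchar0Mult} directly: since $X_\infty/V_\infty$ is proper, inverting $p$ carries $\omega^\RH_{\mathcal{X}} = \omega^\RH_{X_\infty/V_\infty}$ to the classical Grauert--Riemenschneider sheaf $\omega^{\rm GR}_{X_\infty[1/p]} = \mathcal{J}(X_\infty[1/p], \omega_{X_\infty[1/p]})$ on the proper $V_\infty[1/p]$-variety $X_\infty[1/p]$; note $V_\infty[1/p]$ is a field of characteristic zero (the fraction field of the valuation ring $V_\infty$), so the classical theory of multiplier ideals applies. The main obstacle — and really the only genuine content beyond citing \autoref{sec:RH_subsheaves} — is the bookkeeping in (1): verifying that ``alteration of $X_\infty^{nc}$'' is the same as ``$Y_e \otimes_{V_e} V_\infty$ for some alteration $Y_e$ of $X_e$'' (a spreading-out argument, using that $V_\infty^{nc} = \colim_e V_e$ is a filtered colimit of finite free $V$-algebras and that alterations are finitely presented), and checking that the pointed structure and the Grothendieck trace maps are compatible under the base change $X_e \rightsquigarrow X_\infty^{nc} \rightsquigarrow X_\infty$ and under formal GAGA. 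Everything else is a direct citation.
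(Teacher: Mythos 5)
Your proposal is correct and follows essentially the same route as the paper's own proof: define $\omega^\RH_{X_\infty/V_\infty}$ via \autoref{def:GRSheaf}, formal GAGA, and the $!$-realization; deduce (1) and (2) from \autoref{AlmostStabTraceAlt-noncomplete} together with the observation that every alteration of $X_\infty^{nc}$ descends to some finite level $X_e$ since $V_\infty^{nc}=\varinjlim_e V_e$; and deduce (3) from \autoref{GRchar0Mult}.
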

\begin{proof}
By \autoref{def:GRSheaf} applied to $\cO_C=V_\infty$, $\mathfrak{X}=X_\infty$ and $\mathcal{X}=\widehat{X}_\infty$, we have an almost coherent subsheaf $\omega_{\widehat{X}_\infty/V_\infty}^{\RH}\subseteq \omega_{\widehat{X}_\infty/V_\infty}$ (on the formal scheme $\widehat{X}_\infty$). Since $X_\infty\to\Spec(V_\infty)$ is proper, by formal GAGA, we know that $\omega_{\widehat{X}_\infty/V_\infty}^{\RH}$ corresponds to an almost coherent subsheaf of $\omega_{X_\infty/V_\infty}$ (in the almost category). Let $\omega_{{X}_\infty/V_\infty}^{\RH}$ be the $!$-realization of that sheaf. We have 
$$\omega^\RH_{X_\infty/V_\infty} = (\omega^\RH_{X_\infty/V_\infty})_!^a = (\varpi^{1/p^\infty}) \otimes \omega^\RH_{X_\infty/V_\infty}$$
is a quasi-coherent, almost coherent, (honest) subsheaf of $\omega_{{X}_\infty/V_\infty}$. Since $V_\infty$ is the $p$-adic completion of $V_\infty^{\mathrm{nc}}$ and $X_\infty=X_\infty^{\mathrm{nc}}\otimes_{V_\infty^{\mathrm{nc}}}V_\infty$, (a) and (b) both follow from \autoref{AlmostStabTraceAlt-noncomplete}: every alteration $Y^{\mathrm{nc}}_\infty$ of $X^{\mathrm{nc}}_\infty$ satisfies $Y^{\mathrm{nc}}_\infty=Y_e\otimes_{V_e}V^{\mathrm{nc}}_\infty$ for some $e\gg 0$ and some alteration $Y_e$ of $X_e$ since $V^{\mathrm{nc}}_\infty=\varinjlim_eV_e$. Finally, (c) follows from \autoref{GRchar0Mult}.
\end{proof}

\begin{definition}
\label{def.tauRH}
With notation as in \autoref{notation.ForSchemesOverADVR}, suppose $V$ is complete, $X\to \Spec(V)$ is proper and $X_\infty$ is integral. Then we define
$$\tau_{\RH}(\omega_{X/V}):= \kev(\pi_*\omega^\RH_{X_\infty/V_\infty}).$$
\end{definition}

{We emphasize that the image of a quasi-coherent sheaf under a qcqs map is quasi-coherent (see \cite[Tag 01XJ]{stacks-project}). In particular, $\tau_{\RH}(\omega_{X/V})$ is a coherent sheaf on $X$ since it is a quasi-coherent subsheaf of a coherent sheaf $\omega_{X/V}$.}

\begin{theorem}
    \label{thm.SingleAlterationWithPToEpsilon}
    With notation as in \autoref{def.tauRH}, there exists $e_0 > 0$ such that for every $e \geq e_0$, and every alteration $Y \to X$ with $\varpi^{1/p^e} \in \cO_Y$, we have that 
    \[
        \tau_{\RH}(\omega_{X/V}) \subseteq \Tr_{Y/X}(f_* \varpi^{1/p^e}\omega_{Y/V}).
    \]
    Furthermore, for every $e \geq e_0$, there exists an alteration $Y_e \to X$ with $\varpi^{1/p^e} \in \cO_{Y_e}$ such that for every further alteration $Y \to Y_e$ with composition $f : Y \to Y_e \to X$, we have that
    \[
        \tau_{\RH}(\omega_{X/V}) = \Tr_{Y/X}(f_* \varpi^{1/p^e}\omega_{Y/V}).
    \]
    In particular, for every $e \geq e_0$
    \[
        \tau_{\RH}(\omega_{X/V}) =  \bigcap_{Y \to X} \Tr_{Y/X}(f_* \varpi^{1/p^e}\omega_{Y/V}).
    \]
    {\color{black}Finally, if $V$ is the completion of a Henselian DVR $V'$ and $X \to \Spec V$ is the base change of $X' \to \Spec V'$, then we may take $Y_e \to X$ to be the base change of an alteration $Y_e' \to X'$.}
\end{theorem}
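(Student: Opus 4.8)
The plan is to deduce \autoref{thm.SingleAlterationWithPToEpsilon} from the almost-coherent picture on $X_\infty$ established in \autoref{thm.InputsFromSectionRH}, transported down to $X$ via the trace map $\kev$. Recall $\tau_{\RH}(\omega_{X/V}) = \kev(\pi_* \omega^{\RH}_{X_\infty/V_\infty})$. The key compatibility I would first record is that for a tower $Y_e \to X$ with $Y_e$ arising from base change, applying $-\otimes_{V_e} V_\infty$ to $Y_e$ produces an alteration of $X_\infty$ in the index set of \autoref{thm.InputsFromSectionRH}(1), and conversely every such alteration of $X_\infty$ descends to some $Y_e$ over $X_e$ (as in the proof of \autoref{thm.InputsFromSectionRH}). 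Moreover the relative dualizing sheaf is compatible with the base change, $\omega_{Y_\infty/V_\infty} = \omega_{Y_e/V_e}\otimes_{V_e} V_\infty$ (and similarly $\omega_{Y/V}\otimes_V V_\infty$ for $Y\to X$ over $V$ with $\varpi^{1/p^e}\in\cO_Y$, where $Y$ factors through $X_e$), so that $\kev$ applied to a trace image on $X_\infty$ matches up with the trace image on $X$ after using that $\kev|_{V_e}$ generates $\Hom_V(V_e,V)$ by \autoref{prop:kev_is_generator}(3).

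The argument then proceeds in three steps. \textbf{Step 1 (containment).} By \autoref{thm.InputsFromSectionRH}(1), for every alteration $Y_\infty = Y_e\otimes_{V_e}V_\infty$ we have $\omega^{\RH}_{X_\infty/V_\infty} \subseteq \Tr(f_* \omega_{Y_\infty/V_\infty})$ in the almost category; unwinding the almost inclusion yields $\varpi^{1/p^e}\cdot \omega^{\RH}_{X_\infty/V_\infty} \subseteq \Tr(f_*\omega_{Y_\infty/V_\infty})$ on the nose. Pushing forward to $X$ along $\pi$ and applying $\kev$ (using that $\kev$ kills nothing we need and is compatible with $\Tr$ as above, and that $\varpi^{1/p^e} = \varpi\cdot\varpi^{1/p^e - 1}$ with $\varpi^{1/p^e-1}$ a unit once we pass through $\kev_e$, or more directly that $\kev(\varpi^{1/p^e}\cdot -)$ lands where we want), we obtain $\tau_{\RH}(\omega_{X/V}) \subseteq \Tr_{Y/X}(f_*\varpi^{1/p^e}\omega_{Y/V})$ for every alteration $Y\to X$ with $\varpi^{1/p^e}\in\cO_Y$, taking $e_0$ so that $1/p^{e_0}$ is small enough to make sense of all the manipulations. \textbf{Step 2 (the single alteration).} Fix $e\geq e_0$. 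Apply \autoref{thm.InputsFromSectionRH}(2) with $m = p^e$ (or slightly larger) to get an alteration $Y_e^\infty = Y_e\otimes_{V_e}V_\infty \to X_\infty$ whose trace image has cokernel over $\omega^{\RH}_{X_\infty/V_\infty}$ killed by $\varpi^{1/p^e}$; hence $\varpi^{1/p^e}\cdot\Tr(f_*\omega_{Y_e^\infty/V_\infty}) \subseteq \omega^{\RH}_{X_\infty/V_\infty}$. Combined with Step 1 applied to $Y_e$ and to any further alteration $Y\to Y_e$, and using that trace images only shrink along further alterations, we get equality $\tau_{\RH}(\omega_{X/V}) = \Tr_{Y/X}(f_*\varpi^{1/p^e}\omega_{Y/V})$ after pushing down via $\kev$. \textbf{Step 3 (the intersection).} The displayed equality of the intersection is then immediate: the right-hand intersection is sandwiched between $\tau_{\RH}(\omega_{X/V})$ (from Step 1, which gives $\subseteq$ for every term, hence for the intersection) and $\Tr_{Y/X}(f_*\varpi^{1/p^e}\omega_{Y/V})$ for the specific $Y$ of Step 2 (which is one term of the intersection and equals $\tau_{\RH}(\omega_{X/V})$), forcing all to coincide.

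The main obstacle I anticipate is the careful bookkeeping in Step 1: making precise how $\kev$ interacts with the Grothendieck trace map and with multiplication by $\varpi^{1/p^e}$, so that $\kev(\pi_*(\varpi^{1/p^e}\omega^{\RH}_{X_\infty/V_\infty}))$ is genuinely computed by $\Tr_{Y/X}(f_*\varpi^{1/p^e}\omega_{Y/V})$ and not something a priori smaller or larger. This requires (i) the base-change compatibility of relative dualizing sheaves and trace maps under $V\to V_e\to V_\infty$, (ii) the fact that $\kev|_{V_e}$ generates $\Hom_V(V_e,V)$ from \autoref{prop:kev_is_generator}(3) so that $\kev$ really implements a trace at finite level, and (iii) the transition from the honest subsheaf $\omega^{\RH}_{X_\infty/V_\infty} = (\varpi^{1/p^\infty})\otimes(\omega^{\RH}_{X_\infty/V_\infty})^a_!$ to almost-category statements without losing or gaining powers of $\varpi$. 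Once the dictionary between the almost inclusions on $X_\infty$ and honest inclusions on $X$ after $\varpi^{1/p^e}$-scaling is set up cleanly, Steps 2 and 3 are formal.
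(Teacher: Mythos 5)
Your proposal is correct and follows essentially the same route as the paper: the containment via \autoref{thm.InputsFromSectionRH}(1) pushed down through the commutative diagrams relating $\Tr$, $\kev_e$ and $\kev$ (using \autoref{prop:kev_is_generator}), then the single stabilizing alteration via \autoref{thm.InputsFromSectionRH}(2) and a sandwich argument. The one point worth making explicit in your item (iii) is that the correct choice of $e_0$ is the Noetherian stabilization $\tau_{\RH}(\omega_{X/V}) = \kev\big(\pi_*(\varpi^{1/p^e})\,\omega^{\RH}_{X_\infty/V_\infty}\big)$ for all $e \geq e_0$, which is exactly what lets both inclusions of your sandwich close up (the paper's Claim compares the levels $e$ and $e+1$ for this reason).
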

\begin{proof}
 We do this in two steps. First, we show that there exists $e_0$ such that for all $e\geq e_0$ and all alterations $f_e : Y \to X_e$ over $X_e$, we have that $\tau_{\RH}(\omega_{X/V})$ is contained inside $\Tr_{Y/X}(f_* \varpi^{1/p^e}\omega_{Y/V})$. To see this,  we form the base change $f_{\infty} : Y_{\infty} := Y \times_{V_e} V_{\infty} \to X_{\infty} := X_e \times_{V_e} V_{\infty}$. Consider the following diagram
\[\xymatrix{
(f_* \omega_{Y/V_e}) \otimes_{V_e} V_{\infty} \ar@{=}[r] & (f_{\infty})_* \omega_{Y_\infty/V_\infty} \ar[d] \ar[r]^-{\Tr} & \omega_{X_\infty/V_\infty} \ar[d] \ar@/^4pc/[ddd]^{\kev}\\
& f_* \omega_{Y/V_e} \ar[d]_{\cong} \ar[r]^{\Tr} & \omega_{X_e/V_e}\ar[d]_{\cong} \ar@/^2pc/[dd]^<<<<{\kev|_{V_e}} \\
& f_* \omega_{Y/V} \ar@/_1pc/[rd]_{\Tr_{Y/X}} \ar[r]^{\Tr} & \omega_{X_e/V} \ar[d]_{\Tr}\\
&  & \omega_{X/V}.
}
\]%
Here the two upper unlabeled vertical arrows are simply $\mathbb{T}_e : V_{\infty} \to V_e$ base changed to $Y$ and $X$ respectively, and the isomorphism is chosen to make the composition $$\omega_{X_e/V_e}\cong \omega_{X_e/V}\xrightarrow{\Tr_{X_e/X}}\omega_{X/V}$$ equal to the map induced by $\kev_{V_e}$: this is possible since both the $\Tr_{X_e/X}$ and $\kev_{V_e}$ generate $\Hom(\omega_{X_e/V}, \omega_{X/V})$ (see \autoref{prop:kev_is_generator}). The rest of the diagram commutes by \autoref{prop:kev_is_generator} and basic properties of the trace map.

By \autoref{def.tauRH} and the Noetherianity of $\omega_{X/V}$, we know that there exists $e_0$ such that 
$$\tau_{\RH}(\omega_{X/V})= \kev(\pi_*(\varpi^{1/p^e})\omega^{\RH}_{X_\infty/V_\infty})$$ 
for all $e\geq e_0$. We fix this $e_0$ throughout the rest of the proof. For any $e\geq e_0$, notice that $(\varpi^{1/p^e})\omega^{\RH}_{X_\infty/V_\infty}\subseteq (\varpi^{1/p^e})\Tr((f_{\infty})_* \omega_{Y_\infty/V_\infty})$ by \autoref{thm.InputsFromSectionRH}. By the commutative diagram above we have that 
\begin{equation}
    \label{eq.EasyContainmentTauRHInAlterationMidProof}
    \tau_{\RH}(\omega_{X/V}) \subseteq \Tr_{Y/X}(f_* \varpi^{1/p^e}\omega_{Y/V}).
\end{equation}
for all $e\geq e_0$. This completes the proof of the first part of the theorem.  {\color{black}  Note that this part of the theorem also works for alterations base changed from $V' \to V$, if applicable.  }

Next we will show the existence of $Y_e$ for any fixed $e\geq e_0$. We first prove the following.
\begin{claim}
\label{clm:stabilization}
For any $e\geq e_0$, we have 
\[ 
    \tau_{\RH}(\omega_{X/V})=\kev\Big(\pi_*\big((\varpi^{1/p^e})\Tr((f_\infty)_*\omega_{Y_\infty/X_\infty})\big)\Big)
\]
for some alteration $Y_\infty\to X_\infty$ such that $Y_\infty=Y_1\otimes_{V_{e_1}}V_\infty$ where $f_1$: $Y_1\to X_{e_1}$ is an alteration for some $e_1 \geq e$. {\color{black}In fact, we may even assume that $Y_1 \to X_{e_1}$ is based changed from $Y_1' \to X'_{e_1}$.}
\end{claim}
\begin{proof}[Proof of Claim]
By \autoref{thm.InputsFromSectionRH}, we know that there exists an alteration $Y_\infty\to X_\infty$ (and we may assume  {\color{black}$Y_\infty=Y_1'\otimes_{V'_{e'}}V_\infty$ where $Y_1'\to X_{e_1}'$ is an alteration for some $e_1\geq e$}) such that 
$$(\varpi^{1/p^{e+1}})\Tr((f_\infty)_*\omega_{Y_\infty/V_\infty}) \subseteq \omega^{\RH}_{X_\infty/V_\infty}.$$
It follows that 
\begin{align*}
\tau_{\RH}(\omega_{X/V}) & =\kev(\pi_*(\varpi^{1/p^{e+1}})\omega^{\RH}_{X_\infty/V_\infty}) \\
& \supseteq \kev(\pi_*(\varpi^{1/p^{e}})\Tr((f_\infty)_*\omega_{Y_\infty/X_\infty})) \\
& \supseteq \kev(\pi_*(\varpi^{1/p^{e}})\omega^{\RH}_{X_\infty/V_\infty})\\
& = \tau_{\RH}(\omega_{X/V})
\end{align*}
where the first and last equality follows by our choice of $e_0$, the first containment follows by our choice of $Y_\infty$ and the second containment follows by \autoref{thm.InputsFromSectionRH}. Thus we have equality throughout and the Claim follows. 
\end{proof}

At this point, for any $e>e_0$, we consider the following commutative diagram:
\[
    \xymatrix{
        (f_{1*} \omega_{Y_1/V_{e_1}}) \otimes_{V_{e_1}} V_{\infty} \ar@{=}[r] & (f_{\infty})_* \omega_{Y_\infty/V_\infty} \ar[d] \ar[r]^-{\Tr} & \omega_{X_\infty/V_\infty} \ar[d] \ar@/^4pc/[ddd]^{\kev}\\
        & f_{1*} \omega_{Y_1/V_{e_1}} \ar[r]^-{\Tr} \ar[d]^{\cong}& \omega_{X_{e_1}/V_{e_1}}\ar[d]^{\cong}\ar@/^2pc/[dd]^<<<<{\kev|_{V_{e_1}}} \\
        & f_{1*}\omega_{Y_1/V} \ar[r]^{\Tr} \ar@/_1pc/[rd]_{\Tr_{Y_1/X}} & \omega_{X_{e_1}/V} \ar[d]_{\Tr}\\
        & &\omega_{X/V}.
    }
\]
Here $Y_\infty$ and $Y_1$ (and hence $e_1$) are constructed from \autoref{clm:stabilization}, the two upper unlabeled vertical arrows are induced by $\mathbb{T}_{e_1} : V_{\infty} \to V_{e_1}$ base changed to $Y_1$ and $X$ respectively (in particular, they are split surjective by \autoref{prop:kev_is_generator}), and the isomorphism is chosen to make the composition $$\omega_{X_{e_1}/V_{e_1}}\cong \omega_{X_{e_1}/V}\xrightarrow{\Tr_{X_{e_1}/X}}\omega_{X/V}$$ equal to the map induced by $\kev_{V_{e_1}}$: this is possible since both $\Tr_{X_{e_1}/X}$ and $\kev_{V_{e_1}}$ generate $\Hom(\omega_{X_{e_1}/V}, \omega_{X/V})$ (see \autoref{prop:kev_is_generator}). The rest of the diagram commutes by \autoref{prop:kev_is_generator} and basic properties of the trace map.

It follows from the commutative diagram above and \autoref{clm:stabilization} that 
\[
    \tau_{\RH}(\omega_{X/V})=\kev(\pi_*(\varpi^{1/p^e})\Tr((f_{\infty})_* \omega_{Y_\infty/V_\infty})) = \Tr_{Y_1/X}(f_{1*} \varpi^{1/p^e} \omega_{Y_1/V}).
\]
Set $Y_e := Y_1$.  For any further alteration $f$: $Y \to Y_e$, we immediately see that 
\[
    \tau_{\RH}(\omega_{X/V}) = 
     \Tr_{Y_1/X}(f_* \varpi^{1/p^e} \omega_{Y_1/V}) \supseteq  \Tr_{Y/X}(f_* \varpi^{1/p^e} \omega_{Y/V}).
\]
The reverse containment is \autoref{eq.EasyContainmentTauRHInAlterationMidProof} and we have proven the desired result. The ``particular'' statement follows immediately since the intersection stabilizes as we have shown above. 

{\color{black}  For the final statement, our construction guarantees that $Y_e \to X$ is the base change of some $Y_e' \to X'$ as desired.}
\end{proof}

\begin{remark}
\label{rmk:NormalGemoetricallyIntegral}
Now suppose $X$ is normal, integral, proper and flat over $\Spec(V)$. {We claim that, when $V$ is complete, up to replacing $V$ by another DVR, we may assume that $X_{\infty}$ is integral.}
To this end, observe that $H^0(X, \cO_X)$ is a normal, finite domain extension of $V$ and hence $H^0(X, \cO_X)$ is a complete DVR of mixed characteristic. Therefore, upon replacing $V$ by $H^0(X, \cO_X)$, we may assume $X\to \Spec(V)$ is such that $X_\infty$ is integral: it is enough to show that $X_\infty[1/p]$ is integral, but we know that $X_\infty[1/p]=X[1/p]\otimes_{V[1/p]}V_\infty[1/p]$ is normal (\cite[\href{https://stacks.math.columbia.edu/tag/038N}{Tag 038N}]{stacks-project}) and we have $$H^0(X_\infty[1/p], \cO_{X_\infty[1/p]})\cong V_\infty[1/p] \otimes_{V[1/p]} H^0(X[1/p], \cO_{X[1/p]})= V_\infty[1/p]$$
is a field. Thus $X_\infty[1/p]$ is normal and connected, and thus integral. 
\end{remark}

\begin{definition}
\label{def.tauAlt}
Let $X\to \Spec(V)$ be a separated, finite type, and flat morphism of schemes such that $X$ is integral. We define
$$\tau_{\rm alt}(\omega_{X/V}, \varpi^{1/p^e}) := \bigcap_{Y\to X} \Tr_{Y/X}(f_*\varpi^{1/p^e}\omega_{Y/V})$$
where $f$: $Y\to X$ runs over all alterations taken inside a fixed algebraic closure $\overline{K(X)}$ of the fraction field of $X$ such that $\varpi^{1/p^e}\in \cO_Y$. Here the intersection is taken in the category of sheaves of $\cO_X$-modules.
\end{definition}

Note that, a priori, $\tau_{\rm alt}(\omega_{X/V}, \varpi^{1/p^e})$ is only a sheaf of $\cO_X$-modules. Our next result will show that it is actually a coherent subsheaf of $\omega_{X/V}$ for $e \gg 0$.

\begin{theorem}
\label{thm.tauAlt}
Under \autoref{def.tauAlt} and assume $V$ is complete, we have 
\begin{enumerate}
    \item Up to isomorphisms, $\tau_{\rm alt}(\omega_{X/V}, \varpi^{1/p^e})$ depends only on $X$ and the $\mathbf{Q}$-Cartier divisor $\frac{1}{p^e}\Div(\varpi)$ (i.e., it does not depend on the choice of $V$). 
\item There exists $e_0>0$ such that 
$\tau_{\rm alt}(\omega_{X/V}, \varpi^{1/p^e}) = \tau_{\rm alt}(\omega_{X/V}, \varpi^{1/p^{e'}})$ for all $e, e' \geq e_0$, and for all $e \geq e_0$, there exists an alteration $Y_e \to X$ with $\varpi^{1/p^e} \in \cO_{Y_e}$ such that for every further alteration $Y \to Y_e$ with composition $f : Y \to Y_e \to X$, we have that
\[
\tau_{\rm alt}(\omega_{X/V}, \varpi^{1/p^e}) = \Tr_{Y/X}(f_* \varpi^{1/p^e}\omega_{Y/V}).
\]
In particular, $\tau_{\rm alt}(\omega_{X/V}, \varpi^{1/p^e})\subseteq \omega_{X/V}$ is a coherent sheaf.  
\end{enumerate}
\end{theorem}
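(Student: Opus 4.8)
\textbf{Proof plan for \autoref{thm.tauAlt}.} The strategy is to reduce to the proper case already handled by \autoref{thm.SingleAlterationWithPToEpsilon} (via \autoref{def.tauRH}), and then to identify $\tau_{\rm alt}(\omega_{X/V}, \varpi^{1/p^e})$ with $\tau_{\RH}(\omega_{X/V})$ there. The first task is to handle non-properness. Given $X$ separated, finite type, flat over $V$ and integral, I would choose a compactification $\overline X \hookrightarrow$ (some proper flat $V$-scheme), for instance by Nagata compactification followed by normalization and flattening over $V$, so that $\overline X \to \Spec V$ is proper, integral, flat, and $\overline X$ restricts to $X$ on an open subset. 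Since alterations of $X$ are cofinal among (restrictions to $X$ of) alterations of $\overline X$ — any alteration of $X$ extends to an alteration of $\overline X$ after taking closure and normalizing — and since $\Tr_{Y/X}$, $\omega_{Y/V}$, $\varpi^{1/p^e}\omega_{Y/V}$ all localize, one gets $\tau_{\rm alt}(\omega_{X/V}, \varpi^{1/p^e}) = \tau_{\rm alt}(\omega_{\overline X/V}, \varpi^{1/p^e})|_X$. Likewise, by \autoref{rmk:NormalGemoetricallyIntegral}, after replacing $V$ by the finite DVR extension $H^0(\overline X, \cO_{\overline X})$ (a normalization step — note $\tau_{\rm alt}$ only depends on $X$ and the divisor $\frac{1}{p^e}\Div(\varpi)$, which is the content one needs for part (1) and which makes this replacement harmless), we may assume $\overline X_\infty$ is integral, so \autoref{thm.SingleAlterationWithPToEpsilon} applies to $\overline X$.

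Next, for the proper integral normal case $\overline X$: I claim $\tau_{\rm alt}(\omega_{\overline X/V}, \varpi^{1/p^e}) = \tau_{\RH}(\omega_{\overline X/V})$ for $e$ large. The containment $\tau_{\RH} \subseteq \tau_{\rm alt}$ is immediate from the first assertion of \autoref{thm.SingleAlterationWithPToEpsilon} (for $e \geq e_0$, $\tau_{\RH}(\omega_{\overline X/V}) \subseteq \Tr_{Y/X}(f_*\varpi^{1/p^e}\omega_{Y/V})$ for \emph{every} alteration $Y$ with $\varpi^{1/p^e}\in\cO_Y$, hence $\tau_{\RH} \subseteq$ the intersection $= \tau_{\rm alt}$). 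For the reverse containment, the second assertion of \autoref{thm.SingleAlterationWithPToEpsilon} produces, for each $e \geq e_0$, an alteration $Y_e \to \overline X$ with $\varpi^{1/p^e}\in\cO_{Y_e}$ computing $\tau_{\RH}(\omega_{\overline X/V}) = \Tr_{Y_e/\overline X}(f_*\varpi^{1/p^e}\omega_{Y_e/V})$; since $Y_e$ is in particular one of the alterations over which $\tau_{\rm alt}$ intersects, we get $\tau_{\rm alt}(\omega_{\overline X/V}, \varpi^{1/p^e}) \subseteq \Tr_{Y_e/\overline X}(f_*\varpi^{1/p^e}\omega_{Y_e/V}) = \tau_{\RH}(\omega_{\overline X/V})$. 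Combining: $\tau_{\rm alt}(\omega_{\overline X/V}, \varpi^{1/p^e}) = \tau_{\RH}(\omega_{\overline X/V})$, which is independent of $e \geq e_0$, and computed by the single alteration $Y_e$ (and, by the last sentence of \autoref{thm.SingleAlterationWithPToEpsilon}, by any further alteration $Y \to Y_e$). This gives part (2) for $\overline X$, and restricting to $X$ gives part (2) in general: take $Y_e$ over $X$ to be (the restriction to $X$ of, equivalently the normalization of $X$ in) the alteration $Y_e$ over $\overline X$; the stabilization and the formula $\tau_{\rm alt}(\omega_{X/V},\varpi^{1/p^e}) = \Tr_{Y/X}(f_*\varpi^{1/p^e}\omega_{Y/V})$ for all further $Y \to Y_e$ descend because trace, $\omega_{-/V}$ and the intersection all localize, and coherence follows since $\tau_{\rm alt}$ is now realized as the image of a coherent sheaf under a trace map into $\omega_{X/V}$.

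Finally, part (1): independence of the choice of $V$. Since the construction $\tau_{\rm alt}(\omega_{X/V}, \varpi^{1/p^e})$ in \autoref{def.tauAlt} references $V$ only through $\omega_{X/V}$ and the element $\varpi^{1/p^e}$ (equivalently the $\bQ$-Cartier divisor $\frac1{p^e}\Div(\varpi)$), the point is that if $V \subseteq V'$ are two complete DVRs with $X$ flat and integral over both and $\frac1{p^e}\Div(\varpi) = \frac1{p^{e'}}\Div(\varpi')$ as divisors on $X$, then the intersections agree. This is because $\omega_{Y/V}$ and $\omega_{Y/V'}$ differ by the (trivialized, after choosing generators) line bundle $\omega_{V'/V}$ pulled back, and the trace maps are compatible; more directly, both sides equal $\tau_{\rm alt}(\omega_{X/V}, D)$ where $D$ is the common divisor, once one checks that $\Tr_{Y/X}(f_*\cO_Y(\lceil K_{Y/V} - f^*(\lfloor D\rfloor)\rceil \cdots))$-type expressions depend only on $K_X$ and $D$ — this is already implicit in the $\myB^0$-theory of \autoref{ss:+-stable-sections} and the remark on roundings. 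The main obstacle I anticipate is the bookkeeping in the reduction to the proper case: ensuring that alterations of $X$ really are cofinal among restrictions of alterations of a \emph{flat} proper model (flatness over $V$ can fail after taking closures, so one must flatten, which changes the model over the special fiber) and that this flattening does not disturb the generic fiber or the divisor $\frac1{p^e}\Div(\varpi)$; once the comparison with $\tau_{\RH}$ on a good proper model is in place, everything else is formal.
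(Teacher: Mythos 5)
Your proposal is correct and follows essentially the same route as the paper: reduce to the proper case by compactification, replace $V$ by $H^0$ of the normalization so that \autoref{rmk:NormalGemoetricallyIntegral} and \autoref{thm.SingleAlterationWithPToEpsilon} apply, identify $\tau_{\rm alt}$ with $\tau_{\RH}$ there (which in fact is already the last sentence of \autoref{thm.SingleAlterationWithPToEpsilon}), and prove independence of $V$ by comparing relative dualizing sheaves through a fixed trivialization of $\omega_{W/V}$. The only bookkeeping differences are that the paper keeps the compactification un-normalized (so it genuinely restricts to $X$) and descends from $X^{\rm N}$ separately via $\Tr_{X^{\rm N}/X}(\tau_{\rm alt}(\omega_{X^{\rm N}/V},\varpi^{1/p^e}))=\tau_{\rm alt}(\omega_{X/V},\varpi^{1/p^e})$, which handles non-normal $X$ more cleanly than normalizing the compactification; and your worry about flattening is vacuous, since the closure of a $V$-flat integral scheme in any compactification is integral and dominates $\Spec V$, hence is automatically flat.
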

\begin{proof}
First, we assume additionally that $X\to \Spec(V)$ is proper. Let $W=H^0(X^{\rm N}, \cO_{X^{\rm N}})$ where $X^{\rm N}$ denotes the normalization of $X$. By \autoref{rmk:NormalGemoetricallyIntegral} and \autoref{thm.SingleAlterationWithPToEpsilon}, we know that there exists $e_0>0$ such that for all $e>e_0$, 
$$\tau_{\rm alt}(\omega_{X^{\rm N}/W}, \varpi^{1/p^e}) = \tau_{\RH}(\omega_{X^{\rm N}/W})=\Tr_{Y/X}(f_*\varpi^{1/p^e}\omega_{Y/W})$$
for every alteration $f$: $Y\to X^{\rm N}$ that factors over some fixed alteration $Y_e\to X^{\rm N}$. If we fix an isomorphism $W\cong \omega_{W/V}$, then the following commutative diagram 
\[
\xymatrix{
\omega_{Y/W} \ar[d]^{\Tr}\ar[r]^\cong & \omega_{Y/V} \ar[d]^{\Tr} \\
\omega_{X^{\rm N}/W} \ar[r]^\cong & \omega_{X^{\rm N}/V}
}\]
shows that 
$$\tau_{\rm alt}(\omega_{X^{\rm N}/V}, \varpi^{1/p^e})\cong \Tr_{Y/X}(f_*\varpi^{1/p^e}\omega_{Y/V})\cong \Tr_{Y/X}(f_*\varpi^{1/p^e}\omega_{Y/W})\cong \tau_{\rm alt}(\omega_{X^{\rm N}/W}, \varpi^{1/p^e})$$
for every alteration $f$: $Y\to X^{\rm N}$ that factors over some fixed alteration $Y_e\to X^{\rm N}$. In particular, this shows that up to isomorphisms $\tau_{\rm alt}(\omega_{X^{\rm N}/V}, \varpi^{1/p^e})$ does not depend on the choice of $V$: if we have $X^{\rm N}\to \Spec(V')$ is proper and flat for another complete DVR $V'$, then $W=H^0(X^{\rm N}, \cO_{X^{\rm N}})$ is a finite DVR extension over $V'$ and so after fixing an isomorphism $W\cong\omega_{W/V'}$, the same argument as above shows that
$$\tau_{\rm alt}(\omega_{X^{\rm N}/V'}, \varpi^{1/p^e})\cong \tau_{\rm alt}(\omega_{X^{\rm N}/W}, \varpi^{1/p^e}).$$
Next, notice that in the intersection defining $\tau_{\rm alt}(\omega_{X/V}, \varpi^{1/p^e})$, we may clearly restrict to alterations $Y\to X$ that factor through $X^{\rm N}$. It follows that 
$$\tau_{\rm alt}(\omega_{X/V}, \varpi^{1/p^e})\cong \Tr_{X^{\rm N}/X}(\tau_{\rm alt}(\omega_{X^{\rm N}/V}, \varpi^{1/p^e})).$$
Therefore, both parts (a) and (b) for $X$ follow from the already established result for $X^{\rm N}$. 

Finally, for a general $X$ as in the theorem, we let $X'\to X$ be a compactification of $X$. Then since every alteration $Y\to X$ can be compactified to an alteration $Y'\to X'$, we know that $\tau_{\rm alt}(\omega_{X/V}, \varpi^{1/p^e}) = \tau_{\rm alt}(\omega_{X'/V}, \varpi^{1/p^e})|_{X}$. Now both parts (a) and (b) follow from the already established result for $X'$ (as $X'\to \Spec(V)$ is proper).
\end{proof}
{\color{black}
We next drop the completeness assumption on $V$ in \autoref{thm.tauAlt}.

\begin{theorem}
\label{thm.tauAltNonComplete}
Under \autoref{def.tauAlt}, there exists $e_0>0$ such that 
$$\tau_{\rm alt}(\omega_{X/V}, \varpi^{1/p^e}) = \tau_{\rm alt}(\omega_{X/V}, \varpi^{1/p^{e'}})$$ for all $e, e' \geq e_0$.
Moreover, for all $e \geq e_0$, there exists an alteration $Y_e \to X$ with $\varpi^{1/p^e} \in \cO_{Y_e}$ such that for every further alteration $Y \to Y_e$ with composition $f : Y \to Y_e \to X$, we have that
\[
\tau_{\rm alt}(\omega_{X/V}, \varpi^{1/p^e}) = \Tr_{Y/X}(f_* \varpi^{1/p^e}\omega_{Y/V}).
\]
In particular, $\tau_{\rm alt}(\omega_{X/V}, \varpi^{1/p^e})\subseteq \omega_{X/V}$ is a coherent sheaf.  

Furthermore, if $V \to \overline{V}$ is the Henselization or the completion of $V$, then for any irreducible component $\overline{Y_e}$ of the base changed map $Y_e \otimes_V \overline{V} \to X \otimes_V \overline{V}=: \overline{X}$ and any further alteration $\overline{Y}\to \overline{Y_e}$, we have 
\[
\tau_{\rm alt}(\omega_{\overline{X}/\overline{V}}, \varpi^{1/p^e}) = \Tr_{\overline{Y}/\overline{X}}(f_* \varpi^{1/p^e}\omega_{\overline{Y}/\overline{V}}).
\]
\end{theorem}

\begin{proof}
The statement that $\tau_{\rm alt}(\omega_{X/V}, \varpi^{1/p^e}) = \tau_{\rm alt}(\omega_{X/V}, \varpi^{1/p^{e'}})$ for all $e, e' \geq e_0$ follows by base change and the later statements (using the coherence of $\tau_{\rm alt}(\omega_{X/V}, \varpi^{1/p^e})\subseteq \omega_{X/V}$). We thus turn our focus to the later statements.

Without loss of generality we may replace $X$ by its normalization to assume $X$ is normal. We consider the extensions 
$$V\to V^h \to \widehat{V}$$  
where $V^h$ is the Henselization of $V$ and form the corresponding base changes $X' := X \otimes_V V^h$, $X'' := X \otimes_V \widehat{V}$, $f ' : X' \to \Spec V^h$ and $f'' : X'' \to \Spec \widehat{V}$.  These base changes are also normal and hence a disjoint union of their irreducible components.

Fix $e_0$ from \autoref{thm.SingleAlterationWithPToEpsilon} that works for the base change $X'' \to \widehat{V}$.  For any $e > e_0$, \autoref{thm.SingleAlterationWithPToEpsilon} guarantees that there exists an alteration $Y_e' \to X'$ so that the base change to $\widehat{V}$, $Y_e'' \to X''$, has the property that 
\[
    \tau_{\alt}(\omega_{X''/\widehat{V}}, \varpi^{1/p^e}) = \Tr_{Y''_e/X''}(f''_* \varpi^{1/p^e}\omega_{Y''_e/\widehat{V}}).
\]
This implies that $\Tr_{Y''_e/X''}(f''_* \varpi^{1/p^e}\omega_{Y''_e/\widehat{V}}) \subseteq \Tr_{Y''/X''}(g''_* \varpi^{1/p^e}\omega_{Y''/\widehat{V}})$ for any alteration $g'' : Y'' \to X''$ so that $\varpi^{1/p^e} \in \Gamma(Y'', \cO_{Y''})$.  But then by flat base change we see that 
\[
    \Tr_{Y'_e/X'}(f'_* \varpi^{1/p^e}\omega_{Y'_e/V^h}) \subseteq \Tr_{Y'/X'}(g'_* \varpi^{1/p^e}\omega_{Y'/V^h})
\]
for any alteration $Y' \to X'$.  In particular, $\tau_{\alt}(\omega_{X'/V^h}, \varpi^{1/p^e})$ is a coherent sheaf and the intersection defining it stabilizes.   Thus we have proven the result when $V = V^h$ is Henselian.  

We now need to descend to $X \to \Spec (V)$.  For each $e > e_0$, we know there exists an alteration $Y_e' \to X'$ which can be used the compute the intersection $\tau_{\alt}(\omega_{X'/V^h}, \varpi^{1/p^e})$.  As $V^h$ is a colimit of pointed \'etale extensions, we know there exists a pointed \'etale extension $V \subseteq W$ so that $X_W := X \otimes_V W$ has an alteration $g : Z_e \to X_W$ that base changes to $Y_e' \to X'$.  If $Z \to Z_e \to X_W$ is a further alteration with $Z$ normal integral, then the base change 
\[
    g' : Z_{V^h} =: Z' \to Y_e' \to X'
\]
is an alteration, although $Z'$ need not be integral.  It is still normal though, and hence it is a disjoint union of integral alterations $Z' = \coprod Z^j$ each of which has a map $g'_j : Z^j \to X'$ factoring through $Y_e' \to X'$.  Hence 
\[
    \Tr_{Z^j/X'}(g'_{j*}\varpi^{1/p^e}\omega_{Z^j/V^h}) = \Tr_{Y'_e/X'}(f'_* \varpi^{1/p^e}\omega_{Y'_e/V^h}).
\]
and thus we have 
\[
    \Tr_{Z'/X'}(g'_{*}\varpi^{1/p^e}\omega_{Z'/V^h}) = \Tr_{Y'_e/X'}(f'_* \varpi^{1/p^e}\omega_{Y'_e/V^h}).
\]
It follows by flat base change that
\[
    \tau_{\alt}(\omega_{X_W/W}, \varpi^{1/p^e}) = \Tr_{Z_e/X_W}(g_* \varpi^{1/p^e}\omega_{Z_e/W}).
\]
That is, the intersection defining the left side is computed by a single alteration.  

We are almost done but the composition $Z_e \to X_W \to X$ is not proper, although it is generically finite.  Nonetheless, by \autoref{damn_lemma}, we can find an alteration $Y_e \to X$, so that $Y_e \times_V W \to X_W$ can be factored through $Z_e \to X_W$. To complete the proof, note that for any further alteration $f : Y \to Y_e \to X$, we have 
\[
\Tr_{Y/X}(f_{*}\varpi^{1/p^e}\omega_{Y/V}) \subseteq \Tr_{Y_e/X}(f_{e*} \varpi^{1/p^e}\omega_{Y_e/V}).
\]
However these are equal after base changing by the faithfully flat map $\Spec (W) \to \Spec (V)$.  Hence this containment is an equality which proves the main part of the theorem. The final statement about compatibility with base change to Henselization or completion follows from the construction.
%
%
%
\end{proof}

}

\begin{definition}
\label{def.tauAltEpsilon}
Let $X\to \Spec(V)$ be a separated, finite type, flat morphism of schemes such that $X$ is integral. We set
$$\tau^a_{\rm alt}(\omega_{X/V}) := \tau_{\rm alt}(\omega_{X/V}, \varpi^{1/p^e})  \text{ for all $e\gg0$.}$$
By \autoref{thm.tauAltNonComplete}, this is well-defined, independent of the choice of $V$ (up to isomorphisms) and $e \gg 0$. Moreover there exists a fixed $e_0>0$ such that for every $e>e_0$, there exists an alteration $Y_e\to X$ such that for all further alterations $f$: $Y\to Y_e\to X$, we have that $\tau^a_{\rm alt}(\omega_{X/V})= \Tr_{Y/X}(f_* \varpi^{1/p^e}\omega_{Y/V})$. Note that we choose the notation $\tau^a_{\rm alt}$ to emphasize that this is a $\varpi$-almost version of the alteration (i.e., there is always a $(\varpi^{1/p^\infty})$-perturbation built into this definition).
\end{definition}

The following properties of $\tau^a_{\rm alt}(\omega_{X/V})$ are immediate consequences.

\begin{corollary}
    \label{cor.TauRHInvertPEqualsJNoetherianNoPair}
    With notation as in \autoref{def.tauAltEpsilon}, we have 
    $$\tau^a_{\rm alt}(\omega_{X/V})[1/p] = \mJ(X[1/p], \omega_{X[1/p]})$$ 
    where the right hand side is the multiplier submodule (i.e., the Grauert-Riemenschneider canonical sheaf) of $X[1/p]$.  More generally, if $X = \Spec R$ is affine, the formation of $\tau^a_{\rm alt}$ commutes with arbitrary localization.  
\end{corollary}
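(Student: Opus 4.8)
\textbf{Proof plan for \autoref{cor.TauRHInvertPEqualsJNoetherianNoPair}.} The plan is to deduce both statements from the ``single alteration'' conclusion of \autoref{thm.tauAlt}(b) together with \autoref{thm.InputsFromSectionRH}(c) / \autoref{GRchar0Mult}. First I would fix $e \geq e_0$ and an alteration $Y_e \to X$ as in \autoref{thm.tauAlt}(b), so that $\tau^a_{\rm alt}(\omega_{X/V}) = \Tr_{Y/X}(f_* \varpi^{1/p^e}\omega_{Y/V})$ for every further alteration $Y \to Y_e$. For the first claim, use resolution of singularities on $X[1/p]$ to find a further alteration $Y \to Y_e$ with $Y[1/p]$ regular (and $\varpi^{1/p^e} \in \cO_Y$, which is automatic since $Y$ dominates $Y_e$). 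Inverting $p$ kills the perturbation: $\varpi^{1/p^e}$ becomes a unit in $\cO_{Y[1/p]}$, so
\[
\tau^a_{\rm alt}(\omega_{X/V})[1/p] = \Tr_{Y[1/p]/X[1/p]}\big( f_* \omega_{Y[1/p]} \big),
\]
and since $Y[1/p]$ is regular and $Y[1/p] \to X[1/p]$ is proper birational onto a regular open (hence an alteration of $X[1/p]$), the right-hand side is by definition the Grauert--Riemenschneider sheaf, which equals the multiplier submodule $\mJ(X[1/p], \omega_{X[1/p]})$ by the standard characteristic-zero theory (see the discussion around \autoref{GRchar0Mult}). One must check that this single alteration indeed computes the classical GR sheaf, but that is exactly the statement that any sufficiently large alteration with smooth source computes it, which holds in characteristic zero.

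For the localization statement, suppose $X = \Spec R$. Fix $g \in R$ and write $U = \Spec R[1/g] \subset X$. Again take $e \geq e_0$ and $Y_e \to X$ as in \autoref{thm.tauAlt}(b). The key point is that restriction to $U$ is compatible with alterations and with the trace map: given any alteration $Y_U' \to U$ with $\varpi^{1/p^e} \in \cO_{Y_U'}$, one can spread it out to an alteration $Y' \to X$ restricting to it over $U$ (take the closure and normalize, then alter further if needed so that it dominates $Y_e$; enlarging does not change the restriction to $U$ up to the intersection), and conversely restriction of alterations of $X$ gives alterations of $U$. Since the defining intersection for $\tau_{\rm alt}(\omega_{X/V}, \varpi^{1/p^e})$ over $X$ stabilizes at $Y_e$ (again by \autoref{thm.tauAlt}(b)), and since $\Tr_{Y/X}(f_*\varpi^{1/p^e}\omega_{Y/V})|_U = \Tr_{Y_U/U}((f|_U)_* \varpi^{1/p^e}\omega_{Y_U/V})$ because trace maps and dualizing sheaves commute with the flat base change $X \hookleftarrow U$, we get
\[
\tau^a_{\rm alt}(\omega_{X/V})|_U = \Tr_{Y_e|_U / U}\big((f_{Y_e}|_U)_* \varpi^{1/p^e}\omega_{Y_e|_U/V}\big) = \tau^a_{\rm alt}(\omega_{U/V}),
\]
where the last equality uses that $Y_e|_U \to U$ is a valid ``large enough'' alteration computing the stabilized intersection over $U$ (any further alteration of $U$ lifts to one of $X$ dominating $Y_e$, whose restriction dominates $Y_e|_U$). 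General localizations of $R$ are filtered colimits of such $R[1/g]$, and since $\omega_{X/V}$ is coherent and the $\tau$'s are coherent subsheaves stabilizing at finite level, the statement passes to the colimit.

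\textbf{Main obstacle.} The genuinely delicate point is the localization argument: one needs that the alteration $Y_e \to X$ witnessing stabilization \emph{over $X$} also witnesses stabilization \emph{over $U$}, i.e., that every alteration of $U$ (with the $\varpi$-root adjoined) is dominated, after restriction, by $Y_e|_U$ up to enlarging inside $X$. This requires knowing that one can extend an alteration of $U$ to an alteration of all of $X$ dominating $Y_e$ without disturbing the restriction to $U$ — a spreading-out / closure argument that is routine but must be done carefully, and it is the only place where some honest work beyond citing \autoref{thm.tauAlt} and \autoref{GRchar0Mult} is needed. The compatibility of trace and relative dualizing sheaves with the open immersion $U \hookrightarrow X$ is standard (flat, even open, base change for $f^!$ and for Grothendieck trace).
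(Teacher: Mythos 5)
Your proposal is correct and follows essentially the same route as the paper: stabilize at a single alteration whose generic fibre is regular, invert $p$ to kill the $\varpi^{1/p^e}$-perturbation and identify the trace image with the multiplier submodule (the paper cites a variant of \cite[Theorem 8.1]{BlickleSchwedeTuckerTestAlterations} for the fact that a regular alteration computes it in characteristic zero), and then observe that the alteration witnessing stabilization over $X$ also witnesses it after localization. The only slip is describing $Y[1/p] \to X[1/p]$ as ``proper birational'' — it is an alteration, generically finite but generally not birational — though this does not affect the argument.
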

\begin{proof}
    We may choose $e > 0$ and $f : Y \to X$ an alteration such that $$\tau^a_{\rm alt}(\omega_{X/V}) = \Tr_{Y/X}(f_* \varpi^{1/p^e}\omega_{Y/V})$$ by \autoref{thm.tauAltNonComplete}. Enlarging $Y$ if necessary we may assume that $Y[1/p]$ is nonsingular (in fact, it suffices to assume it factors through a nonsingular variety).  Inverting $p$ we notice that the right hand side above becomes the multiplier submodule by a slight modification of \cite[Theorem 8.1]{BlickleSchwedeTuckerTestAlterations}.

    For the statement about commuting with arbitrary localization, note that any alteration which stabilizes the intersection for $\tau^a(\omega_{X/V})$ will also stabilize at each stalk.  The statement follows.
\end{proof}

\begin{corollary}
    \label{cor.TransformationRuleRHUnderFiniteMaps}
    Suppose that $f : Y \to X$ is a finite surjective map between separated integral schemes of finite type and flat over $\Spec(V)$, with induced trace map $\Tr_{Y/X} : f_*\omega_{Y/V} \to \omega_{X/V}$. Then 
    \[
        \Tr_{Y/X}(f_*\tau^a_{\rm alt}(\omega_{Y/V})) = \tau^a_{\rm alt}(\omega_{X/V}).
    \]
\end{corollary}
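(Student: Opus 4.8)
The statement to prove is the transformation rule $\Tr_{Y/X}(f_*\tau^a_{\rm alt}(\omega_{Y/V})) = \tau^a_{\rm alt}(\omega_{X/V})$ for $f\colon Y\to X$ finite surjective between integral, separated, finite type, flat $V$-schemes. The first reduction is to the proper case, exactly as in the proof of \autoref{thm.tauAlt}: compactify $X\hookrightarrow \overline{X}$ and $Y\hookrightarrow \overline{Y}$ compatibly with a finite map $\overline{f}\colon \overline{Y}\to \overline{X}$ (take $\overline{Y}$ to be the normalization of $\overline{X}$ in $K(Y)$, or any compactification of $Y$ dominating it and finite over $\overline{X}$), and use that $\tau^a_{\rm alt}(\omega_{X/V}) = \tau^a_{\rm alt}(\omega_{\overline X/V})|_X$ together with the compatibility of the trace map with restriction to opens. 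So I may assume $X\to\Spec(V)$ is proper; after the further normalization reduction from the proof of \autoref{thm.tauAlt} (again using $\tau^a_{\rm alt}(\omega_{X/V}) = \Tr_{X^{\rm N}/X}(\tau^a_{\rm alt}(\omega_{X^{\rm N}/V}))$ and similarly for $Y$, and composing trace maps), I may assume $X$ and $Y$ are normal. Finally, replacing $V$ by $H^0(X,\cO_X)$ and noting $H^0(Y,\cO_Y)$ is a finite DVR extension of it, I may arrange (via \autoref{rmk:NormalGemoetricallyIntegral}) that $X_\infty$ and $Y_\infty$ are both integral.

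\textbf{The main step.} Now I am in the situation where everything is governed by \autoref{def.tauRH}, so it suffices to show $\Tr_{Y/X}(\tau_{\RH}(\omega_{Y/V})) = \tau_{\RH}(\omega_{X/V})$, i.e., by \autoref{def.tauRH}, that $\kev\big(\pi_*\Tr_{Y/X}(f_*\omega^{\RH}_{Y_\infty/V_\infty})\big) = \kev\big(\pi_*\omega^{\RH}_{X_\infty/V_\infty}\big)$. For this it is enough to prove the cleaner almost-coherent statement
\[
\Tr_{Y_\infty/X_\infty}\big((f_\infty)_*\omega^{\RH}_{Y_\infty/V_\infty}\big) = \omega^{\RH}_{X_\infty/V_\infty}
\]
as almost coherent subsheaves of $\omega_{X_\infty/V_\infty}$, since applying $\kev\circ\pi_*$ (and using that multiplication by $(\varpi^{1/p^\infty})$ turns the almost equality into an honest one, exactly as in the proof of \autoref{thm.SingleAlterationWithPToEpsilon}) then gives the result. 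The equality above is precisely the dual form of finite pushforward compatibility for the RH structure sheaf: by \autoref{FinPushGRdual}, the trace map $(f_\infty)_*\omega^{\RH}_{Y_\infty/V_\infty}\to \omega^{\RH}_{X_\infty/V_\infty}$ is a \emph{surjection} of almost coherent sheaves, which gives $\supseteq$ after taking images, while $\subseteq$ is automatic since $\beta_f$ is induced by the Grothendieck trace and its image lands inside $\omega^{\RH}_{X_\infty/V_\infty}$ by the very definition of $\beta_f$ in \autoref{FinPushGRdual}. Concretely: the image of $(f_\infty)_*\omega^{\RH}_{Y_\infty/V_\infty}$ under $\Tr_{Y_\infty/X_\infty}\colon (f_\infty)_*\omega_{Y_\infty/V_\infty}\to \omega_{X_\infty/V_\infty}$ equals the image of $\beta_f$, which is all of $\omega^{\RH}_{X_\infty/V_\infty}$ by surjectivity of $\beta_f$.

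\textbf{Where the work is.} The reductions in the first paragraph are routine bookkeeping with compactifications, normalizations, and compatibility of trace maps, entirely parallel to the proof of \autoref{thm.tauAlt}; the one mild subtlety is checking that a finite map compactifies to a finite map, for which taking the normalization of $\overline X$ in $K(Y)$ works. The substantive content is entirely contained in \autoref{FinPushGRdual} (surjectivity of $\beta_f$), which is already established in the excerpt, so the only thing I genuinely need to verify here is the bridge from the formal-scheme statement on $\widehat{X}_\infty$ to the scheme-level statement on $X_\infty$ (via formal GAGA, using properness, as in \autoref{thm.InputsFromSectionRH}) and then the descent along $\kev$. The main potential obstacle is a minor one: ensuring that the trace map $\Tr_{Y/X}$ at the Noetherian level is genuinely compatible, after the chain of identifications ($\omega_{-/V}\cong\omega_{-/W}$, base change to $V_\infty$, $!$-realization, $\kev$), with the map $\beta_f$ on the formal side; this is the same kind of commutative-diagram chase as in the proof of \autoref{thm.SingleAlterationWithPToEpsilon}, and I expect it to go through with the analogous diagram, taking care that $\kev|_{W}$ and $\Tr_{Y/X}$ both generate the relevant Hom-module of rank one.
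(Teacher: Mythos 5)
Your route is genuinely different from the paper's, and much heavier than necessary. The paper proves this corollary in two lines: by \autoref{thm.tauAlt}(b), applied separately to $X$ and to $Y$ (each is a separated integral finite type flat $V$-scheme), there is for $e\gg 0$ a single alteration $g:Z\to Y$ that simultaneously dominates the stabilizing alterations for both, so that $\Tr_{Z/Y}(g_*\varpi^{1/p^e}\omega_{Z/V})=\tau^a_{\rm alt}(\omega_{Y/V})$ and $\Tr_{Z/X}((f\circ g)_*\varpi^{1/p^e}\omega_{Z/V})=\tau^a_{\rm alt}(\omega_{X/V})$ (here one uses that an alteration of $Y$ is an alteration of $X$ because $f$ is finite surjective). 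The identity $\Tr_{Y/X}\circ\Tr_{Z/Y}=\Tr_{Z/X}$ then finishes the proof. Since the single-alteration stabilization has already been established at the Noetherian level, there is no need to return to the perfectoid side at all; your proposal essentially re-derives, via \autoref{FinPushGRdual} and the $\kev$-descent, a statement that \autoref{thm.tauAlt} has already packaged.

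Beyond being inefficient, your reduction has a genuine gap: you cannot ``arrange that $X_\infty$ and $Y_\infty$ are both integral'' by invoking \autoref{rmk:NormalGemoetricallyIntegral}. That remark makes $X_\infty$ integral by replacing $V$ with $W=H^0(X,\cO_X)$, but then $Y_\infty=Y\otimes_W W_\infty$ has $H^0(Y_\infty[1/p],\cO)=W'[1/p]\otimes_{W[1/p]}W_\infty[1/p]$ with $W'=H^0(Y,\cO_Y)$ a possibly strictly larger DVR, and this tensor product need not be a field (e.g.\ already $W=\mathbf{Z}_p$, $W'=\mathbf{Z}_p[p^{1/p}]$ gives $\mathbf{Q}_p(p^{1/p})\otimes_{\mathbf{Q}_p}\mathbf{Q}_p(p^{1/p})$, which is not a domain). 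You also cannot replace $V$ by $W'$, since $X$ is not a $W'$-scheme. So $Y_\infty$ is in general a union of several integral components, and the identification of $\omega^{\RH}_{Y_\infty}$ with $\tau^a_{\rm alt}(\omega_{Y/V})$ (which is what your ``main step'' silently uses on the $Y$ side) is only available component by component after a further finite base change. This is exactly the difficulty the paper confronts in the second half of the proof of \autoref{thm.SmoothPullbackForRHOnNoetherian} via \autoref{clm.IrreducibleComponents}, and it is real work, not the ``routine bookkeeping'' you claim. Granting that reduction, your main step (surjectivity of $\beta_f$ from \autoref{FinPushGRdual}, formal GAGA, and the $\kev$-descent as in \autoref{thm.SingleAlterationWithPToEpsilon}) is sound, but as written the proof is incomplete.
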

\begin{proof}
By \autoref{thm.tauAltNonComplete}, we may choose $e \gg 0$ and an alteration $g : Z \to Y$ such that 
    \[
        \Tr_{Z/X}((f\circ g)_* \varpi^{1/p^e}\omega_{Z/V}) = \tau^a_{\rm alt}(\omega_{X/V}) \;\;\;\; \text{ and } \;\;\;\; \Tr_{Z/Y}(g_* \varpi^{1/p^e} \omega_{Z/V}) = \tau^a_{\rm alt}(\omega_{Y/V}).
    \]
    Since $\Tr_{Y/X} \circ \Tr_{Z/Y} = \Tr_{Z/X}$, the result follows.
\end{proof}

\subsection{Behavior under smooth maps} In this subsection we study the behavior of $\tau^a_{\rm alt}(\omega_{X/V})$ under smooth morphisms of schemes.

\begin{theorem}
    \label{thm.SmoothPullbackForRHOnNoetherian}
    Let $f : Y \to X$ be a smooth map of separated integral schemes finite type and flat over $\Spec(V)$. Then 
    \[ 
        f^* \tau^a_{\rm alt}(\omega_{X/V})  \otimes_{\cO_Y} \omega_{Y/X} \cong \tau^a_{\rm alt}(\omega_{Y/V}).
    \]
\end{theorem}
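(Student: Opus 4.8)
The statement is the Noetherian incarnation of the dual smooth-pullback compatibility already recorded in \autoref{SmoothGRdual} for the RH sheaf on formal schemes. So the plan is to reduce to that statement by a sequence of standard reductions. First I would reduce to the \emph{proper} case: both $\tau^a_{\rm alt}(\omega_{X/V})$ and $\tau^a_{\rm alt}(\omega_{Y/V})$ are compatible with restriction to opens (indeed in the affine case they commute with arbitrary localization, \autoref{cor.TauRHInvertPEqualsJNoetherianNoPair}), and an assertion of the form $f^*\tau^a_{\rm alt}(\omega_{X/V})\otimes\omega_{Y/X}\cong\tau^a_{\rm alt}(\omega_{Y/V})$ may be checked locally on $Y$; choosing compactifications $\overline{X}$ of $X$ and $\overline{Y}$ of $\overline{X}\times_X Y$ (or rather a compactification of $Y$ over $\overline X$), and using that $\tau^a_{\rm alt}$ of the compactification restricts to $\tau^a_{\rm alt}$ of the open (as in the last paragraph of the proof of \autoref{thm.tauAlt}), one is reduced to $X,Y$ proper and flat over $V$ with $f$ smooth. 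By \autoref{rmk:NormalGemoetricallyIntegral} — after possibly replacing $V$ by $H^0(X,\cO_X)$, which only changes things up to isomorphism by \autoref{thm.tauAlt}(a) — I may further assume $X_\infty$ (hence $Y_\infty$, smoothness being preserved by base change) is integral, so that the machinery of \autoref{sec:RH_subsheaves} applies.

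Next I would pass from $\tau^a_{\rm alt}$ to $\omega^{\RH}$ upstairs. By \autoref{thm.SingleAlterationWithPToEpsilon} and \autoref{def.tauRH}, for $e\gg 0$ we have $\tau^a_{\rm alt}(\omega_{X/V})=\tau_{\RH}(\omega_{X/V})=\kev\big(\pi_{X,*}(\varpi^{1/p^e})\,\omega^{\RH}_{X_\infty/V_\infty}\big)$ and similarly for $Y$, where $\pi_X:X_\infty\to X$, $\pi_Y:Y_\infty\to Y$ are the projections and $\kev:V_\infty\to V$ is the trace-like map of \autoref{prop:kev_is_generator}. Writing $f_\infty:Y_\infty\to X_\infty$ for the base change of $f$, which is again smooth of the same relative dimension $r$, and $g:\widehat{Y}_\infty\to\widehat{X}_\infty$ for its $p$-adic completion, \autoref{SmoothGRdual} gives a canonical identification
\[
g^*\,\omega^{\RH}_{\widehat{X}_\infty/V_\infty}\otimes\omega_{\widehat{Y}_\infty/\widehat{X}_\infty}\;\simeq\;\omega^{\RH}_{\widehat{Y}_\infty/V_\infty}
\]
of almost coherent subsheaves of $\omega_{\widehat{Y}_\infty/V_\infty}$, compatible with the inclusions into the relative dualizing sheaves and with the isomorphism $g^!\omega^{\mydot}_{\widehat{X}_\infty/V_\infty}\simeq\omega^{\mydot}_{\widehat{Y}_\infty/V_\infty}$. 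Via formal GAGA and the $!$-realization this descends to an identification $f_\infty^*\,\omega^{\RH}_{X_\infty/V_\infty}\otimes\omega_{Y_\infty/X_\infty}\simeq\omega^{\RH}_{Y_\infty/V_\infty}$ on the algebraic side (or one works directly with $\omega^{\RH}_{\mathfrak{X}}$ at the level of schemes as in \autoref{rmk:tauRHInfinityNonProper}).

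Finally I would feed this through the $\kev$-descent. The key compatibility is that base change along $X_\infty\to X$ commutes with $f$, so $\pi_Y$ factors as $Y_\infty=Y\times_X X_\infty\to Y$, and the relative dualizing sheaf $\omega_{Y_\infty/X_\infty}$ is the pullback of $\omega_{Y/X}$ along $Y_\infty\to Y$ (smoothness makes $\omega_{Y/X}$ and $\omega_{Y_\infty/X_\infty}$ line bundles, pulled back compatibly). One then chases the commutative square relating $\pi_{X,*}$, $\pi_{Y,*}$, $f^*$ and the two copies of $\kev$ (base-changed to $X$ and to $Y$), exactly in the spirit of the big diagram in the proof of \autoref{thm.SingleAlterationWithPToEpsilon}; projection formula for the smooth map $f$ identifies $f^*\kev(\pi_{X,*}(\varpi^{1/p^e})\omega^{\RH}_{X_\infty/V_\infty})\otimes\omega_{Y/X}$ with $\kev(\pi_{Y,*}(\varpi^{1/p^e})(f_\infty^*\omega^{\RH}_{X_\infty/V_\infty}\otimes\omega_{Y_\infty/X_\infty}))$, which by the displayed identification equals $\kev(\pi_{Y,*}(\varpi^{1/p^e})\omega^{\RH}_{Y_\infty/V_\infty})=\tau^a_{\rm alt}(\omega_{Y/V})$ for $e\gg0$.

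\textbf{Main obstacle.} The reductions (properness, integrality of $X_\infty$, choice of $V$) are routine given what is already in the paper, and \autoref{SmoothGRdual} does all the real geometric work. The genuine point requiring care is the last step: making the $\kev$-descent interact correctly with $f^*$ and with tensoring by the line bundle $\omega_{Y/X}$, i.e.\ checking that forming the image under $\kev\circ\pi_{X,*}$ of an almost coherent subsheaf commutes with smooth pullback-and-twist. This is a projection-formula/flat-base-change bookkeeping exercise — one must verify that $\kev$ being a map of $V$-modules (not of rings) causes no trouble, that the perturbation factor $\varpi^{1/p^e}$ is carried along consistently, and that one may choose a single $e$ working simultaneously for $X$ and $Y$ (possible by Noetherianity of $\omega_{X/V}$ and $\omega_{Y/V}$, cf.\ \autoref{def.tauAltEpsilon}) — but it is where the argument could go wrong if one is not careful about where ``image'' and ``pullback'' are computed.
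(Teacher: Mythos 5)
Your overall strategy (run everything through \autoref{SmoothGRdual} and then descend via $\kev$) is the right one and matches the paper, but there are two genuine gaps in the reductions, one of which is serious.

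The serious one is your parenthetical claim that once $X_\infty$ is integral, $Y_\infty$ is integral because ``smoothness is preserved by base change.'' Smoothness of $Y_\infty\to X_\infty$ gives you normality of $Y_\infty$ (when $X_\infty$ is normal), not connectedness: $Y_\infty=Y\otimes_V V_\infty$ can perfectly well break into several components even when $Y$ is integral, and you cannot fix $X_\infty$ and $Y_\infty$ simultaneously by a single replacement of $V$ as in \autoref{rmk:NormalGemoetricallyIntegral} (that remark replaces $V$ by $H^0$ of the scheme in question, and $H^0(\overline{Y},\cO)$ need not equal $H^0(\overline{X},\cO)$). The paper treats this as a separate case: it normalizes, base changes to a suitable finite level $V_e$, and proves (\autoref{clm.IrreducibleComponents}, using the Gabber--Ramero equivalence $(V_\infty^{nc}[1/p])_{\fet}\cong(V_\infty[1/p])_{\fet}$ to see that $V_\infty^{nc}[1/p]$ is algebraically closed in $V_\infty[1/p]$) that the irreducible components of $Y\times_X X^{\rm N}\otimes_V V_e$ stay integral after base change to $V_\infty$; it then writes $\widetilde Y=\coprod\widetilde Y_i$, applies the integral case to each $\widetilde Y_i\to\widetilde X$, and descends by a trace diagram using \autoref{cor.TransformationRuleRHUnderFiniteMaps}. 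Without something like this your argument only covers the (genuinely special) case where $Y_\infty$ happens to be integral.

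The second gap is your opening reduction ``to $X,Y$ proper and flat over $V$ with $f$ smooth'': a compactification $\overline{Y}\to\overline{X}$ of a smooth map is essentially never smooth, so you cannot place yourself in the setting where \autoref{SmoothGRdual} applies to a proper situation. The paper instead keeps $X$ and $Y$ non-proper, algebraizes $\omega^{\RH}_{X_\infty/V_\infty}$ on $X_\infty$ itself by Beauville--Laszlo gluing of the formal RH sheaf with the characteristic-zero multiplier submodule (this is where \autoref{rmk:tauRHInfinityNonProper} enters, as you note in passing), proves the smooth-pullback identity for these algebraized sheaves directly, and uses compactifications only to identify $\tau^a_{\rm alt}(\omega_{X/V})$ with $\kev_X(\pi_*\omega^{\RH}_{X_\infty/V_\infty})$ by restriction from $X'$ to $X$. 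You should restructure the first step accordingly; the final $\kev$-descent and projection-formula bookkeeping you describe is then essentially the paper's computation.
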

\begin{proof}

We first assume {\color{black}that $V$ is complete and} that $X_\infty$ and $Y_\infty:=Y\otimes_VV_\infty$ are both integral. Let $\widehat{X}_\infty$ be the $p$-adic completion of $X_\infty$. By \autoref{GRchar0Mult} and \autoref{rmk:tauRHInfinityNonProper}, we know that $\omega^{\RH}_{\widehat{X}_\infty/V_\infty}\in D_{\qcoh}(\widehat{X}_\infty)$ and $\mJ(X_\infty[1/p], \omega_{X_\infty[1/p]})\in D_{\qcoh}(X_\infty[1/p])$ have the same image inside $D_{\qcoh}(\widehat{X}_{\infty, \eta})$.\footnote{Here $\widehat{X}_{\infty, \eta}$ denote the rigid generic fiber of the formal scheme $\widehat{X}_\infty$: if $\widehat{X}_\infty={\rm Spf}(A)$, then we have an identification $D_{\qcoh}(\widehat{X}_{\infty, \eta})=D_{\qcoh}(\widehat{A}[1/p])$.} By Beauville–Laszlo patching \cite{BeauvilleLaszlo}, see \cite[Section 0BNI]{stacks-project} and \cite[Section 5]{BhattTannakaDuality}, we know that there exists a unique quasi-coherent sheaf $\omega_{X_\infty/V_\infty}^{\RH}$ on $X_\infty$ such that
\begin{enumerate}
    \item The $p$-adic completion of $\omega_{X_\infty/V_\infty}^{\RH}$ agrees with $\omega^{\RH}_{\widehat{X}_\infty/V_\infty}$.
    \item $\omega_{X_\infty/V_\infty}^{\RH}[1/p]\cong \mJ(X_\infty[1/p], \omega_{X_\infty[1/p]})$.
\end{enumerate}
Furthermore, this $\omega_{X_\infty/V_\infty}^{\RH}$ agrees with the one appearing in \autoref{thm.InputsFromSectionRH} when $X\to\Spec(V)$ is proper by uniqueness of the Beauville--Laszlo patching and the formal GAGA. Similarly, we have a quasi-coherent sheaf $\omega_{Y_\infty/V_\infty}^{\RH}$ that satisfies the analogous (a) and (b) above.

Since $f_{\infty} : Y_{\infty} \to X_{\infty}$ is smooth, by \autoref{SmoothGRdual} we have that
$$\omega_{\widehat{Y}_{\infty}/V_\infty}^{\RH} \cong f^*_{\infty} \omega_{\widehat{X}_{\infty}/V_\infty}^{\RH} \otimes_{\cO_{\widehat{Y}_{\infty}}} \omega_{\widehat{Y}_{\infty}/\widehat{X}_{\infty}}.$$
It is also well-known that $\mJ(Y_\infty[1/p], \omega_{Y_\infty[1/p]}) \cong f^*_{\infty}\mJ(X_\infty[1/p], \omega_{X_\infty[1/p]}) \otimes_{\cO_{Y_\infty[1/p]}}\omega_{Y_\infty/X_\infty}[1/p]$. Therefore we know that 
$$\omega_{{Y}_{\infty}/V_\infty}^{\RH} \cong f^*_{\infty} \omega_{{X}_{\infty}/V_\infty}^{\RH} \otimes_{\cO_{{Y}_{\infty}}} \omega_{{Y}_{\infty}/{X}_{\infty}}$$

At this point, we let $f'$: $Y'\to X'$ be a compactification of $Y\to X$ (note that $Y'\to X'$ is not necessarily smooth, but both $Y'_\infty$ and $X'_\infty$ are integral by our assumption). By \autoref{thm.tauAlt} and \autoref{thm.SingleAlterationWithPToEpsilon}, we know that 
$$\tau^a_{\rm alt}(\omega_{X'/V}) = \tau_{\RH}(\omega_{X'/V})= \kev(\pi_*\omega_{X'_\infty/V_\infty}^{\RH}).$$
Thus after we restrict to the open subsets $X\subseteq X'$ and $X_\infty\subseteq X'_\infty$ respectively, we have 
$$\tau^a_{\rm alt}(\omega_{X/V}) \cong \kev_X(\pi_*\omega_{X_\infty/V_\infty}^{\RH}).$$

Similarly, we also have $\tau^a_{\rm alt}(\omega_{Y/V}) \cong \kev_Y(\pi_*\omega_{Y_\infty/V_\infty}^{\RH}).$ Here we use $\kev_X$, $\kev_Y$ to emphasize that these maps are defined from $\pi_*\omega_{X_\infty/V_\infty}\to \omega_{X/V}$ and $\pi_*\omega_{Y_\infty/V_\infty}\to \omega_{Y/V}$ respectively. But note that both $\kev_X$ and $\kev_Y$ are obtained out of a base change $\kev: V_\infty\to V$. 
Since $(f^* \omega_{X/V}) \otimes_V V_{\infty} = f_{\infty}^* (\omega_{X/V} \otimes_V V_{\infty})$, it is easy to see that 
    \[ 
        (f^* \bT_X) \otimes_{\cO_Y} \omega_{Y/X} \colon (f^* \omega_{X_{\infty}/V_{\infty}})\otimes_{\cO_Y} \omega_{Y/X} \to (f^* \omega_{X/V}) \otimes_{\cO_Y} \omega_{Y/X}
    \] 
    is identified with $\bT_Y : \omega_{Y_{\infty}/V_{\infty}} \to  \omega_{Y/V}$ (by the behavior of relative canonical sheaf under smooth pull back).  
    Under this identification, we clearly have
    \begin{align*}
     \tau^a_{\rm alt}(\omega_{Y/V}) & =   \bT_Y(\pi_*\omega_{Y_{\infty}/V_{\infty}}^{\RH})\\
     & = \bT_Y\big(\pi_*(f^*_{\infty} \omega_{X_{\infty}/V_{\infty}}^{\RH} \otimes_{\cO_{Y_\infty}}
        \omega_{Y_\infty/X_\infty}) \big)  \\
        & = (f^* \bT_X)(f^*\pi_*\omega^{\RH}_{X_{\infty}/V_{\infty}}) \otimes_{\cO_Y} \omega_{Y/X} \\
        & = f^*\kev_X(\pi_*\omega_{X_\infty/V_\infty}^{\RH})  \otimes_{\cO_Y} \omega_{Y/X}\\
        & = \tau^a_{\rm alt}(\omega_{X/V}) \otimes_{\cO_Y} \omega_{Y/X}.
    \end{align*}
This completes the proof when both $X_\infty$ and $Y_\infty$ are integral.

Now we handle the general case{\color{black}, still assuming that $V$ is complete}. Let $X^{\rm N}$ be the normalization of $X$. After replacing $V$ if necessary, we may assume that $X^{\rm N}\otimes_VV_\infty$ is integral: for instance, one can take $X'$ to be a compactification of $X^{\rm N}$ and set $V=H^0(X'^{\rm N}, \cO_{X'^{\rm N}})$, see \autoref{rmk:NormalGemoetricallyIntegral}. Note that $Y\times_XX^{\rm N}$ and $X^{\rm N}\otimes_VV_e$ are integral for all $e$. We next claim the following.

\begin{claim}
\label{clm.IrreducibleComponents}
There exists $e\gg0$ such that each irreducible component of $(Y\times_XX^{\rm N})\otimes_VV_e$ remains integral after base change from $V_e$ to $V_\infty$. 
\end{claim}
\begin{proof}[Proof of Claim]   
First we note that since $V_\infty^{\mathrm{nc}}$ is henselian valuation ring, by \cite[5.4.54]{GabberRameroAlmostringtheory}, there is an equivalent of categories $(V_\infty^{\mathrm{nc}}[1/p])_{\fet}\cong (V_\infty[1/p])_{\fet}$ (this easily implies there is a bijection between finite field extensions of $V_\infty^{\mathrm{nc}}[1/p]$ and $V_\infty[1/p]$). It follows that $V_\infty^{\mathrm{nc}}[1/p]$ is separably closed (equivalently, algebraically closed) inside $V_\infty[1/p]$: otherwise, let $K$ be a nontrivial finite field extension of $V_\infty^{\mathrm{nc}}[1/p]$ inside $V_\infty[1/p]$, the injection 
$K\otimes_{V_\infty^{\mathrm{nc}}[1/p]}K \hookrightarrow K\otimes_{V_\infty^{\mathrm{nc}}[1/p]}V_\infty[1/p]$
then implies that $K\otimes_{V_\infty^{\mathrm{nc}}[1/p]}K$ is a field which is a contradiction. 
By \cite[\href{https://stacks.math.columbia.edu/tag/037P}{Tag 037P}]{stacks-project}, we know that there is a bijection between the irreducible components of $(Y\times_XX^{\rm N})\otimes_VV_\infty^{\mathrm{nc}}$ and those of $(Y\times_XX^{\rm N})\otimes_VV_\infty$ (note that all schemes involved are $p$-torsion free so irreducible components are in bijection with irreducible components after inverting $p$). Since $V_\infty^{\mathrm{nc}}=\varinjlim_{e}V_e$, there exists $e\gg0$ such that there is a bijection between irreducible components of $(Y\times_XX^{\rm N})\otimes_VV_e$ and those of $(Y\times_XX^{\rm N})\otimes_VV_\infty^{\mathrm{nc}}$. 
\end{proof}

Finally, we come back to the proof of the theorem. We consider the following base change diagram (each square is cartesian)
\[
\xymatrix{
\widetilde{Y} \ar[r]\ar[d] & (Y\times_X X^{\rm N})\otimes_VV_e \ar[r] \ar[d] & Y\times_X X^{\rm N} \ar[d] \ar[r] & Y \ar[d] \\
\widetilde{X}:= (X^{\rm N}\otimes_VV_e)^{\rm N} \ar[r] & X^{\rm N}\otimes_VV_e \ar[r] & X^{\rm N} \ar[r] & X
}
\]
where $\widetilde{X}:= (X^{\rm N}\otimes_VV_e)^{\rm N}$ denotes the normalization of $X^{\rm N}\otimes_VV_e$. Note that all vertical maps are smooth by base change. By \autoref{clm.IrreducibleComponents}, each irreducible component of $(Y\times_X X^{\rm N})\otimes_VV_e$ remains integral upon base change to $V_\infty$, thus the same is true for each irreducible component of $\widetilde{Y}$ (since $\widetilde{X}$ and $X^{\rm N}\otimes_VV_e$ are generically the same). Since $\widetilde{X}$ is normal and $\widetilde{Y}\to \widetilde{X}$ is smooth, we know that $\widetilde{Y}$ is normal and hence $\widetilde{Y}=\coprod_{i=1}^n \widetilde{Y}_i$ such that $\widetilde{Y}_i\otimes_{V_e}V_\infty$ is integral for all $i$ (in particular, each $\widetilde{Y}_i$ is smooth over $\widetilde{X}$ and $\widetilde{Y}\otimes_{V_e}V_\infty$ is a disjoint union of integral schemes $\widetilde{Y}_i\otimes_{V_e}V_\infty$). Consider the cartesian diagram
\[
\xymatrix{
\widetilde{Y}=\coprod_{i=1}^n \widetilde{Y}_i \ar[d]_b \ar[r]^-{g} & \widetilde{X} \ar[d]^a \\ 
Y \ar[r]_f & X.
}
\]
It is well-known that the trace map of $b$ can be identified with the pull back of the trace map of $a$, under the following identifications:
\begin{align*}
b_*\omega_{\widetilde{Y} /V} & \cong b_*(g^*\omega_{\widetilde{X}/V}\otimes \omega_{\widetilde{Y} /\widetilde{X}})\\
& \cong b_*(g^*\omega_{\widetilde{X}/V} \otimes b^*\omega_{Y/X}) \\
& \cong b_*g^*\omega_{\widetilde{X}/V}\otimes \omega_{Y/X} \\
& \cong f^*a_*\omega_{\widetilde{X}/V} \otimes \omega_{Y/X} \\
& \xrightarrow{f^*\Tr_a} f^*\omega_{X/V} \otimes \omega_{Y/X} \cong \omega_{Y/V}.
\end{align*}
This induces the following commutative diagram: 
\[
\xymatrix{
b_*\omega_{\widetilde{Y}/ V} \cong b_*\prod_{i=1}^n\omega_{\widetilde{Y}_i/ V} \ar[dd]^{\Tr_b} & b_*\prod_{i=1}^n\tau_{\rm alt}(\omega_{\widetilde{Y}_i/ V}, p^\epsilon) \ar@{_{(}->}[l] \ar[r]^-\cong \ar@{->>}[dd]^{\Tr_b} & b_*(g^*\tau_{\rm alt}(\omega_{\widetilde{X}/V}, p^\epsilon) \otimes b^*\omega_{Y/X})  \ar[d]^\cong  \\
& & f^*a_*\tau_{\rm alt}(\omega_{\widetilde{X}/V}, p^\epsilon) \otimes \omega_{Y/X} \ar@{->>}[d]^{f^*\Tr_a} \\
\omega_{Y/V} & \tau^a_{\rm alt}(\omega_{Y/V}) \ar@{_{(}->}[l] \ar@{.>}[r] & f^*\tau^a_{\rm alt}(\omega_{X/V}) \otimes \omega_{Y/X}
}.
\]
Here the isomorphism on the top row follows from the already established case (since $\widetilde{X}\otimes_{V_e}V_\infty$ and all $\widetilde{Y}_i\otimes_{V_e}V_\infty$ are integral and $\tau_{\rm alt}$ is independent of the choice of $V$, see \autoref{thm.SingleAlterationWithPToEpsilon}) and the two surjections follow from \autoref{cor.TransformationRuleRHUnderFiniteMaps}. After the identification of $\Tr_b$ with $f^*\Tr_a$, one sees that the dotted arrow in the diagram above is an isomorphism, which is the desired result. 

{\color{black}The final reduction to the case where $V$ is complete follows from the ``Furthermore'' part of \autoref{thm.tauAltNonComplete}.}
\end{proof}

\subsection{The completed stalks} 
After completing at a point $x \in X_{p= 0}$, we want to compare $\tau^a_{\rm alt}(\omega_{X/V})$ with those test modules already defined for complete local rings, see \autoref{ss:different-notions-test-ideals} for the domain case. 

\begin{definition}
\label{def.tau+atStalk}
Suppose $R$ is a Noetherian complete equidimensional reduced local ring of mixed characteristic $(0,p>0)$.  Recall that for a fixed rational number $\lambda > 0$ and $f\in R$ we have a $+$-test module
\[
    \tau_+(\omega_R, f^{\lambda}) = \tau_+(\omega_R, \lambda\Div(f)) := \bigcap_S \Tr_{S/R}(f^{\lambda}\omega_S)  =: {\myB}^0(\Spec(R), \lambda\Div(f); \omega_R) \subseteq \omega_R
\]
where the intersection runs over all normal rings $S \supseteq R$ finite over $R$ where some $f^{\lambda} \in S \subseteq K(R^+) = \overline{K(R)}$, see \cite[Proposition 4.17]{BMPSTWW1}.  The choice of $f^{\lambda}$ does not matter.
\end{definition}

\begin{corollary}
    \label{cor.TauAgreesWithCompletionComputation}
    Suppose the notation of \autoref{notation.ForSchemesOverADVR}.
    Let $x \in X_{p=0}$ and let $R = \widehat{\cO_{X,x}}$ be the completion of the local ring $\cO_{X,x}$ at its maximal ideal.  Then 
    \[
        \tau^a_{\rm alt}(\omega_{X/V}) \otimes \widehat{\cO_{X,x}} = \tau_+(\omega_R, p^{1/p^e})
    \]
    for all $e \gg 0$.
\end{corollary}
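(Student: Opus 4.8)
The strategy is to reduce the completed-stalk statement to the stabilization result already available on the scheme level (\autoref{thm.tauAlt}, \autoref{def.tauAltEpsilon}), and then match the resulting finite-cover description with the definition of $\tau_+$. First I would fix $e \gg 0$ large enough that $\tau^a_{\rm alt}(\omega_{X/V}) = \tau_{\rm alt}(\omega_{X/V},\varpi^{1/p^e})$ and, by \autoref{def.tauAltEpsilon}, that there is an alteration $Y_e \to X$ with $\varpi^{1/p^e} \in \cO_{Y_e}$ computing it: every further alteration $Y \to Y_e$ with composite $f : Y \to X$ satisfies $\tau^a_{\rm alt}(\omega_{X/V}) = \Tr_{Y/X}(f_* \varpi^{1/p^e}\omega_{Y/V})$. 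The key point is that the formation of $\tau^a_{\rm alt}(\omega_{X/V})$ commutes with localization and completion at $x$; localization is \autoref{cor.TauRHInvertPEqualsJNoetherianNoPair}, and for completion one uses that a single alteration $Y_e$ does the job, so base-changing $Y \to X$ along $\Spec R \to X$ (and taking a normalization/alteration of the result) gives a cofinal family of alterations of $\Spec R$; since $Y_e$ still stabilizes after this flat base change (the trace image is computed after completion by the corresponding trace image on $Y \times_X \Spec R$, using flatness of completion and the compatibility of $\Tr$ and $\omega$ with flat base change), we get $\tau^a_{\rm alt}(\omega_{X/V}) \otimes R = \tau_{\rm alt}(\omega_R, \varpi^{1/p^e})$, where the right-hand side is the analogous alteration-intersection on $\Spec R$.

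Next I would identify $\tau_{\rm alt}(\omega_R, \varpi^{1/p^e})$ with $\tau_+(\omega_R, \varpi^{1/p^e})$. Here $R$ is a complete Noetherian local domain (it is the completion of an integral finite-type scheme at a point, hence a quotient of a regular local ring, and reduced and equidimensional; to get a domain one passes to the normalization and uses \autoref{cor.TransformationRuleRHUnderFiniteMaps}, which is compatible with the trace, so the statement reduces to the normal/integral case as in the proof of \autoref{thm.tauAlt}). For such $R$, the equality of the alteration-defined test module with the finite-cover-defined one is exactly \cite[Corollary 4.13]{BMPSTWW1} (\autoref{def.tau+atStalk}, which records $\tau_+(\omega_R, \lambda\Div(f)) = \myB^0_{\rm alt}$), applied with $f = \varpi$ and $\lambda = 1/p^e$. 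One caveat: $\varpi$ is a uniformizer of $V$, whereas the statement of \autoref{cor.TauAgreesWithCompletionComputation} is written with $p^{1/p^e}$; but $\Div(p) = \Div(\varpi)$ up to a positive integer multiple (if $V$ is absolutely ramified) or they are equal (if $V = \bZ_p$), and since we perturb by an arbitrarily small rational multiple, for $e \gg 0$ the two perturbations $\frac{1}{p^e}\Div(\varpi)$ and $\frac{1}{p^e}\Div(p)$ give the same test module by Noetherianity exactly as in \autoref{thm.tauAlt}(a) --- this is why the statement is ``for all $e \gg 0$'' and why the independence-of-$V$ clause of \autoref{thm.tauAlt} is invoked. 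So after absorbing this normalization, $\tau_{\rm alt}(\omega_R, \varpi^{1/p^e}) = \tau_+(\omega_R, p^{1/p^e})$ for $e \gg 0$.

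Putting the two halves together yields $\tau^a_{\rm alt}(\omega_{X/V}) \otimes \widehat{\cO_{X,x}} = \tau_+(\omega_R, p^{1/p^e})$ for $e \gg 0$, as claimed. The main obstacle I anticipate is the first half: rigorously justifying that the single stabilizing alteration $Y_e \to X$ continues to compute the intersection after completion at $x$. This requires knowing that (i) base change of alterations along $\Spec R \to X$ produces a cofinal system among alterations of $\Spec R$ (true since $R$ is excellent, being a completion of a finite-type algebra over a complete DVR, so alterations of $\Spec R$ are dominated by base changes of alterations of $X$ up to shrinking --- one may need to pass to a normalization and use Noetherian approximation), and (ii) $\Tr_{Y/X}(f_*\varpi^{1/p^e}\omega_{Y/V})$ commutes with the flat base change $-\otimes_{\cO_X}\widehat{\cO_{X,x}}$, which follows from flat base change for $\myR f_*$ on the coherent relative dualizing complex together with flatness of completion. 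Both points are standard but need to be spelled out; everything downstream is then a citation to \cite{BMPSTWW1} and the already-established scheme-level results.
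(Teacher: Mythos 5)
Your overall architecture matches the paper's proof: reduce to the normal case via the normalization and the trace transformation rule, use the stabilizing alteration $Y_e \to X$ from \autoref{thm.tauAlt}, pass to the completion by flat base change, identify the result with $\tau_+(\omega_R,\varpi^{1/p^e})$, and absorb the $\varpi$ versus $p$ discrepancy into the choice of $e \gg 0$. The downstream citations and the handling of the non-normal case are all in line with what the paper does.

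The genuine gap is your step (i). You assert that base changes of alterations of $X$ along $\Spec R \to X$ form a cofinal system among \emph{all} alterations of $\Spec R$, and you lean on excellence and Noetherian approximation for this. That claim is not correct as stated: completion enlarges the fraction field enormously, so $\Spec R$ admits finite covers and alterations (inside $R^+$) that are not dominated by the base change of any alteration of $X$ or of $\Spec\cO_{X,x}$. Consequently the intersection over all alterations of $\Spec R$ is a priori strictly smaller than the intersection over the restricted family coming from $X$, and your argument only controls the latter. What rescues the argument --- and what the paper actually invokes at this point --- is \cite[Proposition 4.29]{BMPSTWW1}: the module $\tau_+(\omega_R,\varpi^{1/p^e})$, although defined via all finite covers of $R$, is \emph{already equal} to the intersection taken only over alterations $Z' \to \Spec R$ that are base changed from alterations of $\Spec\cO_{X,x}$ (hence from alterations of $X$). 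This is a nontrivial theorem proved via local cohomology of $R^+$, not a formal cofinality statement. With that citation in place of your cofinality claim, the rest of your argument (stabilization of the restricted intersection at $Y_e$ after flat base change, then \cite[Corollary 4.13]{BMPSTWW1} to pass between alterations and finite covers) goes through and recovers the paper's proof.
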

\begin{proof}
    We set $\omega_R = \omega_{X,x} \otimes R$.  
    Suppose first that $X$ is normal.  In that case, $R$ is a domain. It is enough to show that $\tau^a_{\rm alt}(\omega_{X/V}) \otimes \widehat{\cO_{X,x}} = \tau_+(\omega_R, \varpi^{1/p^e})$ for all $e\gg0$ since $\Div(p)$ is a multiple of $\Div(\varpi)$. There exists $e_0$ such that  $\tau_+(\omega_R, \varpi^{1/p^e})$ is constant for all $e>e_0$ (by Noetherianity of $R$).  We fix an $e>e_0$ such that $\tau^a_{\rm alt}(\omega_{X/V}) = \Tr_{Y/X}(f_*\varpi^{1/p^e}\omega_{Y/V})$ for some alteration $f: Y\to Y_e\to X$ as in \autoref{thm.tauAltNonComplete}. 
    By \cite[Proposition 4.29]{BMPSTWW1} (see \autoref{ss:different-notions-test-ideals}) we have
    \begin{equation} \label{eq:+-test-comparison}
        \tau_+(\omega_R, \varpi^{1/p^e}) = \bigcap_{Z'} \Tr_{Z'/\Spec(R)}(f'_* \varpi^{1/p^e}\omega_{Z'})
    \end{equation}
    where the intersection runs over alterations with fixed geometric generic point $\overline{K(X)} \supseteq K(X)$ and where $f' : Z' \to \Spec(R)$ is base changed from an alteration $Z_x \to \Spec\cO_{X,x}$ and hence comes from an alteration $f : Z \to X$.

    However, the intersection $\bigcap_Z \Tr_{Z/X}(f_* \varpi^{1/p^e}\omega_{Z/V})$ stabilizes (is equal to $\Tr_{Y/X}(f_* \varpi^{1/p^e}\omega_{Y/V})$ for $Y$ as above) and so the intersection also stabilizes after flat base change to $R$ (since all the alterations in \autoref{eq:+-test-comparison} come from alterations over $X$). Thus 
    \[
        \begin{array}{rl}
            \tau^a_{\rm alt}(\omega_{X/V}) \otimes \widehat{\cO_{X,x}} 
            & =  \Tr_{Y/X}(f_* \varpi^{1/p^e}\omega_{Y/V}) \otimes \widehat{\cO_{X,x}}\\
            & =  \Tr_{Y'/\Spec(R)}(f'_* \varpi^{1/p^e}\omega_{Y'})\\
            & =  \bigcap_{Z'}\Tr_{Z'/\Spec(R)}(f'_* \varpi^{1/p^e}\omega_{Z'}) \\
            & =  \tau_+(\omega_R, \varpi^{1/p^e}),
        \end{array}
    \]
    which completes the proof in the normal case.

    In general, let $n : X^{\mathrm{N}} \to X$ denote the normalization.  Base changing to $R$, we obtain $R \to \prod S_i$ where each $S_i$ is a complete normal local domain.  Note that for any normal alteration $f : W \to X$, $\Tr : f_* \omega_{W} \to \omega_X$ factors through $n_* \omega_{X^{\mathrm{N}}} \hookrightarrow \omega_{X}$.  Furthermore, the alteration which computes $\tau^a_{\alt}(\omega_{X^{\mathrm{N}/V}})$ base changes to the one that computes each $\tau_+(\omega_{S_i}, \varpi^{1/p^e})$ as above.  By \autoref{cor.TransformationRuleRHUnderFiniteMaps} and \cite[Lemma 4.18]{BMPSTWW1}, the same alteration computes $\tau_{\alt}^{a}(\omega_{X/V})$ and also $\tau_+(\omega_R, \varpi^{1/p^e})$.
\end{proof}

{\color{black}
\begin{remark}
    There is an alternate pathway to prove the results in this section from Theorem C (\autoref{GRviaAIC}), without the discussion of almost constant systems of alterations (as in \autoref{AlmostStabTraceAlt}).  
    In the first version of this paper on the arXiv, this alternate proof appeared as an appendix.  Specifically, in that appendix, we give a different proof of \autoref{cor.TauAgreesWithCompletionComputation}, namely that 
\[
    \tau_{\RH}(\omega_{X/V}) \otimes \widehat{\cO_{X,x}} = \tau_+(\omega_{\widehat{\cO_{X,x}}}, p^{1/p^e})
\]
for $e \gg 0$, which does not rely on \autoref{ss:GRAlt} or on  \autoref{ss:GRAlt2}.  
This equality would be clear by Theorem C, properties of the trace map, and local duality, if $X_\infty \to X$ was finite, and so our key idea is to study a weaker version of local duality for non-Noetherian $X_{\infty}$ by utilizing the map $\kev$.
The non-Noetherian case of local duality is also discussed in \autoref{remark:MD-non-noetherian}, but the statement therein requires the sheaf on $X_\infty$ to be pulled-back from a finite level, which is not the case in our setting.  Many other results in this section can then be deduced using the quite general \autoref{lem.StabilizingIntersection} below.  We feel that the argument we gave here was ultimately simpler and so we relegated the alternate proof as an appendix.  We encourage the reader interested in local duality and perverse sheaves on $X_{\infty}$ to explore the appendix in the first arXiv version.
\end{remark}
}

\subsection{Comparison with the test module due to Hacon-Lamarche-Schwede} In this subsection, we compare $\tau^a_{\rm alt}(\omega_{X/V})$ with the version of test module introduced and studied in \cite{HaconLamarcheSchwede} when $X\to \Spec(V)$ is projective (or quasi-projective).



\begin{theorem}
\label{thm.ComparisonTauRHvsHLSNoPair}
With notation as in \autoref{def.HaconLamarcheSchwede}, {\color{black} and in particular assuming $V$ is complete}, we have 
$$\tau^a_{\rm alt}(\omega_{X/V}) = \tau_{\myB^0}(\omega_{X/V}, {1/p^e}\Div(p))$$
for all $e\gg0$.
\end{theorem}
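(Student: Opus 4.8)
The two objects $\tau^a_{\rm alt}(\omega_{X/V})$ and $\tau_{\myB^0}(\omega_{X/V}, \tfrac{1}{p^e}\Div(p))$ are both defined (for $e\gg0$) as stable images of trace maps, so the strategy is to match up the two alteration-theoretic descriptions. First I would reduce to the projective case: by the construction in \autoref{def.HaconLamarcheSchwede}, $\tau_{\myB^0}$ for quasi-projective $X$ is defined by restricting from a projective compactification, and by the last paragraph of the proof of \autoref{thm.tauAlt} the same holds for $\tau^a_{\rm alt}$; so assume $X\to\Spec(V)$ is projective with $\sL$ a fixed very ample line bundle. I would also first treat the case where $X$ is normal (hence, after replacing $V$ by $H^0(X,\cO_X)$ as in \autoref{rmk:NormalGemoetricallyIntegral}, with $X_\infty$ integral), and then descend along the normalization using \autoref{cor.TransformationRuleRHUnderFiniteMaps} together with \cite[Lemma 4.18]{BMPSTWW1}, exactly as in the end of the proof of \autoref{cor.TauAgreesWithCompletionComputation}.

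In the normal projective case, the key point is an identity relating $H^0$ of $\tau^a_{\rm alt}(\omega_{X/V})\otimes\sL^n$ with the space of $+$-stable sections $\myB^0$. Recall from \autoref{ss:+-stable-sections} that $\myB^0(X, \tfrac{1}{p^e}\Div(p); \omega_X\otimes\sL^n)$ is the stable image of $H^0(Y,\omega_Y(\lceil f^*(-\tfrac{1}{p^e}\Div(p))\rceil)\otimes f^*\sL^n)\xrightarrow{\Tr} H^0(X,\omega_X\otimes\sL^n)$ over alterations (or finite covers, by \cite[Corollary 4.13]{BMPSTWW1}, since $\tfrac{1}{p^e}\Div(p)$ is $\bQ$-Cartier), and the rounding is harmless as explained in the "Adding roundings" remark. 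By \autoref{thm.tauAlt}(b), there is a single alteration $f\colon Y\to X$ with $\varpi^{1/p^e}\in\cO_Y$ computing $\tau^a_{\rm alt}(\omega_{X/V}) = \Tr_{Y/X}(f_*\varpi^{1/p^e}\omega_{Y/V})$, and this same $Y$ (after enlarging) simultaneously computes $\myB^0(X, \tfrac{1}{p^e}\Div(p); \omega_X\otimes\sL^n)$ for all $n\gg0$; applying $H^0(X,-\otimes\sL^n)$ to the surjection $f_*\varpi^{1/p^e}\omega_{Y/V}\twoheadrightarrow\tau^a_{\rm alt}(\omega_{X/V})$ and using that $\tau^a_{\rm alt}(\omega_{X/V})\otimes\sL^n$ is globally generated for $n\gg0$ (it is a coherent subsheaf of $\omega_X$, and one enlarges $n$ using Serre vanishing on the fixed $Y$ that computes it), I would deduce
\[
H^0\big(X, \tau^a_{\rm alt}(\omega_{X/V})\otimes\sL^n\big) = \myB^0\big(X, \tfrac{1}{p^e}\Div(p); \omega_X\otimes\sL^n\big)
\]
for all $n\gg0$ and the left side globally generates $\tau^a_{\rm alt}(\omega_{X/V})\otimes\sL^n$.

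Now compare with the Hacon--Lamarche--Schwede construction: by \autoref{def.HaconLamarcheSchwede}, $\tau_{\myB^0}(\omega_{X/V},\tfrac{1}{p^e}\Div(p))\otimes\sL^n = \cN_n$ for $n\gg0$, where $\cN_n\subseteq\omega_X\otimes\sL^n$ is the subsheaf \emph{generated} by $\myB^0(X,\tfrac{1}{p^e}\Div(p);\omega_X\otimes\sL^n)$. Combining this with the displayed equality, $\tau_{\myB^0}(\omega_{X/V},\tfrac{1}{p^e}\Div(p))\otimes\sL^n$ is generated by $H^0(X,\tau^a_{\rm alt}(\omega_{X/V})\otimes\sL^n)$; since the latter generates $\tau^a_{\rm alt}(\omega_{X/V})\otimes\sL^n$, the two subsheaves of $\omega_X\otimes\sL^n$ coincide for $n\gg0$, whence $\tau_{\myB^0}(\omega_{X/V},\tfrac{1}{p^e}\Div(p)) = \tau^a_{\rm alt}(\omega_{X/V})$ after twisting down. \textbf{The main obstacle} I anticipate is bookkeeping the $e$-dependence and the "$n\gg0$" thresholds consistently: one must check that a single $e\gg0$ and a single alteration $Y$ can be chosen to compute $\tau^a_{\rm alt}$ and all the relevant $\myB^0$'s simultaneously (this is where \autoref{thm.tauAlt}(b), i.e. the stabilization, is essential), and that the global generation statement for $\tau^a_{\rm alt}(\omega_{X/V})\otimes\sL^n$ is uniform enough to force equality of sheaves rather than merely equality of spaces of sections — exactly the kind of argument already present in \cite[Section 4]{HaconLamarcheSchwede} and in the proof of \autoref{cor.TauAgreesWithCompletionComputation}, which I would invoke rather than redo.
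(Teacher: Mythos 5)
Your reduction steps and the easy containment are fine, but there is a genuine gap at the heart of your argument: the claim that a single alteration $Y$ (after enlarging) "simultaneously computes $\myB^0(X,\tfrac{1}{p^e}\Div(p);\omega_X\otimes\sL^n)$ for all $n\gg0$." This does not follow from \autoref{thm.tauAlt}(b), which is a statement about the \emph{sheaf-theoretic} stable image $\tau^a_{\rm alt}(\omega_{X/V})$, not about the infinite intersection of images of global sections that defines $\myB^0$. A descending chain of submodules of the finitely generated $V$-module $H^0(X,\omega_X\otimes\sL^n)$ need not stabilize, and knowing that a local section of $\omega_X\otimes\sL^n$ lies in the subsheaf $\Tr(f_*\varpi^{1/p^e}\omega_Y)\otimes\sL^n$ does not tell you that it lifts to $H^0(Z,\varpi^{1/p^e}\omega_Z\otimes g^*\sL^n)$ for \emph{every further} alteration $Z$ (the $H^1$ obstruction now depends on $Z$, so Serre vanishing gives no uniform threshold in $n$). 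In fact, stabilization of the $\myB^0$-intersection on projective $X$ is essentially a \emph{consequence} of the theorem you are proving (via \cite[Proposition 4.7]{HaconLamarcheSchwede} together with the stabilization of $\tau^a_{\rm alt}$), so invoking it as an input is circular. Without it, your argument only yields the containment $\myB^0(X,\tfrac{1}{p^e}\Div(\varpi);\omega_X\otimes\sL^n)\subseteq H^0(X,\tau^a_{\rm alt}(\omega_{X/V})\otimes\sL^n)$, i.e.\ $\tau_{\myB^0}\subseteq\tau^a_{\rm alt}$, which is the easy direction.

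The paper avoids this by never comparing global sections on $X$ directly. For the hard containment $\tau^a_{\rm alt}\subseteq\tau_{\myB^0}$ it passes to the section ring $S=\bigoplus_{i\geq 0}H^0(X,\sL^i)$: by \cite[Proposition 4.7]{HaconLamarcheSchwede} the sheaf $\tau_{\myB^0}(\omega_{X/V},\varpi^{1/p^e})$ is associated to the graded module $\bigoplus_{i>0}\myB^0(X,\tfrac{1}{p^e}\Div(\varpi);\omega_{X/V}\otimes\sL^i)$, which by \cite[Proposition 5.5]{BMPSTWW1} equals $\bigl[\bigcap_T\Tr_{T/S}(\varpi^{1/p^e}\omega_T)\bigr]_{>0}$ over graded finite extensions $T$ of $S$; one then observes $\tau^a_{\rm alt}(\omega_{S/V})\subseteq\bigcap_T\Tr_{T/S}(\varpi^{1/p^e}\omega_T)$ on the \emph{affine} scheme $\Spec(S)$ (where \autoref{thm.tauAlt} does apply) and transports this back to $X$ along the smooth projection $Z_j=\Spec(\cO_{U_j}[t,t^{-1}])\to U_j$ using \autoref{thm.SmoothPullbackForRHOnNoetherian}. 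If you want to salvage your approach, you would need to supply exactly this kind of argument (or some other uniform lifting statement) in place of the asserted stabilization of $\myB^0$.
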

\begin{proof}
It is enough to show that $\tau^a_{\rm alt}(\omega_{X/V}) = \tau_{\myB^0}(\omega_{X/V}, \varpi^{1/p^e})$ for all $e\gg0$ since $\Div(p)$ is a multiple of $\Div(\varpi)$.
By \autoref{thm.tauAlt}, we know that for all $e\gg0$, we have
\[ 
    \tau^a_{\rm alt}(\omega_{X/V}) = \bigcap_{Y}\Tr_{Y/X}(f_*\varpi^{1/p^e}\omega_{Y/V})
\]
where $Y$ runs over all alterations such that 
$\varpi^{1/p^e}\in\cO_Y$ (note that the right hand side in fact stabilizes as in \autoref{thm.tauAlt}). 
By \cite[Proposition 4.11]{HaconLamarcheSchwede}, we have 
$$\tau_{\myB^0}(\omega_{X/V},\varpi^{1/p^e})\subseteq \Tr_{Y/X}(f_*\varpi^{1/p^e}\omega_{Y/V})$$
for all $Y$ such that $\varpi^{1/p^e}\in\cO_Y$, thus $\tau_{\myB^0}(\omega_{X/V},\varpi^{1/p^e})\subseteq \tau^a_{\rm alt}(\omega_{X/V})$. 

It remains to show that $\tau^a_{\rm alt}(\omega_{X/V})\subseteq \tau_{\myB^0}(\omega_{X/V},\varpi^{1/p^e})$ for all $e\gg0$. Let $\sL$ be a very ample line bundle on $X$ and let $S=\oplus_{i\geq 0}H^0(X, \sL^i)$ be the section ring of $X$ with respect to $\sL$. By \cite[Proposition 4.7]{HaconLamarcheSchwede}), we know that $\tau_{\myB^0}(\omega_{X/V}, \varpi^{1/p^e})$ is the sheaf associated to the graded $S$-module $\bigoplus_{i>0}\myB^0(X, \frac{1}{p^e}\Div(\varpi); \omega_{X/V}\otimes \sL^i)$. By \cite[Proposition 5.5]{BMPSTWW1} (note that we need a pair version with $\myB^0(X, \frac{1}{p^e}\Div(\varpi); \omega_{X/V}\otimes \sL^i)$ in place of $\myB^0(X;\omega_{X/V}\otimes\sL^i)$, which follows from the same argument there), we have a graded isomorphism 
\[ 
    \bigoplus_{i>0} \myB^0(X, \frac{1}{p^e}\Div(\varpi); \omega_{X/V}\otimes \sL^i) \cong \Big[\bigcap_{T} \Tr_{T/S}(\varpi^{1/p^e}\omega_T)\Big]_{>0}
\]
where $T$ runs over all graded finite domain extensions of $S$ such that $\varpi^{1/p^e}\in T$ and $\Tr_{T/S}$: $\omega_T\to \omega_S$ is the trace map. On the other hand, by \autoref{thm.tauAlt}, we know that for all $e\gg0$, we have that  
\[ 
    \tau^a_{\rm alt}(\omega_{S/V}) = \bigcap_{Z}\Tr_{Z/\Spec(S)}(f_*\varpi^{1/p^e}\omega_{Z/V})
\] 
where $f$: $Z\to \Spec(S)$ runs over all alterations of $\Spec(S)$ such that $\varpi^{1/p^e}\in\cO_{Z}$. In particular, we have that 
\[ 
    \tau^a_{\rm alt}(\omega_{S/V}) \subseteq \bigcap_{T} \Tr_{T/S}(\varpi^{1/p^e}\omega_T).
\]
where $T$ runs over all graded finite domain extensions of $S$ such that $\varpi^{1/p^e}\in T$ as above. Finally, we note that there is an open affine cover $\{U_j\}$ of $X$ such that $Z_j := \Spec(\cO_{U_j}[t, t^{-1}])$ is an open affine cover of $\Spec(S)-V(S_+)$. By \autoref{thm.SmoothPullbackForRHOnNoetherian}, we have 
$$\tau^a_{\rm alt}(\omega_{Z_j/V})\cong g_j^* \tau^a_{\rm alt}(\omega_{U_j/V}) \otimes \omega_{{Z_j}/{U_j}} (\cong \tau^a_{\rm alt}(\omega_{U_j/V})[t, t^{-1}])$$
where $g_j$: $Z_j\to U_j$. It follows that we have 
\small
\[
    \xymatrix{
       \tau^a_{\rm alt}(\omega_{Z_j/V}) \ar@{=}[r] \ar@{=}[d] & \tau^a_{\rm alt}(\omega_{S/V})|_{Z_j} \ar@{=}[d] \ar@{^{(}->}[r] &  \left(\bigoplus_{i>0}\myB^0(X, \frac{1}{p^e}\Div(\varpi); \omega_{X/V}\otimes\sL^i)\right)|_{Z_j}\ar@{=}[d] \\
        g_j^* \tau^a_{\rm alt}(\omega_{U_j/V}) \otimes \omega_{{Z_j}/{U_j}} \ar@{=}[r] & g_j^*\tau^a_{\rm alt}(\omega_{X/V})|_{U_j} \otimes \omega_{{Z_j}/{U_j}} & g_j^*\tau_{\myB^0}(\omega_{X/V}, \varpi^{1/p^e})|_{U_j}\otimes \omega_{{Z_j}/{U_j}}  
    }
\]
\normalsize
where the right vertical identification follows from \cite[Proposition 4.7]{HaconLamarcheSchwede}: $\tau_{\myB^0}(\omega_X, \varpi^{1/p^e})$ is the sheaf associated to the graded $S$-module $\bigoplus_{i>0}\myB^0(X, \frac{1}{p^e}\Div(\varpi); \omega_{X/V}\otimes\sL^i)$. It follows from the diagram above that
$$\tau^a_{\rm alt}(\omega_{X/V})|_{U_j} \subseteq \tau_{\myB^0}(\omega_{X/V}, \varpi^{1/p^e})|_{U_j}.$$ 
Since $\{U_j\}$ covers $X$, $\tau^a_{\rm alt}(\omega_{X/V}) \subseteq \tau_{\myB^0}(\omega_{X/V}, \varpi^{1/p^e})$ as wanted. 
\end{proof}

\section{Test ideals and modules of pairs}
\label{sec.TestIdealsDivisorPairs}

Our goal in this section is to generalize the results from \autoref{sec:SingOverDVR} to pairs $(X, \Gamma)$ where $\Gamma$ is a $\bQ$-Cartier $\bQ$-divisor, and also similarly to define test \emph{ideals} $\utau(\cO_X, \Delta)$ where $\Delta$ is a $\bQ$-divisor so that $K_X + \Delta$ is $\bQ$-Cartier.  In particular, this will assign test ideals $\utau(\cO_X)$ when $X$ is $\bQ$-Gorenstein that agree with previous definitions. All these ideals will incorporate small perturbations that are necessary for our proofs to work.

Unless otherwise stated, we work in the following setting.
\begin{setting} \label{setting:section7}
Let $(V, \varpi, k)$ be a DVR of mixed characteristic $(0,p>0)$ and let $X$ be a finite type, normal, integral scheme over $\Spec(V)$ such that $X \to \Spec(V)$ is flat (equivalently, surjective). We take $\Gamma$ to be a $\bQ$-Cartier $\bQ$-divisor on $X$.
\end{setting}

\begin{remark}
In this section we will often refer to \autoref{sec:SingOverDVR} in which $X$ was assumed to be separated as sometimes it was necessary to consider proper compactifications. However, this does not cause any problems, as all our results are local, so we can always restrict to open affine subsets in which case separatedness is automatic. We shall implicitly do this over and over again in the proofs below, without stating this explicitly.
\end{remark}

\begin{convention}
Throughout this section,  if $Y \to X$ is finite surjective with $Y$ normal and integral, we continue to use $\Tr_{Y/X} : K(Y) \to K(X)$ to denote the trace map. We will often simply write $\Tr$ instead of $\Tr_{Y/X}$ when $Y\to X$ is clear from the context.

Moreover, we will write $\tau_{\rm alt}(\omega_X, \varpi^{1/p^e})$ for $\tau_{\rm alt}(\omega_{X/V}, \varpi^{1/p^e})$ from \autoref{def.tauAlt}, and $\tau_{\rm alt}^a(\omega_X)$ for $\tau_{\rm alt}^a(\omega_{X/V})$ from \autoref{def.tauAltEpsilon} -- which is justified as these objects are independent of the choice of $V$ (see \autoref{thm.tauAltNonComplete} and \autoref{def.tauAltEpsilon}).
\end{convention}

For convenience of the reader, we summarize the results from \autoref{sec:SingOverDVR} that we will use in this section. 
\begin{enumerate}
    \item For all $e\gg0$, we have $\tau_{\rm alt}^a(\omega_X) = \tau_{\rm alt}(\omega_X, \varpi^{1/p^e})$. Moreover, there exists an alteration $Y_e \to X$ with $\varpi^{1/p^e} \in \cO_{Y_e}$ such that for all further alterations $Y \to Y_e$ with composition $f : Y \to Y_e \to X$, we have $\tau_{\rm alt}(\omega_X, \varpi^{1/p^e}) = \Tr_{Y/X}(f_* \varpi^{1/p^e}\omega_{Y})$. (\autoref{thm.tauAltNonComplete})
    \item For $f: Y\to X$ a finite surjective map such that $Y$ is normal and integral, we have $\tau_{\rm alt}^a(\omega_X) = \Tr_{Y/X}(f_*\tau_{\rm alt}^a(\omega_Y))$. (\autoref{cor.TransformationRuleRHUnderFiniteMaps})
    \item $\tau_{\rm alt}^a(\omega_X)[1/p] \cong \mJ(X[1/p], \omega_{X[1/p]})$, and for all $x\in X_{p=0}$ with $R:=\widehat{\cO_{X,x}}$,  we have $\tau^a_{\rm alt}(\omega_X)\otimes \widehat{\cO_{X,x}} = \tau_{+}(\omega_R, p^{1/p^e})$ for all $e\gg0$. (\autoref{cor.TauRHInvertPEqualsJNoetherianNoPair} and \autoref{cor.TauAgreesWithCompletionComputation})
    \item If $X\to \Spec(V)$ is projective and $V$ is complete, then we have $\tau_{\rm alt}^a(\omega_X)=\tau_{\myB^0}(\omega_X, p^{1/p^e})$ for all $e\gg0$, where the latter is defined in \autoref{ss:different-notions-test-ideals}. (\autoref{thm.ComparisonTauRHvsHLSNoPair})
\end{enumerate}
Strictly speaking, the assumption on normality in (b) is unnecessary, but it will be needed when we generalize this result to the case of pairs.




\begin{definition}\label{def:tau_RH_submodule_pairs}
   In the situation of \autoref{setting:section7}, choose $\psi : Y \to X$ a finite surjective map such that $\psi^* \Gamma$ is Cartier.  We define   
    \[
        \tau_{\alt}^a(\omega_X,  \Gamma) := \Tr_{Y/X}\Big(\tau_{\alt}^a(\omega_Y) \otimes_{\cO_Y} \cO_Y(-\psi^* \Gamma)\Big).
    \]
    It follows from \autoref{cor.TransformationRuleRHUnderFiniteMaps} that this is independent of the choice of $\psi : Y \to X$. Moreover, when $\Gamma \geq 0$, we have that $\tau_{\alt}^a(\omega_X,  \Gamma) \subseteq \omega_X$.
If $X = \Spec(R)$ is affine and $\Gamma = t \Div(f)$ for some $f \in \Spec (R)$ and $t \in \mathbf{Q}_{\geq 0}$, then we write 
    \[
        \tau_{\alt}^a(\omega_R, f^t) := \tau_{\alt}^a(\omega_X, t\Div(f))
    \] 
    for the corresponding submodule of $\omega_R$.  
\end{definition}
\begin{remark} \label{rem:a-test-ideal-definition}
Similarly, given a $\bQ$-divisor $\Delta$ such that $K_X+\Delta$ is $\bQ$-Cartier, we define
\[
\tau_{\alt}^a(\cO_X, \Delta) := \tau_{\alt}^a(\omega_X, K_X+\Delta).
\]
\end{remark}
It follows from the projection formula that if $\Gamma = \Gamma' + H$ for some Cartier divisor $H$, then 
\begin{equation}
    \tau_{\alt}^a(\omega_X, \Gamma) = \tau_{\alt}^a(\omega_X, \Gamma') \otimes_{\cO_X} \cO_X(-H).
\end{equation}
In most cases this implies that we may reduce the proofs of the results below to when $\Gamma \geq 0$, in which case $\tau_{\alt}^a(\omega_X, \Gamma)$ is a subsheaf of $\omega_X$. We first observe the following result on the completed stalks of $\tau_{\alt}^a(\omega_X, \Gamma)$ (recall the definition of $\tau_+$ for complete local rings from \autoref{def:tauR+} and \autoref{def:+test}).

\begin{corollary}
    \label{cor.TauRHVsTau+ForDivisorPairs}
    In the situation of \autoref{setting:section7}, let $x \in X_{p=0}\subseteq X$ and $R := \widehat{\cO_{X,x}}$. Then we have
    \[
        \tau_{\alt}^a(\omega_X, \Gamma)\otimes R = \tau_{+}(\omega_{R},  \epsilon \Div(p) + \Gamma|_R)
    \]
    for $1 \gg \epsilon > 0$.  
\end{corollary}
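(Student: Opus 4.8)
The statement is a ``perturbed, divisorial'' upgrade of \autoref{cor.TauAgreesWithCompletionComputation}, and I would prove it by reducing to that result via a finite cover that makes $\Gamma$ Cartier. Concretely: choose $\psi\colon Y\to X$ finite surjective with $Y$ normal integral such that $\psi^*\Gamma$ is Cartier, so that by \autoref{def:tau_RH_submodule_pairs} we have $\tau^a_{\alt}(\omega_X,\Gamma)=\Tr_{Y/X}\big(\tau^a_{\alt}(\omega_Y)\otimes_{\cO_Y}\cO_Y(-\psi^*\Gamma)\big)$, and this is independent of $\psi$ by \autoref{cor.TransformationRuleRHUnderFiniteMaps}. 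Now base change $\psi$ over $R=\widehat{\cO_{X,x}}$: write $S=\cO_Y\otimes_{\cO_X}R$, which is a semilocal normal ring, finite over $R$, whose completion at each maximal ideal $\frm_i$ (lying over $x$) I will denote $S_i:=\widehat{S_{\frm_i}}$, a complete normal local domain. Since completion is flat and $\Tr$ commutes with flat base change (and with finite decomposition of $S$), applying $-\otimes_{\cO_X}R$ to the defining formula gives
\[
\tau^a_{\alt}(\omega_X,\Gamma)\otimes R \;=\; \bigoplus_i \Tr_{S_i/R}\Big(\big(\tau^a_{\alt}(\omega_Y)\otimes\cO_Y(-\psi^*\Gamma)\big)\otimes_{\cO_Y} S_i\Big),
\]
where I must be slightly careful that $\tau^a_{\alt}(\omega_X,\Gamma)$ is a subsheaf of $\omega_X$ only when $\Gamma\ge 0$; in general I reduce to that case by writing $\Gamma=\Gamma'+H$ with $H$ Cartier and using the identity $\tau^a_{\alt}(\omega_X,\Gamma)=\tau^a_{\alt}(\omega_X,\Gamma')\otimes\cO_X(-H)$, which is compatible with everything below, so I may as well assume $\Gamma\ge 0$.

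The next step is to identify each summand. By \autoref{cor.TauAgreesWithCompletionComputation} applied to $Y$ (and the stalk $S_i$, using that $\psi^*\Gamma$ is Cartier so twisting by $\cO_Y(-\psi^*\Gamma)$ just becomes multiplication by an element $g_i\in S_i$ with $\Div(g_i)=\psi^*\Gamma|_{S_i}$), we get
\[
\big(\tau^a_{\alt}(\omega_Y)\otimes\cO_Y(-\psi^*\Gamma)\big)\otimes S_i \;=\; g_i\cdot \tau_+(\omega_{S_i}, p^{1/p^e}) \;=\; \tau_+\big(\omega_{S_i},\, \tfrac{1}{p^e}\Div(p) + \Div(g_i)\big)
\]
for all $e\gg 0$, where the last equality is the twisting identity for $\tau_+$ from \autoref{def:+test}/\autoref{def:tauR+} ($\tau_+(\omega_{S_i},D+E)=\tau_+(\omega_{S_i},D)\otimes S_i(-E)$ for $E$ Cartier). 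Now $\Div(g_i)=\psi^*\Gamma|_{S_i}$ and $K_{S_i}+\Div_{S_i/R}(\text{ram})$ gives the pullback formula $K_{S_i}/K_R$; applying $\Tr_{S_i/R}$ to $\tau_+$ of a pulled-back pair and using the transformation rule for $\tau_+$ under finite maps — this is precisely \cite[Lemma 4.18]{BMPSTWW1} together with \cite[Proposition 4.29]{BMPSTWW1}, exactly as invoked in the proof of \autoref{cor.TauAgreesWithCompletionComputation} — yields $\Tr_{S_i/R}\big(\tau_+(\omega_{S_i},\psi^*(\tfrac{1}{p^e}\Div(p)+\Gamma)|_{S_i})\big)=\tau_+(\omega_R,\tfrac{1}{p^e}\Div(p)+\Gamma|_R)$. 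Summing over $i$ and using that $\frac{1}{p^e}\Div(p)$ is a fixed positive multiple of $\frac1{p^e}\Div(\varpi)$ (so ``for all $e\gg0$'' is the same as ``for $0<\epsilon\ll1$'') gives the claimed equality.

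\textbf{Main obstacle.} The routine parts are the flat-base-change compatibilities for $\Tr$ and the twisting identities; the delicate point is making the finite-cover descent for the \emph{pair} $\tau_+$ match the one for $\tau^a_{\alt}$ on the nose, i.e. checking that the single alteration which stabilizes $\tau^a_{\alt}(\omega_Y)$ base changes to one stabilizing $\tau_+(\omega_{S_i},\cdot)$ simultaneously for the perturbation $\tfrac1{p^e}\Div(p)$ and the (now Cartier, hence harmless) divisor $\psi^*\Gamma$ — this is where I lean on the explicit ``alteration $Y_e$ computing everything'' clause of \autoref{thm.tauAlt}/\autoref{def.tauAltEpsilon} and on the fact, already used in \autoref{cor.TauAgreesWithCompletionComputation}, that all the alterations appearing in the local description \eqref{eq:+-test-comparison} of $\tau_+$ come from alterations over $X$. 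A secondary technical nuisance is the bookkeeping when $X\to X$ is not normal near $x$ so that $R$ is not a domain and $S$ has several maximal ideals; this is handled exactly as in the last paragraph of the proof of \autoref{cor.TauAgreesWithCompletionComputation}, replacing $X$ by its normalization, using \autoref{cor.TransformationRuleRHUnderFiniteMaps} and \cite[Lemma 4.18]{BMPSTWW1}, and I will simply cite that argument rather than repeat it.
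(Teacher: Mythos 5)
Your proposal is correct and follows essentially the same route as the paper: reduce to $\Gamma\ge 0$, pass to a finite cover $\psi\colon Y\to X$ trivializing $\Gamma$, identify $\tau^a_{\alt}(\omega_Y)\otimes S$ with $\tau_+(\omega_S,p^{1/p^e})$ via \autoref{cor.TauAgreesWithCompletionComputation}, twist by the (now Cartier) divisor, and push down by $\Tr$ using the transformation rule for $\tau_+$ under finite maps. The only difference is cosmetic: the paper picks a single point $y$ over $x$ and cites the definition of $\tau_+$ for the final trace step, whereas you sum over all maximal ideals of $\cO_Y\otimes_{\cO_X}R$ and flag an ``alteration-matching'' obstacle that is in fact already absorbed by the finite-map transformation rule for $\tau_+$, so no extra argument is needed there.
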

\begin{proof} Without loss of generality we may assume that $X$ is affine and $\Gamma \geq 0$.
    Choose $f : Y \to X$ a finite surjective map for which $f^* \Gamma$ is Cartier. Write $f^*\Gamma = {\rm div}(h)$ for $h \in H^0(Y,\cO_Y)$. Pick $y\in Y$ such that $f(y)=x$, and set $
    S := \widehat{\cO_{Y,y}}$. We get a corresponding extension of rings $R \subseteq S$. 

    By \autoref{cor.TauAgreesWithCompletionComputation} we have that $\tau_{\alt}^a(\omega_Y)_{y} \cdot S = \tau_{+}(\omega_{S}, p^{1/p^e})$ for $e \gg 0$. Thus

    \[    {\def\arraystretch{1.2}
        \begin{array}{rl}
            \tau_{\alt}^a(\omega_X, \Gamma) \otimes R &=  \Tr_{Y/X}\big( h \tau^a_{\alt}(\omega_Y) \big) \otimes R\\
             &=  \Tr_{S/R} \big( h \tau^a_{\alt}(\omega_Y)_{y} \cdot S \big) \\
             &= \Tr_{S/R} \big(\tau_{+}(\omega_{S}, 1/p^e \Div(p) + f^*\Gamma|_{S})) \\
             &=  \tau_{+}(\omega_R, 1/p^e \Div(p) + \Gamma|_{R}),
        \end{array}}
    \]
    where the first equality is  
\autoref{def:tau_RH_submodule_pairs} and  the last equality follows immediately from the definition of $\tau_+$ (see \autoref{def:+test}).

\end{proof}

\subsection{Perturbation}

Unfortunately, we do not know the answer to the following question:
\begin{question}
    \label{quest.tauAltPerturbed}
    For any effective Cartier divisor $D$ on $X$, is it true that 
    \[
        \tau_{\alt}^a(\omega_X, \varepsilon D) = \tau_{\alt}^a(\omega_X)
    \]
    for all $1 \gg \varepsilon > 0$?
\end{question}

We bypass this by building in perturbations.  First we recall a special case of a lemma of Gabber-Liu-Lorenzini.

\begin{lemma}[{\cite[Theorem 8.1]{GabberLiuLorenziniHypersurfacesInProjectiveSchemes}}]
    \label{lem.GlobalDiscriminantDivisor}
    Suppose $X\to\Spec(V)$ is flat, projective and $X$ is integral. Then there exists a finite surjective map $h : X \to \bP_V^n$.  Hence, there exists a Cartier divisor $G \geq 0$ on $\bP_V^n$ such that $h$ is \'etale outside of $h^{-1}G$.  
\end{lemma}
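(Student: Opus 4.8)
The plan is to read off the finite surjective morphism directly from the theorem of Gabber--Liu--Lorenzini, and then to manufacture the divisor $G$ by a soft argument exploiting that the generic fibre over $V$ has characteristic zero.

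First I would use projectivity to fix a closed immersion $X \hookrightarrow \bP^N_V$. Since $X$ is integral and flat over $\Spec V$ of relative dimension $d$, we have $\dim X = d+1$. Then I would invoke \cite[Theorem 8.1]{GabberLiuLorenziniHypersurfacesInProjectiveSchemes}: unlike the classical generic linear projection argument, which needs an infinite residue field, their result (using hypersurface sections of controlled degree) produces a finite surjective $V$-morphism $h : X \to \bP^n_V$ with $n = d$, for any base. This is the first assertion; note $n=d$ is forced by $\dim X = \dim \bP^n_V$.

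Next, let $\Ram(h)\subseteq X$ be the closed locus where $h$ fails to be \'etale, and let $Z\subseteq\bP^n_V$ be the closure of $h(\Ram(h))$. I would first check that $Z$ does not contain the generic fibre $\bP^n_K$, where $K:=\Frac(V)$: indeed $h_K:X_K\to\bP^n_K$ is a finite surjective morphism of integral schemes over the characteristic-zero field $K$ (here $X_K$ is integral since $X$ is integral and $V$-flat), so the function field extension $K(X)/K(\bP^n_V)$ is separable, hence $h_K$ is \'etale over a dense open of $\bP^n_K$; thus $Z\cap\bP^n_K$ is a proper closed subset of $\bP^n_K$ and is therefore contained in a hypersurface $H_0\subset\bP^n_K$ (take the zero locus of a product of one nonzero homogeneous equation for each irreducible component). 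Letting $\overline{H_0}$ denote the scheme-theoretic closure of $H_0$ in $\bP^n_V$, which is an effective Cartier divisor because $\bP^n_V$ is regular (equivalently, $V[x_0,\dots,x_n]$ is a UFD), and setting
\[
G := \overline{H_0} + \Div(\varpi),
\]
with $\Div(\varpi)=\bP^n_k$ the (reduced) special fibre, we obtain an effective Cartier divisor on $\bP^n_V$ with $Z\subseteq\Supp G$. Hence $h$ is \'etale over $\bP^n_V\setminus\Supp G$, i.e.\ outside $h^{-1}(G)$, which is the second assertion.

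The only genuinely non-elementary input is the first step: over a DVR with finite residue field one cannot take a generic linear projection, and it is exactly \cite[Theorem 8.1]{GabberLiuLorenziniHypersurfacesInProjectiveSchemes} that supplies the finite surjection $X\to\bP^n_V$; the remaining argument is routine, the only points to verify being generic \'etaleness of $h_K$ in characteristic zero and that the closure of a hypersurface of $\bP^n_K$ is Cartier in $\bP^n_V$, both standard.
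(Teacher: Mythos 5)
Your proposal is correct and follows the same route as the paper, which offers no written proof beyond citing \cite[Theorem 8.1]{GabberLiuLorenziniHypersurfacesInProjectiveSchemes} for the finite surjection $h$ and treating the existence of $G$ as a routine consequence. Your verification of the ``Hence'' clause --- generic \'etaleness of $h_K$ in characteristic zero, enclosing the horizontal ramification in the closure of a hypersurface of $\bP^n_K$ (Cartier since $\bP^n_V$ is regular), and adding $\Div(\varpi)$ to absorb any vertical components of the ramification image --- is exactly the intended argument.
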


We next recall the following well-known result in our context.


\begin{lemma}[{\cite[Remark 3.11]{BhattScholzepPrismaticCohomology}}]
    \label{lem.ConstructingA_infty}
    Suppose $A$ is a Noetherian complete regular local ring of mixed characteristic $(0,p>0)$.  Then there exists a perfectoid $A$-algebra $A_{\infty}$ that is a $p$-completed filtered colimit of finite regular extensions of $A$. In other words, 
    \[
        A_{\infty} = (\colim_e A_e)^{\wedge_p}.
    \]
    such that each $A_e$ is a Noetherian complete regular local ring finite over $A$. 
    
    In particular $A_{\infty}$ is faithfully flat over $A$ and admits a map $A_{\infty} \to \widehat{A^+}$.
    Moreover, if $B$ is any perfectoid big Cohen-Macaulay $A^+$-algebra, then the induced map $A_{\infty} \to B$ is $p$-complete faithfully flat.
\end{lemma}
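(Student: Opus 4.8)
\textbf{Proof plan for \autoref{lem.ConstructingA_infty}.}

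The plan is to reduce to the case $A = V[[x_1,\dots,x_d]]$ (a power series ring over the complete DVR $V$) by the Cohen structure theorem, and then adjoin $p$-power roots of the variables together with the construction of \autoref{prop:kev_is_generator} applied to $V$. First I would use the map $V \to V_\infty = \widehat{\varinjlim_e V_e}$ from \autoref{prop:kev_is_generator}, where each $V_e$ is a complete DVR finite free over $V$; then set
\[
A_e := V_e[[x_1^{1/p^e},\dots,x_d^{1/p^e}]], \qquad A_\infty := (\varinjlim_e A_e)^{\wedge_p}.
\]
Each $A_e$ is a complete regular local ring, module-finite over $A$ (it is free on the monomials $\prod_i x_i^{a_i/p^e}$ with $0\le a_i\le p^e-1$ tensored with a $V$-basis of $V_e$), and the transition maps are the evident inclusions. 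One checks $A_\infty$ is perfectoid: modulo $p$ it is the perfection of $k[[x_1,\dots,x_d]]$-type rings after adjoining roots of a uniformizer of $V$, so the relevant Frobenius-surjectivity and the principality of the kernel of $\theta$ hold; this is exactly the standard computation referenced in \cite[Remark 3.11]{BhattScholzepPrismaticCohomology}, and the key structural input is that each $A_e$ is regular so that flatness propagates. Faithful flatness of $A_\infty$ over $A$ follows because each $A_e$ is finite free over $A$, hence $\varinjlim_e A_e$ is faithfully flat over $A$, and $p$-completion of a flat module over the Noetherian ring $A$ preserves flatness (and faithfulness, since $A_\infty/\fram_A A_\infty \ne 0$).

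Next I would produce the map $A_\infty \to \widehat{R^+}$. Since $A$ is regular local and $R$ is a complete local domain finite over (or at least receiving a local map from) $A$ in the relevant setup, $R^+ = A^+$ and the algebraic closure of $\Frac(A)$ inside which we take $R^+$ contains compatible $p$-power roots of $x_1,\dots,x_d$ as well as a copy of $V_\infty^{\mathrm{nc}}$; choosing these compatibly gives $\varinjlim_e A_e \to R^+$, and $p$-completing gives $A_\infty \to \widehat{R^+}$. For the last assertion, let $B$ be any perfectoid big Cohen--Macaulay $R^+$-algebra. The composite $A \to R \to B$ factors through $A_\infty \to B$ via the chosen roots. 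To see $A_\infty \to B$ is $p$-complete faithfully flat, one argues as in the standard big Cohen--Macaulay theory: $B/pB$ is a perfectoid-type algebra over the perfect ring $A_\infty/pA_\infty$, and a module over a perfect ring of the form $(\text{perfection of a regular ring})$ that is big Cohen--Macaulay over it is in fact flat (this is the key regularity/CM input — flatness over a perfect regular-type base is detected by Tor-vanishing, which follows from the CM property and a limit argument reducing to the Noetherian regular rings $A_e/pA_e$ where local criterion for flatness applies); then $p$-complete flatness over $A_\infty$ follows by derived $p$-completion, and faithfulness from $B \ne pB$ (as $B$ is a big Cohen--Macaulay algebra, $B \ne \fram B \supseteq pB$).

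The main obstacle I expect is the flatness claim for $A_\infty \to B$: unlike the honest finite-level rings $A_e$, the colimit $A_\infty$ is non-Noetherian, so the usual local flatness criterion does not apply directly, and one must instead pass through the mod-$p$ fiber, use perfectness of $A_\infty/p$ to reduce flatness to vanishing of $\Tor_1^{A_\infty/p}(A_\infty/p \otimes A/\fram, B/p)$-type groups, and then express these as a filtered colimit over the regular Noetherian rings $A_e/p$ where big Cohen--Macaulayness of $B/p$ gives the needed vanishing. The rest of the argument — the Cohen structure reduction, verifying each $A_e$ is complete regular local and module-finite over $A$, and checking $A_\infty$ is perfectoid — is routine and parallels \autoref{prop:kev_is_generator}. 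I would cite \cite[Remark 3.11]{BhattScholzepPrismaticCohomology} for the perfectoid and faithful flatness statements over $A$, and \cite{AndreWeaklyFunctorialBigCM} together with the standard structure of perfectoid big Cohen--Macaulay algebras for the final $p$-complete faithful flatness over $A_\infty$.
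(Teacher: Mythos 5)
There is a genuine gap at the very first step: the reduction ``to the case $A = V[[x_1,\dots,x_d]]$ by the Cohen structure theorem'' is false when $A$ is ramified, i.e.\ when $p \in \fram_A^2$. Cohen's theorem only gives $A \cong C(k)[[x_1,\dots,x_d]]/(p-f)$ with $f \in (x_1,\dots,x_d)^2$ for a coefficient ring $C(k)$, and such a ring need not be a power series ring over \emph{any} complete DVR: for $A = \bZ_p[[x,y]]/(p-xy)$, which is regular of dimension $2$ and of mixed characteristic, the special fibre $A/pA = \bF_p[[x,y]]/(xy)$ has two minimal primes, whereas $V[[t]]/pV[[t]] = (V/\pi^e)[[t]]$ always has a unique minimal prime. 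So your construction $V_e[[x_1^{1/p^e},\dots,x_d^{1/p^e}]]$ only covers the unramified case. The paper's proof splits into two cases accordingly: in the unramified case it takes $C(k')[[p^{1/p^e},x_2^{1/p^e},\dots,x_d^{1/p^e}]]$ (with $k'$ running over finite inseparable extensions of $k$ to handle the imperfect residue field, as in your use of \autoref{prop:kev_is_generator}), while in the ramified case it takes $A_e := C(k')[[x_1^{1/p^e},\dots,x_d^{1/p^e}]]/(p-f)$, adjoining roots of the regular parameters $x_i$ but \emph{not} of $p$; each such $A_e$ is again regular with maximal ideal $(x_1^{1/p^e},\dots,x_d^{1/p^e})$, and $p=f$ automatically acquires the divisibility needed for the $p$-completed colimit to be perfectoid. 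You would need to add this second construction to make the proof complete.

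The remainder of your argument is essentially the paper's. The map $A_\infty \to \widehat{R^+}$ is obtained exactly as you say, from compatible $p$-power roots inside $R^+$ of the images of the chosen generators. For the final claim, the paper's route is shorter than yours: since each $A_e$ is a Noetherian regular local ring and $B$ is a (balanced) big Cohen--Macaulay algebra, the classical fact that big Cohen--Macaulay modules over regular local rings are faithfully flat gives that $A_e \to B$ is faithfully flat for every $e$, whence $\colim_e A_e \to B$ is faithfully flat and $A_\infty \to B$ is $p$-completely faithfully flat. Your mod-$p$ Tor-vanishing argument reduces to the same finite-level input and is not wrong, just more roundabout; note also that it, too, silently needs the correct finite-level rings $A_e$ from the ramified construction above.
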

\begin{proof}
Let $C(k)$ be a coefficient ring for $A$ (so $C(k)$ is a complete unramified DVR of mixed characteristic $(0,p>0)$), we can form a filtered system of complete unramified DVR $C(k')$ where $k'$ runs over all finite inseparable extensions of $k$, whose completed colimit is $W(k^{1/p^\infty})$ (by a very similar construction as in \autoref{prop:kev_is_generator} without adjoining $p$-power roots of the uniformizer).

If $A$ is unramified, then $A\cong  C(k)\llbracket x_2, \dots, x_d \rrbracket$. In this case we consider the filtered system $C(k')\llbracket p^{1/p^e}, x_2^{1/p^e}, \dots, x_d^{1/p^e}\rrbracket$ and set $A_\infty$ be the $p$-completed colimit. If $A$ is ramified, then we have $A\cong C(k)\llbracket x_1,\dots, x_d\rrbracket /(p-f)$ where $f\in (x_1,\dots,x_d)^2$. In this case we consider the filtered system 
$$C(k')\llbracket x_1^{1/p^e},\dots,x_d^{1/p^e}\rrbracket /(p-f).$$
Note that each item in this filtered system is a complete regular local ring (the maximal ideal is generated by $x_1^{1/p^e},\dots,x_d^{1/p^e}$). The $p$-completed filtered colimit $A_\infty$ is perfectoid (see \cite[Remark 3.11]{BhattScholzepPrismaticCohomology} or \cite[Section 3]{ShimomotoApplicationofAlmostPurity} for more details). Furthermore, 
as all elements of $A^+$ admit compatible systems of arbitrary $p$-power roots, each $A_e$ can be mapped to $A^+$ (each $A_e$ is finite over $A$) inducing $\colim_e A_e \to A^+$ and hence $A_\infty$ maps to $\widehat{A^+}$. Thus $A_\infty$ maps to every perfectoid big Cohen-Macaulay $A^+$-algebra. Finally, since each $A_e$ is a regular local ring and $B$ is big Cohen-Macaulay, $A_e\to B$ is faithfully flat for all $e$ and thus $A_\infty\to B$ is $p$-complete faithfully flat. 
\end{proof}


\begin{proposition}
    \label{prop.PerturbedTestIdealIsReal}
    Suppose $W \to V$ is finite type with $W$ integral regular and suppose we have a finite surjective $h : X \to W$ and an effective divisor $G$ on $W$ such that $h$ is \'etale outside of $h^{-1}G$.  For example, via \autoref{lem.GlobalDiscriminantDivisor}, for $X$ projective we can take $W = \bP^n_V$.
    Fix $1 \gg \epsilon_0 > 0$ so that $\tau_{\rm alt}^a(\omega_X, \epsilon h^*G)$ is constant for all $\epsilon_0 \geq \epsilon > 0$ (which exists since $X$ is Noetherian). Then for all $x \in X_{p = 0}$ and $R := \widehat{\cO_{X,x}}$, we have 
    \[
        \tau_{\rm alt}^a(\omega_X, \epsilon h^*G)\otimes \widehat{\cO_{X,x}} = \tau_{\cB}(\omega_R).
    \]
In particular, the left hand side above is independent of the choice of $h$ and $G$. 
\end{proposition}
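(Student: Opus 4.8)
The plan is to reduce the statement about $\tau^a_{\rm alt}(\omega_X, \epsilon h^*G)$ on the completed stalk to a statement purely in local commutative algebra, namely a computation of $\tau_+(\omega_R, \epsilon D + \epsilon' h^*G|_R)$ where $D = \frac{1}{p^e}\Div(p)$, and then recognize the latter as $\tau_\cB(\omega_R)$ using a faithfully flat perfectoid descent argument. First I would apply \autoref{cor.TauRHVsTau+ForDivisorPairs} with $\Gamma = \epsilon h^*G$ to get
\[
\tau^a_{\rm alt}(\omega_X, \epsilon h^*G) \otimes R = \tau_+(\omega_R, \epsilon' \Div(p) + \epsilon\, h^*G|_R)
\]
for $1 \gg \epsilon' > 0$ (and $\epsilon$ in the constant range). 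So the problem becomes: show $\tau_+(\omega_R, \epsilon'\Div(p) + \epsilon\, h^*G|_R) = \tau_\cB(\omega_R)$ for $1 \gg \epsilon, \epsilon' > 0$. Here $R = \widehat{\cO_{X,x}}$ is module-finite over $A := \widehat{\cO_{W,h(x)}}$, which is a complete regular local ring of mixed characteristic, and $h$ is étale over the complement of $\Div(g)$ where $g$ is a local equation for $G$ at $h(x)$; thus $\Div(g)$ pulled back to $\Spec R$ is (a multiple of) the different/discriminant divisor of $R/A$, and in particular $h^*G|_R$ is supported on the ramification locus.

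The key input is \autoref{lem.ConstructingA_infty}: we get a perfectoid $A$-algebra $A_\infty = (\colim_e A_e)^{\wedge p}$, a $p$-completed colimit of module-finite complete \emph{regular} extensions $A_e$ of $A$, which maps to $\widehat{R^+}$ and in fact maps $p$-completely faithfully flatly to every perfectoid big Cohen--Macaulay $R^+$-algebra $B$. The strategy is to use $A_\infty$ (or rather $R \otimes_A A_\infty$, completed) as an explicit ``test'' object. Concretely, recall from \autoref{def:+test} that $\tau_+(\omega_R, \Gamma|_R) = \tau_B(\omega_R, \Gamma|_R)$ for $B = (R^+)^{\wedge p}$, and $\tau_\cB(\omega_R) = \tau_B(\omega_R)$ for $B$ a sufficiently large perfectoid BCM $R^+$-algebra. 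So I would compute the image of $H^d_\fram(R) \to H^d_\fram(B)$ twisted by $p^{\epsilon'} \cdot g^{\epsilon}$, and show that the $g^{\epsilon}$-twist becomes harmless: since $g$ cuts out the ramification locus of $R$ over the regular ring $A$, after base change to the large regular tower $A_\infty$ the map $A \hookrightarrow R$ becomes split (étale in codimension controlled by $g$, and the ramification is ``killed'' almost by adjoining $p$-power roots — this is exactly the content of almost purity / the Gabber--Liu--Lorenzini discriminant bound that $g$ has bounded order along the different). More precisely the point is that multiplication by $g^{\epsilon}$ on $H^d_\fram(R)$ already factors through the trace from a finite cover for which the relevant local cohomology map to $H^d_\fram(B)$ has the same image as the untwisted map — so the $g^\epsilon$-perturbation does not shrink the image, giving $\tau_+(\omega_R, \epsilon'\Div(p) + \epsilon h^*G|_R) = \tau_+(\omega_R, \epsilon'\Div(p))$, which in turn equals $\tau_\cB(\omega_R)$ by (a further comparison of) the $p$-perturbed $+$-test ideal with the BCM test ideal intersected over all $B$ — this last equality being where the $p$-perturbation is genuinely needed, as $\tau_\cB$ is the infinite intersection $\bigcap_B \tau_B$ and the $\epsilon'\Div(p)$ perturbation is what forces this intersection to be realized by $B = \widehat{R^+}$ together with the faithful flatness of $A_\infty \to B$.

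The main obstacle I anticipate is the second equality: proving that the $h^*G$-perturbation can be absorbed, i.e. that $\tau_+(\omega_R, \epsilon'\Div(p) + \epsilon h^*G|_R)$ does not actually depend on the $h^*G$ term for small $\epsilon$. This is a question of showing that the ramification divisor of $R$ over the regular base $A$ is ``small'' in the sense that it is contained in the perturbation ideal $\tau_+(\omega_R, \epsilon'\Div(p))$ itself (or that the relevant local cohomology classes are not affected) — the geometric content is that the discriminant $G$ on $W = \bP^n_V$ is an \emph{effective} Cartier divisor whose role is precisely to make $h$ étale elsewhere, and over $A_\infty$ the cover $R \otimes_A A_\infty$ splits up to $g^{\epsilon}$ (finite free modulo the ramification, with ramification controlled by roots of $p$). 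Once this is in hand, independence of the choice of $(h, G)$ is immediate since the right-hand side $\tau_\cB(\omega_R)$ makes no reference to $h$ or $G$, which is the final assertion of the proposition.
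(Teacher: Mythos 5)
Your reduction is exactly the paper's: apply \autoref{cor.TauRHVsTau+ForDivisorPairs} with $\Gamma = \epsilon h^*G$ to rewrite the completed stalk as $\tau_+(\omega_R, p^{\epsilon'}g^{\epsilon})$ (with $g$ a local equation for $G$ at $h(x)$, $A=\widehat{\cO_{W,h(x)}}$ regular, and $A[1/g]\subseteq R[1/g]$ \'etale), and you correctly identify \autoref{lem.ConstructingA_infty} and the faithful flatness of $A_\infty \to B$ as the engine for the remaining local claim. The "In particular" is indeed immediate once the main equality is proved.

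The gap is in how you propose to finish. You split the local claim into (a) "the $g^{\epsilon}$-twist is harmless," i.e.\ $\tau_+(\omega_R, p^{\epsilon'}g^{\epsilon}) = \tau_+(\omega_R, p^{\epsilon'})$, followed by (b) "$\tau_+(\omega_R, p^{\epsilon'}) = \tau_{\cB}(\omega_R)$." Neither step is available as an independent ingredient, and (b) in isolation cannot be proved by the mechanism you invoke: the descent from $H^d_{\fram}(B)$ back to $H^d_{\fram}(\widehat{R^+})$ goes through $R^{A_\infty}_{\pfd} = (R\otimes_A A_\infty)_{\pfd}$, and since $R\otimes_A A_\infty$ is only \emph{$g$-almost} finite \'etale over $A_\infty$, one only obtains $g$-almost vanishing statements there --- so a version of the comparison with no $g$ in it is not what comes out. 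The paper instead proves the combined statement $\tau_+(\omega_R,(pg)^{\epsilon}) = \tau_{\cB}(\omega_R)$ in one stroke (Claim~\ref{clm.Tau+PerturbedEqualsTauBigB}), by Matlis-dualizing to a comparison of kernels in local cohomology: $M_2$ (classes killed by $(g^{1/p^\infty})$ in $H^d_{\fram}(R^+)$), $M_3$ (same but in $H^d_{\fram}(B)$ for some perfectoid BCM $B$), and $M_4$ (classes dying in $H^d_{\fram}(B)$ with no twist). The inclusions $M_4 = M_3$ come from \cite{MaSchwedeSingularitiesMixedCharBCM}, and $M_3\subseteq M_2$ is the almost-purity step via $R^{A_\infty}_{\pfd}\to \widehat{R^+}$ following \cite{CaiLeeMaSchwedeTuckerPerfectoidSignature1}. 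Your statements (a) and (b) are true, but only as \emph{corollaries} of $M_2=M_3=M_4$; taken as the two steps of the proof they have the logical order backwards, and each would force you to reprove the same $M_3\subseteq M_2$ computation. If you reorganize around the single claim $\tau_+(\omega_R,(pg)^{\epsilon})=\tau_{\cB}(\omega_R)$ and carry out the kernel comparison, the argument closes.
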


\begin{proof}
Begin by setting $a = h(x) \in W$ and $A = \widehat{\cO_{W, a}}$ and suppose that $g \in A$ is a local defining equation for $G$.  Note that $A$ is a complete regular local ring (it is possibly ramified, indeed even $V$ might be a ramified DVR).  Note also that $(h_* \cO_{X})_a \cdot A$ is a product of complete local rings, one of which is $R$.  It follows that $A[g^{-1}] \subseteq R[g^{-1}]$ is \'etale.  

    By \autoref{cor.TauRHVsTau+ForDivisorPairs}, we already know that $\tau_{\rm alt}^a(\omega_X,  \epsilon h^* G) \otimes \widehat{\cO_{X,x}} = \tau_+(\omega_{R}, p^{\epsilon'} g^{\epsilon})$ for $1 \gg \epsilon' > 0$.  By assumption, this remains unchanged after shrinking $\epsilon$ and $\epsilon'$ further, so we moreover have that $\tau_{\rm alt}^a(\omega_X,  \epsilon h^* G) \otimes \widehat{\cO_{X,x}} = \tau_+(\omega_{R}, p^{\epsilon} g^{\epsilon})$ for $1 \gg \epsilon> 0$. Thus it suffices to prove the following claim.
    \begin{claim}
        \label{clm.Tau+PerturbedEqualsTauBigB}
        Suppose $A \subseteq R$ is a finite extension of Noetherian complete local domains of mixed characteristic $(0, p> 0)$ such that $A$ is regular.  Additionally suppose that $0\neq g \in A$ is such that $A[g^{-1}] \subseteq R[g^{-1}]$ is \'etale.   Then
        \[
            \tau_+(\omega_{R}, p^{\epsilon} g^{\epsilon}) = \tau_{+}(\omega_R, g^{\epsilon})= \tau_{\cB}(\omega_R)
        \]     
        for all $1\gg\epsilon >0$. 
    \end{claim}
    \begin{proof}[Proof of Claim]
        Note if $A[g^{-1}] \subseteq R[g^{-1}]$ is \'etale then so is $A[(pg)^{-1}] \subseteq R[(pg)^{-1}]$.  Hence we may remove the $p^{\epsilon}$ term and prove $\tau_{+}(\omega_R, g^{\epsilon})= \tau_{\cB}(\omega_R)$.
        The key computations can be found in \cite[Lemma 5.1.6]{CaiLeeMaSchwedeTuckerPerfectoidSignature1} and \cite[Proposition 5.7]{MaSchwedeSingularitiesMixedCharBCM}.  Following the notation in the former reference, consider  
        \[
            \begin{array}{rcl}
                M_2 & := & \{ \eta \in H^d_{\fram}(R) \;|\;  (g^{1/p^{\infty}}) \eta = 0 \in H^d_{\fram}(R^+)\}\\
                M_3 & := & \{ \eta \in H^d_{\fram}(R) \;|\;  (g^{1/p^{\infty}}) \eta = 0 \in H^d_{\fram}(B) \text{ for some perfectoid BCM $R^+$-algebra $B$ }\} \\
                M_4 & := & \{ \eta \in H^d_{\fram}(R) \;|\;   \eta \mapsto 0 \in H^d_{\fram}(B) \text{ for some perfectoid BCM $R^+$-algebra $B$ }\}.
            \end{array}
        \]        
        By \cite[Proposition 5.7]{MaSchwedeSingularitiesMixedCharBCM}, it suffices to show that $M_2=M_4$. We will establish this by comparing them with $M_3$. First of all, it is clear that $M_2, M_4 \subseteq M_3$. By \cite[Lemma 5.6]{MaSchwedeSingularitiesMixedCharBCM}, we have $M_3\subseteq M_4$ and thus $M_3=M_4$.  Note also that by \cite[Theorem 4.9]{MaSchwedeSingularitiesMixedCharBCM} we may fix a single $B$ that so that $M_3$ and $M_4$ may be computed with respect to that single $B$.  
        
        It remains to show that $M_3 \subseteq M_2$. This argument proceeds essentially identically to the argument in \cite[{Lemma 5.1.6, Proof of $M_3 \subseteq M_1$}]{CaiLeeMaSchwedeTuckerPerfectoidSignature1} and we point out the differences. Let $S$ denote the integral closure of $R$ in the Galois closure of $K(R)/K(A)$. By \autoref{lem.ConstructingA_infty}, we have $A\to A_\infty$ where $A_\infty$ is perfectoid that is a $p$-completed filtered colimit of finite regular extensions of $A$, and that $A_{\infty} \to B$ is $p$-complete faithfully flat.
        We set 
        \[
            R_{\perfd}^{A_{\infty}} = (R \otimes_{A} A_{\infty})_{\perfd} \;\;\; \text{ and } \;\;\; S_{\perfd}^{A_{\infty}} = (S \otimes_{A} A_{\infty})_{\perfd}
        \]
        Following the argument of \cite[{Lemma 5.1.6, Proof of $M_3 \subseteq M_1$}]{CaiLeeMaSchwedeTuckerPerfectoidSignature1} verbatim at this point, 
        we deduce that if $\eta \in M_3$ (that is $(g^{1/p^{\infty}}) \otimes \eta = 0 \in H^d_{\fram}(B)= B\otimes H_\m^d(R)$) then 
        \[
            (g)_{\perfd} \otimes \eta = 0 \in H^d_{\fram}(R_{\perfd}^{A_{\infty}})= R_{\perfd}^{A_{\infty}} \otimes H_\m^d(R).
        \]
        Since $A_\infty$ maps to $\widehat{R^+}$, we know that $R_{\perfd}^{A_{\infty}} = (R \otimes_{A} A_{\infty})_{\perfd}$ maps to $\widehat{R^+}$ by the universal property of the perfectoidization functor. Thus we have $(g^{1/p^{\infty}}) \otimes \eta = 0 \in \widehat{R^+}\otimes H^d_{\fram}(R)$, that is, $(g^{1/p^{\infty}})\eta=0\in H_\m^d(R^+)$ as desired.       
    \end{proof}
    The claim completes the proof.
\end{proof}

{\color{black}

\begin{remark}
\label{rmk.Tau+PerturbedEqualsTauBigB choosing g and h}
There is a slight generalization of \autoref{clm.Tau+PerturbedEqualsTauBigB} that we will need later. Namely, suppose $A \subseteq R \subseteq S$ are finite extensions of Noetherian complete local domains of mixed characteristic $(0, p> 0)$ such that $A$ is regular.  Additionally suppose that $0\neq g \in A$ is such that $A[g^{-1}] \subseteq R[g^{-1}]$ is \'etale and $0\neq h\in R$ such that $R[h^{-1}]\subseteq  S[h^{-1}]$ is \'etale.   Then
        \[
        \tau_{+}(\omega_S, (gh)^{\epsilon})= \tau_{\cB}(\omega_S)
        \]     
        for all $1\gg\epsilon >0$. To see this, we first claim that we may assume $A[g^{-1}] \subseteq R[g^{-1}]$ and $R[h^{-1}]\subseteq  S[h^{-1}]$ are Galois. We can replace $R$ by $R'$, the integral closure of $R$ in the Galois closure of $K(R)/K(A)$. The total quotient ring of $S\otimes_RR'$ is a finite product of fields $\prod K_i$, each of which is a finite extension of $K(R')$. The integral closure of $S\otimes_RR'$ in the Galois closure of $\prod K_i$ over $K(R')$ is thus a finite product of complete normal domains. We replace $S$ by one of them, call it $S'$. We still have that $R'[h^{-1}]\to S'[h^{-1}]$ is finite \'etale. If we show that $\tau_{+}(\omega_{S'}, (gh)^{\epsilon})= \tau_{\cB}(\omega_{S'})$, then by the transformation rule of finite maps, we know that 
        $$\tau_{+}(\omega_S, (gh)^{\epsilon})= \Tr_{S'/S}\tau_{+}(\omega_{S'}, (gh)^{\epsilon})= \Tr_{S'/S}\tau_{\cB}(\omega_S)=\tau_{\cB}(\omega_S)$$
        which is what we want.  From here on out we view $A, R, S \subseteq R^+$ and we also fix maps from $A_{\infty}, R^{A_{\infty}}_{\perfd}, S^{A_{\infty}}_{\perfd}$ to $\widehat{R^+}$ in a compatible way, i.e., we have $A_{\infty}\to R^{A_{\infty}}_{\perfd}\to S^{A_{\infty}}_{\perfd} \to \widehat{R^+}$.

Therefore from now on we assume that $A[g^{-1}] \subseteq R[g^{-1}]$ and $R[h^{-1}]\to S[h^{-1}]$ are Galois. Following the proof of \autoref{clm.Tau+PerturbedEqualsTauBigB}, it is enough to show that 
$$M_3  :=  \{ \eta \in H^d_{\fram}(S) \;|\;  (gh)^{1/p^{\infty}} \eta = 0 \in H^d_{\fram}(B) \text{ for some perfectoid BCM $R^+$-algebra $B$ }\} 
$$
is contained in 
$$M_2  :=  \{ \eta \in H^d_{\fram}(S) \;|\;  (gh)^{1/p^{\infty}} \eta = 0 \in H^d_{\fram}(R^+)\}.$$
For every $\sigma$ in the Galois group of $K(R)$ over $K(A)$, we fix a lift $\sigma'$ of $\sigma$ in the Galois group of $K(R^+)=\overline{K(R)}$ over $K(A)$. Let $B$ be a big Cohen-Macaulay $R^+$-algebra. We will use $\sigma_*B$ to denote the $R^{A_\infty}_{\perfd}$-algebra via the map $R^{A_\infty}_{\perfd}\xrightarrow{\sigma}R^{A_\infty}_{\perfd}\to \widehat{R^+}\to B$. We have a commutative diagram:
\[\xymatrix{
R^{A_\infty}_{\perfd} \ar[r] \ar[d]^{\sigma} & S^{A_\infty}_{\perfd} \ar[r] & \widehat{R^+} \ar[d]^{\sigma'} \\
R^{A_\infty}_{\perfd} \ar[rr] && \widehat{R^+} \ar[r] & B,
}
\]
which endows $\sigma'_*B$ an $S^{A_\infty}_{\perfd}$-algebra structure such that $\sigma'_*B\cong \sigma_*B$ as $R^{A_\infty}_{\perfd}$-algebras. We claim that we have a commutative diagram: 
\[
\xymatrix{
    A_\infty \ar[r] \ar[d] & R^{A_\infty}_{\perfd} \ar[r] \ar[d] & S^{A_\infty}_{\perfd} \ar[d] \\
    B \ar[r] & R^{A_\infty}_{\perfd}\widehat{\otimes}_{A_\infty} B \ar[r] \ar@{=}[d]^a & S^{A_\infty}_{\perfd} \widehat{\otimes}_{A_\infty} B \ar@{=}[d]^a\\
    & \displaystyle \prod_{\sigma} \sigma_*B \cong \prod_{\sigma} \sigma'_*B \ar[r] & \displaystyle \prod_{\rho}\rho_*\big(\prod_{\sigma} \sigma'_*B\big)
}
\]
We explain the two almost isomorphisms, which are $g$-almost and $(gh)$-almost respectively.  Indeed, notice that  $R^{A_\infty}_{\perfd} \widehat{\otimes}_{A_\infty} R^{A_\infty}_{\perfd}$ is $g$-almost isomorphic to $\prod_{\sigma} \sigma_*R^{A_\infty}_{\perfd}$ and likewise $S^{A_\infty}_{\perfd}\widehat{\otimes}_{R^{A_\infty}_{\perfd}} S^{A_\infty}_{\perfd}$ is $h$-almost isomorphic to $\prod_{\rho}\rho_*S^{A_\infty}_{\perfd}$ by \cite[Theorem 10.9]{BhattScholzepPrismaticCohomology}, where $\sigma$ and $\rho$ runs over all elements in the Galois groups of $K(R)/K(A)$ and $K(S)/K(R)$.  The first almost isomorphism follows immediately from 
\[ 
R^{A_\infty}_{\perfd} \widehat{\otimes}_{A_\infty} B \cong \Big(R^{A_\infty}_{\perfd} \widehat{\otimes}_{A_\infty} R^{A_\infty}_{\perfd}\Big) \widehat{\otimes}_{R^{A_\infty}_{\perfd}} B \overset{a}{\cong} \Big(\prod_{\sigma} \sigma_*R^{A_\infty}_{\perfd}\Big) \widehat{\otimes}_{R^{A_\infty}_{\perfd}} B \cong \prod_{\sigma} \sigma_*B .
\]
For the second, we note that
\[
    {\def\arraystretch{2.0}
    \begin{array}{rl}
         S^{A_\infty}_{\perfd} \widehat{\otimes}_{A_\infty} B 
        \cong  S^{A_\infty}_{\perfd} \widehat{\otimes}_{R^{A_\infty}_{\perfd}}  R^{A_{\infty}}_{\perfd} \widehat{\otimes}_{A_\infty} B
        \overset{a}{\cong}   \displaystyle{S^{A_\infty}_{\perfd} \widehat{\otimes}_{R^{A_\infty}_{\perfd}} \Big(\prod_{\sigma} \sigma'_*B \Big)}
        \cong  \displaystyle{\prod_{\sigma} \Big( S^{A_\infty}_{\perfd} \widehat{\otimes}_{R^{A_\infty}_{\perfd}} \sigma'_* B\Big)}
    \end{array}}
\]
With the aforementioned $S^{A_\infty}_{\perfd}$-algebra structure on each $\sigma'_*B$, we have
\[
 {\def\arraystretch{2.0}
    \begin{array}{rl}
& \displaystyle{\prod_{\sigma'} \Big( S^{A_\infty}_{\perfd} \widehat{\otimes}_{R^{A_\infty}_{\perfd}} \sigma'_* B\Big)} \\
    \cong & \displaystyle{\prod_{\sigma'} \big(S^{A_\infty}_{\perfd} \widehat{\otimes}_{R^{A_\infty}_{\perfd}} S^{A_\infty}_{\perfd} \big)\widehat{\otimes}_{S^{A_\infty}_{\perfd}} \sigma'_*B }\\
    \overset{a}{\cong} & \displaystyle{\prod_{\sigma'} \big(\displaystyle{\prod_{\rho}\rho_*S^{A_\infty}_{\perfd}\big) \widehat{\otimes}_{S^{A_\infty}_{\perfd}} \sigma'_*B}}\\
    \cong & \displaystyle{\prod_{\rho, \sigma'}\rho_* \sigma'_*B}
    \end{array}
    }
\]
The last isomorphism is where we use the left $S^{A_{\infty}}_{\perfd}$-action on the second-to-last line.

By \cite[Lemma 5.1.5]{CaiLeeMaSchwedeTuckerPerfectoidSignature1} applied to $A\to R$ and $R\to S$,\footnote{In \cite[Lemma 5.1.5]{CaiLeeMaSchwedeTuckerPerfectoidSignature1}, it is assumed that the base ring $A$ is regular but the proof works for general $A\to R$ finite extensions of complete normal local domains.} we know that there exists a sufficiently large perfectoid $R^+$-algebra $B$ such that  
$$
M_3  =  \Big\{ \eta \in H^d_{\fram}(S) \;\Big|\;  (gh)^{1/p^{\infty}} \eta = 0 \in H^d_{\fram}\Big(\prod_{\rho}\rho_*\big(\prod_{\sigma} \sigma'_*B\big)\Big) \Big\} \\
$$
Since $A_\infty\to B$ is $p$-complete faithfully flat, by the diagram above we know that $$S^{A_\infty}_{\perfd} \to \prod_{\rho}\rho_*\big(\prod_{\sigma} \sigma'_*B\big)$$ is $p$-complete $(gh)$-almost faithfully flat and in particular $p$-complete $(gh)$-almost pure in the sense of \cite[Proposition 2.3.8]{CaiLeeMaSchwedeTuckerPerfectoidSignature1}. In particular, we know that 
\[ 
    H_\m^d(S^{A_\infty}_{\perfd}) \to H_\m^d\Big(\prod_{\rho}\rho_*\big(\prod_{\sigma} \sigma'_*B\big)\Big)
\]
is $(gh)$-almost injective. Since $S^{A_\infty}_{\perfd}$ maps to $\widehat{R^+}$, it follows that $M_3$ is contained in 
$$M_2=\{ \eta \in H^d_{\fram}(S) \;|\;  (gh)^{1/p^{\infty}} \eta = 0 \in H^d_{\fram}(R^+)\}$$
as wanted. 
\end{remark}

}



\begin{definition}[Perturbation-friendly test module]
\label{def:perturbed_tau}
We work in the situation of \autoref{setting:section7}.
Cover $X$ by open quasi-projective (for instance affine) $X_i \subseteq X$.  We let $\overline{X_i}$ be a projective compactification of $X$.  On each $X_i$ define 
\[
    \tau(\omega_{X_i}) := \tau_{\alt}^a(\omega_{\overline{X_i}},  \epsilon' h_i^*G_i)|_{X_i}.
\]
for $1 \gg \epsilon' > 0$, where $h_i$ and $G_i$ are as in \autoref{prop.PerturbedTestIdealIsReal} (i.e., $h_i: \overline{X_i}\to \mathbb{P}^n_{V}$ is finite surjective and \'etale outside $h_i^{-1}G_i$).  By \autoref{prop.PerturbedTestIdealIsReal}, the $\tau(\omega_{X_i})$ are independent of the choices of $h_i$ and $G_i$ and they agree on the overlaps $X_i \cap X_j$ -- they are all coherent subsheaves of $\omega_X$, and hence they glue to make the \emph{mixed characteristic test module}: $\tau(\omega_X)$.

Recall that $\Gamma$ is a $\bQ$-Cartier $\bQ$-divisor on $X$.  We pick $f : Z \to X$ a finite surjective map with $Z$ normal integral such that $f^* \Gamma$ is Cartier.   We define the \emph{mixed characteristic test module} of $(X, \Gamma)$ to be 
    \[
        \utau(\omega_X, \Gamma) = \Tr\Big( f_* \utau(\omega_Z) \otimes_{\cO_Z} \cO_Z(-f^* \Gamma)  \Big).
    \]
Once again, this definition is independent of the choice of $Z$, using the transformation rules for finite maps \autoref{cor.TransformationRuleRHUnderFiniteMaps}.
\end{definition}

We first show that $\utau(\omega_X, \Gamma)$ can be computed on a single sufficiently large alteration when $X$ is quasi-projective, which is an alternate way to conclude independence of the choice of $Z$ above.
\begin{theorem}\label{thm:perturbed_tau_submodule_single_alteration}
In the situation of \autoref{setting:section7} assume that $X \to \Spec V$ is quasi-projective. 
 Then there exists an effective divisor $G^0$ on $X$ so that for every Cartier divisor $G \geq G^0$ on $X$ and every $1 \gg \epsilon > 0$ (depending on $G$),  the following holds.

There exists an alteration $\pi_\epsilon: Y_\epsilon \to X$ with $\pi_\epsilon^*\epsilon G$ and $\pi_{\epsilon}^*\Gamma$ Cartier  such that for every further normal alteration $Y\to Y_{\epsilon}$ with composition $\pi:Y\to Y_{\epsilon}\to X$ we have that 
\[ 
    \tau(\omega_X,\Gamma)= \tau_{\alt}^a(\omega_X,\epsilon G + \Gamma)=\Tr\big(\pi_*\omega_{Y}(-\pi^*\Gamma-\epsilon\pi^* G)\big).
\]

\end{theorem}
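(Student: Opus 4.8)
The plan is to prove the chain of equalities; the existence of $G^0$ and of $Y_\epsilon$ will come for free from the construction, and the single‑alteration description (second equality, and hence the final intersection formula) will be deduced from the first equality $\tau(\omega_X,\Gamma)=\tau^a_{\alt}(\omega_X,\epsilon G+\Gamma)$ by a divisor–absorption argument, so the heart is the first equality. First I would fix once and for all a finite surjective $\psi_0\colon Z_0\to X$ with $Z_0$ normal integral and $\psi_0^*\Gamma$ Cartier, and then choose $G^0$ to be a sufficiently positive effective Cartier divisor on $X$ (available since $X$ is quasi‑projective over $V$) arranged so that (i) $\psi_0^*G^0$ dominates the discriminant divisors produced by \autoref{lem.GlobalDiscriminantDivisor} on projective compactifications of affine charts of $Z_0$ — the divisors that enter \autoref{prop.PerturbedTestIdealIsReal} — and (ii) $G^0\ge\Div_X(\varpi)$.

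For the first equality, both sides are coherent subsheaves of a fixed twist of $\omega_X$, so it suffices to compare them on $X[1/p]$ and after localizing and completing at each closed point. On $X[1/p]$ both sides become the classical multiplier submodule $\mathcal{J}(X[1/p],\omega_{X[1/p]},\Gamma|_{X[1/p]})$ (the $\epsilon G$–perturbation being invisible for $0<\epsilon\ll 1$): for $\tau^a_{\alt}$ this is \autoref{cor.TauRHInvertPEqualsJNoetherianNoPair} applied through $Z_0$, and for $\tau(\omega_X,\Gamma)$ it is \autoref{def:perturbed_tau} together with the transformation of multiplier submodules under finite maps. At a closed point $x\in X_{p=0}$, set $R=\widehat{\cO_{X,x}}$ and $S_0=\widehat{\cO_{Z_0,y_0}}$ for $y_0$ over $x$. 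Computing through $Z_0$ using \autoref{def:tau_RH_submodule_pairs} (and its transitivity along finite covers, \autoref{cor.TransformationRuleRHUnderFiniteMaps}), \autoref{cor.TauRHVsTau+ForDivisorPairs}, the projection formula, and the $M_2=M_3=M_4$ argument in the proof of \autoref{prop.PerturbedTestIdealIsReal} — which is robust, since $M_4$ is perturbation‑free and the containments involved hold for \emph{any} effective perturbation dominating the discriminant, so that $G\ge G^0$ forces $\tau_+(\omega_{S_0},\epsilon'\Div(p)|_{S_0}+\epsilon\psi_0^*G|_{S_0})=\tau_{\mathcal{B}}(\omega_{S_0})$ — one gets
\[
\tau^a_{\alt}(\omega_X,\epsilon G+\Gamma)\otimes R=\Tr_{S_0/R}\big(\tau_{\mathcal{B}}(\omega_{S_0})(-\psi_0^*\Gamma|_{S_0})\big)=\tau_{\mathcal{B}}(\omega_R,\Gamma|_R).
\]
On the other hand \autoref{prop.PerturbedTestIdealIsReal} gives $\tau(\omega_{Z_0})_{y_0}\cdot S_0=\tau_{\mathcal{B}}(\omega_{S_0})$, whence $\tau(\omega_X,\Gamma)\otimes R=\Tr_{S_0/R}\big(\tau_{\mathcal{B}}(\omega_{S_0})(-\psi_0^*\Gamma|_{S_0})\big)$ as well, proving the first equality.

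For the single‑alteration description, pick by \autoref{def:tau_RH_submodule_pairs} a finite surjective $\psi\colon Z'\to X$ with $Z'$ normal integral on which both $\epsilon G$ and $\Gamma$ become Cartier, and by \autoref{thm.tauAlt} an alteration $g\colon W\to Z'$ with $\varpi^{1/p^e}\in\cO_W$ and $\tau^a_{\alt}(\omega_{Z'})=\Tr(g_*\varpi^{1/p^e}\omega_W)$, valid for all further alterations, where $e$ is taken large enough that $\tfrac1{p^e}\le\tfrac12\epsilon$ (permitted by \autoref{thm.tauAlt}). Writing $\pi=\psi\circ g$ and using $\varpi^{1/p^e}\omega_W=\omega_W(-\tfrac1{p^e}\Div_W\varpi)=\omega_W(-\pi^*\tfrac1{p^e}\Div_X\varpi)$, the projection formula gives $\tau^a_{\alt}(\omega_X,\epsilon G+\Gamma)=\Tr\big(\pi_*\omega_W(-\pi^*(\epsilon G+\Gamma+\tfrac1{p^e}\Div_X\varpi))\big)$, which is contained in $\Tr(\pi_*\omega_W(-\pi^*(\epsilon G+\Gamma)))$. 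Conversely, by (ii) we have $\tfrac1{p^e}\Div_X\varpi\le\tfrac12\epsilon G$, so $\omega_W(-\pi^*(\epsilon G+\Gamma))\subseteq\varpi^{1/p^e}\omega_W(-\pi^*(\tfrac12\epsilon G+\Gamma))$; applying $\Tr$ and \autoref{thm.tauAlt} for $Z'$ once more shows $\Tr(\pi_*\omega_W(-\pi^*(\epsilon G+\Gamma)))\subseteq\tau^a_{\alt}(\omega_X,\tfrac12\epsilon G+\Gamma)$, which by the first equality equals $\tau(\omega_X,\Gamma)=\tau^a_{\alt}(\omega_X,\epsilon G+\Gamma)$. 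Hence all these are equal. Taking $Y_\epsilon$ to be a normal alteration large enough for all these invocations of \autoref{thm.tauAlt} (obtained by replacing $Z'$ by a common finite cover on which $\epsilon G+\Gamma$ and $\tfrac12\epsilon G+\Gamma$ are simultaneously Cartier, then applying \autoref{thm.tauAlt} to it) yields the statement for every further normal alteration $Y\to Y_\epsilon$, and the displayed intersection formula follows since the intersection stabilizes.

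The main obstacle is the bookkeeping around $G^0$ and the passage to completed stalks: one must pin down $G^0$ so that it dominates all the relevant ramification/discriminant data (so \autoref{prop.PerturbedTestIdealIsReal} applies through $Z_0$) and also $\Div_X(\varpi)$ (so the $p$‑perturbation $\epsilon G$ can absorb the intrinsic $\varpi^{1/p^e}$–perturbation built into $\tau^a_{\alt}$), and one must check that the $M_\bullet$‑comparison underlying \autoref{prop.PerturbedTestIdealIsReal} is insensitive to replacing the model perturbation $p^\epsilon g^\epsilon$ by $\epsilon'\Div(p)+\epsilon G|_R$. This is precisely where the hypothesis $G\ge G^0$ is used, and it is the mechanism by which the $p$‑perturbation subsumes the $\varpi$‑perturbation.
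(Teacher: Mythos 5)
Your argument is correct in outline, but it takes a different route from the paper for the first equality, and you should patch one uniformity issue. The paper reduces to $\Gamma=0$, takes $G^0 = h^*G' + \Div(\varpi)$ with $h,G'$ from \autoref{prop.PerturbedTestIdealIsReal}, and proves everything by pure sandwiching: for $1/p^{e+1} < \epsilon < 1/p^e$ the trace image $\Tr\big(\pi_*\omega_Y(-\pi^*\epsilon G^0)\big)$ is squeezed between the trace images for $\tfrac{1}{p^e}G^0$ and $\tfrac{1}{p^{e+1}}G^0$, each of which is computed directly by \autoref{thm.tauAlt} because $\tfrac{1}{p^e}G^0 = \tfrac{1}{p^e}h^*G' + \tfrac{1}{p^e}\Div(\varpi)$ and the $\Div(\varpi)$ summand is exactly the built-in $\varpi^{1/p^e}$-perturbation; the passage from $G^0$ to general $G \geq G^0$ is then a second sandwich $G^0 \leq G \leq G^1$, and general $\Gamma$ is handled by running the $\Gamma=0$ case on a finite cover $Z$ trivializing $\Gamma$. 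Your treatment of the single-alteration statement uses the same absorption mechanism (requiring $G^0 \geq \Div_X(\varpi)$ so that $\tfrac{1}{p^e}\Div\varpi \leq \tfrac{\epsilon}{2}G$), just organized as a sandwich between $\tau^a_{\alt}(\omega_X,\epsilon G+\Gamma)$ and $\tau^a_{\alt}(\omega_X,\tfrac{\epsilon}{2}G+\Gamma)$ rather than between consecutive $1/p^e$'s. Where you genuinely diverge is the first equality: you prove it by comparing completed stalks with $\tau_{\cB}(\omega_R,\Gamma|_R)$ via \autoref{prop.PerturbedTestIdealIsReal} and \autoref{clm.Tau+PerturbedEqualsTauBigB}, which is essentially a front-loaded proof of \autoref{prop.PropertiesOfUltTauOmegaForPairs}~\autoref{prop.PropertiesOfUltTauOmegaForPairs.Completion}; this is not circular (all inputs precede the theorem), and it buys a conceptual explanation of \emph{why} the answer is independent of $G \geq G^0$, at the cost of invoking the BCM machinery where the paper only needs monotonicity and Noetherianity.

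The point to fix: your stalkwise argument produces, for each closed point $x \in X_{p=0}$, a threshold $\epsilon_x$ below which the completed stalks agree, but the theorem requires a single $\epsilon$ working for all $x$ simultaneously, and a fixed $\epsilon$ may fail at points with $\epsilon_x < \epsilon$. This is repaired by first observing that $\{\tau^a_{\alt}(\omega_X,\epsilon G+\Gamma)\}_{\epsilon>0}$ is an increasing (as $\epsilon \downarrow 0$) family of coherent subsheaves of a fixed twist of $\omega_X$, hence stabilizes to some $\sF$ for $0<\epsilon\leq\epsilon_0$ by Noetherianity of $X$; your pointwise comparison then shows $\sF \otimes \widehat{\cO_{X,x}} = \tau(\omega_X,\Gamma)\otimes\widehat{\cO_{X,x}}$ for every $x$ (choosing $\epsilon < \min(\epsilon_0,\epsilon_x)$ point by point), which together with the agreement on $X[1/p]$ gives $\sF = \tau(\omega_X,\Gamma)$. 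The paper sidesteps this because \autoref{prop.PerturbedTestIdealIsReal} already fixes a uniform $\epsilon_0$ by Noetherianity at the outset. A second, smaller point: your appeal to the "robustness" of \autoref{clm.Tau+PerturbedEqualsTauBigB} for an arbitrary perturbation $D \geq \Div(g)$ needs a word of justification for the containment $\tau_{\cB}(\omega_{S_0}) \subseteq \tau_+(\omega_{S_0},\epsilon'\Div(p)+\epsilon D)$, since $D$ may have components not supported on $\Div(g)$; this is handled by bounding $\epsilon'\Div(p)+\epsilon D \leq \delta\,\Div(p\,g\,\nm(u))$ for $u$ cutting out $D$ and $\delta = \max(\epsilon,\epsilon')$, and applying the Claim to $p\,g\,\nm(u) \in A$.
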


{\color{black}Later, in \autoref{thm:onealterationtorulethemall}, we will see we can replace $G^0$ with an ideal $pJ$ where $J$ is the ideal defining the locus where $(X, \Gamma)$ is not smooth over $V$.}

\begin{proof}
We first deal with the case when $\Gamma=0$.  
We may assume $X\to \Spec(V)$ is projective. Let $G' > 0$ and $h : X \to \bP^n$ be as in \autoref{prop.PerturbedTestIdealIsReal} (\cf \autoref{lem.GlobalDiscriminantDivisor}), so that for $\epsilon'> 0$ sufficiently small we have $\tau(\omega_X)=\tau_{\rm alt}^a(\omega_X,  \epsilon'h^*G')$ by \autoref{def:perturbed_tau}.     
Fixing such an $\epsilon'$, by \autoref{def:tau_RH_submodule_pairs}, if $\phi:Z\to X$ is such that $\phi^*\epsilon'h^* G'$ is Cartier, then 
$$\tau_{\rm alt}^a(\omega_X,  \epsilon' h^*G')=\Tr(\phi_*\tau_{\rm alt}^a(\omega_Z)\otimes \sO_Z(-\phi^*\epsilon' h^*G')).$$
   Now by \autoref{thm.tauAltNonComplete}, there is some $e_0$, such that for every $e\geq e_0$ there is an alteration $Y_e\to Z$ with $\varpi^{1/{p^e}}\in \cO_{Y_e}$ 
   such that for every alteration $\psi : Y \to Z$ which factors through $Y_e$ we have $$\tau_{\rm alt}^a(\omega_{Z})=\Tr(\psi_*\varpi^{1/p^e}\omega_{Y}).$$  Now set $G^0=h^*G'+\Div(\varpi)$, and choose $\epsilon_{G_0}$ to satisfy $\epsilon_{G^0}<{1/p^{e_0}}$ and $\epsilon_{G^0}<\epsilon'$.  
   For any $0<\epsilon<\epsilon_{G^0}$, choose $e {\geq e_0}$ such that $1/p^{e+1}<\epsilon<1/p^e$, and choose $Y_\epsilon$ that dominates $Y_e$ and $Y_{e+1}$.  Then for any alteration $\pi:Y\to X$ over $Y_\epsilon$ we have 
   \[ 
        \Tr\left(\pi_*\omega_Y\left(-\pi^*\frac{1}{p^e} G^0\right)\right)\subseteq\Tr\left(\pi_*\omega_Y\left(-\pi^*\epsilon G^0\right)\right)\subseteq \Tr\left(\pi_*\omega_Y\left(-\pi^*\frac{1}{p^{e+1}} G^0\right)\right).
   \]
   Both modules on the outside are equal to $\tau(\omega_X)$ by construction.  

   Now for any $G \geq G^0$, note that we can find $G'' \geq G'$ on $\bP^n_V$ so that $G^1 := h^*G''+\Div(\varpi) \geq G$.
   The arguments above yielding $\epsilon_{G^0}$ and $Y_\epsilon$ are also valid for $G^1$;  choose $\epsilon_0$ to be the minimum of $\epsilon_{G^0}$ and the new $\epsilon_{G^1}$. For every $0<\epsilon <\epsilon_0$, we can further replace $Y_{\epsilon}$ by a larger alteration so that it satisfies the desired conclusion for both $G^0$ and $G^1$ simultaneously.  
   Hence we conclude by noting that:
   \[
   \tau(\omega_X) = \tau_{\alt}^a(\omega_X, \epsilon G^0) \supseteq \tau_{\alt}^a(\omega_X, \epsilon G) \supseteq \tau_{\alt}^a(\omega_X, \epsilon G^1) = \tau(\omega_X)
   \]
   \[
    \tau(\omega_X) = \Tr\big( \pi_* \omega_Y(-\pi^* \epsilon G^0) \big) \supseteq \Tr\big( \pi_* \omega_Y(-\pi^* \epsilon G) \big) \supseteq \Tr\big( \pi_* \omega_Y(-\pi^* \epsilon G^1) \big) = \tau(\omega_X).
   \]
This completes the case of $\Gamma=0$. 

Finally, we handle the general case. Choose a finite map $\phi:Z\to X$ such that $\phi^*\Gamma$ is Cartier.  Then $$\tau(\omega_X,\Gamma)=\Tr(\phi_*\tau(\omega_{Z})\otimes{\sO_Z}(-\phi^*\Gamma))$$
Set $h' = h \circ \phi : Z \to \bP^n_V$. We may enlarge $G'$ so that $h'$ is \'etale outside $(h')^{-1}G'$ as well, and run the above argument for trivial boundary on $Z$, noting that the divisors $G^0$ and $G^1$ produced on $Z$ are pulled back from $X$. 
    Therefore, applying 
    the result for trivial boundary on $Z$
    also
    produces the required $\epsilon_0$ and the required alterations over $X$. 
\end{proof}

{\color{black}
\begin{remark}
\label{rem.BaseChangeToCompletionWithDivisorPairs}
    Suppose $V \to \widehat{V}$ is the completion map and $g : X_{\widehat{V}} \to X$ is the base change.  Note while $X_{\widehat{V}}$ need not be integral, although it is normal and hence it is a disjoint union of normal irreducible components all of which are finite type over $\widehat{V}$, and we can work with each component separately when defining $\tau(\omega_{X_{\widehat{V}}}, g^*\Gamma)$.  
    
    We claim that $\tau(\omega_X, \Gamma) \otimes_V \widehat{V} = \tau(\omega_{X_{\widehat{V}}}, g^*\Gamma)$.  Indeed, after reducing to the projective case, we notice that the $G$ we pick as in \autoref{prop.PerturbedTestIdealIsReal}, pulls back to $g^* G$ which satisfies the same conditions for $X_{\widehat{V}}$.  But now apply \autoref{thm.tauAltNonComplete} to $Z$, a finite cover of $X$ where $\epsilon G + \Gamma$ becomes Cartier.  Indeed, fixing an alteration $f : Y_e \to Z$ as in \autoref{thm.tauAltNonComplete}, we see that each pair of irreducible components of the base change $f \otimes_V \widehat{V}$, say $\overline{Y_e} \to \overline{X}$, satisfy the desired stabilization result.  This forces  
    \[
    \tau(\omega_X, \Gamma) \otimes_V \widehat{V} = \tau(\omega_{X_{\widehat{V}}}, g^*\Gamma)
    \]
    as claimed.
\end{remark}
}

We next list the properties of this test module.

\begin{proposition}
    \label{prop.PropertiesOfUltTauOmegaForPairs}
    We work in the situation of \autoref{setting:section7}. 
    The following hold.
    \begin{enumerate}
        \item If $\Gamma \leq \Gamma'$ then $\tau(\omega_X, \Gamma) \supseteq \tau(\omega_X, \Gamma')$.\label{prop.PropertiesOfUltTauOmegaForPairs.IncreasingCoefficientContainment}
        \item If $D$ is Cartier then $\tau(\omega_X, \Gamma+D) = \tau(\omega_X, \Gamma) \otimes \cO_X(-D)$.\label{prop.PropertiesOfUltTauOmegaForPairs.EasyPrincipalSkoda}
        \item $\utau(\omega_X, \Gamma)[1/p] = \mJ(\omega_{X[1/p]}, \Gamma|_{X[1/p]})$. \label{prop.PropertiesOfUltTauOmegaForPairs.InvertP}
        \item For any finite surjective map $f : X' \to X$, $\Tr(f_*\utau(\omega_{X'}, f^* \Gamma)) = \utau(\omega_X, \Gamma)$.\label{prop.PropertiesOfUltTauOmegaForPairs.FiniteMaps}
        \item  For any projective birational map $f : X' \to X$, $f_* \utau(\omega_{X'}, f^* \Gamma) \supseteq \utau(\omega_X, \Gamma)$. \label{prop.PropertiesOfUltTauOmegaForPairs.BirationalMaps}
        \item Suppose $X$ is quasi-projective and $V$ is complete. Then there exists $G^0 \geq 0$, such that for all Cartier divisors $G \geq G^0 \geq 0$ on $X$, we have that $\utau(\omega_X, \Gamma) = \tau_{\myB^0}(\omega_X, \epsilon G + \Gamma)$ for all $1 \gg \epsilon > 0$ (depending on $G$).\label{prop.PropertiesOfUltTauOmegaForPairs.ComparisonWithB0}
        \item Suppose $X$ is quasi-projective, $x \in X_{p = 0}$, and $R = \widehat{\cO_{X,x}}$.  Then there exists $G^0 \geq 0$ such that for all Cartier divisors $G \geq G^0 \geq 0$ on $X$, we have that
        \[
            \utau(\omega_X, \Gamma) \otimes R = \tau_{+}(\omega_R, \Gamma|_R + \epsilon G|_R)
        \]  
        for all $1 \gg \epsilon > 0$ (depending on $G$).  \label{prop.PropertiesOfUltTauOmegaForPairs.CompletionComparisonWithTau+}
        \item             \label{prop.PropertiesOfUltTauOmegaForPairs.Completion}
        Suppose $x \in X_{p=0}$ and $R = \widehat{\cO_{X,x}}$.  Then 
             \[
                \utau(\omega_X, \Gamma) \otimes R = \tau_{\cB}(\omega_R, \Gamma|_R).%
            \]
        \item For any Cartier divisor $D > 0$ on $X$, we have that 
            \[
                \tau(\omega_X, \Gamma) = \tau(\omega_X, \epsilon D + \Gamma)
            \]
            for all $1 \gg \epsilon > 0$ (depending on $D$).
            \label{prop.PropertiesOfUltTauOmegaForPairs.Perturbation}
        \item For any smooth $f : Y \to X$ we have that 
            \[
                f^* \tau(\omega_X, \Gamma) \otimes \omega_{Y/X} = \tau(\omega_Y, f^* \Gamma).
            \]
            \label{prop.PropertiesOfUltTauOmegaForPairs.SmoothPullBack}
    \end{enumerate}
\end{proposition}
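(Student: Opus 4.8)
\textbf{Plan of proof for \autoref{prop.PropertiesOfUltTauOmegaForPairs}.} The ten assertions split into three groups, and I would organize the proof accordingly. Items \eqref{prop.PropertiesOfUltTauOmegaForPairs.IncreasingCoefficientContainment} and \eqref{prop.PropertiesOfUltTauOmegaForPairs.EasyPrincipalSkoda} are purely formal consequences of \autoref{def:perturbed_tau}: for \eqref{prop.PropertiesOfUltTauOmegaForPairs.EasyPrincipalSkoda} one chooses a common finite cover $f\colon Z\to X$ with $f^*\Gamma$ and $f^*(\Gamma+D)=f^*\Gamma+f^*D$ both Cartier, and then the projection formula together with the identity $\tau_{\alt}^a(\omega_X,\Gamma+D)=\tau_{\alt}^a(\omega_X,\Gamma)\otimes\cO_X(-D)$ (recorded right before \autoref{cor.TauRHVsTau+ForDivisorPairs}) gives the claim; for \eqref{prop.PropertiesOfUltTauOmegaForPairs.IncreasingCoefficientContainment} one writes $\Gamma'=\Gamma+(\Gamma'-\Gamma)$ with $\Gamma'-\Gamma\geq 0$ effective, reduces to the effective case by \eqref{prop.PropertiesOfUltTauOmegaForPairs.EasyPrincipalSkoda}, and uses that $\tau(\omega_X)$ is a subsheaf of $\omega_X$ so tensoring by $\cO_Z(-f^*(\Gamma'-\Gamma))$ shrinks the image of the trace. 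Item \eqref{prop.PropertiesOfUltTauOmegaForPairs.FiniteMaps} follows directly from \autoref{cor.TransformationRuleRHUnderFiniteMaps} combined with the transitivity of trace maps, exactly as in the proof of \autoref{cor.TransformationRuleRHUnderFiniteMaps}; and \eqref{prop.PropertiesOfUltTauOmegaForPairs.BirationalMaps} follows from \autoref{prop:AltContain}/\autoref{AltContainDual} applied to the birational alteration $f$, since every alteration of $X'$ is an alteration of $X$ so the intersection defining $\tau(\omega_X,\Gamma)$ on $X$ is over a larger index set, giving the containment after pushforward.

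For the comparison statements I would first establish \eqref{prop.PropertiesOfUltTauOmegaForPairs.CompletionComparisonWithTau+} and \eqref{prop.PropertiesOfUltTauOmegaForPairs.Completion}, as these feed into \eqref{prop.PropertiesOfUltTauOmegaForPairs.Perturbation}. Reducing to $\Gamma\geq 0$ via \eqref{prop.PropertiesOfUltTauOmegaForPairs.EasyPrincipalSkoda}, statement \eqref{prop.PropertiesOfUltTauOmegaForPairs.CompletionComparisonWithTau+} is obtained by combining \autoref{def:perturbed_tau} (which expresses $\tau(\omega_X,\Gamma)$ locally as $\tau_{\alt}^a(\omega_{\overline{X_i}},\epsilon' h_i^*G_i+\Gamma)|_{X_i}$, after absorbing $\Gamma$ into a finite cover) with \autoref{cor.TauRHVsTau+ForDivisorPairs} and \autoref{prop.PerturbedTestIdealIsReal}; the divisor $G^0$ is the one produced in \autoref{prop.PerturbedTestIdealIsReal} (namely $h^*G$), and the key point is that for $G\geq G^0$ the perturbation $\epsilon G$ already dominates $\epsilon' h^*G'$ up to small $\epsilon$ so the test modules agree, and then \autoref{cor.TauRHVsTau+ForDivisorPairs} identifies the completed stalk with $\tau_+(\omega_R,\Gamma|_R+\epsilon G|_R)$. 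Statement \eqref{prop.PropertiesOfUltTauOmegaForPairs.Completion} is then immediate from \eqref{prop.PropertiesOfUltTauOmegaForPairs.CompletionComparisonWithTau+} (in the quasi-projective case, which suffices by localizing) together with \autoref{prop.PerturbedTestIdealIsReal}, which says precisely $\tau_+(\omega_R,\epsilon G|_R)=\tau_{\cB}(\omega_R)$ and hence, running the same argument with the boundary $\Gamma$ carried along, $\tau_+(\omega_R,\Gamma|_R+\epsilon G|_R)=\tau_{\cB}(\omega_R,\Gamma|_R)$; here one invokes \autoref{clm.Tau+PerturbedEqualsTauBigB} with the boundary divisor included. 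Item \eqref{prop.PropertiesOfUltTauOmegaForPairs.ComparisonWithB0} is the global version: it follows from \autoref{thm:perturbed_tau_submodule_single_alteration} (single-alteration description) together with \autoref{thm.ComparisonTauRHvsHLSNoPair} and \cite[Proposition 4.7, 4.11]{HaconLamarcheSchwede}, by the section-ring argument used in the proof of \autoref{thm.ComparisonTauRHvsHLSNoPair} with $\Gamma$ carried through — checking the equality on the affine cone charts via \autoref{prop.PropertiesOfUltTauOmegaForPairs.SmoothPullBack}.

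Statement \eqref{prop.PropertiesOfUltTauOmegaForPairs.InvertP} follows from \autoref{cor.TauRHInvertPEqualsJNoetherianNoPair} (which gives $\tau_{\alt}^a(\omega_X)[1/p]=\mathcal{J}(X[1/p],\omega_{X[1/p]})$) together with \autoref{def:perturbed_tau}: inverting $p$ kills the perturbation $\epsilon' h_i^*G_i$ since $p$ divides $\varpi^N$ and $h_i^*G_i$ was chosen so that $h_i$ is étale away from it, so on the characteristic-zero fibre the extra boundary contributes nothing to the multiplier ideal; then one pulls $\Gamma$ through using that multiplier ideals of pairs transform correctly under finite covers. Statement \eqref{prop.PropertiesOfUltTauOmegaForPairs.Perturbation} is the payoff of building in perturbations: by \eqref{prop.PropertiesOfUltTauOmegaForPairs.Completion} both $\tau(\omega_X,\Gamma)$ and $\tau(\omega_X,\epsilon D+\Gamma)$ have completed stalk at every $x\in X_{p=0}$ equal to $\tau_{\cB}(\omega_R,\Gamma|_R)$ — the point being that $\tau_{\cB}$ is already a ``limit'' invariant insensitive to the further small perturbation $\epsilon D$, which is where \autoref{prop.PerturbedTestIdealIsReal} does the real work — and on $X[1/p]$ both agree with $\mathcal{J}(\omega_{X[1/p]},\Gamma|_{X[1/p]})$ by \eqref{prop.PropertiesOfUltTauOmegaForPairs.InvertP} since $\epsilon D$ with $\epsilon\ll 1$ does not change the multiplier ideal; two coherent subsheaves of $\omega_X$ that agree after inverting $p$ and agree at every completed stalk of the special fibre must coincide. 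Finally \eqref{prop.PropertiesOfUltTauOmegaForPairs.SmoothPullBack} follows from \autoref{thm.SmoothPullbackForRHOnNoetherian} applied on the compactification charts of \autoref{def:perturbed_tau}, using that a smooth morphism pulls back the discriminant divisor data $h_i,G_i$ compatibly (smoothness is stable under base change and the étale locus pulls back), so the perturbed test modules match, and then carrying $\Gamma$ through via a finite cover as in \autoref{def:perturbed_tau}. The main obstacle is \eqref{prop.PropertiesOfUltTauOmegaForPairs.Perturbation}: it genuinely requires that the $(\varpi^{1/p^\infty})$-perturbation built into $\tau_{\alt}^a$ has already ``saturated'' the invariant so that no additional effective perturbation changes it, and the only leverage for this is the local comparison with $\tau_{\cB}$ via \autoref{prop.PerturbedTestIdealIsReal} — which is exactly why \autoref{quest.tauAltPerturbed} for the unperturbed $\tau_{\alt}^a$ is left open.
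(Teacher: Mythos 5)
Most of your outline tracks the paper's own proof: (a),(b) are definitional; (d),(e) come from matching perturbation divisors and a common large alteration (the paper runs both through \autoref{thm:perturbed_tau_submodule_single_alteration} rather than citing \autoref{prop:AltContain}, which lives in the almost world on $X_\infty$, but your ``larger index set'' idea is the right one); (g),(h),(i) are essentially the paper's arguments via \autoref{cor.TauRHVsTau+ForDivisorPairs}, \autoref{clm.Tau+PerturbedEqualsTauBigB} after a finite cover trivializing $\Gamma$, and \cite[Theorem C]{MaSchwedeSingularitiesMixedCharBCM} plus Noetherianity. For (f) you propose rerunning the section-ring argument with $\Gamma$; the paper instead reduces to the $\Gamma=0$ case of \autoref{thm.ComparisonTauRHvsHLSNoPair} by tracing down from a finite cover $W\to X$ where $\epsilon(G-\Div(p))$ is Cartier and invoking \cite[Theorem 4.21]{HaconLamarcheSchwede} -- cleaner, and it makes visible why $G^0$ must contain $\Div(p)$ in its support, a point your choice of $G^0$ omits in both (f) and (g).

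Two places need repair. In (c), your justification is wrong as stated: the discriminant divisor $G_i=h_i^*G'$ is \emph{not} supported on the special fibre (the branch locus of a Noether normalization has horizontal components in general), so inverting $p$ does not ``kill'' $\epsilon' h_i^*G_i$, and \'etaleness away from $G_i$ does not make $G_i$ invisible to the multiplier ideal. The correct reason is either the standard insensitivity $\mathcal{J}(\omega_{X[1/p]},\Gamma+\epsilon' G)=\mathcal{J}(\omega_{X[1/p]},\Gamma)$ for $0<\epsilon'\ll 1$ in characteristic zero, or (as the paper does) the single-alteration description of \autoref{thm:perturbed_tau_submodule_single_alteration} combined with the fact that the multiplier ideal is computed by any sufficiently large alteration.

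The genuine gap is (j). Your claim that ``a smooth morphism pulls back the discriminant divisor data $h_i,G_i$ compatibly'' fails whenever $f$ has positive relative dimension: the composite $Y\to X\xrightarrow{h}\bP^n_V$ is then not finite, so $f^*G$ is not the branch divisor of any Noether normalization of $Y$ obtained by pullback, and \autoref{def:perturbed_tau} for $Y$ cannot be fed this data directly. The paper's proof must first reduce to the two cases of \'etale morphisms and $X\times\bA^m\to X$; in the \'etale case it passes to the integral closure $\overline{Y}$ of $X$ in $K(Y)$ (a finite cover) and a large alteration, and in the product case it builds a \emph{new} finite surjection $h_{Y'}:Y'\to\bP^n_V\times\bP^m_V$ which is \'etale outside $f'^*G^0$ -- this uses crucially that \autoref{prop.PerturbedTestIdealIsReal} allows any regular finite-type target $W$, not just $\bP^n_V$. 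Without this reduction and construction, the comparison of the perturbed test modules on $X$ and $Y$ does not go through.
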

\begin{proof}
    \autoref{prop.PropertiesOfUltTauOmegaForPairs.IncreasingCoefficientContainment} and \autoref{prop.PropertiesOfUltTauOmegaForPairs.EasyPrincipalSkoda} follow easily from the definition (or can be deduced from various other properties below).

    For \autoref{prop.PropertiesOfUltTauOmegaForPairs.InvertP}, simply observe that $\mJ(\omega_{X[1/p]}, \Gamma|_{X[1/p]})$ can be computed from a sufficiently large alteration, \cf \cite[Proof of Theorem 8.1]{BlickleSchwedeTuckerTestAlterations}, and apply \autoref{thm:perturbed_tau_submodule_single_alteration}.  

    We next prove \autoref{prop.PropertiesOfUltTauOmegaForPairs.FiniteMaps} and \autoref{prop.PropertiesOfUltTauOmegaForPairs.BirationalMaps} simultaneously. Let $G_{0, X}$ and $G_{0, X'}$ be the effective Cartier divisors on $X$ and $X'$ that satisfy the conclusion of  \autoref{thm:perturbed_tau_submodule_single_alteration} for $X$ and $X'$ respectively. Fix a Cartier divisor $G\geq G_{0, X}$ such that $f^*G\geq G_{0, X'}$ and fix $0 < \epsilon \ll 1$ (depending on $G$) so that this data satisfies the conclusion of \autoref{thm:perturbed_tau_submodule_single_alteration} for $G$ on $X$ and $f^*G$ on $X'$ respectively.  
    Take $Y_\epsilon$ and $Y'_{\epsilon}$ as in  \autoref{thm:perturbed_tau_submodule_single_alteration} and fix an alteration $Y$ that dominates both $Y_\epsilon$ and $Y'_{\epsilon}$. Consider 
    $$\pi: Y\xrightarrow{\pi'} X' \xrightarrow{f}X.$$
    It follows from \autoref{thm:perturbed_tau_submodule_single_alteration} that 
    $$\tau(\omega_X, \Gamma)=\Tr_{Y/X}(\pi_*\omega_Y(-\epsilon \pi^*G-\pi^*\Gamma)), \text{ and }$$
    $$\tau(\omega_{X'}, f^*\Gamma)=\Tr_{Y/X'}(\pi'_*\omega_Y(-\epsilon \pi^*G-\pi^*\Gamma)).$$
In particular, applying $f_*$ to the surjection
$$\pi'_*\omega_Y(-\epsilon \pi^*G-\pi^*\Gamma) \xrightarrow{\Tr_{Y/X'}} \tau(\omega_{X'}, f^*\Gamma),$$
we obtain the top row in the commutative diagram
\[
    \xymatrix{
        \pi_*\omega_Y(-\epsilon \pi^*G-\pi^*\Gamma) \ar@{->>}[rrd]_{\Tr_{Y/X}} \ar[rr]^-{f_*\Tr_{Y/X'}} & & f_*\tau(\omega_{X'}, f^*\Gamma) \ar@{^{(}->}[r] & K(X') \ar[d]^{\Tr_{X'/X}}  \\
        & & \tau(\omega_X, \Gamma) \ar@{^{(}->}[r] & K(X)
    }.
\]
Now if $f$ is finite, then $f_*\Tr_{Y/X'}$ is surjective and the diagram shows that $\Tr(f_*\utau(\omega_{X'}, f^* \Gamma)) = \utau(\omega_X, \Gamma)$ (viewed as subsheaves of $K(X)$) which proves \autoref{prop.PropertiesOfUltTauOmegaForPairs.FiniteMaps}, while if $f$ is projective birational, then $K(X')=K(X)$ (i.e., $\Tr_{X'/X}$ is an isomorphism) and the diagram shows that $f_* \utau(\omega_{X'}, f^* \Gamma)\supseteq \utau(\omega_X, \Gamma)$ which proves \autoref{prop.PropertiesOfUltTauOmegaForPairs.BirationalMaps}.

  Now we prove \autoref{prop.PropertiesOfUltTauOmegaForPairs.ComparisonWithB0}. We may assume that $X$ is projective, and we first consider the case that $\Gamma=0$ (which is the key case). 
  By \autoref{def:perturbed_tau}, we can fix a Cartier divisor $G'\geq 0$ such that for all Cartier divisors $G''\geq G'$, we have $\utau(\omega_X)=\tau_{\alt}^a(\omega_X, \epsilon'' G'')$ for all $0<\epsilon''\ll 1$. Set $G^0:=G'+\Div(p)$ and we will show that $G^0$ satisfies the conclusion of \autoref{prop.PropertiesOfUltTauOmegaForPairs.ComparisonWithB0} (with $\Gamma=0$). Fix a Cartier divisor $G\geq G^0$, we need to show that $\utau(\omega_X)=\tau_{\myB^0}(\omega_X, \epsilon G)$ for all $0<\epsilon\ll1$. Note that, since $G-\Div(p)\geq G^0-\Div(p)=G'$, by our choice of $G'$, we know that 
    \begin{equation}
    \label{eqn.utauVStauAlt}
    \utau(\omega_X) =\tau_{\alt}^a\big(\omega_X, \epsilon (G-\Div(p))\big)  \text{ for all $0<\epsilon \ll1$. }
    \end{equation}
    Next, we note that by Noetherianity of $X$, there exists $\delta>0$ such that 
    \begin{equation}
    \label{eqn.tauB0constant}
    \tau_{\myB^0}\big(\omega_X, \alpha\Div(p) + \beta(G-\Div(p))\big) \text{ is constant for all $0< \alpha, \beta \leq \delta$. }
    \end{equation}
 At this point, by \autoref{eqn.utauVStauAlt} and \autoref{eqn.tauB0constant}, we fix $0<\epsilon \ll \delta$ so that we have $\utau(\omega_X) =\tau_{\alt}^a(\omega_X, \epsilon (G-\Div(p)))$. Choose a finite cover $f$: $W\to X$ such that $\epsilon f^*(G-\Div(p))$ is Cartier on $W$. We then have that
 \begin{align*}
    \utau(\omega_X) &= \tau^a_{\alt}(\omega_X, \epsilon(G-\Div(p)))\\
    & = \Tr\big(f_* \tau^a_{\alt}(\omega_W) \otimes_{\cO_W} \cO_W(-\epsilon f^*(G-\Div(p)))\big) \\
    & = \Tr\big(f_*\tau_{\myB^0}(\omega_W, \epsilon'\Div(p)) \otimes_{\cO_W} \cO_W(-\epsilon f^*(G-\Div(p)))\big) \\
    & = \tau_{\myB^0}(\omega_X, \epsilon'\Div(p) + \epsilon(G-\Div(p))) \\
    & = \tau_{\myB^0}(\omega_X, \epsilon\Div(p) + \epsilon(G-\Div(p))) \\
    & = \tau_{\myB^0}(\omega_X, \epsilon G)
 \end{align*}
for all $0<\epsilon'\ll \epsilon$ where the first equality follows by our choice of $\epsilon$, the second equality follows from \autoref{def:perturbed_tau}, the third equality follows from \autoref{thm.ComparisonTauRHvsHLSNoPair}, the fourth equality follows from \cite[Theorem 4.21]{HaconLamarcheSchwede}, the fifth equality follows from $\epsilon'\ll \epsilon\ll \delta$ and \autoref{eqn.tauB0constant}. This completes the proof when $\Gamma=0$. 

Finally, we can take a finite cover $\pi$: $X'\to X$ such that $\pi^*\Gamma$ is a Cartier divisor on $X'$. By the already established $\Gamma=0$ case applied to $X'$, we find a Cartier divisor $G'^0\geq 0$ on $X'$. Note that by the argument in the $\Gamma=0$ case, we may as well assume that $G'^0=\pi^*G^0$ where $G^0\geq 0$ is a Cartier divisor on $X$. It follows that for all Cartier $G\geq G_0$ on $X$, $\utau(\omega_{X'})=\tau_{\myB^0}(\omega_{X'}, \epsilon \pi^*G)$ for all $0<\epsilon\ll1$. By \autoref{def:perturbed_tau}, $$\utau(\omega_X, \Gamma)= \Tr(\pi_*\utau(\omega_{X'})\otimes_{\cO_{X'}}\cO_{X'}(-\pi^*\Gamma)),$$ 
and by \cite[Theorem 4.21]{HaconLamarcheSchwede}, 
$$\tau_{\myB^0}(\omega_X, \epsilon G+ \Gamma)= \Tr(\pi_*\tau_{\myB^0}(\omega_{X'}, \epsilon \pi^*G) \otimes_{\cO_{X'}}\cO_{X'}(-\pi^*\Gamma)).$$
Putting these together we see that $\utau(\omega_X, \Gamma)=\tau_{\myB^0}(\omega_X, \epsilon G+ \Gamma)$ for all $0<\epsilon\ll1$ as desired. 

 For \autoref{prop.PropertiesOfUltTauOmegaForPairs.CompletionComparisonWithTau+}, letting $G^0$ be chosen as in \autoref{thm:perturbed_tau_submodule_single_alteration}, for $G \geq G^0+ \Div(p)$ and $0 < \epsilon \ll 1$ we have
 \begin{align*}
 \tau(\omega_X,\Gamma) \otimes R  = \tau_{\alt}^a(\omega_X, \epsilon (G-\Div(p)) + \Gamma) \otimes R  = \tau_+(\omega_R, \epsilon G \vert_R + \Gamma \vert_R)
 \end{align*}
 where the second equality uses \autoref{cor.TauRHVsTau+ForDivisorPairs}.

    For \autoref{prop.PropertiesOfUltTauOmegaForPairs.Completion} we may assume that $X$ is affine and hence we may assume that $\Gamma$ is effective.  Pick $f : Y \to X$ finite with $f^* \Gamma$ is Cartier.  Let $h$ and $G$ be as in \autoref{prop.PerturbedTestIdealIsReal} (i.e., $h: \overline{X}\to \mathbb{P}^n_{V}$ is finite surjective and \'etale outside $h^{-1}G$); also set $h' = h \circ f$ and enlarge $G$ to assume $h'$ is also \'etale outside $(h')^{-1}G$.    Choose $y \in Y$ to be a point lying over $x$.  Let $S = \widehat{\cO_{Y,y}}$ and we see that $R \subseteq S$ is finite.
    For $0< \epsilon \ll 1$, we know that
    \begin{align*}
        \tau(\omega_X, \Gamma) \otimes R &= \tau_{\alt}^a(\omega_X, \epsilon h^*G + \Gamma)\otimes R \\
        &= \tau_+(\omega_R, \epsilon \Div(p) + \epsilon h^*G \vert_R + \Gamma \vert_R ) \\
        & = \Tr_{S/R} ( \tau_+(\omega_S, \epsilon \Div(p) + \epsilon (h')^*G \vert_S + f^*\Gamma \vert_S)) \\
        & = \Tr_{S/R} (\tau_{\cB}(\omega_S) \otimes_S S(-f^* \Gamma|_S)) \\
        & = \tau_{\cB}(\omega_R, \Gamma|_R)
    \end{align*}
    where the second equality uses \autoref{cor.TauRHVsTau+ForDivisorPairs}, the third equality uses \cite[Theorem 6.17]{MaSchwedeSingularitiesMixedCharBCM}, the fourth equality follows from \autoref{clm.Tau+PerturbedEqualsTauBigB}, and the last equality again uses \cite[Theorem 6.17]{MaSchwedeSingularitiesMixedCharBCM}.
    

    For \autoref{prop.PropertiesOfUltTauOmegaForPairs.Perturbation}, it is harmless to assume that $X$ quasi-projective.  The result then follows from \autoref{prop.PropertiesOfUltTauOmegaForPairs.ComparisonWithB0} since we may increase $G$.  Alternately, it can be deduced from \autoref{thm:perturbed_tau_submodule_single_alteration} by increasing $G$, or one may deduce it from \autoref{prop.PropertiesOfUltTauOmegaForPairs.Completion} and the fact that it holds for $\tau_{\cB}$ (\cite[Theorem C]{MaSchwedeSingularitiesMixedCharBCM}) plus the fact that these sheaves are coherent on a Noetherian $X$.

    Finally, for \autoref{prop.PropertiesOfUltTauOmegaForPairs.SmoothPullBack}, we may assume that both $X$ and $Y$ are quasi-projective.  It suffices to prove this for \'etale maps and $X \times \bA^m \to X$ (adjoining variables).

    Thus suppose first that $f : Y \to X$ is \'etale.  Let $\overline{f} : \overline{Y} \to X$ denote the integral closure of $X$ in $K(Y)$.  
    Finally, we pick $G^0$ so that both $G^0,$ and $\overline{f}^* G^0$ 
    satisfy \autoref{thm:perturbed_tau_submodule_single_alteration} on $X$ and ${\overline{Y}}$ respectively.  
    Pick $G \geq G^0$ and $1 \gg \epsilon > 0$.  
    We pick a sufficiently large alteration $X_{\epsilon} \to \overline{Y} \to X$ so that for all further alterations $\pi_X : X' \xrightarrow{\pi_{\overline{Y}}} \overline{Y} \to X$, we have that 
    \begin{eqnarray}        
        \tau(\omega_X, \Gamma) & = & \Tr_{X'/X} {\pi_X}_* \cO_{X'}(K_{X'} - \pi_X^* (\Gamma + \epsilon G)) \label{eq.TestIdealToBaseChange}, \text{ and } \\
        \tau(\omega_{\overline{Y}}, \overline{f}^* \Gamma) & = & \Tr_{X'/\overline{Y}} {\pi_{\overline{Y}}}_* \cO_{X'}(K_{X'} - \pi_X^* (\Gamma + \epsilon G)) \label{eq.TestIdealNotBaseChanged}
    \end{eqnarray}
    Notice that $X' \times_X Y$ is normal and hence a disjoint union of irreducible components $W_1, \dots, W_n$ with closures $\overline{W}_1, \dots, \overline{W}_n$.  Each $\overline{W}_i$ dominates $X'$ and hence $X_{\epsilon}$.  It follows that applying $f^* (-) \otimes \omega_{Y/X}$ yields \autoref{eq.TestIdealNotBaseChanged} restricted to $Y$, which is what we wanted.

    It suffices to prove the result for $f : Y = X \times \bP^m \to X$ and hence we may assume that $X$ is projective over $V$. For convenience of the reader we provide the following diagram, the construction of which is gradually explained below.
            \begin{center}
    \begin{tikzcd}
    \ar[bend right = 30, start anchor={[xshift=-5.2em]center}, end anchor={[xshift=-4.6em]center}]{dd}[swap]{v''} \mathllap{Y'' :=\ } X'' \times \bP^m_V \ar{r}{f''} \ar{d}{\beta} & X'' \ar{d}{\alpha} \ar[bend left = 40, start anchor={[xshift=0.9em]center}, end anchor={[xshift=0.7em]center}]{dd}{u''}  & \\
     \mathllap{Y' :=\ } X' \times \bP^m_V \ar{r}{f'} \ar{d}{v'} & X' \ar{d}{u'} \arrow[ul, phantom, "\lrcorner", very near end] & \\ 
    \mathllap{Y :=\ } X \times \bP^m_V \ar{dr} \ar{r}{f} & X \ar{dr}{h} \arrow[ul, phantom, "\lrcorner", very near end] &\\
    & \bP^n_V \times \bP^m_V \ar{r} & \bP^n_V.
    \end{tikzcd}
    \end{center}
    Choose $u' : X' \to X$ a finite map such that $(u')^* \Gamma$ is Cartier.  Form the base change $f' : Y' = X' \times \bP^m \to X'$.
    On $X'$, using a map $h_{X'} : X' \xrightarrow{u'} X \to \bP^n_V$, we can find a Cartier $G^0$ on $X'$ pulled back from $\bP^n_V$ outside of which $h_{X'}$ is \'etale.  Base changing with $\bP^m_V$ we have  that
    \[
        h_{Y'} : Y' \to Y \to \bP^n_V \times \bP^m_V
    \]
    also a finite surjective map to a nonsingular scheme, which is \'etale outside of $f'^* G^0$.  Choose $1 \gg \epsilon > 0$ that works for  both $G := G^0$ and $G_Y := f'^* G^0$.  Finally, set $u'' : X'' \xrightarrow{\alpha} X' \to X$ a further finite cover such that 
    \[
        (u'')^* \Gamma + \alpha^* \epsilon G
    \]
    is Cartier.  We then have that
    \begin{equation}
        \label{eq.TauOmegaXPreBaseChange}
        \tau(\omega_X, \Gamma) = \Tr_{X''/X} (u'')_* \Big( \tau_{\alt}^a(\omega_{X''}) \otimes_{\cO_{X''}} \cO_{X''}(-(u'')^* \Gamma - \alpha^* \epsilon G)\Big).
    \end{equation}
    Setting $Y'' = X'' \times \bP^m_V$ with associated $v'' : Y'' \xrightarrow{\beta} Y' \to Y$ we see likewise that 
    \begin{equation}
        \label{eq.TauOmegaXPostBaseChange}
        \tau(\omega_Y, f^* \Gamma) = \Tr_{Y''/Y} (v'')_* \Big( \tau_{\alt}^a(\omega_{Y''}) \otimes_{\cO_{Y''}} \cO_{Y''}(-(v'')^* f^* \Gamma - \beta^* \epsilon f'^* G)\Big).
    \end{equation}
    The result then follows from \autoref{thm.SmoothPullbackForRHOnNoetherian} once we observe that the formation of trace commutes with base change and hence applying $f^*(-) \otimes \omega_{Y/X}$ to \autoref{eq.TauOmegaXPreBaseChange} yields \autoref{eq.TauOmegaXPostBaseChange}.
\end{proof}

\begin{remark} \label{remark:choice-of-perturbation-divisor}
We emphasize that any effective Cartier divisor $G^0$ as in the statements of \autoref{def:perturbed_tau},  \autoref{thm:perturbed_tau_submodule_single_alteration}, and \autoref{prop.PropertiesOfUltTauOmegaForPairs} is valid as long as its support contains the fibre over $\varpi$, and if we fix $\phi : Z \to X$ with  $\phi^* \Gamma$ is Cartier, we have that $\Supp \phi^* G^0$ contains the ramification of the induced $Z \xrightarrow{\phi} X \xrightarrow{h} \bP^n_V$.
\end{remark}

We conclude with a corollary pointing out that the multiplier ideal in characteristic zero can be computed from \emph{finite covers} in mixed characteristic and localizing.  Suppose $R$ is a Noetherian domain of essentially finite type over a DVR of mixed characteristic $(0, p> 0)$ such that $p$ is in the Jacobson radical (for instance if $R$ is local of mixed characteristic).  
Define 
\[ 
    \tau_{\mathrm{fin}}(\omega_R) = \bigcap_{S \supseteq R} \Tr(\omega_S)
\]
where $S$ runs over finite extensions of $R$ contained in a fixed choice of $R^+$.  We define $\tau_{\alt}(\omega_R)$ analogously, running over alterations, as in the introduction.

\begin{corollary}
    \label{cor.MultiplierIdealViaFiniteCovers}
    With notation as above, $\tau_{\mathrm{fin}}(\omega_R)[1/p] = \tau_{\alt}(\omega_R)[1/p] = \mJ(\omega_{R[1/p]})$.
\end{corollary}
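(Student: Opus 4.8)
The plan is to reduce the statement to results already established in the excerpt, namely \autoref{cor.TauRHInvertPEqualsJNoetherianNoPair} (which handles the alteration version) together with the globalization/localization results of \autoref{sec:SingOverDVR}, and then to interpolate $\tau_{\mathrm{fin}}$ between $\tau_{\alt}$ and the multiplier module. First I would observe that there is an obvious chain of containments. Since every finite extension $R \subseteq S$ is in particular an alteration, while every alteration $Y \to \Spec R$ is dominated by a finite cover after inverting $p$ when the source has sufficiently nice (e.g.\ KLT-type) generic behavior, one expects
\[
\tau_{\alt}(\omega_R) \subseteq \tau_{\mathrm{fin}}(\omega_R) \subseteq \Tr\big(\omega_{R^{\mathrm{N}}}\big) \subseteq \omega_R,
\]
so after inverting $p$ it suffices to show the two outer terms agree. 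The containment $\tau_{\alt}(\omega_R)[1/p] \subseteq \tau_{\mathrm{fin}}(\omega_R)[1/p]$ is immediate. For the reverse, the key input is that over the characteristic zero ring $R[1/p]$ the multiplier module $\mJ(\omega_{R[1/p]})$ can be computed from a single log resolution, hence from a single alteration; by \autoref{cor.TauRHInvertPEqualsJNoetherianNoPair} we already have $\tau_{\alt}(\omega_R)[1/p] = \mJ(\omega_{R[1/p]})$.

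\textbf{Reduction to the complete local case.} The second step is to reduce to the situation where $R$ is a complete local domain, so that the comparison $\tau_+(\omega_R) = \tau_{\mathrm{fin}}(\omega_R)$ of \autoref{ss:different-notions-test-ideals} (specifically \autoref{def:+test} and the surrounding discussion, together with \cite[Theorem 2.5]{BMPSTWW1}) becomes available. Since $p$ lies in the Jacobson radical of $R$, an inclusion of finitely generated $R$-modules can be checked after localizing and completing at the maximal ideals of $R$, all of which lie over $p$ and hence belong to $X_{p=0}$ for $X = \Spec R$ — except that inverting $p$ changes the relevant set of points. So instead I would argue: the sheaf $\tau_{\mathrm{fin}}(\omega_R)$ sits between $\tau_{\alt}(\omega_R)$ and $\omega_R$, all three are quasi-coherent, and $\tau_{\alt}(\omega_R)[1/p] = \mJ(\omega_{R[1/p]})$ is coherent on $R[1/p]$. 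Thus it is enough to show that for every prime $\mathfrak{q}$ of $R[1/p]$ — equivalently every prime $\mathfrak{q} \subseteq R$ with $p \notin \mathfrak q$ — the stalks of $\tau_{\mathrm{fin}}(\omega_R)$ and $\tau_{\alt}(\omega_R)$ at $\mathfrak q$ agree. After localizing at such a $\mathfrak q$ and completing, $p$ becomes a unit, so $\widehat{R_{\mathfrak q}}$ is a complete local $\mathbf Q$-algebra; here the multiplier module is computed by a single proper birational modification, which is an alteration, and a standard argument shows it is also computed by taking traces over all finite covers of a fixed resolution. The point that finite covers suffice in characteristic zero for the \emph{module} (as opposed to needing alterations) is because one can always dominate a given alteration birationally by a composite of a resolution and a finite cover, and the trace through such a tower recovers $\mJ(\omega)$ by Grauert--Riemenschneider.

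\textbf{The main obstacle.} The genuinely delicate point is the behavior of the infinite intersection $\tau_{\mathrm{fin}}(\omega_R) = \bigcap_{S} \Tr(\omega_S)$ under localization at a prime $\mathfrak q \subseteq R$ with $p \notin \mathfrak q$: a priori infinite intersections of submodules do not commute with localization, exactly the difficulty that motivates the whole paper. The resolution is that after inverting $p$ we are in equal characteristic zero, where one \emph{does} have a stabilization statement: the multiplier module is finitely computed, and by the argument above $\tau_{\alt}(\omega_{R[1/p]})$ stabilizes on a single alteration. The subtlety is that $\tau_{\mathrm{fin}}$ runs only over finite covers, not all alterations, so one must verify that, over $R[1/p]$, the finite-cover intersection still stabilizes and still equals $\mJ(\omega_{R[1/p]})$. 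I would handle this by the sandwich: $\tau_{\alt}(\omega_{R[1/p]}) \subseteq \tau_{\mathrm{fin}}(\omega_{R[1/p]})$ trivially, and $\tau_{\mathrm{fin}}(\omega_{R[1/p]}) \subseteq \mJ(\omega_{R[1/p]})$ because for any finite cover $S$ of a normal variety in characteristic zero the trace $\Tr(\omega_S) \subseteq \mJ(\omega_R)$ follows from the fact that the multiplier module is the GR sheaf, which lifts to every alteration (hence every finite cover), combined with log resolution. Since $\tau_{\alt}(\omega_{R[1/p]}) = \mJ(\omega_{R[1/p]})$ by \autoref{cor.TauRHInvertPEqualsJNoetherianNoPair}, the sandwich collapses and we are done. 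The only care needed is to make sure the compatibility $\tau_{\mathrm{fin}}(\omega_R)[1/p] = \tau_{\mathrm{fin}}(\omega_{R[1/p]})$ holds, which again follows from the sandwich since both outer terms commute with inverting $p$ (the leftmost by \autoref{cor.TauRHInvertPEqualsJNoetherianNoPair} and the rightmost trivially for the multiplier module), forcing the middle one to as well.
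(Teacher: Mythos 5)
Your first paragraph is fine and matches the paper: the chain $\tau^a_{\alt}(\omega_R)[1/p]\subseteq\tau_{\alt}(\omega_R)[1/p]\subseteq\mJ(\omega_{R[1/p]})$ together with \autoref{cor.TauRHInvertPEqualsJNoetherianNoPair} collapses to give the second equality and the containment $\tau_{\fin}(\omega_R)[1/p]\supseteq\mJ(\omega_{R[1/p]})$. The problem is the reverse containment $\tau_{\fin}(\omega_R)[1/p]\subseteq\mJ(\omega_{R[1/p]})$, and here your argument has a genuine gap: you localize at primes $\frq$ with $p\notin\frq$, land in an equal characteristic zero local ring, and then assert that ``for any finite cover $S$ of a normal variety in characteristic zero the trace $\Tr(\omega_S)\subseteq\mJ(\omega_R)$.'' This is false, and in fact exactly backwards. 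In characteristic zero a finite extension of a normal domain splits, so $\Tr\colon\omega_S\to\omega_R$ is \emph{surjective}; the intersection of $\Tr(\omega_S)$ over finite covers of a normal characteristic zero ring is all of $\omega$, not the multiplier module. (The correct direction of the GR/alteration statement is $\mJ(\omega_R)\subseteq\Tr(\omega_Y)$ for every alteration $Y$, not the containment you use.) For the same reason your closing claim that $\tau_{\fin}(\omega_R)[1/p]=\tau_{\fin}(\omega_{R[1/p]})$ cannot hold: the right side is $\omega_{R[1/p]}$, while the left side is $\mJ(\omega_{R[1/p]})$ — that these differ for non-KLT $R[1/p]$ is precisely the point of the remark following the corollary, which observes that the intersection defining $\tau_{\fin}$ cannot stabilize. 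The finite covers doing the work are the ones ramified over $p$, and that information is destroyed when you pass to a characteristic zero localization.

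The missing idea is to complete at the \emph{maximal} ideals of $R$, all of which contain $p$ because $p$ lies in the Jacobson radical, so that you stay in mixed characteristic where finite covers genuinely detect singularities. Concretely: $\tau_{\fin}(\omega_R)\otimes_R\widehat{R}\subseteq\bigcap_{S}\Tr(\omega_S\otimes_R\widehat{R})$ since an infinite intersection can only grow under flat base change; the right-hand side is $\tau_{\fin}(\omega_{\widehat{R}})$, which equals $\tau_{\alt}(\omega_{\widehat{R}})$ by \cite[Proposition 4.29]{BMPSTWW1} (the complete local mixed characteristic equivalence of finite covers and alterations, resting on the Cohen--Macaulayness of $R^+$); this in turn is contained in $\mJ(\omega_{\widehat{R}[1/p]})$, which agrees with $\mJ(\omega_{R[1/p]})\otimes_{R[1/p]}\widehat{R}[1/p]$ by excellence of $R$ (a resolution base changes to a resolution along the regular map $R\to\widehat{R}$). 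Running over all maximal ideals and using faithful flatness of $R_{\fram}\to\widehat{R}$ then gives $\tau_{\fin}(\omega_R)[1/p]\subseteq\mJ(\omega_{R[1/p]})$. Your proposal explicitly turned away from this route (``except that inverting $p$ changes the relevant set of points''), but it is the only one that works: the statement is not a characteristic zero statement checked stalkwise on $\Spec R[1/p]$, it is a mixed characteristic statement whose proof must pass through points of residue characteristic $p$.
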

\begin{proof}
    Note we have 
    \[
        \tau_{\alt}^a(\omega_R)[1/p] \subseteq \tau_{\alt}(\omega_R)[1/p] \subseteq \mJ(\omega_{R[1/p]})
    \]
    where $\tau_{\alt}(\omega_R)$ is defined as in the introduction (cf.\  \autoref{prop.PropertiesOfUltTauOmegaForPairs} \autoref{prop.PropertiesOfUltTauOmegaForPairs.IncreasingCoefficientContainment}).  Hence, all three are equal which proves the second equality in the statement of \autoref{cor.MultiplierIdealViaFiniteCovers}.  Since $\tau_{\fin}(\omega_R)[1/p] \supseteq \tau_{\alt}(\omega_R)[1/p]$ it follows that $\tau_{\fin}(\omega_R)[1/p] \supseteq \mJ(\omega_{R[1/p]})$.

    Next pick a maximal ideal $\fram \in R$.  By assumption $p \in \fram$.  Let $\widehat{R}$ denote the $\fram$-adic completion of $R$.  Since $R \to \widehat{R}$ has geometrically regular fibers as $R$ is excellent, it follows that 
    \[
        \mJ(\omega_{R[1/p]}) \otimes_{R[1/p]} \widehat{R}[1/p] = \mJ(\omega_{\widehat{R}[1/p]})
    \]
    since a resolution base changes to a resolution.  Now, on the other hand we have that 
    \[
        \tau_{\fin}(\omega_R) \otimes_R \widehat{R} \subseteq \bigcap_{S \supseteq R} \Tr(\omega_S \otimes_R \widehat{R})
    \] 
    since the intersection can only become bigger after completion.  The right side is equal to $\tau_{\fin}(\omega_{\widehat{R}}) = \tau_{\alt}(\omega_{\widehat{R}})$ by \cite[Proposition 4.29]{BMPSTWW1}.  Furthermore $\tau_{\alt}(\omega_{\widehat{R}})[1/p] \subseteq \mJ(\omega_{\widehat{R}[1/p]})$, since $\tau_{\alt}(\omega_{\widehat{R}})$ is an intersection of things, each of which after inverting $p$ computes $\mJ(\omega_{\widehat{R}[1/p]})$. Hence 
    \[
        \tau_{\fin}(\omega_R) \otimes_R \widehat{R}[1/p] \subseteq \tau_{\alt}(\omega_{\widehat{R}})[1/p] \subseteq \mJ(\omega_{\widehat{R}[1/p]}) = \mJ(\omega_{R[1/p]}) \otimes_{R[1/p]} \widehat{R}[1/p].
    \]
    Since this holds for all $\widehat{R}$ (running over all maximal ideals) and since the map $R_{\fram} \to \widehat{R}$ is faithfully flat, we see that 
    \[
        \tau_{\fin}(\omega_R)[1/p] \subseteq \mJ(\omega_{R[1/p]}).
    \]
    This completes the proof.
\end{proof}
\subsection{Test ideals}

So far we have only defined test modules.  We can now define test ideals as well.

\begin{setting} \label{setting:section7testideals}
Let $(V, \varpi, k)$ be a DVR of mixed characteristic $(0,p>0)$ and let $X$ be a finite type, normal, integral scheme over $\Spec(V)$ such that $X \to \Spec V$ is flat (equivalently, surjective). We take $\Delta$ to be a $\bQ$-divisor on $X$ such that $K_X+\Delta$ is $\bQ$-Cartier.
\end{setting}

\begin{definition}
    \label{def.UltTestIdealDefinition}
    We work in the setting of \autoref{setting:section7testideals}. 
    We define the \emph{test ideal} of $(X,\Delta)$ to be the fractional ideal:
    \[
        \utau(\cO_X, \Delta) := \utau(\omega_X, K_X + \Delta).
    \]
    When $\Delta \geq 0$, this is an ideal and not simply a fractional ideal.
\end{definition}

It follows directly from the definitions and previous results that this test ideal agrees with those previously defined:

\begin{corollary}\label{cor:one_test_ideal_to_rule_them_all}

Let $(X,\Delta)$ be as in 
\autoref{setting:section7testideals}, and assume additionally that $X\to \Spec(V)$ is quasi-projective.  Furthermore, fix a point $x\in X_{p=0}$ and let $R=\widehat{\sO_{X,x}}$. Then there is a Cartier divisor $G^0\geq 0$ on $X$ such that for any Cartier divisor $G\geq G^0$ and any $0<\epsilon\ll 1$ (depending on $G$), we have:
\begin{enumerate}
    \item $\tau(\sO_X,\Delta)=\tau_{\alt}^a(\sO_X,\Delta+\epsilon G):=\tau_{\alt}^a(\omega_X, K_X+\Delta+\epsilon G)$.
    \item $\tau(\sO_X,\Delta)=\tau_{\myB^0}(\sO_X,\Delta+\epsilon G)$ when $V$ is complete.
    \item $\tau(\sO_X,\Delta)\cdot R=\tau_+(R,(\Delta+\epsilon G)|_R)$
    \item $\tau(\sO_X,\Delta)\cdot R=\tau_{\mathcal{B}}(R,\Delta|_R)$
    \item $\tau(\sO_X,\Delta)[1/p]=\mJ(\sO_{X[1/p]}, \Delta|_{X[1/p]})$
    \item\label{itm:perturbation_test_ideal} For all Cartier $D\geq 0$, $\tau(\sO_X,\Delta)= \tau(\sO_X,\Delta+\epsilon D)$ for all $0<\epsilon\ll1$. 
\end{enumerate}
\end{corollary}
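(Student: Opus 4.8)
\textbf{Proof plan for \autoref{cor:one_test_ideal_to_rule_them_all}.}

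The plan is to deduce each item essentially by unwinding \autoref{def.UltTestIdealDefinition}, which sets $\utau(\cO_X,\Delta) = \utau(\omega_X, K_X+\Delta)$, and then invoking the corresponding item of \autoref{prop.PropertiesOfUltTauOmegaForPairs} with the $\bQ$-Cartier divisor $\Gamma = K_X + \Delta$. First I would fix notation: by \autoref{prop.PropertiesOfUltTauOmegaForPairs}(\ref{prop.PropertiesOfUltTauOmegaForPairs.ComparisonWithB0}) there is an effective Cartier divisor $G^0$ (valid as long as its support contains the fibre over $\varpi$ and, after pulling back along a finite cover $\phi:Z\to X$ with $\phi^*(K_X+\Delta)$ Cartier, contains the ramification of $Z\to X\to\bP^n_V$ — see \autoref{remark:choice-of-perturbation-divisor}); I would take $G^0$ large enough to simultaneously satisfy the hypotheses of items (\ref{prop.PropertiesOfUltTauOmegaForPairs.ComparisonWithB0}) and (\ref{prop.PropertiesOfUltTauOmegaForPairs.CompletionComparisonWithTau+}) of \autoref{prop.PropertiesOfUltTauOmegaForPairs} as well as those of \autoref{thm:perturbed_tau_submodule_single_alteration}. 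Since finitely many Noetherian constancy conditions are involved, a single $G^0$ and, for each $G\geq G^0$, a single $0<\epsilon\ll 1$ suffice.

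Then the individual items go as follows. Item (1): by \autoref{thm:perturbed_tau_submodule_single_alteration}, $\utau(\omega_X,\Gamma) = \tau_{\alt}^a(\omega_X, \epsilon G + \Gamma)$ for $\Gamma = K_X+\Delta$ and $0<\epsilon\ll1$; translating via \autoref{def.UltTestIdealDefinition} and the notation $\tau_{\alt}^a(\cO_X, \Delta+\epsilon G) := \tau_{\alt}^a(\omega_X, K_X+\Delta+\epsilon G)$ (cf.\ \autoref{rem:a-test-ideal-definition}) gives the claim. Item (2) is \autoref{prop.PropertiesOfUltTauOmegaForPairs}(\ref{prop.PropertiesOfUltTauOmegaForPairs.ComparisonWithB0}) together with the definitions $\tau_{\myB^0}(\cO_X,\Delta') := \tau_{\myB^0}(\omega_X, K_X+\Delta')$ from \autoref{def.HaconLamarcheSchwede}, applied with $\Delta' = \Delta + \epsilon G$ — here one needs the identity $\tau_{\myB^0}(\omega_X, \epsilon G + \Gamma) = \tau_{\myB^0}(\omega_X, K_X + (\Delta + \epsilon G))$, which is immediate since $\Gamma = K_X+\Delta$. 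Item (3) is \autoref{prop.PropertiesOfUltTauOmegaForPairs}(\ref{prop.PropertiesOfUltTauOmegaForPairs.CompletionComparisonWithTau+}) with $\Gamma|_R + \epsilon G|_R = (K_R + \Delta|_R) + \epsilon G|_R$ and the definition $\tau_+(R,\Delta') := \tau_+(\omega_R, K_R+\Delta')$ from \autoref{def:+test}. Item (4) is \autoref{prop.PropertiesOfUltTauOmegaForPairs}(\ref{prop.PropertiesOfUltTauOmegaForPairs.Completion}), using $\tau_{\cB}(R,\Delta|_R) := \tau_{\cB}(\omega_R, K_R+\Delta|_R)$ from \autoref{def:tauR+}; note this item does not require the quasi-projectivity hypothesis but holds a fortiori here. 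Item (5) is \autoref{prop.PropertiesOfUltTauOmegaForPairs}(\ref{prop.PropertiesOfUltTauOmegaForPairs.InvertP}), since inverting $p$ kills the $\epsilon G$ perturbation (the multiplier module is unchanged by adding a small multiple of an effective divisor) and $\mJ(\omega_{X[1/p]}, \Gamma|_{X[1/p]}) = \mJ(\cO_{X[1/p]}, \Delta|_{X[1/p]})$ by definition of the multiplier ideal of a pair. Item (6) is \autoref{prop.PropertiesOfUltTauOmegaForPairs}(\ref{prop.PropertiesOfUltTauOmegaForPairs.Perturbation}), again translated through $\utau(\cO_X,\Delta) = \utau(\omega_X, K_X+\Delta)$ and $\utau(\cO_X,\Delta+\epsilon D) = \utau(\omega_X, K_X+\Delta+\epsilon D)$, with the small-$\epsilon$ quantifier depending on $D$ exactly as in that item.

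There is essentially no genuine obstacle here: the corollary is a bookkeeping consequence of the already-established \autoref{prop.PropertiesOfUltTauOmegaForPairs} and \autoref{thm:perturbed_tau_submodule_single_alteration} once one consistently substitutes $\Gamma = K_X+\Delta$ and records the divisor-vs.-module dictionary. The one point requiring a little care is the uniformity of the auxiliary divisor $G^0$ and the threshold $\epsilon$ across items (1)–(3) and (6): each of these has its own ``for $G$ sufficiently large and then $\epsilon$ sufficiently small'' clause, and I would note explicitly that, since each clause is governed by a Noetherian stabilization statement, enlarging $G^0$ to dominate the finitely many divisors produced by the cited results and then shrinking $\epsilon$ to the minimum of the finitely many thresholds makes all items hold simultaneously, which is what the statement asserts. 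The only subtlety worth a sentence is that item (4) does not need the perturbation at all (it is an exact equality with $\tau_{\cB}$), so its validity is independent of the choice of $G$; I would remark that this is consistent because $\tau_{\cB}$ itself satisfies the perturbation insensitivity (\cite[Theorem C]{MaSchwedeSingularitiesMixedCharBCM}), which is precisely what forces items (1)–(4) to be mutually compatible.
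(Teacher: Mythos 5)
Your proposal is correct and follows essentially the same route as the paper: the paper's proof likewise treats the corollary as a direct translation of \autoref{prop.PropertiesOfUltTauOmegaForPairs} (items (\ref{prop.PropertiesOfUltTauOmegaForPairs.InvertP}), (\ref{prop.PropertiesOfUltTauOmegaForPairs.ComparisonWithB0}), (\ref{prop.PropertiesOfUltTauOmegaForPairs.CompletionComparisonWithTau+}), (\ref{prop.PropertiesOfUltTauOmegaForPairs.Completion}), (\ref{prop.PropertiesOfUltTauOmegaForPairs.Perturbation})) and of the definition/\autoref{thm:perturbed_tau_submodule_single_alteration} to the case $\Gamma = K_X+\Delta$. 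Your added remarks on choosing a single $G^0$ and $\epsilon$ uniformly across the items are a reasonable elaboration of bookkeeping the paper leaves implicit.
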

\begin{proof}
These are translations of the previous results to the case of $\Gamma=K_X+\Delta$.  Specifically:
\begin{enumerate}
    \item This follows immediately from the definition of $\tau(\sO_X,\Delta)$.
    \item This follows from \autoref{prop.PropertiesOfUltTauOmegaForPairs}\autoref{prop.PropertiesOfUltTauOmegaForPairs.ComparisonWithB0}.
    \item This follows from \autoref{prop.PropertiesOfUltTauOmegaForPairs}\autoref{prop.PropertiesOfUltTauOmegaForPairs.CompletionComparisonWithTau+}.
    \item This follows from 
\autoref{prop.PropertiesOfUltTauOmegaForPairs}\autoref{prop.PropertiesOfUltTauOmegaForPairs.Completion}.
    \item This follows from \autoref{prop.PropertiesOfUltTauOmegaForPairs}\autoref{prop.PropertiesOfUltTauOmegaForPairs.InvertP}.
    \item This follows from \autoref{prop.PropertiesOfUltTauOmegaForPairs}\autoref{prop.PropertiesOfUltTauOmegaForPairs.Perturbation}.
\end{enumerate}
\end{proof}

\begin{theorem}
    \label{thm.LocalPropertiesOfUltTauOX}
    With notation as in 
    \autoref{setting:section7testideals} the test ideal $\utau(\cO_X, \Delta)$ satisfies the following properties.
    \begin{enumerate}
        \item Restriction:  Suppose $H$ is a normal Cartier divisor on $X$ such that $H$ and $\Delta$ have no common components in support.  Then  \label{thm.LocalPropertiesOfUltTauOX.Restriction}
        \[ 
            \utau(\cO_X, \Delta) \cdot \cO_H \supseteq \utau(\cO_H, \Delta|_H).
        \]        
        \item Finite maps:  If $f : Y \to X$ is a finite surjective map, then 
        \[ 
            \Tr\big( \utau(\cO_Y, f^* \Delta - \Ram) \big)= \utau(\cO_X, \Delta).
        \]
        \item Smooth maps:  If $f : Y \to X$ is a smooth map, then 
        \[
            f^* \utau(\cO_X, \Delta) = \utau(\cO_Y, f^* \Delta)
        \]
        \item Birational maps:  If $f : Y \to X$ is a finite type birational map, then 
            \[
                f_* \utau(\cO_Y, f^* (K_X + \Delta) - K_Y) \supseteq \utau(\cO_X, \Delta).
            \]
            \label{thm.LocalPropertiesOfUltTauOX.Birational}
        \item Subadditivity: \label{thm.LocalPropertiesOfUltTauOX.Subadditivity}
        If $X$ is regular, then for $\Delta_1, \Delta_2 \geq 0$,
        \[
            \tau(\cO_X, \Delta_1 + \Delta_2) \subseteq \tau(\cO_X, \Delta_1) \tau(\cO_X, \Delta_2).
        \]
    \end{enumerate}
\end{theorem}
\begin{proof}
    By using \autoref{prop.PropertiesOfUltTauOmegaForPairs} \autoref{prop.PropertiesOfUltTauOmegaForPairs.Completion}, the first follows from the corresponding local results for $\tau_{\cB}$, see \cite[Proposition 2.10 and Theorem 3.1]{MaSchwedeTuckerWaldronWitaszekAdjoint}, or from \cite[Lemma 4.20 and Theorem 4.23]{HaconLamarcheSchwede}.  The second and third follows from \autoref{prop.PropertiesOfUltTauOmegaForPairs} \autoref{prop.PropertiesOfUltTauOmegaForPairs.FiniteMaps}, \autoref{prop.PropertiesOfUltTauOmegaForPairs.SmoothPullBack}.
    {\color{black} We now prove \autoref{thm.LocalPropertiesOfUltTauOX.Birational}; in the projective case we can simply apply \autoref{prop.PropertiesOfUltTauOmegaForPairs} \autoref{prop.PropertiesOfUltTauOmegaForPairs.BirationalMaps}. More generally, it suffices to check the statement for affine $Y$, and any such $Y$ can be embedded as an affine chart in a projective $\overline{f} : \overline{Y} \to X$.  Then we have that 
    \[
        f_* \utau(\cO_Y, f^* (K_X + \Delta) - K_Y) \supseteq {\overline f}_* \utau(\cO_{\overline{Y}}, f^* (K_X + \Delta) - K_{\overline{Y}}) \supseteq \utau(\cO_X, \Delta).
    \]
    }
    Finally, subadditivity \autoref{thm.LocalPropertiesOfUltTauOX.Subadditivity} follows since it holds for complete local rings from the argument of \cite[Theorem 4.4]{MaSchwedePerfectoidTestideal}, as carried out in \cite[Theorem 6.18]{MurayamaUniformBoundsOnSymbolicPowers}, \cf \cite[Section 8.3]{HaconLamarcheSchwede}, all of which are based on an argument of Takagi in characteristic $p > 0$ \cite[Theorem 2.4]{TakagiFormulasForMultiplierIdeals}.
\end{proof}

Next we show that this test ideal can be computed from a single alteration in the following sense.



\begin{corollary} \label{cor:test-ideal-one-alteration}
In the situation of \autoref{setting:section7testideals} assume that $X$ is quasi-projective.
Then there exists an effective Cartier divisor $G^0$ on $X$ so that for every Cartier divisor $G \geq G^0$ on $X$, the following is satisfied. 
  
  There exists an $\epsilon_0 > 0$ such that for every smaller $\epsilon_0 > \epsilon > 0$ there exists a normal alteration $\pi_{\epsilon} : Y_{\epsilon} \to X$ with $\pi_{\epsilon}^* (\epsilon G + {(K_X+\Delta)})$ Cartier, such that for every further normal alteration $Y \to Y_{\epsilon}$ with composition $\pi : Y \to Y_{\epsilon} \xrightarrow{\pi_{\epsilon}} X$ we have that 
\[
        \tau(\cO_X, \Delta) = \Tr\big( \pi_* \cO_Y(K_Y - \pi^*(K_X + \Delta) - \epsilon \pi^* G)\big)
    \]
\end{corollary}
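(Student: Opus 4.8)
The statement to prove is \autoref{cor:test-ideal-one-alteration}: that $\tau(\cO_X,\Delta)$ is computed by a single sufficiently large alteration. The strategy is to reduce this to the already-established single-alteration statement for test \emph{modules}, namely \autoref{thm:perturbed_tau_submodule_single_alteration}, via the translation $\Gamma = K_X + \Delta$ built into \autoref{def.UltTestIdealDefinition}. Concretely, recall that by definition $\tau(\cO_X,\Delta) = \tau(\omega_X, K_X+\Delta)$. First I would invoke \autoref{thm:perturbed_tau_submodule_single_alteration} applied to the $\bQ$-Cartier $\bQ$-divisor $\Gamma := K_X + \Delta$: this produces an effective divisor $G^0$ on $X$ (which, by \autoref{remark:choice-of-perturbation-divisor}, can be taken to have support containing the fibre over $\varpi$ and, after pulling back along a finite cover $\phi:Z\to X$ with $\phi^*(K_X+\Delta)$ Cartier, the ramification locus of $Z\to X\to \bP^n_V$), and for every $G\geq G^0$ and every $0<\epsilon\ll 1$ an alteration $\pi_\epsilon:Y_\epsilon\to X$ with $\pi_\epsilon^*(\epsilon G)$ and $\pi_\epsilon^*\Gamma=\pi_\epsilon^*(K_X+\Delta)$ both Cartier, such that for every further normal alteration $Y\to Y_\epsilon$ with composition $\pi:Y\to Y_\epsilon\to X$,
\[
\tau(\omega_X, K_X+\Delta) = \Tr\big(\pi_*\omega_Y(-\pi^*(K_X+\Delta)-\epsilon\pi^*G)\big).
\]

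\textbf{Key step: rewriting the trace expression.} The only remaining work is to identify $\pi_*\omega_Y(-\pi^*(K_X+\Delta)-\epsilon\pi^*G)$ with $\pi_*\cO_Y(K_Y - \pi^*(K_X+\Delta) - \epsilon\pi^*G)$. Since $Y$ is normal and $\pi:Y\to X$ is an alteration (so generically finite) between integral schemes flat over $V$, one has $\omega_Y \cong \cO_Y(K_Y)$ as rank-one reflexive sheaves on the normal scheme $Y$ — here I am using $\omega_Y = \omega_{Y/V}$ and that $K_Y$ is, by convention, a canonical divisor representing this dualizing sheaf. Tensoring the isomorphism $\omega_Y\cong\cO_Y(K_Y)$ by the line bundle $\cO_Y(-\pi^*(K_X+\Delta)-\epsilon\pi^*G)$ (which is a genuine line bundle precisely because $\pi^*(K_X+\Delta)$ and $\epsilon\pi^*G$ were arranged to be Cartier on $Y_\epsilon$, hence on $Y$) yields $\omega_Y(-\pi^*(K_X+\Delta)-\epsilon\pi^*G)\cong\cO_Y(K_Y-\pi^*(K_X+\Delta)-\epsilon\pi^*G)$. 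Applying $\pi_*$ and the trace map gives exactly the displayed formula in the corollary. One should check that the trace map on $\omega_Y$ transported across this isomorphism is the one intended — but this is the standard compatibility of Grothendieck trace with twisting by line bundles pulled back from the base, which is routine.

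\textbf{The main obstacle.} There is no serious obstacle: this corollary is genuinely a restatement of \autoref{thm:perturbed_tau_submodule_single_alteration} under the dictionary $\Gamma\leftrightarrow K_X+\Delta$ plus the identification $\omega_Y\cong\cO_Y(K_Y)$. The one point requiring a little care is bookkeeping of the perturbation parameters: the $G^0$ and $\epsilon_0$ here are literally those supplied by \autoref{thm:perturbed_tau_submodule_single_alteration} for the divisor $\Gamma=K_X+\Delta$, and one must note that enlarging $G$ and shrinking $\epsilon$ remains harmless, which is exactly the content of the ``for every $G\geq G^0$'' and ``for every $\epsilon_0>\epsilon>0$'' quantifiers already present in that theorem. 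I would also remark in passing that the $\epsilon$ here depends on $G$ (as in the earlier statements), and that the equality can equivalently be read, via \autoref{cor:one_test_ideal_to_rule_them_all}(1), as $\tau(\cO_X,\Delta)=\tau_{\alt}^a(\cO_X,\Delta+\epsilon G)$, which makes the appearance of the single stabilizing alteration transparent. Thus the proof is essentially two sentences: invoke \autoref{thm:perturbed_tau_submodule_single_alteration} for $K_X+\Delta$, then rewrite $\omega_Y$ as $\cO_Y(K_Y)$.
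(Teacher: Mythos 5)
Your proposal is correct and is exactly the paper's argument: the paper's entire proof reads "This follows from \autoref{thm:perturbed_tau_submodule_single_alteration} applied to $\Gamma=K_X+\Delta$." Your additional remarks about identifying $\omega_Y(-\pi^*\Gamma-\epsilon\pi^*G)$ with $\cO_Y(K_Y-\pi^*(K_X+\Delta)-\epsilon\pi^*G)$ and about the perturbation bookkeeping are routine and left implicit in the paper.
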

\begin{proof}
    This follows from \autoref{thm:perturbed_tau_submodule_single_alteration} applied to $\Gamma=K_X+\Delta$.
\end{proof}

Finally we note a global generation result which immediately follows from the corresponding result in \cite{HaconLamarcheSchwede}.

\begin{corollary}
\label{cor.EffectiveGlobalGenerationTauDivisor}
In the situation of \autoref{setting:section7testideals} we assume that $X$ is projective over a complete DVR $V$ and $\Delta \geq 0$.
Let $M$ be a Cartier divisor such that $M-K_X-\Delta$ is big and nef, and let $A$ be a globally generated ample Cartier divisor.  If $n_0=\dim(X_{p=0})$, then there exists Cartier $H \geq 0$ such that  $\tau(\sO_X,\Delta)\otimes \sO_X(nA+M)$ is globally generated by $\myB^0(X,\Delta+\epsilon H; \sO_X(nA+M))$ for all $\epsilon > 0$ and $n\geq n_0$. 

Hence, if $\tau(\cO_X, \Delta) = \cO_X$, then $\myB^0(X,\Delta; \sO_X(nA+M))$ globally generates $\cO_X(nA + M)$.
\end{corollary}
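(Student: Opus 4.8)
The idea is to reduce the statement to the analogous global generation statement for $\tau_{\myB^0}$ already established in \cite{HaconLamarcheSchwede}, using the comparison \autoref{prop.PropertiesOfUltTauOmegaForPairs}\autoref{prop.PropertiesOfUltTauOmegaForPairs.ComparisonWithB0} (equivalently \autoref{cor:one_test_ideal_to_rule_them_all}(2)) together with the perturbation insensitivity \autoref{prop.PropertiesOfUltTauOmegaForPairs}\autoref{prop.PropertiesOfUltTauOmegaForPairs.Perturbation}. Recall from \cite{HaconLamarcheSchwede} (the effective global generation statement for $\tau_{\myB^0}$, \cf \cite[Section 6]{HaconLamarcheSchwede}) that if $M - K_X - \Delta$ is big and nef, $A$ is globally generated ample, and $n_0 = \dim(X_{p=0})$, then for all $n \geq n_0$ the sheaf $\tau_{\myB^0}(\cO_X, \Delta) \otimes \cO_X(nA+M)$ is globally generated by $\myB^0(X, \Delta; \cO_X(nA+M)) \subseteq H^0(X, \cO_X(nA+M))$.

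First I would fix, via \autoref{cor:one_test_ideal_to_rule_them_all} (or \autoref{prop.PropertiesOfUltTauOmegaForPairs}\autoref{prop.PropertiesOfUltTauOmegaForPairs.ComparisonWithB0}), an effective Cartier divisor $G^0 \geq 0$ on $X$ so that for every Cartier $G \geq G^0$ and every $0 < \epsilon \ll 1$ (depending on $G$) we have $\tau(\cO_X, \Delta) = \tau_{\myB^0}(\cO_X, \Delta + \epsilon G)$. The point is now that the right-hand side is a test ideal \emph{of the pair} $(X, \Delta + \epsilon G)$, so if one simply applies the cited effective global generation result for $\tau_{\myB^0}$ to the pair $(X, \Delta + \epsilon G)$ one needs $M - K_X - \Delta - \epsilon G$ to be big and nef — which may fail since we only assume $M - K_X - \Delta$ is big and nef, not strictly so. The remedy is to instead appeal directly to the $\myB^0$-global generation statement with boundary $\Delta$ together with the observation that $\tau(\cO_X, \Delta) = \tau_{\myB^0}(\cO_X, \Delta)$ up to a small perturbation that is absorbed into $\myB^0$. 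Concretely: by \autoref{cor:one_test_ideal_to_rule_them_all}(2) we may write $\tau(\cO_X, \Delta) = \tau_{\myB^0}(\cO_X, \Delta + \epsilon G)$, and the sheaf on the right is globally generated in degrees $\geq n_0$ by $\myB^0(X, \Delta + \epsilon G; \cO_X(nA+M))$, provided $M - K_X - (\Delta + \epsilon G)$ is big and nef; since this last condition may fail we instead note that by the last displayed formula in Theorem~B(10) (or directly by the definition of $\myB^0$ via $\myB^0(X, \Delta; \cdot) = \bigcap_{f\colon Y \to X} \Image(\cdots)$ and the containment $\myB^0(X, \Delta + \epsilon G; \sL) \subseteq \myB^0(X, \Delta; \sL)$) one has, for all $0 < \epsilon \ll 1$,
\[
H^0\big(X, \tau(\cO_X,\Delta) \otimes \cO_X(nA+M)\big) = \myB^0\big(X, \Delta + \epsilon G; \cO_X(nA+M)\big) \subseteq \myB^0\big(X, \Delta; \cO_X(nA+M)\big),
\]
the equality following from \autoref{thm.ComparisonTauRHvsHLSNoPair} and \cite[Proposition 4.7]{HaconLamarcheSchwede} (that $\tau_{\myB^0}$ is the sheafification of $\bigoplus_{i>0}\myB^0(X, \Gamma; \omega_X \otimes \sL^i)$) after twisting by the appropriate power of a very ample bundle and using Serre vanishing to pass between the graded module and its sheafification in high degree.

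Second I would carry out the global generation itself: apply the effective global generation theorem for $\tau_{\myB^0}$ of \cite{HaconLamarcheSchwede} to the pair $(X, \Delta)$, which gives that $\tau_{\myB^0}(\cO_X, \Delta) \otimes \cO_X(nA+M)$ is globally generated by $\myB^0(X, \Delta; \cO_X(nA+M))$ for $n \geq n_0$; then observe $\tau(\cO_X, \Delta) \subseteq \tau_{\myB^0}(\cO_X, \Delta)$ with the two agreeing after a small perturbation, so the sub-sheaf $\tau(\cO_X, \Delta) \otimes \cO_X(nA+M)$ is generated by those global sections of $\tau_{\myB^0}(\cO_X, \Delta) \otimes \cO_X(nA+M)$ which land in it, i.e.\ exactly by $H^0(X, \tau(\cO_X,\Delta)\otimes \cO_X(nA+M)) = \myB^0(X, \Delta+\epsilon G; \cO_X(nA+M)) = \myB^0(X,\Delta;\cO_X(nA+M))$ where the last equality holds for $n \gg 0$ by Noetherianity (the decreasing family $\myB^0(X, \Delta + \epsilon G; \sL^n)$ in $\epsilon$ stabilizes, and after sheafification equals $\tau_{\myB^0}(\cO_X,\Delta)$). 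I expect the main obstacle to be precisely this last bookkeeping: one must be careful that $\myB^0(X,\Delta;\cO_X(nA+M))$ rather than $\myB^0(X,\Delta+\epsilon G;\cO_X(nA+M))$ is what generates, and verifying these two agree in the relevant range requires invoking that $\tau(\cO_X,\Delta) = \tau_{\myB^0}(\cO_X,\Delta)$ on the nose after possibly enlarging the perturbation divisor and using that $G^0$'s support only needs to contain the fibre over $\varpi$ (\autoref{remark:choice-of-perturbation-divisor}) — so a clean way is to phrase the final answer as: $\tau(\cO_X,\Delta)\otimes\cO_X(nA+M)$ is globally generated by the image of $\myB^0(X,\Delta;\cO_X(nA+M))$ under the inclusion $H^0(X,\tau_{\myB^0}(\cO_X,\Delta)\otimes\cO_X(nA+M)) \hookrightarrow H^0(X,\cO_X(nA+M))$, which is literally the conclusion since $\tau(\cO_X,\Delta)$ and $\tau_{\myB^0}(\cO_X,\Delta)$ share the same $H^0$ in high degree. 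All of this is routine once the comparison theorems are in hand; no new geometry is needed.
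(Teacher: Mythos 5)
You correctly isolate the real difficulty — that $M-K_X-\Delta-\epsilon G$ need not remain big and nef after the $\epsilon G$ perturbation needed to identify $\tau(\cO_X,\Delta)$ with $\tau_{\myB^0}(\cO_X,\Delta+\epsilon G)$ — but your remedy does not work. Applying the Hacon--Lamarche--Schwede global generation result to the pair $(X,\Delta)$ produces global generation of $\tau_{\myB^0}(\cO_X,\Delta)\otimes\cO_X(nA+M)$, and you then try to descend to the subsheaf $\tau(\cO_X,\Delta)\otimes\cO_X(nA+M)$ by saying it is ``generated by those global sections which land in it.'' That inference is false in general: a subsheaf of a globally generated sheaf need not be globally generated at all (e.g.\ $\cO_{\bP^1}(-1)\subseteq\cO_{\bP^1}$). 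Your attempted patch, the equality $\myB^0(X,\Delta;\cO_X(nA+M))=\myB^0(X,\Delta+\epsilon G;\cO_X(nA+M))$ ``for $n\gg0$ by Noetherianity,'' is also unjustified: Noetherianity only gives that $\myB^0(X,\Delta+\epsilon G;\cdot)$ stabilizes for small \emph{positive} $\epsilon$, not that the stable value agrees with the value at $\epsilon=0$. Asserting that equality amounts to asserting $\tau_{\myB^0}(\cO_X,\Delta)=\tau(\cO_X,\Delta)$ on the nose, which is precisely the unperturbed statement the paper does not know how to prove (\cf \autoref{quest.tauAltPerturbed}); if it were available, the perturbations built into the whole theory would be unnecessary.

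The paper's fix is different and simpler: first reduce to the case that $M-K_X-\Delta$ is \emph{ample}. Since $M-K_X-\Delta$ is big and nef, there is an effective Cartier divisor $D$ with $M-K_X-\Delta-\epsilon D$ ample for $1\gg\epsilon>0$; by the perturbation property $\tau(\cO_X,\Delta)=\tau(\cO_X,\Delta+\epsilon D)$ (\autoref{prop.PropertiesOfUltTauOmegaForPairs} \autoref{prop.PropertiesOfUltTauOmegaForPairs.Perturbation} via \autoref{cor:one_test_ideal_to_rule_them_all}\autoref{itm:perturbation_test_ideal}) and the monotonicity $\myB^0(X,\Delta;\cO_X(nA+M))\supseteq\myB^0(X,\Delta+\epsilon D;\cO_X(nA+M))$, it suffices to prove the statement with $\Delta$ replaced by $\Delta+\epsilon D$. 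Once $M-K_X-\Delta$ is ample, the further perturbation $M-K_X-\Delta-\epsilon G$ stays ample for $1\gg\epsilon>0$, so one may apply \cite[Corollary 6.2]{HaconLamarcheSchwede} directly to $\tau_{\myB^0}(\cO_X,\Delta+\epsilon G)=\tau(\cO_X,\Delta)$ — no descent from a larger sheaf is needed. I recommend restructuring your argument along these lines.
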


\begin{proof}
Pick $G^0$ as in \autoref{cor:one_test_ideal_to_rule_them_all}.  
As $M - K_X - \Delta$ is big and nef, there exists an effective Cartier divisor $H \geq G^0$ such that $M - K_X - \Delta - \epsilon H$ is ample for $1 \gg \epsilon > 0$. 
The corollary then follows from \cite[Corollary 6.2]{HaconLamarcheSchwede} applied to $\tau_{\myB^0}(\sO_X,\Delta+\epsilon H)$ using the fact that $\tau(\sO_X,\Delta) = \tau_{\myB^0}(\sO_X,\Delta + \epsilon H)$ for $1 \gg \epsilon > 0$ by \autoref{cor:one_test_ideal_to_rule_them_all}(b).  The final statement follows as 
$\myB^0(X,\Delta+\epsilon H; \sO_X(nA+M)) \subseteq \myB^0(X,\Delta; \sO_X(nA+M)) \subseteq H^0(X, \cO_X(nA+M))$.
\end{proof}

\subsection{Openness of $p$-almost splinter locus}\label{ss:open-splinter}

Last we study the question of whether the splinter locus is open.
\begin{definition}
\label{def.PAlmostSplinter}
    Fix a prime number $p>0$. Let $R$ be a Noetherian domain all of whose primes have residue field of characteristic $0$ or $p$.  We say that $R$ is \emph{a $p$-almost splinter} if and only if there exists a rational number $0 < \epsilon \ll 1$ such that for every finite extension of rings $R \subseteq S$ such that $p^{\epsilon} \in S$, we have that the map $R \xrightarrow{p^{\epsilon}} S$ splits as a map of $R$-modules.  The \emph{$p$-almost splinter locus} is the set of primes $Q \in \Spec R$ such that $R_Q$ is a $p$-almost splinter.
\end{definition}

Note that if $R$ is normal, then $R[1/p]$ is normal in characteristic zero and so is a splinter.  Hence the non-$p$-almost splinter locus is always a subset of $V(p) \subseteq \Spec R$.

\begin{setting}
    \label{set.PAlmostSplinter}
    In this subsection, we fix $R$ to be a normal $\bQ$-Gorenstein domain of finite type and flat over a DVR $V$ of mixed characteristic $(0,p>0)$. Take $X = \Spec (R)$. 
\end{setting}

\begin{lemma} \label{lem:a-tau-governs-splinters-noncomplete}  With notation as in \autoref{set.PAlmostSplinter}, if $\tau^a_{\rm alt}(R) = R$, then $R$ is a $p$-almost splinter. 
\end{lemma}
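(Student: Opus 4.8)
The statement to prove is that if $\tau^a_{\rm alt}(R) = R$ (equivalently $\tau^a_{\rm alt}(\omega_R) = \omega_R$ by the Gorenstein hypothesis), then $R$ is a $p$-almost splinter. The plan is to fix a small $\epsilon = 1/p^e$ (for $e \gg 0$ as guaranteed by \autoref{def.tauAltEpsilon}) and unwind the definition of $\tau^a_{\rm alt}$ as an intersection over alterations to directly produce the required splittings. Recall that by \autoref{thm.tauAlt} there is an alteration $Y_e \to X$ with $\varpi^{1/p^e} \in \cO_{Y_e}$ such that for every further alteration $f : Y \to Y_e \to X$ we have $\tau^a_{\rm alt}(\omega_{X/V}) = \Tr_{Y/X}(f_* \varpi^{1/p^e}\omega_{Y/V})$; since $\Div(p)$ is a fixed positive multiple of $\Div(\varpi)$, we may equivalently work with $p^{\epsilon}$ in place of $\varpi^{1/p^e}$ after shrinking $\epsilon$. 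The hypothesis $\tau^a_{\rm alt}(R) = R$ then says that the trace map $\Tr_{Y/X} : f_* p^{\epsilon}\omega_{Y/V} \to \omega_{X/V}$ is \emph{surjective} for all sufficiently large alterations $Y$, and in particular for all finite extensions $R \subseteq S$ with $p^{\epsilon} \in S$ (which are dominated by such alterations, or rather which themselves dominate, so that their trace images contain $\tau^a_{\rm alt}$).

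\textbf{Key steps.} First, I would reduce to a statement purely about finite covers. Given any finite extension $R \subseteq S$ with $p^{\epsilon} \in S$, we may enlarge $S$ to a normal finite extension $S' \supseteq S$ (still with $p^{\epsilon} \in S'$, inside $R^+$), and it suffices to split $R \xrightarrow{p^{\epsilon}} S'$, since a splitting of this composes with $S \hookrightarrow S'$ appropriately — actually one must be slightly careful: splitting $R \xrightarrow{p^\epsilon} S'$ does not immediately split $R \xrightarrow{p^\epsilon} S$, so instead I would argue that $R \xrightarrow{p^{\epsilon/2}} S \xrightarrow{p^{\epsilon/2}} S'$ being split (as a composite) forces $R \xrightarrow{p^{\epsilon/2}} S$ to be split, and then absorb the change of exponent into the choice of $e \gg 0$. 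Second, for a normal finite extension $R \subseteq S'$, the module $\Hom_R(S', R)$ is identified (via the Gorenstein/normal hypotheses and the trace pairing) with a fractional ideal, and the image of the evaluation-at-$p^{\epsilon}$ map $\Hom_R(S', R) \to R$ is exactly $\Tr_{S'/R}(p^{\epsilon} \omega_{S'/R})$ under the identification $\omega_R \cong R$ coming from $\bQ$-Gorenstein (here one uses that $S'$ finite over the normal $R$ carries $\omega_{S'/R}$ as a reflexive sheaf and the trace map is the Grothendieck trace). Third, the hypothesis $\tau^a_{\rm alt}(R) = R$ together with the alteration-stabilization gives $\Tr_{Y/X}(f_* p^{\epsilon}\omega_{Y/V}) = R$ for a cofinal family of alterations, hence (since finite covers are dominated by alterations and the trace factors) $\Tr_{S'/R}(p^{\epsilon}\omega_{S'/R}) = R$ for all such $S'$; thus the evaluation map $\Hom_R(S', R) \to R$ hits $1$, i.e.\ there is $\varphi \in \Hom_R(S',R)$ with $\varphi(p^{\epsilon}) = 1$, which is precisely a splitting of $R \xrightarrow{p^\epsilon} S'$.

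\textbf{Main obstacle.} The delicate point is the bookkeeping between ``alterations'' and ``finite covers,'' and between the fixed exponent $1/p^e$ appearing in $\tau^a_{\rm alt}$ and the ``there exists $0 < \epsilon \ll 1$'' in \autoref{def.PAlmostSplinter}. One has to check that the single exponent $e_0$ produced by \autoref{thm.tauAlt} / \autoref{def.tauAltEpsilon} works uniformly: that for every finite $R \subseteq S$ with $p^{\epsilon} \in S$ (where $\epsilon = 1/p^{e}$, $e \geq e_0$), the trace $\Tr_{S/R}(p^{\epsilon}\omega_{S/R})$ contains $\tau^a_{\rm alt}(R)$. This follows because any such $S$ is itself an alteration of $X$ (it is finite, hence an alteration), and for alterations we always have the containment $\tau^a_{\rm alt}(\omega_X, p^\epsilon) \subseteq \Tr_{Y/X}(f_* p^\epsilon \omega_{Y/V})$ from the very definition of $\tau^a_{\rm alt}$ as an intersection (\autoref{def.tauAlt}), with no further-domination hypothesis needed on $S$. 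So in fact the argument is cleaner than the alteration-stabilization suggests: $\tau^a_{\rm alt}(R) = R$ directly forces $\Tr_{S/R}(p^\epsilon \omega_{S/R}) = R$ for \emph{every} finite $S$ with $p^\epsilon \in S$, and the only real content is identifying this trace surjectivity with the splitting of $R \xrightarrow{p^\epsilon} S$ via the Gorenstein duality identification $\Hom_R(S,R) \cong \omega_{S/R}$ and $\Tr_{S/R}$ being the evaluation/trace. I would present this identification carefully as the core lemma and then the proof is a two-line deduction.
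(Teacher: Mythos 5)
Your proposal is correct and follows essentially the same route as the paper: since any finite extension $R \subseteq S$ with $p^{\epsilon} \in S$ is itself an alteration, the hypothesis $\tau^a_{\rm alt}(R) = R$ forces $\Image\bigl(\Hom_R(S,R) \xrightarrow{\text{ev at } p^{\epsilon}} R\bigr) = \Image\bigl(\Tr_{S/R}(p^{\epsilon}\omega_{S/R})\bigr) = R$, which is exactly the splitting of $R \xrightarrow{p^{\epsilon}} S$; the detours through normalizing $S$ and adjusting exponents that you sketch in the first half are, as you yourself conclude, unnecessary. The only nitpick is the opening parenthetical identifying $\tau^a_{\rm alt}(R)=R$ with $\tau^a_{\rm alt}(\omega_R)=\omega_R$: the setting is only $\bQ$-Gorenstein, not Gorenstein, though nothing in your actual argument uses this.
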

\begin{proof}
Fix $0 < \epsilon \ll 1$ and let $R \subseteq S$ be a finite extension such that $p^{\epsilon} \in S$.  Recall that $\omega_{S/R} = \Hom_R(S, R)$ is reflexive. By assumptions, $\tau_{\rm alt}(R, p^\epsilon) = R$ for $0 < \epsilon \ll 1$, and if $R \hookrightarrow S$ is a finite extension, then $\Spec (S) \to \Spec (R)$ is an alteration. Hence
\[
\Image \Big( \Tr_{S/R} : \omega_{S/R} \xrightarrow{p^{\epsilon}} R \Big) = \Image \Big( \Hom_R(S, R) \xrightarrow{\text{evaluation at}\,\,  p^{\epsilon}} R \Big)
\]
is equal to $R$, and so $\Hom_R(S, R) \xrightarrow{\text{evaluation at}\,\,  p^{\epsilon}} R$ is surjective. This implies that $R \xrightarrow{p^{\epsilon}} S$ splits.
\end{proof}

\begin{lemma} \label{lem:a-tau-governs-splinters-complete} Let $R$ be a complete Noetherian local domain. Then  
$\tau_+(R, p^\epsilon) = R$ for all rational $0 < \epsilon \ll 1$ if and only if $R$ is a $p$-almost splinter.
\end{lemma}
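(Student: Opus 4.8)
The plan is to reduce the statement to a completely elementary fact about splittings. For a module-finite extension $R \subseteq S$ with $p^\epsilon \in S$, the map $R \xrightarrow{p^\epsilon} S$ admits an $R$-linear splitting if and only if the evaluation map $\Hom_R(S,R) \to R$, $\phi \mapsto \phi(p^\epsilon)$, has $1$ in its image; since that image is an ideal of $R$, this is the same as the surjectivity of that map. Interpreting $\omega_{S/R} = \Hom_R(S,R)$ and recalling that $\Tr_{S/R}\colon \omega_{S/R}\to \omega_{R/R}=R$ is evaluation at $1\in S$, this surjectivity is precisely the equality $\Tr_{S/R}\big(p^\epsilon\omega_{S/R}\big)=R$. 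Consequently, $\tau_+(R,p^\epsilon)=\bigcap_{R\subseteq S}\Tr_{S/R}\big(p^\epsilon\omega_{S/R}\big)$ (the intersection over module-finite normal domain extensions $S\subseteq R^+$ with $p^\epsilon\in S$, as in \autoref{def:+test}) equals $R$ if and only if $R\xrightarrow{p^\epsilon} S$ splits for every such $S$. This is the local counterpart of \autoref{lem:a-tau-governs-splinters-noncomplete}, but now it is an equivalence.

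For the ``only if'' direction I would fix $0<\epsilon\ll 1$ with $\tau_+(R,p^\epsilon)=R$; by the previous paragraph, $R\xrightarrow{p^\epsilon} S$ splits for every module-finite normal domain extension $R\subseteq S\subseteq R^+$ with $p^\epsilon\in S$. To see that $R$ is a $p$-almost splinter, let $R\subseteq S$ be an arbitrary module-finite extension with $p^\epsilon\in S$. Choose a minimal prime $\mathfrak q\subset S$ with $\mathfrak q\cap R=(0)$ (one exists because $R\hookrightarrow S$ is injective); then a splitting of $R\xrightarrow{p^\epsilon} S/\mathfrak q$, composed with $S\twoheadrightarrow S/\mathfrak q$, splits $R\xrightarrow{p^\epsilon} S$. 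Replacing $S$ by $S/\mathfrak q$, we may assume $S$ is a module-finite domain extension of $R$; passing to its normalization (finite, as $R$ is excellent) and embedding it into $R^+$ only enlarges $S$ while keeping $p^\epsilon\in S$, and a splitting of the larger extension restricts to one of $S$. Hence $R\xrightarrow{p^\epsilon} S$ splits, and $R$ is a $p$-almost splinter witnessed by this $\epsilon$.

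For the ``if'' direction, let $R$ be a $p$-almost splinter, witnessed by some $\epsilon_0>0$: then $R\xrightarrow{p^{\epsilon_0}} S$ splits for every module-finite $R\subseteq S$ with $p^{\epsilon_0}\in S$, so $\Tr_{S/R}\big(p^{\epsilon_0}\omega_{S/R}\big)=R$ for each such $S$, and hence $\tau_+(R,p^{\epsilon_0})=R$ by the first paragraph. It remains to propagate this to all $0<\epsilon\leq\epsilon_0$, for which I would prove the monotonicity $\tau_+(R,p^\epsilon)\supseteq\tau_+(R,p^{\epsilon_0})$: writing $\epsilon,\epsilon_0\in\tfrac{1}{p^N}\mathbf{Z}_{\geq 0}$, given a module-finite normal domain $S\subseteq R^+$ over $R$ with $p^\epsilon\in S$, let $T$ be the normalization inside $R^+$ of $S[p^{1/p^N}]$; then for $r\in\tau_+(R,p^{\epsilon_0})$ pick $\psi\in\Hom_R(T,R)$ with $\psi(p^{\epsilon_0})=r$, and observe that $s\mapsto\psi(p^{\epsilon_0-\epsilon}s)$ restricted to $S$ lies in $\Hom_R(S,R)$ and sends $p^\epsilon$ to $r$, so $r\in\Tr_{S/R}\big(p^\epsilon\omega_{S/R}\big)$. (Equivalently, $\{\tau_+(R,p^\epsilon)\}_\epsilon$ increases as $\epsilon\downarrow 0$ and hence stabilizes to $R$ by Noetherianity of $R$.) Thus $\tau_+(R,p^\epsilon)=R$ for all $0<\epsilon\ll 1$.

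The main difficulty is bookkeeping rather than conceptual: one must fix the precise indexing set of the intersection defining $\tau_+(R,p^\epsilon)$ for $R$ not assumed $\bQ$-Gorenstein (so that the equivalence at the end of the first paragraph holds literally), and keep track of compatible $p$-power roots of $p$ inside $R^+$ in the monotonicity step. Both are routine given the conventions fixed earlier and the treatment of $\tau_+$ in \cite{BMPSTWW1}.
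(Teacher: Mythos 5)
Your proof is correct and follows essentially the same route as the paper: identify $\tau_+(R,p^\epsilon)$ with the intersection of the images of the evaluation-at-$p^\epsilon$ maps $\Hom_R(S,R)\to R$, and observe that surjectivity of that map is exactly the splitting of $R\xrightarrow{p^\epsilon}S$. The paper's proof is terser and leaves implicit the two reductions you spell out (passing between arbitrary finite ring extensions and finite normal domain extensions inside $R^+$, and the monotonicity in $\epsilon$ reconciling ``for all $0<\epsilon\ll 1$'' with the existential quantifier in the splinter definition); both are routine and your treatment of them is fine.
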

\begin{proof}
Recall that $\omega_{S/R} = \Hom_R(S, R)$, for a finite extension $R \subseteq S$, is reflexive. Then we have that:
\[
\tau_+(R, p^\epsilon) = \bigcap_{\textrm{ finite } R \,\subseteq\, S} \Tr_{S/R}(\omega_{S/R}) =  \bigcap_{\textrm{ finite } R \,\subseteq\, S} \Image \Big( \Hom_R(S, R) \xrightarrow{\text{evaluation at}\,\,  p^{\epsilon}} R \Big), 
\]
where the intersection is taken over all finite ring extensions $R \subseteq S$ such that $p^{\epsilon} \in S$. Thus the statement follows from the fact that $R \xrightarrow{p^{\epsilon}} S$ splits if and only if $\Hom_R(S, R) \xrightarrow{\text{evaluation at}\,\,  p^{\epsilon}} R$ is surjective.
\end{proof}

 \begin{theorem} \label{thm:p-almost-splinter-locus-open} For $R$ as in \autoref{set.PAlmostSplinter}, the $p$-almost splinter locus is open. Specifically if $R_{\mathfrak{q}}$ is a $p$-almost splinter for $\mathfrak{q} \in X_{p=0}$, then $R[1/g]$ is a $p$-almost splinter for some $g \not \in \mathfrak{q}$.
\end{theorem}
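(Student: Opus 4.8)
The plan is to leverage the formal-local characterization of the $p$-almost splinter property established in \autoref{lem:a-tau-governs-splinters-complete} together with the fact (\autoref{cor:one_test_ideal_to_rule_them_all}, \autoref{prop.PropertiesOfUltTauOmegaForPairs}) that the test ideal $\tau(\cO_X,\Delta)$ is a coherent sheaf whose formation commutes with localization and completion. Concretely, since $R$ is $\bQ$-Gorenstein, we have the test \emph{ideal} $\tau(\cO_X) = \tau(\cO_X, 0) = \tau(\omega_X, K_X) \subseteq \cO_X$, and by \autoref{cor.TauAgreesWithCompletionComputation} (together with \autoref{cor:one_test_ideal_to_rule_them_all}(c)--(d)) its stalk at any point $\frp \in X_{p=0}$ satisfies $\tau(\cO_X)_\frp \cdot \widehat{\cO_{X,\frp}} = \tau_+(\widehat{\cO_{X,\frp}}, p^{1/p^e})$ for $e \gg 0$. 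The key input then is that $\widehat{\cO_{X,\frp}}$ is a $p$-almost splinter if and only if $\tau_+(\widehat{\cO_{X,\frp}}, p^{1/p^e}) = \widehat{\cO_{X,\frp}}$, which is exactly \autoref{lem:a-tau-governs-splinters-complete}. Combining these, for $\frp \in X_{p=0}$ we get: $R_\frp$ (equivalently $\widehat{\cO_{X,\frp}}$, by faithful flatness of completion) is a $p$-almost splinter $\iff$ $\tau(\cO_X)_\frp = \cO_{X,\frp}$ $\iff$ $\frp \notin \Supp(\cO_X/\tau(\cO_X))$.

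First I would observe that the non-$p$-almost-splinter locus is contained in $X_{p=0} = V(pR)$: indeed $R[1/p]$ is a normal domain in characteristic zero, hence a splinter, so any prime not containing $p$ is automatically in the $p$-almost splinter locus. Thus it suffices to analyze primes $\frp \supseteq pR$. For such primes, the chain of equivalences above identifies the $p$-almost splinter locus inside $X_{p=0}$ with the complement of $V(\tau(\cO_X)) = \Supp(\cO_X/\tau(\cO_X))$, which is a closed subset of $X$ since $\tau(\cO_X)$ is a coherent ideal sheaf (hence $\cO_X/\tau(\cO_X)$ is coherent, so its support is closed). Therefore the $p$-almost splinter locus is the union of $X \setminus V(pR)$ — which is open — and $(X\setminus V(\tau(\cO_X))) \cap V(pR)$. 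To see the whole locus is open, note that if $\frp$ is a non-$p$-almost-splinter prime then $\frp \in V(pR) \cap V(\tau(\cO_X))$, a closed set; conversely every point of this closed set fails to be a $p$-almost splinter point. Hence the $p$-almost splinter locus is precisely $X \setminus \big(V(pR) \cap V(\tau(\cO_X))\big)$, which is open.

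To extract the explicit statement: given $\frq \in X_{p=0}$ with $R_\frq$ a $p$-almost splinter, we have $\frq \notin V(pR) \cap V(\tau(\cO_X))$ but $\frq \in V(pR)$, so $\frq \notin V(\tau(\cO_X))$, i.e.\ $\tau(\cO_X)_\frq = R_\frq$. Since $\tau(\cO_X)$ is finitely generated, there exists $g \notin \frq$ with $\tau(\cO_X)[1/g] = R[1/g]$, and then using that the formation of the test ideal commutes with localization (\autoref{cor.TauRHInvertPEqualsJNoetherianNoPair}, \autoref{prop.PropertiesOfUltTauOmegaForPairs}\autoref{prop.PropertiesOfUltTauOmegaForPairs.SmoothPullBack} applied to the open immersion $\Spec R[1/g] \hookrightarrow X$) we get $\tau(\cO_{\Spec R[1/g]}) = \cO_{\Spec R[1/g]}$. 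Now for any prime $Q \in \Spec R[1/g]$: if $p \notin Q$ we are done as above; if $p \in Q$, then $\tau(\cO_{\Spec R[1/g]})_Q = R[1/g]_Q$ forces, by the equivalences above applied to $\widehat{(R[1/g])_Q}$, that $\widehat{(R[1/g])_Q}$ is a $p$-almost splinter, and I would then invoke \autoref{lem:a-tau-governs-splinters-noncomplete} in the form that $\tau^a_{\rm alt}$ controlling splinters descends from the completion — more precisely, a direct argument: a split of $R[1/g]_Q \xrightarrow{p^\epsilon} S$ for finite $R[1/g]_Q \subseteq S$ exists because the corresponding $\Hom$-module surjectivity can be checked after the faithfully flat base change to the completion, where it holds. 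Either way $R[1/g]$ is a $p$-almost splinter.

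The main obstacle I anticipate is the descent step: the $p$-almost splinter property is defined via splitting of \emph{all} finite extensions, and while $\tau(\cO_X)_Q = \cO_{X,Q}$ handles the local ring, passing to the genuinely global (or at least semilocal-after-localization) statement "$R[1/g]$ is a $p$-almost splinter" requires knowing that splitting can be detected stalk-by-stalk and that the perturbation exponent $\epsilon$ can be chosen uniformly. Uniformity of $\epsilon$ follows from Noetherianity (the test ideal stabilizes as $\epsilon \to 0$, \autoref{def.tauAltEpsilon}), and detecting splitting of $R \xrightarrow{p^\epsilon} S$ locally is standard: $\Hom_R(S,R) \xrightarrow{\mathrm{ev}_{p^\epsilon}} R$ is surjective iff it is surjective at every maximal ideal, and surjectivity at $\frm$ is equivalent (by faithful flatness, since $\omega_{S/R}$ is coherent) to surjectivity after $\frm$-adic completion, which is governed by $\tau_+$. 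I would spell this reduction out carefully, as it is where the abstract coherence of the test ideal gets converted into the concrete splitting conclusion.
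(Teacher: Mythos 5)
Your overall strategy coincides with the paper's: identify the $p$-almost splinter locus inside $X_{p=0}$ with the complement of the vanishing locus of a coherent test ideal whose completed stalks detect the property via \autoref{lem:a-tau-governs-splinters-complete}, then use coherence and the compatibility with localization to produce $g\notin\frq$. Also, the descent step you flag at the end as the "main obstacle" is already handled cleanly by \autoref{lem:a-tau-governs-splinters-noncomplete}: once the test ideal of $R[1/g]$ is all of $R[1/g]$, every evaluation-at-$p^\epsilon$ map $\Hom(S,R[1/g])\to R[1/g]$ surjects outright because its image contains the test ideal, so no stalk-by-stalk splitting analysis or uniformity discussion is needed.

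There is, however, one substantive error. You work with the perturbation-friendly test ideal $\tau(\cO_X)$ of \autoref{def.UltTestIdealDefinition} and assert that its completed stalk is $\tau_+(\widehat{\cO_{X,\frp}},p^{1/p^e})$. That is not what \autoref{cor:one_test_ideal_to_rule_them_all}(c)--(d) gives: the completed stalk of $\tau(\cO_X)$ is $\tau_{\cB}(\widehat{\cO_{X,\frp}})=\tau_+(\widehat{\cO_{X,\frp}},\epsilon G|_{\widehat{\cO_{X,\frp}}})$, which is a priori \emph{strictly smaller} than $\tau_+(\widehat{\cO_{X,\frp}},p^\epsilon)$; whether the extra $\epsilon G$ perturbation is harmless is essentially the open \autoref{quest.tauAltPerturbed}. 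Since \autoref{lem:a-tau-governs-splinters-complete} characterizes the $p$-almost splinter property via $\tau_+(\cdot,p^\epsilon)$, your claimed equivalence "$R_\frq$ is a $p$-almost splinter $\iff\tau(\cO_X)_\frq=\cO_{X,\frq}$" is only established in the "if" direction, and the "only if" direction is exactly the one you need in order to conclude $\frq\notin V(\tau(\cO_X))$ and find $g$. The fix is to use $\tau^a_{\alt}(R)$ (i.e.\ $\tau^a_{\alt}(\cO_X,0)$, with only the $p$-perturbation) throughout, whose completed stalk really is $\tau_+(\widehat{\cO_{X,\frq}},p^{1/p^e})$ by \autoref{cor.TauAgreesWithCompletionComputation}; this is what the paper does. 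A second, smaller gap: "faithful flatness of completion" only yields that $\widehat{\cO_{X,\frq}}$ being a $p$-almost splinter implies $R_\frq$ is one; the implication you actually use ($R_\frq$ a $p$-almost splinter $\Rightarrow\widehat{R_\frq}$ a $p$-almost splinter) does not follow from flatness, since finite extensions of the completion need not descend to $R_\frq$ — the paper invokes \cite[Proposition 4.29]{BMPSTWW1} for precisely this point.
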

In the proof we use the notation $\tau^a_{\alt}(R)$ from Remark \ref{rem:a-test-ideal-definition}. All of its properties follows from that of $\tau^a_{\alt}(\omega_R)$ from \autoref{sec:SingOverDVR}.
\begin{proof}
Observe that the $\mathfrak{q}$-completion $\widehat{R}_{\mathfrak{q}}$ is a $p$-almost splinter as can be seen immediately from \cite[Proposition 4.29]{BMPSTWW1}.

First, we prove that $\tau^a_{\alt}(R) \not \subseteq \mathfrak{q}$. To this end, suppose by contradiction that $\tau^a_{\alt}(R) \subseteq \mathfrak{q}$.  This immediately contradicts the following equalities for $0 < \epsilon \ll 1$: 
\[
\widehat{\tau^a_{\alt}(R)}_{\mathfrak{q}} \overset{(1)}{=}
\tau_+(\widehat{R}_{\mathfrak{q}}, p^\epsilon) \overset{(2)}{=} \widehat{R}_{\mathfrak{q}},
\]

where (1) follows from \autoref{cor.TauAgreesWithCompletionComputation}, and (2) follows from
\autoref{lem:a-tau-governs-splinters-complete}.

Thus we can assume that there exists $g \in \tau^a_{\alt}(R)$ such that $g \not \in \mathfrak{q}$. Since $\tau^a_{\alt}(R)$ is stable under localization, $\tau^a_{\alt}(R[1/g]) = R[1/g]$. Thus $R[1/g]$ is a $p$-almost splinter by \autoref{lem:a-tau-governs-splinters-noncomplete}.
\end{proof}

\begin{remark}
    \label{rem.pgAlmostSplinterIsAlsoOpen}
    For $R$ as in \autoref{set.PAlmostSplinter} and any $0 \neq g \in R$, the results in this section, with the same proof as above, also show that the analogously defined $pg$-almost splinter locus of $R$ is open as well.  
\end{remark}

\section{Test ideals of non-principal ideals}
\label{sec.TestIdealsNonPrincipal}

There are multiple ways to define test ideals associated to ideals $\fra^t$ raised to formal exponents $t \geq 0$.  Our goal in this section is to show that they all (almost) agree and hence that theorems we prove about one can be transferred to another.

We work first in the complete local case.  

\subsection{The complete local case}\label{sec:local_non_principal}

The following subsection is independent from the work done earlier in the paper.

\begin{setting}
\label{set.CompleteLocalTestIdealForNonPrincipal}
    Suppose $(R, \m)$ is a  complete Noetherian local domain with residue field of characteristic $p > 0$.  Suppose $\fra \subseteq R$ is an ideal and $t \geq 0$ is a rational number.  We also fix a set $\{ g_{\lambda} \}_{\lambda \in \Lambda} \subseteq \fra$ (typically a generating set). 
    
    More generally $\fra_1, \dots, \fra_m$ are ideals and $t_1, \dots, t_m \geq 0$ are rational numbers.  In that case we may also consider subsets $\{ g_{1, \lambda}\}_{\lambda \in \Lambda_1} \subseteq \fra_1, \dots, \{ g_{m, \lambda} \}_{\lambda \in \Lambda_m} \subseteq \fra_m$ (typically generating sets).  
    
    We set $B$ to be a perfectoid big Cohen-Macaulay $R^+$-algebra (for instance $\widehat{R^+}$).  
\end{setting}

Our first definition uses all the elements of the ideal $\fra$ (and the powers of $\fra$). 

\begin{definition}[\cite{MaSchwedePerfectoidTestideal,SatoTakagiArithmeticAndGeometricDeformationsOfFPure,RobinsonBCMTestIdealsMixedCharToric,MurayamaUniformBoundsOnSymbolicPowers}, \cf \cite{HaraYoshidaGeneralizationOfTightClosure}]
\label{def.TauForIdealEltWise}
Define $\tau^{\elt}_{B}(\omega_R, \fra^t)$ to be the sum 
\[
    \sum \tau_B(\omega_R, f^s) 
\]
where the sum runs over $f \in \fra^n$ and rational numbers $s \geq t/n$ for various integers $n$. We similarly define 
$$\tau^{\elt}_{\mathcal{B}}(\omega_R, \fra^t) = \sum \tau_{\mathcal{B}}(\omega_R, f^s),$$
this is equal to $\tau^{\elt}_{B}(\omega_R, \fra^t)$ for all $B$ sufficiently large by \cite[Proposition 6.4]{MaSchwedeSingularitiesMixedCharBCM}.

More generally, for mixed test ideals, we define $\tau_B^{\elt}(\omega_R, \fra_1^{t_1} \cdots \fra_m^{t_m})$ to be 
\[
   \sum \tau_B(\omega_R, f_1^{s_1} \cdots f_m^{s_m}) 
\]
where the sum runs over $f_i \in \fra_i^{n_i}$ and rational numbers $s_i \geq t_i/n_i$ for various integers $n_i > 0$. We similarly have $\tau_{\mathcal{B}}^{\elt}(\omega_R, \fra_1^{t_1} \cdots \fra_m^{t_m})$. The superscript ``$\elt$'' is shorthand for \emph{element-wise}.

Finally, if $\Delta \geq 0$ is a $\bQ$-divisor such that $K_R + \Delta = {1 \over l} \Div(h)$ is $\bQ$-Cartier (for some $h \in R$), then we define 
\[
    \tau_B^{\elt}(R, \Delta, \fra^t) := \tau_B(\omega_R, h^{1/l} \fra^t) 
\]
and likewise with $\tau_B(R, \Delta, \fra_1^{t_1} \cdots \fra_m^{t_m})$ and $\tau_{\mathcal{B}}(R, \Delta, \fra_1^{t_1} \cdots \fra_m^{t_m})$.
\end{definition}

This is not exactly the definition as given in the above sources but it is easily seen to coincide up to perturbing the coefficients.  The one given in the above sources is typically phrased in terms of local cohomology.

Our next definition is obtained by choosing a set of generators of $\fra = (g_{\lambda})$.  

\begin{definition}
[\cite{MaSchwedePerfectoidTestideal,SatoTakagiArithmeticAndGeometricDeformationsOfFPure,RobinsonBCMTestIdealsMixedCharToric,MurayamaUniformBoundsOnSymbolicPowers}] 
\label{def.ElementWiseTestElementCompleteLocal}
Define $\tau^{\elt}_{B}(\omega_R, [g_\lambda]^t)$ to be the sum 
\[
    \sum \tau_B\Big(\omega_R, \prod g_{\lambda}^{s_{\lambda}}\big)
\]
where the sum runs over finite formal products $\prod g_{\lambda}^{s_{\lambda}}$ such that the (finitely many nonzero) $s_{\lambda}$ are nonnegative rational numbers with $\sum_{\lambda} s_{\lambda} \geq t$.  We further define 
\[
\tau_{\mathcal{B}}^{\elt}(\omega_R, [g_\lambda]^t) := \sum \tau_{\mathcal{B}}\Big(\omega_R, \prod g_{\lambda}^{s_{\lambda}}\big),
\]
this is equal to $\tau^{\elt}_{B}(\omega_R, [g_\lambda]^t)$ for all $B$ sufficiently large again by \cite[Proposition 6.4]{MaSchwedeSingularitiesMixedCharBCM}. Note that for sets $\{ g_\lambda \}$ with only one element, this clearly coincides with our earlier definitions of $\tau_B(\omega_R, g^{t})$ and $\tau_{\mathcal{B}}(\omega_R, g^{t})$.

More generally, if $\Gamma = s \Div g \geq 0$ is $\bQ$-Cartier, we define $\tau_B^{\elt}(\omega_R,\Gamma, [g_{1, \lambda}]^{t_1} \cdots [g_{m, \lambda}]^{t_m})$ to be 
\[
    \sum \tau_B\Big(\omega_R, g^s (\prod g_{1,\lambda}^{s_{1,\lambda}}) \cdots (\prod g_{m,\lambda}^{s_{m,\lambda}}) \Big)
\]
where again we run over finite formal products for finite sets of nonnegative rational numbers (finitely many nonzero) $s_{i,\lambda} \geq 0$ with $\sum_{\lambda} s_{i,\lambda} \geq t_i$. We similarly define the test modules $\tau_{\mathcal{B}}^{\elt}(\omega_R, [g_{1, \lambda}]^{t_1} \cdots [g_{m, \lambda}]^{t_m})$. If $\Delta$ is such that $K_R + \Delta$ is $\bQ$-Cartier, we can also define $\tau^{\elt}_B(R, \Delta,  [g_{1, \lambda}]^{t_1} \cdots [g_{m, \lambda}]^{t_m})$ and $\tau_{\mathcal{B}}^{\elt}(R, \Delta, [g_{1, \lambda}]^{t_1} \cdots [g_{m, \lambda}]^{t_m})$ in the way analogous to \autoref{def.TauForIdealEltWise}.  Non-effective $\Gamma$ or $\Delta$ can be handled by adding an effective Cartier divisor and then untwisting to obtain a fractional ideal (or fractional submodule of $\omega_R$).
\end{definition}

Finally, there is a third notion which appeared in \cite{HaconLamarcheSchwede}.  We first recall some notation.

Suppose $Y$ is a normal integral scheme proper over a complete Noetherian ring $R$ and $M$ is a Cartier divisor on $Y$.  We set 
\[
    \myB^0(Y, tM; \omega_Y) = \bigcap_{Z \to Y} \Image\Big(H^0(Z, \cO_Z(K_Z - tG)) \xrightarrow{\Tr} H^0(Y, \cO_Y(K_Y))\Big)
\]
where the intersection runs over either alterations $Z \to Y$ or equivalently finite surjective maps.  

\begin{definition}
    \label{def.TauBlup}
    Let $\pi : Y \to X\, {=\Spec(R)}$ be a projective birational map from a normal $Y$ factoring through the blowup of $\fra$ (for instance, we can let $Y$ denote the normalized blowup of $\fra$).  
    Write $\cO_Y(-M) = \fra \cdot \cO_Y$ for some Cartier divisor $M$.  We define $\tau_{\blup}(\omega_R, \fra^t)$ to be 
    \[
        \myB^0(Y, tM; \omega_Y) \subseteq H^0(Y, \omega_Y) \subseteq \omega_R.
    \]
    This is independent of the choice of $Y$ by \cite[Lemma 4.19]{BMPSTWW1}.

    More generally, for mixed test ideals, if $f : Y \to \Spec R$ as above factors through all the blowups of the $\fra_i$ and we write $\fra_i \cO_Y = \cO_Y(-M_i)$, and $\Gamma$ is $\bQ$-Cartier then we define  $\tau_{\blup}(\omega_R, \Gamma, \fra_1^{t_1} \cdots \fra_m^{t_m})$ to be  
    \[
        \myB^0(Y, f^* \Gamma + t_1 M_1 + \dots + t_m M_m; \omega_Y) \subseteq H^0(Y, \omega_Y) \subseteq \omega_R.
    \]

    Finally, if $\Delta$ is such that $K_R + \Delta$ is $\bQ$-Cartier, then we can define
    \[
        \tau_{\blup}(R, \Delta, \fra^t) :=  \myB^0(Y, t M + \pi^*(K_X + \Delta); \omega_Y) \subseteq \omega_R.
    \]
    The ideal $\tau_{\blup}(R, \Delta, \fra_1^{t_1} \cdots \fra_m^{t_m})$ is defined similarly.
\end{definition}    

{\color{black}One advantage of this definition is that it is immediately clear that 
    \begin{equation}
        \label{eq.VariousUnambiguities}
        \tau_{\myB^0}(\omega_R, \underbrace{\fra^{t} \cdots \fra^{t}}_{\text{$n$-times}}\frb^s) = \tau_{\myB^0}(\omega_R, {(\fra^{n}})^t\frb^s) =         \tau_{\myB^0}(\omega_R, {\fra^{nt}}\frb^s).
    \end{equation}
    More generally, we have the following observation.
}

\begin{remark}
    \label{rem.SuperficialMixedIdealStrategy}
    The generalization to $\tau_{\myB^0}(\omega_R, \Gamma, \fra_1^{t_1} \cdots \fra_m^{t_m}) = \tau_{\myB^0}(\omega_R, (g)^s \fra_1^{t_1} \cdots \fra_m^{t_m})$ is superficial.  Indeed, suppose for each $i$ we write $s = a/b, t_i = a_i/b$ for integers $a_i$ and a common denominator $b$, it is straightforward to see that  
    \[
        \tau_{\myB^0}(\omega_R, \fra_1^{t_1} \cdots \fra_m^{t_m}) = \tau_{\myB^0}\big(\omega_R, \fra^{1/b}\big)
    \]
    where $\fra = g^a \fra_1^{t_1 b} \cdots \fra_m^{t_m b}$ (an honest ideal).  
\end{remark}

All of these definitions have various advantages.  For instance $\tau(\omega_S, [g_{\lambda}]^t)$ is easily seen to satisfy summation-type theorems, and subadditivity for this variant can be found in \cite{MurayamaUniformBoundsOnSymbolicPowers} or \cite{MaSchwedePerfectoidTestideal}, \cf \cite{TakagiFormulasForMultiplierIdeals} when $R$ is regular.  On the other hand, if $R$ is regular or even BCM-regular in the sense of \cite{MaSchwedeSingularitiesMixedCharBCM}, it is easy to see that 
\[
    \fra^{\lceil t \rceil} = (g_{\lambda})^{\lceil t \rceil} \subseteq \tau(R, [g_{\lambda}]^t).
\]
We explicitly highlight some key properties of $\tau_{\blup}(\omega_R, \fra^t)$ as several will be useful shortly.

\begin{proposition} \label{prop:easy-properties-for-tau-B0-local}
    With notation as in \autoref{def.TauBlup}:
    \begin{enumerate}
        \item If $t \leq  t'$ then $\tau_{\blup}(\omega_R, \fra^t) \supseteq \tau_{\blup}(\omega_R, \fra^{t'})$.\label{prop:easy-properties-for-tau-B0-local.exponentcomparison}
        \item If $\fra \subseteq \frb$ then $\tau_{\blup}(\omega_R, \fra^t) \subseteq \tau_{\blup}(\omega_R, \frb^t)$.\label{prop:easy-properties-for-tau-B0-local.idealcontainment}
        \item $\tau_{\blup}(\omega_R, \fra^t) = \tau_{\blup}(\omega_R, \overline{\fra}^t)$, where $\overline{\fra}$ is the integral closure (likewise for mixed and ideal variants).  \label{prop:easy-properties-for-tau-B0-local.integralClosure}
        \item \label{prop:easy-properties-for-tau-B0-local.trace-inclusion} If $\pi : Z \to \Spec R$ is any normal alteration with $\fra \cdot \cO_Z= \cO_Z(-G)$ and such that $tG$ is Cartier, then 
        \[
            \tau(\omega_R, \fra^t) \subseteq \Tr\big(H^0(Z, \cO_Z(K_Z -tG))\big).
        \]
    \end{enumerate}
\end{proposition}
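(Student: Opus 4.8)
The plan is to derive (1)--(3) from a single monotonicity observation about $\myB^0$ together with elementary facts on blow-ups, and to isolate the genuine content in (4). First I would record the following: for a fixed model $Y$, a fixed line bundle $\sM = \cO_Y(N)$, and effective $\bQ$-divisors $\Delta \leq \Delta'$ on $Y$, one has $\myB^0(Y, \Delta'; \sM) \subseteq \myB^0(Y, \Delta; \sM)$; this is immediate from the defining intersection, since on every finite cover $g : W \to Y$ the inequality $\lceil g^*(N - K_Y - \Delta')\rceil \leq \lceil g^*(N - K_Y - \Delta)\rceil$ gives an inclusion of the relevant invertible subsheaves of $K(W)$, hence of the images of the traces. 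For (1) I would fix one normal projective birational $Y \to X$ factoring through the blow-up of $\fra$, write $\fra\cO_Y = \cO_Y(-M)$ with $M \geq 0$, and apply the observation to $tM \leq t'M$. For (2) I would instead take $Y$ normal dominating the blow-ups of both $\fra$ and $\frb$; then $\fra\cO_Y = \cO_Y(-M_\fra)$ and $\frb\cO_Y = \cO_Y(-M_\frb)$ are invertible, the inclusion $\fra\cO_Y \subseteq \frb\cO_Y$ forces $M_\fra \geq M_\frb \geq 0$, and the observation (with $\sM = \omega_Y$) finishes it.

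For (3), the key point is that if $Y$ is normal and $\fra\cO_Y$ is invertible then $\fra\cO_Y$ is integrally closed, so $\fra\cO_Y \subseteq \overline{\fra}\,\cO_Y \subseteq \overline{\fra\cO_Y} = \fra\cO_Y$ and hence $\fra\cO_Y = \overline{\fra}\,\cO_Y$. Consequently any $Y$ legitimately computing $\tau_{\blup}(\omega_R, \fra^t)$ also makes $\overline{\fra}$ invertible (by the universal property of the blow-up), so it computes $\tau_{\blup}(\omega_R, \overline{\fra}^t)$ as well, via literally the same divisor $M$ defined by $\fra\cO_Y = \cO_Y(-M) = \overline{\fra}\,\cO_Y$; thus the two $\myB^0$'s coincide. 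The mixed-ideal and $\Delta$-twisted variants go through verbatim, using a single model on which all the relevant ideals become invertible.

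For (4) I would use the equality $\myB^0(Y, tM; \omega_Y) = \myB^0_{\mathrm{alt}}(Y, tM; \omega_Y)$ of \cite[Corollary 4.13]{BMPSTWW1}, which applies because $M$, and hence $tM$, is Cartier on the blow-up $Y$. Given the normal alteration $\pi : Z \to X$ with $\fra\cO_Z = \cO_Z(-G)$ invertible, $Z \to X$ factors through the blow-up of $\fra$, so I can form $W$, the normalization of the unique component of $Y \times_X Z$ dominating $X$; then $k : W \to Z$ is birational while $h : W \to Y$ is an alteration. By the description of $\myB^0_{\mathrm{alt}}$, $\tau_{\blup}(\omega_R, \fra^t)$ is contained in the image of $H^0\big(W, \cO_W(K_W + \lceil h^*(-tM)\rceil)\big)$ in $\omega_R$. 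On $W$ one has $h^*M = k^*G$ (both represent the invertible ideal $\fra\cO_W$), and since $tG$ is Cartier this identifies the above sheaf with $\omega_W \otimes k^*\cO_Z(-tG)$. Factoring the trace as $\Tr_{Z/X} \circ \Tr_{W/Z}$ and using the projection formula, $\Tr_{W/Z}$ carries $\omega_W \otimes k^*\cO_Z(-tG)$ into $\omega_Z \otimes \cO_Z(-tG) = \cO_Z(K_Z - tG)$, and composing with $\Tr_{Z/X}$ gives $\tau_{\blup}(\omega_R, \fra^t) \subseteq \Tr\big(H^0(Z, \cO_Z(K_Z - tG))\big)$, as desired.

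The main obstacle is the bookkeeping in (4): checking that the chosen $W$ is simultaneously a bona fide alteration of $Y$ and birational over $Z$ (which holds because the generic fibre of $Y \times_X Z$ over $X$ is the single point $\Spec K(Z)$), that the pulled-back divisors $h^*M$ and $k^*G$ agree on the nose rather than merely $\bQ$-linearly, and that the trace maps and the inclusions into $\omega_R$ are compatibly composed. None of this is deep, but it is where care is required; parts (1)--(3) should be essentially immediate once the monotonicity lemma is in place.
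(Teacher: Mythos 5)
Your proposal is correct and follows essentially the same route as the paper: (1)--(2) via monotonicity of $\myB^0$ in the boundary divisor (which the paper simply cites from \cite[Lemma 4.7]{BMPSTWW1} and you re-derive from the defining intersection), (3) via $\fra\cO_Y=\overline{\fra}\,\cO_Y$ on a normal model where $\fra\cO_Y$ is invertible, and (4) via $\myB^0=\myB^0_{\alt}$ so that any normal alteration on which $\fra$ becomes invertible appears (after passing to a common dominating model) in the intersection computing $\tau_{\blup}$. The only difference is that you write out the bookkeeping for (4) that the paper compresses into one displayed equation.
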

\begin{proof}
    Properties \autoref{prop:easy-properties-for-tau-B0-local.exponentcomparison} and \autoref{prop:easy-properties-for-tau-B0-local.idealcontainment} follow from \cite[Lemma 4.7]{BMPSTWW1}.  \autoref{prop:easy-properties-for-tau-B0-local.integralClosure} follows since $\fra \cO_Y = \overline{\fra} \cO_Y = \cO_Y(-M)$.  \autoref{prop:easy-properties-for-tau-B0-local.trace-inclusion} follows since 
    \[ 
        \tau_{\myB^0}(\omega_R, \fra^t) = \myB^0(Y, tM; \omega_Y) = \bigcap_{Z \to Y} \Image\Big(H^0(Z, \cO_Z(K_Z - tG)) \xrightarrow{\Tr} H^0(Y, \cO_Y(K_Y))\Big)
    \]
    where the intersection runs over normal alterations $Z \to Y$.  
\end{proof}


Another particularly important property for $\tau_{\blup}(\omega_R, \fra_1^{t_1} \cdots \fra_m^{t_m})$ is Skoda's theorem.  We give a complete proof here since it is proved in \cite{HaconLamarcheSchwede} only for the case of a single ideal $\fra$.

\begin{theorem}
    \label{thm.SkodaForLocalCase}
    Suppose $(R, \fram)$ is a complete Noetherian reduced local domain with a positive characteristic residue field.  Suppose $\fra_1, \dots, \fra_m \subseteq R$ are ideals and $t_1, \dots, t_m \geq 0$ are rational numbers.  Suppose further that $t_1$ is greater than or equal to the number of generators of $\fra_1$, then 
    \[
        \tau_{\blup}(\omega_R, \fra_1^{t_1} \cdots \fra_m^{t_m}) = \fra_1 \cdot \tau_{\blup}(\omega_R, \fra_1^{t_1-1} \cdot \fra_2^{t_2} \cdots \fra_m^{t_m}).
    \]
    When $R$ is normal and $K_R + \Delta$ is $\bQ$-Cartier, the analogous result holds for the test ideal $\tau_{\blup}(R, \Delta, \fra_1^{t_1} \cdots \fra_m^{t_m})$.
\end{theorem}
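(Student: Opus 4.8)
\textbf{Proof plan for the Skoda theorem (\autoref{thm.SkodaForLocalCase}).}

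The plan is to reduce to the classical Skoda-type statement for $\myB^0$ on a fixed normal model where all the relevant ideals have become invertible. First, I would choose a projective birational map $\pi \colon Y \to X = \Spec R$ from a normal $Y$ that simultaneously dominates the (normalized) blowups of $\fra_1, \dots, \fra_m$, so that $\fra_i \cO_Y = \cO_Y(-M_i)$ for Cartier divisors $M_i$, and, if $K_R + \Delta$ is $\bQ$-Cartier, also so that $\pi^*(K_X + \Delta)$ is Cartier. By \autoref{def.TauBlup} (and its independence of the choice of $Y$ via \cite[Lemma 4.19]{BMPSTWW1}) it then suffices to prove the corresponding statement with $\tau_{\blup}(\omega_R, \fra_1^{t_1}\cdots\fra_m^{t_m})$ replaced by $\myB^0(Y, t_1 M_1 + \cdots + t_m M_m; \omega_Y)$, and $\fra_1$ by $\cO_Y(-M_1)$ pushed forward — the point being that $\fra_1 \cdot \tau_{\blup}(\omega_R, \fra_1^{t_1 - 1}\fra_2^{t_2}\cdots\fra_m^{t_m})$ computes, on $Y$, as $\pi_*\big(\cO_Y(-M_1) \cdot \myB^0(Y, (t_1-1)M_1 + t_2 M_2 + \cdots; \omega_Y)\big)$ since $\fra_1 \cO_Y$ is already invertible and $\pi_* \cO_Y(-M_1) = \fra_1$ (using that $Y$ dominates the blowup of $\fra_1$, so $M_1$ is $\pi$-globally generated and the projection formula applies).

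Next I would invoke the Skoda-type statement for $\myB^0$ that is already available. The key input is that if $M_1$ is globally generated by $n$ sections $s_1, \dots, s_n$ with $n \leq t_1$ (which we may arrange by further blowing up, or by choosing $Y$ to dominate the blowup of $\fra_1$, since then $-M_1$ is $\pi$-free and we may pass to a higher model where it is genuinely globally generated as needed — here $t_1 \geq$ the number of generators of $\fra_1$ is exactly the numerical hypothesis we need), then the Koszul complex on the $s_i$ together with the multiplication maps $\cO_Y(-M_1)^{\oplus n} \to \cO_Y$ gives a surjection that is compatible with the $\myB^0$ construction. Concretely, one uses that $\myB^0(Y, \Gamma + M_1; \omega_Y) = \sum_i s_i \cdot \myB^0(Y, \Gamma; \omega_Y)$ whenever $M_1$ is globally generated by $n \leq $ (the relevant bound) sections — this is the $\myB^0$ analog of the classical Skoda theorem, and follows from the Koszul-complex argument as in \cite[Corollary 6.3 and its proof]{HaconLamarcheSchwede} (where it is carried out for a single ideal), applied with $\Gamma = (t_1 - 1)M_1 + t_2 M_2 + \cdots + t_m M_m + \pi^*(K_X + \Delta)$; the extra fixed part $t_2 M_2 + \cdots$ and $\pi^*(K_X+\Delta)$ simply rides along as part of the twist and does not interfere with the argument.

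Finally I would descend the identity on $Y$ back down to $X$ by applying $\pi_*$ (taking global sections, since $X$ is affine), using that $\myB^0$ is by definition a submodule of $H^0$ and that $\pi_*(\cO_Y(-M_1) \otimes -) $ interacts with the sum over the $s_i$ as $\fra_1 \cdot (-)$. The statement for the test ideal $\tau_{\blup}(R, \Delta, \fra_1^{t_1}\cdots\fra_m^{t_m})$ then follows from the $\omega_R$-version by the usual untwisting by $\pi^*(K_X + \Delta)$ as in \autoref{def.TauBlup}, and non-effective $\Delta$ is handled by adding an effective Cartier divisor and untwisting. The main obstacle I anticipate is bookkeeping the global generation hypothesis correctly: one must ensure that on the chosen model $Y$, the Cartier divisor $M_1$ (or a suitable modification of it computing $\fra_1$) is globally generated by exactly $\leq t_1$ sections — equivalently, that the number of generators of $\fra_1$ is not increased when passing to $Y$ — so that the Koszul/Skoda mechanism applies with the stated numerical bound; this is where the reduced hypothesis on $R$ and care with the normalized blowup of $\fra_1$ (whose fibers over the generators give exactly $n$ sections cutting out $M_1$) will be needed.
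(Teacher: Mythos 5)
Your proposal is correct and follows essentially the same route as the paper: reduce to a normal model $Y$ on which the $\fra_i$ become invertible, then run the Koszul complex on the chosen generators of $\fra_1$ together with the vanishing-based $\myB^0$-Skoda mechanism of Hacon--Lamarche--Schwede (ultimately Bhatt's vanishing on $Y^+$) and Matlis duality; the paper simply writes out that Koszul/vanishing/duality computation in full precisely because the cited reference only treats a single ideal, thereby confirming your claim that the extra factors $t_2M_2+\cdots+t_mM_m$ and $\pi^*(K_X+\Delta)$ ride along harmlessly as a fixed twist. Your anticipated obstacle is a non-issue: the generators $s_1,\dots,s_n$ of $\fra_1$ automatically generate $\fra_1\cO_Y=\cO_Y(-M_1)$ on $Y$, so no extra care with global generation on the model is needed (and the remark following the theorem in the paper notes one may even replace the generator count by $\dim R$ after passing to a reduction of $\fra_1$).
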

\begin{proof}
    For ease of notation, we show that 
    \[
        \tau_{\blup}(\omega_R, \fra^{t} \frb^s) = \fra \cdot \tau_{\blup}(\omega_R, \fra^{t-1} \frb^s)
    \]
    for $t \geq \dim R$ as we can reduce to this case via the observation of \autoref{rem.SuperficialMixedIdealStrategy} (\cf \autoref{rem:candoskodawhenbiggerthandim}).  Set $X := \Spec R$. 
    
    Fix a normal birational map $\pi : Y \to X$ such that $\fra \cO_Y = \cO_Y(-M)$ and $\frb \cO_Y(-N)$ are line bundles. The effective divisors $M$ nad $N$ are fixed throughout the proof.  We now follow the argument of \cite{HaconLamarcheSchwede}, which itself is a modification of arguments from \cite{SchwedeTuckerNonPrincipal,LazarsfeldPositivity2,EinLazarsfeldGlobalGeneration}.  The point is we use Bhatt's vanishing for $\pi_+ : Y^+ \to X$ (where $Y^+$ is the absolute integral closure of $Y$) instead of Kawamata-Viehweg vanishing on a resolution in characteristic zero, or Frobenius plus Serre vanishing in characteristic $p > 0$.

    Pick generators $s_1, \dots, s_n \in \fra$ which then also generate $\cO_Y(-M)$.  Set $V = R^{\oplus {n \choose i}}$ to be the free $R$-module on the $s_i$.  We have an exact Skoda/Koszul complex on $Y$
    \[
        0 \to \sF_{n} \to \sF_{n-1} \to \dots \to \sF_1 \to \sF_0 \to 0
    \]
    where $\sF_i = \wedge^i V \otimes \cO_Y(iM)$.  We tensor this complex with $\cO_{Y^+}$ and note it stays exact since it is locally a resolution of a free module.  Since $-tM$ and $-tN$ pull back to Cartier divisors on some finite cover $\kappa : W \to Y$, we write $\cO_{Y^+}(-tM - sN) := \mu^* \cO_W(-\kappa^* tM - \kappa^* sN)$ to denote the pullback of the corresponding sheaf via $Y^+ \xrightarrow{\mu} W$.  We twist by $\cO_{Y^+}(-tM - sN)$ and apply $\sHom_{\cO_{Y^+}}(-, \cO_{Y^+})$ to obtain the following exact complex:
    \[
        0 \leftarrow \sG_{n} \leftarrow \sG_{n-1} \leftarrow \dots \leftarrow \sG_1 \leftarrow \sG_0 \leftarrow 0
    \]
    where 
    \[
        \sG_i := \wedge^i V \otimes \cO_{Y^+}( (t-i)M + s N)
    \]
    Note $-\kappa^* ((t-i)M + sN)$ is big and semi-ample for $i = 0, \dots, n$.  Hence 
    \[
        0 = \myH^j \myR\Gamma_{\fram} \myR\Gamma(Y^+, \sG_i) = H^j_{\fram}(\myR (\pi_+)_* \sG_i) =: H^j_{\fram}(Y^+, \sG_i)
    \]
    for $j \neq d= \dim R$ by \cite{BhattAbsoluteIntegralClosure} in the form of \cite[Corollary 3.7]{BMPSTWW1} (the right term is a slight abuse of notation). 
    It follows that  
    \[
        0 \leftarrow H^d_{\fram}(Y^+, \sG_{n}) \leftarrow \dots \leftarrow H^d_{\fram}(Y^+,\sG_{1})  \leftarrow H^d_{\fram}(Y^+,\sG_0) \leftarrow 0
    \]
    is also exact and most crucially $H^d_{\fram}(Y^+,\sG_0) \to H^d_{\fram}(Y^+,\sG_1)$ injects.

    Since $M, N$ are effective, we have canonical maps $\cO_Y \to \sG_0$ and $V \otimes \cO_Y \to \sG_1$.   
    Consider the following diagram:
    \begin{equation}
        \label{eq.KeySkodaSquare}
        \xymatrix{
            V \otimes H^d_{\fram}(Y, \cO_Y) \ar@{->>}[d]  &\ar[l] H^d_{\fram}(Y, \cO_Y) \ar@{->>}[d]  \\
            \mathrm{im}_2 \ar@{^{(}->}[d]  & \ar@{_{(}->}[l] \mathrm{im}_1 \ar@{^{(}->}[d]   \\
            H^d_{\fram}(Y^+,\sG_1)  & \ar@{_{(}->}[l] H^d_{\fram}(Y^+,\sG_0)  
        }
    \end{equation}
    where the middle row consists of images of the canonical maps. 
    Let $E$ denote an injective hull of the residue field of $R$, then by local and Grothendieck duality, and \autoref{eq.B0DefinitionViaLocalCohom}, 
    \[ 
        \Hom_R(\mathrm{im}_1, E) = \myB^0(Y, tM+sN; \omega_Y) = \tau_{\myB^0}(\omega_R, \fra^t \frb^s)
    \] 
    and likewise $\Hom_R(\mathrm{im}_2, E) = V \otimes \tau_{\myB^0}(\omega_R, \fra^{t-1} \frb^s)$.  Hence the Matlis dual of the middle horizontal map is the map 
    \[
        V \otimes  \tau_{\myB^0}(\omega_R, \fra^{t-1} \frb^s) \to \tau_{\myB^0}(\omega_R, \fra^{t} \frb^s)
    \]
    which is thus surjective.  But $V$ is the free vector space on the $s_i$ and so we have just shown that  
    \[
        \tau_{\myB^0}(\omega_R, \fra^{t} \frb^s) = \fra \cdot \tau_{\myB^0}(\omega_R, \fra^{t-1} \frb^s)
    \]
    as desired.  

    The ideal case may be reduced to the test module case since if $K_R + \Delta = {1 \over n} \Div(h)$ then
    \[
        \tau_{\myB^0}(R, \Delta, \fra_1^{t_1} \cdots \fra_m^{t_m}) = \tau_{\myB^0}(\omega_R, \fra_1^{t_1} \cdots \fra_m^{t_m} (h)^{1/n}).
    \]
    The proof is complete.
\end{proof}

\begin{remark}\label{rem:candoskodawhenbiggerthandim}
    By passing to a finite \'etale extension of $R$ and replacing $\fra_1$ with a reduction (a smaller ideal with the same integral closure), we may always assume that $\fra_1$ is generated by $\leq \dim R$ elements, see \cite[Proposition 3.10]{HaconLamarcheSchwede}.  Hence, instead of assuming that $t_1 \geq \text{(number of generators of $\fra_1$)}$ we may alternately assume that $t_1 \geq \dim R$ in the statement of \autoref{thm.SkodaForLocalCase}.  
\end{remark}

We include a direct and alternative proof of the subadditivity property for $\tau_{\myB^0}$ which mimics the strategy of \cite[Proof of Theorem 4.3]{IyengarMaSchwedeWalkerMCM}. Note that when $(R,\m)$ is complete and regular (or more generally complete and Gorenstein), $\omega_R=R$ and we will write $\tau_{\myB^0}(R,\fra^t)$ for $\tau_{\myB^0}(\omega_R,\fra^t)$. 

\begin{theorem}
    \label{thm:BlowUpSubadditivity}
    Let $(R,\m)$ be a complete regular local ring of residue characteristic $p>0$. Let $\fra$ and $\frb$ are two ideals. Then for all rational numbers $s, t\geq 0$, we have 
    $$\tau_{\myB^0}(R, \fra^s\frb^t)\subseteq \tau_{\myB^0}(R, \fra^s)\cdot \tau_{\myB^0}(R,\frb^t).$$
    \end{theorem}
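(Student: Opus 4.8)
The plan is to reduce the statement for $\tau_{\myB^0}$ over the complete regular local ring $R$ to a computation on a single model where both $\fra$ and $\frb$ become invertible, and then run a diagonal/multiplication argument powered by Bhatt's vanishing for the absolute integral closure (exactly the tool replacing Kawamata--Viehweg vanishing that was used in the proof of \autoref{thm.SkodaForLocalCase}). Concretely, set $X = \Spec R$, let $Y \to X \times_R X =: X^{(2)}$ be a normal birational model dominating the blowups of $\fra \cO_{X^{(2)}}$ and $\frb \cO_{X^{(2)}}$ pulled back from the two factors, so that $\mathrm{pr}_1^*\fra \cdot \cO_Y = \cO_Y(-M)$ and $\mathrm{pr}_2^*\frb \cdot \cO_Y = \cO_Y(-N)$ are line bundles. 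Let $\delta\colon X \to X^{(2)}$ be the diagonal, let $\widetilde{X} \to X$ be the strict transform (or, more safely, a common normal model of $X$ dominating $\widetilde{X}$ and the blowups of $\fra$ and $\frb$ on $X$), and note that the multiplication map $R \otimes_R R \to R$ induces, on the level of $\myB^0$-submodules of the respective dualizing modules, a map realizing the inclusion we want, provided we can control restriction to the diagonal. The key input is that $R$ is \emph{regular}, so $X^{(2)}$ is regular of dimension $2d$ and the diagonal $\delta$ is a regular (lci) embedding of codimension $d$; thus $\omega_{X^{(2)}} \otimes \cO_X \cong \omega_X \otimes \omega_X$ and restriction to $\delta$ is well-behaved.

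The steps in order: \emph{(i)} Reduce, via \autoref{rem.SuperficialMixedIdealStrategy} and \autoref{prop:easy-properties-for-tau-B0-local} \autoref{prop:easy-properties-for-tau-B0-local.exponentcomparison}, to the case where $s,t$ have a common denominator; passing to a finite cover and back using transformation rules, we may even assume $s = t = 1$ after replacing $\fra, \frb$ by appropriate powers and taking integral closures (harmless by \autoref{prop:easy-properties-for-tau-B0-local.integralClosure}). \emph{(ii)} On $X^{(2)}$, form $Y \to X^{(2)}$ as above and observe that $\tau_{\myB^0}(\omega_{X^{(2)}}, (\mathrm{pr}_1^*\fra)^s(\mathrm{pr}_2^*\frb)^t) = \myB^0(Y, sM + tN; \omega_Y)$, and that by the K\"unneth/box-product formula for $\myB^0$ (which follows from the analogous statement for local cohomology of the absolute integral closure, using that $(X^{(2)})^+$ receives $X^+ \widehat{\otimes} X^+$-type algebras) this box test module equals $\tau_{\myB^0}(\omega_R,\fra^s)\boxtimes\tau_{\myB^0}(\omega_R,\frb^t)$ as a submodule of $\omega_R \otimes_R \omega_R$. \emph{(iii)} Apply the restriction/adjunction-type inclusion for $\myB^0$ along the regular embedding $\delta$: because $\delta$ is lci of codimension $d$ into the regular scheme $X^{(2)}$, one has $\delta^* \tau_{\myB^0}(\omega_{X^{(2)}}, \Gamma) \subseteq \tau_{\myB^0}(\omega_X, \delta^*\Gamma)$ by the same argument as \autoref{thm.LocalPropertiesOfUltTauOX} \autoref{thm.LocalPropertiesOfUltTauOX.Restriction} applied iteratively (or directly via the Cech/Koszul presentation of $\delta$ and Bhatt vanishing, as in the proof of \autoref{thm.SkodaForLocalCase}). \emph{(iv)} Identify $\delta^*\big((\mathrm{pr}_1^*\fra)^s(\mathrm{pr}_2^*\frb)^t\big) = \fra^s\frb^t$ on $X$ and $\delta^*(\omega_R \otimes_R \omega_R) = \omega_R \otimes_R \omega_R \to \omega_R$ (using $\omega_R = R$ as $R$ is regular), so that the composite of the K\"unneth identification in (ii) with the restriction in (iii) and the multiplication map $\omega_R \otimes \omega_R \to \omega_R$ yields exactly $\tau_{\myB^0}(R,\fra^s)\cdot\tau_{\myB^0}(R,\frb^t) \supseteq \tau_{\myB^0}(R,\fra^s\frb^t)$.

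\textbf{Main obstacle.} The delicate point is step \emph{(iii)}: the restriction inequality for $\myB^0$-test modules along the diagonal. The restriction theorem in \autoref{thm.LocalPropertiesOfUltTauOX} is stated only for a single normal Cartier divisor $H$, whereas the diagonal is a codimension-$d$ regular embedding, so one must either (a) factor $\delta$ as a sequence of $d$ codimension-one regular embeddings into successively singular schemes and check that each step preserves the needed properties (normality of the intermediate loci is the worrying hypothesis), or (b) prove the codimension-$d$ restriction statement directly by presenting $\delta$ via a Koszul/\v{C}ech complex on the sections cutting it out and running Bhatt's vanishing argument from the proof of \autoref{thm.SkodaForLocalCase} verbatim on $(X^{(2)})^+$. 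Option (b) is cleaner and avoids normality issues: the relevant sheaves $\cO_{(X^{(2)})^+}((s-i)M + (t-j)N)$ twisted down the Koszul complex of the diagonal are pullbacks of big semiample bundles, so their higher local cohomology over $(X^{(2)})^+$ vanishes by \cite{BhattAbsoluteIntegralClosure}, and Matlis dualizing the resulting exact sequence of top local cohomology modules gives the surjection onto $\tau_{\myB^0}(\omega_R, \fra^s\frb^t)$ from the box-product. Establishing the K\"unneth formula for $\myB^0$ in step (ii) at the level of local cohomology of $(X^{(2)})^+$ — i.e.\ that $\widehat{R^+}\,\widehat{\otimes}\,\widehat{R^+}$ (suitably perfectoidized) computes both sides — is a secondary technical point, but it should follow formally from the fact that a perfectoid big Cohen--Macaulay algebra over $(R\otimes_R R)^+ = R^+$ can be built from a completed tensor product of two copies, together with the flat base change behavior of local cohomology.
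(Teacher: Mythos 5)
Your proposal takes a genuinely different route from the paper — the Demailly--Ein--Lazarsfeld diagonal trick — whereas the paper never leaves $\Spec R$: it Matlis-dualizes the statement into the colon-ideal containment $0^{\fra^s}_{H^d_{\m}(R)} : \tau_{\myB^0}(R,\frb^t) \subseteq 0^{\fra^s\frb^t}_{H^d_{\m}(R)}$ (where $0^{\fra^s}$ is the kernel of $H^d_{\m}(R)\to H^d_{\m}(\myR\pi_*\cO_{Y^+}(sG))$), and proves that containment by rewriting the image of a class $\eta$ as an element of $H^0\bigl(\Ann_E\tau_{\myB^0}(R,\frb^t)\otimes^{\mathbf{L}}_R \myR\pi_*\cO_{Y^+}(sG)\bigr)$ using Bhatt's vanishing and the fact that, $R$ being regular, $\myR\pi_*\cO_{Y^+}(sG)$ is a bounded complex of flat modules. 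That argument (modeled on Iyengar--Ma--Schwede--Walker) needs no product space, no K\"unneth formula, and no restriction theorem.

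As written, your approach has genuine gaps that I do not think can be patched cheaply. First, $X\times_R X$ is literally $X$, so you must mean a product over some base; but a complete regular local ring of mixed characteristic has no canonical base other than a Cohen ring $C(k)$, and if $R$ is ramified (e.g.\ $R=\bZ_p[p^{1/2}]$, or $R=C(k)\llbracket x_1,\dots,x_d\rrbracket/(p-f)$), the completed tensor product $R\,\widehat{\otimes}_{C(k)}\,R$ is \emph{not} regular and need not even be a normal domain. Then $X^{(2)}$ is not regular, the diagonal is not an lci embedding into a regular scheme, $\omega_{X^{(2)}}\otimes\cO_X\cong\omega_X\otimes\omega_X$ fails, and steps (ii)--(iv) collapse at the outset. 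Second, even granting a good $X^{(2)}$, the K\"unneth identification $\myB^0(Y, sM+tN;\omega_Y)=\tau_{\myB^0}(\omega_R,\fra^s)\boxtimes\tau_{\myB^0}(\omega_R,\frb^t)$ is a substantial unproven claim: $\myB^0$ is computed via the absolute integral closure, and $(R\,\widehat{\otimes}\,R)^+$ has no straightforward description in terms of $R^+\,\widehat{\otimes}\,R^+$ (your parenthetical ``$(R\otimes_R R)^+=R^+$'' is true but only because that product is trivial). Third, the codimension-$d$ restriction inequality along the diagonal is not in the paper — the restriction theorem there is for a single normal Cartier divisor — and your option (b) is a sketch of a new Koszul argument rather than a reduction to existing results. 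If you want a proof in the spirit of these tools, the paper's colon-ideal argument is the one that actually closes: it uses regularity of $R$ only through flatness of $\myR\pi_*\cO_{Y^+}(sG)$, which sidesteps every one of these product-space issues.
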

    \begin{proof}
    Let $\pi \colon Y \to \Spec(R)$ be a projective birational map with $Y$ normal that factors through the blowup of $\fra\frb$. Let $\fra\cO_Y=\cO_Y(-G)$ and $\frb\cO_Y=\cO_Y(-H)$. Note that these are both big and semi-ample on $Y$. Let $Y^+$ denote the absolute integral closure of $Y$. We will use the following notations ($d=\dim(R)$):
    $$0^{\fra^s}_{H_\m^d(R)} = \ker\big(H_\m^d(R)\to H_\m^d(\myR\pi_*\cO_{Y^+}(sG))\big)$$
    $$0^{\frb^t}_{H_\m^d(R)} = \ker\big(H_\m^d(R)\to H_\m^d(\myR\pi_*\cO_{Y^+}(tH))\big)$$ 
    $$0^{\fra^s\frb^t}_{H_\m^d(R)} = \ker\big(H_\m^d(R)\to H_\m^d(\myR\pi_*\cO_{Y^+}(sG+tH))\big.$$
    We know from \cite{HaconLamarcheSchwede} and \cite{BMPSTWW1} that $\tau_{\myB^0}(R,\fra^t)=\Image\big(H_\m^d(R)\to H_\m^d(\myR\pi_*\cO_Y(sG))\big)^\vee$. Since $H_\m^d(R)=E$, the injective hull of the residue field of $R$, by Matlis duality we know that 
    $$\tau_{\myB^0}(R,\fra^s)= \Ann_R0^{\fra^s}_{H_\m^d(R)} \text{ and } 0^{\fra^s}_{H_\m^d(R)}=0:_E\tau_{\myB^0}(R,\fra^s),$$ 
    and similarly 
    $$\tau_{\myB^0}(R,\frb^t)= \Ann_R0^{\frb^t}_{H_\m^d(R)} \text{ and } 0^{\frb^t}_{H_\m^d(R)}=0:_E\tau_{\myB^0}(R,\frb^t),$$ 
    $$\tau_{\myB^0}(R,\fra^s\frb^t)= \Ann_R0^{\fra^s\frb^t}_{H_\m^d(R)} \text{ and } 0^{\fra^s\frb^t}_{H_\m^d(R)}=0:_E\tau_{\myB^0}(R,\fra^s\frb^t),$$ 
    
    With notation as above, it is enough to show that $0^{\fra^s}_{H_\m^d(R)}: \tau_{\myB^0}(R, \frb^t) \subseteq 0^{\fra^s\frb^t}_{H_\m^d(R)}$. Because if this is true, then since $\tau_{\myB^0}(R,\fra^s\frb^t) = \Ann_R0^{\fra^s\frb^t}_{H_\m^d(R)}$, it follows that 
    $$\tau_{\myB^0}(R,\fra^s\frb^t) \cdot (0^{\fra^s}_{H_\m^d(R)}: \tau_{\myB^0}(R, \frb^t))=0$$
    and thus 
    \begin{align*}
    \tau_{\myB^0}(R,\fra^s\frb^t) &\subseteq \Ann_R(0^{\fra^s}_{H_\m^d(R)}: \tau_{\myB^0}(R, \frb^t)) \\
    & = \Ann_R\big((0:_E\tau_{\myB^0}(R,\fra^s)): \tau_{\myB^0}(R, \frb^t)\big)\\
    & = \Ann_R\big( 0:_E \tau_{\myB^0}(R, \fra^s)\tau_{\myB^0}(R,\frb^t) \big) \\
    & = \tau_{\myB^0}(R, \fra^s)\cdot \tau_{\myB^0}(R,\frb^t)
    \end{align*}
    
    Now we prove $0^{\fra^s}_{H_\m^d(R)}: \tau_{\myB^0}(R, \frb^t) \subseteq 0^{\fra^s\frb^t}_{H_\m^d(R)}$. Suppose $\eta\in H_\m^d(R)$ is such that $\tau_{\myB^0}(R, \frb^t)\eta \in 0^{\fra^s}_{H_\m^d(R)}$. This means the image of $\tau_{\myB^0}(R, \frb^t)\eta$ vanish in $H_\m^d(\myR\pi_*\cO_{Y^+}(sG))$, i.e., we have 
    \begin{align*}
    \text{Image of $\eta$ in $H_\m^d(\myR\pi_*\cO_{Y^+}(sG))$} & \in \Hom\big(R/\tau_{\myB^0}(R, \frb^t), {H_\m^d(\myR\pi_*\cO_{Y^+}(sG))}\big) \\
    & \cong H^0\Big(\myR\Hom\big(R/\tau_{\myB^0}(R, \frb^t), {H_\m^d(\myR\pi_*\cO_{Y^+}(sG))}\big) \Big) \\
    & \cong H^0\Big(\myR\Hom\big(R/\tau_{\myB^0}(R, \frb^t), \myR\pi_*\cO_{Y^+}(sG) \otimes_R^{\mathbf{L}} E\big) \Big) \\
    & \cong H^0\Big(\myR\Hom\big(R/\tau_{\myB^0}(R, \frb^t), E\big) \otimes_R^{\mathbf{L}} \myR\pi_*\cO_{Y^+}(sG)  \Big) \\
    & \cong H^0\Big(\Ann_E\tau_{\myB^0}(R, \frb^t) \otimes_R^{\mathbf{L}} \myR\pi_*\cO_{Y^+}(sG) \Big) 
    \end{align*}
    where the isomorphism on the third line follows from Bhatt's vanishing \cite[Theorem 6.28]{BhattAbsoluteIntegralClosure} (see \cite[Corollary 3.7]{BMPSTWW1}, which implies that $H_\m^d(\myR\pi_*\cO_{Y^+}(sG))$ is the first non-vanishing cohomology of $\myR\pi_*\cO_{Y^+}(sG) \otimes_R^{\mathbf{L}} E$, and it sits in degree $0$) and the isomorphism on the fourth line follows from \cite[Proposition 1.1 (4)]{FoxbyIsomorphismsComplexes} (here we used $R$ is regular so that $\myR\pi_*\cO_{Y^+}(sG)$ is isomorphic to a bounded complex of flat modules). Note that if we tensor the following diagram 
    \[
    \xymatrix{
    0 \ar[r] & 0^{\frb^t}_{H_\m^d(R)} \ar[r] \ar@{=}[d] &  H_\m^d(R)  \ar[r] \ar@{=}[d] & H_\m^d(\myR\pi_*\cO_{Y^+}(tH)) \ar[d] \\
    0 \ar[r] & \Ann_E\tau_{\myB^0}(R, \frb^t)  \ar[r] & E \ar[r] &  E \otimes_R^{\mathbf{L}} \myR\pi_*\cO_{Y^+}(tH)
    }
    \]
    with $\myR\pi_*\cO_{Y^+}(sG)$ (the right vertical map exists again by Bhatt's vanishing explained above), then we obtain that 
    \begin{align*}
    \Ann_E\tau_{\myB^0}(R, \frb^t) \otimes_R^{\mathbf{L}} \myR\pi_*\cO_{Y^+}(sG) & \to E \otimes_R^{\mathbf{L}} \myR\pi_*\cO_{Y^+}(tH) \otimes_R^{\mathbf{L}} \myR\pi_*\cO_{Y^+}(sG) \\
    & \to E \otimes_R^{\mathbf{L}} \myR\pi_*\cO_{Y^+}(sG+tH).
    \end{align*}
    is the zero map (the second map exists by \cite[Tag 0FKU]{stacks-project}). Therefore, we have $\Image(\eta)\in H^0\big(\Ann_E\tau_{\myB^0}(R, \frb^t) \otimes_R^{\mathbf{L}} \myR\pi_*\cO_{Y^+}(sG) \big)$ maps to zero in $H^0(E \otimes_R^{\mathbf{L}} \myR\pi_*\cO_{Y^+}(sG+tH))\cong  H_\m^d(\myR\pi_*\cO_{Y^+}(sG+tH))$, that is, $\eta\in 0^{\fra^s\frb^t}_{H_\m^d(R)}$ as wanted. 
    \end{proof}

We now return to the problem of comparing our various notions of $\tau$.  We first note that we have the following containments:
\[
    \tau_B^{\elt}(\omega_R, [g_\lambda]^t) \subseteq \tau_B^{\elt}(\omega_R, \fra^t) \subseteq \tau_{\widehat{R^+}}^{\elt}(\omega_R, \fra^t) \subseteq \tau_{\blup}(\omega_R, \fra^t),
\]
where the last containment follows from \cite[Remark 6.4]{HaconLamarcheSchwede}. The analogous containments also hold for mixed test modules $\tau(\omega_R, \fra_1^{t_1} \cdots \fra_{m}^{t_m})$ as well as the test ideal variations.

Our goal is to show these containments are equalities after slight adjustments (assuming the $g_{\lambda}$ generate $\fra$).  The key argument follows, which should be viewed as a variant of the summation formula for multiplier ideals \cite{MustataMultiplierIdealOfASum,TakagiFormulasForMultiplierIdeals,JowMillerMultiplierIdealsOfASum}.

\begin{proposition}
    \label{prop.TauBlupVsTauElts}
    Suppose that $(g_{\lambda} \mid \lambda \in \Lambda) = \fra$.  Then for $1 \gg \epsilon > 0$ we have that 
    \[
        \tau_{\blup}(\omega_R, \fra^{t+\epsilon}) = \tau_{\widehat{R^+}}^{\elt}(\omega_R, [g_\lambda]^{t+\epsilon}).
    \]
    The analogous statement also holds for mixed test modules and mixed test ideals.
\end{proposition}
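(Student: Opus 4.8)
\textbf{Proof strategy for \autoref{prop.TauBlupVsTauElts}.}
The plan is to prove the non-trivial inclusion
\[
\tau_{\blup}(\omega_R, \fra^{t+\epsilon}) \subseteq \tau_{\widehat{R^+}}^{\elt}(\omega_R, [g_\lambda]^{t+\epsilon})
\]
for $0 < \epsilon \ll 1$, since the reverse inclusion is already recorded above (it holds without any perturbation). The model to follow is the classical summation/subadditivity argument for multiplier ideals: we must find a \emph{single} element-wise monomial $\prod g_\lambda^{s_\lambda}$ (with $\sum_\lambda s_\lambda \geq t + \epsilon$) whose $\tau_{\widehat{R^+}}$-test module already captures all of $\tau_{\blup}(\omega_R, \fra^{t+\epsilon})$. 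As usual, one reduces to finitely many $g_\lambda$: on the normalized blowup $\pi \colon Y \to \Spec R$ of $\fra$ with $\fra\cO_Y = \cO_Y(-M)$, the divisor $M$ is globally generated by finitely many of the $g_\lambda$, so we may replace $\Lambda$ by a finite index set, say $\Lambda = \{1, \dots, n\}$, without changing either side.

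The geometric heart is the following. Fix $\pi \colon Y \to \Spec R$ factoring through the blowup of $\fra$, with $\fra\cO_Y = \cO_Y(-M)$. The chosen generators $g_1, \dots, g_n$ give a surjection $\cO_Y^{\oplus n} \twoheadrightarrow \cO_Y(-M)$, equivalently a morphism $f \colon Y \to \bP^{n-1}_R$ with $f^*\cO(1) = \cO_Y(M)$, and $\Div(g_i) = f^* H_i + M$ for the coordinate hyperplanes $H_i$. Now choose a \emph{general} $\bQ$-linear combination: for a general point $[a_1 : \cdots : a_n] \in \bP^{n-1}$, the divisor $D := \pi^* \Div(\sum a_i g_i) - M = f^*(\text{general hyperplane})$ is effective, and after a further blowup $\mu \colon Y' \to Y$ we may assume that $D' := \mu^* D$ together with the exceptional locus of $Y' \to \Spec R$ form a configuration for which $D'$ contributes no extra ``discrepancy'' beyond that of a single generic member of $|f^*\cO(1)|$. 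Concretely, by Bertini (applied on $Y_{K(R)}$ and then on the reductions — here the perturbation $\epsilon$ is what gives us the slack to absorb the finitely many bad primes in mixed characteristic) one arranges that
\[
\tau_{\blup}\bigl(\omega_R, \fra^{t}\cdot (\textstyle\sum a_i g_i)^{\epsilon}\bigr) = \tau_{\blup}(\omega_R, \fra^{t+\epsilon}),
\]
using \autoref{prop:easy-properties-for-tau-B0-local}(\ref{prop:easy-properties-for-tau-B0-local.exponentcomparison})--(\ref{prop:easy-properties-for-tau-B0-local.integralClosure}) and the fact that a general member of a base-point-free system computes the multiplier ideal. But $(\sum a_i g_i)^\epsilon$ is a \emph{principal} perturbation, so the left side equals $\tau_{\widehat{R^+}}(\omega_R, \fra^t \cdot (\sum a_i g_i)^\epsilon)$ after possibly enlarging to a big Cohen-Macaulay $\widehat{R^+}$-algebra, and $\fra^t = \overline{\fra^t}$ is handled by writing $\fra^t$ itself element-wise via a further generic combination for the exponent-$t$ part. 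Iterating the generic-linear-combination trick for the $\fra^t$ factor (this is exactly the summation formula for $\fra = (g_1) + \cdots + (g_n)$, cf.\ \cite{TakagiFormulasForMultiplierIdeals}) expresses everything as a sum of principal test modules $\tau_{\widehat{R^+}}(\omega_R, \prod g_i^{s_i})$ with $\sum s_i \geq t + \epsilon$, which by definition lies in $\tau_{\widehat{R^+}}^{\elt}(\omega_R, [g_\lambda]^{t+\epsilon})$.

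The main obstacle I anticipate is making the Bertini/generic-member argument work \emph{integrally} rather than merely over $K(R)$: a generic hyperplane section is transverse over the generic fiber, but one must control what happens along $\Div(\varpi)$ and the finitely many other ``bad'' primes of $R$. This is precisely where the small perturbation $\epsilon > 0$ is essential — it provides enough room that the finitely many bad contributions, each of bounded multiplicity, can be swallowed, exactly as in \autoref{prop.PerturbedTestIdealIsReal} and \autoref{thm:perturbed_tau_submodule_single_alteration} where perturbations along $\Div(p)$ and ramification divisors play the same role. The rest — reducing the infinite sum defining $\tau^{\elt}$ to a finite one, passing between $\tau_{\blup}$ and $\tau_{\widehat{R^+}}$ for principal coefficients via \cite[Remark 6.4]{HaconLamarcheSchwede}, and the straightforward extension to mixed ideals $\fra_1^{t_1}\cdots\fra_m^{t_m}$ and to test ideals via the device $K_R + \Delta = \tfrac{1}{l}\Div(h)$ as in \autoref{rem.SuperficialMixedIdealStrategy} — is routine.
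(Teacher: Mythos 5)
The easy inclusion $\tau_{\widehat{R^+}}^{\elt}(\omega_R, [g_\lambda]^{t+\epsilon}) \subseteq \tau_{\blup}(\omega_R, \fra^{t+\epsilon})$ and the reduction to finitely many generators are fine, but your argument for the hard inclusion has a genuine gap at its central step. You want to replace $\fra^{t+\epsilon}$ by data attached to single elements via general $\bQ$-linear combinations $\sum a_i g_i$, invoking ``the fact that a general member of a base-point-free system computes the multiplier ideal'' and then ``iterating the generic-linear-combination trick'' to handle the $\fra^t$ factor. Neither step is available here. The general-member statement is a characteristic-zero fact proved via resolution and vanishing (or via Frobenius in characteristic $p$); over a complete local ring of mixed characteristic with possibly finite residue field there is no Bertini theorem of this kind for $\tau_{\blup}$ or $\tau_{\widehat{R^+}}$, and the $\epsilon$-perturbation does not rescue it — the obstruction is not finitely many bad primes but the absence of any mechanism relating a blowup-defined test module to the test module of a single general element. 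Worse, even granting that step, reducing $\tau(\omega_R,\fra^t)$ for $t \geq 1$ to test modules of products of elements is exactly the content of Skoda's theorem; a single general combination $(\sum a_i g_i)^t$ does not compute $\tau(\fra^t)$, and your ``iteration'' silently assumes the summation formula you are in the middle of proving.

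The paper's proof avoids Bertini entirely and makes Skoda the engine. Fix $\epsilon'$ so that both sides are constant for $0 < \epsilon \leq \epsilon'$, and pass to a finite normal extension $S \supseteq R$ containing $g_i^{1/N}$ with $N\epsilon' \geq 2v$ (where $v$ is the number of generators). Setting $\fra_N = (g_1^{1/N},\dots,g_v^{1/N}) \subseteq S$, one has $\overline{\fra_N^N} = \overline{\fra S}$, so by the finite-map transformation rule and integral-closure invariance, $\tau_{\blup}(\omega_R,\fra^{t+\epsilon'}) = \Tr\bigl(\tau_{\blup}(\omega_S,\fra_N^{N(t+\epsilon')})\bigr)$. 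Skoda's theorem (\autoref{thm.SkodaForLocalCase}, \cite[Theorem 6.6]{HaconLamarcheSchwede}) then gives $\tau_{\blup}(\omega_S,\fra_N^{N(t+\epsilon')}) = \fra_N^{N(t+\epsilon')-v}\,\tau_{\blup}(\omega_S,\fra_N^{v})$, and since $N(t+\epsilon')-v \geq N(t+\epsilon'/2)$, expanding the ideal power into monomials $g_1^{a_1/N}\cdots g_v^{a_v/N}$ with $\sum a_i = N(t+\epsilon'/2)$ and tracing down lands you inside $\tau_{\widehat{R^+}}^{\elt}(\omega_R,[g_\lambda]^{t+\epsilon'/2})$. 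So the perturbation's role is to absorb the loss of $v$ in the exponent coming from Skoda, not to absorb bad fibers of a hyperplane section. If you want to salvage your write-up, replace the Bertini step by this extraction-of-roots-plus-Skoda argument.
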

\begin{proof}
    Without loss of generality we may assume that $\{g_{\lambda} \mid \lambda \in \Lambda \} = \{g_1, \dots, g_v\}$ is finite and $t$ is rational.  Fix $\epsilon' > 0$ such that $\tau_{\blup}(\omega_R, \fra^{t+\epsilon})$ is constant for all $0 < \epsilon \leq \epsilon'$ and such that $\tau_{\widehat{R^+}}^{\elt}(\omega_R, [g_\lambda]^{t+\epsilon})$ is also constant for such $\epsilon$.  Such an $\epsilon'$ exists by Noetherianity.  

    Let $S \supseteq R$ be a finite normal extension such that $\{g_{\lambda}^{1/N}\} \subseteq S$ for $N$ so that $N\epsilon' \geq 2v$ which implies $N(t + \epsilon') - v \geq N(t + \epsilon'/2)$.  We may also assume that $Nt, N\epsilon'/2 \in \mathbf{Z}$.  
    Consider the ideal $\fra_N = (g_1^{1/N}, \dots, g_v^{1/N}) \subseteq S$.  It follows easily that $\overline{\fra_N^N} = \overline{\fra S}$.  Hence 
    \[
        \begin{array}{cll}
             &     \tau_{\blup}(\omega_R, \fra^{t+\epsilon'})\\
             = &  \Tr(\tau_{\blup}(\omega_S, (\fra S)^{t+\epsilon'})) & \text{($\myB^0$ under finite maps, \cite[Lemma 4.18]{BMPSTWW1}) }\\
            = &  \Tr(\tau_{\blup}(\omega_S, (\fra_N)^{N(t+\epsilon')})) & \text{(integral closure agnostic $\tau_{\blup}$)} \\
             = &  \Tr((\fra_N)^{N(t+\epsilon') - v}\tau_{\blup}(\omega_S, (\fra_N)^{v} )) & \text{(Skoda theorem, \cite[Theorem 6.6]{HaconLamarcheSchwede})}\\
             \subseteq & \Tr((\fra_N)^{N(t+\epsilon'/2)} \tau_{\widehat{R^+}}(\omega_S)) & \text{($\tau_{\blup}(\omega_S, (\fra_N)^{v} ) \subseteq \tau_{\blup}(\omega_S) = \tau_{\widehat{R^+}}(\omega_S)$)}\\
             = & \Tr\Big(\sum g_1^{a_1/N} \cdots g_v^{a_v/N}\tau_{\widehat{R^+}}(\omega_S)\Big) & \text{($\sum a_i = N(t+\epsilon'/2)$, $a_i \in \mathbf{Z}_{\geq 0}$)}\\
             = & \sum \tau_{\widehat{R^+}}(\omega_R, g_1^{a_1/N} \cdots g_v^{a_v/N}) & \text{($\myB^0$ under finite maps, \cite[Lemma 4.18]{BMPSTWW1})}\\
             \subseteq & \tau_{\widehat{R^+}}(\omega_R, [g_\lambda]^{t+\epsilon'/2}).
        \end{array}
    \]
    The result follows.  The statement for mixed test modules and mixed test ideals follows similarly.
\end{proof}

{\color{black}
\begin{theorem}
    \label{thm.CommonTestIdealNonPrincipal}
    Suppose $(R, \fram)$ is a Noetherian complete local domain of residue characteristic $p > 0$ and $\fra \subseteq R$ is an ideal. Pick a finite generating set $(g_{\lambda}) = \fra$.
    Suppose that $A \subseteq R$ is a Noether-Cohen normalization (i.e., $A\to R$ is a finite extension such that $A$ is complete and regular).  Choose $h \in A$ such that $A[1/h] \subseteq R[1/h]$ is \'etale, let $g = \prod_{\lambda} g_{\lambda}$, and set $d_0 = h g$.  Then for every $d \in R$ divisible by $d_0$ and 
    for $1 \gg \epsilon > 0$ (depending on $d$) the following ideals are equal.
    \begin{enumerate}
        \item $\tau_{\cB}^{\elt}(\omega_R, [g_\lambda]^{t})$\label{thm.CommonTestIdealNonPrincipal.BCMGensNoEpsilon}
        \item $\tau_{\cB}^{\elt}(\omega_R, \fra^{t})$\label{thm.CommonTestIdealNonPrincipal.BCMIdealNoEpsilon}
        \item $\tau_{\cB}^{\elt}(\omega_R, d^{\epsilon} [g_\lambda]^{t + \epsilon})$\label{thm.CommonTestIdealNonPrincipal.BCMGensWithEpsilon}
        \item $\tau_{\cB}^{\elt}(\omega_R, d^{\epsilon} \fra^{t + \epsilon})$\label{thm.CommonTestIdealNonPrincipal.BCMIdealWithEpsilon}
        \item $\tau_{\widehat{R^+}}^{\elt}(\omega_R, d^{\epsilon} [g_\lambda]^{t + \epsilon})$\label{thm.CommonTestIdealNonPrincipal.PerturbedGens}
        \item $\tau_{\widehat{R^+}}^{\elt}(\omega_R, d^{\epsilon} \fra^{t+\epsilon})$\label{thm.CommonTestIdealNonPrincipal.PerturbedIdeal}
        \item $\tau_{\blup}(\omega_R, d^{\epsilon} \fra^{t + \epsilon})$\label{thm.CommonTestIdealNonPrincipal.Blup}
    \end{enumerate} 
    The analogous result also holds for mixed test modules.  
\end{theorem}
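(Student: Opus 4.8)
The plan is to establish the chain of equalities by a combination of two ingredients: first, the already-proven comparison results in the purely local setting, and second, a perturbation/Skoda argument which allows us to absorb the small exponent $\epsilon$ and the extra divisor $d^\epsilon$. I would begin by recording the trivial containments that hold with no hypotheses: for any choice of generators $(g_\lambda) = \fra$ one has
\[
\tau_{\cB}^{\elt}(\omega_R, [g_\lambda]^t) \subseteq \tau_{\cB}^{\elt}(\omega_R, \fra^t) \subseteq \tau_{\widehat{R^+}}^{\elt}(\omega_R, \fra^t) \subseteq \tau_{\blup}(\omega_R, \fra^t),
\]
and similarly for the perturbed versions, where the second containment is \cite[Proposition 6.4]{MaSchwedeSingularitiesMixedCharBCM} and the last is \cite[Remark 6.4]{HaconLamarcheSchwede}. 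By Noetherianity of $R$, fix $\epsilon' > 0$ small enough that all seven expressions (for exponent $t + \epsilon$ where relevant) are independent of $\epsilon$ for $0 < \epsilon \le \epsilon'$. The key point is then to show the largest object, $\tau_{\blup}(\omega_R, d^\epsilon \fra^{t+\epsilon})$, is contained in the smallest, $\tau_{\cB}^{\elt}(\omega_R, [g_\lambda]^t)$, for a suitable $d_0 \in A$ and $d$ divisible by $d_0$.

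The heart of the argument is \autoref{prop.TauBlupVsTauElts}, which already gives $\tau_{\blup}(\omega_R, \fra^{t+\epsilon}) = \tau^{\elt}_{\widehat{R^+}}(\omega_R, [g_\lambda]^{t+\epsilon})$ for $1 \gg \epsilon > 0$; I would run the same Skoda-plus-finite-cover computation but now carrying along the auxiliary factor $d^\epsilon$, using \autoref{thm.SkodaForLocalCase} for the mixed test ideal $\tau_{\blup}$ on the finite normal extension $S \supseteq R$ adjoining $N$-th roots of the $g_\lambda$ (and of $d$, if needed), together with \cite[Lemma 4.18]{BMPSTWW1} for the behavior of $\myB^0$ under finite maps. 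This yields
\[
\tau_{\blup}(\omega_R, d^\epsilon \fra^{t+\epsilon}) \subseteq \tau^{\elt}_{\widehat{R^+}}(\omega_R, d^{\epsilon'} [g_\lambda]^{t + \epsilon'/2})
\]
for appropriate small exponents. The remaining move is to pass from $\widehat{R^+}$-test modules to $\cB$-test modules (i.e., intersecting over all perfectoid big Cohen--Macaulay $R^+$-algebras), and to remove both the $\epsilon$ and the $d^\epsilon$; this is exactly the phenomenon governed by \autoref{clm.Tau+PerturbedEqualsTauBigB} from the proof of \autoref{prop.PerturbedTestIdealIsReal}, which says that perturbing by $p^\epsilon g^\epsilon$ converts $\tau_+$ into $\tau_{\cB}$. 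Here the role of the Noether--Cohen normalization $A$ enters: one takes $d_0$ to be (a multiple of $p$ times) a defining element of the ramification locus of $A \subseteq R$, coming from \autoref{lem.ConstructingA_infty} and \autoref{lem.GlobalDiscriminantDivisor} applied locally, so that $A[d_0^{-1}] \subseteq R[d_0^{-1}]$ is étale; then the computation of the modules $M_2 = M_3 = M_4$ in \autoref{clm.Tau+PerturbedEqualsTauBigB} shows the perturbed $\widehat{R^+}$-version agrees with the $\cB$-version.

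Concretely I would organize the proof as: (i) reduce to a finite generating set and rational $t$; (ii) choose $\epsilon'$ by Noetherianity; (iii) choose $d_0 \in A$ via the normalization so that localizing away from $d_0$ makes $A \to R$ étale; (iv) for $d$ divisible by $d_0$, run the Skoda/finite-cover chain of \autoref{prop.TauBlupVsTauElts} with the factor $d^\epsilon$ retained to bound \ref{thm.CommonTestIdealNonPrincipal.Blup} above by a perturbed element-wise $\widehat{R^+}$-test module; (v) invoke \autoref{clm.Tau+PerturbedEqualsTauBigB} (applied to each summand $\tau_+(\omega_R, d^\epsilon g_1^{a_1/N} \cdots g_v^{a_v/N})$, noting each summand is a cyclic-divisor $\tau_+$ to which the claim applies after replacing $g$ by $p g \cdot (\text{monomial})$) to rewrite this as a sum of $\tau_{\cB}$-modules, which by definition equals \ref{thm.CommonTestIdealNonPrincipal.BCMGensNoEpsilon}; (vi) close the loop with the trivial containments from the first paragraph. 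The mixed test module case \ref{thm.CommonTestIdealNonPrincipal}$'$ follows by the reduction in \autoref{rem.SuperficialMixedIdealStrategy}, writing $\fra_1^{t_1}\cdots\fra_m^{t_m}$ as a single $\frb^{1/b}$. The main obstacle I anticipate is bookkeeping in step (v): one must check that the element-wise $\widehat{R^+}$-test module obtained from the Skoda bound is built from cyclic $\tau_+$'s of precisely the form covered by \autoref{clm.Tau+PerturbedEqualsTauBigB} (including that the extra factor $d^\epsilon$ plays the role of the étale-locus defining element, not an arbitrary element), and that summing the resulting equalities over all monomials is compatible with the definition of $\tau^{\elt}_{\cB}$; the étaleness hypothesis $A[d_0^{-1}] \subseteq R[d_0^{-1}]$ is what makes this work, and verifying that a single $d_0 \in A$ suffices uniformly (rather than depending on the monomial) is the delicate point.
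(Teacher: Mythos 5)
Your proposal follows essentially the same route as the paper: the trivial containments plus the $\epsilon$-insensitivity of $\tau_{\cB}$ reduce everything to showing $\tau_{\widehat{R^+}}^{\elt}(\omega_R, d^\epsilon[g_\lambda]^{t+\epsilon}) \subseteq \tau_{\cB}^{\elt}(\omega_R, d^\epsilon[g_\lambda]^{t+\epsilon})$, the equality of the last three items comes from \autoref{prop.TauBlupVsTauElts} applied to the mixed pair, and the final containment is extracted summand-by-summand from \autoref{clm.Tau+PerturbedEqualsTauBigB} via the \'etale locus of $A \subseteq R$. The one ``delicate point'' you flag but do not resolve — uniformity of $d_0$ over all monomials — is handled in the paper by taking $d_0 := ph\prod_\lambda h_\lambda$, where $A[1/h] \to R[1/h]$ is \'etale and each $h_\lambda \in A$ is a multiple of $g_\lambda$: inverting $d$ then inverts every $g_\lambda$, so the normalization $S$ of $R[d^{\epsilon_1}\prod_\lambda g_\lambda^{s_{j,\lambda}}]$ in $R^+$ satisfies that $A[1/d] \to S[1/d]$ is finite \'etale for every monomial simultaneously, which is exactly the hypothesis \autoref{clm.Tau+PerturbedEqualsTauBigB} needs.
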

\begin{proof}
First of all, we notice that \autoref{thm.CommonTestIdealNonPrincipal.BCMGensNoEpsilon}=\autoref{thm.CommonTestIdealNonPrincipal.BCMGensWithEpsilon} and \autoref{thm.CommonTestIdealNonPrincipal.BCMIdealNoEpsilon}=\autoref{thm.CommonTestIdealNonPrincipal.BCMIdealWithEpsilon} for all $d$ and $0<\epsilon\ll1$, because for all $0\neq g\in R$, $\tau_{\mathcal{B}}(\omega_R, f^s)= \tau_{\mathcal{B}}(\omega_R, f^sg^\epsilon)$ for all $0<\epsilon\ll 1$ by \cite[Proposition 6.4]{MaSchwedeSingularitiesMixedCharBCM} (by Noetherianity these test ideals are equal to a finite sum of test ideals for principal divisors, and so we only need to apply op.cit.\ a finite number of times). Also, for any $d$ and $\epsilon$, we clearly have \autoref{thm.CommonTestIdealNonPrincipal.PerturbedGens}$\subseteq$\autoref{thm.CommonTestIdealNonPrincipal.PerturbedIdeal}$\subseteq$\autoref{thm.CommonTestIdealNonPrincipal.Blup} and \autoref{prop.TauBlupVsTauElts} implies that they are equal for $1 \gg \epsilon > 0$ when applied to the mixed pair $(\omega_R, d^0 \fra^t)$. Also, note that we have  \autoref{thm.CommonTestIdealNonPrincipal.BCMGensWithEpsilon}$\subseteq$
\autoref{thm.CommonTestIdealNonPrincipal.BCMIdealWithEpsilon}$\subseteq$\autoref{thm.CommonTestIdealNonPrincipal.PerturbedIdeal}=\autoref{thm.CommonTestIdealNonPrincipal.PerturbedGens} for all $d$ and $0<\epsilon\ll1$, where the last equality is what we have already shown. Therefore, it only remains to show that \autoref{thm.CommonTestIdealNonPrincipal.PerturbedGens}$\subseteq$\autoref{thm.CommonTestIdealNonPrincipal.BCMGensWithEpsilon}.


We fix a $d = d'g h$, a multiple of $d_0$ and consider 
\[\tau_{\cB}^{\elt}(\omega_R, d^{\alpha} [g_\lambda]^{t + \beta}) \text{ and } \tau_{\widehat{R^+}}^{\elt}(\omega_R, d^{\alpha} [g_\lambda]^{t + \beta})\]
for various $\alpha, \beta>0$. Note that there exists $\epsilon_0$ such that both these families are constant for all $0<\alpha, \beta \leq \epsilon_0$ (by Noetherianity). Fix this $\epsilon_0$ and fix $\epsilon_1<\epsilon_0$. We can write 
\[ 
    \tau_{\widehat{R^+}}^{\elt}(\omega_R, d^{\epsilon_0} [g_\lambda]^{t + \epsilon_0}) = \sum_j \tau_{\widehat{R^+}}\big(\omega_R, d^{\epsilon_0}\prod_{\lambda} g_{\lambda}^{s_{j,\lambda}}\big)
\]
where the right hand side is a finite sum for various finite collections of $\{s_{j,\lambda}\}_{\lambda}$ such that $\sum_{\lambda} s_{j,\lambda}\geq t+ \epsilon_0$. At this point, we examine each single $\tau_{\widehat{R^+}}\big(\omega_R, d^{\epsilon_0}\prod_{\lambda} g_{\lambda}^{s_{j,\lambda}}\big)$. Let $S$ be the normalization of $R[d^{\epsilon_1}\prod_\lambda g_{\lambda}^{s_{j,\lambda}}]$ inside $R^+$. 
Now we have that
\begin{align*}
\tau_{\widehat{R^+}}\big(\omega_R, d^{\epsilon_0}\prod_{\lambda} g_{\lambda}^{s_{\lambda_j}}\big) & = \Tr_{S/R}\Big((d^{\epsilon_1}\prod_{\lambda} g_{\lambda}^{s_{j,\lambda}})\cdot \tau_{\widehat{R^+}}(\omega_S, d^{\epsilon_0-\epsilon_1})\Big) \\
& \subseteq \Tr_{S/R}\Big((d^{\epsilon_1}\prod_\lambda g_{\lambda}^{s_{j,\lambda}})\cdot \tau_{\widehat{R^+}}(\omega_S, (gh)^{\epsilon})\Big) \\
& = \Tr_{S/R}\Big((d^{\epsilon_1}\prod_\lambda g_{\lambda}^{s_{j,\lambda}})\cdot \tau_{\mathcal{B}}(\omega_S)\Big) \\
& =\tau_{\mathcal{B}}\big(\omega_R, d^{\epsilon_1}\prod_\lambda g_{\lambda}^{s_{j,\lambda}}\big)
\end{align*}
for all $\epsilon\ll1$, where the first and last equality follows from \cite[Theorem 6.17]{MaSchwedeSingularitiesMixedCharBCM}, the second to last equality follows from \autoref{clm.Tau+PerturbedEqualsTauBigB} and \autoref{rmk.Tau+PerturbedEqualsTauBigB choosing g and h} since $A[1/h]\to R[1/h]$ is finite \'etale and $R[1/g]\to S[1/g]$ is finite \'etale. 
Finally, taking a sum, we see that 
$$\tau_{\widehat{R^+}}^{\elt}(\omega_R, d^{\epsilon_0} [g_\lambda]^{t + \epsilon_0}) \subseteq \tau_{\cB}^{\elt}(\omega_R, d^{\epsilon_1} [g_\lambda]^{t + \epsilon_0}) = \tau_{\cB}^{\elt}(\omega_R, d^{\epsilon_0} [g_\lambda]^{t + \epsilon_0})$$
where the last equality follows from our choice of $\epsilon_0$. Thus, by our choice of $\epsilon_0$ again, we have \autoref{thm.CommonTestIdealNonPrincipal.PerturbedGens}$\subseteq$\autoref{thm.CommonTestIdealNonPrincipal.BCMGensWithEpsilon} for all $0<\epsilon\leq \epsilon_0$ as we desired. 

 The statement for mixed test modules follows similarly.
\end{proof}
}

\begin{remark}
    \label{rem.ExplicitD0}
    It is worth remarking that the $d_0$ constructed above is quite explicit.  Indeed, following the proof, if $A[h^{-1}] \subseteq R[h^{-1}]$ is \'etale and $h_{\lambda} \in A$ is a multiple of $g_{\lambda}$, where $\{g_{\lambda}\}_\lambda$ is a generating set of $\mathfrak{a}$, then we can take
    \[
        d_0 := h \prod_{\lambda} h_{\lambda}.
    \]
\end{remark}

In view of \autoref{thm.CommonTestIdealNonPrincipal}, we make the following definition.

\begin{definition} \label{def:common-tau-nonprincipal}
    With notation as above if $(g_\lambda \mid \lambda \in \Lambda) =: \fra \subseteq R$, we define 
\[
    \utau(\omega_R, \fra^t)
\]
to be the common submodule of $\omega_R$ from \autoref{thm.CommonTestIdealNonPrincipal}.  The mixed test modules, denoted $\utau(\omega_R, \Gamma, \fra_1^{t_1} \cdots \fra_m^{t_m})$, are defined similarly (or by combining the $\Gamma$ and $\fra_i^{t_i}$ into a single $\fra^t$).  Likewise if $R$ is normal, then
\[ 
    \utau(R, \Delta, \fra^t) := \utau(\omega_R, K_R + \Delta, \fra^t) = \utau(\omega_R, h^{1/n} \fra^t)
\] 
where $\Div(h) = n(K_R + \Delta)$.   The mixed test ideals $\utau(R, \Delta, \fra_1^{t_1} \cdots \fra_m^{t_m})$ are defined analogously.
\end{definition}


Our results imply Takagi's summation formula in mixed characteristic \cite{TakagiFormulasForMultiplierIdeals}, \cf \cite{MustataMultiplierIdealOfASum,JowMillerMultiplierIdealsOfASum}.
\begin{corollary}[\cf {\cite{TakagiFormulasForMultiplierIdeals}}]
    \label{cor.SummationInLocalCase}
    Suppose $(R, \m)$ is a Noetherian complete local domain, $\Gamma$ is $\bQ$-Cartier, 
    and $\fra, \frb \subseteq R$ are ideals.  Then 
    \[
        \utau(\omega_R, \Gamma, (\fra + \frb)^t) = \sum_{t_1 + t_2 = t} \utau(\omega_R, \Gamma, \fra^{t_1} \frb^{t_2})
    \]
    where the $t_i \geq 0$.
    The analogous result also holds for larger sums $\utau(\omega_R, \Gamma, (\fra_1 + \dots + \fra_m)^t)$.
\end{corollary}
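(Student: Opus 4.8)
The plan is to reduce the summation formula to the element-wise description of the test modules provided by \autoref{thm.CommonTestIdealNonPrincipal}, where it becomes essentially formal. Concretely, pick finite generating sets $\fra = (g_\lambda \mid \lambda \in \Lambda)$ and $\frb = (h_\mu \mid \mu \in M)$. Then $\{g_\lambda\} \cup \{h_\mu\}$ is a generating set for $\fra + \frb$. By \autoref{thm.CommonTestIdealNonPrincipal} (the mixed version), there is an element $d_0 \in R$ (divisible by $p$ and a suitable \'etale-locus element) so that for a fixed multiple $d$ of $d_0$ and all $1 \gg \epsilon > 0$ we may compute each of the three test modules appearing in the statement as $\tau_{\widehat{R^+}}^{\elt}$ of the corresponding perturbed data; I will want to choose $d$ to work simultaneously for $\fra + \frb$, for $\fra$, for $\frb$, and for all the mixed pairs $\fra^{t_1}\frb^{t_2}$ (since there are only finitely many relevant values of $(t_1,t_2)$ up to perturbation by Noetherianity, one common $d$ suffices). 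Handling the twist by $\Gamma$ is routine: write $\Gamma = s\operatorname{div}(g)$ (after adding an effective Cartier divisor to reduce to the effective case and untwisting at the end) and fold the factor $d^\epsilon g^s$ into the element-wise products.

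With everything expressed element-wise, the containment ``$\supseteq$'' is immediate from the definitions: for each decomposition $t = t_1 + t_2$, a product $\prod_\lambda g_\lambda^{s_\lambda} \prod_\mu h_\mu^{r_\mu}$ with $\sum s_\lambda \geq t_1 + \epsilon/2$ and $\sum r_\mu \geq t_2 + \epsilon/2$ is in particular a product of elements of $\fra + \frb$ with total exponent $\geq t + \epsilon$, so $\tau_{\widehat{R^+}}(\omega_R, d^\epsilon \Gamma\text{-twist} \cdot \prod g_\lambda^{s_\lambda}\prod h_\mu^{r_\mu})$ is one of the summands defining $\tau_{\widehat{R^+}}^{\elt}(\omega_R, \Gamma, d^\epsilon(\fra+\frb)^{t+\epsilon})$. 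Conversely, for ``$\subseteq$'', a generic summand of $\tau_{\widehat{R^+}}^{\elt}(\omega_R, \Gamma, d^\epsilon(\fra+\frb)^{t+\epsilon})$ is $\tau_{\widehat{R^+}}(\omega_R, \Gamma\text{-twist} \cdot d^\epsilon \prod_\lambda g_\lambda^{a_\lambda}\prod_\mu h_\mu^{b_\mu})$ with $\sum a_\lambda + \sum b_\mu \geq t + \epsilon$; setting $t_1 := \sum_\lambda a_\lambda$ and $t_2 := (t+\epsilon) - t_1 \le \sum_\mu b_\mu$ (both $\geq 0$, possibly after shrinking $\epsilon$ we may also arrange $t_1 \le t$, $t_2 \le t$), this summand lies inside $\utau(\omega_R, \Gamma, \fra^{t_1}\frb^{t_2 - \epsilon/2})$ (using that $\sum b_\mu \geq t_2$ and absorbing the extra $d^\epsilon$ as permitted by \cite[Proposition 6.4]{MaSchwedeSingularitiesMixedCharBCM}), hence inside the right-hand sum indexed by the decomposition $t = t_1' + t_2'$ with $t_1' = t_1, t_2' = t - t_1$. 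The only subtlety is keeping track of the $\epsilon$'s so that after taking the (finite, by Noetherianity) sum the perturbations can be absorbed uniformly; this is the same bookkeeping that appears in \autoref{prop.TauBlupVsTauElts} and \autoref{thm.CommonTestIdealNonPrincipal}, so I would phrase it by fixing $\epsilon_0$ small enough that all the finitely many test modules in sight are constant for $0 < \epsilon \le \epsilon_0$, and then the containments of summands above become containments of the stabilized objects.

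The main obstacle I anticipate is purely organizational rather than conceptual: ensuring that a \emph{single} auxiliary element $d$ and a \emph{single} threshold $\epsilon_0$ can be chosen to make \autoref{thm.CommonTestIdealNonPrincipal} applicable to all the finitely many mixed pairs $(\Gamma, \fra^{t_1}\frb^{t_2})$ that occur, while also matching the perturbations on the two sides of the desired equality. Once that is set up, the proof is a short unwinding of the element-wise definitions together with the integral-closure insensitivity and the $\tau_{\widehat{R^+}} = \tau_{\mathcal B}$ comparison already recorded in \autoref{thm.CommonTestIdealNonPrincipal}. The larger-sum case $\utau(\omega_R, (\fra_1 + \cdots + \fra_m)^t)$ follows by the identical argument (or by induction on $m$, grouping $\fra_1 + \cdots + \fra_{m-1}$ together and applying the two-term case), so I would simply remark that the proof goes through verbatim. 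Finally, the effective reduction: given general $\Gamma$ (or $\Delta$), add an effective Cartier divisor $E$, prove the formula for the effective twist, and untwist by $\otimes R(E)$, using \autoref{prop:easy-properties-for-tau-B0-local}\autoref{prop:easy-properties-for-tau-B0-local.exponentcomparison} and the projection-formula compatibility $\utau(\omega_R, \Gamma + E, \fra^t) = \utau(\omega_R, \Gamma, \fra^t)\otimes R(-E)$ that holds by the same computation as \autoref{def:common-tau-nonprincipal}.
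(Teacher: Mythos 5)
Your proposal is correct and takes essentially the same route as the paper: reduce to the element-wise description of \autoref{thm.CommonTestIdealNonPrincipal} via generating sets, where the summation formula is formal because splitting $\sum_\lambda s_\lambda + \sum_\mu r_\mu \geq t$ into $t_1 + t_2 = t$ matches the summands on the two sides. The only difference is that the paper works with the $\tau_{\cB}^{\elt}$ incarnation (items (a) and (b) of \autoref{thm.CommonTestIdealNonPrincipal}), which holds with no $d^{\epsilon}$ perturbation at all, so the $\epsilon$-bookkeeping you flag as the main obstacle disappears and the argument collapses to a three-line computation; the paper also handles $\Gamma$ by passing to a finite extension rather than folding it into the exponents, though both are fine.
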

\begin{proof}
    By taking a finite extension, we may assume that $\Gamma = 0$.
    Writing $\fra = (g_1, \dots, g_v)$ and $\frb = (g_{v+1}, \dots, g_w)$ we see 
    \[
        \utau(\omega_R, (\fra + \frb)^t) = \tau^{\elt}_{\cB}(\omega_R, [g_1, \dots, g_w]^t).
    \]
    Likewise 
    \[
         \sum_{t_1 + t_2 = t} \utau(\omega_R, \Gamma, \fra^{t_1} \frb^{t_2}) = \sum_{t_1 + t_2 = t} \tau^{\elt}_{\cB}(\omega_R, \Gamma, [g_1, \dots, g_v]^{t_1} [g_{v+1}, \dots, g_w]^{t_2})
    \]
    which also equals $\tau^{\elt}_{\cB}(\omega_R, [g_1, \dots, g_w]^t)$ by definition.
   The formula for larger sum is completely analogous.  This completes the proof.
\end{proof}

\begin{remark}
    The essential part of this argument is the proof of \autoref{prop.TauBlupVsTauElts}.  It is not difficult to see that this argument, or slight variants thereof, proves that the summation formula is a formal consequence in any multiplier/test ideal theory that allows small perturbations in the coefficients, has Skoda's theorem, and satisfies transformation rules under finite maps.
\end{remark}

\begin{remark}
    \autoref{thm.CommonTestIdealNonPrincipal} answers a variant of \cite[Question 9.1]{MaSchwedePerfectoidTestideal}, and also a question contained  in \cite[Remark 6.4]{HaconLamarcheSchwede} (in the local case). More specifically, in \cite[Definition 3.1 and 3.5]{MaSchwedePerfectoidTestideal}, there are four versions of mixed characteristic test ideals defined using perfectoid algebras that are constructed via Andr\'{e}'s flatness lemma (see \cite[Section 2.5]{AndreDirectsummandconjecture}, \cite[Section 2]{BhattDirectsummandandDerivedvariant}, \cite[Theorem 7.14]{BhattScholzepPrismaticCohomology}), and \autoref{thm.CommonTestIdealNonPrincipal} shows that, if one uses a sufficient large perfectoid big Cohen-Macaulay $R^+$-algebra $B$, then all four notions agree. If one uses the original construction in \cite{MaSchwedePerfectoidTestideal}, then at least the perturbed element-wise defined $\tau([f_1,\dots, f_n]^t)$ and the $\tau(\mathfrak{a}^t)$ defined via perturbed blowups are the still the same (when $f_1,\dots,f_n$ is a generating set of $\mathfrak{a}$). This can be proved by a similar argument as in \autoref{thm.CommonTestIdealNonPrincipal} by observing that adjoining compatible system of $p$-power roots to $A_\infty$ via Andr\'{e}'s construction maps to $\widehat{R^+}$\footnote{Actually the perfectoid algebra constructed in \cite{MaSchwedePerfectoidTestideal} only admits a $p$-almost map to $\widehat{R^+}$, but this is not an issue since in the test ideal definition in \cite[Section 3]{MaSchwedePerfectoidTestideal}, we built in the small $p$-perturbation.} and noting that one can take $d=1$ since $R=A$ is regular in \cite{MaSchwedePerfectoidTestideal}. We do not carry out the details here because we think that the test ideal defined via $\widehat{R^+}$ or $B$ is better behaved (and seems the ``correct" definition).
\end{remark}

{\color{black}
    \subsubsection{More precise choices of test elements.}
    \begin{lemma}
        \label{lem.TestElementsFormAnIdeal}
        Suppose $(R, \fram)$ is a complete local Noetherian domain of residue characteristic $p > 0$, $\Gamma$ is a $\bQ$-Cartier $\bQ$-divisor on $\Spec (R)$ (if $\Gamma \neq 0$ we assume $R$ is normal), and $\fra \subseteq R$ is an ideal.
        Consider the following set:
        \[
             D := \{ 0 \neq d \in R \;|\; \utau(\omega_R, \Gamma, \fra^t) = \tau_{\myB^0}(\omega_R,\Gamma, d^{\epsilon} \fra^t) \;\;\forall 1 \gg \epsilon > 0 \}.
        \]
        Then $D \cup \{ 0\}$ is a nonzero radical ideal of $R$.  In particular, it is closed under sum. 
    \end{lemma}
    \begin{proof}
        If we can show $D$ is an ideal it is certainly radical.  To show $D$ is nonempty, pick $h$ with $H = \Div (h)$ such that $\Gamma + H \geq 0$ and let $\frb$ define $\Supp(\Gamma + H)$.  Choose a finite generating set $\{f_\lambda\}_\lambda$ of $\frb$ and $\{g_\lambda\}_\lambda$ of $\fra$ and let $d_0\in R$ so that $A[d_0^{-1}] \subseteq R[d_0^{-1}]$ is \'etale for some Cohen-Noether normalization $A \subseteq R$. Now pick $d \in R$ that is a multiple of $d_0fg$ where $f=\prod f_\lambda$ and $g=\prod g_\lambda$. Then for any $1 \gg \epsilon > 0$ we have that
        \begin{align*}
            \tau_{\myB^0}(\omega_R, \Gamma + H, d^{\epsilon} \fra^t) & \subseteq \tau_{\myB^0}(\omega_R, \Gamma+H, d^{\epsilon/2} \fra^{\epsilon/2} \fra^{t}) \\
            & = \tau_{\myB^0}(\omega_R, \Gamma+H, d^{\epsilon/2} \fra^{t+\epsilon/2}) \\
            & = \utau(\omega_R, \Gamma+H, \fra^t)\\
            & \subseteq  \tau_{\myB^0}(\omega_R, \Gamma + H, d^{\epsilon} \fra^t) 
        \end{align*} 
        where the second equality and inclusion follow from \autoref{thm.CommonTestIdealNonPrincipal}. Thus we have equalities throughout. As $H$ is Cartier, it follows that $\tau_{\myB^0}(\omega_R, \Gamma, d^{\epsilon} \fra^t) =\tau_{\myB^0}(\omega_R, \Gamma, d^{\epsilon/2} \fra^{t+\epsilon/2}) = \utau(\omega_R, \Gamma, \fra^t)$  and so  $D$ is non-empty.  
    
        It suffices to show that $D$ is closed under addition as it is easily seen to be closed under multiplication from $R$.  Suppose $d, d' \in D$ with $d + d' \neq 0$.  Then we also know that $\utau(\omega_R, \Gamma, \fra^t) = \tau_{{\myB^0}}(\omega_R, \Gamma, d^{\epsilon} d'^{\epsilon'} \fra^t)$ for all $1 \gg \epsilon, \epsilon' > 0$.  It immediately follows from the summation formula \autoref{prop.TauBlupVsTauElts} in the mixed test ideal case that 
        \[
            \utau(\omega_R, \Gamma, \fra^t) = \tau_{\myB^0}(\omega_R, \Gamma, (d,d')^{\epsilon} \fra^t)
        \]
        for $1 \gg \epsilon > 0$.
        However, $d + d' \in (d, d')$ and hence for all $\epsilon > 0$ we  have that 
        \[
            \utau(\omega_R, \Gamma, \fra^t) \supseteq \tau_{\myB^0}(\omega_R, \Gamma, (d+d')^{\epsilon} \fra^t).
        \]
        But the reverse containment is automatic for $1 \gg \epsilon > 0$ and so the result follows.
    \end{proof}

    \begin{corollary}
        \label{cor.CompleteLocalOptimalTestElementChoice}
        Suppose $(R, \fram)$ is a complete local Noetherian domain of residue characteristic $p > 0$ and $\fra \subseteq R$ is an ideal and $0 \neq c \in R$.  Suppose $0 \neq d \in \sqrt{\fra}$ and that for each $Q \in \Spec (R)$ with $d \notin Q$, we have that $R_Q$ is regular.  Then 
        \[
            \utau(\omega_R, c \fra^t) = \tau_{\myB^0}(\omega_R, (pd)^{\epsilon} c \fra^t)
        \]
        for all $1 \gg \epsilon > 0$.  As a consequence, if  we set $\frg = \sqrt{p \fra J}$ where $J$ is the ideal defining the singular locus of $R$, then $\utau(\omega_R, \fra^t) = \tau_{\myB^0}(\omega_R, \frg^{\epsilon} \fra^t)$.
        
        Suppose additionally, at the minimal primes $Q_1, \dots, Q_t$ of $pR$, we have that $R_{Q_i}$ is a DVR such that $p$ is a uniformizer (i.e., $\Div(p)$ is reduced) and that $R[d^{-1}]/(p)$ is regular.  
        Then we have that 
        \[
            \utau(\omega_R, \fra^t) = \tau_{\myB^0}(\omega_R, d^{\epsilon} \fra^t)
        \]
        for all $1 \gg \epsilon > 0$.  In particular, if we set $\frg = \sqrt{\fra J}$ where both $R$ and $R/p$ are regular outside of $V(J)$, then $\utau(\omega_R, \fra^t) = \tau_{\myB^0}(\omega_R, \frg^{\epsilon} \fra^t)$.
    \end{corollary}
    \begin{proof}
        As we may pull $c$ out of either side, we may assume $c = 1$.
        For the first statement, 
        let $Z \subseteq \Spec (R)$ denote the singular locus and suppose that 
        \[
            Q \in (\Spec R) \setminus (Z \cup V(\fra) \cup V(p)).
        \]
        Thanks to by \cite{Heitmann.EtaleLocusCompleteLocal}, there is an unramified complete regular local ring $A$ with a finite extension $A \subseteq R$ and an $h \in A$, $h \notin Q$ such that $A[h^{-1}] \subseteq R[h^{-1}]$ is \'etale.  Pick a generating set $\{g_\lambda\}_\lambda$ of $\fra$ so that $g_\lambda \notin Q$ for each $g_\lambda$. Thus $g:=\prod g_\lambda \notin Q$.  It follows from \autoref{thm.CommonTestIdealNonPrincipal} that for $1 \gg \epsilon > 0$
        \[
            \tau(\omega_R, \fra^t) = \tau_{\myB^0}(\omega_R, (hg)^{\epsilon/2} \fra^{t+\epsilon/2}) \supseteq \tau_{\myB^0}(\omega_R, h^{\epsilon} g^{\epsilon} \fra^{t}) \supseteq \tau(\omega_R, \fra^t)
        \]
        and hence $hg \in D$ while $hg \notin Q$.  In particular, $Q \notin V(D)$ and we thus have that 
        \[
            V(D) \subseteq Z \cup V(\fra) \cup V(p).
        \]
        But by hypothesis $Z \cup V(\fra) \cup V(p) \subseteq V(pd)$ and so it follows that $pd \in D$.  The consequence about $\frg$ follows from the summation formula.
        
        For the second statement, the proof is essentially the same.  Now we consider
        \[
            Q \in (\Spec R) \setminus (Z \cup V(\fra)).
        \]
        Arguing in the same way, this time using a more precise application of \cite{Heitmann.EtaleLocusCompleteLocal}, we obtain that $V(D) \subseteq V(d)$ and hence that $d \in D$.  As above, the consequence about $\frg$ follows from the summation formula.
    \end{proof}


    One can be somewhat more precise as well, weakening the condition that $d \in \fra$ (respectively $\frg \subseteq \sqrt{\fra J}$), for example:

    \begin{corollary}
        Suppose $(R, \fram)$ is a complete local Noetherian domain of residue characteristic $p>0$.  Suppose additionally that $0 \neq f \in R$.  Suppose there exists $0 \neq d \in \fra$ such that for each $Q \in \Spec (R)$ with $d \notin Q$, we have that $R_Q$ and   $(R/f)_Q$ are nonsingular.  Then 
        \[ 
            \tau(\omega_R, f^s) = \tau_{\myB^0}(\omega_R, (pd)^{\epsilon} f^s )
        \]
        for all $1 \gg \epsilon > 0$.  
    \end{corollary}
    \begin{proof}
        Write $s = {a \over b}$ for integers $a,b > 0$ and consider $S := R[f^{1 \over b}] \subseteq R^+$.  For all points $Q \in \Spec (S)$ with $d \notin Q$ we have that $S_Q$ is nonsingular by hypothesis.  Hence, we can apply \autoref{cor.CompleteLocalOptimalTestElementChoice} to $S$ and obtain that
        \[
            \tau(\omega_S) = \tau_{\myB^0}(\omega_S, (pd)^{\epsilon} ).
        \]
        Hence we also have 
        \[
            \tau(\omega_S, f^{s} ) = (f^{1 \over b})^a \tau(\omega_S) = (f^{1 \over b})^a\tau_{\myB^0}(\omega_S, (pd)^{\epsilon}) = \tau_{\myB^0}(\omega_S, (pd)^{\epsilon} f^{s}).
        \]
        Applying the trace map to both sides we obtain 
        \[
            \tau(\omega_R, f^{s} ) = \tau_{\myB^0}(\omega_R, (pd)^{\epsilon} f^{s})
        \]
        thanks to the transformation rules for finite maps.  Indeed, for the left side see \cite[Theorem 6.17]{MaSchwedeSingularitiesMixedCharBCM}.  See \cite[Lemma 4.18]{BMPSTWW1} for the right side.
    \end{proof}
}

We summarize what we know about $\tau(R, \Delta, \fra^t \frb^s)$ in the complete local case.

\begin{corollary}[Summary of results in the complete local case]
    \label{cor.SummaryOfResultsCompleteLocal}
    Suppose $R$ is a Noetherian complete local normal domain of residue characteristic $p > 0$, $\fra, \frb$ are ideals and $\Gamma = K_R + \Delta$ is a $\bQ$-Cartier divisor.  Then $\tau(R, \Delta, \fra^t \frb^s)=\tau(\omega_R, \Gamma, \fra^s \frb^t)$ satisfies the following.
    \begin{enumerate}
        \item The summation formula in the form of \autoref{cor.SummationInLocalCase}.
        \item Skoda's theorem in the form of \autoref{thm.SkodaForLocalCase}.
        \item The restriction theorem: if $R$ is $\bQ$-Gorenstein and $R/(f)$ is normal, then $\tau(R, \Delta, \fra^t) \cdot R/(f) \supseteq \tau(R/(f), \Delta|_R, (\fra R/(f))^t)$.
        \item Perturbation: $\tau(R, \Delta, \fra^t) = \tau(R, \Delta, \fra^{t+\epsilon})$ for $1 \gg \epsilon > 0$.
        \item Singularity measurement: $\tau(R, \Delta, \fra^t)$ agrees with $R$ after localizing at all points on the complement of 
        $(\Spec R)_{\mathrm{sing}} \cup \Supp \Delta \cup V(\fra)$.
        \item Subadditivity when $R$ is regular and $\Gamma = 0$, in the form of \autoref{thm:BlowUpSubadditivity}.
        {\color{black}\item Suppose $(R, \Delta)$ is BCM-regular (that is, $\tau(R, \Delta) = R$), for instance suppose $R$ is regular and $\Delta = 0$.  Then $\fra \subseteq \tau(R, \Delta, \fra^1)$.}
    \end{enumerate}
\end{corollary}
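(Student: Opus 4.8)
The plan is to deduce each item of \autoref{cor.SummaryOfResultsCompleteLocal} from the corresponding fact already established for $\tau_{\blup}$ or $\tau_{\cB}$, using \autoref{thm.CommonTestIdealNonPrincipal} (together with \autoref{prop.TauBlupVsTauElts}) as the dictionary that identifies $\utau(R,\Delta,\fra^t\frb^s)$ with a suitable perturbed variant. The single recurring move is: choose a Noether--Cohen normalization $A\subseteq R$, let $d_0\in A$ be as in \autoref{thm.CommonTestIdealNonPrincipal} (explicitly $d_0 = ph\prod_\lambda h_\lambda$ by \autoref{rem.ExplicitD0}), pick $d$ divisible by $d_0$ and large enough to absorb whatever auxiliary generators appear in the statement at hand, and then rewrite $\utau(R,\Delta,\fra^t\frb^s)$ as $\tau_{\blup}(\omega_R, h^{1/n} d^\epsilon \fra^{t+\epsilon}\frb^{s+\epsilon})$ for $1\gg\epsilon>0$. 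Once in this form, the already-proven statements apply verbatim.

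First I would record (1): writing $\fra=(g_1,\dots,g_v)$, $\frb=(g_{v+1},\dots,g_w)$, and using that $K_R+\Delta$ can be cleared by passing to a finite extension (as in \autoref{cor.SummationInLocalCase}, via \autoref{prop.PropertiesOfUltTauOmegaForPairs}\autoref{prop.PropertiesOfUltTauOmegaForPairs.FiniteMaps}-type transformation rules that hold here by \autoref{thm.CommonTestIdealNonPrincipal}), the identity $\utau(R,\Delta,(\fra+\frb)^t)=\tau^{\elt}_{\cB}(\omega_R,[g_1,\dots,g_w]^t) = \sum_{t_1+t_2=t}\tau^{\elt}_{\cB}(\omega_R,[g_1,\dots,g_v]^{t_1}[g_{v+1},\dots,g_w]^{t_2})=\sum_{t_1+t_2=t}\utau(R,\Delta,\fra^{t_1}\frb^{t_2})$ is \autoref{cor.SummationInLocalCase} verbatim. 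For (2), Skoda, I would reduce the mixed-ideal statement to the single-ideal case by \autoref{rem.SuperficialMixedIdealStrategy} and then invoke \autoref{thm.SkodaForLocalCase}, noting that after replacing $\fra_1$ by a reduction and passing to a finite \'etale extension (\autoref{rem:candoskodawhenbiggerthandim}) the hypothesis $t_1\geq\dim R$ suffices; the equality $\utau(R,\Delta,\fra^t)=\tau_{\blup}(R,\Delta,\fra^{t+\epsilon})$ from \autoref{thm.CommonTestIdealNonPrincipal} transports the conclusion back. Item (4), perturbation, is immediate: $\utau(R,\Delta,\fra^t)$ equals $\tau_{\blup}(\omega_R,h^{1/n}d^\epsilon\fra^{t+\epsilon})$, which by Noetherianity is constant for $1\gg\epsilon>0$, so $\utau(R,\Delta,\fra^t)=\utau(R,\Delta,\fra^{t+\epsilon})$. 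Item (5), the singularity measurement, follows from $\utau[1/g]$ commuting with localization away from $V(\fra)\cup\Supp\Delta\cup(\Spec R)_{\mathrm{sing}}$ and the fact that on the smooth locus with trivial boundary and trivial ideal $\tau_\cB(R)=R$ (\cite{MaSchwedeSingularitiesMixedCharBCM}); alternatively one observes that on that open locus the blowup realizing $\tau_{\blup}$ is an isomorphism and $\omega_R$ is free there. Item (6), subadditivity for $R$ regular and $\Gamma=0$, is precisely \autoref{thm:BlowUpSubadditivity} once one identifies $\utau(\omega_R,\fra^t)$ with $\tau_{\myB^0}(R,\fra^{t+\epsilon})$ for $1\gg\epsilon>0$; since $R$ is regular one may take $d=1$ in \autoref{thm.CommonTestIdealNonPrincipal}, so there is no discrepancy.

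The main obstacle is item (3), the restriction theorem, since nothing proved earlier in this subsection addresses restriction of $\tau_{\blup}$ or $\tau^{\elt}_\cB$ to a normal Cartier divisor directly. Here I would argue as follows: $\tau_\cB$ satisfies restriction by \cite[Proposition 2.10 and Theorem 3.1]{MaSchwedeTuckerWaldronWitaszekAdjoint} (cf. \cite[Lemma 4.20 and Theorem 4.23]{HaconLamarcheSchwede}), and by \autoref{thm.CommonTestIdealNonPrincipal} we have $\utau(R,\Delta,\fra^t)=\tau^{\elt}_\cB(\omega_R, h^{1/n}d^\epsilon[g_\lambda]^{t+\epsilon})$, a finite sum of honest $\tau_\cB$'s of principal-divisor type, each of which restricts; one must then check that the restriction-of-a-sum contains the sum-of-restrictions and that on $R/(f)$ the perturbing divisors $d^\epsilon$ and $h^{1/n}$ restrict to divisors with the expected meaning (here one uses that $f$, $\fra$, and $\Supp\Delta$ have no common components, so that $d_0$ chosen on $R$ restricts to an admissible $d_0$ on $R/(f)$, and invokes \autoref{prop.PropertiesOfUltTauOmegaForPairs}\autoref{prop.PropertiesOfUltTauOmegaForPairs.Perturbation} to absorb the $\epsilon$'s). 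The bookkeeping of the perturbation parameters across the restriction — ensuring a single $\epsilon$ works simultaneously for the finitely many summands and for both $R$ and $R/(f)$ — is the delicate point, but it is handled by the usual Noetherianity argument already used throughout \autoref{sec:local_non_principal}.
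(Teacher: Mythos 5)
Your treatment of items (1)--(5) follows essentially the same route as the paper: (1) and (2) are direct invocations of \autoref{cor.SummationInLocalCase} and \autoref{thm.SkodaForLocalCase}, (4) is definitional by Noetherianity, and for (3) and (5) the paper likewise reduces to the principal case (via summation / the element-wise description) and then quotes the corresponding results for $\tau_{\cB}$ from \cite{MaSchwedeTuckerWaldronWitaszekAdjoint}. Your extra worry about matching the perturbation data across the restriction in (3) is reasonable but is exactly the standard Noetherianity bookkeeping; no new idea is needed there.

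Item (6), however, has a genuine gap. You propose to identify $\utau(\omega_R,\fra^t)$ with the \emph{unperturbed} $\tau_{\blup}(\omega_R,\fra^{t+\epsilon})$, justified by ``one may take $d=1$ in \autoref{thm.CommonTestIdealNonPrincipal} since $R$ is regular.'' That is not available: the element $d_0$ of \autoref{rem.ExplicitD0} always contains the factor $p$, and the proof of \autoref{thm.CommonTestIdealNonPrincipal} genuinely uses that $d$ is a multiple of $p$ when invoking \autoref{clm.Tau+PerturbedEqualsTauBigB}. Without the $d^\epsilon$ the paper only knows $\utau(\omega_R,\fra^t)\subseteq\tau_{\blup}(\omega_R,\fra^{t+\epsilon})$, and the reverse containment is essentially the open \autoref{quest.tauAltPerturbed}. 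This matters for the logic: your chain gives $\utau(\fra^t\frb^s)\subseteq\tau_{\myB^0}(\fra^{t+\epsilon})\cdot\tau_{\myB^0}(\frb^{s+\epsilon})$, but to conclude subadditivity you need $\tau_{\myB^0}(\fra^{t+\epsilon})\subseteq\utau(\fra^t)$, which is the unproven direction. The paper's fix is to absorb the perturbation divisor into the ideals rather than discard it: writing $G=\Div(g)$ and $\epsilon=2/N$, one has
\[
\tau(R,\fra^t\frb^s)=\tau_{\myB^0}\bigl(R,\epsilon G,\fra^t\frb^s\bigr)=\tau_{\myB^0}\bigl(R,(g\fra^N)^{t/N}(g\frb^N)^{s/N}\bigr)\subseteq\tau_{\myB^0}\bigl(R,(g\fra^N)^{t/N}\bigr)\cdot\tau_{\myB^0}\bigl(R,(g\frb^N)^{s/N}\bigr),
\]
where the containment is \autoref{thm:BlowUpSubadditivity} applied to the modified ideals $g\fra^N$ and $g\frb^N$, and each factor on the right is again an honest $\tau(R,\fra^t)$ resp.\ $\tau(R,\frb^s)$ by \autoref{thm.CommonTestIdealNonPrincipal}. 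You should replace your justification of (6) with this absorption argument.
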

\begin{proof}
    These are straightforward consequences of what we have already done.
    \begin{enumerate}
        \item This follows from \autoref{cor.SummationInLocalCase}.
        \item This follows from \autoref{thm.SkodaForLocalCase}.
        \item This follows from the main result of \cite{MaSchwedeTuckerWaldronWitaszekAdjoint} after reduction to the principal case (say via summation).
        \item Perturbation is definitional.
        \item Singularity measurement follows from \cite[Theorem 4.1]{MaSchwedeTuckerWaldronWitaszekAdjoint} after writing the ideal as a sum of test ideals of principal ideals $\tau(R, \Delta, f^s)$ (for instance, using summation).
        \item For subadditivity, 
            if the relevant $G = \Div(g)$, then for $1 \gg \epsilon = 2/N$, 
            \[
                \begin{array}{rcl}
                    \tau(R, \fra^t \frb^s) = \tau_{\myB^0}(R, \epsilon G, \fra^t \frb^s) & = & \tau_{\myB^0}(R, (g \fra^N)^{t/N} (g \frb^N)^{s/N})\\
                    & \subseteq & \tau_{\myB^0}(R, (g \fra^N)^{t/N} ) \cdot \tau_{\myB^0}(R, (g \frb^N)^{s/N}) = \tau(R, \fra^t) \cdot \tau(R, \frb^s).
                \end{array}
            \]
            via \autoref{thm:BlowUpSubadditivity}.  The argument of \cite{MaSchwedePerfectoidTestideal,MurayamaUniformBoundsOnSymbolicPowers} \cf \cite{TakagiFormulasForMultiplierIdeals}, can also be adapted to work if one uses \autoref{thm.CommonTestIdealNonPrincipal}.
        {\color{black}\item For the containment $\fra \subseteq \tau(R, \Delta, \fra^1)$, take $f \in \fra$.  Then by Skoda's theorem (b), we have that $(f) = f \tau(R, \Delta) = \tau(R, \Delta, (f)^1) \subseteq \tau(R, \Delta, \fra^1)$.  Hence $\fra \subseteq \tau(R, \Delta, \fra^1)$ as desired.}
    \end{enumerate}    
\end{proof}

{\color{black}
Before moving to the global case, we present an example.  Suppose that $(R, \fram)$ is a complete regular local ring and $x_1, \dots, x_n$ is a regular system of parameters (a minimal generating set for $\fram$).  Suppose that $\fra$ is an ideal generated by monomials in the $x_i$ (for instance, $\fra = (x_1^2, x_1x_2^2, x_2^3)$).  We will show that 
\[ 
    \tau(R, \fra^t)
\]
is generated by monomials in the $x_i$.  Indeed, it is made up of the same ``monomials'' that appear in Howald's theorem for multiplier ideals \cite{HowaldMultiplierIdeals} or the corresponding result for test ideals in positive characteristic \cite[Theorem 4.8]{HaraYoshidaGeneralizationOfTightClosure}, \cf \cite{BlickleMultiplierIdealsAndModulesOnToric}.  A variant of this result in mixed characteristic appeared previously in \cite{RobinsonBCMTestIdealsMixedCharToric}.

To explain this precisely, we fix some notation.   Consider an ideal $\fra := (m_1, \dots, m_s)$ generated by the monomials $\displaystyle{m_j = \underline{x}^{\underline{a_j}}:= \prod_{i=1}^n x_i^{a_{ji}}}$ in the $x_i$.  One can consider the set of all exponents $\underline{\lambda}$ such that $\underline{x}^{\underline{\lambda}} \in \fra$.  These form a subset of the lattice $L = \bZ^n$ and we set $P$ to be the convex hull of this set, which is called the Newton polygon of $\fra$.

\begin{proposition}[{\cf \cite{HowaldMultiplierIdeals,HaraYoshidaGeneralizationOfTightClosure,RobinsonBCMTestIdealsMixedCharToric}}]
\label{prop.HowaldsTheoremForTau}
    With notation as above, we have that 
    \[
        \tau(R, \fra^t) = \big( {\underline x}^{\underline{\lambda}} \mid \underline{\lambda}+{\bf 1} \in \mathrm{Int}(tP) \cap L\big) = ( x^{\lfloor \lambda \rfloor} \mid \lambda \in tP ).
    \]
\end{proposition}
\begin{proof}
    The second equality in the statement is simply \cite[Remark 1]{HowaldMultiplierIdeals}, hence we will show that $\tau(R, \fra^t)$ agrees with the right side.

    We will see that this is a consequence of the summation formula.  Indeed, by \autoref{cor.SummationInLocalCase} we already know that 
    \[
        \tau(R, \fra^t) = \sum_{\substack{t_1+ \dots + t_s \\\geq t}} \tau(R, \prod_j m_j^{t_j}) 
    \]
    where the $t_j \geq 0$.  Now, $\prod_{j=1}^{s} m_j^{t_j} = \underline{x}^{\underline{b}}$ where 
    \[ 
        \underline{b} = (a_{11}t_1 + \dots + a_{s1}t_s, \dots, {a_{1n}t_1 + \dots + a_{sn}t_s}) = t_1 \underline{a_1} + \dots + t_s\underline{a_s}
    \]   
    which is by definition those elements $\underline{b}$ in $tP$.
    As $R$ is regular and $\Div(\underline{x})$ has SNC support, we also know from \autoref{thm.SkodaForLocalCase} (applied in the trivial principal case) and from \cite[Theorem 4.1]{MaSchwedeTuckerWaldronWitaszekAdjoint} that
    \[
        \tau(R, \underline{x}^{\underline b}) = ( \underline{x}^{\lfloor \, \underline{b}\, \rfloor}).
    \]
    The result follows.
\end{proof}
}

\subsection{Global versions of test modules of non-principal ideals}


    

\begin{setting} \label{setting:global-version-test-ideals-projective}
    Suppose $(V, \varpi)$ is a Noetherian DVR of mixed characteristic $(0, p > 0)$ and $X$ is a {\color{black}normal,} integral, quasi-projective scheme over $V$.  We fix $\frc$ an ideal sheaf on $X$ and take a finite set $\Lambda$ and Cartier divisors $C_{\lambda}$ for $\lambda \in \Lambda$, such that $\sum_{\lambda \in \Lambda} \cO_X(-C_{\lambda}) = \frc$. We also fix $t \in \bQ_{\geq 0}$ and a $\bQ$-Cartier divisor $\Gamma$ on $X$.
\end{setting}
In most of the work below we can easily replace $X$ by a projective compactification.
\begin{definition} \label{def.GlobalyTauOmegaXNonPrincipal}
With notation as in \autoref{setting:global-version-test-ideals-projective}, we define
\[
\utau(\omega_X, \frc^t) := \sum_{\sum s_{\lambda} \geq t} \utau(\omega_X, \sum_{\lambda} s_\lambda C_\lambda),
\]
where the sum is taken over tuples of non-negative rational numbers $(s_{\lambda})_{\lambda \in \Lambda}$ such that $\sum_\lambda s_{\lambda} \geq t$. As we will see in \autoref{remark:def-global-test-ideal-non-principal-independent-basis}, this definition is independent of the choice of $C_{\lambda}$.
\end{definition}

\begin{remark}
\label{rem:def.GlobalyTauOmegaXNonPrincipal.Triple}
Similarly we can define 
\[ 
    \utau(\omega_X, \Gamma, \frc^t) =  \sum_{\sum s_{\lambda} \geq t} \utau(\omega_X, \Gamma + \sum_{\lambda} s_\lambda C_\lambda)
\] 
for a $\bQ$-Cartier $\bQ$-divisor $\Gamma$. More generally, let $\frc_1, \dots, \frc_m$ be ideal sheaves on $X$. For $1 \leq j \leq m$, pick divisors $C_{\lambda,j}$ on $X$ indexed by $\lambda \in \Lambda_j$ such that $\sum_{\lambda \in \Lambda_j} \cO_X(-C_{\lambda,j}) = \frc_j$. Let $t_1,\ldots, t_m \in \bQ$. Then we define
\[
\utau(\omega_X, \Gamma, \frc_1^{t_1}\ldots \frc_r^{t_r}) :=  \sum_{(s_{\lambda,1}), \dots, (s_{\lambda,r})} \utau(\omega_X, \Gamma + \sum_j \sum_{\lambda \in \Lambda_j} s_{\lambda,j} C_{\lambda,j})
\]
where the outer sum is taken over tuples of non-negative rational numbers $(s_{\lambda,j})_{\lambda \in \Lambda_j}$ such that $\sum_{\lambda \in \Lambda_j} s_{\lambda,j} \geq t_j$. 
\end{remark}

{\color{black}One advantage of this definition is that various properties can be directly reduced to the divisor case.  For instance, we see that if $V \to \widehat{V}$ is the completion map and $X_{\widehat{V}}$ is the base change, then $\tau(\omega_{X}, \frc^t) \otimes_V \widehat{V} = \tau(\omega_{X_{\widehat{V}}}, (\frc \cO_{\widehat{V}})^t)$, see \autoref{rem.BaseChangeToCompletionWithDivisorPairs}.}

Recall from \autoref{prop.PropertiesOfUltTauOmegaForPairs} \autoref{prop.PropertiesOfUltTauOmegaForPairs.Perturbation} that for every $(s_{\lambda})_{\lambda \in \Lambda}$ such that $\sum_\lambda s_{\lambda} \geq t$ we can find $1 \gg \epsilon > 0$ such that  
\[
\utau(\omega_X, \sum_{\lambda} s_\lambda C_\lambda) = \utau(\omega_X, \sum_{\lambda} (s_\lambda+\epsilon) C_\lambda).
\]
In particular, we obtain that:
\[
\utau(\omega_X, \frc^t) = \sum_{\sum s_{\lambda} > t} \tau(\omega_X, \sum_{\lambda} s_\lambda C_\lambda).
\]
By Noetherianity, $\utau(\omega_X, \frc^t)$ is a sum of $\utau(\omega_X, \sum_{\lambda} s_\lambda C_\lambda)$ for some finite number of tuples $(s_{\lambda})_{\lambda \in \Lambda}$ such that $\sum s_{\lambda} > t$. Moreover, since localization commutes with finite sums, we get that $\utau(\omega_X, \frc^t)$ is a coherent sheaf on $X$ by \autoref{thm:perturbed_tau_submodule_single_alteration}. 

\begin{lemma}  \label{lem:restriction-of-tau-elt-to-R}
    For all $x\in X_{p=0}$ and $R=\widehat{\cO_{X,x}}$, we have that 
\begin{equation} 
\utau(\omega_X, \Gamma, \frc^t) \otimes R = \utau(\omega_R, \Gamma|_R, (\frc R)^t),
\end{equation}
    where the right hand side is the (local) test module from \autoref{def:common-tau-nonprincipal} (noting that if $\Gamma|_R = s \Div(g)$ then $\utau(\omega_R, \Gamma|_R, (\frc R)^t) := \utau(\omega_R, (gR)^s (\frc R)^t)$).  The analogous statement for mixed test modules also holds.
\end{lemma}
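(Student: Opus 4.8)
The plan is to reduce the global non-principal statement to the two things already in hand: the definition of $\utau(\omega_X,\Gamma,\frc^t)$ as a finite sum of divisorial test modules (\autoref{def.GlobalyTauOmegaXNonPrincipal}), and the completed-stalk comparison for divisorial pairs. First I would observe that both sides are defined as finite sums, so it suffices to commute completion past a finite sum. On the global side, by \autoref{def.GlobalyTauOmegaXNonPrincipal} and the perturbation remark just before the lemma, $\utau(\omega_X,\Gamma,\frc^t)$ is a finite sum $\sum_j \utau(\omega_X,\Gamma+\sum_\lambda s_{\lambda,j}C_\lambda)$ over finitely many rational tuples with $\sum_\lambda s_{\lambda,j}>t$ (Noetherianity). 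Since completion at $x$ is flat and commutes with finite sums of subsheaves of a fixed coherent sheaf (here $\omega_X$), we get
\[
\utau(\omega_X,\Gamma,\frc^t)\otimes R=\sum_j\big(\utau(\omega_X,\Gamma+\textstyle\sum_\lambda s_{\lambda,j}C_\lambda)\otimes R\big).
\]

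Next I would apply \autoref{prop.PropertiesOfUltTauOmegaForPairs} \autoref{prop.PropertiesOfUltTauOmegaForPairs.Completion} to each summand: $\utau(\omega_X,\Gamma+\sum_\lambda s_{\lambda,j}C_\lambda)\otimes R=\tau_{\cB}(\omega_R,(\Gamma+\sum_\lambda s_{\lambda,j}C_\lambda)|_R)$. Writing $C_\lambda|_R=\Div(c_\lambda)$ for local equations $c_\lambda$ of $\cO_X(-C_\lambda)$ at $x$, and noting $\frc R=(c_\lambda\mid\lambda\in\Lambda)$ by the hypothesis $\sum_\lambda\cO_X(-C_\lambda)=\frc$, each such summand becomes $\tau_{\cB}(\omega_R,g^s\prod_\lambda c_\lambda^{s_{\lambda,j}})$ (if $\Gamma|_R=s\Div(g)$; if $\Gamma=0$ just drop the $g^s$). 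Summing over $j$, the right-hand side is exactly $\tau_{\cB}^{\elt}(\omega_R,\Gamma|_R,[c_\lambda]^t)$ in the notation of \autoref{sec:local_non_principal} — the only point to check is that the finite collection of tuples appearing in the Noetherian-reduction of the global sum is cofinal among all tuples with $\sum s_\lambda\geq t$ in the sense that their test modules already exhaust the sum; this holds because $\tau_{\cB}(\omega_R,-)$ is monotone in the coefficients and the perturbation property lets us replace $\geq t$ by $>t$.

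Finally, by \autoref{thm.CommonTestIdealNonPrincipal} (applied with the generating set $\{c_\lambda\}$ of $\frc R$, after passing to a suitably divisible $d$ and small $\epsilon$, which is built into the definition \autoref{def:common-tau-nonprincipal}), $\tau_{\cB}^{\elt}(\omega_R,\Gamma|_R,[c_\lambda]^t)$ — possibly after a harmless $\epsilon$-perturbation absorbed into the $>t$ sum — agrees with the common value $\utau(\omega_R,\Gamma|_R,(\frc R)^t)$ from \autoref{def:common-tau-nonprincipal}. This gives the claimed identity. The main obstacle I anticipate is bookkeeping: matching the exact indexing conventions (rational tuples with $\sum s_\lambda\geq t$ versus $>t$, and whether the $d^\epsilon$-perturbation in \autoref{thm.CommonTestIdealNonPrincipal} is needed here or is automatically supplied by \autoref{prop.PropertiesOfUltTauOmegaForPairs} \autoref{prop.PropertiesOfUltTauOmegaForPairs.Perturbation}), and making sure the independence of the choice of the $C_\lambda$ — which the lemma implicitly relies on — is either invoked from \autoref{remark:def-global-test-ideal-non-principal-independent-basis} or follows a posteriori from the completed-stalk identity plus the fact that $\utau(\omega_X,\Gamma,\frc^t)$ is a coherent subsheaf of $\omega_X$ determined by its stalks. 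I would resolve the latter by proving this lemma first with a fixed choice of $C_\lambda$ and then deducing basis-independence from it.
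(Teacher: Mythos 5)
Your proposal is correct and follows essentially the same route as the paper, whose proof is a one-line citation of exactly the three ingredients you use: the definition of $\utau(\omega_X,\Gamma,\frc^t)$ as a (Noetherian-finite) sum of divisorial test modules, the completed-stalk identity $\utau(\omega_X,\Gamma')\otimes R=\tau_{\cB}(\omega_R,\Gamma'|_R)$ from \autoref{prop.PropertiesOfUltTauOmegaForPairs}~\autoref{prop.PropertiesOfUltTauOmegaForPairs.Completion}, and the equality \autoref{thm.CommonTestIdealNonPrincipal.BCMGensNoEpsilon}$=$\autoref{thm.CommonTestIdealNonPrincipal.BCMIdealNoEpsilon} of \autoref{thm.CommonTestIdealNonPrincipal}. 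Your bookkeeping worries resolve as you suspect: the finite global sub-sum exhausts the full local element-wise sum because each local term is itself the completion of a global divisorial term, and basis-independence is indeed deduced afterwards (in \autoref{remark:def-global-test-ideal-non-principal-independent-basis}) from this lemma rather than assumed beforehand.
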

\begin{proof}
    This follows by definition of $\utau(\omega_X, \Gamma, \frc^t)$, by \autoref{prop.PropertiesOfUltTauOmegaForPairs} \autoref{prop.PropertiesOfUltTauOmegaForPairs.Completion}, and by \autoref{thm.CommonTestIdealNonPrincipal} (equality between \autoref{thm.CommonTestIdealNonPrincipal.BCMGensNoEpsilon} and \autoref{thm.CommonTestIdealNonPrincipal.BCMIdealNoEpsilon}).  The mixed test module case follows similarly.
\end{proof}

\begin{remark} \label{remark:def-global-test-ideal-non-principal-independent-basis}
We claim that 
 \[ 
        \utau(\omega_X, \frc^t) = \sum_{n \in \bN} \Bigg( \sum_{\substack{ C \mbox{ \scriptsize Cartier} \\ \O_X(-C) \subseteq \frc^n}} \utau( \omega_X, (t/n)C) \Bigg),
    \]
and so, $\utau(\omega_X, \frc^t)$ is independent of the choice of $C_{\lambda}$.
Indeed, after reducing to the projective case, it is enough to show this equality after restricting to $R = \widehat{\cO_{X,x}}$ for every $x \in X_{p=0} \subseteq X$. Write $C_{\lambda}|_R = {\rm div}(g_\lambda)$ for $g_\lambda \in R$. Since the right hand side of the above displayed equality is a finite sum by Noetherianity again, we reduce to checking that:
\[
\sum_{\sum s_{\lambda} > t} \utau(\omega_X,  \prod_{\lambda} g_{\lambda}^{s_{\lambda}}) = \sum_{n\in \bN} \sum_{f \in \frc^n } \utau(\omega_R, f^{\frac{t}{n}}),
\]    
which follows from \autoref{def:common-tau-nonprincipal} and \autoref{thm.CommonTestIdealNonPrincipal}.

{\color{black}This also lets us reduce the mixed test module case to the single ideal module case.  Consider $\tau(\omega_X, \frc_1^{t_1}\cdots \frc_r^{t_r})$.  As the $t_i$ are rational, we can write $t_i = a_i/b$ with a common denominator. We notice that 
\[
        \tau(\omega_X, \frc_1^{t_1}\cdots \frc_r^{t_r})
        = \tau(\omega_X, (\frc_1^{a_1}\cdots \frc_r^{a_r})^{1/b})
\]
as this can be checked locally at each stalk even after completion (and it is clear for $\tau_{\myB^0}(\omega_R, \dots)$, see the discussion after \autoref{def.TauBlup}).  Hence, the results in this subsection also apply to $\utau(\omega_X, \Gamma, \frc_1^{t_1}\ldots \frc_r^{t_r})$ with minimal work, and in other cases we will do this reduction with minimal comment.}
\end{remark}

\begin{remark} \label{remark:ultimate-tau-omega-global-explicit}
Let $\sL$ be an  ample line bundle on $X$, and let $\pi : Y \to X$ is a proper birational map from a normal $Y$ factoring through the blowup of $\frc$ with $M$ the Cartier divisor so that $\O_Y(-M) = \fra \cdot \O_Y$, and $\mathcal{N}_i$ is the subsheaf of $\omega_{X} \otimes \sL^i$ generated by $\myB^0(Y,tM;\omega_{Y} \otimes \pi^*\sL^i) \subseteq H^0(X,\omega_{X} \otimes \sL^i)$. Following \cite[Section 6.1]{HaconLamarcheSchwede} define
$$\tau_{\myB^0}(\omega_X,\frc^t) = \mathcal{N}_i \otimes \sL^{-i} \quad \mbox{ for } i \gg 0.$$
\end{remark}



We will need the following lemma.
\begin{lemma}
\label{lem.KevinsNonPrincipalLemma}
With assumptions as in  \autoref{setting:global-version-test-ideals-projective}, let $x \in X_{p=0}$ and  $R = \widehat{\cO_{X,x}}$. Then
  \[ 
        \tau_{\myB^0}(\omega_X,\Gamma,\frc^{t}) \otimes R \subseteq \tau_{\myB^0}(\omega_R, \Gamma|_R, (\frc R)^{t}).
    \] 
\end{lemma}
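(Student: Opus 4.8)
First I would reduce to the case where $X$ is projective over $V$: the global test module $\tau_{\myB^0}(\omega_X,\Gamma,\frc^t)$ for quasi-projective $X$ is defined (\autoref{def.HaconLamarcheSchwede}) by restriction from a projective compactification over $V$, and both "restrict to $X$'' and "take the completed stalk at $x$'' are harmless in this context, so it suffices to treat the projective case. Then fix an ample line bundle $\sL$ on $X$ and a projective birational morphism $\pi\colon Y\to X$ from a normal $Y$ dominating the normalized blowup of $\frc$, with $\frc\cO_Y=\cO_Y(-M)$. By \autoref{remark:ultimate-tau-omega-global-explicit} (carrying along the boundary $\Gamma$ exactly as in the second part of \autoref{def.HaconLamarcheSchwede}) together with \cite[Proposition 4.7]{HaconLamarcheSchwede}, for all $i\gg 0$ the sheaf $\tau_{\myB^0}(\omega_X,\Gamma,\frc^t)\otimes\sL^i$ is globally generated by $\myB^0(Y,\,tM+\pi^{*}\Gamma;\,\omega_Y\otimes\pi^{*}\sL^i)\subseteq H^0(X,\omega_X\otimes\sL^i)$. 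Since $\Spec\cO_{X,x}$ is contained in every Zariski open neighbourhood of $x$, the line bundle $\sL^i$ is trivial there; fixing such a trivialization identifies $\tau_{\myB^0}(\omega_X,\Gamma,\frc^t)\otimes R$ with the $R$-submodule of $\omega_R$ generated by the germ--completions of these distinguished sections.

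Next I would base change $\pi$ along the faithfully flat map $\cO_{X,x}\to R$. Let $Z$ be the component dominating $\Spec R$ of the normalization of $Y\times_X\Spec R$ (normalization being well behaved since $\cO_{X,x}$, hence $R$, is excellent); then $\rho\colon Z\to\Spec R$ is projective birational, factors through the blowup of $\frc R$, and $\frc R\cdot\cO_Z=\cO_Z(-N)$ with $N$ the pullback of $M$, while $\pi^{*}\Gamma$ pulls back to $\rho^{*}(\Gamma|_R)$ — all because forming blowups, normalizations and pullbacks of divisors commutes with flat base change. By \autoref{def.TauBlup} this gives $\tau_{\myB^0}(\omega_R,\Gamma|_R,(\frc R)^t)=\myB^0\big(Z,\,tN+\rho^{*}(\Gamma|_R);\,\omega_Z\big)$. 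Using the trivialization of $\pi^{*}\sL^i$ near $Z$, pulling a distinguished section $s\in\myB^0(Y,tM+\pi^{*}\Gamma;\omega_Y\otimes\pi^{*}\sL^i)$ back to $Z$ produces a section $s_Z\in H^0(Z,\omega_Z)$ whose generic value is precisely the corresponding generator of $\tau_{\myB^0}(\omega_X,\Gamma,\frc^t)\otimes R\subseteq\omega_R$. It therefore suffices to prove that $s_Z\in\myB^0\big(Z,\,tN+\rho^{*}(\Gamma|_R);\,\omega_Z\big)$.

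The crux is that $+$-stability of $s$ on $Y$ is inherited by $s_Z$ on $Z$, and here the machinery of \cite{BMPSTWW1} enters. Working through the section ring $S:=\bigoplus_i H^0(Y,\pi^{*}\sL^i)=\bigoplus_i H^0(X,\sL^i)$ and the graded canonical module $\bigoplus_i H^0(Y,\omega_Y\otimes\pi^{*}\sL^i)$, the pair version of \cite[Proposition 5.5]{BMPSTWW1} identifies $\bigoplus_i\myB^0(Y,tM+\pi^{*}\Gamma;\omega_Y\otimes\pi^{*}\sL^i)$ with the $+$-stable submodule $\tau_+$ of that module associated to the $\mathbf{Q}$-divisor on $\Spec S$ cut out by $\frc$ and by the cone over $\Gamma$. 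Flat base change for cohomology gives $S\otimes_{\cO_{X,x}}R$ as the section ring of $Z$; completing at the maximal ideal lying over $x$ and the irrelevant ideal, the needed statement becomes: every finite cover of this completed ring is dominated by the completion of a finite cover defined over $\cO_{X,x}$, which in turn is just the localization of a finite cover of $Y$ (the relevant function fields are all equal to $K(Y)$). This is \cite[Proposition 4.29]{BMPSTWW1}, whose proof (via Artin--Popescu approximation) applies verbatim in the graded setting — the same mechanism used in the proofs of \autoref{cor.TauAgreesWithCompletionComputation} and \autoref{cor.MultiplierIdealViaFiniteCovers}. Since $s$ lifts along the Grothendieck trace for every finite cover of $Y$ with the prescribed twist, it follows after base change that $s_Z$ does too for every finite cover of $Z$, i.e.\ $s_Z\in\myB^0\big(Z,\,tN+\rho^{*}(\Gamma|_R);\,\omega_Z\big)$, which completes the argument.

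I expect the main obstacle to be this last step: carefully matching the boundary data coming from the blowup of $\frc$ on $Y$ (with the implicit rounding built into the definition of $\myB^0$, originating from the non-effective divisor there) against an honest $\mathbf{Q}$-divisor on the section ring, and then invoking completion-insensitivity of $\tau_+$ from \cite[Proposition 4.29]{BMPSTWW1} in the correct graded/proper generality. One must also address the fact that $Y\times_X\Spec R$ need not be irreducible; this is handled by passing to the dominant component $Z$ and checking, via \cite[Lemma 4.18]{BMPSTWW1}, that the trace maps and the relevant $\myB^0$'s restrict compatibly, so that no information is lost in doing so.
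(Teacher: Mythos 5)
Your overall strategy is the right one, and you correctly identify \cite[Proposition 4.29]{BMPSTWW1} as the key input, but the crux of your argument — the detour through the section ring — is both unnecessary and flawed as written. The object ``$S\otimes_{\cO_{X,x}}R$'' does not make sense: $S=\bigoplus_i H^0(X,\sL^i)$ is an algebra over $H^0(X,\cO_X)$, not over $\cO_{X,x}$, and the section ring of $Z=Y\times_X\Spec R$ is not obtained from $S$ by any such tensor product (flat base change computes stalks of $\pi_*(\omega_Y\otimes\pi^*\sL^i)$, not global sections over the cone). Likewise, ``completing at the maximal ideal lying over $x$ and the irrelevant ideal'' conflates the cone point with the point $x\in X$; the graded $\tau_+$ machinery of \cite[Proposition 5.5]{BMPSTWW1} lives on $\Spec S$ and is the tool the paper uses for the global comparison in \autoref{thm.ComparisonTauRHvsHLSNoPair}, not for restriction to a completed stalk of $X$. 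So this middle portion of your proof does not go through as stated.

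The fix is to delete the section-ring step entirely and run the argument your last sentence already asserts. By definition, $\myB^0(Y,tM+\pi^*\Gamma;\omega_Y\otimes\pi^*\sL^i)$ is the intersection over alterations $\nu\colon Z\to Y$ of the images of $H^0(Z,\omega_Z\otimes\cO_Z(-t\nu^*M-\nu^*\pi^*\Gamma)\otimes\nu^*\pi^*\sL^i)\to H^0(Y,\omega_Y\otimes\pi^*\sL^i)$. Each such image maps, under base change along $\cO_{X,x}\to R$ (where $\sL$ trivializes), into the image of the corresponding trace for $Z_R\to Y_R$, since Grothendieck trace commutes with flat base change. Hence the completed stalk of $\tau_{\myB^0}(\omega_X,\Gamma,\frc^t)$ lands in the intersection of those images taken over the alterations of $Y_R$ that arise by base change from $Y$; by \cite[Proposition 4.29]{BMPSTWW1} this restricted intersection already equals $\myB^0(Y_R,tM_R+\pi^*\Gamma|_R;\omega_{Y_R})=\tau_{\myB^0}(\omega_R,\Gamma|_R,(\frc R)^t)$. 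Note also that only the containment $\subseteq$ is claimed, so there is no need to show that the distinguished sections generate the right-hand side after base change, and no need to worry about losing information when passing to components; one only needs that each generator of the left-hand side lies in the right-hand side.
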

\begin{proof}
    After reducing to the case that $\Gamma \geq 0$ if necessary, we may incorporate $\Gamma$ into $\frc$ and so assume that $\Gamma = 0$.
We choose a proper birational map $\pi : Y \to X$ and $M$ as in \autoref{remark:ultimate-tau-omega-global-explicit}.  Now, $\myB^0(Y, tM; \omega_Y \otimes \pi^* \sL^i)$ is the intersection of the images of 
    \[
        H^0(Z, \omega_Z \otimes \cO_Z(-t\nu^* M) \otimes \nu^* \pi^* \sL^i ) \to H^0(Y, \omega_Y \otimes \pi^* \sL^i)
    \] 
    for alterations $\nu : Z \to Y$ with $Z$ normal. Those images admit maps to the images of 
    \[
        H^0(Z_R, \omega_{Z_R} \otimes \cO_{Z_R}(-t\nu^* M) \otimes \nu^* \pi^* \sL^i|_{Z_R} ) \to H^0(Y_R, \omega_{Y_R} \otimes \pi^* \sL^i|_{Y_R}).
    \]
    where $Z_R$ and $Y_R$ are the base changes to the complete local ring $R$.  Of course, on $R$, $\sL$ is trivial, and the intersection of the latter images becomes $\tau_{\myB^0}(\omega_R, \fra^{t})$ by \cite[Proposition 4.29]{BMPSTWW1}.  The desired containment follows.  
\end{proof}

In what follows, suppose $\pi : Y \to X$ a birational map from a normal $Y$ factoring through the blow up of $\frc$ and write $\frc \cO_Y = \cO_Y(-M)$.  We define: 
\begin{equation}
    \myB^0(X, \Gamma+ \epsilon G, \frc^t; \omega_X \otimes \sL^n) := \myB^0(Y, \pi^* \Gamma + \epsilon G + t M, \omega_Y \otimes \pi^* \sL^n)
\end{equation}
viewed as a submodule of $H^0(Y, \omega_Y \otimes \pi^* \sL^n) \subseteq H^0(X, \omega_X \otimes \sL^n)$.

We can now obtain a generalization, and a more precise version, of \autoref{prop.PropertiesOfUltTauOmegaForPairs} \autoref{prop.PropertiesOfUltTauOmegaForPairs.ComparisonWithB0}.  {\color{black}

We will need the following lemma.
\begin{lemma}
    \label{lem.QDivisorSupportEffectiveGame}
    Suppose $\Gamma$ is a $\bQ$-divisor on $X$, an integral normal scheme which is quasi-projective over a Noetherian ring.  Then 
    \[
        \Supp(\Gamma) \supseteq \bigcap_{L} \Supp(L + \Gamma) = \{x \in X \;|\; \Gamma \text{ is not Cartier at $x$}\}
    \]
    where the intersection runs over Cartier $L$ such that $L + \Gamma \geq 0$.
\end{lemma}
\begin{proof}    
    We first show $\supseteq$.
    Pick $x \in X \setminus \Supp(\Gamma)$.  It suffices to find $L$ such that $L + \Gamma \geq 0$ and $x \notin \Supp L$.  We fix $\sA$ a very ample line bundle.  Write $\Gamma = \Gamma_1 - \Gamma_2$ where $\Gamma_i$ are effective and have no common components.  Fix $I = \cO_X(-\lceil \Gamma_2\rceil)$.  Then for $m \gg 0$ we have that $I \otimes \sA^m$ is globally generated.  Pick a global section $s \in \Gamma(X, I \otimes \sA^m) \subseteq \Gamma(X, \sA^m)$ which does not vanish at $x$ (viewed as a section of $\sA^m$) and let $L$ be the corresponding effective Cartier divisor with $\cO_X(L) \cong \sA^m$.  We see that $x \notin \Supp(L)$ by construction and observe that $L \geq \lceil \Gamma_2\rceil$ so that $L + \Gamma \geq 0$.

    For the second equality $\supseteq$ is clear.  For the converse, choose $s \in K(X)$ such that $\Gamma$ and the principal divisor $\Div(s)$ agree at $x$.  By construction, $x \notin \Supp(\Gamma - \Div(s))$.  Thus by the first part, there exists $L'$ such that $\Gamma - \Div(s) + L'$ is effective and $x$ is not in its support.  Fix $L = -\Div(s) + L'$.
\end{proof}

Now we fix some notation:

\begin{notation}
    \label{notation.GlobalTestElementChoice}
    Assume the conventions of  \autoref{setting:global-version-test-ideals-projective}. 
    Additionally suppose $\frg \subseteq \cO_X$ is an ideal sheaf with the following properties.
    First, 
    \[
        V(\frg) \supseteq (\text{non-Cartier locus of $\Gamma$}) \cup V(\frc)
    \] 
    which holds for instance if $V(\frg) \supseteq \Supp(\Gamma) \cup V(\frc)$.
    Second, so that either:
    \begin{enumerate}
        \item $\frg \subseteq \sqrt{p\cO_X}$ and $X \setminus V(\frg)$ is smooth over $V$, or 
        \item $X$ is projective over $V$, $\Div_X(p) = X_{p=0}$ is reduced, and $X \setminus V(\frg)$ is smooth over $V$.
    \end{enumerate}
\end{notation}


In the second case, the condition that $X$ is projective can frequently be removed (note, there is already an implicit quasi-projective hypothesis).  However, removing the projectivity assumption adds substantial technicality to various proofs as the fact that $X_{p=0}$ is reduced in condition (b) may not be preserved when passing to a projective compactification.  We choose not to add these technicalities.

\begin{theorem}
    \label{thm.MainResultOnNonprincipalV2}
    Fix notation as in \autoref{notation.GlobalTestElementChoice} and additionally assume that $V$ is complete. 
    Then for every $1 \gg \epsilon > 0$ we have that 
    \begin{equation} 
    \utau(\omega_X,\Gamma, \frc^t) = \tau_{\myB^0}(\omega_X, \Gamma, \frg^{\epsilon}\frc^{t}). \label{thm.MainResultOnNonprincipalV2.comparisonWithB0} 
    \end{equation}
    As a consequence, for $X$ projective over $V$, for $\Gamma \geq 0$ $\bQ$-Cartier and $\sL$ is ample, then we have that 
    \[
        H^0(X, \utau(\omega_X,\Gamma, \frc^t) \otimes \sL^n) = \myB^0(X, \Gamma, \frg^{\epsilon} \frc^t; \omega_X \otimes \sL^n)
    \]
    for $1 \gg \epsilon > 0$ and $n \gg 0$.
\end{theorem}
\begin{proof}    
    It is harmless to assume that $X$ is projective and so we do.
    Recall that by definition, for a fixed finite set of Cartier divisors $C_{\lambda}$ with $\sum \cO_Y(-C_{\lambda}) = \frc$, we have that
    \[
        \utau(\omega_X, \Gamma,\frc^t) = \sum_{1 \leq i \leq r} \utau\Big(\omega_X, \Gamma+\sum_{\lambda \in \Lambda} s^{(i)}_\lambda C_\lambda\Big),
    \]
    where $(s^{(1)}_\lambda)_{\lambda \in \Lambda}, \ldots, (s^{(r)}_\lambda)_{\lambda \in \Lambda}$ are some tuples of non-negative rational numbers such that the finite sums $\sum_\lambda s^{(i)}_{\lambda} > t$ for each $1 \leq i \leq r$.  We may also assume these $C_\lambda$ and $s_{\lambda}^{(i)}$ can be used to compute the multiplier module via the classical summation formula on $X[1/p]$.

    \begin{claim}  \label{claim:MainResultOnNonprincipalClaim} There exists an effective Cartier divisor $G^0$ such that for every $G \geq G^0$, for every $1 \gg \epsilon >0$ (depending on $G$), and for every $1 \leq i \leq r$:
        \[
        \utau\Big(\omega_X, \Gamma+\sum_{\lambda} s^{(i)}_\lambda C_\lambda\Big) = \tau_{\myB^0}\Big(\omega_X, \Gamma+\epsilon G + \sum_{\lambda} s^{(i)}_\lambda C_\lambda\Big).
        \]
        \end{claim}
        \begin{proof}[Proof of Claim]
        For every $1 \leq i \leq r$ we take an effective Cartier divisor $G_i^{0}$, containing all the $C_{\lambda}$ in its support, such that the following holds. For every $G \geq G_i^{0}$ and every $1 \gg \epsilon>0$ (depending on $G$)  the statement of \autoref{prop.PropertiesOfUltTauOmegaForPairs} \autoref{prop.PropertiesOfUltTauOmegaForPairs.ComparisonWithB0} is satisfied for $\Gamma +  \sum_{\lambda} s^{(i)}_\lambda C_\lambda$.
        
        Then we define $G^0$ to be a Cartier divisor such that $G^0 \geq G_i^0$ for every $1 \leq  i \leq r$ (for example $G^0 = \sum_{i=1}^r  G_i^0$). Then given $G \geq G^0$ we have that $G \geq G^0_i$ for every $1 \leq i \leq r$, and so the statement of the claim is satisfied for every $1 \gg \epsilon > 0$ (depending on $G$).
        \end{proof}

        As we can make $G$ larger, we can choose such a $G$ so that $\cO_X(-G) \subseteq \frg$ and hence 
        \[
            \utau(\omega_X, \Gamma,\frc^t) = \sum_i \tau_{\myB^0}\Big(\omega_X, \epsilon G + \Gamma + \sum_{\lambda} s^{(i)}_\lambda C_\lambda\Big) \subseteq \tau_{\myB^0}(\omega_X, \Gamma,\frg^{\epsilon} \frc^t).
        \]
        Fix $x \in X$ a closed point, we need to show equality at the stalks at $x$.  As $x \in X_{p=0}$, 
        let $R = \widehat{\cO_{X,x}}$, a complete local ring of mixed characteristic.   
        Expanding to $R$, by \autoref{lem:restriction-of-tau-elt-to-R}, we see that 
        \[
            \utau(\omega_X, \Gamma, \frc^t) \otimes R = \utau(\omega_R, \Gamma, (\frc R)^t).
        \]
        On the other hand, by \autoref{lem.KevinsNonPrincipalLemma}, we have that 
        \[
            \utau_{\myB^0}(\omega_X, \Gamma,\frg^{\epsilon} \frc^t) \otimes R \subseteq \utau_{\myB^0}(\omega_R, \Gamma,(\frg R)^{\epsilon} (\frc R)^t).
        \]
        As $\cO_{X,x}$ is excellent, we see that $\Spec (R)$ is regular outside of $V(\frg R)$.
        We now wish to apply \autoref{cor.CompleteLocalOptimalTestElementChoice} but $\Gamma$ need not be effective and so we do not see how absorb it into $\fra$ without changing $\frg$.  However, fix a Cartier $L$ on $X$ such that $L + \Gamma$ is effective, we see that if $g \in R$ and $V(g) \supseteq V(\frg R) \cup \Supp((L + \Gamma)|_R)$, then $g$ satisfies the conditions of \autoref{cor.CompleteLocalOptimalTestElementChoice} by our hypotheses and so $\utau(\omega_R, (\Gamma + L)|_R, (\frc R)^t) = \utau_{\myB^0}(\omega_R, (\Gamma+L)|_R, g^{\epsilon} (\frc R)^t)$ and hence, as $L$ is Cartier, that 
        \[
            \utau(\omega_R, \Gamma|_R, (\frc R)^t) = \utau_{\myB^0}(\omega_R, \Gamma|_R, g^{\epsilon} (\frc R)^t)
        \]  
        for $1 \gg \epsilon > 0$.
        The set of $g$ satisfying {this equation} generates an ideal $J \subseteq R$ by \autoref{lem.TestElementsFormAnIdeal}.  
        However, by \autoref{lem.QDivisorSupportEffectiveGame}, we see that $V(J) \subseteq V(\frg)$.  Hence $\frg R \subseteq \sqrt{J}$ and thus, just as in the proof of \autoref{cor.CompleteLocalOptimalTestElementChoice}, we see that 
        \begin{equation} 
            \label{eq.usefulEqn.thm.MainResultOnNonprincipalV2}
            \utau(\omega_R, \Gamma|_R, (\frc R)^t) = \utau_{\myB^0}(\omega_R, \Gamma|_R, \frg^{\epsilon} (\frc R)^t)
        \end{equation}
        for $1 \gg \epsilon > 0$.


        The final consequence follows since $\tau_{\myB^0}(\omega_X, \Gamma+ \epsilon G, \frc^t) \otimes \sL^n$ is the subsheaf of $\omega_X \otimes \sL^n$ generated by $\myB^0(X, \Gamma+ \epsilon G, \frc^t, \omega_X \otimes \sL^n)$ for $n \gg 0$ by definition.
\end{proof}
}

{\color{black}
\begin{proposition} \label{prop:tau-nonprincipal-is-global-intersection}
    Fix notation as in \autoref{notation.GlobalTestElementChoice}.  Then for every $1 \gg \epsilon > 0$ (depending on $\frg$), we have that 
    \[
        \utau(\omega_X, \Gamma, \frc^t) = \bigcap_{Y \to X} \Tr \big( \pi_* \cO_Y(K_Y - \pi^* \Gamma - \epsilon N - tM) \big)
    \]
    where the intersection is over all normal alterations $\pi : Y \to X$ such that $\frc \cO_Y = \cO_Y(-M)$ and $\frg \cO_Y = \cO_Y(-N)$ are line bundles, and where $tM$, $\epsilon N$, and $\pi^*\Gamma$ are Cartier.

    Moreover, for every $x \in X_{p=0}$ and $R = \widehat{\cO_{X,x}}$, we have that
    \[
    \utau(\omega_X, \Gamma, \frc^t) \otimes R = \bigcap_{Y \to X} \Tr \big( \pi_* \cO_Y(K_Y - \pi^* (\epsilon G+\Gamma)- tM) \otimes R \big). 
    \]
\end{proposition}

We emphasize that the above statement is quite subtle as we are taking the intersection in the category of sheaves (and not the category of quasi-coherent sheaves). In particular, the fact that the right hand side of the first displayed equation is quasi-coherent is not clear at this point and is only a consequence of the above proposition. In the proof we shall use the following fact: if $\mathcal{F} \subseteq \mathcal{G}$ is an inclusion of $\cO_X$-sheaves and the stalks $\mathcal{F}_x=\mathcal{G}_x$ agree for every $x \in X$, then $\mathcal{F} = \mathcal{G}$. Since completion is faithfully flat, it is enough to check the equality of stalks after tensoring by $R = \widehat{\cO_{X,x}}$.

\begin{proof}
Without loss of generality we may assume that $X$ is projective over $V$.
The inclusion $\subseteq$ follows from the observation that 
\[
\utau(\omega_X, \Gamma, \frc^t) 
\subseteq \Tr \big( \pi_* \cO_Y(K_Y - \pi^*\Gamma - \epsilon N - tM) \big) 
\]
for every $Y \to X$ as above.  Indeed, this containment can be checked after completion at every closed point where the statement is clear in view of \autoref{lem:restriction-of-tau-elt-to-R}.  


We next prove the inclusion $\supseteq$. On $X[1/p]$, the left side $\utau(\omega_X, \Gamma, \frc^t)[1/p]$ is $\mJ(\omega_{X[1/p]},\Gamma, (\frc[1/p]^t))$ by the summation formula \cite{TakagiFormulasForMultiplierIdeals} and \autoref{prop.PropertiesOfUltTauOmegaForPairs} \autoref{prop.PropertiesOfUltTauOmegaForPairs.InvertP}, whereas the right side stabilizes in characteristic zero to be the multiplier ideal for $Y$ large enough.

Now fix $x \in X_{p=0}$ and $R = \widehat{\cO_{X,x}}$.  It suffices to check our stalks at $x$ agree after passing to the completion $R$.  To this end, we argue as follows:
\begin{align*}
& \left(\bigcap_{Y \to X} \Tr \big( \pi_* \cO_Y(K_Y - \pi^* \Gamma -\epsilon N - tM) \big)\right) \otimes R\\
\overset{(\dagger)
}{\subseteq} & \bigcap_{Y \to X}\left( \Tr \big( \pi_* \cO_Y(K_Y - \pi^* \Gamma - \epsilon N - tM) \big) \otimes R \right) \\
\overset{(1)}{=} & \tau_{\myB^0}(\omega_R, \Gamma|_R, (\frg R)^{\epsilon}(\frc R)^{t}) \\
\overset{(2)}{=} & \utau(\omega_R, \Gamma|_R, (\frc R)^t) \\
\overset{(3)}{=} & \utau(\omega_X, \Gamma, \frc^t) \otimes R,
\end{align*}
where $(\dagger)$ is immediate, (1) follows from  the definition of $\tau_{\myB^0}(\omega_R,\Gamma|_R, (\frg R)^{\epsilon} (\frc R)^t)$ (see \autoref{def.TauBlup}), (2) follows from the equality 
\[
    \tau_{\myB^0}(\omega_R,\Gamma|_R, (\frg R)^{\epsilon} \frc^{t}) = \utau(\omega_R,\Gamma|_R, (\frc R)^t)
\]
which we observed as \autoref{eq.usefulEqn.thm.MainResultOnNonprincipalV2} within the proof of \autoref{thm.MainResultOnNonprincipalV2} (we did not use that $V$ is complete for that statement), 
 and (3) is  \autoref{lem:restriction-of-tau-elt-to-R}. 

Finally, the second part of the proposition is equivalent to $(\dagger)$ being an equality of $R$-submodules of $\omega_R$, which is a consequence of our chain of containments above.
\end{proof}
}

\begin{lemma}
    \label{lem.StabilizingIntersection}
    Let $X$ be a Noetherian scheme, and $\sF$ a coherent sheaf on $X$. Suppose that $\{\sG_\lambda\}_{\lambda \in \Lambda}$ is a collection of coherent subsheaves of $\sF$ so that, for any two $\lambda_1, \lambda_2 \in \Lambda$, there exists $\lambda \in \Lambda$ with $\sG_\lambda \subseteq \sG_{\lambda_1} \cap \sG_{\lambda_2}$. Let $\sG = \bigcap_{\lambda \in \Lambda} \sG_\lambda$, where the intersection is taken as subsheaves of $\sF$. If $\sG$ happens to be coherent, and moreover for all $x \in X$ we have that
    \begin{equation}
    \label{eq:completedstalksintersection}
    \widehat{\sG_x} = \bigcap_{\lambda \in \Lambda} \widehat{(\sG_\lambda)_x}
    \end{equation}
    for the ($\fram_x$-adically) completed stalks, then there exists $\lambda_0 \in \Lambda$ such that $\sG = \sG_\lambda$ for all $\lambda \in \Lambda$ with $\sG_\lambda \subseteq \sG_{\lambda_0}$. In other words, the intersection defining $\sG$ stabilizes.
\end{lemma}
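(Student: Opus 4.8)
The plan is to reduce to the case $\sG = 0$ and then run a Noetherian induction on $\Supp\sF$, exploiting that at the generic points of the support one has the descending chain condition.

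The first step will be to pass from the hypothesis \eqref{eq:completedstalksintersection} on completed stalks to the analogous statement on honest stalks: for every $x \in X$ one has $\bigcap_\lambda (\sG_\lambda)_x = \sG_x$ inside $\sF_x$. The inclusion $\sG_x \subseteq \bigcap_\lambda (\sG_\lambda)_x$ is automatic since $\sG$ is the sheaf-theoretic intersection. For the reverse, $N := \bigcap_\lambda (\sG_\lambda)_x$ is a finitely generated $\cO_{X,x}$-submodule of $\sF_x$ (a submodule of a finite module over a Noetherian ring), and, as $\cO_{X,x}\to\widehat{\cO_{X,x}}$ is faithfully flat, completion preserves the relevant inclusions, so $\widehat N \subseteq \bigcap_\lambda \widehat{(\sG_\lambda)_x} = \widehat{\sG_x}$ by \eqref{eq:completedstalksintersection}, while $\sG_x\subseteq N$ forces $\widehat{\sG_x}\subseteq\widehat N$; hence $\widehat{N/\sG_x} = 0$ and faithful flatness gives $N = \sG_x$. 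I expect this to be the only substantive point: a sheaf-theoretic identity $\bigcap_\lambda\sG_\lambda=\sG$ does not by itself control the stalks of an infinite intersection, so the completed-stalk hypothesis is doing essential work here. Granting it, I will replace $(\sF,\{\sG_\lambda\})$ by $(\sF/\sG,\{\sG_\lambda/\sG\})$ --- still coherent subsheaves forming a directed system, now with $\bigcap_\lambda (\sG_\lambda/\sG)_x = 0$ for all $x$, so that the subsheaf $\bigcap_\lambda(\sG_\lambda/\sG)$ of $\sF/\sG$ has vanishing stalks and is therefore zero --- without changing the conclusion to be proved. So I may assume $\sG = 0$ and $\bigcap_\lambda (\sG_\lambda)_x = 0$ for every $x\in X$.

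Next I will induct on the closed set $\Supp\sF$ (valid as $X$ is Noetherian, so its closed subsets satisfy the descending chain condition). If $\sF = 0$ there is nothing to prove. Otherwise, let $\eta_1,\dots,\eta_r$ be the generic points of the irreducible components of $\Supp\sF$. Since each $\eta_i$ is minimal among the points of $\Supp\sF$, the radical of $\Ann(\sF_{\eta_i})$ is $\fram_{\eta_i}$, so the finitely generated module $\sF_{\eta_i}$ has finite length over $\cO_{X,\eta_i}$ and its submodules satisfy the descending chain condition. Thus the directed family $\{(\sG_\lambda)_{\eta_i}\}_\lambda$ has a minimal member, which by directedness is contained in every $(\sG_\lambda)_{\eta_i}$ and so equals $\bigcap_\lambda (\sG_\lambda)_{\eta_i} = 0$; hence there is $\mu_i\in\Lambda$ with $(\sG_{\mu_i})_{\eta_i} = 0$. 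Choosing $\lambda_*\in\Lambda$ with $\sG_{\lambda_*}\subseteq\sG_{\mu_1}\cap\dots\cap\sG_{\mu_r}$, we get $(\sG_{\lambda_*})_{\eta_i} = 0$ for all $i$, so $\Supp\sG_{\lambda_*}$ is a proper closed subset of $\Supp\sF$.

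Finally, I will pass to the subfamily $\Lambda' = \{\lambda : \sG_\lambda\subseteq\sG_{\lambda_*}\}$, which is directed and cofinal in $\Lambda$ (hence still has vanishing stalkwise intersection), and whose members are coherent subsheaves of $\sG_{\lambda_*}$. Applying the inductive hypothesis to $\sG_{\lambda_*}$ (whose support is strictly smaller) together with $\{\sG_\lambda\}_{\lambda\in\Lambda'}$ produces $\lambda_0\in\Lambda'$ such that $\sG_\lambda = 0$ for all $\lambda\in\Lambda'$ with $\sG_\lambda\subseteq\sG_{\lambda_0}$; and if $\lambda\in\Lambda$ satisfies $\sG_\lambda\subseteq\sG_{\lambda_0}$ then $\sG_\lambda\subseteq\sG_{\lambda_*}$, so $\lambda\in\Lambda'$ and $\sG_\lambda = 0$. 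Undoing the reduction, this $\lambda_0$ witnesses the claim. Apart from the completed-to-uncompleted stalk passage flagged above, everything else is routine bookkeeping with directed families of coherent subsheaves, the only additional ingredient being the finite-length observation at the generic points of $\Supp\sF$ that makes the descending chain condition available to start the induction.
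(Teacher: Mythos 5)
Your proof is correct. It uses the same basic ingredients as the paper's argument (the descending chain condition on closed subsets of a Noetherian scheme, finite length of the relevant stalks at generic points of supports, and insensitivity of the comparison to completion), but packages them differently, and the packaging is worth comparing. The paper chooses $\lambda_0$ so that $\Supp(\sG_{\lambda_0}/\sG)$ is minimal among all $\Supp(\sG_\lambda/\sG)$ and derives a contradiction at a generic point $x$ of that support: there $(\sG_{\lambda_0})_x/\sG_x$ has finite length, hence is $\fram_x$-power torsion and unaffected by completion, so the DCC plus the completed-stalk hypothesis force the support to shrink strictly. You instead front-load the completion hypothesis into a single clean step --- showing via faithful flatness of $\cO_{X,x}\to\widehat{\cO_{X,x}}$ that $\bigcap_\lambda(\sG_\lambda)_x=\sG_x$ on honest stalks --- after which the completion plays no further role; you then reduce to $\sG=0$ and run a Noetherian induction on $\Supp\sF$, invoking the DCC only at the generic points of the support to find a member of the family with strictly smaller support. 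Your version isolates more transparently what the hypothesis \eqref{eq:completedstalksintersection} is actually buying (namely, that stalks commute with the infinite intersection), at the cost of an extra reduction and an explicit induction; the paper's minimal-counterexample formulation is logically equivalent and slightly more economical, but interleaves the completion argument with the finite-length observation. Both are complete and correct.
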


\begin{proof}
    As $\sG$ and $\sG_\lambda$ are coherent for all $\lambda \in \Lambda$,  the subsets $Z_\lambda = \Supp (\sG_\lambda / \sG)$ are closed. By Noetherianity, we may choose $\lambda_0 \in \Lambda$ so that $Z_{\lambda_0}$ is minimal among $Z_\lambda$ for $\lambda \in \Lambda$. By way of contradiction, assume $Z_{\lambda_0}$ is non-empty and pick $x \in Z_{\lambda_0}$ which is the generic point of an irreducible component. Thus, $(\sG_{\lambda_0})_x/\sG_x$ has finite length and in particular is $\fram_x$-power torsion, hence unaffected by $\fram_x$-adic completion. Using the descending chain condition on submodules of 
    \[ 0 \neq (\sG_{\lambda_0})_x/\sG_x = \widehat{(\sG_{\lambda_0})_x/\sG_x} = \widehat{(\sG_{\lambda_0})_x} / \widehat{\sG_x}, \]
    it follows that $\widehat{(\sG_\lambda)_x}/\widehat{\sG_x}$ are constant for all $\sG_\lambda \subseteq \sG_{\lambda_0}$ sufficiently small. In other words, there is some $\lambda_1 \in \Lambda$ with $\sG_{\lambda_1} \subseteq \sG_{\lambda_0}$ so that $\widehat{(\sG_\lambda)_x} = \widehat{(\sG_{\lambda_1})_x}$ for all $\lambda \subseteq \Lambda$ with $\sG_\lambda \subseteq \sG_{\lambda_1}$. Using \eqref{eq:completedstalksintersection}, we then have
    \[
    \widehat{\sG_x} = \bigcap_{\lambda \in \Lambda} \widehat{(\sG_\lambda)_x} = \widehat{(\sG_{\lambda_1})_x} \qquad \mbox{ or equivalently } \qquad 0 = (\sG_{\lambda_1})_x/\sG_x  = \widehat{(\sG_{\lambda_1})_x} / \widehat{\sG_x}
    \]
    so that $x \not\in Z_{\lambda_1}$ and $Z_{\lambda_1} \subsetneq Z_{\lambda_0}$ is a proper subset, contradicting the minimality of $Z_{\lambda_0}$. Thus, we must have that $Z_{\lambda_0} = \emptyset$ or $\sG_{\lambda_0} = \sG$. If $\lambda \in \Lambda$ with $\sG_{\lambda} \subseteq \sG_{\lambda_0}$, this gives
    \[
    \sG \subseteq \sG_{\lambda} \subseteq \sG_{\lambda_0} = \sG,
    \]
    and so equality holds throughout as well.
\end{proof}

{\color{black}

\begin{theorem}
    \label{thm:onealterationtorulethemall}
        Fix notation as in \autoref{notation.GlobalTestElementChoice} and choose $1 \gg \epsilon > 0$ depending on $\frg$.  
        Then there exists $\pi_{\epsilon} : Y_{\epsilon} \to X$ factoring through the blowups of $\frc, \frg$, with $\frc \cO_{Y_{\epsilon}} = \cO_{Y_{\epsilon}}(-M_{\epsilon})$, $\frg \cO_{Y_{\epsilon}} = \cO_{Y_{\epsilon}}(-N_{\epsilon})$ so that $t M_{\epsilon}$ and $\epsilon N_{\epsilon}$ are Cartier, 
        such that the following holds. 
        For every further alteration $\pi : Y \to Y_{\epsilon} \xrightarrow{\pi_{\epsilon}} X$ where we write $\frc \cO_Y = \cO_Y(-M)$ and $\frg \cO_Y = \cO_Y(-N)$, 
        we have that 
        \begin{equation}
            \label{thm.eq.NonPrincipalSingleAltWhiskey}
            \Tr\big(\pi_* \cO_Y(K_Y - \pi^* \Gamma - \epsilon N - tM)\big) = \tau(\omega_X,\Gamma, \frc^t).
        \end{equation}
        Furthermore, if $X = \Spec (S)$, where $S$ is the completion of a mixed characteristic normal local domain of essentially finite type over a mixed characteristic DVR $V$, then $\tau(\omega_S, \Gamma, \frc^t)$ as defined in \autoref{def:common-tau-nonprincipal} is also computed by a single alteration in an analogous way.
    \end{theorem}
    \begin{proof}
        Fix $1 \gg \epsilon > 0$ from \autoref{prop:tau-nonprincipal-is-global-intersection}.
        Let $\Lambda$ be the set indexing  alterations $Y \to X$ with $\frc \cO_Y = \cO_Y(-M)$, $\frg \cO_Y = \cO_Y(-N)$ and $t M$ and $\epsilon N$ Cartier.  For $\lambda \in \Lambda$ and the corresponding alteration $\pi \colon Y \to X$ with $\frc \cO_Y = \cO_Y(-M), \frg \cO_Y = \cO_Y(-N)$ define 
    \begin{align*}
    \mathcal{G}_{\lambda} &:= \Tr\big(\pi_* \cO_Y(K_Y - \pi^* \Gamma - \epsilon N - tM)\big),  \text{ and }\\
    \mathcal{G} &:= \bigcap_{\lambda \in \Lambda} \mathcal{G}_{\lambda}.
    \end{align*}
    Here, $\mathcal{G} = \utau(\omega_X, \Gamma, \frc^t)$ by \autoref{prop:tau-nonprincipal-is-global-intersection}. 
    
    The theorem follows from \autoref{lem.StabilizingIntersection} provided that we can verify that its assumptions are satisfied.     
    Specifically, we need to show that 
    \[
        \utau(\omega_X, \Gamma, \frc^t) \otimes R = \bigcap_{Y \to X} \Tr \big( \pi_* \cO_Y(K_Y - \pi^* \Gamma - \epsilon N - tM) \otimes R \big)
    \]
    for every $x \in X$ and $R = \widehat{\cO_{X,x}}$, 
    where the left hand side is equal to $\tau(\omega_R, \Gamma, \frc^t)$ by \autoref{lem:restriction-of-tau-elt-to-R}. When $x$ has residue characteristic zero, this is classical (see \cite{BlickleSchwedeTuckerTestAlterations}). When $x \in X_{p=0}$, this follows immediately from the second part of \autoref{prop:tau-nonprincipal-is-global-intersection}.  This proves the first part of the theorem.
    
    The final statement is simply a combination of the main part of the theorem with \autoref{lem:restriction-of-tau-elt-to-R}, \cf \autoref{prop:tau-nonprincipal-is-global-intersection}.
    \end{proof}
}

\begin{theorem}
\label{thm.PropertiesOfTauOmegaNonPrincipal}
With notation as above, let $\frb, \frc$ be ideal sheaves, and let  $t, s \in\bQ$.     We have the following properties:
\begin{enumerate}
\item For any Cartier divisor $D$ and $0<\epsilon_1,\epsilon_2,\epsilon_3\ll 1$ (depending on the data of  $X$, $\frb$, $\frc$, and $D$), we have 
\[\tau(\omega_X, \Gamma, \frb^t)=\tau(\omega_X,\epsilon_1D + \Gamma,\frb^{t+\epsilon_2}\frc^{\epsilon_3}).\] 
\item  If $\frb\subseteq\frc$ and $s \geq t$, then  $\tau(\omega_X,\Gamma, \frb^s)\subseteq\tau(\omega_X,\Gamma, \frc^t)$.

\item $\tau(\omega_X,\Gamma, \fra^{nt})=\tau(\omega_X,\Gamma, (\fra^n)^t) {\color{black}= \tau(\omega_X, \Gamma, \underbrace{\fra^t \cdots \fra^t}_{\text{$n$-times}})}$ for every $n \in \bN$.




\item $\tau(\omega_X,\Gamma,\frc^t)[1/p]=\mathcal{J}(\omega_{X[1/p]},\Gamma|_{X[1/p]}, (\frc[1/p])^t)$.

\item 
If $t \geq \dim(X)$ or $\frb$ can locally be generated by $t$ elements, then $\tau(\omega_X,\Gamma,\frb^t \frc^s) = \frb \cdot \tau(\omega_X,\Gamma,\frb^{t-1}\frc^s)$.
\item 
$\tau(\omega_X, \Gamma, (\frb + \frc)^t) = \sum \tau(\omega_X, \Gamma, \frb^{t_1} \frc^{t_2})$ where the sum runs over $t_1, t_2 \geq 0$ with $t_1 + t_2 = t$.
\item Suppose that $f : Y \to X$ is a finite surjective map, then $\Tr\big( f_* \utau(\omega_Y, f^* \Gamma,(\frc \cO_Y)^t)\big) = \utau(\omega_X, \Gamma, \frc^t).$
{\color{black}\item Suppose that $f : Y \to X$ is a projective birational map, then $f_* \utau(\omega_Y, f^* \Gamma,(\frc \cO_Y)^t) \supseteq \utau(\omega_X, \Gamma, \frc^t).$}
\item Suppose that $f : Y \to X$ is a smooth map, then $\utau(\omega_Y, f^* \Gamma, (\frc \cO_Y)^t) = f^* \utau(\omega_X,\Gamma, \frc^t) \otimes \omega_{Y/X}$. \label{thm.PropertiesOfTauOmegaNonPrincipal.SmoothPullback}
\end{enumerate}

\end{theorem}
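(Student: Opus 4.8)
The plan is to deduce \autoref{thm.PropertiesOfTauOmegaNonPrincipal}\autoref{thm.PropertiesOfTauOmegaNonPrincipal.SmoothPullback} directly from the divisorial smooth-pullback statement \autoref{prop.PropertiesOfUltTauOmegaForPairs}\autoref{prop.PropertiesOfUltTauOmegaForPairs.SmoothPullBack} by unwinding \autoref{def.GlobalyTauOmegaXNonPrincipal}. First I would reduce to the situation where both $X$ and $Y$ are quasi-projective over $V$ (the statement is local on $Y$, and $\omega_{Y/X}$ is a line bundle since $f$ is smooth; the cited divisorial proposition already accommodates smooth $f$ with $Y$ possibly reducible). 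Then I would fix Cartier divisors $C_\lambda$, $\lambda\in\Lambda$, on $X$ with $\sum_{\lambda}\cO_X(-C_\lambda)=\frc$. Since $f$ is flat, $f^*$ is exact, so $\sum_{\lambda}\cO_Y(-f^*C_\lambda)=\frc\cO_Y$; by the independence of the chosen generating divisors (\autoref{remark:def-global-test-ideal-non-principal-independent-basis}) the collection $\{f^*C_\lambda\}$ is a legitimate choice for computing $\tau(\omega_Y, f^*\Gamma,(\frc\cO_Y)^t)$, and $f^*\Gamma$ is $\bQ$-Cartier.

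With these choices, \autoref{def.GlobalyTauOmegaXNonPrincipal} gives
\[
\tau(\omega_Y, f^*\Gamma, (\frc\cO_Y)^t) = \sum_{\sum s_\lambda\ge t} \tau\Big(\omega_Y,\ f^*\Big(\Gamma + \sum_{\lambda} s_\lambda C_\lambda\Big)\Big),
\]
the sum being over tuples of nonnegative rationals $(s_\lambda)_{\lambda\in\Lambda}$ with $\sum_\lambda s_\lambda\ge t$. For each fixed tuple the divisor $\Gamma+\sum_\lambda s_\lambda C_\lambda$ is $\bQ$-Cartier on $X$, so \autoref{prop.PropertiesOfUltTauOmegaForPairs}\autoref{prop.PropertiesOfUltTauOmegaForPairs.SmoothPullBack} applies term by term:
\[
\tau\Big(\omega_Y,\ f^*\Big(\Gamma + \sum_\lambda s_\lambda C_\lambda\Big)\Big) = f^*\tau\Big(\omega_X,\ \Gamma + \sum_\lambda s_\lambda C_\lambda\Big)\otimes\omega_{Y/X}
\]
as subsheaves of a common ambient $\cO_Y$-sheaf, after the harmless twist by the line bundle $\omega_{Y/X}$; in the non-effective case one first adds an effective Cartier divisor and untwists, exactly as in \autoref{def.GlobalyTauOmegaXNonPrincipal}.

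To finish, I would use that by Noetherianity the sum defining $\tau(\omega_X,\Gamma,\frc^t)$ involves only finitely many tuples, and that $f^*$, being flat, commutes with finite sums of subsheaves of a fixed sheaf; hence
\[
\tau(\omega_Y, f^*\Gamma, (\frc\cO_Y)^t) = f^*\Big(\sum_{\sum s_\lambda\ge t}\tau\Big(\omega_X,\ \Gamma + \sum_\lambda s_\lambda C_\lambda\Big)\Big)\otimes\omega_{Y/X} = f^*\tau(\omega_X,\Gamma,\frc^t)\otimes\omega_{Y/X},
\]
the last equality being \autoref{def.GlobalyTauOmegaXNonPrincipal} again.

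The only genuine input is \autoref{prop.PropertiesOfUltTauOmegaForPairs}\autoref{prop.PropertiesOfUltTauOmegaForPairs.SmoothPullBack}, which we are entitled to assume; the rest is bookkeeping. The one point requiring a little care — and hence the main obstacle — is making sure that all of the identifications happen inside a single ambient sheaf, so that ``$f^*$ commutes with the sum'' and ``$\otimes\,\omega_{Y/X}$ commutes with the sum'' are literally correct. This is handled by twisting away the non-effectivity of $\Gamma$ (and of each $\sum_\lambda s_\lambda C_\lambda$) before summing, i.e.\ by the same reduction already built into the definition of $\tau(\omega_X,\Gamma,\frc^t)$.
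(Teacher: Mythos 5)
Your argument for part \autoref{thm.PropertiesOfTauOmegaNonPrincipal.SmoothPullback} is correct and is essentially the paper's own proof of that part: the paper likewise deduces smooth pullback for $\frc^t$ by writing $\tau(\omega_X,\Gamma,\frc^t)$ as a finite (by Noetherianity) sum of divisorial test modules via \autoref{def.GlobalyTauOmegaXNonPrincipal} and applying the divisorial smooth-pullback statement \autoref{prop.PropertiesOfUltTauOmegaForPairs}\autoref{prop.PropertiesOfUltTauOmegaForPairs.SmoothPullBack} term by term; your extra care about generators of $\frc\cO_Y$ and about working inside a fixed ambient sheaf is sensible but not a real obstacle.

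The genuine gap is one of scope: the theorem asserts eight properties, (a) through (h), and your proposal proves only (h). Nothing is said about the perturbation statement (a), the monotonicity statements (b) and (c), the identification with the multiplier ideal after inverting $p$ in (d), Skoda's theorem (e), the summation formula (f), or the transformation rule under finite surjective maps (g). Several of these require inputs beyond the divisorial case and the definition: (d) is obtained by choosing a single computing alteration that dominates a log resolution over the characteristic-zero locus (via \autoref{thm:onealterationtorulethemall}); (e) is reduced, through the completed-stalk comparison \autoref{lem:restriction-of-tau-elt-to-R}, to the complete local Skoda theorem \autoref{thm.SkodaForLocalCase}; and (f) reduces to the local summation formula \autoref{cor.SummationInLocalCase} after passing to a finite cover trivializing $\Gamma$. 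A complete proof must supply arguments for all eight assertions, not just the last one.
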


\begin{proof}\hphantom{a}

\begin{enumerate}
\item The terms $\epsilon_1 D, \frb^{\epsilon_2}, \frc^{\epsilon_3}$, can all be absorbed inside $\frg$ as making that ideal smaller changes nothing.  Part (a) then from \autoref{thm:onealterationtorulethemall}.  Alternately, one may reduce to the projective case, verify it at a closed point, and observe the statement then holds in a neighborhood.  Taking a minimum of the $\epsilon_i$ that work on a finite affine cover also proves (a). 

\item[(b),(c)] These follow in various ways.  For instance, simply choose an alteration and $\frg$ that works for both pairs in \autoref{thm:onealterationtorulethemall}. Alternately, one can check it after passing to  the projective case, and then checking it at the completed stalks of closed points, where \autoref{eq.VariousUnambiguities} applies.

\addtocounter{enumi}{2}

\item 
By choosing an alteration dominating a log resolution over the locus of points with residue characteristic zero, this follows immediately from \autoref{thm:onealterationtorulethemall}.

\item Using \autoref{lem:restriction-of-tau-elt-to-R}, this follows from
\autoref{thm.SkodaForLocalCase} thanks to  \autoref{rem:candoskodawhenbiggerthandim}.

\item After passing to a finite cover to trivialize $\Gamma$, this follows from \autoref{cor.SummationInLocalCase} as we can check it at the completed stalks.

\item This follows immediately from, for example, \autoref{thm:onealterationtorulethemall}.

{\color{black}\item This also follows from \autoref{thm:onealterationtorulethemall}.}

\item This follows from the definition of $\utau$ via summation (\autoref{def.GlobalyTauOmegaXNonPrincipal}) and the principal case (\autoref{prop.PropertiesOfUltTauOmegaForPairs} \autoref{prop.PropertiesOfUltTauOmegaForPairs.SmoothPullBack}).

\end{enumerate}
\end{proof}

\subsection{Global versions of test ideals of non-principal ideals}

We now move to define test ideals instead of test modules.  
\begin{definition}
    \label{def.UltimateTestIDEALDefinition}
With notation as in \autoref{setting:global-version-test-ideals-projective}, assume in addition that $X$ is normal, and suppose that $\Delta$ is a $\bQ$-divisor such that $K_X+\Delta$ is $\bQ$-Cartier.  Then we define the test ideal $\tau(\sO_X,\Delta,\frc^t)=\tau(\omega_X,K_X+\Delta,\frc^t)$.  For a sequence of ideal sheaves $\frc_1,\dots \frc_r$ and rational numbers $t_i \geq 0$ we also define mixed test ideals
\[\tau(\sO_X,\Delta,\frc_1^{t_1}\dots\frc_r^{t_r}):=\tau(\omega_X,K_X+\Delta,\frc_1^{t_1}\dots\frc_r^{t_r}).\]
As before, the $\frc_1^{t_1}\dots\frc_r^{t_r}$ may be reinterpreted as a single $\frc^t$ as the $t_i$ are rational.  
When $\Delta\geq 0$ we note that this is a genuine ideal, not just a fractional ideal.

{\color{black}
Choose an ideal sheaf $\frg$ so that 
$V(\frg) \supseteq (\text{non-Cartier locus of $\Delta$}) \cup V(\frc)$ and so that either:
    \begin{enumerate}
        \item $\frg \subseteq \sqrt{p\cO_X}$ and $X \setminus V(\frg)$ is smooth over $V$, or 
        \item $X$ is projective over $V$, $X_{p=0}$ is reduced\footnote{If $X$ is normal, this simply means that $\Div_X(p)$ is reduced.}, and $X \setminus V(\frg)$ is smooth over $V$.
    \end{enumerate}}
\end{definition}
{\color{black}

\begin{lemma}
    \label{lem.frgAndfrdComparison}
    An ideal sheaf $\frg$ satisfies the conditions of  \autoref{def.UltimateTestIDEALDefinition} if and only if it satisfies the conditions of \autoref{notation.GlobalTestElementChoice} for $\Gamma = K_X + \Delta$.
    
\end{lemma}
\begin{proof}
The complement of $V(\frg)$ is non-singular in either case.  Hence, 
    we only need to compare the non-Cartier locus of $\Delta$ with the non-Cartier locus of $K_X + \Delta$ on the non-singular locus of $X$. But $K_X$ is Cartier on the non-singular locus so the result follows.
\end{proof}
}

We now collect together various properties satisfied by our test ideals.  First we point out it can be computed from a single alteration.

{\color{black}
\begin{corollary}
    \label{thm.FinalAlterationStabilizationForTestIdeal}
    Fix notation as in \autoref{def.UltimateTestIDEALDefinition}. 

    There exists $\pi_{\epsilon} : Y_{\epsilon} \to X$ satisfying the conditions of \autoref{thm:onealterationtorulethemall} for $\frg, \frc$
    such that the following holds.
      For every further alteration $\pi : Y \to Y_{\epsilon} \xrightarrow{\pi_{\epsilon}} X$ where we write $\frc \cO_Y = \cO_Y(-M), \frg \cO_Y = \cO_Y(-N)$ 
    we have that for $1 \gg \epsilon > 0$ with $\epsilon $
    \begin{equation}
        \label{thm.eq.NonPrincipalSingleAltWhiskeyIdeal}
        \Tr\big(\pi_* \cO_Y(K_Y - \pi^* (K_X + \Delta) - \epsilon N - tM)\big) = \tau(\cO_X, \Delta, \frc^t).
    \end{equation}
    Furthermore, if instead $X = \Spec (R)$, where $R$ is the completion of a mixed characteristic normal local domain of essentially finite type over a mixed characteristic DVR $V$, then $\tau(R, \Delta, \frc^t)$ as defined in \autoref{def:common-tau-nonprincipal} is also computed by a single alteration in the same way.
\end{corollary}
\begin{proof}
    In view of \autoref{thm:onealterationtorulethemall} this follows from the fact that we can take $\frg$ for either $\Gamma$ or $\Delta$, which we proved in  \autoref{lem.frgAndfrdComparison}.  
\end{proof}
}

\begin{corollary}
    \label{cor.FinalCompletionGlobalizationLocalizationForTestIdeal}
    With notation as in \autoref{def.UltimateTestIDEALDefinition}, for all $x\in X_{p=0}$ and $R=\widehat{\cO_{X,x}}$, we have that 
    \begin{equation} 
    \utau(\cO_X, \Delta, \frc^t) \cdot R = \utau(R, \Delta|_R, (\frc R)^t),
    \end{equation}
        where the right hand side is the test ideal from \autoref{def:common-tau-nonprincipal}.  

    Furthermore, {\color{black}if $V$ is complete,} for every $1 \gg \epsilon>0$:
        \begin{equation} 
        \utau(\cO_X,\Delta, \frc^t) = \tau_{\myB^0}(\cO_X, \Delta, \frg^{\epsilon} \frc^{t}). 
        \end{equation}
\end{corollary}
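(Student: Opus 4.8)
The plan is to deduce both assertions directly from the previously established statements about the test \emph{module} $\utau(\omega_X,\Gamma,\frc^t)$, specialized to the choice $\Gamma = K_X+\Delta$, together with the definitional identities $\utau(\cO_X,\Delta,\frc^t)=\utau(\omega_X,K_X+\Delta,\frc^t)$ and $\utau(\cO_X,\Delta,\frc^{t_1}\cdots\frc_r^{t_r}) = \utau(\omega_X,K_X+\Delta,\frc^{t_1}\cdots\frc_r^{t_r})$ from \autoref{def.UltimateTestIDEALDefinition}, the identity $\utau(R,\Delta|_R,(\frc R)^t)=\utau(\omega_R,K_R+\Delta|_R,(\frc R)^t)$ from \autoref{def:common-tau-nonprincipal}, and $\tau_{\myB^0}(\cO_X,\Delta+\epsilon G,\frc^t)=\tau_{\myB^0}(\omega_X,K_X+\Delta+\epsilon G,\frc^t)$ from \autoref{def.HaconLamarcheSchwede}. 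In other words, the corollary is a translation from the $\omega$-module language to the ideal language, and all the mathematical content is already contained in \autoref{lem:restriction-of-tau-elt-to-R} and \autoref{thm.MainResultOnNonprincipal}.

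First I would treat the localization/completion statement. Set $\Gamma=K_X+\Delta$, a $\bQ$-Cartier $\bQ$-divisor on $X$. Applying \autoref{lem:restriction-of-tau-elt-to-R} gives $\utau(\omega_X,\Gamma,\frc^t)\otimes R = \utau(\omega_R,\Gamma|_R,(\frc R)^t)$ for every $x\in X_{p=0}$ and $R=\widehat{\cO_{X,x}}$. It then remains to identify the right-hand side with $\utau(\omega_R,K_R+\Delta|_R,(\frc R)^t)$. This is the one point that needs care: $\Gamma|_R=(K_X+\Delta)|_R$ and $K_R+\Delta|_R$ need not be literally the same divisor, but they differ by a principal divisor, since $K_X|_{\Spec R}$ and $K_R$ are both canonical divisors on $\Spec R$ (equivalently, the localization-then-completion of $\omega_X$ is canonically the dualizing module $\omega_R$). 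Since $\utau(\omega_R,-,(\frc R)^t)$ depends on a $\bQ$-Cartier input only through a defining equation of a sufficiently divisible multiple of it (see \autoref{def:common-tau-nonprincipal}, \autoref{def.TauForIdealEltWise}), replacing that input by a linearly equivalent divisor does not change the submodule; hence $\utau(\omega_R,\Gamma|_R,(\frc R)^t)=\utau(\omega_R,K_R+\Delta|_R,(\frc R)^t)=\utau(R,\Delta|_R,(\frc R)^t)$, which is the first equality. I would spell this compatibility out explicitly so the chain of identifications is airtight.

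For the $\tau_{\myB^0}$-comparison I would invoke \autoref{thm.MainResultOnNonprincipal} with the same $\Gamma=K_X+\Delta$: it produces an effective Cartier divisor $G^0$ on $X$ such that for every Cartier $G\geq G^0$ and every $1\gg\epsilon>0$ (depending on $G$) one has $\utau(\omega_X,\Gamma,\frc^t)=\tau_{\myB^0}(\omega_X,\Gamma+\epsilon G,\frc^t)$. Rewriting both sides via the definitional identities recalled above converts this into $\utau(\cO_X,\Delta,\frc^t)=\tau_{\myB^0}(\cO_X,\Delta+\epsilon G,\frc^t)$, which is exactly the claim. The hypothesis that $X$ is quasi-projective over $V$, inherited from \autoref{setting:global-version-test-ideals-projective} via \autoref{def.UltimateTestIDEALDefinition}, is what makes $\tau_{\myB^0}$ and \autoref{thm.MainResultOnNonprincipal} applicable.

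I do not expect a genuine obstacle: the argument is bookkeeping built on \autoref{lem:restriction-of-tau-elt-to-R} and \autoref{thm.MainResultOnNonprincipal}. The closest thing to a subtlety is the behaviour of the canonical divisor under localization and completion, i.e.\ the licence to use $(K_X+\Delta)|_R$ in place of $K_R+\Delta|_R$; this is standard, but writing it down carefully is the only step that is not purely formal.
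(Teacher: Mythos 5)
Your proposal is correct and follows essentially the same route as the paper, which likewise obtains the corollary by setting $\Gamma = K_X+\Delta$ and citing \autoref{lem:restriction-of-tau-elt-to-R} for the completion statement and \autoref{thm.MainResultOnNonprincipal} for the $\tau_{\myB^0}$-comparison. Your extra care about identifying $(K_X+\Delta)|_R$ with $K_R+\Delta|_R$ up to linear equivalence is a reasonable point that the paper leaves implicit, and your handling of it is fine.
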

\begin{proof}
    Apply \autoref{lem:restriction-of-tau-elt-to-R}, \autoref{thm.MainResultOnNonprincipalV2} as well as  \autoref{lem.frgAndfrdComparison} after setting $\Gamma = K_X + \Delta$.
\end{proof}

We list some other useful properties below.

\begin{theorem}
    \label{thm.FinalPropertiesOfTestIdealsTriples}
With notation as in  \autoref{def.UltimateTestIDEALDefinition}, let $\frb, \frc$ be ideal sheaves, and let  $t, s \in\bQ$.     We have the following properties:
\begin{enumerate}
\item For any Cartier divisor $D$ and $0<\epsilon_1,\epsilon_2,\epsilon_3\ll 1$ (depending on the data of  $X$, $\frb$, $\frc$, and $D$), we have 
\[\tau(\cO_X, \Delta,  \frb^t)=\tau(\cO_X, \Delta +  \epsilon_1D,\frb^{t+\epsilon_2}\frc^{\epsilon_3}).\] \label{thm.FinalPropertiesOfTestIdealsTriples.Perturbation}
\item  If $\frb\subseteq\frc$ and $s \geq t$, then  $\tau(\cO_X, \Delta, \frb^s)\subseteq\tau(\cO_X, \Delta, \frc^t)$.\label{thm.FinalPropertiesOfTestIdealsTriples.BiggerIdeals-Exponents}

\item $\tau(\cO_X, \Delta,\fra^{nt})=\tau(\cO_X, \Delta, (\fra^n)^t) {\color{black}= \tau(\cO_X, \Delta, \underbrace{\fra^t \cdots \fra^t}_{\text{$n$-times}})}$ for every $n \in \bN$.\label{thm.FinalPropertiesOfTestIdealsTriples.Unambiguity}




\item $\tau(\cO_X, \Delta, \frc^t)[1/p]=\mathcal{J}(\cO_{X[1/p]}, \Delta|_{X[1/p]}, (\frc[1/p])^t)$.  \label{thm.FinalPropertiesOfTestIdealsTriples.InvertPMultiplierIdeals}

\item 
If $t \geq \dim(X)$ or $\frb$ can locally be generated by $t$ elements, then $\tau(\cO_X, \Delta, \frb^t \frc^s) = \frb \cdot \tau(\cO_X, \Delta, \frb^{t-1}\frc^s)$.
\label{thm.FinalPropertiesOfTestIdealsTriples.Skoda}
{\color{black}\item Suppose $(X, \Delta)$ is BCM-regular (that is, $\tau(\cO_X, \Delta) = \cO_X)$), then $\frb \subseteq \tau(\cO_X, \Delta, \frb^1)$.  \label{thm.FinalPropertiesOfTestIdealsTriples.EasyContainmentInBCMRegular}}
\item 
$\tau(\cO_X, \Delta, (\frb + \frc)^t) = \sum \tau(\cO_X, \Delta, \frb^{t_1} \frc^{t_2})$ where the sum runs over $t_1, t_2 \geq 0$ with $t_1 + t_2 = t$.
\item If $X$ is regular, then $\tau(\cO_X, \frb^s \frc^t) \subseteq \tau(\cO_X, \frb^s) \cdot \tau(\cO_X, \frc^t)$.\label{thm.FinalPropertiesOfTestIdealsTriples.Subadditivity}
{\item Suppose that $f : Y \to X$ is a finite surjective map, then $\Tr\big( f_* \utau(\cO_Y, \Delta_Y, (\frc \cO_Y)^t)\big) = \utau(\cO_X, \Delta, \frc^t)$, where $K_Y+\Delta_Y = f^*(K_X+\Delta)$. \label{thm.FinalPropertiesOfTestIdealsTriples.FiniteMaps}}
{\color{black}\item Suppose that $f : Y \to X$ is a finite type birational map, then $ f_* \utau(\cO_Y, \Delta_Y, (\frc \cO_Y)^t) \supseteq \utau(\cO_X, \Delta, \frc^t)$, where $K_Y+\Delta_Y = f^*(K_X+\Delta)$. \label{thm.FinalPropertiesOfTestIdealsTriples.Birational}}
\item Suppose that $f : Y \to X$ is a smooth map, then $\utau(\cO_Y, f^*\Delta, (\frc \cO_Y)^t) = f^* \utau(\cO_X, \Delta; \frc^t)$.\label{thm.FinalPropertiesOfTestIdealsTriples.SmoothPullback}
\item Suppose that $H \subseteq X$ is a normal Cartier divisor such that $H$ and $\Delta$ have no common components in support, then \[
    \tau(\cO_X, \Delta, \frb^t) \cdot \cO_H \supseteq \tau(\cO_H, \Delta|_H, (\frb \cO_H)^t).
\]%
\label{thm.FinalPropertiesOfTestIdealsTriples.Restriction}%
\end{enumerate}
\end{theorem}
\begin{proof}
Subadditivity \autoref{thm.FinalPropertiesOfTestIdealsTriples.Subadditivity}, and \autoref{thm.FinalPropertiesOfTestIdealsTriples.EasyContainmentInBCMRegular} follow from \autoref{cor.FinalCompletionGlobalizationLocalizationForTestIdeal} and \autoref{cor.SummaryOfResultsCompleteLocal}.  The rest are immediate from \autoref{thm.PropertiesOfTauOmegaNonPrincipal} and \autoref{thm.LocalPropertiesOfUltTauOX}.  {\color{black}Note, for \autoref{thm.FinalPropertiesOfTestIdealsTriples.Birational}, one can reduce to the projective case by arguing as in the proof of  \autoref{thm.LocalPropertiesOfUltTauOX}.}
\end{proof}

\begin{remark}[Test ideals of linear series]
    \label{rem.TestIdealsOfLinearSeries}
    One other sort of test ideal appeared in \cite{HaconLamarcheSchwede}.  Suppose $X$ is normal and projective over $V$.  Suppose $D$ is a Cartier divisor on $X$ and $t \geq 0$ is a rational number.  Then they defined 
    \[
        \tau_{\myB^0}(\omega_X, \Gamma, t || D ||) = \sum_{F \in | mD | } \tau_{\myB^0}(\omega_X, \Gamma + {t \over m} F)
    \] 
    for $m$ sufficiently divisible.  Let $\frb_{m}$ denote the base ideal of $|mD|$.  We claim that there exists a $G^0 \geq 0$ such that for each $G \geq G^0$ and all $1 \gg \epsilon > 0$ (depending on $G$)
    \begin{equation}
        \tau_{\myB^0}(\omega_X, \Gamma +\epsilon G, t || D ||) = \tau(\omega_X, \Gamma, \frb_m^{t/m})
    \end{equation}
    for $m$ sufficiently divisible.  

    We quickly show this (assuming $\Gamma = 0$ for simplicity of exposition).
    Indeed, the containment
    \[
        \tau_{\myB^0}(\omega_X, \Gamma + \epsilon G, t || D ||) \subseteq \tau(\omega_X, \Gamma, \frb_m^{t/m})
    \]
    is clear since if $F \in |mD|$ then $\cO_X(-F) \subseteq \frb_m$.  On the other hand, $\frb_m^n \subseteq \frb_{mn}$, and so $\tau(\omega_X, \frb_m^{t/m})$ increases as $m$ becomes more divisible and hence it must stabilize (fix that $m$).  Now, 
    \[ 
        \sum_{F \in |mD| }\cO_X(-F) = \frb_m
    \] 
    and so for some sufficiently large $n$
    \[
        \tau(\omega_X, \frb_m^{t/m}) = \sum_{\substack{F_1, \dots, F_w \\ \in |mD|} } \; \sum_{\substack{s_1 + \dots + s_w \\= {t/m}}} \tau\Big(\omega_X, s_1 F_1 + \dots + s_w F_w \Big).
    \]
    The sum is finite and so we may assume all relevant $s_j = t {a_j \over mn}$ are rational numbers where we can pick $a_j, n \in \bZ$.  Note then ${t / m} = \sum s_j = {t \over mn} \sum {a_j}$.  Hence each 
    \[ 
        s_1 F_1 + \dots + s_w F_w = {t \over nm}(a_1 F_1 + \dots + a_w F_w) = {t \over nm} F
    \]  
    for $F = a_1 F_1 + \dots + a_w F_w \in |nmD|$.   But then we have that 
    \[
        \tau(\omega_X, \frb_m^{t/m}) = \sum_{F \in |nmD| } \tau\big(\omega_X, {t \over nm} F\big)
    \]
    which is also a finite sum and so we may find such a $G^0$, so that for $G$ and $\epsilon$ as above, that 
    \[
        \tau(\omega_X, \frb_m^{t/m}) = \sum_{\substack{F_1, \dots, F_l \\ \in |nmD|} } \tau_{\myB^0}(\omega_X, \epsilon G + {t \over nm} F_i) \subseteq \tau_{\myB^0}(\omega_X, \epsilon G, t || D ||)
    \]
    which completes the proof.

    Suppose now that $A$ is a globally generated ample divisor, and $\Gamma = K_X + \Delta$.  If $L - (K_X + \Delta) - t D$ is big and nef, then we immediately see that 
    \begin{equation}
        \tau(\cO_X, \Delta, \frb_m^{t/m}) \otimes \cO_X(nA + L) = \tau_{\myB^0}(\cO_X, \Delta + \epsilon G, t || D ||)  \otimes \cO_X(nA + L)
    \end{equation}
    is globally generated for $n \geq n_0 := \dim X_{p = 0}$ (still assuming $m$ sufficiently divisible).  Indeed, this ideal is a sum of ideals of the form $\tau(\cO_X, \Delta + {t \over nm} D_{nm})$ for effective $D_{nm} \sim nmD$.  Hence, since each $L - (K_X + \Delta + {t \over nm} D_{nm})$ is big and nef, we can simply apply \autoref{cor.EffectiveGlobalGenerationTauDivisor} and note that a sum of globally generated sheaves is globally generated.
\end{remark}


\section{Characteristic zero analog of our result} \label{ss:intro-char0}

In what follows we explain how to recover the multiplier ideal sheaves from the intersection complex. Let $X$ be an algebraic variety of dimension $d$ defined over $\mathbf{C}$. We fix an embedding of $X$ into a smooth ambient variety $Y$, and define a $\mathcal D_X$-module to be a $\mathcal D_Y$-module supported on $X$.

First, we recall the characteristic zero Riemann-Hilbert correspondence generalizing the correspondence between local systems and vector bundles with integrable connections:
\[
\RH \colon  D_{\cons}^b(X, \mathbf{C}) \xrightarrow{\simeq} D_{\mathrm{rh}}^b(X, \mathcal D_X).
\]
Here the left hand side denotes the derived category of $\mathbf{C}$-constructible sheaves while the right hand side denotes the derived category of regular holonomic $\mathcal D_X$-modules. Moreover, $D_{\cons}^b(X, \mathbf{C})$ admits a subcategory of perverse sheaves $\mathrm{Perv}_{\cons}(X, \mathbf{C})$ (see  \autoref{sec:preliminaries_perverse_sheaves}) for which the functor $\RH$ restricts to:
\[
\RH \colon \mathrm{Perv}_{\cons}(X, \mathbf{C}) \xrightarrow{\simeq} \mathcal {\rm Mod}_{\rm rh} (\mathcal D_X),
\]
where ${\rm Mod}_{\rm rh}(\mathcal D_X)$ denotes the category of regular holonomic $\mathcal D_X$-modules. Also, note that this  functor admits a shift by $d$ with respect to the usual Riemann-Hilbert between local systems and vector bundles with integrable connections, for example, $\RH(\mathbf{C}) = \cO_X[-d]$.

In the $p$-adic situation, the analog of $\RH$ incorporates additional Galois action structure. However, in characteristic zero, additional structure, namely Hodge filtration, must be introduced by hand. This is achieved by ways of Hodge modules which are triples  $\left(V, M, F_{{\bullet}} M\right)$ consisting of a $\mathcal{D}_X$-module $M$ with filtration $F_{{\bullet}} M$ and a perverse $\mathbf{Q}$-sheaf $V$ such that $\mathrm{RH}(V \otimes \mathbf{C})=M$, subject to various conditions. In what follows, by abuse of notation, we denote the \emph{trivial Hodge module} $(\mathbf{Q}_X[d], \cO_X, F_{\bullet} \cO_X)$ with a trivial filtration by $\mathbf{Q}_X[d]$. 


In practice, one often works with the split de Rham complex. Specifically, we have a functor (the reader should be warned that this notation is non-standard):
\[
\mathrm{RH}^{\text {Higgs }}: D_{\mathrm{HM}}^b(X) \rightarrow D^b(X, \cO_X)
\]
between the derived category of Hodge modules and the usual derived category of coherent sheaves. In the case when $X$ is smooth this functor is defined as 
\begin{align*}
\mathrm{RH}^{\mathrm{Higgs}}\left(V, M, F_{{\bullet}} M\right) &:=\operatorname{GR}_{\bullet} \mathrm{DR}(M)=\bigoplus F_k \mathrm{DR}(M) / F_{k-1} \mathrm{DR}(M), \text{ where } \\
\mathrm{DR}(M) &:= (M \to M \otimes \Omega^1_X \to \cdots \to M \otimes \Omega^{d}_X) \in D^{[-d,0]}(X, \mathbf{C}) \text{ and } \\F_k \mathrm{DR}(M) &:= (F_{k}M \to F_{k+1}M \otimes \Omega^1_X \to \cdots \to F_{k+d}M \otimes \Omega^{d}_X) \in D^{[-d,0]}(X, \mathbf{C}).
\end{align*}
For example, 
\begin{equation} \label{eq:rrh-char0-trivial}
\mathrm{RH}^{\mathrm{Higgs}}(\mathbf{Q}_X[d])= \cO_X[d] \oplus \Omega_X^1[d-1] \oplus \cdots \oplus \Omega_X^d[0].
\end{equation}
By performing this operation on the ambient space $Y$, this construction generalizes to the case when $X$ is not necessarily smooth. We emphasize that the $p$-adic Riemann-Hilbert correspondence discussed in this paper is a direct analog in mixed characteristic of $\mathrm{RH}^{\text {Higgs}}$ and not of the classical $\mathrm{RH}$ functor!

One can define a duality functor $\mathbf{D} \colon D_{\mathrm{HM}}^b(X) \to D_{\mathrm{HM}}^b(X)$ on Hodge modules which restricts to Verdier duality on constructible sheaves. Moreover, there is the Grothendieck duality functor $\mathbf{D} \colon D^b(X,\cO_X) \to D^b(X,\cO_X)$ such that
\[
\mathbf{D}(-) = \myR\sHom(-, \omega^{\mydot}_X).
\]
Functor $\mathrm{RH}^{\mathrm{Higgs}}$ satisfies the following properties:
\begin{itemize}
\item it interchanges Grothendieck duality and Hodge module duality
      \[ \mathbf{D}{\rm RH}^{\rm Higgs}(M) \simeq {\rm RH}^{\rm Higgs}(\mathbf{D}M),\]
\item it is compatible with proper pushforwards, specifically for $\pi \colon Y \to X$ proper,
  $${\rm RH}^{\rm Higgs}(\myR\pi_*M) \simeq \myR\pi_*{\rm RH}^{\rm Higgs}(M).$$
\end{itemize}
Moreover, as its $p$-adic analog, one can check that this functor is left $t$-exact for perverse $t$-structures on both sides, but we will not need that in this subsection.

For simplicity, we state the characteristic zero analog of the main result of this paper, only for Grauert-Riemenschneider sheaves, but one can extend the definition to all multiplier ideals sheaves. From now on we assume that $X$ is normal.
\begin{definition} \label{def:GR-intro}
We define the \emph{Grauert-Riemenschneider sheaf}
\[
\mathcal J(X, \omega_X) = \pi_*\omega_Y
\]
for a resolution of singularities $\pi \colon Y \to X$.
\end{definition}
\noindent One can check that this definition is independent of the choice of the resolution. When $X$ is Gorenstein, this agrees with the multiplier ideal sheaf. 

One of the most important examples of perverse sheaves is the intersection complex ${\rm IC}_X \in \mathrm{Perv}_{\cons}(X, \mathbf{Q})$. This constructible complex underlies a Hodge module which, by abuse of notation, we denote by the same symbol ${\rm IC}_X$. This Hodge module is self-dual, that is $\mathbf{D}({\rm IC}_X) = {\rm IC}_X$. 

\begin{remark}
To state the following theorem, we need to introduce a couple of maps.
Firstly, the map $\mathbf{Q}_X[d] \to {\rm IC}_X$ of Hodge modules induces by applying $H^0\mathbf{D}{\rm RH}^{\rm Higgs}$:
\[
\psi' \colon H^0\mathbf{D} \RH^{\rm Higgs} (\mathbf{Q}_X[d]) \leftarrow H^0 \mathbf{D} {\rm RH}^{\rm Higgs}({\rm IC}_X) = H^0{\rm RH}^{\rm Higgs}({\rm IC}_X).
\]
Secondly, assuming for now that $X$ is smooth and using \autoref{eq:rrh-char0-trivial}, we get the map $\cO_X[d] \to {\rm RH}^{\rm Higgs}(\mathbf{Q}_X[d])$. Then by applying Grothendieck duality $H^0\mathbf{D}(-) = H^0\myR\sHom(-, \omega^{\mydot}_X)$, we obtain:
\[
\psi'' \colon \omega_X \leftarrow H^0\mathbf{D}{\rm RH}^{\rm Higgs} (\mathbf{Q}_X[d]).
\]
In general, since $\omega_X$ is $S_2$, we can define this map on the smooth locus and extend to $X$ by reflexivity. Finally, we define $\psi \colon H^0{\rm RH}^{\rm Higgs} ({\rm IC}_X) \to \omega_X$ as the composition $\psi'' \circ \psi'$.
\end{remark}

\begin{theorem} \label{thm:mainthm-in-char-zero}
For a normal algebraic variety $X$ over $\mathbf{C}$, the following holds:
\[
\mathcal J(X, \omega_X) = {\rm Image}(\psi \colon H^0{\rm RH}^{\rm Higgs} ({\rm IC}_X) \to \omega_X).
\]
\end{theorem}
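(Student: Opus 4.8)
The plan is to compare both sides after resolving singularities and to exploit the decomposition theorem for the intersection complex. First I would reduce to the case that $X$ is \emph{smooth}, where the claim is nearly tautological: by \eqref{eq:rrh-char0-trivial} and $\mathrm{IC}_X = \mathbf{Q}_X[d]$ we have $H^0\mathrm{RH}^{\mathrm{Higgs}}(\mathrm{IC}_X) = \omega_X$, the map $\psi$ is the identity, and $\mathcal{J}(X,\omega_X) = \omega_X$ since $X$ needs no resolution; so equality holds. To descend from this to the general case I would fix a resolution $\pi\colon Y \to X$ and study the pushforward $\myR\pi_*\mathbf{Q}_Y[d]$ as a complex of Hodge modules. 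By the decomposition theorem, ${}^pH^0(\myR\pi_*\mathbf{Q}_Y[d])$ contains $\mathrm{IC}_X$ as a direct summand, and the composite $\mathbf{Q}_X[d] \to \myR\pi_*\mathbf{Q}_Y[d] \to {}^pH^0(\myR\pi_*\mathbf{Q}_Y[d])$ lands in the $\mathrm{IC}_X$-factor and realizes the canonical map $\mathbf{Q}_X[d] \to \mathrm{IC}_X$ (the fundamental class), since over the smooth locus $U \subseteq X$ both maps restrict to the identity. The point of passing to $Y$ is that $\mathrm{RH}^{\mathrm{Higgs}}$ is compatible with proper pushforward and with Grothendieck/Hodge duality, so the right-hand side of the statement can be computed on $Y$, where the Hodge module involved is trivial.

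The key steps, in order, would be: (1) apply $H^0\mathbf{D}\mathrm{RH}^{\mathrm{Higgs}}(-)$ to the factorization $\mathbf{Q}_X[d] \to \mathrm{IC}_X \hookrightarrow {}^pH^0(\myR\pi_*\mathbf{Q}_Y[d])$ of Hodge modules; using proper pushforward compatibility, $\mathbf{D}\mathrm{RH}^{\mathrm{Higgs}}(\myR\pi_*\mathbf{Q}_Y[d]) \simeq \myR\pi_*\mathbf{D}\mathrm{RH}^{\mathrm{Higgs}}(\mathbf{Q}_Y[d])$, and the smooth case computes $\mathbf{D}\mathrm{RH}^{\mathrm{Higgs}}(\mathbf{Q}_Y[d])$ so that $H^0$ of its $\pi$-pushforward is exactly $\pi_*\omega_Y = \mathcal{J}(X,\omega_X)$; (2) check that under this identification the map $\psi''\circ\psi'$ from the statement becomes precisely the inclusion $\pi_*\omega_Y \hookrightarrow \omega_X$ (the trace map), which on the smooth locus $U$ is the identity, hence is determined by $S_2$-ness of $\omega_X$; (3) conclude that $\mathrm{Image}(\psi) = \pi_*\omega_Y = \mathcal{J}(X,\omega_X)$, by observing that $\mathrm{IC}_X$ being a \emph{direct summand} of ${}^pH^0(\myR\pi_*\mathbf{Q}_Y[d])$ forces $H^0\mathbf{D}\mathrm{RH}^{\mathrm{Higgs}}(\mathrm{IC}_X) \to \omega_X$ to have the same image as $H^0\mathbf{D}\mathrm{RH}^{\mathrm{Higgs}}({}^pH^0\myR\pi_*\mathbf{Q}_Y[d]) \to \omega_X$, the latter being the trace image by (1)--(2). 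One should also note that $H^0\mathbf{D}\mathrm{RH}^{\mathrm{Higgs}}$ commutes with passing to a perverse ${}^p\tau^{\le 0}$-truncation on the level of top cohomology because $\mathrm{RH}^{\mathrm{Higgs}}$ is left $t$-exact for the perverse $t$-structures, so only ${}^pH^0$ of $\myR\pi_*\mathbf{Q}_Y[d]$ contributes.

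I would also verify the compatibility of the two descriptions of $\psi$: the map $\psi'$ comes from $\mathbf{Q}_X[d] \to \mathrm{IC}_X$ and $\psi''$ from $\cO_X[d] \to \mathrm{RH}^{\mathrm{Higgs}}(\mathbf{Q}_X[d])$ (the $k=0$ piece of the associated-graded de Rham complex, i.e.\ $\mathrm{Gr}^F_0$), and their composition should agree, after restriction to $U$, with the canonical top-degree piece of $\mathrm{RH}^{\mathrm{Higgs}}(\mathrm{IC}_X)$ dualized. On $U$ this is transparent from \eqref{eq:rrh-char0-trivial}; extending off $U$ uses reflexivity of $\omega_X$ and the $S_2$ property, exactly as the remark preceding the theorem indicates.

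The main obstacle I expect is Step (3): controlling the image of the map out of the \emph{summand} $\mathrm{IC}_X$ rather than the whole pushforward. A priori $\mathrm{Image}(\psi)$ could be strictly smaller than the trace image $\pi_*\omega_Y$ if the other summands of ${}^pH^0(\myR\pi_*\mathbf{Q}_Y[d])$ (supported on proper subvarieties of $X$) contributed to the map into $\omega_X$. One must argue they do not: any Hodge module supported on a proper closed subset $Z \subsetneq X$ has $H^0\mathbf{D}\mathrm{RH}^{\mathrm{Higgs}}$ supported on $Z$, hence its image in the torsion-free sheaf $\omega_X$ is zero. Thus $H^0\mathbf{D}\mathrm{RH}^{\mathrm{Higgs}}(\mathrm{IC}_X) \to \omega_X$ and $H^0\mathbf{D}\mathrm{RH}^{\mathrm{Higgs}}({}^pH^0\myR\pi_*\mathbf{Q}_Y[d]) \to \omega_X$ genuinely have the same image, which closes the argument. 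A secondary technical point — making sure the abstract splitting of the decomposition theorem is compatible with the \emph{specific} fundamental-class map $\mathbf{Q}_X[d]\to\mathrm{IC}_X$ of \autoref{CanMapIC}'s characteristic-zero analog — is handled by the generic-on-$U$ normalization, since a map of Hodge modules restricting to the identity on a dense open is unique.
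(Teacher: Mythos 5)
Your proposal is correct and follows essentially the same route as the paper: resolve, apply the decomposition theorem $\myR\pi_*\mathrm{IC}_Y = \mathrm{IC}_X \oplus M$ with $M$ supported on $Z = \pi(\mathrm{Exc}(\pi))$, compute $H^0\mathrm{RH}^{\mathrm{Higgs}}(\myR\pi_*\mathrm{IC}_Y) = \pi_*\omega_Y \oplus (\text{sheaf supported on } Z)$ via proper-pushforward compatibility and \eqref{eq:rrh-char0-trivial}, and kill the extra summands by torsion-freeness of $\omega_X$. Your extra care about the compatibility of the abstract splitting with the fundamental class and about the left $t$-exactness reduction to ${}^pH^0$ is harmless elaboration of points the paper leaves implicit.
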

\begin{proof}
Let $\pi \colon Y \to X$ be a resolution and $Z := \pi(\mathrm{Exc}(\pi))$. First, we observe that by the decomposition theorem for Hodge modules: 
\[
\myR\pi_*{\rm IC}_Y = {\rm IC}_X \oplus M,
\]
where $M \in D_{\mathrm{HM}}^b(X)$ is supported on $Z$.

Second, by \autoref{eq:rrh-char0-trivial}, the fact that ${\rm IC}_Y = \mathbf{Q}_Y[d]$ since $Y$ is smooth, and the compatibility of $\RH$ with proper pushforward, we have that:
\begin{align*} H^0{\rm RH}^{\rm Higgs}(\myR\pi_*{\rm IC}_Y) &= H^0\myR\pi_*{\rm RH}^{\rm Higgs}({\rm IC}_Y) \\ 
&= H^0\myR\pi_*(\cO_Y[d] \oplus \cdots \oplus \omega_Y[0]) = \pi_*\omega_Y \oplus \textrm{a sheaf supported on } Z.
\end{align*}
Combining the above and using torsion-freeness of $\omega_X$ we finally get: 
\[
\Image(\psi) = \Image( H^0{\rm RH}^{\rm Higgs}(\myR\pi_*{\rm IC}_Y) \to \omega_X ) =  \Image(\pi_*\omega_Y \to \omega_X). \qedhere
\]
\end{proof}
We do not know how to prove \autoref{thm:mainthm-in-char-zero} using purely complex geometric methods but not the decomposition theorem.  On the other hand, in the $p$-adic setting, the decomposition theorem for perverse sheaves is false. Instead, our proof uses the fact that the $p$-adic Riemann-Hilbert correspondence is left $t$-exact for the perverse $t$-structures (see \autoref{sec:ReviewRH}). 
\begin{remark}
For the sake of this paper, it is easier to work with the dual variant of \autoref{thm:mainthm-in-char-zero} which may be stated as:
\[
\cO^{\rm GR}_{X} = \Image(\phi \colon {}^p H^0(\cO_X[d]) \to {}^p H^0({{\rm RH}^{\rm Higgs}}({\rm IC}_X)))
\]
where $\cO^{\rm GR}_{X} := \mathbf{D}(\mathcal J(X, \omega_X))$, and $\phi$ defined as the composition 
\[
\cO_X[d] \to {\rm RH}^{\rm Higgs}(\mathbf{Q}_X[d]) \to {\rm RH}^{\rm Higgs}({\rm IC}_X).
\]
\end{remark}

\section{Further questions}

We do not know if the $p$-perturbation and the $\epsilon G$ perturbation in this paper is necessary (\cf \autoref{quest.tauAltPerturbed}).  Explicitly, we do not know, or have an approach, to the following question.

\begin{conjecture}[\cf {\cite{SmithTightClosureParameter}}]
    Suppose that $X$ is normal integral and finite type over a complete mixed characteristic DVR $V$, and $\Gamma$ is a $\bQ$-Cartier $\bQ$-divisor.  Then  
    \[
        \tau(\omega_X, \Gamma, \frc^t) = \Tr_{Y/X} \big(\pi_* \cO_Y(K_Y - \pi^* \Gamma - tM)  \big)
    \]
    for some sufficiently large alteration $\pi : Y \to X$ with $\frc \cO_Y = \cO_Y(-M)$ and $tM$ and $\pi^* \Gamma$ Cartier?
\end{conjecture}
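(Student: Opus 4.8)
The conjecture asks to remove \emph{all} perturbation from \autoref{thm:onealterationtorulethemall}: we want an honest alteration $\pi\colon Y\to X$ computing $\tau(\omega_X,\Gamma,\frc^t)$ on the nose, with no $\epsilon G$ term. The plan is to reduce to the case of $\tau(\omega_X)$ (trivial boundary, trivial ideal), since the transformation rules under finite maps (\autoref{cor.TransformationRuleRHUnderFiniteMaps}, \autoref{thm.FinalPropertiesOfTestIdealsTriples.FiniteMaps}) and the summation formula (\autoref{cor.SummationInLocalCase}) let us handle $\Gamma$ and $\frc^t$ by passing to a cover where $\Gamma$ becomes Cartier and blowing up $\frc$, just as in the proof of \autoref{thm:perturbed_tau_submodule_single_alteration}. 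So the heart of the matter is: show there is a single alteration $\pi\colon Y\to X$ with $\tau(\omega_X)=\Tr_{Y/X}(\pi_*\omega_Y)$ (equivalently, that $\tau^a_{\rm alt}(\omega_{X/V})$ can be computed without the $\varpi^{1/p^e}$ twist, i.e. $\tau_{\rm alt}(\omega_{X/V},1)$ stabilizes and equals $\tau^a_{\rm alt}(\omega_{X/V})$).

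First I would localize and complete: by \autoref{cor.TauAgreesWithCompletionComputation} the question at a point $x\in X_{p=0}$ becomes whether $\tau_+(\omega_R)=\bigcap_S\Tr_{S/R}(\omega_S)$ is computed by a single finite cover $S$, where $R=\widehat{\cO_{X,x}}$. By \autoref{eq.B0DefinitionViaLocalCohom} and local duality this is dual to asking whether $\Image\big(H^d_{\fram}(R)\to H^d_{\fram}(R^+)\big)$ stabilizes along a single finite extension $R\subseteq S\subseteq R^+$. The natural strategy is to mimic the positive-characteristic argument of \cite{SmithTightClosureParameter,BlickleSchwedeTuckerTestAlterations}: one wants to produce a \emph{test element} $c\in R$ (a nonzerodivisor, e.g. a nonzero element of the conductor or of $\tau_+(\omega_R)$ itself) such that $c$ annihilates the cokernel $H^d_{\fram}(R^+)/\Image(H^d_{\fram}(R))$ in a controlled way, and then use the noetherianity of $R$ together with the fact that $H^d_{\fram}(R)$ is artinian. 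Concretely: the image of $H^d_{\fram}(R)$ in $H^d_{\fram}(R^+)$ is a sub-$R$-module, each element $\eta$ in the kernel of $H^d_{\fram}(R)\to H^d_{\fram}(R^+)$ is already killed after passing to \emph{some} finite $S$ (since $R^+=\colim S$ and $H^d_{\fram}(-)$ commutes with filtered colimits), and since the kernel is a finitely-generated (indeed artinian, hence noetherian over $R$) module we can choose a single $S$ killing all of it. This last step is exactly the argument that works with the $\varpi^{1/p^e}$-twist in \autoref{cor.TauAgreesWithCompletionComputation}; the difficulty is that without the twist the relevant submodule of $H^d_{\fram}(R)$ — namely $\ker(H^d_{\fram}(R)\to H^d_{\fram}(R^+))$, the "absolute tight closure of zero" $0^*_{H^d_{\fram}(R)}$ — may \emph{fail} to be finitely generated, as happens for $\tau_{\rm fin}$ (cf. the remark after \autoref{cor.MultiplierIdealViaFiniteCovers}: the intersection $\tau_{\rm fin}(R)$ cannot stabilize). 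With alterations instead of finite covers, one has much more room (Bhatt's Cohen--Macaulayness of $R^+$, \autoref{thm.InputsFromSectionRH}(2)), so the hope is that the RH-theoretic picture makes $0^{\rm alt}_{H^d_{\fram}(R)}$ better behaved; but I know of no proof.

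The main obstacle is precisely this noetherianity/finiteness: the entire machinery of this paper uses almost mathematics on $\widehat X_\infty$ to tame the infinite intersection, and the $\varpi^{1/p^\infty}$-perturbation is the shadow of that almost structure on the noetherian scheme $X$ (as explained in \autoref{ss:IdeaProof}, Theorem D). Removing it would require either (i) proving that the ind-system $\{\Image({}^pH^0(\cO_{\mathcal X}[d])\to{}^pH^0(f_{\mathcal Y,*}\cO_{\mathcal Y}[d]))\}_{\mathcal P}$ in \autoref{AlmostStabTraceAlt} is \emph{honestly} (not just almost) ind-constant — which would need a version of \autoref{AlmostIndConstCrit} without almost mathematics, and there the condition (2) about annihilation by a fixed power of $p$ genuinely uses the Hodge--Tate decomposition only up to a bounded power, so this seems out of reach with current tools — or (ii) a direct commutative-algebra argument showing $\tau_{\rm fin}$-type stabilization for alterations, along the lines of the positive-characteristic completion-and-test-element argument, which would need a new input replacing the vanishing theorems available in characteristic $p$. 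I would first attempt route (ii) in the $\bQ$-Gorenstein affine case, trying to bound the "non-stabilization locus" using that $\tau^a_{\rm alt}(\omega_X)[1/p]$ is the multiplier module (so stabilization holds after inverting $p$, by resolution) and then arguing fiberwise over $\Div(p)$; but I expect the genuinely mixed-characteristic phenomenon along $\Div(p)$ to be where the argument breaks, which is why the conjecture remains open.
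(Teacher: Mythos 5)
This statement is an open conjecture, not a theorem: it appears in the paper's final ``Further questions'' section, and the paper offers no proof of it. Your proposal, to its credit, does not claim one either --- it is a plan plus an honest admission that both routes you sketch break down. So there is nothing in the paper to compare your argument against, and the only thing to verify is whether your assessment of the difficulty is accurate. It is. The paper itself flags exactly the issue you isolate: Question~\ref{quest.tauAltPerturbed} records that the authors do not know whether $\tau^a_{\alt}(\omega_X,\varepsilon D)=\tau^a_{\alt}(\omega_X)$ for small $\varepsilon$, the introduction and the discussion around Theorem~D explain that the $\varpi^{1/p^\infty}$-perturbation on $X$ is precisely the Noetherian shadow of the almost mathematics on $X_\infty$, and the remark following \autoref{cor.MultiplierIdealViaFiniteCovers} shows that the analogous intersection over finite covers genuinely fails to stabilize. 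Your identification of the two obstacles --- that \autoref{AlmostIndConstCrit} and \autoref{AlmostStabTraceAlt} only give \emph{almost} ind-constancy (with condition (2) resting on the Hodge--Tate decomposition up to a bounded power of $p$), and that the dual local-cohomology module $\ker(H^d_{\fram}(R)\to H^d_{\fram}(R^+))$ is not known to be finitely generated without the twist --- matches the actual structure of the paper's proofs and correctly locates where a perturbation-free argument would have to supply new input.

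One small caution: your reduction of the general statement to the case of $\tau(\omega_X)$ via finite covers and the summation formula is itself not entirely free, because the definition of $\tau(\omega_X,\Gamma)$ in \autoref{def:perturbed_tau} already builds in a perturbation by $\epsilon h^*G$ before any boundary or ideal is introduced; so even granting a single-alteration statement for the unperturbed $\tau^a_{\alt}(\omega_Y)$ on a cover $Y$, one would still need to show that the perturbation-friendly $\tau(\omega_X)$ of \autoref{def:perturbed_tau} coincides with the unperturbed object. That is a second instance of the same open problem rather than a separate error, but it means the reduction does not cleanly isolate a single remaining difficulty. As a report on the status of the conjecture and the shape of the obstruction, your write-up is correct.
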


Even if we cannot answer the above question, based on the theory of test elements \cite{HochsterHunekeTC1}, we hope that the following holds.
\begin{question}
    Suppose $X$ is $\bQ$-Gorenstein and quasi-projective over $V$.  Suppose $G^0 > 0$ is a Cartier divisor such that $\cO_X(-G^0) \subseteq \tau(\cO_X)$, then does that $G^0$ satisfy \autoref{cor:test-ideal-one-alteration}?  In other words, for any $G \geq G^0$ and $1 \gg \epsilon > 0$ (depending on $G$), do we have that 
    \[
        \tau(\cO_X) = \Tr_{Y/X}\big(\pi_* \cO_Y(K_Y - \pi^*(K_X + \epsilon G))  \big)
    \]
    for some sufficiently large alteration $\pi : Y \to X$ such that $\pi^*(K_X + \epsilon G)$ is Cartier.
    One can ask the same question for pairs and triples $(X, \Delta, \frc^t)$.
\end{question}

Another natural way to try to generalize this paper is to replace the complete DVR $V$ with a complete Noetherian local domain $(R, \fram)$. 

\begin{conjecture}  Suppose $X$ is quasi-projective over a Noetherian complete\footnote{or even excellent with a dualizing complex} local ring $R$.  Then for some $G^0 \geq 0$, all Cartier $G \geq G^0$ and all $1 \gg \epsilon > 0$ (depending on $G$), the intersection over sufficiently large alterations $\pi : Y \to X$
\[
    \bigcap_{\pi : Y \to X} \Tr_{Y/X}\big(\pi_* \cO_Y(K_Y - \pi^* \epsilon G)  \big)
\]
stabilizes.  We expect the same stabilization with pairs $(\omega_X, \Gamma)$ and triples $(\omega_X, \Gamma, \frc^t)$.
\end{conjecture}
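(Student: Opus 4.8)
The plan is to rerun the architecture of Sections~3--7 with the complete DVR $V$ replaced by the complete Noetherian local ring $(R,\fram)$. When $R$ has equal characteristic $0$ the statement is classical (the intersection is the multiplier module, computed by any log resolution), and when $R$ has equal characteristic $p$ it is \cite{BlickleSchwedeTuckerTestAlterations}; so the genuinely new content is the mixed characteristic case, where $\Spec(R)$ is no longer finite type over a complete DVR. As in the DVR case, stabilization of $\sG:=\bigcap_{\pi}\Tr_{Y/X}(\pi_*\cO_Y(K_Y-\pi^*\epsilon G))$ should be deduced from \autoref{lem.StabilizingIntersection}, which requires two inputs: (i) $\sG$ is a coherent sheaf on $X$, and (ii) for each $x\in X$ the formation of $\sG$ commutes with $\fram_x$-adic completion, i.e.\ $\widehat{\sG_x}=\bigcap_{\pi}\widehat{(\sG_\pi)_x}$.

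Input (ii) is the routine half. For $x$ of residue characteristic $0$, $\widehat{\cO_{X,x}}$ is excellent and one reduces to a single log resolution; for $x$ of residue characteristic $p$, the perturbed intersection over finite covers of the complete local domain $\widehat{\cO_{X,x}}$ agrees with $\tau_{\cB}$ and stabilizes, by combining the argument of \autoref{clm.Tau+PerturbedEqualsTauBigB} with \cite{MaSchwedeSingularitiesMixedCharBCM}, while the passage from alterations to finite covers uses \cite[Theorem 4.19]{BhattAbsoluteIntegralClosure} as in \autoref{AlmostStabTraceAlt}. Crucially, (ii) needs no new Riemann--Hilbert input; it is essentially an assembly of existing local results.

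The real work is input (i). In the DVR case, coherence of the perturbed test module came from the almost coherent object $\omega^{\RH}_{X_\infty/V_\infty}$ produced by the $p$-adic Riemann--Hilbert functor over the perfectoid valuation ring $V_\infty$ (\autoref{thm:RH}), together with the trace-like map $\kev\colon V_\infty\to V$ of \autoref{prop:kev_is_generator} used to descend it to $V$. Two things break over a general local base. First, $\Spec(R)$ is not finitely presented over a perfectoid valuation ring; writing $R$ as a module-finite extension of a complete regular local ring $A$ of mixed characteristic, one would need to upgrade the RH formalism --- or at least the properties actually used: perverse $t$-left-exactness, compatibility with proper pushforward and $p$-completed colimits, the value $\RH(\mathbf{Z}_p)=\cO_{\mathrm{perfd}}$, and Verdier--Grothendieck compatibility --- to $p$-adic formal schemes flat over a perfectoid ring of the shape $\widehat{A\otimes_V V_\infty}$, built so that $A\otimes_V V_\infty$ is perfectoid (compare \autoref{lem.ConstructingA_infty}). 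Second, finite extensions of $R$ need not be free, so there is no generator of $\Hom_R(S,R)$ and no literal analogue of $\kev$; the descent step should instead be carried out uniformly with the relative dualizing complex $\omega^{\mydot}_{X/R}$ and Grothendieck trace maps, using that the image of the trace maps along the cofiltered system of \emph{all} finite covers already computes the RH sheaf (again via \cite[Theorem 4.19]{BhattAbsoluteIntegralClosure}), so that no chosen splitting is needed --- only the observation that the resulting image is a quasi-coherent, hence coherent, subsheaf of $\omega_{X/R}$.

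A lighter alternative for (i) would be Noetherian approximation: $X$ and $G$ descend to some $X_0/R_0$ with $R_0$ of finite type over $V$ and $R$ a completion of a localization of $R_0$, and Theorem~A already supplies a coherent $\tau^a_{\rm alt}(\omega_{X_0})$ computed by a single alteration; one then must show this alteration stays universal after the flat base change $R_0\to R$. The obstacle there is that base change enlarges the index set of alterations, so one must show that every alteration of $X$ is dominated, after a further alteration, by the base change of an alteration over $X_0$ --- plausible by approximation but delicate at the completion step. Either way, I expect input (i) to be the main obstacle; once it is in hand, the reduction via \autoref{lem.StabilizingIntersection}, the completed-stalk identifications, and the passage from finite covers to alterations should all go through as in Sections~4--7, and the generalizations to pairs $(\omega_X,\Gamma)$ and triples $(\omega_X,\Gamma,\frc^t)$ should follow by the same finite-cover and summation reductions as in Sections~7--8.
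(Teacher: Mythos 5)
This statement is not proved in the paper at all: it appears in the ``Further questions'' appendix as a conjecture, and the authors explicitly write that they hope to address it in future work. So there is no paper proof to compare against, and what you have written is, by your own account, a research program rather than a proof. Your outline is a sensible one --- it correctly isolates the two inputs needed to invoke \autoref{lem.StabilizingIntersection} (coherence of the intersection, and compatibility with completed stalks), and it correctly identifies why the paper's argument does not transfer: $\Spec(R)$ is not finitely presented over a perfectoid valuation ring, so the Riemann--Hilbert functor of \autoref{thm:RH} does not apply as stated, and finite extensions of $R$ need not be free, so there is no analogue of the trace-like map $\kev$ from \autoref{prop:kev_is_generator} to descend the almost coherent RH sheaf.

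But identifying the obstruction is not the same as overcoming it, and your input (i) --- the coherence of $\bigcap_\pi \Tr_{Y/X}(\pi_*\cO_Y(K_Y-\pi^*\epsilon G))$ --- is precisely the open content of the conjecture; you leave it unestablished on both of your proposed routes. The RH route requires extending the entire formalism of Sections 3--5 (perverse $t$-structures on $D_{\qcoh}$ of the relevant formal schemes, left $t$-exactness, compatibility with proper pushforward and $p$-completed colimits, the identification $\RH(\mathbf{Z}_p)=\cO_{\pfd}$) to a base of the form $\widehat{A\otimes_V V_\infty}$, and then inventing a substitute for the $\kev$-descent; none of this is carried out, and the claim that ``the image of the trace maps along all finite covers already computes the RH sheaf, so no chosen splitting is needed'' does not by itself yield quasi-coherence of that image over the Noetherian base, which is exactly the point of the descent step in \autoref{thm.SingleAlterationWithPToEpsilon}. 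The Noetherian-approximation route founders on the issue you yourself flag: after the non-finite-type base change $R_0\to R$ the index set of alterations genuinely enlarges, and dominating an arbitrary alteration of $X$ by the base change of one over $X_0$ is not established (and is essentially a restatement of the stabilization one wants). Your input (ii) is more plausible but also not free: \autoref{clm.Tau+PerturbedEqualsTauBigB} requires a Noether normalization $A\subseteq \widehat{\cO_{X,x}}$ with $A$ regular and controlled ramification relative to the chosen $G$, and one must check that a single global $G^0$ works uniformly at all completed stalks (the analogue of \autoref{remark:choice-of-perturbation-divisor}). In short, the proposal is a reasonable map of the terrain, but the conjecture remains open and the central step is missing.
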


One should also expect that the adjoint-variant of the test ideal \cite{MaSchwedeTuckerWaldronWitaszekAdjoint}, \cf \cite{TakagiPLTAdjoint,TakagiAdjointIdealsAndACorrespondence,HaconLamarcheSchwede} can also be defined by a single alteration.  
Suppose $X$ is normal, integral and quasi-projective over a mixed characteristic complete DVR $V$ and $D$ a prime divisor on $X$.  For each alteration $Y \to X$ with fixed $K(Y) \subseteq K(X^+) = \overline{K(X)}$, compatibly pick $D_Y$ a prime divisor dominating $D$ (this is the data of a valuation on $K(X^+)$ extending the DVR corresponding to $D$).  

\begin{conjecture}
      With notation as above, suppose $\Delta$ is a $\bQ$-divisor without common components with $D$, and $K_X + \Delta + D$ is $\bQ$-Cartier.     Then for every sufficiently large effective Cartier divisor $G$ without common components with $D$, and all $1 \gg \epsilon > 0$ (depending on $G$), and all normal alterations $\pi : Y \to X$ (depending on $\epsilon G$), the intersection
    \[
        \adj^D(\cO_X, D+\Delta) = \bigcap_{\pi : Y \to X} \Tr_{Y/X} \big( \pi_* \cO_Y(K_Y - \pi^* (K_X + \Delta + D + \epsilon G) + D_Y) \big)
    \]
    stabilizes and hence, after inverting $p$, agrees with the adjoint ideal in characteristic zero.

    Furthermore, for each closed point $x \in X$ with $R = \widehat{\cO_{X,x}}$, we have that $\adj^D(\cO_X, D+\Delta) \cdot R$ agrees with the adjoint ideal defined in \cite{MaSchwedeTuckerWaldronWitaszekAdjoint} for sufficiently large perfectoid big Cohen-Macaulay algebras.  Finally, up to small perturbation, it also agrees with the adjoint ideal defined in \cite{HaconLamarcheSchwede}.
\end{conjecture}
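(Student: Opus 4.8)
The plan is to reduce the conjecture to the results already established for $\utau$ in this paper together with the theory of BCM-adjoint ideals from \cite{MaSchwedeTuckerWaldronWitaszekAdjoint}, rather than to rebuild a Riemann--Hilbert-type construction adapted to the pair $(X, D + \Delta)$. The key device is the adjunction short exact sequence. For a normal alteration $\pi : Y \to X$ as in the statement, with $D_Y$ the chosen prime divisor over $D$ (so $\pi|_{D_Y}$ is generically finite onto $D$), adjunction gives $K_{D_Y} = (K_Y + D_Y)|_{D_Y}$, and twisting the structure sequence of $D_Y \subset Y$ by $\cO_Y(K_Y + D_Y - \pi^*(K_X + \Delta + D + \epsilon G))$ yields, after $\pi_*$ and passing to images of trace maps, a short exact sequence of coherent sheaves on $X$ whose outer terms are a ``sub-adjoint'' piece and a piece supported on $D$. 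The sub-adjoint piece will be identified with $\utau(\cO_X, \Delta + (1-\delta)D + \epsilon' G)$ for $0 < \delta, \epsilon' \ll 1$, and the $D$-piece, after decomposing $D$ into components and normalizing, with $\utau(\cO_{D^{\mathrm N}}, \Delta_{D^{\mathrm N}})$ for the divisor $\Delta_{D^{\mathrm N}}$ prescribed by the different.

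Granting this, the steps I would carry out are the following. First, fix $G^0$ large enough to serve simultaneously as a perturbation divisor (in the sense of \autoref{remark:choice-of-perturbation-divisor}) for $\utau$ on $X$ and on each component of $D^{\mathrm N}$, and so that $\Supp(G^0)$ contains $\Supp(D) \cup (X_{p=0})_{\red}$. Second, construct the adjunction sequence on one sufficiently large alteration and check that it is preserved under further alterations, so that the three terms form compatible inverse systems. Third, identify the outer terms with the objects above, using \autoref{thm:onealterationtorulethemall} and \autoref{thm.FinalAlterationStabilizationForTestIdeal} applied both on $X$ and (after normalizing and replacing the base DVR by $H^0$ of the normalization, exactly as in the proof of \autoref{thm.tauAlt}) on $D^{\mathrm N}$; these already stabilize. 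Fourth, deduce that the middle intersection stabilizes by a snake-lemma argument together with \autoref{lem.StabilizingIntersection}, which reduces the claimed stabilization to an equality of completed stalks. Fifth, invert $p$: here the $+D_Y$ summand is precisely what is needed for a variant of \cite[Theorem 8.1]{BlickleSchwedeTuckerTestAlterations} --- run over alterations dominating a log resolution of $(X, D + \Delta)$ --- to produce the classical adjoint ideal.

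For the completed-stalk comparison with \cite{MaSchwedeTuckerWaldronWitaszekAdjoint}, after twisting to assume $\Delta$ effective, localize and complete at $x \in X_{p=0}$ with $R = \widehat{\cO_{X,x}}$. The local adjunction sequence for $R$ is the Matlis dual of a local cohomology sequence, and the BCM-adjoint ideal of \loccit\ is the Matlis dual of the kernel of the corresponding map into the local cohomology of a perfectoid big Cohen--Macaulay $R^+$-algebra; one then invokes that the perturbed $+$-adjoint equals the BCM-adjoint (the adjoint analogue of the $\tau_+ = \tau_{\cB}$ comparison behind the completion statement of \autoref{prop.PropertiesOfUltTauOmegaForPairs}), together with the already-known agreement of the two outer terms after completion, to force equality of the middle terms. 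One must check, as in \autoref{remark:choice-of-perturbation-divisor}, that a $G$ large on $X$ restricts to an adequate perturbation datum on $R$ and correctly governs the behaviour along $D$. Finally, the comparison up to perturbation with the Hacon--Lamarche--Schwede adjoint ideal follows the template of \autoref{thm.ComparisonTauRHvsHLSNoPair} and the $\myB^0$-comparison in \autoref{prop.PropertiesOfUltTauOmegaForPairs}, feeding in the adjoint statements of \cite{HaconLamarcheSchwede} in place of their test-ideal statements.

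The step I expect to be the main obstacle is the third one --- making the ``piece supported on $D$'' genuinely a mixed characteristic test ideal on $D$. There are two difficulties. First, $D$ need not be normal or irreducible, and, worse, need not be flat over $V$: if a component of $D$ lies in $X_{p=0}$ the adjunction sequence collapses, so the hypotheses of the conjecture must be strengthened --- presumably $D$ horizontal, i.e.\ each component dominating $\Spec V$ --- or the vertical case treated separately. Second, even for $D$ horizontal and normal, the restriction term is naturally an adjoint object on $D$ unless $\Delta|_D$ and the different behave well; the ``no common components'' hypothesis and a careful definition of $\Delta_{D^{\mathrm N}}$ are needed to make it an honest $\utau$, so that the inductive structure closes. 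Everything else I expect to be a faithful, if lengthy, transcription of the arguments of \autoref{sec:SingOverDVR} through \autoref{sec.TestIdealsNonPrincipal}, once the exact sequence is in place.
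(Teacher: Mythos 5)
The first thing to say is that the paper does not prove this statement: it appears in the closing ``Further questions'' appendix as an open conjecture, so there is no proof of record to compare your proposal against. What follows is an assessment of your proposal on its own terms.

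Your architecture --- sandwich the adjoint intersection between two already-stabilized test-module intersections via the adjunction sequence for $D_Y \subset Y$, then conclude by a five-lemma argument and \autoref{lem.StabilizingIntersection} --- is a reasonable plan, and you have correctly located the danger in your third step, but the gap is more serious than ``$D$ may fail to be normal or horizontal.'' To run the five-lemma argument you need, for each sufficiently large alteration $\pi : Y \to X$ in the tower, a genuine short exact sequence
\[
0 \to A_Y \to B_Y \to C_Y \to 0
\]
of images of trace maps, where $B_Y$ is the term in your intersection and $C_Y$ is the candidate test module on $D^{\mathrm{N}}$. Exactness on the right is inversion of adjunction at the level of a \emph{single} alteration: it requires killing the obstruction to lifting sections from $D_Y$ to $Y$ (an $R^1\pi_*$ of the relevant twist, or dually a surjectivity of local cohomology maps). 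In mixed characteristic the only vanishing theorem of this kind is Bhatt's vanishing on the absolute integral closure $Y^+$, not on a finite-level $Y$; this is exactly why the complete local adjoint theory of \cite{MaSchwedeTuckerWaldronWitaszekAdjoint} is phrased via $R^+$ and perfectoid BCM algebras, and why the present paper cannot simply sandwich $\tau$ between known objects but must access the infinite intersection through the Riemann--Hilbert functor. So your short exact sequences exist only ``in the limit over all alterations,'' which is precisely where the stabilization question lives; as written the argument is circular. Relatedly, identifying $C_Y$ with a term computing $\utau(\cO_{D^{\mathrm{N}}}, \Delta_{D^{\mathrm{N}}})$ presupposes that the trace for $Y/X$ restricted to $D_Y$ agrees with the trace for the alteration $D_Y \to D^{\mathrm{N}}$ compatibly across the tower and that the $D_Y$ are cofinal among alterations of $D^{\mathrm{N}}$; the paper's restriction statement (\autoref{thm.LocalPropertiesOfUltTauOX}) is only a containment, which reflects the same missing surjectivity. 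A complete proof would most likely require an adjoint-flavored analogue of the Riemann--Hilbert input (for instance a nearby-cycles or $j_{!*}$-type perverse sheaf attached to the pair $(X[1/p], D[1/p])$ whose image under $\RH$ realizes the adjoint module), which is not constructed in this paper.
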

\appendix

\newpage
\section[Uniform approximation of Abhyankar valuation ideals in mixed characteristic, an appendix by Rankeya Datta]{Uniform approximation of Abhyankar valuation ideals\\ in mixed characteristic\\ An appendix by Rankeya Datta}
\label{Datta-valuation-ideals}

Discovering uniform behavior in Noetherian rings has been a central theme in commutative algebra since the 1980s. Some examples are Koll{\'a}r's effective Nullstellensatz using uniform bounds for annihilators of local cohomology modules \cite{KollarEffectiveNullstellensatz} and the uniform Artin-Rees theorems of O'Carroll \cite{OCarrollUniformAR}, Duncan-O'Carroll \cite{OCarrollDuncanAR} and Huneke \cite{HunekeUniform}. In a different direction, Ein-Lazarsfeld-Smith used multiplier ideals to prove that for radical ideals of the coordinate ring of a smooth characteristic $0$ affine variety, there is a uniform bound for inclusions of symbolic powers of these ideals in their ordinary powers \cite{EinLazSmithSymbolic}. Their result was generalized by Hochster-Huneke to all ideals of all equicharacteristic regular rings using the celebrated theory of tight \cite{HochsterHunekeComparisonOfSymbolic}, by Ma-Schwede to radical ideals of mixed characteristic regular rings with geometrically reduced formal fibers \cite{MaSchwedePerfectoidTestideal} and to all ideals of all regular rings by Murayama \cite{MurayamaUniformBoundsOnSymbolicPowers}. The key tool for \cite{MaSchwedePerfectoidTestideal,MurayamaUniformBoundsOnSymbolicPowers} was a mixed characteristic analog of multiplier ideals \cite{MaSchwedePerfectoidTestideal,MaSchwedeSingularitiesMixedCharBCM,RobinsonBCMTestIdealsMixedCharToric,HaconLamarcheSchwede,BMPSTWW1} whose creation was inspired by the techniques introduced in Andr{\'e}'s proof of the direct summand conjecture \cite{AndreDirectsummandconjecture,AndrePerfectoidAbhyankarLemma, AndreWeaklyFunctorialBigCM} and Scholze's theory of perfectoid spaces \cite{ScholzePerfectoidspaces}. 

After their work on symbolic powers, Ein-Lazarsfeld-Smith utilized multiplier ideals and resolution of singularities in characteristic $0$ to prove another surprising uniformity result for the graded family of valuation ideals of an Abhyankar valuation that is centered on a smooth point of a characteristic $0$ variety \cite{EinLazSmithValuations}. As a consequence, they also deduced Izumi's comparison result for a pair of Abhyankar valuations that share a common smooth center. The author later proved the prime characteristic versions of the \cite{EinLazSmithValuations} results using the theory of test ideals \cite{DattaUniformApproximation}. The purpose of this appendix is to use the theory of mixed characteristic test ideals developed in this paper to prove a mixed characteristic version of \cite{EinLazSmithValuations,DattaUniformApproximation}. The main result is:

\begin{theorem}[see \autoref{thm:uniform-approximation-Abhyankar-valuation-ideals}]
\label{thm:uniform-approximation-Abhyankar-valuation-ideals-intro}
Let $(V,\fram_V,\kappa_V)$ be a DVR of mixed characteristic $(0,p)$ such that $\kappa_V$ is perfect. Let $(B,\fram_B,\kappa_B)$ be a regular local ring that is an essentially of finite type faithfully flat $V$-algebra. Let $F$ be the fraction field of $B$ and $\nu$ be an $\bR$-valued valuation of $F$ such that $(B,\fram_B,\kappa_B)$ is an Abhyankar center of $\nu$. Let $\Gamma_\nu$ be the value group of $\nu$ and suppose $\Gamma_\nu/\nu(\Frac(V)^\times)$ is torsion-free. Then there exists $e \in \mathbb{R}_{\geq 0}$ such that for all $m \in \mathbb{R}$ and $\ell \in \mathbb{Z}_{> 0}$, 
\[\ba_{\nu,m}(B)^\ell \subseteq \ba_{\nu,\ell m}(B) \subseteq \ba_{\nu,m-e}(B)^\ell.\]
\end{theorem}

We refer the reader to \autoref{subsec:Abhyankar valuations and Abhyankar centers} for the definition of Abhyankar centers. Here $\ba_{\nu,m}(B)$ is the \emph{valuation ideal} of $B$ consisting of those elements $x \in B$ such that $\nu(x) \geq m$. The inclusion $\ba_{\nu,m}(B)^\ell \subseteq \ba_{\nu,\ell m}(B)$ holds always. The interesting assertion is that there is an $\ell$-th power valuation ideal that contains $\ba_{\nu,\ell m}(B)$ in a uniform manner.  

The first key ingredient for the proof of \autoref{thm:uniform-approximation-Abhyankar-valuation-ideals-intro} is a mixed characteristic local monomialization result of Knaf-Kuhlmann \cite{KnafKuhlmannAbhyankar} for Abhyankar valuations. This result is recalled in the form we need in \autoref{thm:local-uniformization}. We need to assume torsion-freeness of $\Gamma_\nu/\nu(\Frac(V)^\times)$ in \autoref{thm:uniform-approximation-Abhyankar-valuation-ideals-intro} because of its importance in Knaf-Kuhlmann's monomialization result. 

The second key ingredient is the robust theory of mixed characteristic test ideals developed in this paper. We use this theory to define an asymptotic version of the test ideal of an ideal pair (\autoref{def:asymptotic-test-ideals}), show a sub-additivity property for asymptotic test ideals (\autoref{lem:asymptotic-test-ideals-properties}\autoref{lem:asymptotic-test-ideals-properties:subadditivity}) that follows from the sub-additivity property established in \autoref{thm.FinalPropertiesOfTestIdealsTriples}\autoref{thm.FinalPropertiesOfTestIdealsTriples.Subadditivity}, and then reduce \autoref{thm:uniform-approximation-Abhyankar-valuation-ideals-intro} to showing that a certain infinite intersection of colon ideals is non-zero (\autoref{lem:asymptotic-test-ideals-properties}\autoref{lem:asymptotic-test-ideals-properties:reduction-main-thm}). We tackle the non-triviality of the intersection of colon ideals in \autoref{prop:colon-intersection-not-zero}, whose proof proceeds by showing that one can always pass to birational extensions using the birational transformation law for test ideals of ideal pairs (\autoref{thm.FinalPropertiesOfTestIdealsTriples}\autoref{thm.FinalPropertiesOfTestIdealsTriples.BiggerIdeals-Exponents}). This allows us to further reduce to the case where the valuation ideals are monomial in a regular system of parameters via Knaf-Kuhlmann's monomialization result. One finishes the proof by using the `expected' characterization of the mixed characteristic test ideal of a monomial ideal (\autoref{prop.HowaldsTheoremForTau}) in terms of the convex hull of the tuples of exponents of the monomials in the ideal. The characteristic $0$ version of \autoref{prop.HowaldsTheoremForTau} was proved by Howald for multiplier ideals of monomial ideals \cite{HowaldMultiplierIdeals} and the positive characteristic version by Hara-Yoshida for test ideals of monomial ideals \cite{HaraYoshidaGeneralizationOfTightClosure}. For a variant of \autoref{prop.HowaldsTheoremForTau} in mixed characteristic, see also the work of Robinson \cite{RobinsonBCMTestIdealsMixedCharToric}.


We highlight an important corollary of \autoref{thm:uniform-approximation-Abhyankar-valuation-ideals-intro}, namely an Izumi theorem for valuations sharing a common center. We note that this Izumi result has been established in greater generality in \cite{SpivakovskyRondIzumi}. 

\begin{theorem}[see \autoref{thm:Izumi}]
Let $(V,\fram_V,\kappa_V)$ be a DVR of mixed characteristic $(0,p)$ with perfect residue field $\kappa_V$. Let $(B,\fram_B,\kappa_B)$ be a a regular local ring that is an essentially of finite type faithfully flat $V$-algebra. Let $\nu, w$ be two $\bR$-valued valuations of $\Frac(B)$ centered on $B$. Assume the following:
\begin{enumerate}[(i)]
\item $B$ is regular and an Abhyankar center of $\nu$.
\item $\Gamma_\nu/\nu(\Frac(V)^\times)$ is torsion-free.
\end{enumerate}
Then there exists a real number $C > 0$ such that for all $x \in B - \{0\}$, $\nu(x) \leq Cw(x)$.
\end{theorem}

\subsection{Abhyankar valuations and Abhyankar centers}
\label{subsec:Abhyankar valuations and Abhyankar centers}
Throughout this appendix, $F/K$ will denote a finitely generated field extension. Let $w$ be a valuation of $F$ with valuation ring $(\sO_w,\fram_w,\kappa_w)$ and let $\nu$ be the restriction of $w$ to $K$ with valuation ring $(\sO_\nu,\fram_\nu,\kappa_\nu)$. We will \emph{not} assume that $\nu$ is the trivial valuation of $K$. We let $\Gamma_w$ (resp. $\Gamma_\nu$) be the value group of $w$ (resp. $\nu$). Then we have the following fundamental inequality \cite[VI.10.3, Corollary\ 1]{Bourbaki1998}:
\begin{equation}
\label{eq:Abhyankar-inequality}
\dim_{\bQ}(\Gamma_w/\Gamma_\nu \otimes_{\bZ} \bQ) + \td{\kappa_w/\kappa_\nu} \leq \td{F/K}.
\end{equation}
Moreover, if equality holds in the above inequality that $\Gamma_w/\Gamma_\nu$ is a finitely generated abelian group and $\kappa_w/\kappa_\nu$ is a finitely generated field extension.

\begin{definition}
\label{def:Abhyankar-valuation}
With $F/K$ and $w$ as above, we say $w$ is an \emph{Abhyankar valuation} of $F/K$ if equality holds in \autoref{eq:Abhyankar-inequality}. 
\end{definition}

\begin{remark}
\label{rem:finitely-generated-value-group}
If $w$ is an Abhyankar valuation of $F/K$ and $\Gamma_\nu$ is a finitely generated (torsion free) abelian group, then $\Gamma_w$ will be a finitely generated (torsion free) abelian group by the discussion above. 
\end{remark}

We say that $w$ is \emph{centered} on a local domain $(A,\fram_A,\kappa_A)$ or that $(A,\fram_A,\kappa_A)$ is a \emph{center} of $w$ if $(\sO_w,\fram_w,\kappa_w)$ dominates $(A,\fram_A,\kappa_A)$. Said differently, $w$ is centered on $(A,\fram_A,\kappa_A)$ if for all $x \in A - \{0\}$, $w(x) \geq 0$ and for all $x \in \fram_A - \{0\}$, $w(x) > 0$. More generally, if $A$ is a domain that is not necessarily local, we say that \emph{$w$ is centered on $A$ at $\frp \in \Spec(A)$} if $A \subseteq \sO_w$ and $w$ dominates the local domain $A_\frp$ in the above sense. The last definition generalizes to integral schemes in the obvious way.

Closely related to \autoref{eq:Abhyankar-inequality} is the following fundamental result due to Abhyankar, which allows one to generalize the notion of an Abhyankar valuation.

\begin{theorem}\cite{AbhyankarValuationsCenteredLocal}
\label{thm:Abhyankar-inequality-center}
Let $w$ be a valuation of a field $F$ with valuation ring $(\sO_w,\fram_w,\kappa_w)$ and value group $\Gamma_w$. Suppose $w$ is centered on a Noetherian local domain $(A,\fram_A,\kappa_A)$ such that $\Frac(A) = F$. Then 
\begin{equation}
\label{eq:Abhyankar-inequality-center}
\dim_{\bQ}(\Gamma_w \otimes_{\bZ} \bQ) + \td{\kappa_w/\kappa_A} \leq \dim(A).
\end{equation}
If equality holds in the above inequality then $\Gamma_w$ is a finitely generated abelian group and $\kappa_w/\kappa_A$ is a finitely generated field extension.
\end{theorem}

\begin{definition}
\label{def:Abhyankar-center}
In the situation of \autoref{thm:Abhyankar-inequality-center}, we will say that $(A,\fram_A,\kappa_A)$ is an \emph{Abhyankar center of $w$} if equality holds in \autoref{eq:Abhyankar-inequality-center}. More generally, if $X$ is a locally noetherian integral scheme with function field $F$ and $w$ is centered on $x \in X$, then we say $x$ is an \emph{Abhyankar center of $w$} if $\sO_{X,x}$ is an Abhyankar center of $X$.
\end{definition}

\begin{example}
\label{eg:discrete-valuations-Abhyankar-center}
Suppose $w$ is a rank $1$ discrete valuation of $F$, that is, $\Gamma_w \cong \bZ$. Then $\sO_w$ is an Abhyankar center of itself.
\end{example}

\autoref{def:Abhyankar-valuation} and \autoref{def:Abhyankar-center} are close related in the following setting.

\begin{proposition}
\label{prop:Abhyankar-valuation-versus-center}
Let $F/K$ be a finitely generated field extension, $w$ be a valuation of $F$ and $\nu$ denote the restriction of $w$ to $K$. Suppose $\nu$ is centered on a Noetherian local domain $(R,\fram_R,\kappa_R)$ such that $\Frac(R) = K$, $R$ is universally catenary and $R$ is an Abhyankar center of $\nu$. Then the following are equivalent:
\begin{enumerate}
\item $w$ is an Abhyankar valuation of $F/K$.\smallskip
\item For every center $(A,\fram_A,\kappa_A)$ of $w$ with fraction field $F$ such that $A$ is an essentially of finite type local extension of $R$, $A$ is an Abhyankar center of $w$.\smallskip
\item There exists an Abhyankar center $(A,\fram_A,\kappa_A)$ of $w$ with fraction field $F$ such that $A$ is an essentially of finite type local extension of $R$.
\end{enumerate}
\end{proposition}

\begin{proof}
Since $F/K$ is finitely generated, it is always possible to construct a center $(A,\fram_A,\kappa_A)$ of $w$ with fraction field $F$ such that $A$ is an essentially of finite type local extension of $R$. We omit the details.

Since $R$ is an Abhyankar center of $\nu$, we have 
\begin{equation}
\label{eq:Abhyankar-center}
\dim_{\bQ}(\Gamma_\nu \otimes_{\bZ} \bQ) + \td{\kappa_\nu/\kappa_R} = \dim(R).
\end{equation}
Furthermore, since $R$ is universally catenary, if $(R,\fram_R,\kappa_R) \hookrightarrow (A,\fram_A,\kappa_A)$ is a local essentially of finite type extension of domains with $\Frac(A) = F$, then by the dimension formula \cite[\href{https://stacks.math.columbia.edu/tag/02IJ}{Tag 02IJ}]{stacks-project} we have
\begin{equation}
\label{eq:catenary}
\dim(A) = \dim(R) + \td{F/K} - \td{\kappa_A/\kappa_R}.
\end{equation}
Using the diagram of field extensions
\[\begin{tikzcd}[cramped]
	{\kappa_R} && {\kappa_A} \\
	\\
	{\kappa_\nu} && {\kappa_w}
	\arrow[hook, from=1-1, to=1-3]
	\arrow[hook, from=1-1, to=3-1]
	\arrow[hook, from=1-3, to=3-3]
	\arrow[hook, from=3-1, to=3-3]
\end{tikzcd}\]
we also have 
\begin{align*}
\td{\kappa_w/\kappa_A} &\stackrel{}{=} \td{\kappa_w/\kappa_\nu} + \td{\kappa_\nu/\kappa_R} - \td{\kappa_A/\kappa_R}\\
&\stackrel{\autoref{eq:catenary}}{=} \td{\kappa_w/\kappa_\nu} + \td{\kappa_\nu/\kappa_R} + \dim(A) - \dim(R) - \td{F/K}\\
&\stackrel{\autoref{eq:Abhyankar-center}}{=} \td{\kappa_w/\kappa_\nu} + \dim(A) -\dim_{\bQ}(\Gamma_\nu \otimes_{\bZ}\bQ) - \td{F/K}.
\end{align*}
Since $\dim_{\bQ}(\Gamma_w/\Gamma_\nu \otimes_{\bZ} \bQ) = \dim_{\bQ}(\Gamma_w \otimes_{\bZ} \bQ) - \dim_{\bQ}(\Gamma_\nu \otimes_{\bZ} \bQ)$, we then get
\[ \dim_{\bQ}(\Gamma_w \otimes_{\bZ} \bQ) + \td{\kappa_w/\kappa_A} = \dim(A) + \dim_{\bQ}(\Gamma_w/\Gamma_\nu \otimes_{\bZ} \bQ) + \td{\kappa_w/\kappa_\nu} - \td{F/K}.\]
From this last equality it is clear that the equality
\[\dim_{\bQ}(\Gamma_w \otimes_{\bZ} \bQ) + \td{\kappa_w/\kappa_A} = \dim(A)\] 
holds if and only if the equality
\[\dim_{\bQ}(\Gamma_w/\Gamma_\nu \otimes_{\bZ} \bQ) + \td{\kappa_w/\kappa_\nu} = \td{F/K}\] 
holds, proving the equivalence of $(a), (b), (c)$. 
\end{proof}

Since regular local rings are universally catenary, we immediately obtain:

\begin{corollary}
Let $F/K$ be a finitely generated field extension, $w$ be a valuation of $F$ and $\nu$ denote the restriction of $w$ to $K$. Suppose $\sO_\nu$ is a DVR. Then the following are equivalent:
\begin{enumerate}
\item $w$ is an Abhyankar valuation of $F/K$.\smallskip
\item For every center $(A,\fram_A,\kappa_A)$ of $w$ with fraction field $F$ such that $A$ is an essentially of finite type local extension of $\sO_\nu$, $A$ is an Abhyankar center of $w$.\smallskip
\item There exists an Abhyankar center $(A,\fram_A,\kappa_A)$ of $w$ with fraction field $F$ such that $A$ is an essentially of finite type local extension of $\sO_\nu$.
\end{enumerate}
\end{corollary}

The key fact about Abhyankar valuations is that they satisfy local uniformization in arbitrary characteristic \cite{KnafKuhlmannAbhyankar}. We state the result in the generality we need.

\begin{theorem}\cite[Theorem\ 1.2]{KnafKuhlmannAbhyankar}
\label{thm:local-uniformization}
Let $F/K$ be a finitely generated field extension. Let $w$ be a valuation of $F$ with value group $\Gamma_w$ and valuation ring $(\sO_w,\fram_w,\kappa_w)$. Let $\nu$ be the restriction of $w$ to $K$ with valuation ring $(\sO_\nu,\fram_\nu,\kappa_\nu)$. Assume that $\sO_\nu$ is an excellent DVR, $w$ is an Abhyankar valuation of $F/K$, $\Gamma_w/\Gamma_\nu$ is a torsion-free abelian group and $\kappa_w/\kappa_\nu$ is a separable extension (for example, if $\kappa_\nu$ is perfect). Then for any finite set $Z \subset \sO_w$, $w$ is centered on a local domain $(A,\fram_A,\kappa_A)$ satisfying all of the following properties:
\begin{enumerate}
\item $A$ is an essentially of finite type local extension of $\sO_\nu$ and $\Frac(A) = F$,
\item $A$ is regular local and $\dim(A) = \dim_{\bQ}(\Gamma_w \otimes_{\bZ} \bQ)$,
\item $Z \subset A$ and there exists a regular system of parameters $x_1,\dots,x_d$ of $A$ such that for all $z \in Z$,
\[z = ux_1^{a_1}\cdots x_d^{a_d}\]
for some $u \in A^\times$ and $a_1,\dots,a_d \in \bZ_{\geq 0}$.
\end{enumerate}
\end{theorem}

\begin{remark}
In \cite[Theorem\ 1.2]{KnafKuhlmannAbhyankar} the assumption is that the valued field $(K,\nu)$ satisfies the technical condition of being \emph{defectless}. However, as explained on \cite[Page 835]{KnafKuhlmannAbhyankar}, if $\sO_\nu$ is an excellent DVR then $(K,\nu)$ is automatically defectless. Thus, we work with the alternate condition of excellence, which is automatic for mixed characteristic DVRs.
\end{remark}

\autoref{thm:local-uniformization} implies the following:

\begin{proposition}
\label{prop:birational-domination-regular-local}
Let $F/K$ be a finitely generated field extension. Let $w$ be a valuation of $F$ with value group $\Gamma_w$ and valuation ring $(\sO_w,\fram_w,\kappa_w)$. Let $\nu$ be the restriction of $w$ to $K$ with valuation ring $(\sO_\nu,\fram_\nu,\kappa_\nu)$. Assume that $\sO_\nu$ is an excellent DVR, $\Gamma_w/\Gamma_\nu$ is torsion-free and $\kappa_w/\kappa_\nu$ is separable. Suppose $w$ is centered on a local domain $(R,\fram_R,\kappa_R)$ such that $R$ is an essentially of finite type local extension of $\sO_\nu$, $\Frac(R) = F$ and $R$ is an Abhyankar center of $w$. Then there exists a local birational extension of domains
\[(R,\fram_R,\kappa_R) \hookrightarrow (A,\fram_A,\kappa_A)\]
such that 
\begin{enumerate}[(i)]
\item $w$ is centered on $A$,\label{prop:birational-domination-regular-local.i}
\item $A$ is an essentially of finite type local extension of $\sO_\nu$,\label{prop:birational-domination-regular-local.ii}
\item $A$ is regular and $\dim(A) = \dim_{\bQ}(\Gamma_w \otimes_{\bZ} \bQ)$,\label{prop:birational-domination-regular-local.iii}
\item $\kappa_A = \kappa_w$ and $A$ has a regular system of parameters $\{x_1,\dots,x_d\}$ with the property that $\{w(x_1),\dots,w(x_d)\}$ is a $\bZ$-basis of $\Gamma_w$.\label{prop:birational-domination-regular-local.iv}
\end{enumerate}
\end{proposition}

\begin{proof}
$w$ is an Abhyankar valuation of $F/K$ by \autoref{prop:Abhyankar-valuation-versus-center}. Choose a finitely generated $\sO_\nu$-subalgebra, $\sO_\nu[r_1,\dots,r_n]$, of $F$ such that $R = \sO_\nu[r_1,\dots,r_n]_\frp$ for $\frp \in \Spec(\sO_\nu[r_1,\dots,r_n])$. 
 Since $w$ is an Abhyankar valuation of $F/K$, $\Gamma_w$ is a finitely generated torsion-free abelian group by \autoref{rem:finitely-generated-value-group}. Thus, one can choose a $\bZ$-basis $\{\alpha_1,\dots,\alpha_d\}$ of $\Gamma_w$ such that $\alpha_i > 0$. Choose $y_i \in \sO_w$ such that $w(y_i) = \alpha_i$ for all $i$. Furthermore, let $\{z_1,\dots,z_m\}$ be generators of the field extension $\kappa_w/\kappa_\nu$, which is finitely generated because $w$ is an Abhyankar valuation of $F/K$. Let $\overline{z_i} \in \sO_w^\times$ be a lift of $z_i$. Let
\[Z \coloneqq \{r_1,\dots,r_n,y_1,\dots,y_d,\overline{z_1},\dots,\overline{z_m}\}.\]
Applying \autoref{thm:local-uniformization}, we can find a regular local ring $(A,\fram_A,\kappa_A)$ with fraction field $F$ that satisfies \autoref{prop:birational-domination-regular-local.i}, \autoref{prop:birational-domination-regular-local.ii}, \autoref{prop:birational-domination-regular-local.iii} as well as the condition that $Z \subset A$ and every element of $Z$ is a monomial upto a unit in $A$ with respect to a regular system of parameters $\{x_1,\dots,x_d\}$ of $A$. Since $\{w(y_1),\dots,w(y_d)\}$ is a $\bZ$-basis of $\Gamma_w$, it follows that $\{w(x_1),\dots,w(x_d)\}$ generates $\Gamma_w$ as well, and so, must be a $\bZ$-basis of $\Gamma_w$. By construction, $\sO_\nu[r_1,\dots,r_n] \subset A$. Thus, since $w$ is centered on $A$, it follows that $\fram_A \cap \sO_\nu[r_1,\dots,r_n] = \fram_w \cap \sO_\nu[r_1,\dots,r_n] = \frp$. Hence, we get an induced local birational extension $R = \sO_\nu[r_1,\dots,r_n]_\frp \hookrightarrow A$. Finally, we certainly have $\kappa_A \subset \kappa_w$. But $\overline{z_1},\dots,\overline{z_n}$ must be units in $A$ since $(\sO_w,\fram_w,\kappa_w)$ dominates $(A,\fram_A,\kappa_A)$ and $\overline{z_1},\dots,\overline{z_n}$ are units in $\sO_w$. Thus, $\kappa_A = \kappa_w$ because the classes of the $\overline{z_i}$ in $\kappa_w$ generate $\kappa_w$ over $\kappa_\nu$, and so, \autoref{prop:birational-domination-regular-local.iv} is satisfied as well.
\end{proof}

\subsection{Real-valued valuations admitting an Abhyankar center} 

\begin{definition}
\label{def:valuation-ideals}
Let $\nu$ be an $\bR$-valued valuation of a field $K$. Suppose $\nu$ is centered on a domain $A$. Then for $m \in \bR$ the \emph{valuation ideal of $A$ of order $m$ along $\nu$}, denoted $\ba_{\nu,m}(A)$, is defined as follows:
\[\ba_{\nu,m}(A) \coloneqq \{a \in A \colon \nu(a) \geq m\} \cup \{0\}.\]
If $\nu, A$ are clear from context then we just use $\ba_m$ instead of $\ba_{\nu,m}(A)$.
\end{definition}

The collection of valuation ideals $\{\ba_m \colon m \in \bR_{\geq 0}\}$ satisfies the following properties:
\begin{enumerate}
\item[(1)] For $m,n \in \bR$, $\ba_m\cdot\ba_n \subseteq \ba_{m+n}$.
\item[(2)] For $m,n \in \bR$, if $n > m$ then $\ba_n \subseteq \ba_m$.
\item[(3)] If $\nu$ is centered at $\frp \in \Spec(A)$ and $\frp$ is finitely generated by $x_1,\dots,x_d$, then for $m \coloneqq \min\{\nu(x_1),\dots,\nu(x_d)\}$, we have $\ba_{\nu,m}(A) = \frp$.
\end{enumerate}
$(1)$ and $(2)$ imply that $\{\ba_m \colon m \in \bR_{\geq 0}\}$ is a \emph{graded family} of ideals of $A$. In addition, as a consequence of the Archimedean property of $\bR$, the following result is straightforward.

\begin{lemma}
\label{lem:valuation-ideals-primary}
Let $A$ be a domain with fraction field $K$ and $\frp \in \Spec(A)$. Suppose $\nu$ is a $\bR$-valued valuation of $K$ that is centered on $A$ at $\frp$. Then for all $m \in \bR_{> 0}$, $\ba_{\nu,m}(A)$ is a $\frp$-primary ideal of $A$. Furthermore, $\ba_{\nu,m}(A)A_\frp = \ba_{\nu,m}(A_\frp)$.
\end{lemma}

The other result we will need is a precise characterization of the valuation ideals $\ba_m$ when the center $A$ has the properties of \autoref{prop:birational-domination-regular-local}.

\begin{lemma}
\label{lem:valuation-ideals-monomial}
Let $\nu$ be a $\bR$-valued valuation of a field $K$. Suppose that $\nu$ is centered on a Noetherian local domain $(A,\fram_A,\kappa_A)$ such that $\{x_1,\dots,x_d\}$ generates $\fram_A$ and has the property that $\{\nu(x_1),\dots,\nu(x_d)\}$ is $\bZ$-linearly independent. Then for all $m \in \bR_{\geq 0}$,
\[\ba_m(A) = \left(x_1^{a_1}\cdots x_d^{a_d} \colon \sum_{i=1}^d a_i\nu(x_i) \geq m, a_i \in \mathbb{Z}_{\geq 0}\right)\]
\end{lemma}

\begin{proof}
We may assume $m > 0$ as otherwise the result is true because $x_1^0\cdots x_d^0 = 1 \in \ba_0(A)$. Then we know by \autoref{lem:valuation-ideals-primary} and the fact that $A$ is Noetherian that there exists an integer $n > 0$ such that $\fram_A^n \subset \ba_m(A)$. We know that $\fram_A^n$ is generated by monomials in $x_1,\dots,x_d$ of total degree $n$. Moreover, every element of $A/\fram_A^n$ is the class of a polynomial $f$ in $x_1,\dots,x_d$ of total degree $\leq n-1$ whose non-zero terms have coefficients in $A^\times$. Suppose $ux^{a_1}_1\cdots x^{a_d}_d$ and $vx^{b_1}_1\cdots x^{b_d}_d$ are two non-zero terms of $f$ with $u, v \in A^\times$ such that $(a_1,\dots,a_d) \neq (b_1,\dots,b_d)$.
    Then 
\[\nu(ux^{a_1}_1\cdots x^{a_d}_d) = \sum_{i=1}^d a_i\nu(x_i) \neq \sum_{i=1}^d b_i\nu(x_i) = \nu(vx^{b_1}_1\cdots x^{b_d}_d),\]
because $\{\nu(x_1),\dots,\nu(x_d)\}$ is $\bZ$-linearly independent. Thus, $\nu(f)$ is the infimum of the valuations of its non-zero terms, and so, in order for $\nu(f) \geq m$ to hold, each non-zero term of $f$ must have order at least $m$ along $\nu$, that is, each term of $f$ must be in $\ba_m(A)$. This proves that $\ba_m(A)$ is generated by monomials in $x_1,\dots,x_d$.
\end{proof}

\subsection{Uniform approximation of Abhyankar valuation ideals} We now prove the main result, which is:

\begin{theorem}
\label{thm:uniform-approximation-Abhyankar-valuation-ideals}
Let $(V,\fram_V,\kappa_V)$ be a DVR of mixed characteristic $(0,p)$ such that $\kappa_V$ is perfect. Let $(B,\fram_B,\kappa_B)$ be a regular local ring that is an essentially of finite type extension domain of $V$ that dominates $V$. Let $F$ be the fraction field of $B$ and $\nu$ be a $\bR$-valued valuation of $F$ such that $(B,\fram_B,\kappa_B)$ is an Abhyankar center of $\nu$. Suppose $\Gamma_\nu/\nu(\Frac(V)^\times)$ is torsion-free. Then there exists $e \in \mathbb{R}_{\geq 0}$ such that for all $m \in \mathbb{R}$ and $\ell \in \mathbb{Z}_{> 0}$, 
\[\ba_{\nu,m}(B)^\ell \subseteq \ba_{\nu,\ell m}(B) \subseteq \ba_{\nu,m-e}(B)^\ell.\]
\end{theorem}

\begin{remarks}
\label{rem:reductions-main-theorem}
We make a few comments and reductions. 
\begin{enumerate}
    \item Since $B$ dominates $V$ and $\nu$ is centered on $B$, it follows that $\nu$ is also centered on $V$. Thus, if $\nu'$ is the restriction of $\nu$ to $K \coloneqq \Frac(V)$, then the valuation ring of $\nu'$ is $V$, and so, $V$ is an Abhyankar center of $\nu'$. Since a DVR is universally catenary, it follows by \autoref{prop:Abhyankar-valuation-versus-center} that $\nu$ is an Abhyankar valuation of the finitely generated field extension $F/K$. Thus, $\kappa_\nu/\kappa_V$ is a finitely generated, and hence, a separable extension since $\kappa_V$ is perfect.\label{rem:reductions-main-theorem.a}
    
    
    \item One can weaken the assumption that $\kappa_V$ is perfect if one assumes instead that $\kappa_\nu/\kappa_V$ is separable since this is the assumption one needs in the local monomialization theorem of Knaf-Kuhlmann \autoref{thm:local-uniformization}.\label{rem:reductions-main-theorem.b} 

    \item The inclusion $\ba_{\nu,m}(B)^\ell \subseteq \ba_{\nu,\ell m}(B)$ clearly holds always. \label{rem:reductions-main-theorem.c}
    
    \item We can write $B = R_\frp$, where $R \subseteq B$ is a finitely generated $V$-algebra and $\frp \in \Spec(R)$. Since $R$ is excellent (and hence has open regular locus) and $
   B = R_\frp$ is regular, we may assume $R$ is also regular. Moreover, by \autoref{lem:valuation-ideals-primary}, we have $\ba_{\nu,m}(R)B = \ba_{\nu,m}(B)$. Consequently, if we can show that there exists $e \in \mathbb{R}_{\geq 0}$ such that for all $m \in \mathbb{R}$ and $\ell \in \mathbb{Z}_{> 0}$, 
\[\ba_{\nu,\ell m}(R) \subseteq \ba_{\nu,m-e}(R)^\ell\]
then the conclusion for $B$ would follow by ideal expansion. Note that $\Spec(R)$ is a quasi-projective scheme over the DVR $V$. \label{rem:reductions-main-theorem.d}
\end{enumerate}
\end{remarks}

With $R$ as in \autoref{rem:reductions-main-theorem.d} above, for an ideal $\bb$ of $R$ and a rational number $t \geq 0$, we now consider the \emph{test ideal} 
\[\tau(R,\bb^t),\] 
which we define to be the global sections of the test ideal sheaf $\tau(\sO_{\Spec(R)},\tilde{\bb}^t)$ introduced in \autoref{def.UltimateTestIDEALDefinition} with $\Delta = 0$. Note $K_{\Spec(R)}$ is Cartier because $R$ is regular. Here $\tilde{\bb}$ denotes the ideal sheaf associated with the ideal $\bb$. 

We will next define an \emph{asymptotic} version of the above test ideal for the graded family of valuation ideals $\{\ba_{\nu,m}(R) \colon m \in \bR\}$. Fix $m \in \bR$. For any integer $\ell > 0$, we have $\ba_{\nu,m}(R)^\ell \subset \ba_{\nu,\ell m}(R)$. Thus, using \autoref{thm.FinalPropertiesOfTestIdealsTriples} parts \autoref{thm.FinalPropertiesOfTestIdealsTriples.Unambiguity} and \autoref{thm.FinalPropertiesOfTestIdealsTriples.BiggerIdeals-Exponents}, we get
\[\tau(R,\ba_m^t) = \tau(R,(\ba_m^\ell)^{t/\ell}) \subseteq \tau(R,\ba_{\ell m}^{t/\ell}).\]
Thus, for a fixed $m$ and a fixed $t \in \bQ_{\geq 0}$, the collection $\{\tau(R,\ba_{\ell m}^{t/\ell}) \colon \ell \in \mathbb{Z}_{> 0}\}$ is filtered under inclusion. For example, $\tau(R,\ba_{\ell_1 m}^{t/\ell_1}), \tau(R,\ba_{\ell_2 m}^{t/\ell_2})$ are both contained in $\tau(R,\ba_{\ell_1\ell_2 m}^{t/\ell_1\ell_2})$. Since $R$ is Noetherian, $\{\tau(R,\ba_{\ell m}^{t/\ell}) \colon \ell \in \mathbb{Z}_{> 0}\}$ has a unique largest element under $\subseteq$, which must also coincide with $\sum_{\ell \in \mathbb{Z}_{> 0}} \tau(R,\ba_{\ell m}^{t/\ell})$. This leads to the following definition, upon taking $t = 1$.

\begin{definition}
\label{def:asymptotic-test-ideals}
For the graded family of valuation ideals $\ba_\bullet = \{\ba_{\nu,m}(R) \colon m \in \bR\}$ and $m \in \bR$, we define the \emph{$m$-th asymptotic test ideal of $\ba_\bullet$}, denoted $\tau(R,m\cdot\ba_\bullet)$, to be 
\[\tau(R,m\cdot\ba_\bullet) \coloneqq \sum_{\ell \in \mathbb{Z}_{> 0}} \tau(R,\ba_{\ell m}^{1/\ell})\]
\end{definition}

By the above discussion, $\tau(R,m\cdot\ba_\bullet) = \tau(R,\ba_{\ell m}^{1/\ell})$ for a highly divisible integer $\ell \gg 0$.

\begin{remark}
\label{rem:graded-family-more-general}
Asymptotic test ideals can be defined for any graded family of ideals. However, we will focus only on the case of graded families of valuation ideals.
\end{remark}

\begin{lemma}
\label{lem:asymptotic-test-ideals-properties}
Let $(V,\fram_V,\kappa_V)$ be a DVR of mixed characteristic $(0,p)$ and $R$ be a finite type $V$-algebra extension domain. Suppose $R$ is regular and $\nu$ is an $\bR$-valued valuation of $\Frac(R)$ that is centered at $\frp \in \Spec(R)$ such that $\frp \cap V = \fram_V$. Let $\ba_\bullet = \{\ba_m \colon m \in \bR\}$ be the graded family of valuation ideals of $R$ associated with $\nu$. Then we have the following:
\begin{enumerate}
 \item For all $m \in \bR$, $\ba_m \subseteq \tau(R,\ba_m) \subseteq \tau(R,m\cdot \ba_\bullet)$.\label{lem:asymptotic-test-ideals-properties:containment}\smallskip
 \item For all $m \in \bR$ and $\ell \in \bZ_{> 0}$, $\ba_{\ell m} \subseteq \tau(R,(\ell m)\cdot\ba_\bullet) \subseteq \tau(R,m\cdot\ba_\bullet)^\ell$.\label{lem:asymptotic-test-ideals-properties:subadditivity}\smallskip
 \item If $\bigcap_{m \in \bR} (\ba_m \colon \tau(R,m\cdot\ba_\bullet)) \neq 0$, then there exists $e \in \mathbb{R}_{\geq 0}$ such that for all $m \in \mathbb{R}$ and $\ell \in \mathbb{Z}_{> 0}$, $\ba_{m}^\ell \subseteq \ba_{\ell m} \subseteq \ba_{m-e}^\ell$.\label{lem:asymptotic-test-ideals-properties:reduction-main-thm}
\end{enumerate}
\end{lemma}

\begin{proof}
\autoref{lem:asymptotic-test-ideals-properties:containment} Since $R$ is regular, $\ba_m \subseteq \tau(R,\ba_m)$ follows by \autoref{thm.FinalPropertiesOfTestIdealsTriples}\autoref{thm.FinalPropertiesOfTestIdealsTriples.EasyContainmentInBCMRegular}, while $\tau(R,\ba_m) \subseteq \tau(R,m\cdot\ba_\bullet)$ follows by \autoref{def:asymptotic-test-ideals}.

\autoref{lem:asymptotic-test-ideals-properties:subadditivity} $\ba_{\ell m} \subseteq \tau(R,(\ell m)\cdot\ba_\bullet)$ follows by \autoref{lem:asymptotic-test-ideals-properties:containment}. Choose a highly divisible integer $n > 0$ such that $\tau(R,(\ell m)\cdot \ba_\bullet) = \tau(R,\ba_{n\ell m}^{1/n})$ \text{ and } $\tau(R,m\cdot\ba_\bullet) = \tau(R,\ba_{n\ell m}^{1/n\ell})$. Then 
\begin{align*}
\tau(R,(\ell m)\cdot \ba_\bullet) = \tau(R,\ba_{n\ell m}^{1/n}) &= \tau(R,\ba_{n\ell m}^{\ell/n\ell})\\
&\stackrel{\autoref{thm.FinalPropertiesOfTestIdealsTriples}\autoref{thm.FinalPropertiesOfTestIdealsTriples.Unambiguity}}{=} \tau(R,\underbrace{\ba_{n\ell m}^{1/n\ell}\cdots\ba_{n\ell m}^{1/n\ell}}_{\ell-\text{times}}))\\
&\stackrel{\autoref{thm.FinalPropertiesOfTestIdealsTriples}\autoref{thm.FinalPropertiesOfTestIdealsTriples.Subadditivity}}{\subseteq} \tau(R,\ba_{n\ell m}^{1/n\ell})^\ell
= \tau(R,m\cdot\ba_\bullet)^\ell, 
\end{align*}
as desired.

\autoref{lem:asymptotic-test-ideals-properties:reduction-main-thm} Let $x \in \bigcap_{m \in \bR} (\ba_m \colon \tau(R,m\cdot\ba_\bullet))$ be a non-zero element. Let $e \coloneqq \nu(x)$. Since $x\cdot \tau(R,m\cdot \ba_{\bullet}) \subseteq \ba_{m}$, we must have $\tau(R,m\cdot\ba_\bullet) \subseteq \ba_{m-e}$, and so,
\[
\ba_{\ell m} \stackrel{\autoref{lem:asymptotic-test-ideals-properties:containment}}{\subseteq} \tau(R,(\ell m)\cdot \ba_\bullet) \stackrel{\autoref{lem:asymptotic-test-ideals-properties:subadditivity}}{\subseteq} \tau(R,m\cdot\ba_\bullet)^\ell \subseteq \ba_{m-e}^\ell,
\]
proving \autoref{lem:asymptotic-test-ideals-properties:reduction-main-thm}.
\end{proof}

Using \autoref{lem:asymptotic-test-ideals-properties}\autoref{lem:asymptotic-test-ideals-properties:reduction-main-thm} the proof of \autoref{thm:uniform-approximation-Abhyankar-valuation-ideals} reduces to:

\begin{proposition}
\label{prop:colon-intersection-not-zero}
Let $(V,\fram_V,\kappa_V)$ be a DVR of mixed characteristic $(0,p)$ such that $\kappa_V$ is perfect. Let $R$ be a finite type $V$-algebra extension domain. Suppose $R$ is regular and $\nu$ is a $\bR$-valued valuation of $\Frac(R)$ that is centered at $\frp \in \Spec(R)$ such that $\frp \cap V = \fram_V$. Suppose $\frp$ is an Abhyankar center of $\nu$ and $\Gamma_\nu/\nu(\Frac(V)^\times)$ is torsion-free. Let $\ba_\bullet = \{\ba_m \colon m \in \bR\}$ be the graded family of valuation ideals of $R$ associated with $\nu$. Then 
\[\bigcap_{m \in \bR} (\ba_m \colon \tau(R,m\cdot\ba_\bullet)) \neq 0.\]
\end{proposition}

\begin{proof}
Using \autoref{prop:birational-domination-regular-local} and a simple spreading out argument, there exists a finite type birational morphism of affine schemes
\[\pi \colon \Spec(S) \to \Spec(R)\]
such that $S$ is a regular domain and $\nu$ is centered at an Abhyankar point $\frq \in \Spec(S)$ that lies over $\frp \in \Spec(R)$ such that $S_\frq$ satisfies properties \autoref{prop:birational-domination-regular-local.i}-\autoref{prop:birational-domination-regular-local.iv} of \autoref{prop:birational-domination-regular-local}. We let $\bb_\bullet$ denote the corresponding graded family of valuation ideals of $S$ associated with $\nu$. Note that for all $m \in \bR$, 
\begin{equation}
\label{eq:valuation-ideals-contract}
\bb_m \cap R = \ba_m.
\end{equation}
Thus, for all $t \in \bQ_{\geq 0}$
\begin{equation}
\label{eq:trivial-containment}
 \tau(S,\bb_m^t) \stackrel{\autoref{thm.FinalPropertiesOfTestIdealsTriples}\autoref{thm.FinalPropertiesOfTestIdealsTriples.BiggerIdeals-Exponents}}{\supseteq} \tau(S,(\ba_mS)^t).
\end{equation}

\begin{claim}
\label{claim:birational}
If $\bigcap_{m \in \bR} (\bb_m \colon \tau(S,m\cdot\bb_\bullet)) \neq 0$, then $\bigcap_{m \in \bR} (\ba_m \colon \tau(R,m\cdot\ba_\bullet)) \neq 0$.
\end{claim}

\textbf{Proof of \autoref{claim:birational}.}
Choose canonical divisors $K_R$ of $\Spec(R)$ and $K_S$ of $\Spec(S)$ that agree on the locus where $\pi$ is an isomorphism. Let $K_{S/R} = K_S - \pi^*(K_R)$ denote the relative canonical divisor, which is Cartier in our setting. By the birational transformation property (\autoref{thm.FinalPropertiesOfTestIdealsTriples}\autoref{thm.FinalPropertiesOfTestIdealsTriples.Birational}), for all $m \in \bR$, $t \in \bQ_{\geq 0}$ we have 
\begin{equation}
\label{eq:birational-containment}
\pi_*\utau(\Spec(S),-K_{S/R},\widetilde{\ba_mS}^t) \supseteq \utau(\Spec(R),\widetilde{\ba_m}^t).
\end{equation}
The definition of test ideals for triples (\autoref{rem:def.GlobalyTauOmegaXNonPrincipal.Triple}) and   \autoref{prop.PropertiesOfUltTauOmegaForPairs}\autoref{prop.PropertiesOfUltTauOmegaForPairs.EasyPrincipalSkoda} imply
\[\sO_{\Spec(S)}(K_{S/R}) \otimes_{\sO_{\Spec(S)}} \tau\big{(}\Spec(S),\widetilde{\ba_mS}^t\big{)} =  \tau(\Spec(S),-K_{S/R},\widetilde{\ba_mS}^t).
\] 
Thus, using \autoref{eq:trivial-containment}, and taking global sections in \autoref{eq:birational-containment}, we then get 
\begin{equation}
\label{eq:birational-containment-2}
\left(\omega_{S/R}\cdot\tau(S,\bb_m^t)\right) \cap R \supseteq \left(\omega_{S/R}\cdot\tau(S,(\ba_mS)^t)\right) \cap R \supseteq \tau(R,\ba_m^t),
\end{equation}
where $\omega_{S/R} \coloneqq \Gamma(\Spec(S),\sO_{\Spec(S)}(-K_{S/R}))$. Here we are identifying $\omega_{S/R} \otimes_S \tau(S,(\ba_mS)^t)$ with the fractional ideal $\omega_{S/R}\cdot\tau(S,(\ba_mS)^t)$ using the flatness of $\omega_{S/R}$. For a given $m \in \bR$, choose a highly divisible integer $\ell \gg 0$ such that 
\[\tau(R,m\cdot\ba_\bullet) = \tau(R,\ba^{1/\ell}_{m\ell}) \text{ and } \tau(S,m\cdot\bb_\bullet) = \tau(S,\bb^{1/\ell}_{m\ell}).\]
Then by \autoref{eq:birational-containment-2}, for all $m \in \bR$, we get
\begin{equation}
\label{eq:asymptotic-birational-containment}
\left(\omega_{S/R}\cdot\tau(S,m\cdot\bb_\bullet)\right) \cap R \supseteq \tau(R,m\cdot\ba_\bullet).
\end{equation}
Let $\mathfrak{J}$ be the non-zero ideal $\bigcap_{m \in \bR} (\bb_m \colon \tau(S,m\cdot\bb_\bullet))$ of $S$. Then $\omega_{S/R}^{-1}\cdot\mathfrak{J}$ is a non-zero fractional ideal of $S$, and hence, $(\omega_{S/R}^{-1}\cdot\mathfrak{J})\cap R$ is a non-zero ideal of $R$. Hence for all $m \in \bR$, we get 
\begin{align*}
((\omega_{S/R}^{-1}\cdot\mathfrak{J})\cap R) \cdot \tau(R,m\cdot\ba_\bullet) &\stackrel{\autoref{eq:asymptotic-birational-containment}}{\subseteq} \left((\omega_{S/R}^{-1}\cdot\mathfrak{J}) \cdot (\omega_{S/R}\cdot\tau(S,m\cdot\bb_\bullet))\right) \cap R\\
&\stackrel{\autoref{eq:valuation-ideals-contract}}{\subseteq} \bb_m \cap R = \ba_m.
\end{align*}
Thus, $(\omega_{S/R}^{-1}\cdot\mathfrak{J})\cap R \subseteq \bigcap_{m \in \bR} (\ba_m \colon \tau(R,m\cdot\ba_\bullet))$, proving \autoref{claim:birational}.

Let $x_1,\dots,x_d \in S$ such that their images in $S_\frq$ form a regular system of parameters of $S_\frq$ with the property that $\{\nu(x_1),\dots,\nu(x_d)\}$ is a $\bZ$-basis of $\Gamma_\nu$. We finish the proof by showing that 
\[x_1\cdots x_d \in \bigcap_{m \in \bR} (\bb_m \colon \tau(S,m\cdot\bb_\bullet)).\]
This is clear if $m \leq 0$ so suppose $m > 0$. Let $A \coloneqq \widehat{S_\frq}$ be the $\frq S_\frq$-adic completion of $S_\frq$. Since $\bb_m$ is $\frq$-primary, we have $\bb_m = \bb_mA \cap S$. Thus, it suffices to show that 
\[x_1\cdots x_d \in (\bb_mA\colon \tau(S,m\cdot\bb_\bullet)A).\] 
By \autoref{lem:valuation-ideals-primary}, $\bb_mS_\frq$ is the valuation ideal of $S_\frq$ of order $m$ along $\nu$, and by our choice of $S$ and $x_1,\dots,x_d$, we have 
\[\bb_mS_\frq = \left(x_1^{a_1}\cdots x_d^{a_d} \colon \sum_{i=1}^d a_i\nu(x_i) \geq m, a_i \in \mathbb{Z}_{\geq 0}\right)\]
by \autoref{lem:valuation-ideals-monomial}. Thus, $\bb_mA$ is also generated by the monomials $x_1^{a_1}\cdots x_d^{a_d}$ such that $\sum_{i=1}^d a_i\nu(x_i) \geq m$. Now, choose an integer $\ell \gg 0$ such that $\tau(S,m\cdot\bb_\bullet) = \tau(S,\bb^{1/\ell}_{m\ell})$. Since $\frq \in \Spec(S)_{p=0}$, by \autoref{cor.FinalCompletionGlobalizationLocalizationForTestIdeal} we get
\[\tau(S,m\cdot\bb_\bullet)A = \tau(A,(\bb_{m\ell} A)^{1/\ell}),\]
where the right hand side is the complete local universal test ideal from \autoref{def:common-tau-nonprincipal}. Then by \autoref{prop.HowaldsTheoremForTau} we have
\[\tau(A,(\bb_{m\ell} A)^{1/\ell}) = \big(x_1^{b_1}\cdots x_d^{b_d} \colon (b_1+1,\dots,b_d+1) \in \mathrm{Int}((1/\ell)P) \cap \bZ^d\big),\]
where $\mathrm{Int}((1/\ell)P)$ is the relative interior of $(1/\ell)P$, where $P$ is the convex hull of the subset of $\bZ^d$ consisting of those $(a_1,\dots,a_d) \in \bZ^d$ such that $x_1^{a_1}\cdots x_d^{a_d} \in \bb_{m\ell}A$. But $x_1^{a_1}\cdots x_d^{a_d} \in \bb_{m\ell}A$ if and only if $x_1^{a_1}\cdots x_d^{a_d} \in \bb_{m\ell}S_\frq$, which is the valuation ideal of $S_\frq$ of order $m\ell$ along $\nu$. Thus, $(1/\ell)P$ is the convex hull of 
\[\bigg\{\left(\frac{a_1}{\ell},\dots,\frac{a_d}{\ell}\right) \colon a_i \in \bZ_{\geq 0}, \sum_{i=1}^a \frac{a_i}{\ell}\nu(x_i) \geq m\bigg\}.\]
Then clearly $\sum_{i=1}^d (b_i+1)\nu(x_i) \geq m$, or equivalently, $(x_1\cdots x_d)x_1^{b_1}\cdots x_d^{b_d} \in \bb_mA$. Thus, $x_1\cdots x_d \in (\bb_m A \colon \tau(A,(\bb_{m\ell} A)^{1/\ell})) = (\bb_m A \colon \tau(S,m\cdot\bb_\bullet)A)$, as desired.
\end{proof}

\subsection{An Izumi theorem} As a consequence of \autoref{thm:uniform-approximation-Abhyankar-valuation-ideals}, one obtains an Izumi theorem for two $\bR$-valued valuations sharing a common center when the center is Abhyankar for one of the two valuations. A more general version of the result we are about to state holds by \cite{SpivakovskyRondIzumi}.

\begin{theorem}
\label{thm:Izumi}
Let $(V,\fram_V,\kappa_V)$ be a DVR of mixed characteristic $(0,p)$ with perfect residue field $\kappa_V$. Let $(R,\fram_R,\kappa_R)$ be an essentially of finite type extension domain of $V$ such that $\fram_R \cap V = \fram_V$. Let $\nu, w$ be two $\bR$-valued valuations of $\Frac(R)$ centered on $R$. Assume that
\begin{enumerate}[(i)]
\item $R$ is an Abhyankar center of $\nu$,
\item $\Gamma_\nu/\nu(\Frac(V)^\times)$ is torsion-free, and
\item there exists a local essentially of finite type birational extension
\[(R,\fram_R,\kappa_R) \hookrightarrow (A,\fram_A,\kappa_A)\]
such that $A$ is regular and $\nu, w$ are both also centered on $A$.
\end{enumerate}
Then there exists a real number $C > 0$ such that for all $x \in R - \{0\}$, $\nu(x) \leq Cw(x)$.
\end{theorem}

\begin{proof}
Note that $V$ is an Abhyankar center of the restriction of both $\nu$ to $\Frac(V)$. Thus, because $V$ is universally catenary, $A$ is also an Abhyankar center of $\nu$ by \autoref{prop:Abhyankar-valuation-versus-center}. If we can show the existence of a constant $C$ for the center $A$, then the same $C$ will also work for $R$. Thus, replacing $R$ by $A$ we may assume $R$ is regular. Let 
\[\delta \coloneqq \inf\{w(x) \colon x \in \fram_R -\{0\}\}.\]
Note that $\delta > 0$ because if $\fram_R$ is generated by $x_1,\dots,x_d$, then $\delta = \min\{w(x_1),\dots,w(x_d)\}$.
It is also clear that for all $\ell \in \bZ_{\geq 0}$, $\fram_R^\ell \subseteq \ba_{w,\delta\ell}(R)$.

By \autoref{thm:uniform-approximation-Abhyankar-valuation-ideals}, choose a real number $e > 0$ such that for all $m \in \bR$ and $\ell \in \bZ_{\geq 0}$, 
\[\ba_{\nu,m\ell}(R) \subseteq \ba_{\nu,m-e}(R)^\ell.\]
Fix any $m_0 > e$. Then for all $\ell \in \bZ_{\geq 0}$, we have
\begin{equation}
\label{eq:Izumi-containments}
\ba_{\nu,m_0\ell} \subseteq \ba_{m_0-e}(R)^\ell \subseteq \fram_R^\ell \subseteq \ba_{w,\delta\ell}(R).
\end{equation}
We claim that 
\[C \coloneqq 2m_0/\delta\]
will work. Indeed, suppose for contradiction that there exists $x_0 \in \fram_R -\{0\}$ such that $\nu(x_0) > Cw(x_0)$. Choose $\ell \in \bZ_{\geq 0}$ such that $\delta(\ell-1)\leq w(x_0) < \delta\ell$. Note that $\ell \geq 2$ since $\delta\leq w(x_0)$. Thus, multiplying this chain of inequalities by $C$, we get 
\[m_0\ell \leq 2m_0(\ell-1) \leq Cw(x_0) < \nu(x_0).\]
Then $x_0 \notin \ba_{w,\delta\ell}(R)$ but $x_0 \in \ba_{\nu,m_0\ell}(R)$, which contradicts \autoref{eq:Izumi-containments}.
\end{proof}



\newpage

\bibliographystyle{skalpha}
\bibliography{MainBib}

\newcommand{\etalchar}[1]{$^{#1}$}
\def\cfudot#1{\ifmmode\setbox7\hbox{$\accent"5E#1$}\else \setbox7\hbox{\accent"5E#1}\penalty 10000\relax\fi\raise 1\ht7 \hbox{\raise.1ex\hbox to 1\wd7{\hss.\hss}}\penalty 10000 \hskip-1\wd7\penalty 10000\box7}
\providecommand{\bysame}{\leavevmode\hbox to3em{\hrulefill}\thinspace}
\providecommand{\MR}{\relax\ifhmode\unskip\space\fi MR}
\providecommand{\MRhref}[2]{%
  \href{http://www.ams.org/mathscinet-getitem?mr=#1}{#2}
}
\providecommand{\href}[2]{#2}
\begin{thebibliography}{BMP{\etalchar{+}}23}

\bibitem[Abh56]{AbhyankarValuationsCenteredLocal}
{\sc S.~Abhyankar}: \emph{On the valuations centered in a local domain}, Amer. J. Math. \textbf{78} (1956), 321--348. {\sf\scriptsize 82477}

\bibitem[And18a]{AndreDirectsummandconjecture}
{\sc Y.~Andr\'{e}}: \emph{La conjecture du facteur direct}, Publ. Math. Inst. Hautes \'{E}tudes Sci. \textbf{127} (2018), 71--93. {\sf\scriptsize 3814651}

\bibitem[And18b]{AndrePerfectoidAbhyankarLemma}
{\sc Y.~Andr\'e}: \emph{Le lemme d'{A}bhyankar perfectoide}, Publ. Math. Inst. Hautes \'Etudes Sci. \textbf{127} (2018), 1--70. {\sf\scriptsize 3814650}

\bibitem[And20]{AndreWeaklyFunctorialBigCM}
{\sc Y.~Andr\'{e}}: \emph{Weak functoriality of {C}ohen-{M}acaulay algebras}, J. Amer. Math. Soc. \textbf{33} (2020), no.~2, 363--380. {\sf\scriptsize 4073864}

\bibitem[BL95]{BeauvilleLaszlo}
{\sc A.~Beauville and Y.~Laszlo}: \emph{Un lemme de descente}, C. R. Acad. Sci. Paris S\'{e}r. I Math. \textbf{320} (1995), no.~3, 335--340. {\sf\scriptsize 1320381}

\bibitem[BBD82]{BBDG82}
{\sc A.~A. Be\u{\i}linson, J.~Bernstein, and P.~Deligne}: \emph{Faisceaux pervers}, Analysis and topology on singular spaces, {I} ({L}uminy, 1981), Ast\'{e}risque, vol. 100, Soc. Math. France, Paris, 1982, pp.~5--171.

\bibitem[Bha15]{BhattPerverseNotes}
{\sc B.~Bhatt}: \emph{Perverse sheaves}, Notes taken by T. Murayama, available at \url{https://www.math.purdue.edu/~murayama/MATH731.pdf}, 2015.

\bibitem[Bha16]{BhattTannakaDuality}
{\sc B.~Bhatt}: \emph{Algebraization and {T}annaka duality}, Camb. J. Math. \textbf{4} (2016), no.~4, 403--461. {\sf\scriptsize 3572635}

\bibitem[Bha18]{BhattDirectsummandandDerivedvariant}
{\sc B.~Bhatt}: \emph{On the direct summand conjecture and its derived variant}, Invent. Math. \textbf{212} (2018), no.~2, 297--317. {\sf\scriptsize 3787829}

\bibitem[Bha20]{BhattAbsoluteIntegralClosure}
{\sc B.~Bhatt}: \emph{Cohen-{M}acaulayness of absolute integral closures}, arXiv:2008.08070.

\bibitem[BH22]{BhattHansen}
{\sc B.~Bhatt and D.~Hansen}: \emph{The six functors for {Z}ariski-constructible sheaves in rigid geometry}, Compos. Math. \textbf{158} (2022), no.~2, 437--482. {\sf\scriptsize 4413751}

\bibitem[BIM19]{BhattIyengarMaRegularRingsPerfectoid}
{\sc B.~Bhatt, S.~B. Iyengar, and L.~Ma}: \emph{Regular rings and perfect(oid) algebras}, Comm. Algebra \textbf{47} (2019), no.~6, 2367--2383. {\sf\scriptsize 3957103}

\bibitem[BL]{BhattLuriepadicRHmodp}
{\sc B.~Bhatt and J.~Lurie}: \emph{A $p$-adic {R}iemann-{H}ilbert functor: $\mathbf{Z}/p^n$-coefficients}, In preparation.

\bibitem[BMP{\etalchar{+}}23]{BMPSTWW1}
{\sc B.~Bhatt, L.~Ma, Z.~Patakfalvi, K.~Schwede, K.~Tucker, J.~Waldron, and J.~Witaszek}: \emph{Globally +-regular varieties and the minimal model program for threefolds in mixed characteristic}, Publ. Math. Inst. Hautes \'{E}tudes Sci. \textbf{138} (2023), 69--227. {\sf\scriptsize 4666931}

\bibitem[BMS18]{BhattMorrowScholzeIHES}
{\sc B.~Bhatt, M.~Morrow, and P.~Scholze}: \emph{Integral {$p$}-adic {H}odge theory}, Publ. Math. Inst. Hautes \'{E}tudes Sci. \textbf{128} (2018), 219--397. {\sf\scriptsize 3905467}

\bibitem[BS15]{BhattScholzeProetale}
{\sc B.~Bhatt and P.~Scholze}: \emph{The pro-\'{e}tale topology for schemes}, Ast\'{e}risque (2015), no.~369, 99--201. {\sf\scriptsize 3379634}

\bibitem[BS22]{BhattScholzepPrismaticCohomology}
{\sc B.~Bhatt and P.~Scholze}: \emph{Prisms and prismatic cohomology}, Ann. of Math. (2) \textbf{196} (2022), no.~3, 1135--1275. {\sf\scriptsize 4502597}

\bibitem[Bli04]{BlickleMultiplierIdealsAndModulesOnToric}
{\sc M.~Blickle}: \emph{Multiplier ideals and modules on toric varieties}, Math. Z. \textbf{248} (2004), no.~1, 113--121. {\sf\scriptsize MR2092724 (2006a:14082)}

\bibitem[BST15]{BlickleSchwedeTuckerTestAlterations}
{\sc M.~Blickle, K.~Schwede, and K.~Tucker}: \emph{{$F$}-singularities via alterations}, Amer. J. Math. \textbf{137} (2015), no.~1, 61--109. {\sf\scriptsize 3318087}

\bibitem[BGR84]{BGR}
{\sc S.~Bosch, U.~G\"{u}ntzer, and R.~Remmert}: \emph{Non-{A}rchimedean analysis}, Grundlehren der mathematischen Wissenschaften [Fundamental Principles of Mathematical Sciences], vol. 261, Springer-Verlag, Berlin, 1984, A systematic approach to rigid analytic geometry. {\sf\scriptsize 746961}

\bibitem[BL93]{BLR1}
{\sc S.~Bosch and W.~L\"{u}tkebohmert}: \emph{Formal and rigid geometry. {I}. {R}igid spaces}, Math. Ann. \textbf{295} (1993), no.~2, 291--317. {\sf\scriptsize 1202394}

\bibitem[Bou98]{Bourbaki1998}
{\sc N.~Bourbaki}: \emph{Commutative algebra. {C}hapters 1--7}, Elements of Mathematics (Berlin), Springer-Verlag, Berlin, 1998, Translated from the French, Reprint of the 1989 English translation. {\sf\scriptsize MR1727221 (2001g:13001)}

\bibitem[Bre03]{BrennerRescueSolidClosure}
{\sc H.~Brenner}: \emph{How to rescue solid closure}, J. Algebra \textbf{265} (2003), no.~2, 579--605. {\sf\scriptsize 1987018}

\bibitem[BH93]{BrunsHerzog}
{\sc W.~Bruns and J.~Herzog}: \emph{Cohen-{M}acaulay rings}, Cambridge Studies in Advanced Mathematics, vol.~39, Cambridge University Press, Cambridge, 1993. {\sf\scriptsize MR1251956 (95h:13020)}

\bibitem[CLM{\etalchar{+}}23]{CaiLeeMaSchwedeTuckerPerfectoidSignature1}
{\sc H.~Cai, S.~Lee, L.~Ma, K.~Schwede, and K.~Tucker}: \emph{Perfectoid signature, perfectoid {H}ilbert-{K}unz multiplicity, and an application to local fundamental groups}, arXiv:2209.04046.

\bibitem[CEMS18]{ChiecchioEnescuMillerSchwede}
{\sc A.~Chiecchio, F.~Enescu, L.~E. Miller, and K.~Schwede}: \emph{Test ideals in rings with finitely generated anti-canonical algebras}, J. Inst. Math. Jussieu \textbf{17} (2018), no.~1, 171--206. {\sf\scriptsize 3742559}

\bibitem[Dat20]{DattaUniformApproximation}
{\sc R.~Datta}: \emph{Uniform approximation of {A}bhyankar valuation ideals in function fields of prime characteristic}, Trans. Amer. Math. Soc. \textbf{373} (2020), no.~1, 319--341. {\sf\scriptsize 4042877}

\bibitem[DT23]{DattaTuckerOpenness}
{\sc R.~Datta and K.~Tucker}: \emph{Openness of splinter loci in prime characteristic}, J. Algebra \textbf{629} (2023), 307--357. {\sf\scriptsize 4583731}

\bibitem[DO89]{OCarrollDuncanAR}
{\sc A.~J. Duncan and L.~O'Carroll}: \emph{A full uniform {A}rtin-{R}ees theorem}, J. Reine Angew. Math. \textbf{394} (1989), 203--207. {\sf\scriptsize 977443}

\bibitem[EL93]{EinLazarsfeldGlobalGeneration}
{\sc L.~Ein and R.~Lazarsfeld}: \emph{Global generation of pluricanonical and adjoint linear series on smooth projective threefolds}, J. Amer. Math. Soc. \textbf{6} (1993), no.~4, 875--903. {\sf\scriptsize 1207013 (94c:14016)}

\bibitem[ELS01]{EinLazSmithSymbolic}
{\sc L.~Ein, R.~Lazarsfeld, and K.~E. Smith}: \emph{Uniform bounds and symbolic powers on smooth varieties}, Invent. Math. \textbf{144} (2001), no.~2, 241--252. {\sf\scriptsize MR1826369 (2002b:13001)}

\bibitem[ELS03]{EinLazSmithValuations}
{\sc L.~Ein, R.~Lazarsfeld, and K.~E. Smith}: \emph{Uniform approximation of {A}bhyankar valuation ideals in smooth function fields}, Amer. J. Math. \textbf{125} (2003), no.~2, 409--440. {\sf\scriptsize 1963690}

\bibitem[EV83]{EsnaultViehwegSurUneMinoration}
{\sc H.~Esnault and E.~Viehweg}: \emph{Sur une minoration du degr\'e d'hypersurfaces s'annulant en certains points}, Math. Ann. \textbf{263} (1983), no.~1, 75--86. {\sf\scriptsize 697332 (84m:14023)}

\bibitem[EV92]{EsnaultViehwegLecturesOnVanishing}
{\sc H.~Esnault and E.~Viehweg}: \emph{Lectures on vanishing theorems}, DMV Seminar, vol.~20, Birkh\"auser Verlag, Basel, 1992. {\sf\scriptsize MR1193913 (94a:14017)}

\bibitem[Fal02]{FaltingsAlmostEtale}
{\sc G.~Faltings}: \emph{Almost \'{e}tale extensions}, no. 279, 2002, pp.~185--270, Cohomologies $p$-adiques et applications arithm\'{e}tiques, II. {\sf\scriptsize 1922831}

\bibitem[FGK22]{fang2022nonarchimedean}
{\sc Y.~Fang, W.~Gubler, and K.~Künnemann}: \emph{On the non-archimedean {M}onge-{A}mp\`ere equation in mixed characteristic}, arXiv:2203.12282.

\bibitem[Fox77]{FoxbyIsomorphismsComplexes}
{\sc H.-B.~r. Foxby}: \emph{Isomorphisms between complexes with applications to the homological theory of modules}, Math. Scand. \textbf{40} (1977), no.~1, 5--19. {\sf\scriptsize 447269}

\bibitem[Gab04]{Gabber.tStruc}
{\sc O.~Gabber}: \emph{Notes on some {$t$}-structures}, Geometric aspects of Dwork theory. Vol. I, II, Walter de Gruyter GmbH \& Co. KG, Berlin, 2004, pp.~711--734.

\bibitem[Gab18]{GabberMSRINotes}
{\sc O.~Gabber}: \emph{Observations made after the {MSRI} workshop on homological conjectures}, https://docs.google.com/viewer?url=\\ https://www.msri.org/workshops/842/schedules/23854/documents/3322/assets/31362.

\bibitem[GLL15]{GabberLiuLorenziniHypersurfacesInProjectiveSchemes}
{\sc O.~Gabber, Q.~Liu, and D.~Lorenzini}: \emph{Hypersurfaces in projective schemes and a moving lemma}, Duke Math. J. \textbf{164} (2015), no.~7, 1187--1270. {\sf\scriptsize 3347315}

\bibitem[GR03]{GabberRameroAlmostringtheory}
{\sc O.~Gabber and L.~Ramero}: \emph{Almost ring theory}, Lecture Notes in Mathematics, vol. 1800, Springer-Verlag, Berlin, 2003.

\bibitem[GR04]{GabberRameroFoundationsAlmostRingTheory}
{\sc O.~Gabber and L.~Ramero}: \emph{Foundations for almost ring theory}, arXiv:0409584.

\bibitem[GL14]{GL_Weil}
{\sc D.~Gaitsgory and J.~Lurie}: \emph{Weil's conjecture for function fields}, \url{https://www.math.ias.edu/~lurie/papers/tamagawa.pdf}.

\bibitem[GR70]{GRVanishing}
{\sc H.~Grauert and O.~Riemenschneider}: \emph{Verschwindungss\"atze f\"ur analytische {K}ohomologiegruppen auf komplexen {R}\"aumen}, Invent. Math. \textbf{11} (1970), 263--292. {\sf\scriptsize MR0302938 (46 \#2081)}

\bibitem[HLS22]{HaconLamarcheSchwede}
{\sc C.~Hacon, A.~Lamarche, and K.~Schwede}: \emph{Global generation of test ideals in mixed characteristic and applications}, arXiv:2106.14329, to appear in Algebr. Geom.

\bibitem[Har98]{HaraRatImpliesFRat}
{\sc N.~Hara}: \emph{A characterization of rational singularities in terms of injectivity of {F}robenius maps}, Amer. J. Math. \textbf{120} (1998), no.~5, 981--996. {\sf\scriptsize MR1646049 (99h:13005)}

\bibitem[HW02]{HaraWatanabeFRegFPure}
{\sc N.~Hara and K.-I. Watanabe}: \emph{F-regular and {F}-pure rings vs. log terminal and log canonical singularities}, J. Algebraic Geom. \textbf{11} (2002), no.~2, 363--392. {\sf\scriptsize MR1874118 (2002k:13009)}

\bibitem[HY03]{HaraYoshidaGeneralizationOfTightClosure}
{\sc N.~Hara and K.-I. Yoshida}: \emph{A generalization of tight closure and multiplier ideals}, Trans. Amer. Math. Soc. \textbf{355} (2003), no.~8, 3143--3174 (electronic). {\sf\scriptsize MR1974679 (2004i:13003)}

\bibitem[Har66]{HartshorneResidues}
{\sc R.~Hartshorne}: \emph{Residues and duality}, Lecture notes of a seminar on the work of A. Grothendieck, given at Harvard 1963/64. With an appendix by P. Deligne. Lecture Notes in Mathematics, No. 20, Springer-Verlag, Berlin, 1966. {\sf\scriptsize MR0222093 (36 \#5145)}

\bibitem[Har67]{HartshorneLocalCohomology}
{\sc R.~Hartshorne}: \emph{Local cohomology}, A seminar given by A. Grothendieck, Harvard University, Fall, vol. 1961, Springer-Verlag, Berlin, 1967. {\sf\scriptsize MR0224620 (37 \#219)}

\bibitem[HM18]{HeitmannMaBigCohenMacaulayAlgebraVanishingofTor}
{\sc R.~Heitmann and L.~Ma}: \emph{Big {C}ohen-{M}acaulay algebras and the vanishing conjecture for maps of {T}or in mixed characteristic}, Algebra Number Theory \textbf{12} (2018), no.~7, 1659--1674. {\sf\scriptsize 3871506}

\bibitem[Hei21]{Heitmann.EtaleLocusCompleteLocal}
{\sc R.~C. Heitmann}: \emph{The \'etale locus in complete local rings}, Commutative algebra---150 years with {R}oger and {S}ylvia {W}iegand, Contemp. Math., vol. 773, Amer. Math. Soc., [Providence], RI, [2021] \copyright 2021, pp.~49--62. {\sf\scriptsize 4321390}

\bibitem[Hoc94]{HochsterSolidClosure}
{\sc M.~Hochster}: \emph{Solid closure}, Commutative algebra: syzygies, multiplicities, and birational algebra ({S}outh {H}adley, {MA}, 1992), Contemp. Math., vol. 159, Amer. Math. Soc., Providence, RI, 1994, pp.~103--172. {\sf\scriptsize 1266182}

\bibitem[HH90]{HochsterHunekeTC1}
{\sc M.~Hochster and C.~Huneke}: \emph{Tight closure, invariant theory, and the {B}rian\c con-{S}koda theorem}, J. Amer. Math. Soc. \textbf{3} (1990), no.~1, 31--116. {\sf\scriptsize MR1017784 (91g:13010)}

\bibitem[HH92]{HochsterHunekeInfiniteIntegralExtensionsAndBigCM}
{\sc M.~Hochster and C.~Huneke}: \emph{Infinite integral extensions and big {C}ohen-{M}acaulay algebras}, Ann. of Math. (2) \textbf{135} (1992), no.~1, 53--89. {\sf\scriptsize 1147957 (92m:13023)}

\bibitem[HH02]{HochsterHunekeComparisonOfSymbolic}
{\sc M.~Hochster and C.~Huneke}: \emph{Comparison of symbolic and ordinary powers of ideals}, Invent. Math. \textbf{147} (2002), no.~2, 349--369. {\sf\scriptsize 1881923 (2002m:13002)}

\bibitem[How01]{HowaldMultiplierIdeals}
{\sc J.~A. Howald}: \emph{Multiplier ideals of monomial ideals}, Trans. Amer. Math. Soc. \textbf{353} (2001), no.~7, 2665--2671. {\sf\scriptsize 1828466}

\bibitem[Hun92]{HunekeUniform}
{\sc C.~Huneke}: \emph{Uniform bounds in {N}oetherian rings}, Invent. Math. \textbf{107} (1992), no.~1, 203--223. {\sf\scriptsize 1135470}

\bibitem[HL07]{HunekeLyubeznikAbsoluteIntegralClosure}
{\sc C.~Huneke and G.~Lyubeznik}: \emph{Absolute integral closure in positive characteristic}, Adv. Math. \textbf{210} (2007), no.~2, 498--504. {\sf\scriptsize 2303230 (2008d:13005)}

\bibitem[IMSW21]{IyengarMaSchwedeWalkerMCM}
{\sc S.~B. Iyengar, L.~Ma, K.~Schwede, and M.~E. Walker}: \emph{Maximal {C}ohen-{M}acaulay complexes and their uses: a partial survey}, Commutative algebra, Springer, Cham, [2021] \copyright 2021, pp.~475--500. {\sf\scriptsize 4394418}

\bibitem[Jia21]{JiangClosureOpsMixedChar}
{\sc Z.~Jiang}: \emph{Closure operations in complete local rings of mixed characteristic}, J. Algebra \textbf{580} (2021), 366--398. {\sf\scriptsize 4244674}

\bibitem[JM08]{JowMillerMultiplierIdealsOfASum}
{\sc S.-Y. Jow and E.~Miller}: \emph{Multiplier ideals of sums via cellular resolutions}, Math. Res. Lett. \textbf{15} (2008), no.~2, 359--373. {\sf\scriptsize 2385647}

\bibitem[KK05]{KnafKuhlmannAbhyankar}
{\sc H.~Knaf and F.-V. Kuhlmann}: \emph{Abhyankar places admit local uniformization in any characteristic}, Ann. Sci. \'Ecole Norm. Sup. (4) \textbf{38} (2005), no.~6, 833--846. {\sf\scriptsize 2216832}

\bibitem[Kol88]{KollarEffectiveNullstellensatz}
{\sc J.~Koll\'ar}: \emph{Sharp effective {N}ullstellensatz}, J. Amer. Math. Soc. \textbf{1} (1988), no.~4, 963--975. {\sf\scriptsize 944576}

\bibitem[Laz04]{LazarsfeldPositivity2}
{\sc R.~Lazarsfeld}: \emph{Positivity in algebraic geometry. {II}}, Ergebnisse der Mathematik und ihrer Grenzgebiete. 3. Folge. A Series of Modern Surveys in Mathematics [Results in Mathematics and Related Areas. 3rd Series. A Series of Modern Surveys in Mathematics], vol.~49, Springer-Verlag, Berlin, 2004, Positivity for vector bundles, and multiplier ideals. {\sf\scriptsize MR2095472 (2005k:14001b)}

\bibitem[Lip94]{LipmanAdjointsOfIdealsInRegularLocal}
{\sc J.~Lipman}: \emph{Adjoints of ideals in regular local rings}, Math. Res. Lett. \textbf{1} (1994), no.~6, 739--755, With an appendix by Steven Dale Cutkosky. {\sf\scriptsize MR1306018 (95k:13028)}

\bibitem[MS18]{MaSchwedePerfectoidTestideal}
{\sc L.~Ma and K.~Schwede}: \emph{Perfectoid multiplier/test ideals in regular rings and bounds on symbolic powers}, Invent. Math. \textbf{214} (2018), no.~2, 913--955. {\sf\scriptsize 3867632}

\bibitem[MS21]{MaSchwedeSingularitiesMixedCharBCM}
{\sc L.~Ma and K.~Schwede}: \emph{Singularities in mixed characteristic via perfectoid big {C}ohen-{M}acaulay algebras}, Duke Math. J. \textbf{170} (2021), no.~13, 2815--2890. {\sf\scriptsize 4312190}

\bibitem[MST{\etalchar{+}}22]{MaSchwedeTuckerWaldronWitaszekAdjoint}
{\sc L.~Ma, K.~Schwede, K.~Tucker, J.~Waldron, and J.~Witaszek}: \emph{An analogue of adjoint ideals and {PLT} singularities in mixed characteristic}, J. Algebraic Geom. \textbf{31} (2022), no.~3, 497--559. {\sf\scriptsize 4484548}

\bibitem[MSW21]{MaSinghWaltherKoszulvsLocal}
{\sc L.~Ma, A.~K. Singh, and U.~Walther}: \emph{Koszul and local cohomology, and a question of {D}utta}, Math. Z. \textbf{298} (2021), no.~1-2, 697--711. {\sf\scriptsize 4257105}

\bibitem[MS97]{MehtaSrinivasRatImpliesFRat}
{\sc V.~B. Mehta and V.~Srinivas}: \emph{A characterization of rational singularities}, Asian J. Math. \textbf{1} (1997), no.~2, 249--271. {\sf\scriptsize MR1491985 (99e:13009)}

\bibitem[Mil13]{milneLEC}
{\sc J.~S. Milne}: \emph{Lectures on {\'e}tale cohomology (v2.21)}, Available at \url{www.jmilne.org/math/}, 2013.

\bibitem[Mur23]{MurayamaUniformBoundsOnSymbolicPowers}
{\sc T.~Murayama}: \emph{Uniform bounds on symbolic powers in regular rings}, arXiv:2111.06049, to appear in J. Reine Angew. Math.

\bibitem[Mus02]{MustataMultiplierIdealOfASum}
{\sc M.~Musta{\c{t}}{\v{a}}}: \emph{The multiplier ideals of a sum of ideals}, Trans. Amer. Math. Soc. \textbf{354} (2002), no.~1, 205--217 (electronic). {\sf\scriptsize 1859032 (2002k:14006)}

\bibitem[Nad89]{NadelMultiplierIdealSheaves}
{\sc A.~M. Nadel}: \emph{Multiplier ideal sheaves and existence of {K}\"ahler-{E}instein metrics of positive scalar curvature}, Proc. Nat. Acad. Sci. U.S.A. \textbf{86} (1989), no.~19, 7299--7300. {\sf\scriptsize 1015491 (90k:32061)}

\bibitem[Nad90]{NadelMultiplierIdeals}
{\sc A.~M. Nadel}: \emph{Multiplier ideal sheaves and {K}\"ahler-{E}instein metrics of positive scalar curvature}, Ann. of Math. (2) \textbf{132} (1990), no.~3, 549--596. {\sf\scriptsize 1078269 (92d:32038)}

\bibitem[NS24]{NakazatoShimomotoVariantPerfectoidAbhyankar}
{\sc K.~Nakazato and K.~Shimomoto}: \emph{A variant of perfectoid {A}bhyankar's lemma and almost {C}ohen-{M}acaulay algebras}, Nagoya Math. J. \textbf{253} (2024), 164--215. {\sf\scriptsize 4711790}

\bibitem[O'C87]{OCarrollUniformAR}
{\sc L.~O'Carroll}: \emph{A uniform {A}rtin-{R}ees theorem and {Z}ariski's main lemma on holomorphic functions}, Invent. Math. \textbf{90} (1987), no.~3, 647--652. {\sf\scriptsize 914854}

\bibitem[PRG21]{PerezRGTestIdeals21}
{\sc F.~P\'{e}rez and R.~R.~G.}: \emph{Characteristic-free test ideals}, Trans. Amer. Math. Soc. Ser. B \textbf{8} (2021), 754--787. {\sf\scriptsize 4312323}

\bibitem[Rob22]{RobinsonBCMTestIdealsMixedCharToric}
{\sc M.~Robinson}: \emph{Big {C}ohen-{M}acaulay test ideals on mixed characteristic toric schemes}, J. Commut. Algebra \textbf{14} (2022), no.~4, 591--602. {\sf\scriptsize 4509410}

\bibitem[RS14]{SpivakovskyRondIzumi}
{\sc G.~Rond and M.~Spivakovsky}: \emph{The analogue of {I}zumi's theorem for {A}bhyankar valuations}, J. Lond. Math. Soc. (2) \textbf{90} (2014), no.~3, 725--740. {\sf\scriptsize 3291797}

\bibitem[ST21]{SatoTakagiArithmeticAndGeometricDeformationsOfFPure}
{\sc K.~Sato and S.~Takagi}: \emph{Arithmetic and geometric deformations of $f$-pure and $f$-regular singularities}.

\bibitem[Sch12]{ScholzePerfectoidspaces}
{\sc P.~Scholze}: \emph{Perfectoid spaces}, Publ. Math. Inst. Hautes \'Etudes Sci. \textbf{116} (2012), 245--313.

\bibitem[Sch04]{SchoutensCanonicalBCM}
{\sc H.~Schoutens}: \emph{Canonical big {C}ohen-{M}acaulay algebras and rational singularities}, Illinois J. Math. \textbf{48} (2004), no.~1, 131--150. {\sf\scriptsize 2048219}

\bibitem[ST14]{SchwedeTuckerNonPrincipal}
{\sc K.~Schwede and K.~Tucker}: \emph{Test ideals of non-principal ideals: computations, jumping numbers, alterations and division theorems}, J. Math. Pures Appl. (9) \textbf{102} (2014), no.~5, 891--929. {\sf\scriptsize 3271293}

\bibitem[Shi16]{ShimomotoApplicationofAlmostPurity}
{\sc K.~Shimomoto}: \emph{An application of the almost purity theorem to the homological conjectures}, J. Pure Appl. Algebra \textbf{220} (2016), no.~2, 621--632. {\sf\scriptsize 3399381}

\bibitem[Sin99]{SinghQGorensteinSplinters}
{\sc A.~K. Singh}: \emph{{$\bold Q$}-{G}orenstein splinter rings of characteristic {$p$} are {F}-regular}, Math. Proc. Cambridge Philos. Soc. \textbf{127} (1999), no.~2, 201--205. {\sf\scriptsize 1735920 (2000j:13006)}

\bibitem[Smi94]{SmithTightClosureParameter}
{\sc K.~E. Smith}: \emph{Tight closure of parameter ideals}, Invent. Math. \textbf{115} (1994), no.~1, 41--60. {\sf\scriptsize MR1248078 (94k:13006)}

\bibitem[Smi95]{SmithTestIdeals}
{\sc K.~E. Smith}: \emph{Test ideals in local rings}, Trans. Amer. Math. Soc. \textbf{347} (1995), no.~9, 3453--3472. {\sf\scriptsize MR1311917 (96c:13008)}

\bibitem[Smi00]{SmithMultiplierTestIdeals}
{\sc K.~E. Smith}: \emph{The multiplier ideal is a universal test ideal}, Comm. Algebra \textbf{28} (2000), no.~12, 5915--5929, Special issue in honor of Robin Hartshorne. {\sf\scriptsize MR1808611 (2002d:13008)}

\bibitem[{Sta}]{stacks-project}
{\sc T.~{Stacks Project Authors}}: \emph{{\itshape Stacks Project}}.

\bibitem[Tak04]{TakagiInterpretationOfMultiplierIdeals}
{\sc S.~Takagi}: \emph{An interpretation of multiplier ideals via tight closure}, J. Algebraic Geom. \textbf{13} (2004), no.~2, 393--415. {\sf\scriptsize MR2047704 (2005c:13002)}

\bibitem[Tak06]{TakagiFormulasForMultiplierIdeals}
{\sc S.~Takagi}: \emph{Formulas for multiplier ideals on singular varieties}, Amer. J. Math. \textbf{128} (2006), no.~6, 1345--1362. {\sf\scriptsize MR2275023 (2007i:14006)}

\bibitem[Tak08]{TakagiPLTAdjoint}
{\sc S.~Takagi}: \emph{A characteristic {$p$} analogue of plt singularities and adjoint ideals}, Math. Z. \textbf{259} (2008), no.~2, 321--341. {\sf\scriptsize MR2390084 (2009b:13004)}

\bibitem[Tak13]{TakagiAdjointIdealsAndACorrespondence}
{\sc S.~Takagi}: \emph{Adjoint ideals and a correspondence between log canonicity and {F}-purity}, Algebra Number Theory \textbf{7} (2013), no.~4, 917--942.

\bibitem[TY23]{TakamatsuYoshikawaMMP}
{\sc T.~Takamatsu and S.~Yoshikawa}: \emph{Minimal model program for semi-stable threefolds in mixed characteristic}, J. Algebraic Geom. \textbf{32} (2023), no.~3, 429--476. {\sf\scriptsize 4622257}

\bibitem[Yam23]{YamaguchiBCMTestIdealsInEqualChar}
{\sc T.~Yamaguchi}: \emph{Big {C}ohen-{M}acaulay test ideals in equal characteristic zero via ultraproducts}, Nagoya Math. J. \textbf{251} (2023), 549--575. {\sf\scriptsize 4637141}

\bibitem[Zav22]{ZavyalovAlmostCoh}
{\sc B.~Zavyalov}: \emph{Almost coherent modules and almost coherent sheaves}, arXiv:2110.10773.

\end{thebibliography}















\end{document}